\documentclass[reqno, letterpaper]{amsart}

\usepackage{amssymb}

\usepackage[style=alphabetic, backend=bibtex]{biblatex}
\usepackage{hyperref, cleveref}
\hypersetup{
	colorlinks = true,
	linkcolor = blue,
	citecolor = magenta
}

\usepackage{tikz-cd}
\tikzcdset{
	diagrams={sep=small}			
}

\usepackage{multirow}				
\usepackage{mathtools}				

\usepackage{enumitem}
\setlist[1]{labelindent=\parindent}
\setlist[enumerate, 1]{label = \textnormal{(\arabic*)}, ref = \arabic*}
\setlist[enumerate, 2]{label = \textnormal{(\alph*)}, ref = \alph*}
\setlist[enumerate, 3]{label = \textnormal{(\roman*)}, ref = \roman*}

\makeatletter
\def\l@subsection{\@tocline{2}{0pt}{2.5pc}{5pc}{}}
\makeatother

\numberwithin{equation}{subsection}

\usepackage[OT2,T1]{fontenc}
\DeclareSymbolFont{cyrletters}{OT2}{wncyr}{m}{n}
\DeclareMathSymbol{\Sha}{\mathalpha}{cyrletters}{"58}

\usepackage{thmtools}

\declaretheorem[sibling=equation, style=definition]{definition}
\declaretheorem[sibling=equation]{theorem}
\declaretheorem[sibling=equation]{proposition}
\declaretheorem[sibling=equation]{lemma}
\declaretheorem[sibling=equation]{corollary}
\declaretheorem[sibling=equation, style=remark]{remark}
\declaretheorem[sibling=equation, style=remark]{example}

\declaretheorem[sibling=equation, style=definition, name=Proposition-Definition]{proposition-definition}

\let\emptyset\varnothing

\let\AA\undefined

\let\to\longrightarrow

\let\mapsto\longmapsto

\newcommand{\AA}{\mathbb{A}}

\newcommand{\GG}{\mathbb{G}}

\newcommand{\PP}{\mathbb{P}}
\newcommand{\QQ}{\mathbb{Q}}

\newcommand{\ZZ}{\mathbb{Z}}

\DeclareMathOperator{\Hom}{Hom}
\DeclareMathOperator{\Ext}{Ext}

\DeclareMathOperator{\Aut}{Aut}

\DeclareMathOperator{\id}{id}
\DeclareMathOperator{\im}{im}
\DeclareMathOperator{\coker}{coker}

\DeclareMathOperator{\ord}{ord}

\DeclareMathOperator{\PGL}{PGL}

\DeclareMathOperator{\Spec}{Spec}

\DeclareMathOperator{\Pic}{Pic}
\DeclareMathOperator{\NS}{NS}
\DeclareMathOperator{\Br}{Br}

\DeclareMathOperator{\Sing}{Sing}
\DeclareMathOperator{\pr}{pr}
\DeclareMathOperator{\red}{red}

\DeclareMathOperator{\Alb}{Alb}

\newcommand{\SheafHom}{\underline{\mathrm{Hom}}}

\newcommand{\SheafExt}{\underline{\mathrm{Ext}}}

\DeclareMathOperator{\St}{St}
\DeclareMathOperator{\alb}{alb}

\begin{document}
\title[Singular fiber of abelian fibrations]{Kodaira-type classification of singular fibers of some minimal abelian fibrations}

\author[Y.-J. Kim]{Yoon-Joo Kim}
\address{Department of Mathematics, Columbia University, New York, NY 10027, USA}
\address{HCMC, Korea Institute for Advanced Study, Seoul, South Korea}
\email{yk3029@columbia.edu}

\date{\today}

\begin{abstract}
	Let $X \to S$ be a minimal abelian fibration of relative dimension $n$ over a curve. We classify all possible singular fibers $X_s$ having $(n-1)$-dimensional ``abelian variety parts''. This generalizes Kodaira's work on elliptic fibrations, and Matsushita and Hwang--Oguiso's work on Lagrangian fibrations into a single framework. The classification is divided into three parts: semistable, unstable, and multiple. Multiple fibers are again divided into three types: semistable-like, mixed, and unstable-like.
\end{abstract}

\maketitle
\tableofcontents

\section{Introduction}
	Kodaira classified all possible fibers $X_s$ arising in a minimal elliptic surface $f : X \to S$. Besides smooth elliptic curves ($\mathrm I_0$), one encounters nodal rational curves ($\mathrm I_1$) and cycles of $r \ge 2$ copies of $\PP^1$'s ($\mathrm I_r$), called \emph{semistable fibers}. There are more fibers of types $\mathrm{II}$, $\mathrm{III}$, $\mathrm{IV}$, $\mathrm I_{r \ge 0}^*$, $\mathrm{II}^*$, $\mathrm{III}^*$, and $\mathrm{IV}^*$, called \emph{unstable fibers}. These are non-semistable but still manageable as they have a smooth point. Finally, nowhere-smooth fibers are called \emph{multiple fibers}. They are necessarily of the form $X_s = m \cdot X_{s, \red}$ and its reduction $X_{s, \red}$ is a semistable fiber of type $\mathrm I_r$ as above. Kodaira's classification played an important role in his study of compact complex surfaces \cite{kod60, kod63} and was later recovered in an algebraic setting by N\'eron \cite{neron:original_article}. See \cite[\S V.7]{bar-hulek-pet-van:surface} for nice pictorial descriptions and simplified proof.
	
	A higher-dimensional analog of a minimal elliptic fibration is a \emph{minimal abelian fibration}, a flat projective degeneration $f : X \to S$ of $n$-dimensional abelian varieties over a smooth curve $S$ that is relatively minimal in the sense of the minimal model program. A natural question is whether singular fibers of such fibrations admit a classification. In general this problem is intractable, and much of the existing literature instead focuses on their semistable reduction behavior via allowing ramified base change. This perspective is important in the compactification problems of the moduli stack $A_{n,d}$ of polarized abelian varieties.
	
	Note however that Kodaira's original intent was to understand the total space $X$ (surface in his case). In other words, if one's goal is to understand the total space $X$, it is preferable to analyze the fiber $X_s$ directly. Such attempts first arose in the study of Calabi--Yau varieties, but to our knowledge, the most successful results were obtained for algebraic symplectic varieties. In relative dimension $2$, Matsushita \cite{mat01} classified codimension $1$ singular fibers of Lagrangian fibrations on smooth symplectic fourfolds. Hwang--Oguiso \cite{hwang-ogu09, hwang-ogu11, hwang-ogu16} subsequently extended this to higher-dimensional Lagrangian fibrations. Remarkably, their results closely resemble Kodaira's classification, suggesting that a classification of singular fibers for higher-dimensional minimal abelian fibrations may be possible under suitable assumptions.
	
	It is later observed in \cite{kim24} (see also forthcoming work \cite{decat-kim-sch26}) that every Lagrangian fibration satisfies a condition called \emph{$\delta$-regularity}. Roughly speaking, this condition implies that every codimension $1$ fiber admits an \emph{abelian variety part} of dimension $\ge n-1$. In this paper, we develop a general framework for \emph{minimal and $\delta$-regular} abelian fibrations, and show that their singular fibers admit a complete classification, unifying the results of Kodaira, Matsushita, and Hwang--Oguiso. The following is the main theorem of this article.
	
	\begin{theorem} \label{thm:main}
		Let $S$ be a Dedekind scheme of characteristic $0$ with algebraically closed residue fields and $f : X \to S$ a minimal $\delta$-regular abelian fibration of relative dimension $n \ge 2$. Write $X_s = f^{-1}(s)$ for a closed fiber. Then
		\begin{enumerate}
			\item \textnormal{(Semistable: \Cref{thm:classification of semistable fibers})} Every semistable fiber $X_s$ is isomorphic to one of the following.
			\begin{enumerate}
				\item[\textnormal{($\mathrm I_0$)}] An abelian variety of dimension $n$.
				\item[\textnormal{($\mathrm I_1$)}] A $\PP^1$-bundle $E \to A$ over an abelian variety of dimension $n-1$ with two sections $0$ and $\infty$ identified.
				\item[\textnormal{($\mathrm I_r$)}] A cycle of $r \ge 2$ isomorphic copies of $\PP^1$-bundles $E_i \to A$ for $i = 1, \cdots, r$, whose sections $0_i$ and $\infty_i$ are successively identified by $\infty_i = 0_{i+1}$.
			\end{enumerate}
			
			\item \textnormal{(Unstable: \Cref{thm:classification of unstable fibers})} Every unstable fiber $X_s$ is isomorphic to $(C \times A)/G$, where $C$ is an unstable curve in Kodaira's classification (type $\mathrm {II}$, $\cdots$, $\mathrm{IV}^*$) and $G$ is a finite abelian group classified in \Cref{table:classification of G}.
			
			\item \textnormal{(Multiple: \Cref{thm:classification of multiple fiber i}, \ref{thm:classification of multiple fiber ii}, \ref{thm:classification of multiple fiber iii}, \ref{thm:classification of multiple fiber iv}, and \ref{thm:classification of multiple fiber v})} Every multiple fiber $X_s$ is of the form $X_s = m \cdot \frac{1}{m} X_s$, where $\frac{1}{m}X_s$ is isomorphic to one of the following.
			\begin{enumerate}
				\item[\textnormal{($\mathrm I_0, \, \mathrm I^1_1$)}] Type $\mathrm I_0$ and $\mathrm I_1 = \mathrm I_1^1$ fibers described in (1).
				\item[\textnormal{($\mathrm I_R^k$)}] A cycle of $\PP^1$-bundles $E_i \to A$ for $i = 1, \cdots, R \, (\ge 2)$ similar to type $\mathrm I_R$ in (1), but $E_i$ and $E_j$ are isomorphic when $i \equiv j \mod k$ for $k \mid R$.
				
				\item[\textnormal{($\mathrm I_0^+$)}] A $\ZZ/d$-quotient of an abelian variety of dimension $n$ by a free automorphism of order $d = 2$, $3$, $4$, or $6$.
				\item[\textnormal{($\mathrm I_R^\pm$)}] A $\ZZ/2$-quotient of a type $\mathrm I_{2R}^{m/2}$ or $\mathrm I_R^{m/2}$ fiber with $2 \mid m \mid 2R \ (R \ge 1)$ by a free involution sending $E_i$ to $E_{-i}$ or $E_{1-i}$, or its additional $G$-quotient for $G = \ZZ/2$, $\ZZ/4$, or $(\ZZ/2)^{\times 2}$.
				
				\item[\textnormal{($\mathrm{II}$, $\cdots$)}] Any unstable fiber in (2) except $\mathrm I_R^*$ and its quotients for $R \ge 1$.
				\item[\textnormal{($\mathrm I_R^*$)}] Similar to type $\mathrm I_R^*$ in (2) with $m \mid R$, but the $R+1$ non-reduced $\PP^1$-bundles $E_i$'s $(0 \le i \le R)$ are isomorphic only when $i \equiv \pm j \mod m$. Additionally, its $G$-quotient for $G = \ZZ/2$, $\ZZ/4$, or $(\ZZ/2)^{\times 2}$.
				\item[\textnormal{($\mathrm I_0^*$-a, $\cdots$)}] Six additional cases stated in \Cref{table:exceptional unstable-like fibers}.
			\end{enumerate}
			Moreover, possible multiplicities $m$ in each case are classified in \Cref{table:classification of multiple fibers}.
		\end{enumerate}
	\end{theorem}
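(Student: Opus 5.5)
The plan is to reduce everything to the structure of the relative Albanese, or Néron-model, group scheme acting on $X$. This reduces the problem to a one-dimensional degeneration (Kodaira's list) twisted by an $(n-1)$-dimensional abelian variety. Concretely, let $P \to S$ be the Néron model of the generic fiber's Albanese, or the identity component of the automorphism scheme $\Aut^0_{X/S}$. By $\delta$-regularity, the fiber $P_s^0$ over a closed point $s$ is an extension
\[ 0 \to L_s \to P_s^0 \to A \to 0, \]
where $A$ is an abelian variety of dimension $\ge n-1$ and $L_s$ is either trivial, $\GG_m$, or $\GG_a$. These three possibilities correspond to $\mathrm I_0$, the semistable case, and the unstable case respectively. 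The first step is to show that $P_s$ acts on $X_s$ (on its reduction in the multiple case) with a dense open orbit on each component, or at least that the abelian part $A$ acts with finite stabilizers. Taking the quotient by this action then yields a one-dimensional ``Kodaira fiber'' $C$. I would obtain this by passing to the quotient $X \to X/A'$, where $A' \subset P$ is the abelian subscheme spread out near $s$ after a suitable isogeny. Alternatively, one can use a Poincaré-reducibility splitting $P_{\bar\eta} \sim A_{\bar\eta} \times E_{\bar\eta}$ together with minimality, which guarantees that $X$ is the unique minimal model, to compare $X$ with $(\mathcal E \times_S \mathcal A)/G$. Here $\mathcal E$ is a minimal elliptic surface, $\mathcal A$ is an abelian scheme, and $G$ is a finite group scheme.

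In the semistable case ($L_s=\GG_m$), I would construct the $\PP^1$-bundles $E_i$ as the closures of the $\GG_m$-torsors over $A$ given by the open $P_s^0$-orbits. The gluing $\infty_i = 0_{i+1}$ follows from the toric description of the degeneration of a $\GG_m$-extension. Minimality ($K_X$ being $f$-nef, hence numerically trivial on fibers) forces the dual graph to be a cycle, exactly as in Kodaira's argument. This step uses adjunction $K_{E_i} = -(0_i+\infty_i)$ restricted to each component, together with the fiber $X_s$ being Cartier with $X_s\cdot E_i=0$.

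In the unstable case, the local monodromy on $H^1$ is quasi-unipotent but not unipotent. After a finite cyclic base change $S'\to S$ of order $d\in\{2,3,4,6\}$, the fibration becomes semistable of type $\mathrm I_0$ or $\mathrm I_R$. The original $X$ is then recovered as the minimal model of a $\ZZ/d$-quotient of the base change. On the product decomposition this is a diagonal action on the elliptic factor and on $A$, and the action on $A$ may include translations. Resolving the elliptic quotient gives Kodaira's $C$. The residual group $G$ consists of the elements acting by translations on $A$ that commute with the Kodaira structure, i.e.\ subgroups of the component group of $C$'s Néron model that act freely. This gives the possibilities listed in \Cref{table:classification of G}.

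Multiple fibers come from those base changes in which the translation part acts on $X_s$ with no fixed point, yielding $X_s = m\cdot\frac1m X_s$. I would split according to how $\ZZ/m$ (resp.\ its extension by the monodromy group) acts on the component cycle or on the Kodaira graph. A rotation of the cycle gives $\mathrm I_R^k$. A free involution reversing the cycle gives $\mathrm I_R^\pm$ and its further quotients. Translations on the unstable graphs give the $\mathrm I_R^*$ variants and the exceptional cases. I expect this last enumeration to be the main obstacle, specifically checking freeness and minimality for each combination of rotation, reflection and translation, and extracting the allowed multiplicities in \Cref{table:classification of multiple fibers}. I would proceed graph by graph, using that the action must preserve the multiplicities of Kodaira's fiber and act freely on $A$-torsor parts.
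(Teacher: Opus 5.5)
Your reduction to a one-dimensional problem does not work, and the unstable and multiple steps depend on it. In general there is no abelian subscheme $A' \subset P$ of relative dimension $n-1$ spreading out the abelian variety part $A_s$, even after isogeny or finite base change. Poincar\'e reducibility does not give $P_{\bar\eta} \sim A_{\bar\eta} \times E_{\bar\eta}$ either, because the generic fiber may be geometrically simple. For instance, take the Jacobians of a general one-parameter family of genus $2$ curves acquiring a single node (resp.\ an ordinary cusp) with elliptic normalization. The N\'eron model is $\delta$-regular with linear part $\GG_m$ (resp.\ $\GG_a$), yet the geometric generic fiber is a simple abelian surface. So neither $X/A'$ nor a comparison with $(\mathcal E \times_S \mathcal A)/G$ exists. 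Even on the central fiber, $A_s$ is a priori only a quotient of $P_s^\circ$. In the semistable case $P_s^\circ$ is in general a non-split $\GG_m$-extension of $A_s$. When the shear is non-torsion, $\Alb_{X_s}$ has dimension $< n-1$, so $X_s$ is not a quotient of any $C \times A$. In the unstable case $A_s$ does embed in $P_s^\circ$, but that is a theorem, and it holds only over $s$. The paper proves $P_s^\circ = \GG_a \times A_s$ by compactifying the dual N\'eron model $\check P$, identifying $P_s^\circ \cong \Pic^\circ_{\check X_s}$, and using the map from the Weierstrass fiber to an $(n-1)$-dimensional abelian torsor. Your ``diagonal action on the product decomposition'' and ``translations on $A$'' in the unstable and multiple cases presuppose a global splitting that is not available.

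The missing idea is to work on the central fiber only. The paper uses the Albanese morphism of the possibly non-reduced, non-normal scheme $X_s$ (Conrad, Laurent--Schr\"oer) and shows that $P_s^\circ$ acts transitively on $\Alb_s$. It then builds the \emph{untangle}, a finite \'etale cover $\tilde X \to X$ obtained by lifting the torsion line bundles $\alb^*\hat G \subset \Pic^\circ_{X_s}$ to $X$. This lifting needs $\Pic^\circ_f$ to be separated, i.e.\ $X_s$ non-multiple. The result is $\tilde X_s = C \times A_s$ without any splitting of the generic fiber.

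Several other ingredients are also absent from your sketch:
\begin{itemize}
\item reducedness of semistable fibers (Koll\'ar's slc criterion), which you need before speaking of open orbits on every component;
\item the possibility that a component is a non-normal $\PP^1 \cup \PP^1$-bundle, which has to be excluded separately in the semistable case;
\item the failure of $(E_i \cdot R_j) = (E_j \cdot R_i)$ for $n \ge 2$, so Kodaira's numerics apply only after untangling.
\end{itemize}
For multiple fibers you further need the \'etale degree-$m$ base change $Y \to X$ with $\frac1m X_s = Y_t/\varphi$. You also need the possible non-splitting of $0 \to G \to H \to \mu_m \to 0$, which is exactly what produces the types $\mathrm I_R^-$ and $\mathrm I_0^*$-b.
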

	
	The readers can find the relevant definitions in \S \ref{sec:models and delta-regularity} (\Cref{def:delta-regular} for $\delta$-regularity and \ref{def:semistability} for semistability). Some details and subtleties are omitted in \Cref{thm:main} for the brevity of exposition. The more precise versions are referred to the body of the article. The interested readers can draw the pictures of the above possible singular fibers as in \cite[Figures~1--3]{mat01}.
	
	\begin{table}[h]
		\begin{tabular}{|c|c||c|c|} \hline
			Stability & Kodaira type & Restrictions & Properties \\ \hline\hline
			\multirow{3}{*}{semistable} & $\mathrm I_0$ & - & abelian variety \\ \cline{2-4}
			& $\mathrm I_1$ & - & integral \\ \cline{2-4}
			& $\mathrm I_r$ & $r \ge 2$ & reduced, \ reducible \\ \hline
			\multirow{4}{*}{unstable} & $\mathrm {II}, \ \mathrm{III}/2, \ \mathrm{IV}/3$ & - & integral \\ \cline{2-4}
			& $\mathrm{III}, \ \mathrm{IV}$ & - & reduced, \ reducible \\ \cline{2-4}
			& \begin{tabular}[x]{@{}c@{}} $\mathrm I_r^*, \ \mathrm I_r^*/2, \ \mathrm I_r^*/4, \ \mathrm{II}^*,$ \\ $\mathrm {III}^*, \, \mathrm {III}^*/2, \ \mathrm {IV}^*, \, \mathrm {IV}^*/3$ \end{tabular} & $r \ge 0$ & non-reduced, \ reducible \\ \hline
		\end{tabular}
		\caption{Classification of non-multiple central fibers $X_s$.}
		\label{table:classification of non-multiple fibers}
	\end{table}
	
	\begin{table}[h]
		\begin{tabular}{|c|c||c|c|} \hline
			\begin{tabular}[x]{@{}c@{}} Stability \\ (multiple) \end{tabular} & \begin{tabular}[x]{@{}c@{}} Kodaira type of $X_s$ \\ ($\times \, 1/m$) \end{tabular} & Restrictions & Kodaira type of $P_s$ \\ \hline\hline
			\multirow{3}{*}{semistable-like} & $\mathrm I_0$ & - & $\mathrm I_0$ \\ \cline{2-4}
			& $\mathrm I^k_R$ & \begin{tabular}[x]{@{}c@{}} $k \mid m, \ R = kr$ \\ $k, r \ge 1$ \end{tabular} & $\mathrm I_r$ \\ \hline
			
			\multirow{3}{*}{mixed behavior} & $\mathrm I_0^+$ & \begin{tabular}[x]{@{}c@{}} $d \mid m$ \\ $d=2,3,4,6$ \end{tabular} & \begin{tabular}[x]{@{}c@{}}Any unstable type but\\$\mathrm I_r^*, \, \mathrm I_r^*/2, \, \mathrm I_r^*/4$ for $r \ge 1$ \end{tabular} \\ \cline{2-4}
			& \begin{tabular}[x]{@{}c@{}} $\mathrm I_R^+, \, \mathrm I_R^+/2 , \, \mathrm I_R^+/4,$ \\ $\mathrm I_R^-, \, \mathrm I_R^-/2$ \end{tabular} & \begin{tabular}[x]{@{}c@{}} $m = 2k, \ R = kr$ \\ $k, r \ge 1$ \end{tabular} & $\mathrm I_r^*, \ \mathrm I_r^*/2, \ \mathrm I_r^*/4$ \\ \hline
			
			\multirow{7}{*}{unstable-like}  & $\mathrm{II}, \, \mathrm{II}^*$ & $m \equiv \pm 1 \mod 6$ & $\mathrm{II}, \, \mathrm{II}^*$ \\ \cline{2-4}
			& $\mathrm{III}, \, \mathrm{III}/2, \ \mathrm{III}^*, \, \mathrm{III}^*/2$ & $2 \nmid m$ & $\mathrm{III}, \, \mathrm{III}/2, \ \mathrm{III}^*, \, \mathrm{III}^*/2$ \\ \cline{2-4}
			& $\mathrm{IV}, \, \mathrm{IV}/3, \ \mathrm{IV}^*, \, \mathrm{IV}^*/3$ & $3 \nmid m$ & $\mathrm{IV}, \, \mathrm{IV}/3, \ \mathrm{IV}^*, \, \mathrm{IV}^*/3$ \\ \cline{2-4}
			& $\mathrm I_0^*, \ \mathrm I_0^*/2, \ \mathrm I_0^*/4$ & $2 \nmid m$ & $\mathrm I_0^*, \ \mathrm I_0^*/2, \ \mathrm I_0^*/4$ \\ \cline{2-4}
			& $\mathrm I_R^*, \ \mathrm I_R^*/2, \ \mathrm I_R^*/4$ & \begin{tabular}[x]{@{}c@{}} $2 \nmid m ,\ R = mr$ \\ $r \ge 1$ \end{tabular} & $\mathrm I_r^*, \ \mathrm I_r^*/2, \ \mathrm I_r^*/4$ \\ \cline{2-4}
			& Additional types & see \Cref{table:exceptional unstable-like fibers} & - \\ \hline
		\end{tabular}
		
		\caption{Classification of multiple central fibers $X_s$ (the multiplicity $m > 1$ is suppressed for notational simplicity). Here $P_s$ is the \emph{translation-automorphism group}, see \Cref{def:t-automorphism scheme} and \ref{def:Kodaira type of t-automorphism group}.}
		\label{table:classification of multiple fibers}
	\end{table}
	
	\begin{definition}
		In \Cref{thm:main}, the \emph{Kodaira type} of the central fiber $X_s$ in each case is as follows.
		\begin{enumerate}
			\item The \emph{Kodaira type} of a semistable fiber $X_s$ is $\mathrm I_r$ for $r \ge 0$.
			\item The \emph{Kodaira type} of an unstable fiber $X_s$ is denoted by (Kodaira type of $C$)$/$(order of $G$). See \Cref{table:classification of non-multiple fibers} for all possible unstable Kodaira types.
			\item The \emph{Kodaira type} of a multiple fiber $X_s$ is $m$ (the multiplicity) multiplied by the indicated type in \Cref{thm:main}(3). We say that type $m \cdot \mathrm I^k_R$ has a \emph{semistable-like} stability, type $m \cdot \mathrm I_R^{\pm}$ has a \emph{mixed} stability, and type $m \cdot \mathrm{II}$, $\cdots$, together with the additional cases have an \emph{unstable-like} stability.
		\end{enumerate}
	\end{definition}
	
	Our study of minimal $\delta$-regular abelian fibrations is intimately related to the study of abstract N\'eron models of abelian varieties. Consequently, \Cref{thm:main} yields some new results about N\'eron models of abelian varieties. Their $1$-dimensional versions are well-known since Kodaira and N\'eron. We highlight the items (2a) and (3), which reveal new higher-dimensional properties of N\'eron models.
	
	\begin{theorem} [\Cref{thm:splitting} and \ref{thm:classification of component group}] \label{thm:main 2}
		Let $P \to S$ be the N\'eron model of an abelian variety over its function field, and write $P_s$ for its closed fiber.
		\begin{enumerate}
			\item If the linear part of $P_s$ is $\GG_m$, then the N\'eron component group $\pi_0(P_s)$ is a cyclic group.
			\item If the linear part of $P_s$ is $\GG_a$, then the following holds.
			\begin{enumerate}
				\item The Chevalley exact sequence of its neutral component splits: $P^\circ_s = \GG_a \times A$ for an abelian variety $A$ of dimension $n-1$.
				\item The N\'eron component group $\pi_0(P_s)$ is either a cyclic group of order $\le 4$ or $(\ZZ/2)^{\times 2}$ (see \Cref{table:classification of Neron component group}).
			\end{enumerate}
			\item Let $\check P \to S$ be the N\'eron model of the dual abelian variety over the function field. If the linear part of $P_s$ has dimension $\le 1$, then $\pi_0(P_s)$ and $\pi_0(\check P_s)$ are isomorphic.
		\end{enumerate}
	\end{theorem}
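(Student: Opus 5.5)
The plan is to prove \Cref{thm:main 2} by studying the N\'eron model $P \to S$ through its closed fiber $P_s$. By Chevalley, $P_s^\circ$ is an extension $0 \to L \to P_s^\circ \to B \to 0$ with $L$ linear of dimension $\le 1$ and $B$ an abelian variety of dimension $n-1$. We reduce as far as possible to a one-dimensional (elliptic) problem.

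Over a strict henselization of $S$ at $s$ (harmless, since residue fields are algebraically closed), we use Grothendieck's orthogonality theorem and Raynaud's uniformization. The generic abelian variety $P_\eta$ has potentially good or potentially multiplicative reduction along a rank-one torus part.
\begin{enumerate}
	\item When $L = \GG_m$, the reduction is semistable. The rigid-analytic uniformization gives $P_\eta^{an} = E_\eta/\Lambda$, where $E$ is an extension of an abelian scheme $\mathcal B$ by $\GG_m$ and $\Lambda \cong \ZZ$. The component group is then $\coker(\Lambda \to \Hom(\ZZ,\ZZ)^\vee)$ via Grothendieck's monodromy pairing on $\Lambda\times\check\Lambda$, with both lattices of rank one. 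Hence $\pi_0(P_s) \cong \ZZ/r$, where $r$ is the value of the pairing on generators.
	\item This gives (1). It also gives (3) in the $\GG_m$ case, since the dual has the same pairing, transposed. The good-reduction case of (3) is trivial.
\end{enumerate}

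For the additive case $L = \GG_a$, the plan is to construct, via the main classification, a minimal $\delta$-regular abelian fibration $X \to S$ whose smooth locus is a torsor under $P$; for instance, take a projective model of $P_\eta$ and run the relative MMP. We then read off $P_s$ from \Cref{thm:classification of unstable fibers}. There $X_s \cong (C\times A)/G$ with $C$ a Kodaira unstable curve, and $P_s$ acts by translations.

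Alternatively, and more intrinsically, we pass to a finite Galois base change $S' \to S$ of degree $d \in \{2,3,4,6\}$ (tame, since we are in characteristic $0$) after which the reduction becomes semistable with rank-one linear part or good. The cyclic group $\ZZ/d$ then acts on the special fiber of the new N\'eron model, and $P_s$ is recovered from invariants (Edixhoven's filtration). In the potentially good case, $\ZZ/d$ acts on an abelian $n$-fold $A'$ fixing an $(n-1)$-dimensional abelian subvariety up to isogeny. Hence $A'$ is isogenous to $B\times E'$ with $\ZZ/d$ acting on the elliptic factor $E'$ by a CM automorphism. The $\GG_a$ comes from the tangent direction of $E'$. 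Splitting (2a) should then follow from the $\ZZ/d$-equivariant decomposition: the invariant part of the N\'eron model of $\mathrm{Res}$ splits as the product of the invariant part for $B$ (good reduction) and that for $E'$ (additive). The obstruction $\Ext^1(B,\GG_a) \cong H^1(B,\mathcal O)$ must be shown to vanish for this specific extension. We would argue this via the $\ZZ/d$-eigenspace decomposition: the extension class lies in a nontrivial character eigenspace, while $B$ carries trivial action.

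For (2b), the component group is the group of components of the fixed locus, which is controlled by $(\ZZ/d)$-coinvariants of the relevant lattices or torsion points of $E'$ (or of the $\GG_m$ part in potentially multiplicative $\mathrm I_r^*$-like cases). These are $H^1(\ZZ/d, -)$ computations for $d \le 6$ acting on rank-$\le 2$ lattices, reproducing Kodaira's list: $\ZZ/2\times\ZZ/2$ or $\ZZ/4$ for $\mathrm I_r^*$, $\ZZ/3$, $\ZZ/2$, and trivial. Possibly the group gets smaller via the quotient $G$ interacting with $B[d]$, giving cyclic groups of order $\le 4$. The isomorphism (3) in the additive case then follows because the dual carries the transposed action, and these small groups are determined by $d$ and the lattice type, which are invariant under duality (equivalently, Grothendieck's perfect pairing $\pi_0(P_s)\times\pi_0(\check P_s)\to\QQ/\ZZ$ combined with cyclicity or the $(\ZZ/2)^2$ symmetry).

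The main obstacle is (2a) together with the precise control of $\pi_0$ in (2b). The abelian part $B$ need not be a product factor of $P_\eta$ over the base, only after base change and up to isogeny. I expect to need the geometry of the minimal $\delta$-regular model, namely that $X_s$ has an $(n-1)$-dimensional abelian variety part $A$ mapping to $B$, to produce a section of $P_s^\circ \to B$.
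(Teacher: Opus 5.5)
\textbf{Parts (1), (3) and (2b).} Here you take a different route from the paper, and it is essentially sound. The paper compactifies $P$ to a type $\mathrm I_r$ minimal model and computes $\pi_0(\check P_s)\cong\ZZ/r$ from $\Pic^\bullet_f$ and the intersection matrix. It then matches $r$ with the number of components of the $P_s$-torsor $X'_s$. Its unstable cases of (2b) and (3) go through untangles, explicit semistable reduction, and a case-by-case comparison of $(Q_t,\check Q_t)$.

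Your route is more direct:
\begin{itemize}
\item For (1), Grothendieck's description of $\pi_0$ as the cokernel of the monodromy pairing between rank-one character groups suffices.
\item For (3), perfectness of Grothendieck's pairing on prime-to-residue-characteristic parts \cite[Exp.~IX]{SGA7I} gives $\pi_0(P_s)\cong\Hom(\pi_0(\check P_s),\QQ/\ZZ)\cong\pi_0(\check P_s)$ at once. This needs no $\delta$-regularity and no classification.
\item For (2b), Edixhoven's $\pi_0(P_s)\cong\pi_0(Q_t^\psi)$ \cite{edi92} suffices once made precise. Put $B=(Q_t^\psi)^\circ$, of dimension $n-1$. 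In the good case $Q_t^\psi/B$ injects into $(Q_t/B)^{\bar\psi}=\ker(1-\bar\psi)$ for a nontrivial automorphism $\bar\psi$ of the elliptic curve $Q_t/B$; this kernel is $(\ZZ/2)^2$, $\ZZ/3$, $\ZZ/2$ or $0$. In the toric case $\bar\psi$ inverts $Q_t^\circ/B\cong\GG_m$, and $\psi$ acts by $-1$ on $\pi_0(Q_t)\cong\ZZ/R$ (functoriality of the same description). So $\pi_0(Q_t^\psi)$ is an extension of a subgroup of $\ZZ/2$ by a subgroup of $\ZZ/2$.
\end{itemize}
This replaces your vaguer isogeny-decomposition and ``$G$ versus $B[d]$'' remarks. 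Unlike the paper's argument, it does not use (2a).

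\textbf{The gap is (2a).} Reading $P_s$ off \Cref{thm:classification of unstable fibers} is circular. That theorem rests on \Cref{prop:splitting and albanese} and the untangle, i.e.\ on (2a) itself.

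Your eigenspace argument also fails. By Edixhoven, $P_s$ is the $\ZZ/d$-fixed part of $(\mathrm{Res}_{S'/S}Q)_s$. The latter is an extension of $Q_t$ by a vector group $U$ with graded pieces $\mathrm{Lie}(Q_t)\otimes{\mathfrak m'}^{i}/{\mathfrak m'}^{i+1}$, and the $\GG_a$ of $P_s$ is $U^{\ZZ/d}$. Since $\ZZ/d$ acts trivially on $B$, the class of $P_s^\circ$ is the $H^1(B,\mathcal O)\otimes U^{\ZZ/d}$-component of the restriction to $B$ of that extension class. This component lies in the \emph{invariant} isotypic part, not a nontrivial eigenspace, so equivariance forces nothing.

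The paper argues globally instead. It compactifies $\check P$ to a non-multiple unstable fibration $\check f$ and identifies $P_s^\circ\cong\Pic^\circ_{\check X_s}$ (\Cref{prop:dual Neron model is Picard}). The Weierstrass model gives a surjection $\check X_s\to\bar X_s\to A$ (\Cref{prop:Weierstrass fiber unstable}), hence an $(n-1)$-dimensional abelian subvariety of $P^\circ_s$. It concludes because $\Ext^1(A_s,\GG_a)\cong H^1(A_s,\mathcal O)$ is torsion-free. This is the idea your last sentence gestures at.

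\textbf{Warning: the class can be nonzero, so (2a) cannot be proved in this generality.}
\begin{itemize}
\item Take $n=2$, $Q_t=A\times E$ with the product principal polarization, and $\psi=(1,-1)$.
\item Let $\mathcal Q\to\Spec\mathbb C[[\pi']]$ be the $\psi$-equivariant deformation in the off-diagonal direction: period matrix with diagonal $\tau_A,\tau_E$ and off-diagonal entry $c\pi'$, with involution $(z_1,z_2,\pi')\mapsto(z_1,-z_2,-\pi')$. Descend it to $\pi={\pi'}^2$.
\item The $1$-jet computation of the fixed part gives $P_s^\circ\cong\mathbb C^2/(\ZZ(1,0)+\ZZ(\tau_A,c))$. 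For $c\neq0$ this is the universal vector extension of $A$, which is not split.
\item Equivalently, build the minimal model by blowing up the four $A_1$-curves of $\mathcal Q$ modulo the involution. Its reduced components are $\PP(N)$, where $N$ is the normal bundle of $A\times\{e_i\}$ in $\mathcal Q$. Since $A\times\{e_i\}$ does not deform to first order, $N$ is a nonsplit self-extension of $\mathcal O_A$.
\end{itemize}
In the paper's argument, the step that fails is presumably the descent of $\tilde p$ in \Cref{prop:prop:Weierstrass fiber unstable}. A reduced conductor only makes $\mu$ a retraction of the first-order thickening $\tilde Z$ onto $Z\cong A$. But $\tilde p|_{\tilde Z}$ is another retraction, and it may differ from $\mu$ by a translation-invariant vector field on $A$.
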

	
	Compared to the results of Matsushita and Hwang--Oguiso, our result has an advantage that it applies to wider classes of abelian fibrations and has more complete descriptions (even for Lagrangian fibrations). Our work, however, was highly inspired by theirs. We also note that \Cref{thm:main} does not fully recover the results in \cite[Theorem~1.1]{hwang-ogu11}: in the paper, the symplectic form puts more constraints to $f$ and eliminates many cases in \Cref{thm:main}(3). Symplectic forms play no role in this article.

	\subsection{Strategy of the proof}
		Here is an overview of our strategy and the structure of this article. First, \S \ref{sec:models and delta-regularity} is devoted to some reviews of various different objects arising in the study of abelian fibrations: minimal abelian fibrations $f : X \to S$, N\'eron models $P \to S$ of abelian schemes (t-automorphism scheme), etc.
		
		\medskip
		
		Our proof proceeds in two big stages: non-multiple and multiple fibers. When $X_s$ is non-multiple, the core of the proof boils down to an intersection number computation, similar to the work of Kodaira and Hwang--Oguiso. To realize this idea, however, we will first need to understand the geometry of each irreducible component $E_i$ of $X_s = \sum_{i=1}^r a_i E_i$. Roughly speaking, $E_i$ tends to be a $\PP^1$-bundle over an $(n-1)$-dimensional abelian variety with some minor exceptions. This will be studied more precisely in \S \ref{sec:irreducible component}. In the next \S \ref{sec:shears and untangles}, we address two new phenomena in higher-dimensional abelian fibrations that did not arise in elliptic fibrations: we call these a \emph{shear} and \emph{tangle}. Again roughly, they measure the failure of $X_s$ being isomorphic to the product $C \times A$ of a Kodaira singular curve $C$ and an abelian variety $A$ of dimension $n-1$ (shear in the semistable case and tangle in the unstable case). Along the way, we prove the splitting of the t-automorphism group $P_s^\circ$ in the unstable case (\Cref{thm:splitting}) and introduce a technique that \emph{untangles} the unstable fibrations (\Cref{prop:untangle for unstable}). The main ingredient here is the result of Conrad and Laurent--Schr\"oer \cite{con:albanese, lau-sch24} about the existence of Albanese morphisms. In \S \ref{sec:classification of non-multiple fibers}, we collect the results in \S \S \ref{sec:irreducible component}--\ref{sec:shears and untangles} to imitate Kodaira's intersection number computation and classify non-multiple fibers $X_s$. As a consequence, the N\'eron component group $\pi_0(P_s)$ is classified as well.
		
		\medskip
		
		It remains to classify multiple fibers. The intersection number computation is insufficient to capture the more intricate geometry of multiple fibers. To overcome this, we will start utilizing ramified coverings $T \to S$ and a new minimal abelian fibration $g : Y \to T$ obtained by the base change of $f$. This is done in \S \ref{sec:general base change}. To study how $f$ and $g$ are related, we first recall the relation between their t-automorphism schemes $P$ and $Q$ following Edixhoven \cite{edi92}. A byproduct of these discussions is an alternative interpretation of the classification of unstable fibers through their semistable base change (\Cref{prop:semistable reduction theorem untangled ver}).
		
		In \S \ref{sec:classification of multiple fibers}, we classify multiple fibers. Let us only highlight the two starting points. The first observation well-known: it is easier to study $\frac{1}{m}X_s$ than $X_s$. More precisely, there exists a preferred base change $g : Y \to T$ of degree $m$ inducing an \'etale $\mu_m$-covering $Y_t \to \frac{1}{m}X_s$ \eqref{eq:q and i}. This reduces the study of $\frac{1}{m}X_s$ to that of a non-multiple fiber $Y_t$ with a free $\mu_m$-action. The second is to divide the proof by the type of $P$ and $Q$ (\Cref{lem:classification of pullback}). When $P$ and $Q$ are both semiabelian, we show that $Y_t$ is semistable and its $\mu_m$-action cyclically permutes its components. When $Q$ is semiabelian but $P$ is not, $Y_t$ is still semistable but the $\mu_m$-action reverses the orientation of its components. When $P$ and $Q$ are both non-semiabelian, we need to take a further semistable base change $h : Z \to U$ and study the interactions of three fibrations $X$, $Y$, and $Z$. All these cases behave slightly differently, so we need to analyze them one-by-one. In \S \ref{sec:examples}, we give examples of all types of fibers stated in \Cref{thm:main}. \S \ref{sec:appendix} collects some facts in scheme and algebraic group theory that are used in this article.
		
		\medskip
		
		Finally, let us briefly explain why unstable-like multiple fibers admit additional possibilities (in fact, type $\mathrm I^-_R$ is a variation of $\mathrm I^+_R$ in the same fashion). There are two technical reasons: the first is a non-splitting of a certain Galois group (\Cref{lem:multiple fiber iv:untangle of X} and \ref{lem:multiple fiber v:untangle of X}). The second is a discrepancy between the degree of the semistable base change in \Cref{prop:semistable reduction theorem untangled ver} and the order of the automorphism group of a certain elliptic curve $E$ (\Cref{lem:multiple fiber v:varphi-action smooth}).

	\subsection*{Acknowledgment}
		I would like to thank Johan de Jong, Daniel Huybrechts, Jun-Muk Hwang, J\'anos Koll\'ar, Yuchen Liu, Mirko Mauri, and Takumi Murayama for helpful discussions and answering my questions. I would especially like to thank Johan de Jong for explaining the notion of a conductor subscheme and clearing up my confusions on the Picard space, and Takumi Murayama for detailed explanations of his results.

\section{Models and delta-regularity} \label{sec:models and delta-regularity}
	Throughout this article, $S$ is always a connected Dedekind scheme whose residue field $k$ is algebraically closed of characteristic $0$.
	
	\begin{definition}
		An \emph{abelian fibration} over $S$ is a flat projective morphism $f : X \to S$ whose generic fiber is a torsor under an abelian variety.
	\end{definition}
	
	Fix a closed point $s \in S$ and denote its complement by
	\[ S_0 = S \setminus \{ s \} .\]
	We will use the subscript $-_0$ to denote the base change $- \times_S S_0$. For simplicity, we assume that every abelian fibration in this article is smooth over $S_0$ and has one possible singular fiber over $s$. The restriction $f_0 : X_0 \to S_0$ is an \emph{abelian torsor}, a smooth projective morphism whose fibers are torsors under abelian varieties. Equivalently, we can say that $X_0$ is a torsor under a (unique) abelian scheme
	\begin{equation} \label{eq:abelian scheme}
		P_0 = \Aut^{\circ}_{f_0} \to S_0 .
	\end{equation}
	Let us review some definitions and basic properties in this section.

	\subsection{Minimal models and t-automorphism schemes}
		Following \cite[(16)]{kol25:neron}, we drop the $\QQ$-factoriality axiom in the definition of minimal models.
		
		\begin{definition} \label{def:minimal abelian fibration}
			An abelian fibration $f : X \to S$ is \emph{minimal} (resp. \emph{klt-minimal}) if $X$ has terminal singularities (resp. log terminal singularities) and $K_X$ is numerically $f$-trivial.
		\end{definition}
		
		By the theorem of N\'eron--Raynaud \cite{neron:original_article, raynaud:neron}, every abelian scheme $P_0 \to S_0$ admits a unique \emph{N\'eron model} $P \to S$, a quasi-projective, smooth, and commutative group scheme over $S$ whose restriction over $S_0$ is isomorphic to $P_0$. For the definition of N\'eron model, see \cite{neron}. Its central fiber $P_s$ is a possibly disconnected commutative algebraic group over $k$, sitting in a split exact sequence (\Cref{lem:pi_0 sequence splits})
		\[ 0 \to P^\circ_s \to P_s \to \pi_0(P_s) \to 0 ,\]
		where $P^\circ_s$ is its \emph{neutral component} and $\pi_0 (P_s)$ is its \emph{(N\'eron) component group}. By \cite[$\mathrm{IV_B}$ Theorem~3.10]{SGA3I}, $P$ admits a unique open subgroup scheme $P^\circ \subset P$ whose central fiber $P^\circ \times_S \{ s \}$ is the neutral component $P^\circ_s$ of $P_s$ as above. We call $P^\circ$ the \emph{neutral component} of $P$.
		
		\begin{definition} \label{def:t-automorphism scheme}
			Let $f : X \to S$ be a klt-minimal abelian fibration. The \emph{translation-automorphism group scheme} (\emph{t-automorphism scheme} for short) of $f$ is the N\'eron model $P \to S$ of the abelian scheme $P_0 \to S_0$ in \eqref{eq:abelian scheme}.
		\end{definition}
		
		\begin{remark}
			We remark a slight notational discrepancy with \cite[Definition~24]{kol25:TS}: a translation-automorphism sheaf $Aut^\circ_{X/S}$ in loc. cit. corresponds to our $P^\circ$, which is the neutral component of the translation-automorphism scheme in our terminology. In other words, our notion of the translation-automorphism $P$ contains more data of $\pi_0(P)$. When $X$ is only klt-minimal, this additional datum may not be useful because the $P^\circ$-action in \Cref{thm:Po-action} below may not be extendable to $P$. However, when $X$ is minimal and $\delta$-regular, $P$ will act on $X$ by \Cref{prop:P-action existence}. Our definition follows \cite[Definition~4.1]{kim24}.
		\end{remark}
		
		We start with the following special cases of two general theorems.
		
		\begin{theorem}[{\cite[Theorem~16.5]{kol25:neron}}] \label{thm:minimal model}
			Every abelian fibration has a $\QQ$-factorial minimal model.
		\end{theorem}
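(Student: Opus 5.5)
The statement is cited from Koll\'ar, so the plan is to reduce it to the general existence of relative minimal models over a Dedekind base of characteristic $0$. Let $f : X \to S$ be an abelian fibration.

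\emph{Step 1: resolve.} Take a resolution of singularities $X' \to X$, which exists since we are in characteristic $0$ and $X$ is excellent. This yields a regular, hence $\QQ$-factorial and terminal, total space $f' : X' \to S$ that is projective over $S$ and still has generic fiber a torsor under an abelian variety. It then suffices to run a $K_{X'}$-MMP over $S$.

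\emph{Step 2: run the MMP and check termination.} The generic fiber $X'_\eta$ has $K_{X'_\eta} \equiv 0$. Hence $K_{X'}$ is $f'$-pseudo-effective, and in fact $\QQ$-linearly equivalent over $S$ to an effective divisor supported on finitely many fibers, i.e.\ vertical. Because the base has dimension one, such a divisor is degenerate in the sense of Lai and Koll\'ar. Termination of the MMP with scaling for degenerate divisors then applies: each step contracts components of the vertical support, and the number of such components decreases or the process stabilizes.

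The MMP over $S$ for excellent schemes of characteristic $0$ is available in the required generality by Murayama and others, including in the mixed-characteristic-free, equicharacteristic setting over Dedekind schemes. The output $X^m \to S$ is $\QQ$-factorial and terminal, with $K_{X^m}$ nef over $S$ and vertical.

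\emph{Step 3: upgrade nef to numerically trivial.} Over each closed point $s$, write $K_{X^m} \sim_{\QQ} \sum a_i E_i$ with $E_i$ the fiber components, and normalize so that $\min a_i = 0$. By Zariski's lemma for fibers over a one-dimensional base, a nef vertical divisor must be a rational multiple of the full fiber. Concretely, $\sum a_i E_i$ restricted to a general curve in a component $E_j$ with $a_j = 0$ that meets other components yields negativity unless all $a_i$ are equal. It follows that $K_{X^m} \equiv_f 0$.

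\emph{Main obstacle.} The hard part is termination and existence of flips over a general Dedekind base rather than a variety. I would invoke the excellent-scheme MMP results; since the divisor being contracted is vertical over a curve, only finitely many steps can occur, using the usual special-termination argument for degenerate divisors.
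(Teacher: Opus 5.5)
The paper gives no proof here; it just quotes Koll\'ar. Your outline is the standard route and it works: resolve, run a relative $K$-MMP with scaling over $S$ (available over excellent Dedekind schemes of characteristic $0$ by Lyu--Murayama), and use that $K_{X'}$ is $\QQ$-linearly equivalent over $S$ to an effective vertical divisor $D$. However, two of the justifications you actually write down are wrong.

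\textbf{Termination.} ``The number of components decreases or the process stabilizes'' proves nothing. Flips do not decrease the number of components of $D$, and nothing you say rules out an infinite sequence of flips. The mechanism is also not special termination. The correct argument runs as follows.
\begin{itemize}
\item Run the MMP with scaling of a general ample $H$, with thresholds $\lambda_i \searrow \lambda$.
\item If $\lambda>0$, every step is $(K+\lambda' H)$-negative for $0<\lambda'<\lambda$. So this is an MMP with big boundary, and it terminates by the BCHM-type results.
\item If $\lambda=0$ and the sequence were infinite, eventually all steps are flips. Then the strict transform $D_i$ is a limit of $f$-movable divisors. The negativity lemma for degenerate divisors (Lai, Birkar, Hacon--Xu) forces $D_i\le 0$, hence $D_i=0$. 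So $K_{X_i}$ was already nef, a contradiction.
\end{itemize}
This needs $D$ to be degenerate, which a vertical divisor is \emph{not} automatically (take $D=X_s$). You must first subtract full fibres. Since fibres can be non-reduced (exactly the multiple fibres this paper studies), you subtract $c_sX_s$ with $c_s=\min_i a_i/m_i$, where $X_s=\sum m_iE_i$. Arranging $\min a_i=0$ is not the right normalization. Done correctly, the MMP ends with $D$ contracted, so $K_{X^m}\sim_{\QQ,S}0$ and Step~3 is automatic.

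\textbf{Step~3.} Your concrete version of Zariski's lemma uses the wrong component. If $C$ is a general curve in a component $E_j$ with $a_j=0$, then $(D\cdot C)=\sum_{i\neq j}a_i(E_i\cdot C)\ge 0$. This is strictly positive if $C$ meets $\operatorname{Supp}D$, which is perfectly compatible with nefness.

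Negativity comes instead from a component where $a_i/m_i$ is \emph{maximal}. Set $c=\max_i a_i/m_i$; then $D\equiv_f D-cX_s\le 0$. Suppose $D-cX_s\neq 0$. By connectedness of $X_s$, some zero-coefficient component $E_j$ meets a component $E_k$ with negative coefficient. Take a curve $C\subset E_j$ through a point of $E_j\cap E_k$ and lying in no other component. Then $\bigl((D-cX_s)\cdot C\bigr)<0$, using $\QQ$-factoriality of $X^m$. This is the same computation as \eqref{eq:intersection negative}, and it gives $D\equiv_f 0$.
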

		
		\begin{theorem} [{\cite[Theorem~47]{kol25:neron}}] \label{thm:Po-action}
			Let $f : X \to S$ be a klt-minimal abelian fibration and $P \to S$ its t-automorphism scheme. Then there exists a unique $P^\circ$-action on $X$ over $S$ extending the $P_0$-torsor structure on $X_0$.
		\end{theorem}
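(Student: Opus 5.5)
The statement is \Cref{thm:Po-action}, which is cited from Kollár and not proved in the paper. Still, here is how I would prove it. Uniqueness is straightforward. Any two $P^\circ$-actions $P^\circ \times_S X \to X$ agree on the dense open $P_0 \times_{S_0} X_0$. Since $P^\circ \times_S X$ is flat over $S$ with reduced generic fiber, hence reduced, and $X$ is separated over $S$, they agree everywhere.

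For existence, I would construct the action map as a rational map $a : P^\circ \times_S X \dashrightarrow X$ extending the torsor action over $S_0$, and show that it is a morphism.

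\textbf{Step 1: reduce to a birational automorphism.} Since $P^\circ$ is smooth over $S$, it suffices to work étale locally on $P^\circ$. More precisely, fix a point $\sigma$ of $P^\circ$ over an étale neighbourhood $S' \to S$, i.e.\ a section of $P^\circ_{S'}$. Translation by $\sigma|_{S'_0}$ is an automorphism of $X_{S'_0}$, hence a birational self-map $\tau_\sigma$ of $X_{S'}$ over $S'$. The space $X_{S'}$ is again klt-minimal, since these conditions are étale local. I then need to show that $\tau_\sigma$ is an isomorphism. Applied to the universal section over $S'=P^\circ$, with $X$ base-changed along $P^\circ \to S$, this gives a morphism $P^\circ \times_S X \to X$. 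The action axioms then follow by density.

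\textbf{Step 2: $\tau_\sigma$ is an isomorphism in codimension one.} Both sides are minimal models over $S'$ with $K$ numerically trivial over $S'$, so $\tau_\sigma$ is a birational map between klt-minimal models. Such a map is an isomorphism in codimension one: a discrepancy comparison via a common resolution shows that neither side contracts a divisor.

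\textbf{Step 3: $\tau_\sigma$ preserves an ample class.} Being small, $\tau_\sigma$ is an isomorphism once it preserves some relatively ample $\QQ$-Cartier divisor $H$. This is where the neutral component matters. Translation by elements of the connected group $P^\circ_0$ acts trivially on the Néron--Severi group of the generic fiber, so $\tau_{\sigma*}H$ and $H$ agree generically up to algebraic equivalence. The difference is then a vertical divisor plus a class that deforms to $0$ in the family. Here I would use connectedness of $P^\circ_s$: the assignment $\sigma \mapsto$ (class of $\tau_{\sigma*}H - H$) is locally constant along the connected fibers of $P^\circ$ and is $0$ at the identity section. Vertical divisors, i.e.\ combinations of fiber components, would be handled by noting that the induced permutation of the components of $X_s$ is trivial on the identity component.

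I expect Step 3 to be the main obstacle, especially making the argument work without $\QQ$-factoriality and controlling the vertical part rigorously. If it proves too delicate, I would fall back on two further ingredients: the rigidity of the $P_0$-torsor structure to rule out nontrivial flops, and Kollár's characterization of the Néron model through the automorphism sheaf of $X$ to see that $P^\circ$ lands in $\Aut_{X/S}$.
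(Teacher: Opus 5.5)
The paper does not prove this statement; it quotes it from Koll\'ar. So your argument has to stand on its own. Uniqueness is fine. Existence breaks at Step~2, because it is false that a birational map between klt-minimal models is an isomorphism in codimension one.

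Comparing discrepancies on a common resolution can at best show that $\tau_\sigma$ is crepant. Crepancy rules out contracted divisors only when every exceptional divisor has positive discrepancy, i.e.\ for terminal $X$. That is why the paper restricts to terminal models in \Cref{prop:minimal delta-regular abelian fibration is unique}.

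Here is a counterexample. Take the $\mathrm{IV}^*$ fiber $X_s=\sum_i(E_i+2F_i)+3G$ of \Cref{ex:counterexample for reduced component} and contract $E_1\cup F_1$, $E_2$ and $E_3$.
\begin{itemize}
\item The result $\bar X$ has Du Val singularities and $K_{\bar X}$ is numerically trivial over $S$, so it is klt-minimal. Multiply by a constant abelian scheme if you want $n\ge 2$.
\item On $X$, translation by a section of $P$ through a non-identity component of $P_s$ (here $\pi_0(P_s)=\ZZ/3$) rotates the three arms, $F_i\mapsto F_{i+1}$.
\item So the induced birational self-map of $\bar X$ sends the divisor $\bar F_3$ to the point to which $F_1$ was contracted.
\end{itemize}
Sections of $P^\circ$ preserve each arm and do act regularly on $\bar X$. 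Your Step~2 never uses $\sigma\in P^\circ$, so it would equally ``prove'' the false statement for all of $P$. The neutral-component hypothesis therefore has to enter already at the codimension-one stage.

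Step~3 is the only place where you invoke $P^\circ$, and it does not supply the missing input; the argument there is circular. Local constancy of $\sigma\mapsto[\tau_{\sigma*}H-H]$ along $P^\circ_s$ presupposes that the behavior of $\tau_\sigma$ near $X_s$ depends only, and algebraically, on $\sigma(s)$. Two sections with the same value at $s$ differ by an element of the kernel of reduction, and you say nothing about how such elements act near $X_s$; that is essentially the extension you are trying to prove. The same applies to your claim that components of $X_s$ are not permuted.

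There are further problems:
\begin{itemize}
\item $\tau_{\sigma*}H$ need not be $\QQ$-Cartier.
\item Lying in $\Pic^0$ on the generic fiber does not give numerical triviality on $X_s$ modulo vertical divisors. The obstruction is exactly what $\pi_0(\check P_s)$ measures in the paper's component-group computation.
\item In Step~1, the universal section over $P^\circ$ lives over a base that is not \'etale over $S$, so Steps~2--3 do not apply to it as written.
\end{itemize}

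One input that genuinely uses the hypothesis: for $S'$ strictly henselian, $P^\circ(S')$ is divisible. It surjects onto the divisible group $P^\circ_s(k)$, and its kernel of reduction is uniquely divisible because $[N]$ is \'etale along the identity section in characteristic $0$. Hence every homomorphism from $P^\circ(S')$ to a finite group, or to $\mathrm{GL}_r(\ZZ)$, is trivial; for the latter, recall that abelian subgroups of $\mathrm{GL}_r(\ZZ)$ are finitely generated.
\begin{itemize}
\item Apply this to the set of divisorial valuations over $s$ with discrepancy $\le 0$. The set is finite since $X$ is klt, and the $\tau_\sigma$ permute it because they are crepant. To see crepancy, note that on a common resolution the difference of the two pullbacks of $K_X$ is vertical and numerically trivial, hence a multiple of the fiber by Zariski's lemma. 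The multiple vanishes by comparing $\mathrm{lct}(X;X_s)$ on both sides. This repairs Step~2.
\item Apply it again to $N^1$ of a small $\QQ$-factorial modification of $X_{S'}$, on which the now-small $\tau_\sigma$ act by strict transform. This repairs Step~3.
\end{itemize}
Passing from individual sections to a morphism $P^\circ\times_S X\to X$ still needs a separate argument.
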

		
		\begin{proposition} \label{prop:P-action property}
			The $P^\circ$-action in \Cref{thm:Po-action} satisfies the following.
			\begin{enumerate}
				\item The $P^\circ$-action on $X$ is faithful.
				\item The $P^\circ$-action on $X' = X \setminus \Sing(X_s)$ is free.
				\item The $P^\circ_s$-action on $\Sing(X_s)$ has nontrivial linear stabilizer groups.
			\end{enumerate}
		\end{proposition}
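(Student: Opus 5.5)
We prove the three items in order, using throughout that $X$ is normal (terminal singularities), flat over $S$ with geometrically integral generic fiber, and that $P^\circ \to S$ is a smooth separated group scheme with connected fibers.

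\textbf{Faithfulness.} Let $K \subset P^\circ$ be the kernel of the action, a closed subgroup scheme. Over $S_0$ the action is the torsor structure of $X_0$ under $P_0$, which is free, so $K_0$ is trivial. Since $P^\circ$ is flat over $S$, the generic fiber is schematically dense in $P^\circ$. An element acting trivially on $X$ acts trivially on the dense open $X_0$. The remaining point is that a closed subgroup scheme of $P^\circ$ concentrated over $s$ still acts trivially on $X_s$. We handle this by restricting to $T$-points for flat $S$-schemes $T$: any $g \in K(T)$ restricts to the identity over $T_0$, and $T_0$ is schematically dense in $T$. By separatedness of $P^\circ$, $g$ equals the identity section. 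Thus $K$ has no nontrivial flat points. Interpreting faithfulness as injectivity of $P^\circ \to \Aut_{X/S}$ on points of flat test schemes (equivalently, as a morphism of sheaves in the smooth topology, since $P^\circ$ is smooth), this gives (1).

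\textbf{Freeness on $X'$.} Take a closed point $x \in X_s$ that is a smooth point of $X_s$, and let $H = \St_x \subset P^\circ_s$ be its stabilizer. The orbit map $P^\circ_s \to X_s$, $g \mapsto g \cdot x$, has image of dimension $n - \dim H$. The plan is to show $H$ is finite and then trivial.

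\emph{Finiteness.} Near $x$, the fibration $f$ is smooth, so by \cite[$\mathrm{IV_B}$]{SGA3I}-type arguments the $P^\circ$-action identifies a neighborhood with an open of a $P^\circ$-torsor-like object. Concretely, since $f$ is smooth at $x$ and $S$ is strictly henselian locally, choose a section $\sigma : S \to X$ through $x$ (passing to the strict henselization). The orbit map $P^\circ \to X$, $g \mapsto g\sigma$, is an isomorphism over $S_0$ onto $X_0$. By the Néron mapping property applied to the smooth locus $X^{sm}$ (a smooth model of $X_0$), we obtain a map $X^{sm} \to P$ extending the inverse over $S_0$. The composition $P^\circ \to X^{sm} \to P$ is the inclusion, by separatedness and density. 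Hence the orbit map is injective, so $H$ is trivial scheme-theoretically, which gives freeness at $x$. Here we use that the Néron model over the strict henselization is the base change of $P$.

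\emph{Main obstacle.} The delicate point is ensuring the orbit map lands in $X^{sm}$ and that the Néron extension is compatible with the action. The orbit of a smooth point stays smooth because $P^\circ$ acts by automorphisms. Compatibility is handled by the equivariance of the Néron extension, which holds by uniqueness.

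\textbf{Nontrivial linear stabilizers on $\Sing(X_s)$.} Let $y \in \Sing(X_s)$ and $H = \St_y \subset P^\circ_s$. First, $H$ is nontrivial. Otherwise the orbit of $y$ is a copy of $P^\circ_s$, of dimension $n$, hence open in the $n$-dimensional fiber $X_s$. But the singular locus is closed, $P^\circ_s$-stable, and nowhere dense in the reduced components (at least in multiplicity-one components). We need care with non-reduced components, where every point of the component lies in $\Sing(X_s)$; the argument must then use $\dim$ of $\Sing(X_s)$ differently. Our approach is to show instead that the orbit map $P^\circ_s \to X_s$ with trivial stabilizer would be an isomorphism onto a smooth open orbit, contradicting singularity at $y$.

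Next, $H$ has nontrivial linear part. Write $H^\circ_{\red}$ and suppose its linear part is trivial, so $H^\circ$ is an abelian variety $B$ (or $H$ is finite). If $H$ is finite, pass to a finite étale cover and use the étale-local structure to reduce to the trivial-stabilizer case above, giving a contradiction. If $B$ is an abelian subvariety, use Chevalley to write $P^\circ_s \to P^\circ_s/B$ and show that abelian-variety stabilizers force $y$ into a locus fibered by $B$-torsors. Since $B$ extends (after étale base change) to an abelian subscheme acting freely generically, this would make $X$ locally a $B$-torsor over a quotient, contradicting singularity. This last step is where we expect the most work, and we would try first to use rigidity of abelian-variety actions.
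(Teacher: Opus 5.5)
Your proof of (2) is correct and is essentially the paper's argument: after \'etale shrinking, take a section through a smooth point, use the N\'eron mapping property for $X'$, and note that the composite $P^\circ\to X'\to P$ is the inclusion. The gaps are in (1) and (3).

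\textbf{Faithfulness (1).} You only show that $K$ has no nontrivial points over flat $S$-schemes, i.e.\ that the schematic closure of $K_0$ is the identity section. This does not exclude a nontrivial $g\in P^\circ_s(k)$ acting trivially on $X_s$. A lift of $g$ to a section of $P^\circ$ need not act trivially on $X$, and $K$ need not be flat. Your ``equivalently'' is false: the union of the identity section with a nontrivial finite subgroup of $P^\circ_s$ is a closed subgroup scheme of $P^\circ$ with no nontrivial points over any flat (in particular smooth) $S$-scheme. Fiberwise faithfulness is the real content of (1), and it is exactly what (3) needs. Part (2) would give it when $X'_s\neq\emptyset$, but not for multiple fibers. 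The missing idea is to apply the N\'eron mapping property to the automorphism scheme. The action is a homomorphism $P^\circ\to M$ into the main component $M\subset\Aut_f$, which is a smooth commutative group scheme, so N\'eron gives $M\to P$. The composite $P^\circ\to M\to P$ is the open immersion $P^\circ\subset P$. Hence $P^\circ\to M$ is an open immersion, and $P^\circ_s\to\Aut_{X_s}$ is a monomorphism.

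\textbf{Linearity in (3).} You misread what has to be shown. Finite groups are linear, and nontrivial finite stabilizers genuinely occur on $\Sing(X_s)$: in \Cref{ex:mI0}, $X_s=m\cdot(B/b)$ and every point has stabilizer $\langle b\rangle\cong\ZZ/m$. So your attempt to rule out finite $H$ is aimed at a false statement, and the abelian-subvariety case is left as a heuristic. What is needed is the rigidity argument of \Cref{lem:linear stabilizer}. If $\St_y$ were not affine, its nontrivial anti-affine part $H$ fixes $y$, so by Brion's lemma it acts trivially on all infinitesimal neighbourhoods of $y$. Hence it acts trivially on every irreducible component through $y$, and by connectedness on $X_{s,\red}$. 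The same argument at every point then shows it acts trivially on $X_s$, contradicting fiberwise faithfulness.

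\textbf{Nontriviality in (3).} Your open-orbit argument is correct at points of $\Sing(X_s)$ lying only on multiplicity-one components. There $X_s$ is Cartier in a normal scheme, hence $S_1$, hence reduced, so an $n$-dimensional orbit would be a smooth open subscheme. On a non-reduced component, however, the orbit map only identifies $P^\circ_s$ with an open subscheme of $X_{s,\red}$, and no contradiction is available because the claim fails there. Take an elliptic surface with a fiber of type ${}_m\mathrm I_b$, $b\ge1$, times an abelian variety. The degree-$m$ base change has $Y_t$ of type $\mathrm I_{mb}$, with $\varphi$ rotating the cycle by $b$. The map $p^*\colon P^\circ_s\to Q^\circ_t$ is an isomorphism by \Cref{thm:Edixhoven}, and $\varphi$ commutes with $p^*(P_s)=Q_t^\psi$. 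Hence a point in the open part of a component of $\frac1m X_s$ has trivial $P^\circ_s$-stabilizer, although $X_s=m\cdot\frac1m X_s$ is nowhere reduced. These are the $k=1$ cases of \Cref{thm:classification of multiple fiber iii}. The paper's own step fails at the same place: the orbit map $P_T\to X_T$ along a multisection is a monomorphism but not an open immersion, since $X_T$ has $m$ local branches along the orbit. So the nontriviality part of (3) can only be proved on the locus where $X_s$ is reduced, which is where it is used later (e.g.\ \Cref{lem:stabilizer of Ir}).
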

		\begin{proof}
			(1) Consider the relative automorphism scheme $\Aut_f \to S$ of $f$. Let $M \subset \Aut_f$ be its \emph{main component}, which is a smooth, commutative, and closed subgroup scheme over $S$ (see \cite[Definition~2.36, Proposition~2.42]{kim24}). \Cref{thm:Po-action} constructs a group scheme homomorphism $P^\circ \to M \subset \Aut_f$. Since $M$ is smooth over $S$, the N\'eron mapping property constructs a homomorphism $M \to P$. The composition
			\[ P^\circ \to M \to P \]
			must be the open immersion $P^\circ \subset P$ again by the N\'eron mapping property, so this shows $P^\circ \to M$ is an open immersion.
			
			\medskip
			
			(2) There is nothing to prove when $X'_s = \emptyset$, so let us assume that $X' \to S$ is surjective. Since the statement is local on $S$, we may shrink $S$ \'etale locally and fix a section $S \to X'$. Consider the orbit map $P^\circ \to X' \subset X$ of this section. Similar as above, the N\'eron mapping property constructs a morphism $X' \to P$, whose composition $P^\circ \to X' \to P$ must be the open immersion $P^\circ \subset P$. This forces $P^\circ \to X'$ to be an open immersion, meaning that the action is free along the section. Vary the section to conclude $P^\circ$ acts on $X'$ freely.
			
			\medskip
			
			(3) Since the $P^\circ_s$-action on $X_s$ is faithful by (1), every stabilizer is linear by \Cref{lem:linear stabilizer}. To prove the remaining claim, assume that $x_s \in X_s$ is a closed point with trivial stabilizer. Take a quasi-finite flat morphism $T \to S$ with a section $x : T \to X$ through $x_s$ (e.g., take a general complete intersection of $f$-ample divisors or use \cite[$\mathrm {VI_B}$~Lemma~5.6.1]{SGA3I}). The section defines an orbit map $P_T \to X_T$ by $a \mapsto a \cdot x$, which is an open immersion because there are no stabilizers along the section $x$. In particular, the central fiber $X_s$ is regular at $x_s$.
		\end{proof}
		
		\begin{remark}
			The quasi-projectiveness of $P$, \Cref{prop:P-action property}, and \cite[Theorem~1.2]{anc-fra24} make the $P^\circ$-action on $X$ a \emph{weak abelian fibration}. Though important in the study of $Rf_*\QQ_\ell$, we will not discuss this topic in this article.
		\end{remark}

	\subsection{Delta-regularity}
		The notion of $\delta$-regularity can be defined over arbitrary bases following \cite[(7.1.5)]{ngo10}, but we restrict ourselves to Dedekind schemes. Given a smooth commutative group scheme $P \to S$ over a Dedekind scheme $S$, its fiber $P_s$ admits a \emph{Chevalley exact sequence} \cite[Theorem~4.2]{brion15}
		\[ 0 \to L_s \to P_s \to \tilde A_s \to 0 , \quad\mbox{or}\quad 0 \to L_s \to P^\circ_s \to A_s \to 0 ,\]
		where $L_s$ is a connected linear algebraic group, $\tilde A_s$ is a proper algebraic group, and $A_s = \tilde A_s^\circ$ is an abelian variety. We call $L_s$ the \emph{linear part} and $A_s$ the \emph{abelian variety part} of $P_s$.
		
		\begin{definition} \label{def:delta-regular}
			\begin{enumerate}
				\item A smooth commutative group scheme $P \to S$ is \emph{$\delta$-regular} if its generic fiber is an abelian variety, and its closed fiber $P_s$ has at most $1$-dimensional linear part $L_s$.
				\item A klt-minimal abelian fibration $f : X \to S$ is \emph{$\delta$-regular} if its t-automorphism scheme $P \to S$ is $\delta$-regular.
			\end{enumerate}
		\end{definition}
		
		In other words, the $\delta$-regularity condition says either $P_s$ is an abelian variety (in which case $L_s = 0$), or $L_s = \GG_m$ or $\GG_a$. The main interest of this article is an abelian fibration that is both minimal and $\delta$-regular. In this case, \Cref{thm:Po-action} has the following improvement. The rest of this subsection proves this.
		
		\begin{proposition} \label{prop:P-action existence}
			Let $f : X \to S$ be a minimal $\delta$-regular abelian fibration and $P \to S$ its t-automorphism scheme. Then
			\begin{enumerate}
				\item There exists a unique faithful $P$-action on $X$ extending the $P_0$-torsor structure on $X_0$. Its restriction to $P^\circ$ is \Cref{thm:Po-action}.
				\item If $X_s$ has a regular point, then $X' = X \setminus \Sing(X_s)$ is a $P$-torsor.
			\end{enumerate}
		\end{proposition}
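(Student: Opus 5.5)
We extend the $P^\circ$-action of \Cref{thm:Po-action} to all of $P$ by letting each component of $P_s$ act through a birational automorphism of $X$ and then showing these automorphisms are regular. Uniqueness and faithfulness are formal. Any two actions agree on the dense open $P_0 \times_{S_0} X_0$, and $X$ is separated, so they coincide. Faithfulness then follows from \Cref{prop:P-action property}(1) together with the fact that $P_0$ acts faithfully on $X_0$.

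For existence, work étale locally on $S$ and use the splitting of $\pi_0(P_s)$ from \Cref{lem:pi_0 sequence splits}. Every $\sigma \in \pi_0(P_s)$ is then represented by a section $\sigma : S \to P$; after étale localization and translating by $P^\circ$ this is possible since $P$ is smooth. Translation by $\sigma_0$ is an automorphism of $X_0$ over $S_0$, hence a birational self-map $\tau_\sigma$ of $X$ over $S$. The key step is to show that $\tau_\sigma$ extends to an automorphism of $X$. The standard fact to invoke is that a birational map between minimal models that is an isomorphism in codimension one extends to an isomorphism if it preserves an $f$-ample class. First, $\tau_\sigma$ is an isomorphism in codimension one: both sides are minimal over $S$ with $K_X \equiv_f 0$, so $\tau_\sigma$ is small. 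In particular $\tau_\sigma$ permutes the components $E_i$ of $X_s$; no divisor is contracted, since $K$-trivial birational maps between terminal models are small. Next, take an $f$-ample divisor $H$ on $X$. Translation by a section of $P_0$ acts on $\NS$ of the generic fiber trivially up to algebraic equivalence, so $\tau_{\sigma*}H$ differs from $H$ by a divisor that is algebraically trivial on the generic fiber plus a combination of the $E_i$. It remains to show $\tau_{\sigma*}H$ is $f$-ample, or at least $f$-nef, and to conclude by the Matsusaka--Mumford type argument (Kollár's extension lemma for $K$-trivial small maps preserving a nef and big class).

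This is where $\delta$-regularity enters, and it is the main obstacle. The plan is to show that the action of $\pi_0(P_s)$ on the dual graph of components of $X_s$, and on the relevant part of the relative Néron--Severi group, factors through the symmetries of a Kodaira-type configuration. Since $L_s$ has dimension $\le 1$, the part of $\NS$ of $X$ supported on $X_s$ has rank one less than the number of components, controlled as for elliptic surfaces. One can then average $H$ over the finite group generated by $\tau_\sigma$ in the relative Néron--Severi group modulo $P^\circ$-translates to obtain a $\tau_\sigma$-invariant $f$-ample class, namely the sum of $f$-ample translates. It must be checked that these translates are again ample; I expect to argue this via the fact that $\tau_\sigma$ permutes the components and that the pullback of a relatively ample class under a small map with finite orbit still has nonnegative degree on the curves of type $\PP^1$-fibers in the $E_i$. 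If this averaging works, $\tau_\sigma$ extends to a regular automorphism. The action map $P \times_S X \to X$ is then defined on each component of $P$, and it is a morphism agreeing with the group law generically, hence an action.

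For (2), let $x_s \in X_s$ be a regular point. By \Cref{prop:P-action property}(2), $P^\circ$ acts freely on $X'$. The orbit map argument in that proof shows that $P^\circ \to X'$ is an open immersion along any local section. It then suffices to show that $P_s$ acts simply transitively on $X'_s$.
\begin{itemize}
\item \emph{Freeness.} An element of $P_s$ fixing a point has finite order modulo $P^\circ_s$. It would then act on $X'$ with a fixed point, contradicting Néron's mapping property: apply the orbit map $X' \to P$ constructed from a section, as in \Cref{prop:P-action property}(2).
\item \emph{Transitivity.} Each component of $X'_s$ is a single $P^\circ_s$-orbit, by dimension and openness. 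The components of $X'_s$ correspond to the reduced multiplicity-one components, and these are matched bijectively with $\pi_0(P_s)$ via the Néron mapping property applied to $X' \to P$, whose inverse is the orbit map.
\end{itemize}
Hence $X'$ is a $P$-torsor.
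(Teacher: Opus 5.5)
Your argument breaks down at the step you yourself flag as the main obstacle: proving that the small birational self-map $\tau_\sigma$ is regular. The averaging does not produce an $f$-ample class. Let $k$ be the order of $\sigma$ in $\pi_0(P_s)$. Then $\tau_\sigma^k$ is translation by a section of $P^\circ$, so $D=\sum_{i=0}^{k-1}(\tau_\sigma^i)_*H$ is $\tau_\sigma$-invariant up to $f$-numerical equivalence. But each $(\tau_\sigma^i)_*H$ is only known to be ample on the generic fiber and may be negative on curves in $X_s$. This is exactly what happens for flops, the typical small $K$-trivial maps between minimal models, and an invariant class in the movable cone need not lie in the ample chamber of $X$. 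Knowing that $(\tau_\sigma)_*H$ is $f$-ample is equivalent to what you want to prove, so the argument is circular, and your claimed nonnegativity on $\PP^1$-fibers of the $E_i$ is unjustified. Preserving an $f$-nef and $f$-big class would not suffice either: a flop preserves the pullback of an ample class from its flopping contraction. Finally, $\delta$-regularity does no work in your plan. The rank statement for the span of the $E_i$ is Zariski's lemma and holds for every fibration. Appealing to symmetries of Kodaira-type configurations is circular, because those configurations are derived from this very proposition (e.g.\ the transitivity of $P_s$ on the components of a semistable fiber).

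The missing idea is geometric, not numerical. Because $P$ is commutative, $\tau_\sigma$ is $P^\circ$-equivariant. So its indeterminacy locus $Z\subset X_s$, and the indeterminacy locus $W$ of its inverse, are closed and $P^\circ_s$-stable, of dimension $\le n-1$ since $\tau_\sigma$ is small. By \Cref{lem:P-orbit has dimension n-1} they are disjoint unions of $(n-1)$-dimensional abelian torsors; this is where $\delta$-regularity enters. For $x\in Z$, the fiber over $x$ of the graph of $\tau_\sigma$ is positive-dimensional and rationally chain connected \cite[Corollary~1.5]{hacon-mcker07}. It lies in $W$, which contains no rational curves, so $Z=\emptyset$. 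This is \Cref{prop:minimal delta-regular abelian fibration is unique}(2). The paper then applies it to the main component $M\subset\Aut_f$. Every rational section $S\dashrightarrow M$ is a birational automorphism of $X$, hence regular. So $M$ is a weak N\'eron model and therefore equals $P$ by \cite[Theorem~7.1.1]{neron}, which gives existence, uniqueness and faithfulness at once. Your one-line faithfulness argument only covers $P^\circ$; the other components need the same $P\to M\to P$ argument. Once (1) is available, your treatment of (2) is essentially the standard one: $X'$ is a N\'eron model of $X_0$, so the orbit map of a section through a regular point is an isomorphism $P\to X'$.
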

		
		\begin{lemma} \label{lem:P-orbit has dimension n-1}
			Let $f : X \to S$ be a klt-minimal $\delta$-regular abelian fibration and $P$ its t-automorphism scheme. Then
			\begin{enumerate}
				\item Every $P^\circ_s$-orbit in $X_s$ has dimension $\ge n-1$.
				\item Every $P^\circ_s$-orbit in $X_s$ of dimension $n-1$ is an abelian torsor.
			\end{enumerate}
		\end{lemma}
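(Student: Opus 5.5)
We want to show two things: every $P^\circ_s$-orbit in $X_s$ has dimension at least $n-1$, and an orbit of dimension exactly $n-1$ is an abelian torsor. The plan is to control orbits through their stabilizers, using \Cref{prop:P-action property}(1),(3) together with the structure of $P^\circ_s$ coming from $\delta$-regularity.

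First recall the structure. By \Cref{def:delta-regular}, $P^\circ_s$ is a connected commutative algebraic group of dimension $n$. It sits in the Chevalley sequence $0 \to L_s \to P^\circ_s \to A_s \to 0$, where $L_s$ is $0$, $\GG_m$ or $\GG_a$, so $\dim A_s \ge n-1$.

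Fix a closed point $x \in X_s$ with stabilizer $H = \mathrm{Stab}(x) \subset P^\circ_s$. By \Cref{prop:P-action property}(1) and (3), together with \Cref{lem:linear stabilizer}, $H$ is a linear algebraic subgroup. In characteristic $0$ the orbit $P^\circ_s \cdot x$ is isomorphic to $P^\circ_s/H$, so its dimension is $n - \dim H$. Since $H$ is linear, the composite $H \to P^\circ_s \to A_s$ has linear image in an abelian variety, hence finite image. Therefore $H^\circ \subset L_s$ and $\dim H \le \dim L_s \le 1$. This proves (1).

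For (2), suppose the orbit has dimension $n-1$. Then $\dim H = 1$, which forces $L_s \neq 0$ and $H^\circ = L_s$ (both are connected of dimension $1$). So $L_s \subset H$, and the orbit $P^\circ_s/H$ is a quotient of $P^\circ_s/L_s = A_s$ by the finite subgroup $H/L_s$. This quotient is an abelian variety, and the orbit is a torsor under it. Since $A_s$ is the abelian variety part, the orbit is a torsor under an abelian variety isogenous to $A_s$, which is what "abelian torsor" means here.

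The step that needs the most care is linearity of stabilizers at arbitrary points, since the argument rests entirely on it. \Cref{prop:P-action property}(3) asserts it only on $\Sing(X_s)$. At points of $X' = X \setminus \Sing(X_s)$ we instead use that the action is free by \Cref{prop:P-action property}(2). There $H$ is trivial and the orbit has dimension $n$, so the bound in (1) holds trivially and such orbits never have dimension $n-1$. With this split into regular and singular points, the argument above covers every $x \in X_s$.
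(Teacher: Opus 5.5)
Your proof is correct and follows the paper's argument: linear stabilizers have identity component inside $L_s$, so they have dimension $\le 1$ and every orbit has dimension $\ge n-1$, and in the equality case the orbit is a finite quotient of $A_s$. Your split into regular and singular points is harmless but unnecessary, because faithfulness together with \Cref{lem:linear stabilizer} already gives linear stabilizers at every point of $X_s$; this is exactly how the paper proves \Cref{prop:P-action property}(3).
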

		\begin{proof}
			Since the linear part of $P^\circ_s$ has dimension $\le 1$, every point $x \in X$ has a stabilizer with dimension $\le 1$ by \Cref{prop:P-action property}. If an orbit has dimension $n-1$, then it is isomorphic to the quotient of $P^\circ_s$ by the linear stabilizer, which is an abelian variety by the Chevalley sequence.
		\end{proof}
		
		\begin{proposition} \label{prop:minimal delta-regular abelian fibration is unique}
			\begin{enumerate}
				\item Every minimal $\delta$-regular abelian fibration $f : X \to S$ has a regular total space $X$.
				\item Every birational map between two minimal $\delta$-regular abelian fibrations uniquely extends to an isomorphism.
			\end{enumerate}
		\end{proposition}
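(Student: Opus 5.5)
Both parts rest on the $P^\circ$-action from \Cref{thm:Po-action} together with the orbit-dimension bound of \Cref{lem:P-orbit has dimension n-1}. The idea is that $\delta$-regularity leaves the singular locus of $X$ too small and too homogeneous to host terminal singularities, and this rigidity is then propagated to birational maps.

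For (1), let $Z \subset X$ be the singular locus of the total space. Since $X_0 \to S_0$ is smooth, $Z \subset X_s$, and $Z$ is stable under $P^\circ$, so $Z$ is a union of $P^\circ_s$-orbits. By \Cref{prop:P-action property}(2), $Z \subset \Sing(X_s)$. Every point of $\Sing(X_s)$ has a nontrivial linear stabilizer, which has dimension $\le 1$ because the linear part is $\GG_m$ or $\GG_a$. Hence its orbit has dimension exactly $n-1$ by \Cref{lem:P-orbit has dimension n-1}, and it is an abelian torsor. So $Z$ is a union of $(n-1)$-dimensional abelian torsor orbits, hence of codimension $2$ in the $(n+1)$-dimensional $X$.

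Terminal singularities are smooth in codimension $2$, which already gives $Z = \emptyset$. To be safe, I would also argue directly. Take an orbit $O \subset Z$ and a general point $x \in O$. Locally near $x$, after an \'etale slice, $X$ looks like (slice surface) $\times\, O$, because the action of $P^\circ_s$ transversal to the stabilizer is free and $O$ is a smooth abelian torsor. The transversal slice is then a $2$-dimensional terminal singularity, and such a singularity is smooth. Constructing this slice rigorously (a Luna-type or Ancona--Fratila local structure) is the main technical obstacle. Citing the codimension-$2$ smoothness of terminal varieties avoids it entirely, so I would use that route. Therefore $X$ is regular.

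For (2), let $\phi: X \dashrightarrow Y$ be a birational map between minimal $\delta$-regular abelian fibrations over $S$. Both $K_X$ and $K_Y$ are numerically trivial over $S$ and both varieties are terminal, so $\phi$ is an isomorphism in codimension $1$ (it is a composition of flops). In particular it is an isomorphism over $S_0$, where it is compatible with the $P_0$-torsor structures up to translation. By part (1), $X$ and $Y$ are regular.

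Consider the graph $\Gamma \subset X \times_S Y$, which is $P^\circ$-equivariant for the diagonal action by uniqueness in \Cref{thm:Po-action}. Suppose $\phi$ is not a morphism. Then the exceptional locus of $\Gamma \to X$ is a $P^\circ$-stable closed subset of $X_s$ of codimension $\ge 2$ in $X$. By the orbit bound it has dimension at least $n-1$, so it is a union of $(n-1)$-dimensional orbits. Since $X$ is regular, blowing up such a locus produces an exceptional divisor of positive discrepancy. A flop would have to contract curves that are $K$-trivial. I would rule these out by showing that such curves, transversal to abelian-torsor orbits, sweep a divisor rather than a codimension-$2$ locus. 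The key point is that the normal direction is $1$-dimensional inside $X_s$, so the flopping locus must be an entire component of $X_s$, which is a divisor. This contradicts the fact that flops are small. Hence $\phi$ and $\phi^{-1}$ are morphisms, and $\phi$ is an isomorphism. Uniqueness of the extension follows from separatedness.
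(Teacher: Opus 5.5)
\textbf{Part (1).} This is fine and is the paper's argument. Terminal singularities are regular in codimension $2$, so $\Sing(X)$ has dimension $\le n-2$. On the other hand, $\Sing(X)$ is $P^\circ_s$-stable and every orbit has dimension $\ge n-1$ by \Cref{lem:P-orbit has dimension n-1}. The slice construction is not needed. Your intermediate claim that orbits in $\Sing(X_s)$ have dimension exactly $n-1$ is wrong in general: on a non-reduced component the stabilizer can be finite and the orbit $n$-dimensional. This is harmless, since only the lower bound is used.

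\textbf{Part (2).} Here there is a genuine gap, at exactly the step meant to produce the contradiction. You correctly reach the point where the indeterminacy locus $Z \subset X_s$ of $\phi$ (and $W \subset Y_s$ of $\phi^{-1}$) is a finite disjoint union of closed $(n-1)$-dimensional abelian-torsor orbits. But consider the curves you need to exclude. These are the curves in the positive-dimensional fibers $\Gamma_x$ of the graph over $x \in Z$, which map into $W$, or equivalently the curves contracted by a flopping contraction, whose exceptional locus is again $P^\circ$-stable of codimension $\ge 2$. All of them lie in such a disjoint union of closed abelian torsors. Hence each connected one lies \emph{inside} a single orbit. There is no ``transversal'' direction at all. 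The assertion that the flopping locus must fill a whole component of $X_s$ has no support. Your argument says nothing about curves contained in an orbit, and these are the only curves that can occur. The positive-discrepancy remark also feeds into nothing.

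The missing idea is rationality. Since $X$ is regular, hence klt, each fiber $\Gamma_x$ of the birational morphism $\Gamma \to X$ over $x \in Z$ is positive-dimensional and rationally chain connected (Koll\'ar; Hacon--McKernan). Moreover $\Gamma_x$ maps into $W$, because $\phi$ restricts to an isomorphism $X \setminus Z \to Y \setminus W$. Abelian torsors contain no rational curves, so $Z = \emptyset$, and symmetrically $W = \emptyset$. Arguing directly with the graph also removes the need to factor $\phi$ into flops, which would require Kawamata's theorem plus control of the intermediate models.
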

		\begin{proof}
			(1) The singular locus $\Sing(X)$ is over $s$ and has dimension $\le n-2$, since a terminal scheme is regular in codimension $2$. Since $\Sing(X)$ is invariant under $P^\circ_s$-action and every orbit has dimension $\ge n-1$ by \Cref{lem:P-orbit has dimension n-1}, we have $\Sing(X) = \emptyset$.
			
			\medskip
			
			(2) Let $X$ and $Y$ be two minimal $\delta$-regular fibrations and $\phi : X \dashrightarrow Y$ a birational map over $S$. Then $\phi$ is $P^\circ$-equivariant and hence its undefined locus is closed under the $P^\circ_s$-action. Since $X$ and $Y$ are minimal, $\phi$ is an isomorphism in codimension $1$ points \cite[Corollary~3.54]{kol-mori}. Hence the undefined locus $Z \subset X_s$ of $\phi$ (resp. undefined locus $W \subset Y_s$ of $\phi^{-1}$) has dimension $\le n-1$ and is a disjoint union of abelian torsors by \Cref{lem:P-orbit has dimension n-1}. Moreover, $\phi$ induces an isomorphism $X \setminus Z \to Y \setminus W$. Take the graph $\Gamma \subset X \times_S Y$ of the birational map $\phi$. Then for every $x \in Z$, the fiber $\Gamma_x = \Gamma \cap \pr_1^{-1}(x) \subset Y_s$ is contained in $W$, an abelian torsor of dimension $n-1$. But $\Gamma_x$ is positive dimensional and rationally chain connected \cite[Theorem~VI.1.2]{kol:rational} \cite[Corollary~1.5]{hacon-mcker07}, producing rational curves in $W$. Abelian torsors contain no rational curves, so this shows $Z = \emptyset$. Thus $\phi$ is a morphism, or an isomorphism since $\phi^{-1}$ is a morphism, too.
		\end{proof}
		
		\begin{proof} [Proof of \Cref{prop:P-action existence}]
			(1) The following strategy is taken from \cite[\S 5]{kim24}. Take the relative automorphism scheme and its main component $M \subset \Aut_f$ as in \Cref{prop:P-action property}. The claim is equivalent to saying that $M$ is a N\'eron model. Thanks to \cite[Theorem~7.1.1]{neron}, it is enough to show that it is a \emph{weak N\'eron model} in the sense of \cite[Definition~3.5.1 and Proposition~3.5.6]{neron} (or \cite[\S 3.6]{kim24}). Shrinking $S$ \'etale locally, we need to show that every rational map $u : S \dashrightarrow M$ defined on $S_0$ is regular. Consider $u$ as a birational automorphism $\phi_u : X \dashrightarrow X$ over $S$. \Cref{prop:minimal delta-regular abelian fibration is unique} shows that $\phi_u$ is necessarily an automorphism, so $u$ uniquely extends to a section $S \to M$ and shows the weak N\'eron mapping property of $M$. Hence $M$ is the N\'eron model of $P_0$, and it is isomorphic to $P$.
			
			\medskip
			
			(2) Note that $X' = X \setminus \Sing(X_s)$ is a N\'eron model of $X_0 \to S_0$: this is a consequence of \cite[\S 1.3]{neron} (see \cite[Example~3.13]{kim24} for a summary) or its generalization in \cite[Corollary~2]{kol25:neron}. When $X_s$ has a regular point, $X'$ is a $P$-torsor by \cite[Corollary~6.5.3]{neron} or \cite[Proposition~3.21]{kim24}.
		\end{proof}

	\subsection{Multiplicity and semistability}
		The central fiber $X_s$ is an effective Cartier divisor
		\begin{equation} \label{eq:central fiber}
			X_s = \sum_{i=1}^r a_i E_i \qquad\mbox{for}\quad E_i : \mbox{prime divisor and } a_i \in \ZZ_{>0} .
		\end{equation}
		The prime Weil divisors $E_i$ will be considered as integral subschemes of $X$.
		
		\begin{definition}
			Let $f : X \to S$ be an abelian fibration from a normal scheme $X$ and $X_s$ its central fiber.
			\begin{enumerate}
				\item The \emph{multiplicity} of the component $E_i \subset X_s$ is the positive integer $a_i$ in \eqref{eq:central fiber}. We say $E_i$ is a \emph{reduced component} of $X_s$ if $a_i = 1$ and \emph{non-reduced component} of $X_s$ if $a_i > 1$.
				\item The \emph{multiplicity} of $X_s$ is the positive integer
				\[ m = \gcd \{ a_1, \cdots, a_r \} \ \in \ZZ_{>0} .\]
			\end{enumerate}
		\end{definition}
		
		\begin{lemma} [{\cite[Remark~1.3]{cam05}}] \label{lem:non-multiple fiber has reduced component}
			Let $f : X \to S$ be an abelian fibration from a regular scheme $X$. If $X_s$ has multiplicity $1$, then it has a reduced component.
		\end{lemma}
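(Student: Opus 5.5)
The plan is to turn the arithmetic condition $\gcd\{a_i\} = 1$ into the existence of a component of multiplicity exactly one. The tool is the local structure of a regular scheme along the central fiber, together with the fact that the generic fiber is a torsor under an abelian variety. Since the statement is étale local on $S$, we may replace $S$ by the strict henselization at $s$. The residue field $k$ is then algebraically closed of characteristic $0$, and $X$ remains regular.

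\emph{Step 1: lower the dimension.} Choose $n-1$ general members $H_1, \dots, H_{n-1}$ of a very ample linear system on $X$ relative to $S$, and set $Y = X \cap H_1 \cap \dots \cap H_{n-1}$. By Bertini in characteristic $0$, $Y$ is a regular two-dimensional scheme, flat and projective over $S$. Its generic fiber is a smooth complete intersection curve in the abelian torsor $X_\eta$, hence of genus $\ge 1$. Each $E_i$ has dimension $n \ge 2$, so $E_i \cap H_1 \cap \dots \cap H_{n-1}$ is an irreducible curve $C_i$ for general choices, and
\[ Y_s = \sum_i a_i C_i \]
with the same multiplicities $a_i$. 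So it suffices to prove the claim for the arithmetic surface $Y \to S$, keeping in mind where the torsor structure of $X_\eta$ enters.

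\emph{Step 2: relate multiplicities to the index.} For a regular proper flat model over a strictly henselian DVR with algebraically closed residue field, Hensel's lemma lifts every point of the smooth locus of $(Y_s)_{\red}$ lying on a component of multiplicity $a_i$ to a multisection of degree $a_i$. Conversely, Raynaud's results on the index of the generic fiber show that every closed point of $Y_\eta$ has degree divisible by $\gcd\{a_i\}$, while the degrees of multisections through reduced points generate the index. Transporting this back to $X$, the condition $\gcd\{a_i\} = 1$ makes the index of the torsor $X_\eta$ equal to $1$. For torsors under abelian varieties over the fraction field of a strictly henselian DVR in characteristic $0$, I expect index one to force period one, so $X_\eta$ is trivial and has a $K$-point. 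This is the place where the torsor structure must be used, since for general genus $\ge 2$ curve fibrations the analogue can fail.

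\emph{Step 3: a rational point gives a reduced component.} A $K$-point of $X_\eta$ extends, by properness of $f$, to a section $\sigma : S \to X$. The Cartier divisor $X_s$ pulls back along $\sigma$ to the closed point $s$ with multiplicity $1$. On the other hand this multiplicity equals $\sum_i a_i \cdot \operatorname{mult}_{\sigma(s)}(E_i \cdot \sigma)$. Since $X$ is regular, a section meets the central fiber at a regular point of $X_s$, and that point lies on a single component $E_i$ with $a_i = 1$.

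The main obstacle is Step 2: deducing from $\gcd\{a_i\} = 1$ that $X_\eta$ is actually trivial rather than merely of index one. I would first try to argue directly that some component has multiplicity one, using Raynaud's formula expressing the index as $\gcd\{a_i \cdot r_i\}$ with $r_i$ the geometric multiplicities, all equal to $1$ over an algebraically closed residue field. If that only yields index one, I would fall back on the known equality of period and index for abelian torsors over such fields, which should be available in characteristic $0$ via the Néron model of the Jacobian or Albanese of $X_\eta$ (cf.\ \cite{cam05}).
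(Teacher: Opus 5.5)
Your outline follows the paper's argument. You lift a regular point of $(X_s)_{\red}$ on $E_i$ to a multisection of degree $a_i$, use $\gcd\{a_i\}=1$ to get a $K$-point of $X_\eta$, and extend it to a section. Regularity of $X$ then shows the section passes through a component of multiplicity one; your Step 3 is exactly right.

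The one real gap is the step you flag yourself. You only ``expect'' index one to force period one. Your fallback appeals to a period--index equality via N\'eron models, which is unproved here and much stronger than needed. The required direction is formal. Let $\alpha\in H^1(K,P_\eta)$ be the class of the torsor $X_\eta$, and suppose $X_\eta$ has a point over a finite extension $L/K$ of degree $a$. Then $a\alpha=\operatorname{cor}_{L/K}\operatorname{res}_{L/K}(\alpha)=0$. Concretely, the sum of the $a$ conjugates of that point is a $K$-point of the torsor of class $a\alpha$. Applied to your multisections, this gives $a_i\alpha=0$ for all $i$. Hence $\alpha=\sum_i b_ia_i\alpha=0$ and $X_\eta(K)\neq\emptyset$. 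This is precisely what the paper does with the group $\mathfrak X_0=\bigsqcup_m X_0^{(m)}$ and the combination $\sum_i b_i x_i$. It is also where the torsor structure enters, as you correctly anticipated.

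With this in hand, Step 1 and Raynaud's index formula are unnecessary. The Hensel lifting works directly on the regular scheme $X$, which is how the paper proceeds.
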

		\begin{proof}
			The following copies Campana's argument but using techniques in \cite[\S \S 25--26]{kol25:TS}. We may assume $S = \Spec R$ and $S_0 = \Spec K$ are the Spec of a strictly henselian dvr and its function field. Write $z$ for a uniformizing parameter of $R$ and $X_s = \sum_{i=1}^r a_i E_i$. For each $i$, take a point $x_i \in E_i$ such that it is a regular point of $(X_s)_{\red}$. Then the point $x_i \in X_s$ extends to a \emph{multisection} $x_i : S \to X$ of $f$ such that the composition $f \circ x_i : S \to X \to S$ is $z \mapsto z^{a_i}$. 
			
			\medskip
			
			Recall that $X_0 \to S_0$ is a $P_0$-torsor. Set $X^{(0)}_0 = P_0$ and $X^{(m)}_0 = X_0 \times_{p_m} S_0$ for each $m \in \ZZ_{\neq 0}$ where $p_m : S_0 \to S_0$ is a base change $z \mapsto z^m$. (Equivalently, if $X_0$ is a $P_0$-torsor defined by $\alpha \in H^1_{\acute et}(S_0, P_0)$, then $X^{(m)}_0$ is a torsor defined by $m \alpha$.) Their disjoint union
			\[ \mathfrak X_0 \coloneq \bigsqcup_{m \in \ZZ} X^{(m)}_0 \to S_0 \]
			form a commutative group scheme. Use the assumption $\gcd \{ a_i \} = 1$ to take $b_i \in \ZZ$ such that $\sum_i a_i b_i = 1$. The multisection $x_i$ defines a section of $X_0^{(a_i)} \subset \mathfrak X_0$, so we may take take their sum using the commutative group structure:
			\[ x = \sum_{i=1}^r b_i \cdot x_i : S_0 \to \mathfrak X_0 .\]
			This section lands into $X^{(1)}_0 = X_0$, because the degree of the multisection $x$ is $\sum_i a_i b_i = 1$. The valuative criterion extends it to a section $x : S \to X$. Such a section necessarily factors through $X' = X \setminus \Sing(X_s)$ by the regularity of $X$ \cite[Proposition~3.1.2]{neron} and hence $X'_s \neq \emptyset$. This means $X_s$ has a reduced component.
		\end{proof}
		
		To talk about semistability, recall that $\delta$-regularity implies one of the three possibilities: (1) $P_s$ has a linear part $0$, i.e., $P_s$ is an abelian variety, (2) $P_s$ has a linear part $\GG_m$, or (3) it has a linear part $\GG_a$. The first case is easy to analyze. In fact, we can immediately classify $X_s$ in this case.
		
		\begin{lemma} \label{lem:classification when P is smooth}
			Let $f : X \to S$ be a klt-minimal abelian fibration. If $P_s$ is an abelian variety, then $X_s = m \cdot X_{s, \red}$ for an abelian torsor $X_{s, \red}$ of dimension $n$ and $m \ge 1$.
		\end{lemma}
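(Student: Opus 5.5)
The plan is to use the $P^\circ$-action of \Cref{thm:Po-action} to show that $X_s$ has a single orbit of dimension $n$ set-theoretically, and then to read off the multiplicity. Since $P_s$ is an abelian variety, $P^\circ_s$ is an abelian variety of dimension $n$.

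First, I would show that every $P^\circ_s$-orbit in $X_s$ has dimension $n$. By \Cref{prop:P-action property}(3) together with \Cref{lem:linear stabilizer}, every stabilizer of the faithful $P^\circ_s$-action is a linear algebraic group. A linear subgroup of an abelian variety is finite, so every orbit $P^\circ_s \cdot x$ is isomorphic to $P^\circ_s/\mathrm{Stab}_x$, which is an abelian variety of dimension $n$. The same argument gives more: part (3) of \Cref{prop:P-action property} says points of $\Sing(X_s)$ have nontrivial linear stabilizers, which are only finite groups here. So I do not need freeness; only the dimension count matters.

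Next, I would use that $X_s$ has pure dimension $n$, since $f$ is flat. Each orbit is then closed, being proper as a quotient of an abelian variety, and $n$-dimensional, so it is an irreducible component. Orbits are disjoint, and $X_s$ is connected: $f_*\mathcal O_X=\mathcal O_S$ holds because the generic fiber is geometrically connected and $X$ is normal. It follows that $X_{s,\red}$ is a single orbit, hence an abelian torsor (a torsor under $P^\circ_s/\mathrm{Stab}$) of dimension $n$. In particular $X_s$ has one component $E_1$ with multiplicity $a_1=m$, and as Cartier divisors $X_s=m\cdot E_1$ with $E_1=X_{s,\red}$.

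The main subtlety is justifying that the divisor $E_1$, viewed as a reduced scheme, coincides with the scheme $X_{s,\red}$, and that $X_s = mE_1$ is an equality of subschemes rather than only of Weil divisors. Since $X$ is only klt, I would argue as follows. $E_1$ is an abelian torsor, hence smooth. $X$ is normal and Cohen--Macaulay (klt implies rational singularities), so $X_s$ is $S_2$ as a Cartier divisor and has no embedded points. Hence $X_s$ is determined by its Weil divisor $mE_1$. If needed, I would also check that $X$ is regular along $E_1$, using $P^\circ$-homogeneity: the singular locus is $P^\circ_s$-invariant of dimension $\le n-2$, while every orbit has dimension $n$, so $X$ is regular. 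Then $X_s = mE_1$ as Cartier divisors, which completes the argument.
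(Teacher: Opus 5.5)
Your proposal is correct and follows the paper's argument. Linear, hence finite, stabilizers make every $P^\circ_s$-orbit a closed $n$-dimensional abelian torsor, and connectedness of $X_s$ forces a single orbit, so $X_{s,\red}\cong P^\circ_s/K$. Two remarks on your final paragraph. First, the extra scheme-theoretic step is unnecessary, since on a normal $X$ an effective Cartier divisor is already determined by its Weil divisor $mE_1$. Second, for klt $X$ the singular locus is only known to have dimension $\le n-1$, not $\le n-2$ (that stronger bound is the terminal case). This does not hurt your regularity argument, because every orbit still has dimension $n$.
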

		\begin{proof}
			The $P_s$-action on $X_{s, \red}$ can have only finite stabilizers by \Cref{prop:P-action property}(2--3). Since $X_{s, \red}$ is proper connected, the only possibility is $X_{s, \red} \cong P_s / K$ for a finite subgroup $K \subset P_s$. Hence $X_{s, \red}$ is an abelian torsor.
		\end{proof}
		
		\begin{definition} \label{def:semistability}
			Let $f : X \to S$ be a minimal $\delta$-regular abelian fibration and $m$ the multiplicity of its central fiber $X_s$. Then $f$ (or $X_s$) is called
			\begin{itemize}
				\item \emph{semistable} (resp. \emph{strictly semistable}) if $m = 1$, and $P_s$ has a linear part either $0$ or $\GG_m$ (resp. only $\GG_m$);
				\item \emph{unstable} if $m = 1$ and $P_s$ has a linear part $\GG_a$; and
				\item \emph{multiple} if $m > 1$.
			\end{itemize}
		\end{definition}
		
		\begin{remark}
			We deviate from the usual convention and divide the non-semistable fibers into two categories: unstable and multiple. In other words, \emph{unstable fibers are always assumed to be non-multiple} in this article. To justify, we will see in \S \ref{sec:classification of multiple fibers} that a multiple fiber can behave similar to semistable fibers, unstable fibers, or even a mixture of them.
			
			We also note that the multiplicity of the central fiber $X_s$ is \emph{not} a birational invariant of $f$. See the following simple example. For this reason, we call the central fiber $X_s$ multiple (or non-multiple) only when the abelian fibration is \emph{minimal} $\delta$-regular, which is unique in its birational class by \Cref{prop:minimal delta-regular abelian fibration is unique}.
		\end{remark}
		
		\begin{example} \label{ex:counterexample for reduced component}
			Take a minimal elliptic surface $f : X \to S$ with a Kodaira type $\mathrm {IV}^*$ central fiber
			\[ X_s = (E_1 + 2F_1) + (E_2 + 2F_2) + (E_3 + 2F_3) + 3G .\]
			Contracting three $E_i$'s yields an elliptic fibration $\bar f : \bar X \to S$ from a canonical surface $\bar X$ with three $\mathrm A_1$ singularities. Its central fiber $\bar X_s = 2(\bar F_1 + \bar F_2 + \bar F_3) + 3\bar G$ has four components with multiplicities $2$, $2$, $2$, and $3$, respectively. Hence \Cref{lem:non-multiple fiber has reduced component} fails without the regularity assumption on $X$.
			
			This time, contract three chains $E_i \cup F_i$ to yield another elliptic fibration $\bar {\bar f} : \bar {\bar X} \to S$ from a canonical surface $\bar {\bar X}$ with three $\mathrm A_2$ singularities. Its central fiber $\bar {\bar X}_s = 3\bar {\bar G}$ is irreducible with multiplicity $3$. We will \emph{not} call it a multiple fiber because $\bar {\bar f}$ is not minimal. Indeed, the original minimal model $f$ had a non-multiple fiber $X_s$.
		\end{example}
		
		Semistable fibers are generally better understood than unstable/multiple fibers. The following is a consequence of Koll\'ar's characterization of semistable fibers.
		
		\begin{theorem} [{\cite[Theorem~42]{kol25:neron}}] \label{thm:Kollar semistable}
			Let $f : X \to S$ be a klt-minimal abelian fibration and $P$ its t-automorphism scheme. If $X_s$ has a reduced component and $P_s$ is semiabelian, then $X_s$ has slc singularities. In particular, it is reduced and demi-normal.
		\end{theorem}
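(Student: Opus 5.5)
The plan is to pass through inversion of adjunction. The statement is local on $S$, so I would first replace $S$ by the spectrum of a strictly henselian dvr $R$ with uniformizer $z$. Since $X_s$ is Cartier and $K_X \equiv_f 0$, the pair $(X, X_s)$ satisfies $K_X + X_s \equiv_f 0$. By inversion of adjunction (Kawakita), $X_s$ is slc as soon as the pair $(X, X_s)$ is log canonical near $X_s$. So it suffices to prove that every divisor $F$ over $X$ with center in $X_s$ has log discrepancy $a(F; X, X_s) \ge 0$.

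\textbf{Step 1: reduction to a good model.} The reduced component gives a smooth point of $X_s$. Since $R$ is strictly henselian, Hensel's lemma lifts it to a section $x : S \to X$. Hence $X_0$ is a trivial $P_0$-torsor and $X$ is birational to any model of $P_0$. Because $P_s$ is semiabelian, $P_0$ already has semistable reduction over $S$ without any base change. The Mumford / Faltings--Chai (or Künnemann) construction then yields a projective model $Y \to S$ with the following properties:
\begin{itemize}
\item $Y$ is toroidal (a quotient of a relatively complete model of the Raynaud extension);
\item its central fiber $Y_s$ is reduced with simple toroidal crossings;
\item $(Y, Y_s)$ is log canonical, indeed dlt after subdivision;
\item $K_Y + Y_s \sim_{\QQ,f} 0$.
\end{itemize}
I expect constructing $Y$ to be the main obstacle. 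The point is to produce this model directly over $S$, using only that $P_s$ is semiabelian together with the section $x$. My first attempt would be to take the Raynaud extension of $P^\circ$, choose a $\ZZ^{r}$-periodic rational polyhedral decomposition, and take the Mumford quotient. One must check that no base change is needed precisely because the toric part of the generic fiber is already split with integral periods.

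\textbf{Step 2: crepant comparison.} Both $(X, X_s)$ and $(Y, Y_s)$ are pairs with $K + (\text{fiber}) \equiv_f 0$, and they are isomorphic over $S_0$. Take a common resolution $W \to X$, $W \to Y$. The difference of the pullbacks of $K_X + X_s$ and $K_Y + Y_s$ is an $f$-numerically trivial divisor supported in the central fiber of $W$. By Zariski's lemma it is a rational multiple of the fiber, and it vanishes after comparing along a component dominated by the section $x$ (both fibers contain a reduced component there). Hence the two pairs are crepant birational and have the same log discrepancies. Since $(Y, Y_s)$ is log canonical, so is $(X, X_s)$, and inversion of adjunction gives that $X_s$ is slc.

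\textbf{Step 3: reducedness and demi-normality.} If a component $E_i$ of $X_s$ had $a_i > 1$, then $a(E_i; X, X_s) = 1 - a_i < 0$, contradicting log canonicity. So $X_s$ is generically reduced. Being Cartier in the normal scheme $X$, it is $S_2$, hence reduced. Finally, slc schemes are demi-normal by definition, which completes the argument.
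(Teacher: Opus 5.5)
The paper gives no proof of this statement (it quotes Koll\'ar), so I am judging your argument on its own. Your route is viable: make $(X,X_s)$ crepant birational to a toroidal Mumford model $(Y,Y_s)$, then apply adjunction. Step~1 is a heavy black box, but it is available in the literature (Mumford, Faltings--Chai, Alexeev--Nakamura over $\hat R$, with a periodic decomposition whose vertices are integral so that $Y_s$ is reduced). The genuine gap is the last sentence of Step~2, which is where the whole content of the theorem sits.

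Write $D=cW_s$ and let $a_\ell$ denote log discrepancy. Comparing coefficients at the strict transform of the reduced component $E\ni x(s)$ only gives $c=a_\ell(E;Y,Y_s)\ge 0$. Comparing at any component $E'$ of $Y_s$ gives $a_\ell(E';X,X_s)=-c$. Suppose $E$ is exceptional over $Y$ and every component of $Y_s$ is exceptional over $X$. Then any $0<c<1$ is compatible with everything you have used ($X$ klt, $(Y,Y_s)$ lc, Zariski's lemma), and in that case $(X,X_s)$ is \emph{not} lc.

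The section does not settle this. Its extension to $Y$ by the valuative criterion may land in a singular stratum of $Y_s$, since a cell may contain lattice points other than its vertices. Even if it lands in $Y^{\mathrm{sm}}$, you give no reason for $X\dashrightarrow Y$ to be a local isomorphism near $x(s)$. So the components of $W_s$ through the lifted section may be exceptional over both $X$ and $Y$. For comparison, blow up the node of an $\mathrm I_1$ elliptic surface and contract the strict transform of the fiber, which is a $(-4)$-curve. This gives a klt model with $K\equiv_f 0$ and a section, for which $c=\tfrac12$. There the reduced-component hypothesis fails, and this step is exactly where that hypothesis has to enter.

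The missing input is the group action, which you can supply as follows:
\begin{enumerate}
\item Since $Y_s$ is reduced, $Y$ has a section $y$ through $Y^{\mathrm{sm}}$. Choose the identification $X_K\cong Y_K$ of $P_K$-torsors that sends $x_K$ to $y_K$.
\item $Y$ is klt (toroidal with $K_Y$ Cartier) and $K_Y\equiv_f 0$, so \Cref{thm:Po-action} applies to both $X$ and $Y$.
\item The argument of \Cref{prop:P-action property}(2) then shows that the orbit maps $g\mapsto g\cdot x$ and $g\mapsto g\cdot y$ are open immersions $P^\circ\hookrightarrow X$ and $P^\circ\hookrightarrow Y$.
\item The birational map is $P^\circ$-equivariant, so it identifies these two open sets. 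Hence $E$ is also a component of $Y_s$, and $c=a_\ell(E;Y,Y_s)=0$.
\end{enumerate}
Equivalently, take $K_X$ and $K_Y$ to be the divisors of the invariant top form of $P^\circ$. This form generates $\omega_{X/S}$ along $E$ via the orbit map, and generates $\omega_{Y/S}$ as the toroidal log form.

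Two smaller corrections. First, you only need the easy adjunction direction ($(X,X_s)$ lc $\Rightarrow$ $X_s$ slc), not Kawakita's inversion of adjunction. Second, a Cartier divisor in a normal scheme is only $S_1$, not $S_2$. That suffices for reducedness, but for $S_2$ (and hence demi-normality) use that klt singularities are rational, hence Cohen--Macaulay.
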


	\subsection{Dual of the t-automorphism scheme} \label{sec:dual Neron model}
		Recall that the t-automorphism scheme $P$ was nothing but the N\'eron model of the abelian scheme $P_0$ in \eqref{eq:abelian scheme}.
		
		\begin{definition}
			Let $f : X \to S$ be a klt-minimal abelian fibration and $P$ its t-automorphism scheme. The \emph{dual} of $P$ is the N\'eron model $\check P \to S$ of the dual abelian scheme $\check P_0 \to S_0$.
		\end{definition}
		
		\begin{proposition} \label{prop:dual Neron model}
			Let $P_0$ be an abelian scheme over $S_0$ and $\check P_0$ its dual. Letting $P$ and $\check P$ be their N\'eron models over $S$, their central fibers $P_s$ and $\check P_s$ satisfy the following.
			\begin{enumerate}
				\item Their linear parts $L_s$ and $\check L_s$ are isomorphic. In particular, the semiabelian property and $\delta$-regularity of $P$ and $\check P$ are the same.
				\item If $P_s$ (or equivalently $\check P_s$) is semiabelian, then their abelian variety parts $A_s$ and $\check A_s$ are dual to each other.
			\end{enumerate}
		\end{proposition}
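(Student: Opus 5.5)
The plan is to reduce both statements to facts about the semistable reduction theorem and Grothendieck's orthogonality theory (SGA 7, Exp.~IX), together with the existence of an isogeny between $P_0$ and $\check P_0$.

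\textbf{Step 1: isomorphic linear parts in (1).} First I will handle the linear parts. Since $P_0$ is projective over $S_0$, after shrinking $S$ we may choose a polarization. This gives an isogeny $\lambda_0 : P_0 \to \check P_0$, and there is an isogeny $\mu_0 : \check P_0 \to P_0$ with $\mu_0 \circ \lambda_0 = [N]$. By the N\'eron mapping property these extend to homomorphisms $\lambda : P \to \check P$ and $\mu : \check P \to P$ with $\mu \circ \lambda = [N]$. Multiplication by $N$ on the smooth commutative group $P^\circ_s$ is surjective with finite kernel, because we are in characteristic $0$. Hence $\lambda_s : P^\circ_s \to \check P^\circ_s$ has finite kernel, and comparing dimensions shows it is an isogeny. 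An isogeny maps $L_s$ onto $\check L_s$: it respects Chevalley sequences, since the image of a connected linear group is linear and the image of an abelian variety is an abelian variety.

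\textbf{Step 2: upgrading isogeny to isomorphism.} I expect this to be the main obstacle. Isogeny does not give isomorphism for unipotent groups in general, but in characteristic $0$ a connected unipotent group is $\GG_a^u$. Isogenous tori also have the same dimension and are both split over the algebraically closed field $k$, so they are isomorphic. Thus the torus parts $\GG_m^t$ agree, and the unipotent parts $\GG_a^u$ agree because they have equal dimension. This yields $L_s \cong \check L_s$ as abstract groups: $L_s = T_s \times U_s$ with $T_s$ split, since $k$ is perfect and $L_s$ is commutative. The claims about the semiabelian property ($U_s = 0$) and $\delta$-regularity ($\dim L_s \le 1$) follow immediately.

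\textbf{Step 3: duality of abelian parts in (2).} If $P_s$ is semiabelian, then $P$ has semistable reduction, and I will invoke Grothendieck's theory (SGA 7, Exp.~IX, Th\'eor\`eme 5.4, or the Raynaud extension / Faltings--Chai degeneration data). In that setting the abelian parts of the special fibers of the N\'eron models of $P_0$ and its dual are dual abelian varieties. Concretely, the pairing $T_\ell P_0 \times T_\ell \check P_0 \to \ZZ_\ell(1)$ makes the toric part $T_\ell(T_s)$ and the fixed part $T_\ell(P^\circ_s)$ of each side exact annihilators of the corresponding parts of the other. The induced pairing on the graded pieces $T_\ell A_s \times T_\ell \check A_s$ is perfect and is identified with the Weil pairing of $A_s$. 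Alternatively, I will cite the Faltings--Chai description, in which the Raynaud extension of $\check P$ is $0 \to \check T \to \check G \to \check A \to 0$, with $\check T$ the torus whose character group is the cocharacter-related lattice and with $\check A$ the dual of $A$. Either way, (2) is essentially a citation, and I do not expect further difficulty beyond quoting it correctly.
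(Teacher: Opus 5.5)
Your proposal is correct and follows the paper's proof essentially verbatim. For (1), you extend a polarization $\lambda_0$ and its quasi-inverse over $S$ by the N\'eron mapping property, which gives an isogeny $\lambda_s : P^\circ_s \to \check P^\circ_s$. You then use that isogenous connected commutative linear groups over $k$ are both of the form $\GG_m^{a} \times \GG_a^{b}$, hence abstractly isomorphic. For (2), you cite SGA~7, Exp.~IX, Th\'eor\`eme~5.4 and Faltings--Chai, exactly as the paper does.

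The one step that deserves a line of justification is $\lambda_s(L_s) = \check L_s$. The quotient $\check P^\circ_s/\lambda_s(L_s)$ receives a surjection from $A_s$, so it is an abelian variety. By minimality of $\check L_s$ in Chevalley's theorem this gives $\check L_s \subset \lambda_s(L_s)$; this is the content of the paper's snake-lemma argument.
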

		\begin{proof}
			(1) The abelian scheme $P_0 \to S_0$ is projective by \cite[Theorem~XI.1.4]{ray:group_schemes}. Take any polarizations $\lambda_0 : P_0 \to \check P_0$ and $\check \lambda_0 : \check P_0 \to P_0$ that satisfy
			\[ \lambda_0 \circ \check \lambda_0 = [m]_{\check P_0} ,\qquad \check \lambda_0 \circ \lambda_0 = [m]_{P_0} .\]
			These homomorphisms and equalities extend over $S$ by the N\'eron mapping property. Hence $\lambda$ and $\check \lambda$ have quasi-finite kernels over $S$. As a result, $\lambda_s : P^\circ_s \to \check P^\circ_s$ is an isogeny and hence the claim follows from \Cref{lem:isogeny}.
			
			\medskip
			
			(2) This is \cite[IX~Theorem~5.4]{SGA7I} and \cite[\S II.2]{fal-chai:abelian}. We note that this statement fails when $P$ is not semiabelian. See \Cref{rmk:abelian varieties are not dual}.
		\end{proof}
		
		By \cite[Theorem~1]{kol-kov25} (or \Cref{lem:Kollar-Kovacs} in our case), $f_* \mathcal O_X = \mathcal O_S$ and $R^1f_*\mathcal O_X$ are locally free and their formation commutes with arbitrary base change. This implies the existence of a locally separated and locally finite type group algebraic space $\Pic_f \to S$ that is smooth along the identity section (\cite[Theorem~8.3.1 and 8.4.1]{neron} and \cite[Tag~0D2D]{stacks-project}). Its \emph{strict neutral component} $\Pic^\circ_f$ is a unique open subgroup space of $\Pic_f$ with connected fibers. Its \emph{weak neutral component} $\Pic^\bullet_f$ is the largest open subgroup space of $\Pic_f$ with a connected total space $\Pic^\bullet_f$. There exists a sequence of open immersions
		\[ \Pic^\circ_f \ \subset \ \Pic^\bullet_f \ \subsetneq \ \Pic_f \qquad\mbox{(the first inclusion is generally not an equality, too)} ,\]
		and where $\Pic^\circ_f$ and $\Pic^\bullet_f$ are smooth group algebraic spaces. We note that $\Pic_f$ is not smooth in general. See \cite[\S 2.2]{kim24} for more detailed discussions, where the notation $\Pic^{\circ\circ}_f \subset \Pic^\circ_f\subset \Pic_f$ is used instead. The following is a higher-dimensional generalization of \cite[Theorem~9.5.4]{neron}.
		
		\begin{proposition} \label{prop:dual Neron model is Picard}
			Let $f : X \to S$ be a minimal $\delta$-regular abelian fibration. Then
			\begin{enumerate}
				\item $\check P$ is isomorphic to the separated quotient of $\Pic^\bullet_f$.
				\item $\check P^\circ$ is isomorphic to the separated quotient of $\Pic^\circ_f$.
				\item There exists an isogeny $\kappa : (\Pic^\circ_f)_s = \Pic^\circ_{X_s} \to \check P^\circ_s$. If $f$ is non-multiple, then $\kappa$ is an isomorphism.
			\end{enumerate}
		\end{proposition}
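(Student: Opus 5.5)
We follow the strategy of \cite[Theorem~9.5.4]{neron}: first show that the separated quotient of $\Pic^\bullet_f$ is a N\'eron model of $\check P_0$, and then read off (2) and (3) from it.

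\textbf{Generic fiber.} Over $S_0$, $X_0$ is a $P_0$-torsor, so $\Pic^\circ_{X_0/S_0} \cong \Pic^\circ_{P_0/S_0} = \check P_0$. Hence $\Pic^\circ_f$ and $\Pic^\bullet_f$ both restrict to $\check P_0$ over $S_0$. Let $E \subset \Pic^\bullet_f$ be the schematic closure of the unit section. The separated quotient $Q = \Pic^\bullet_f / E$ is then a separated, smooth (by \cite[Tag~0D2D]{stacks-project} and smoothness along the identity), commutative group algebraic space over $S$ with generic fiber $\check P_0$.

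\textbf{N\'eron mapping property.} By \cite[Theorem~7.1.1]{neron} it suffices to verify the weak N\'eron property, exactly as in the proof of \Cref{prop:P-action existence}. After an \'etale shrinking of $S$, take a section $S_0 \to \check P_0$, i.e.\ a line bundle $L_0$ on $X_0$ algebraically equivalent to zero. Since $X$ is regular (\Cref{prop:minimal delta-regular abelian fibration is unique}), $L_0$ extends to a line bundle $L$ on $X$. The extension is unique up to twists by the components $E_i$ of $X_s$, and these twists lie in $E$. We must check that $L$ defines a point of $\Pic^\bullet_f$. The total space of $\Pic^\bullet_f$ is connected, and $L$ lies in the closure of the generic fiber, so this holds provided the closure of $\check P_0$ in $\Pic_f$ is open. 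We expect this to follow from smoothness of $\Pic_f$ at such points: the obstruction lies in $H^2(X_s,\mathcal O)$, which is controlled by \cite[Theorem~1]{kol-kov25}. This gives the map $S \to Q$ and proves (1).

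Alternatively, one could use the $P$-action of \Cref{prop:P-action existence} together with \cite[\S 9.5]{neron}, after reducing to the case where $X_s$ has a reduced component.

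\textbf{Step (2).} $\Pic^\circ_f \subset \Pic^\bullet_f$ is open with connected fibers, so its image in $Q \cong \check P$ is an open subgroup with connected fibers. This image must be $\check P^\circ$.

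\textbf{Step (3).} This is the main obstacle. On the central fiber, $(\Pic^\circ_f)_s = \Pic^\circ_{X_s} \to \check P^\circ_s$ is surjective with kernel $E_s \cap \Pic^\circ_{X_s}$, i.e.\ the classes of $\mathcal O_X(\sum c_iE_i)|_{X_s}$ that are algebraically trivial on $X_s$. This kernel is finite, being a quotient of $\ZZ^r/\ZZ(a_i)$ intersected with a connected group, so $\kappa$ is an isogeny.

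In the non-multiple case we must show the kernel is trivial. Write $m=\gcd(a_i)$. The bundle $\mathcal O_X(X_s)$ is trivial, so $\mathcal O_X(\tfrac1m X_s)|_{X_s}$ is $m$-torsion; when $m=1$ this source of torsion disappears. It then remains to show that any $D=\sum c_iE_i$ with $\mathcal O(D)|_{X_s}\in\Pic^\circ$ is a multiple of $X_s$. For this we plan to use a reduced component (\Cref{lem:non-multiple fiber has reduced component}) and the resulting section $x$. Restricting $\mathcal O(D)$ to $P$-orbit closures, $\mathbb P^1$-fibres, or curves through $x$, we intersect with curves $C$ in each component. The degrees $D\cdot C$ must vanish, and Zariski's lemma on the negative semidefinite intersection form then forces $D\in\QQ X_s$, hence $D\in\ZZ X_s$ since $m=1$.

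The delicate point is producing enough curves in $X_s$ when the components are not yet understood. We would use orbit closures of the linear part $\GG_m$ or $\GG_a$ of $P_s^\circ$, which give rational curves in $X_s$, and verify nondegeneracy of the intersection pairing on them.
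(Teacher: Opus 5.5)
\textbf{Verdict.} Your treatment of (1), (2) and of the isogeny in (3) follows the paper's route, modulo two local repairs. The non-multiple case of (3) has a genuine gap.

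\textbf{Parts (1), (2) and the isogeny.} As in the paper, regularity of $X$ gives the existence half of the N\'eron property for $\Pic_f$, passing to $\Pic^\bullet_f/E$ gives uniqueness, and the image of $\Pic^\circ_f$ is $\check P^\circ$. Two repairs are needed.

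\emph{First repair, in (1).} The $H^2$-obstruction remark is not a valid justification. \cite{kol-kov25} gives local freeness of $R^2f_*\mathcal O_X$, not vanishing of obstructions, and $\Pic_f$ is genuinely non-smooth in general. You also do not need openness of the closure. If $\sigma_L$ is the section defined by $L$, the translates $\Pic^\bullet_f + n\sigma_L$ ($n \in \ZZ$) are connected open subspaces that all contain $\Pic^\circ_{X_0}$. Their union is therefore a connected open subgroup space, and maximality forces $\sigma_L$ into $\Pic^\bullet_f$.

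\emph{Second repair, in (3).} $\ker\kappa$ is finite because it is a zero-dimensional closed subgroup of the finite-type group $\Pic^\circ_{X_s}$, being the kernel of a surjection between $n$-dimensional groups. The description via $\ZZ^r/\ZZ(a_i)$, a group of rank $r-1$, does not by itself give finiteness.

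\textbf{The gap: injectivity of $\kappa$ in the non-multiple case.} You correctly reduce this to showing that a divisor $D=\sum b_iE_i$ whose restriction lies in $\Pic^\circ_{X_s}$, hence is numerically trivial on $X_s$, is an integral multiple of $X_s$. However, you leave the needed higher-dimensional Zariski lemma open, and the route you sketch fails as stated, for two reasons.
\begin{enumerate}
\item Orbit closures of the linear part of $P^\circ_s$ can be points. On components where $L_s$ acts trivially they give no curves at all. This is case (ii) of \Cref{prop:E normalization}; for example, $\GG_a$ fixes the four nodes on the double component of a type $\mathrm I_0^*$ curve $C$, hence acts trivially on the double component $F \times A$ of $C\times A$.
\item For curves $C_j\subset E_j$, the matrix $\big((E_i\cdot C_j)\big)$ is not symmetric in general. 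So there is no negative semidefinite form on $X$ whose nondegeneracy you could verify.
\end{enumerate}

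\textbf{The paper's missing idea: cut down to a surface.} A complete intersection $Y$ of $n-1$ general very ample divisors is a regular surface fibred over $S$, with $Y_s=\sum a_iF_i$ where $F_i=E_i\cap Y$ is integral. The restriction $\mathcal O_X(D)|_Y=\mathcal O_Y(\sum b_iF_i)$ is numerically trivial over $S$. The classical Zariski lemma for fibred surfaces then gives $b_i=ca_i$ for a single $c\in\QQ$, so $D=cX_s$. Finally $a_1=1$ (or just $\gcd(a_i)=1$) gives $c\in\ZZ$, and $\mathcal O_X(D)$ is trivial after shrinking $S$.

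\textbf{An alternative that also works.} Put $\lambda=\max_i b_i/a_i$ and subtract $\lambda X_s$. Then test against a curve that lies in a component attaining the maximum, passes through a point where that component meets a non-maximal one, and is contained in no other component. Each non-maximal neighbour contributes a strictly negative term, contradicting numerical triviality, and connectedness of $X_s$ then forces $D=\lambda X_s$. Some such argument must be supplied.

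\textbf{A smaller point.} Identifying $\ker\kappa$ with classes $\mathcal O_X(D)|_{X_s}$ presupposes that $S$-sections of $\Pic^\circ_f$ are represented by line bundles on $X$. The paper secures this by using the section coming from a reduced component.
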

		
		See \cite[\S 1.1]{hol19} and \cite[\S 2.3]{kim24} for the notion of a separated quotient.
		
		\begin{proof}
			(1) Since $X$ is regular by \Cref{prop:minimal delta-regular abelian fibration is unique}, the existence part of the N\'eron mapping property holds for $\Pic_f$ (and hence $\Pic^\bullet_f$). The uniqueness property fails due to the its non-separatedness, so once we take its separated quotient $\Pic^\bullet_f / E$, where $E$ is the closure of its identity section, the uniqueness holds as well. See \cite[Theorem~6.2]{hol19} and \cite[Lemma~6.12]{kim24}. This proves $\Pic^\bullet_f/E$ is the N\'eron model $\check P$. (2) The N\'eron mapping property constructs a homomorphism $\Pic^\circ_f \to \check P$, whose image is $\check P^\circ$ and kernel is the closure of the identity section of $\Pic^\circ_f$ because $\check P$ is separated.
			
			\medskip
			
			(3) Recall that $\Pic_f$, as an fppf abelian sheaf, is the sheafification of the fppf presheaf $T \mapsto \Pic X_T / \Pic T$ (e.g., \cite[\S 9.2]{kle:picard}). The fiber $(\Pic_f)_s$ over $T = \{ s \}$ is the sheafification of the same fppf presheaf applied to $X_s \to \{ s\}$, so $(\Pic_f)_s = \Pic_{X_s}$. Therefore, the fiber of (2) over $s$ yields an isogeny $\Pic^\circ_{X_s} \to \check P^\circ_s$ in general.
			
			\medskip
			
			Suppose now that $f$ is non-multiple. It is enough to show that $\Pic^\circ_f$ is separated, or its identity section is closed. Shrinking $S$, we may assume $f$ has a section by \Cref{lem:non-multiple fiber has reduced component}, and hence every $S$-section of $\Pic^\circ_f \subset \Pic_f$ is representable by a line bundle on $X$ by \cite[Theorem~9.2.5]{kle:picard}. To show the identity section is closed, let $L$ be a line bundle on $X$ that is $f$-trivial over $S_0$. Write $X_s = \sum_{i=1}^r a_iE_i$ for the central fiber and $L = \mathcal O_X(D)$ where $D = \sum_{i=1}^r b_i E_i$ is a divisor supported in $X_s$. Since $L$ is a section of $\Pic^\circ_f \subset \Pic^\tau_f$, it is numerically $f$-trivial over $S$.
			
			Let $Y \subset X$ be a complete intersection of $n-1$ very ample divisors of $X$, a regular surface $Y$ with a fibration over $S$ whose central fiber is $Y_s = \sum_{i=1}^r a_i F_i$ for a prime divisor $F_i$ in $Y$. The restriction of the line bundle is $\mathcal O_Y(D_Y)$ for a divisor $D_Y = \sum_i b_i F_i$. Since it is numerically trivial over $S$, Zariski's lemma for fibrations of surfaces (\cite[Lemma~III.8.2]{bar-hulek-pet-van:surface} or \cite[Theorem~9.1.23]{liu:ag}) applies and shows $D_Y = c \cdot Y_s$ for $c \in \QQ$. Hence $D = c \cdot X_s$. But note that $f$ was non-multiple and hence we may assume $a_1 = 1$ by \Cref{lem:non-multiple fiber has reduced component}. This means $c = b_1 \in \ZZ$ and hence the divisor $D$ is an integral multiple of $X_s$. Hence $L$ is $f$-trivial over $S$.
		\end{proof}
		
		Combining \Cref{prop:dual Neron model}--\ref{prop:dual Neron model is Picard}, the semistability of a minimal $\delta$-regular abelian fibration $f$ can be checked from the Picard scheme $\Pic_{X_s}$ of the central fiber.
		
		\begin{remark} \label{rmk:dual Neron model is Picard}
			By N\'eron--Raynaud, $\check P^\circ \to S$ is always a quasi-projective group scheme. By Artin, $\Pic^\circ_f \to S$ is always a locally separated group algebraic space of finite type. Let us point out a subtle fact that $\Pic^\circ_f$ may fail to be a scheme. More precisely, consider the following four conditions:
			\begin{enumerate}[label = \textnormal{(\roman*)}]
				\item $f : X \to S$ is non-multiple.
				\item $\Pic^\circ_f \to S$ is separated.
				\item $\Pic^\circ_f = \check P^\circ$.
				\item $\Pic^\circ_f$ is a scheme.
			\end{enumerate}
			We claim (i) $\Longrightarrow$ (ii) $\Longleftrightarrow$ (iii) $\Longleftrightarrow$ (iv). The implications (i) $\Longrightarrow$ (ii) $\Longleftrightarrow$ (iii) are noting but \Cref{prop:dual Neron model is Picard}(2--3). (iii) $\Longrightarrow$ (iv) is clear, and (iv) $\Longrightarrow$ (ii) is \cite[$\mathrm{VI_B}$~Corollary~5.5]{SGA3I}. We thank Johan de Jong for clearing up our original confusions. We will later prove in \Cref{lem:kernel of Pic to P check} that if $f$ has multiplicity $m$, then the isogeny $\kappa : \Pic^\circ_{X_s} \to \check P^\circ_s$ has $\ker \kappa \subset \ZZ/m$.
		\end{remark}
		
		\begin{example} \label{ex:non-separated Pic}
			In \Cref{ex:mI0}, a minimal $\delta$-regular abelian fibration $f : X \to S$ with a multiple central fiber $X_s = m \cdot (A/a)$ will be constructed. Its dual t-automorphism scheme is the constant abelian scheme $\check P = \check A \times S \to S$ and $(\Pic^\circ_f)_s = \Pic^\circ_{X_s} = (A/a)^\vee$. The homomorphism $(A/a)^\vee \to \check A$ in \Cref{prop:dual Neron model is Picard}(3) is only an isogeny but not an isomorphism. Indeed, its kernel is generated by the restriction of a line bundle $L = \mathcal O_X(\frac{1}{m} X_s)$, which is trivial over $S_0$ but only numerically trivial over $S$. As a result, $\Pic^\circ_f \to S$ is a non-separated group space and hence not a scheme.
		\end{example}

	\subsection{Weierstrass models} \label{sec:Weierstrass models}
		A minimal elliptic surface $f : X \to S$ with non-multiple central fiber admits a unique \emph{Weierstrass model} $\bar f : \bar X \to S$.\footnote{A more standard terminology is a minimal Weierstrass model. We omitted the word minimal to prevent potential confusions with minimal models.} It is a birational model of $f$ where the total space $\bar X$ has canonical singularities and the central fiber $\bar X_s$ is integral (e.g., \cite[\S 1]{artin-swi73}). One cheap way of constructing Weierstrass model is to use the Kodaira classification: $X_s$ is always a union of a reduced component and a chain of smooth rational curves with negative ADE intersection matrix. Contracting the latter, we obtain an elliptic fibration $\bar X \to S$ from a canonical surface $\bar X$ with an integral central fiber, the Weierstrass model of $f$.
		
		One can still contract a different set of rational curves and yield a different model, resulting a total space with possibly klt singularities. This motivates the following definitions.
		
		\begin{definition}
			A $\delta$-regular abelian fibration $f : X \to S$ is \emph{Weierstrass} (resp. \emph{klt-Weierstrass}) if $X$ is $\QQ$-factorial and has canonical singularities (resp. log terminal singularities), and the central fiber $X_s$ is integral (resp. irreducible).
		\end{definition}
		
		A Weierstrass (resp. klt-Weierstrass) model automatically satisfies $K_X \sim_f 0$ (resp. $K_X \sim_{f, \QQ} 0$) because the central fiber is integral (resp. irreducible). In particular, both are klt-minimal. The following are the main results of this subsection.
		
		\begin{proposition} \label{prop:klt Weierstrass model}
			Every $\delta$-regular abelian fibration has a klt-Weierstrass model.
		\end{proposition}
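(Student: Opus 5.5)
We have to show that every $\delta$-regular abelian fibration $f : X \to S$ has a klt-Weierstrass model. The plan is to start from a minimal model, which is regular and carries a $P$-action, and then contract every component of $X_s$ but one by an MMP. Singularities stay klt, and $\QQ$-factoriality is preserved because we only run divisorial contractions.

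\textbf{Step 1: pass to a good model.} By \Cref{thm:minimal model}, $f$ has a $\QQ$-factorial minimal model. Its t-automorphism scheme is unchanged, since it depends only on $X_0$, so this model is still $\delta$-regular. By \Cref{prop:minimal delta-regular abelian fibration is unique} its total space is regular, so we may assume $X$ itself is regular, $\QQ$-factorial, terminal, with $K_X \equiv_f 0$. Write $X_s = \sum_{i=1}^r a_i E_i$. If $r = 1$ we are done.

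\textbf{Step 2: choose a boundary and run an MMP.} Otherwise pick one component, say $E_1$. Set $\Delta = \epsilon \sum_{i \ge 2} E_i$ for small $\epsilon > 0$, so that $(X, \Delta)$ is klt. Since $K_X \equiv_f 0$, we have $K_X + \Delta \equiv_f \epsilon(\sum_{i\ge2} E_i)$. This divisor is $f$-nef only if it is numerically a multiple of $X_s$, which is impossible because it misses $E_1$. By Zariski's lemma, applied on a surface section exactly as in the proof of \Cref{prop:dual Neron model is Picard}, such a vertical divisor is not $f$-nef.

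Run the $(K_X+\Delta)$-MMP over $S$, which exists and terminates for klt pairs over a curve base. Each step is $(K_X+\Delta)$-negative, and $K_X+\Delta$ is supported on the fiber, so each extremal ray is spanned by curves in $X_s$ meeting some $E_i$ ($i\ge2$) negatively. The key claim is that every step is a divisorial contraction of one of the $E_i$ with $i \ge 2$.

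\textbf{Step 3: exclude flips.} This is the step I expect to be the main obstacle. I would use the symmetry given by the $P^\circ$-action of \Cref{thm:Po-action}. Every model in the MMP is klt-minimal after forgetting $\Delta$, since $K \equiv_f 0$ is preserved. Hence $P^\circ$ acts on each model, and every step is $P^\circ$-equivariant.

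The exceptional locus of each step is therefore $P^\circ_s$-invariant, so by \Cref{lem:P-orbit has dimension n-1} it has dimension $\ge n-1$. A flipping locus has codimension $\ge 2$ in the $(n+1)$-fold $X$, so it would be a union of $(n-1)$-dimensional abelian-torsor orbits. But the locus is covered by rational curves contracted by the flipping contraction (the flipped locus is rationally chain connected over the base), and abelian torsors contain no rational curves. This contradicts the flip, so every step is divisorial.

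\textbf{Step 4: termination and conclusion.} A divisorial contraction cannot contract $E_1$: near a general point of $E_1$ the divisor $K+\Delta$ has no $E_1$ coefficient, so negativity forces the contracted divisor to lie in $\operatorname{Supp}\Delta$. The MMP terminates when $K+\Delta$ is $f$-nef. By Step 2 this happens only when the pushforward of $\Delta$ vanishes, that is, when every $E_i$ with $i\ge2$ has been contracted.

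The resulting model $\bar X$ is $\QQ$-factorial with klt singularities, since these are preserved by divisorial contractions of a klt pair and $K_{\bar X}$ itself is $\QQ$-Cartier. Its central fiber is irreducible, supported on the image of $E_1$. The model is still $\delta$-regular, since $P$ is unchanged. Hence $\bar X \to S$ is a klt-Weierstrass model.
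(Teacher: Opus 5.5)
Your proof is correct, but it controls the MMP differently from the paper.

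\textbf{The paper's route.} It never runs a general MMP. It puts the boundary $\varepsilon E$ on a single component $E$. It then gets a curve $C\subset E$ through $E\cap E'$ with $(E\cdot C)<0$ directly from $(X_s\cdot C)=0$. Next it invokes \Cref{cor:rational curve in E}: every rational curve in $E$ is the image of a fiber of the $\PP^1$-bundle $\tilde E\to A$, which rests on \Cref{prop:E normalization}(ii). From this it concludes that there is exactly one negative extremal ray, and that its contraction is divisorial, collapsing $E$ onto an abelian torsor. Finally it checks $a=0$ in $K_X\sim_{\QQ}\phi^*K_{\bar X}+aE$ and iterates.

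\textbf{Your route.} You keep one component, put the boundary on all the others, and exclude small contractions by the orbit-dimension bound of \Cref{lem:P-orbit has dimension n-1} together with the absence of rational curves in abelian torsors. This is the mechanism of \Cref{prop:minimal delta-regular abelian fibration is unique}.

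\textbf{What each buys.} Your argument needs no information about the geometry of the components. The price is that you need Zariski's lemma to keep $K+\Delta$ non-nef until all other components are gone, and you do not control which component is contracted at a given step. The paper's route gives the sharper statement that any prescribed component can be contracted by itself onto an $(n-1)$-dimensional abelian torsor. This is exactly what is reused in \Cref{lem:prop:E step 1}. Your flip-exclusion would also give this if you took $\Delta=\epsilon E$ instead.

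\textbf{Small repairs.}
\begin{enumerate}
\item Do not invoke termination of klt MMPs in general, since it is not available. Because you rule out flips, every step contracts a fiber component, so termination after at most $r-1$ steps is automatic.
\item Record why $K\equiv_f 0$ survives each step: intersect $K_X\sim_{\QQ}\phi^*K_{X'}+aE_i$ with a contracted curve to get $a=0$. This is what lets you reapply \Cref{thm:Po-action} and \Cref{lem:P-orbit has dimension n-1} on the later models, which are no longer regular.
\item The $P^\circ_s$-invariance of the exceptional locus is best justified by noting that the connected group $P^\circ_s$ acts trivially on $N_1(X/S)$. Hence it preserves the ray and the set of contracted curves. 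You do not need an action on the target of a putative small contraction.
\item On the intermediate models, apply Zariski's lemma after pulling $K+\Delta$ back to the original regular model along the composite birational morphism. This pullback preserves $f$-nefness, keeps coefficient $0$ on $E_1$, and keeps coefficient $\epsilon$ on any surviving $E_i$ with $i\ge 2$.
\end{enumerate}
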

		
		\begin{proposition} \label{prop:Weierstrass model}
			Every minimal $\delta$-regular abelian fibration $f : X \to S$ with a non-multiple central fiber has a Weierstrass model $\bar f : \bar X \to S$. Moreover, $\bar X$ is a contraction of $X$ and is unique up to isomorphism (possibly after shrinking $S$).
		\end{proposition}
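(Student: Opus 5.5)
The plan is to run the minimal model program on $X$ relative to $S$, contracting every component of $X_s$ except one reduced component. By \Cref{prop:minimal delta-regular abelian fibration is unique}, $X$ is regular. By \Cref{lem:non-multiple fiber has reduced component}, $X_s = \sum_i a_i E_i$ has a reduced component, say $E_1$ with $a_1 = 1$. Shrinking $S$, we may assume that $X \to S$ is projective with $K_X \sim_{f} 0$. (Numerical triviality upgrades to $\QQ$-linear triviality after shrinking; since a Weierstrass model will satisfy $K_{\bar X} \sim_f 0$, it suffices to work with this.)

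\textbf{Step 1: contract the other components.} Set $D = \sum_{i \ge 2} E_i$, and consider the pair $(X, \epsilon D)$ for small $\epsilon > 0$. It is klt because $X$ is regular, hence terminal. Since $K_X \equiv_f 0$, we have $K_X + \epsilon D \equiv_f \epsilon D \equiv_f -\epsilon a_1 E_1 + (\text{terms})$. More usefully, $D$ is $f$-exceptional-like: it is a vertical divisor not containing a whole fiber. I will run a $(K_X + \epsilon D)$-MMP over $S$, which terminates by \cite{kol25:neron}-type results for abelian fibrations (or by the standard theory of MMP with scaling for vertical divisors over a curve, cf.\ the negativity lemma). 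The negativity lemma (Zariski's lemma over a curve) shows that the output contracts exactly the divisors in $\operatorname{Supp} D$: a vertical effective divisor not containing a full fiber cannot be nef over $S$. The output $\bar X$ is $\QQ$-factorial, and its central fiber is $\bar E_1$, which is integral because $a_1 = 1$. Since each step is $K_X$-trivial, $\bar X$ has canonical singularities, as $X \to \bar X$ is crepant from a terminal variety. Alternatively, one can take the relative canonical model of $(X, \epsilon D)$, i.e.\ $\operatorname{Proj}$ of the section ring of $D$ over $S$, once finite generation is known. Throughout, $\delta$-regularity of $\bar f$ is automatic, since $P$ depends only on the generic fiber.

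\textbf{Step 2: uniqueness.} Let $\bar X'$ be another Weierstrass model. Then $\bar X \dashrightarrow \bar X'$ is a crepant birational map between canonical models over $S$, and it is an isomorphism in codimension one since both central fibers are integral and correspond to the same divisorial valuation. To see this, note that any such valuation is the unique component of multiplicity one: a different component $E_j$ would have a canonical center but multiplicity $a_j$. The identification of valuations needs care here. Both models are then $\QQ$-factorial, and $\operatorname{Proj}$ of the section ring of an $f$-ample divisor transported across the map shows they are isomorphic; equivalently, each is the ample model of the strict transform of an ample divisor. Moreover, $\bar X'$ is dominated by $X$: since $X$ is the unique minimal model by \Cref{prop:minimal delta-regular abelian fibration is unique} and a terminalization of $\bar X'$ is minimal and $\delta$-regular, $X \to \bar X'$ is a morphism. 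Hence every Weierstrass model is a contraction of $X$ contracting all $E_i$ except one reduced component.

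\textbf{Main obstacle.} The subtle point, and the one I expect to be the main obstacle, is the choice of reduced component: if there are several reduced components, different choices could a priori give different contractions. Uniqueness needs the $P$-action. By \Cref{prop:P-action existence}, $\pi_0(P_s)$ acts on $X$ and permutes the reduced components transitively, since $X' = X \setminus \operatorname{Sing}(X_s)$ is a $P$-torsor and the reduced components are the closures of the $P_s$-orbits in $X'_s$. After shrinking $S$ étale locally to pick a section, translation by an element of $P(S)$ gives an automorphism of $X$ carrying one choice to another. Thus the resulting Weierstrass models are isomorphic, though not as contractions of $X$ with identity on the generic fiber; this is the source of the phrase ``up to isomorphism'' in the statement.
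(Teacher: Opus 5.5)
Your existence step is workable, but it takes a heavier route than the paper. You run a general $(K_X+\epsilon D)$-MMP over $S$, which needs termination over a Dedekind base and might a priori involve flips. The paper contracts one component at a time and uses \Cref{cor:rational curve in E} to see that each extremal ray is spanned by the $\PP^1$-fibers of $\tilde E \to A$, so every step is divisorial and $X \to \bar X$ is a morphism from the start. It then gets canonicity from $K_{\bar X} \sim a\bar X_s$ being Cartier, whereas you get it by preserving discrepancies. Your domination argument, taking a terminalization of $\bar X'$ and applying \Cref{prop:minimal delta-regular abelian fibration is unique}, is valid and slicker than the paper's section argument. It also repairs the fact that your MMP output is not obviously a contraction of $X$. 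Using a t-automorphism to move one reduced component to another matches the paper.

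The gap is the final step of uniqueness. After the t-automorphism, $\bar X \dashrightarrow \bar X'$ is an isomorphism in codimension one. You conclude it is an isomorphism because both models are $\QQ$-factorial and each is ``the ample model of the strict transform of an ample divisor.'' That is false in general. Two $\QQ$-factorial models with $K$ numerically trivial over $S$ that agree in codimension one can differ by a flop, and then the strict transform of an $f$-ample divisor on $\bar X'$ is not $f$-ample, or even nef, on $\bar X$. $\QQ$-factoriality does not rule this out.

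What rules it out here is the group action, and this is the paper's argument. The indeterminacy locus of $\bar X \dashrightarrow \bar X'$, and of its inverse, is $P^\circ_s$-invariant of dimension $\le n-1$. By \Cref{lem:P-orbit has dimension n-1} it is therefore a disjoint union of $(n-1)$-dimensional abelian torsors. A fiber of the graph over an indeterminacy point would be positive-dimensional and rationally chain connected, yet lie in such a torsor, which contains no rational curves. So the indeterminacy locus is empty.

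Alternatively, since you already know both models are contractions of $X$ contracting $E_2,\dots,E_r$, you could show the two contractions contract exactly the same curves and then apply rigidity. Those curves are the rational curves in $\bigcup_{i\ge 2}E_i$: use purity of the exceptional locus over $\QQ$-factorial targets and the fact that each $E_i$ maps onto an abelian torsor. Either way, the missing input is the dichotomy between abelian torsors and rational curves; the same dichotomy also shows your MMP never flips.

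Your intermediate claim that both central fibers give the same valuation because it is ``the unique component of multiplicity one'' is wrong as stated, since a type $\mathrm I_r$ fiber has $r$ reduced components. The map becomes an isomorphism in codimension one only after the t-automorphism step.
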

		
		\begin{proof}[Proof of \Cref{prop:klt Weierstrass model}]
			Start with a $\QQ$-factorial klt-minimal model $f : X \to S$ (e.g., take the minimal model in \Cref{thm:minimal model}).
			
			\medskip
			
			If the central fiber $X_s$ is irreducible, then $f$ is a klt-Weierstrass model. Suppose that $X_s = \sum_{i=1}^r a_i E_i$ has $r \ge 2$ components. Take any irreducible component $E = E_1$. Then there exists another component $E'$ intersecting with $E$. Take a curve $C$ that is contained in $E$, passing through a point in $E \cap E'$, but not contained in any other components $E_i$ for $i \ge 2$. This makes the intersection $(E \cdot C)$ negative, because we have $(X_s \cdot C) = 0$ and $(E' \cdot C) > 0$, so
			\begin{equation} \label{eq:intersection negative}
				(E \cdot C) = -\frac{1}{a_1} \sum_{i=2}^r a_i (E_i \cdot C) < 0 .
			\end{equation}
			
			Fix a rational number $0 < \varepsilon \ll 1$ such that the pair $(X, \varepsilon E)$ is klt \cite[Corollary~2.35]{kol-mori}. Since the $\QQ$-divisor $K_X + \varepsilon E$ is not $f$-nef by \eqref{eq:intersection negative}, we can apply the relative cone/contraction theorem (Theorem~3.25 in op. cit.). First, the cone theorem guarantees the existence of a $(K_X + \varepsilon E)$-negative rational curve $R$ defining an extremal ray in the relative Mori cone. Such a rational curve $R$ must be contained in $E$. We will soon prove in \Cref{cor:rational curve in E} that in this case, the normalization $\tilde E$ admits a $\PP^1$-bundle $\tilde p : \tilde E \to A$ over an abelian torsor $A$ of dimension $n-1$ and moreover, every rational curve in $E$ is the image of a fiber of $\tilde p$. Therefore, there is only one $(K_X + \varepsilon E)$-extremal ray (defined by $R$) and its contraction morphism
			\[ \phi : X \to \bar X \]
			contracts each rational curve in $E$ to a point. This means $\phi$ is a divisorial contraction of $E$ and $\phi(E) \cong A$. This yields a $\QQ$-factorial klt pair $(\bar X, \phi_*E = 0)$ by Proposition~3.36 and 3.43 in op. cit. Finally, take the identity
			\[ K_X \sim_{f, \QQ} \phi^* K_{\bar X} + a E \qquad\mbox{for}\quad a \in \QQ .\]
			Since $(K_X \cdot R) = 0$, we have $a = 0$ and hence $K_{\bar X}$ is numerically $\bar f$-trivial as well. This means $\bar f : \bar X \to S$ is again klt-minimal, so we may run the same process until we reach an irreducible central fiber $X_s$.
		\end{proof}
		
		\begin{proof}[Proof of \Cref{prop:Weierstrass model}]
			For the existence, use \Cref{lem:non-multiple fiber has reduced component} to fix a reduced component of $X_s$. Contract all other components using argument of \Cref{prop:klt Weierstrass model}: this yields a contraction to a klt-Weierstrass model $X \to \bar X$ with an integral central fiber $\bar X_s = E$. Take a canonical divisor $K_{\bar X}$ of the form $aE$ for $a \in \ZZ$. Since $E = X_s$ is Cartier, $K_{\bar X}$ is Cartier and hence $\bar X$ is canonical.
			
			\medskip
			
			Let $\bar X$ be a Weierstrass $\delta$-regular abelian fibration over $S$ and $X$ its minimal model. We claim the birational map $\phi : X \dashrightarrow \bar X$ is a morphism. First, both $X$ and $\bar X$ admit $P^\circ$-action by \Cref{thm:Po-action} and $\phi$ is equivariant. Take a section $S \to \bar X$ through a regular point of $\bar X_s$ and extend it to a section $S \to X$ via valuative criterion. This picks up a reduced component $E_i \subset X_s$ corresponding to $\bar X_s$. Contracting all other components yields another Weierstrass model $X \to \bar X'$ and a birational map $\bar \phi : \bar X' \dashrightarrow \bar X$ that generically identifies the central fibers. The undefined loci of $\bar \phi$ and $\bar \phi^{-1}$ are disjoint unions of abelian torsors by \Cref{lem:P-orbit has dimension n-1}. Since they do not contain rational curves, both $\bar \phi$ and $\bar \phi^{-1}$ are morphisms.
			
			\medskip
			
			Finally, let $\bar \phi : \bar X \dashrightarrow \bar Y$ be a birational map between two Weierstrass $\delta$-regular abelian fibrations. Their minimal models $X \to \bar X$ and $Y \to \bar Y$ are necessarily isomorphic via $\phi : X \to Y$ by \Cref{prop:minimal delta-regular abelian fibration is unique}. Say $E \subset X_s$ and $F \subset Y_s$ are the reduced components corresponding to $\bar X_s$ and $\bar Y_s$. Then there exists a t-automorphism of $Y$ sending $\phi(E)$ to $F$ by \Cref{prop:P-action existence} (possibly after shrinking $S$). Replacing $\phi$ by its composition with this t-automorphism, we may assume $\phi(E) = F$, or $\bar \phi : \bar X \dashrightarrow \bar Y$ is isomorphic in codimension $1$. Again this extends to an isomorphism.
		\end{proof}

\section{Irreducible components of the central fiber} \label{sec:irreducible component}
	Let $f : X \to S$ be a minimal $\delta$-regular abelian fibration and $P$ its t-automorphism scheme. In this section, we focus on studying each irreducible component $E$ of the central fiber $X_s$. The main theme is to construct a morphism $p : E \to A$ to an abelian torsor $A$ of dimension $n-1$, such that the $P^\circ_s$-action on $E$ descends to a transitive $P^\circ_s$-action on $A$ (so $p$ will be a locally trivial morphism). This is almost always possible, as we will see in this section.
	
	We highlight four main results. In \Cref{prop:E normalization}, we show that every irreducible component $E \subset X_s$ after normalization is a $\PP^1$-bundle $\tilde E \to A$, with only few exceptions. In \Cref{prop:E}, we show that $E$ is either a $\PP^1$- or $\PP^1 \cup \PP^1$-bundle $E \to A$ \emph{without} a normalization, but assuming $X_s$ is reducible. When $X_s$ is integral, this may be false, but we will treat this case separately in \Cref{prop:Weierstrass fiber unstable} and \ref{prop:Weierstrass fiber semistable}.

	\subsection{Irreducible component after normalization}
		Let us first study the normalization $\tilde E$ of an irreducible component $E \subset X_s$. The results in this subsection were first proved in \cite[Theorem~1.3]{hwang-ogu09}. Our argument generalizes \cite[Proposition~5.14]{kim24}.
		
		\begin{proposition} \label{prop:E normalization-preparation}
			Let $f : X \to S$ be a klt-minimal $\delta$-regular abelian fibration. Then every irreducible component $E \subset X_s$ has a smooth normalization $\tilde E$.
		\end{proposition}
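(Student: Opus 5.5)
The plan is to exploit the $P^\circ_s$-action. By \Cref{thm:Po-action}, $P^\circ$ acts on $X$. Since $P^\circ_s$ is connected, it preserves each irreducible component $E \subset X_s$, hence acts on $E$ with its reduced structure. By the universal property of normalization, which is functorial for the smooth group $P^\circ_s$ acting on the reduced variety $E$ (use that $P^\circ_s \times \tilde E$ is normal because $P^\circ_s$ is smooth), the action lifts to a $P^\circ_s$-action on $\tilde E$. The non-smooth locus $\Sing(\tilde E)$ is closed, $P^\circ_s$-invariant, and of codimension $\ge 2$ in $\tilde E$, i.e. of dimension $\le n-2$, since $\tilde E$ is normal of dimension $n-1+1 = n$.

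The key step is to bound orbit dimensions on $\tilde E$ from below by $n-1$. By \Cref{lem:P-orbit has dimension n-1}(1), every $P^\circ_s$-orbit in $X_s$, and hence in $E$, has dimension $\ge n-1$. The normalization $\nu : \tilde E \to E$ is finite and $P^\circ_s$-equivariant, so the orbit of $\tilde x$ maps onto the orbit of $\nu(\tilde x)$ finitely. The two orbits therefore have the same dimension, which is $\ge n-1$. Consequently $\Sing(\tilde E)$, being a union of orbits each of dimension $\ge n-1$, yet of dimension $\le n-2$, must be empty. So $\tilde E$ is regular, hence smooth because $k$ has characteristic $0$.

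The only subtle point is the lifting of the action to the normalization. Here I would note that $P^\circ_s \times \tilde E$ is normal, being a product of a smooth variety and a normal variety over an algebraically closed field. The composite $P^\circ_s \times \tilde E \to E$ is dominant on the single component, so it factors through $\tilde E$ by the universal property of normalization. The action axioms then follow by density. The lemma on orbit dimensions was stated for klt-minimal $\delta$-regular fibrations, so it applies verbatim here; no minimality or regularity of $X$ is needed.
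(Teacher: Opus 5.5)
Your proof is correct and follows the paper's argument: lift the $P^\circ_s$-action to the normalization, observe that $\Sing(\tilde E)$ is a closed invariant subset of dimension $\le n-2$ by normality, and compare this with the lower bound $n-1$ on orbit dimensions from \Cref{lem:P-orbit has dimension n-1}. You also spell out two points the paper leaves implicit: why the action lifts to $\tilde E$, and why orbit dimensions are preserved under the finite equivariant map $\nu$.
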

		\begin{proof}
			Take the $P^\circ$-action on $X$ in \Cref{thm:Po-action}. The $P^\circ_s$-action on $X_s$ lifts to its normalization $\tilde X_s$ and restricts to its connected component $\tilde E \subset \tilde X_s$. Note that $\Sing(\tilde E)$ is a $P^\circ_s$-orbit of dimension $\le n-2$ by normality. On the other hand, every orbit of $\tilde E$ has dimension $\ge n-1$ by \Cref{lem:P-orbit has dimension n-1}. This forces $\Sing(\tilde E) = \emptyset$ and hence $\tilde E$ is smooth.
		\end{proof}
		
		\begin{proposition} \label{prop:E normalization}
			In \Cref{prop:E normalization-preparation}, assume either of the following.
			\begin{enumerate}[label = \textnormal{(\roman*)}, ref = \roman*]
				\item $P_s$ is not an abelian variety and the $P^\circ_s$-action on $E$ has a finite kernel (e.g., when $E \subset X_s$ is a reduced component by \Cref{prop:P-action property}).
				
				\item $E$ has a rational curve.
			\end{enumerate}
			Then there exists an $P^\circ_s$-equivariant $\PP^1$-bundle morphism $\tilde p : \tilde E \to A$ to an abelian torsor $A$ of dimension $n-1$. Moreover, $P^\circ_s$ acts on $A$ transitively.
		\end{proposition}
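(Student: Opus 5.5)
The plan is to study the smooth projective variety $\tilde E$ of dimension $n$, smooth by \Cref{prop:E normalization-preparation}, together with the lifted action of $G \coloneq P^\circ_s$. By \Cref{lem:P-orbit has dimension n-1}, every $G$-orbit has dimension $\ge n-1$, and every $(n-1)$-dimensional orbit is an abelian torsor. Let $K \subset G$ be the kernel of the action on $\tilde E$. I will reduce both hypotheses to two structural situations and then construct $\tilde p$ in each. Throughout, $L \subset G$ denotes the linear part, which is $0$, $\GG_m$ or $\GG_a$.

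\emph{Case (i): $K$ finite and $L \neq 0$.} Here $G/K$ is $n$-dimensional and acts faithfully on the $n$-dimensional $\tilde E$, so a generic stabilizer is finite and there is a dense open orbit $U \cong G/H$ with $H$ finite. Set $A \coloneq G/(L \cdot H)$; this is an abelian variety of dimension $n-1$ by the Chevalley sequence.
\begin{enumerate}
\item The projection $U \to A$ is a rational map $\tilde E \dashrightarrow A$. It extends to a morphism $\tilde p$ because $\tilde E$ is smooth and $A$ contains no rational curves.
\item The extension is $G$-equivariant by density of $U$, and $G$ acts transitively on $A$ through $G \to G/(L\cdot H)$.
\item By transitivity and generic smoothness, $\tilde p$ is smooth and all its fibers are isomorphic; after \'etale base change it is locally trivial.
\item A fiber $F$ is a smooth projective curve containing a dense orbit of $L$, which is a quotient of $\GG_m$ or $\GG_a$. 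Hence each component of $F$ is $\PP^1$.
\item If $F$ is disconnected, replace $A$ by the Stein factorization of $\tilde p$. This is an \'etale cover of $A$, still an abelian torsor with a transitive $G$-action, and now the fibers are $\PP^1$.
\end{enumerate}

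\emph{Case (ii): $E$ contains a rational curve.} If $K$ is finite, there are two sub-cases. When $L \neq 0$ we are in case (i). When $L = 0$, $G$ is an abelian variety acting with finite stabilizers, so $\tilde E$ is a single orbit, i.e. an abelian torsor. That contradicts the existence of a rational curve, which lifts to $\tilde E$ through its normalization.

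So suppose $K$ is infinite. Since stabilizers of points of $X_s$ have dimension $\le 1$, we get $\dim K = 1$, and $K^\circ = L$ because a connected $1$-dimensional subgroup of the stabilizers is linear by \Cref{lem:linear stabilizer}. Then the abelian variety $B \coloneq G/L$ of dimension $n-1$ acts on $\tilde E$ with finite stabilizers, and every orbit is an $(n-1)$-dimensional abelian torsor. By the structure theorem for such actions (Blanchard/Brion: a free-up-to-finite action of an abelian variety with quotient a curve), I expect $\tilde E \cong (B \times C')/F$ for a finite group $F$ acting on $B$ by translations and on a smooth curve $C'$.

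The rational curve in $\tilde E$ then finishes the argument:
\begin{enumerate}
\item The map $B \times C' \to \tilde E$ is finite \'etale, and $\PP^1$ is simply connected, so the rational curve lifts to $B \times C'$.
\item $B$ contains no rational curves, so the lift maps nonconstantly to $C'$, forcing $C' \cong \PP^1$.
\item The map $\tilde p : (B \times \PP^1)/F \to B/F \eqqcolon A$ is then the desired equivariant $\PP^1$-bundle, and $G$ acts transitively on $A$.
\end{enumerate}

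I expect the main obstacle to be justifying the product decomposition $\tilde E \cong (B\times C')/F$ in the infinite-kernel case. My first attempt is to use the quotient $\tilde E \to \tilde E/B$, which is a smooth curve since stabilizers are finite and constant along orbits. Choosing a multisection then trivializes the action after a finite \'etale base change, by the standard Poincar\'e-reducibility argument. A secondary point is the smoothness of $\tilde p$ in case (i), where I would rely on homogeneity over $A$.
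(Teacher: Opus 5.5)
Your case (i) is essentially the paper's argument: quotient by the kernel, which by commutativity is the stabilizer of the open orbit; extend the Chevalley map $\tilde E \dashrightarrow A$ using smoothness of $\tilde E$; then conclude by homogeneity and generic smoothness. Your step (5) is never needed. The $L$-orbit $U \cap F$ is connected, and it is dense in $F$, because $\tilde E \setminus U$ is a finite union of $(n-1)$-dimensional orbits, each finite over $A$.

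In case (ii) you take a genuinely different route.

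\emph{The paper's route.} The paper never proves a product decomposition of $\tilde E$. It quotients $\tilde E$ by a finite subgroup $K \subset A_s$ containing all (finitely many) stabilizers. Then $\bar A = A_s/K$ acts freely on $F = \tilde E/K$, and the free quotient $F \to C$ is an $\bar A$-torsor over a smooth projective curve. The rational curve cannot lie in an orbit, so it dominates $C$, giving $C \cong \PP^1$. The torsor is then trivial because $H^1_{\acute et}(\PP^1, \bar A) = 0$, and Stein factorization of $\tilde E \to \PP^1 \times \bar A \to \bar A$ gives $\tilde p$.

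\emph{Your route.} The paper uses the rational curve \emph{before} trivializing, so it only needs the vanishing of torsors over $\PP^1$. You trivialize first, over an arbitrary curve. This requires the Blanchard--Brion structure theorem $\tilde E \cong (B \times C')/F$, which is true and yields more (the structure of $\tilde E$ without any rational curve). However, your sketch of it is the weak point. An arbitrary multisection $D$ does not make $B \times D \to \tilde E$ \'etale. You need a specific one, for instance a fiber of an equivariant map $\tilde E \to B'$ onto an isogenous abelian variety, built via Poincar\'e reducibility in $\Alb(\tilde E)$ or from the torsion of the torsor class. Reordering the argument as the paper does removes this obstacle entirely.

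\emph{A step to justify in both subcases.} You use that a rational curve $R \subset E$ yields a rational curve in $\tilde E$. This is not automatic for a normalization: if $R$ lay in the conductor, its preimage need not be rational. It does hold here. The non-normal locus of $E$ is $P^\circ_s$-invariant of dimension $\le n-1$, hence a union of $(n-1)$-dimensional orbits, which are abelian torsors. So $R$ is not contained in it, and its strict transform is birational to $R$. In the $L = 0$ subcase it is simpler to note that $E$ itself is a single orbit, as in \Cref{lem:classification when P is smooth}.
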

		\begin{proof}
			(i) Let $K$ be the kernel of the action and denote $\bar P_s = P^\circ_s / K$. Then the $\bar P_s$-action on $\tilde E$ is generically free with a unique open orbit $\bar P_s \subset \tilde E$. Let $0 \to \bar L_s \to \bar P_s \to A \to 0$ be the Chevalley exact sequence, where $\bar L_s = \GG_m$ or $\GG_a$ and $A$ is an abelian variety of dimension $n-1$. The equivariant rational map $\tilde E \supset \bar P_s \to A$ is defined everywhere because $\tilde E$ is smooth and $A$ is an abelian torsor. Since the $\bar P_s$-action on $A$ is transitive, all its fibers are isomorphic to each other and contains the linear part $\bar L_s$ as an open subset. Generic smoothness shows it is a $\PP^1$-bundle.
			
			\medskip
			
			(ii) Note that $P_s$ cannot be an abelian variety by \Cref{lem:classification when P is smooth}. Let us further assume that (i) does not hold, i.e., $E$ has a $1$-dimensional kernel containing the linear part $L_s \subset P^\circ_s$ (\Cref{prop:P-action property}). We may thus consider an $P_s^\circ/L_s = A_s$-action instead. Take a sufficiently large finite subgroup $K \subset A_s$ that contains all stabilizers of the $A_s$-action on $\tilde E$. Defining the quotients $F = \tilde E / K$ and $\bar A = A_s / K$, the variety $F$ admits an $\bar A$-action by \Cref{lem:quotient action}, which is free because we killed every stabilizer. The variety $F$ is smooth by arguing $\Sing(F) = \emptyset$ as in \Cref{prop:E normalization-preparation}. The free quotient (e,g, \cite[Tag~06PH]{stacks-project})
			\[ F \to C = F/\bar A \]
			defines an $\bar A$-torsor over a smooth projective curve $C$. At this point, use the rational curve assumption to see that $C$ is dominated by a rational curve. This proves $C = \PP^1$, and hence forces $F = \PP^1 \times \bar A$ because $H^1_{\acute et} (\PP^1, \bar A) = 0$.
			
			\medskip
			
			We have an equivariant morphism $\tilde E \to F \to \bar A$ to an abelian torsor $\bar A$ of dimension $n-1$ with a transitive $A_s$-action. Take its Stein factorization $\tilde E \to A \to \bar A$. Then $A$ admits a transitive $A_s$-action as well, so it is an abelian torsor of dimension $n-1$ (and $A \to \bar A$ is an isogeny). Moreover, $\tilde p : \tilde E \to A$ is smooth projective and is locally trivial. Since $\tilde E$ has a rational curve by assumption, it is a $\PP^1$-bundle.
		\end{proof}
		
		\begin{corollary} \label{cor:rational curve in E}
			Let $f : X \to S$ be a klt-minimal $\delta$-regular abelian fibration. Then every rational curve in $E \subset X_s$ is the image of a fiber of the $\PP^1$-bundle $\tilde p : \tilde E \to A$ in \Cref{prop:E normalization}. \qed
		\end{corollary}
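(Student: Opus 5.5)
The plan is to reduce to \Cref{prop:E normalization}(ii) and then exploit the $\PP^1$-bundle structure it produces. Let $R \subset E$ be a rational curve, with $E \subset X_s$ an irreducible component containing it; the statement is vacuous if no such curve exists. Condition (ii) of \Cref{prop:E normalization} holds, so the normalization $\tilde E$ carries a $P^\circ_s$-equivariant $\PP^1$-bundle $\tilde p : \tilde E \to A$ over an abelian torsor $A$ of dimension $n-1$.

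First, lift $R$ to $\tilde E$. The normalization $\nu : \tilde E \to E$ is finite and surjective, so some irreducible curve $\tilde R \subset \tilde E$ maps onto $R$. I expect this to be the main obstacle, and the subtle point is that $\tilde R$ must itself be rational. Take the normalization $R^\nu \cong \PP^1$ of $R$ and form the fiber product $R^\nu \times_E \tilde E$, or better, choose a component of $\nu^{-1}(R)$ dominating $R$. In general a finite cover of a rational curve need not be rational, so we cannot conclude from finiteness alone.

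To get around this, I would avoid lifting and push forward to $A$ directly. The composition $R^\nu \to R \subset E$ is a rational curve in $E$. Any rational map from $\PP^1$ to an abelian torsor is constant, so it suffices to produce a morphism $E \to A$ through which the situation factors. Since $E$ may be non-normal, use instead the curve $\tilde R$. Its normalization $\tilde R^\nu$ maps finitely onto $R^\nu \cong \PP^1$, and $\tilde p(\tilde R)$ is a point or a curve in $A$.

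If $\tilde p(\tilde R)$ is a curve, pass to the $P^\circ_s$-equivariant structure. The $P^\circ_s$-action on $E$ factors through $A_s$ or has open orbit, and the fibers of $\nu$ are finite and preserved by the action. Hence the set-theoretic map $E \to A/K$ for a suitable finite $K$ (all stabilizers), namely $x \mapsto \tilde p(\nu^{-1}(x))$ modulo $K$, is well defined and continuous. One can construct it as a morphism because $\tilde E \to A \to A/K$ is constant on $\nu$-fibers: two points of $\tilde E$ over the same point of $E$ lie in the same $P^\circ_s$-orbit up to finite stabilizer data, by equivariance and the finiteness of orbits of $\nu$-fibers. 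This is the step I would check most carefully, possibly via the gluing of $\nu$ being $P^\circ_s$-equivariant with $A$ homogeneous. Then $R \to A/K$ is constant, since $R^\nu = \PP^1$ and $A/K$ is an abelian torsor. So $\tilde p(\tilde R)$ is finite, hence a point, contradicting the assumption that it is a curve.

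Therefore $\tilde R$ lies in a single fiber $\tilde p^{-1}(a) \cong \PP^1$, and being a curve it equals that fiber. Consequently $R = \nu(\tilde p^{-1}(a))$ is the image of a fiber of $\tilde p$, which is the claim.
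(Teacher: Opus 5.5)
Your worry about lifting $R$ to $\tilde E$ is the right one, but the workaround fails: in general there is no morphism $E \to A/K$ with $K$ finite through which $\tilde E \to A \to A/K$ factors. The justification that two points of $\tilde E$ over the same point of $E$ lie in the same $P^\circ_s$-orbit is false.

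Take a Weierstrass fiber of type $\mathrm I_1$ (\Cref{prop:Weierstrass fiber semistable}, \Cref{thm:classification of semistable fibers}). There $\nu$ glues the section $0_A$ to the section $\infty_A$, which are two different orbits, and the gluing is a translation by the shear $a \in A_s$. When $a$ is non-torsion (such fibrations exist, see \S\ref{sec:examples}), the two preimages of a non-normal point have images in $A$ differing by a non-torsion element, so no finite $K$ works. In fact no such morphism exists at all. It would be a surjection from $X_s = E$ onto an $(n-1)$-dimensional abelian torsor, while $\dim \Alb_s < n-1$ by \Cref{cor:dimension of Albanese}(2). Since the corollary applies to this fibration, your argument does not prove it.

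The missing idea is to show that $\nu$ is an isomorphism over a dense open subset of $R$.

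The non-normal locus $Z \subset E$ (the conductor) is a closed $P^\circ_s$-invariant subset of dimension $\le n-1$. By \Cref{lem:P-orbit has dimension n-1}, it is therefore a disjoint union of $(n-1)$-dimensional abelian torsors, and these contain no rational curves. Hence $R \not\subset Z$.

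It follows that the strict transform $\tilde R = \overline{\nu^{-1}(R \setminus Z)}$ maps birationally onto $R$, so $\tilde R$ is rational. Then $\tilde p(\tilde R)$ is a point of $A$, and $\tilde R$ is a whole fiber of $\tilde p$.

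This also settles the rationality of the lift that you identified as the main obstacle. The same observation underlies the step ``$\tilde E$ has a rational curve'' in the proof of \Cref{prop:E normalization}(ii).
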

		
		\begin{remark}
			\begin{enumerate}
				\item Propositions~\ref{prop:E normalization-preparation}--\ref{cor:rational curve in E} are proved in \cite[\S 2--3]{hwang-ogu09} in the context of Lagrangian fibrations. A rational curve in $X_s$ is called a characteristic leaf there, since it is obtained as a leaf of a certain rank $1$ foliation induced form the symplectic form.
				
				\item A posteriori, (i) implies (ii) in \Cref{prop:E normalization}. According to \Cref{lem:P-orbit has dimension n-1}, the kernel of the $P^\circ_s$-action on $X_s$ is either finite or $1$-dimensional. It is these two cases that behaved differently in the proof of \Cref{prop:E normalization}.
				
				\item We will see in \Cref{prop:E} that once $X_s$ is reducible, every irreducible component $E \subset X_s$ is covered by smooth rational curves.
			\end{enumerate}
		\end{remark}
		
		We conclude with some properties of the morphism $\tilde p : \tilde E \to A$, which will be used in future arguments.
		
		\begin{lemma} \label{lem:E normalization structure-semistable}
			In \Cref{prop:E normalization}, assume the condition (i) holds.
			\begin{enumerate}
				\item If $P_s$ has a linear part $\GG_m$, then $\tilde E$ has precisely two $P^\circ_s$-orbits with nontrivial stabilizers. Both are sections of $\tilde p$, which we call $0$ and $\infty$.
				\item If $P_s$ has a linear part $\GG_a$, then $\tilde E$ has precisely one $P^\circ_s$-orbit with nontrivial stabilizers. It is a section of $\tilde p$.
			\end{enumerate}
		\end{lemma}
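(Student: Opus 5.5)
Under assumption (i), the proof of \Cref{prop:E normalization} gives a quotient $\bar P_s = P^\circ_s/K$ by a finite group $K$ that acts on $\tilde E$ with a dense open orbit $U \cong \bar P_s$. It also gives a $\bar P_s$-equivariant $\PP^1$-bundle $\tilde p : \tilde E \to A$ with $\bar P_s \to A$ the Chevalley quotient, whose kernel is $\bar L_s \cong \GG_m$ or $\GG_a$. The plan is to reduce everything to a single fiber of $\tilde p$. Since $\bar P_s$ acts transitively on $A$, the orbits of $\bar P_s$ on $\tilde E$ correspond bijectively to the orbits of the stabilizer of a point $a \in A$ acting on the fiber $\tilde p^{-1}(a) \cong \PP^1$. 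This stabilizer contains $\bar L_s$ as a finite-index subgroup; in fact it equals $\bar L_s$, because the stabilizer of $a$ in $\bar P_s$ is exactly the kernel of $\bar P_s \to A$. So we need the $\bar L_s$-orbits on $\PP^1$. By construction the open orbit meets the fiber in a copy of $\bar L_s$, embedded as an open subset of $\PP^1$, and the $\bar L_s$-action on it is translation.

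Next I would identify the complement. An open subset of $\PP^1$ isomorphic to $\GG_m$ misses exactly two points, and one isomorphic to $\GG_a \cong \AA^1$ misses exactly one point. The $\bar L_s$-action on $\PP^1$ is algebraic, so it is given by a homomorphism $\bar L_s \to \Aut(\PP^1) = \PGL_2$. The complement of the open orbit is therefore $\bar L_s$-stable, and since $\bar L_s$ is connected, each of its points is fixed. These fixed points have stabilizer all of $\bar L_s$, which is nontrivial. On the open orbit the stabilizer in $\bar P_s$ is trivial, so in $P^\circ_s$ it is the finite group $K$. Here I must be careful about what ``nontrivial stabilizers'' means. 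I would interpret it as stabilizers in $\bar P_s$, equivalently stabilizers in $P^\circ_s$ that are strictly larger than the generic kernel $K$. Alternatively, stabilizers of positive dimension: the open orbit has finite stabilizer $K$, while the boundary points have stabilizer containing the one-dimensional preimage of $\bar L_s$.

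Globalizing: the boundary points of each fiber sweep out the complement $\tilde E \setminus U$. This complement is $\bar P_s$-stable and meets every fiber in two points (case $\GG_m$) or one point (case $\GG_a$). In the $\GG_a$ case it is therefore the image of a section of $\tilde p$: it is a single $\bar P_s$-orbit mapping bijectively onto $A$, and being an orbit it is smooth, so the bijective equivariant map of homogeneous spaces is an isomorphism (characteristic $0$). In the $\GG_m$ case the complement is a finite étale double cover of $A$ whose components are $\bar P_s$-orbits.

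The main obstacle is showing that this double cover is split, i.e.\ consists of two sections $0$ and $\infty$ rather than one connected orbit of degree $2$ over $A$. I would argue using the characters of $\GG_m$. The two fixed points of $\GG_m$ on $\PP^1$ are distinguished by the weight by which $\GG_m$ acts on the tangent line at each: it is $+1$ at one and $-1$ at the other, since the action is the standard one extended from translation on $\GG_m$. Any element of $\bar P_s$ commutes with $\bar L_s$ (everything is commutative), so it carries a fiber's fixed point of weight $+1$ to a fixed point of weight $+1$. Hence the orbit of the weight $+1$ point is a single orbit meeting each fiber once, i.e.\ a section $0$; the weight $-1$ points likewise form a section $\infty$. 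This yields exactly two orbits with nontrivial stabilizers in case (1) and one in case (2).
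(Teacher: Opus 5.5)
Your proof is correct, and its skeleton is the paper's. The complement $Z$ of the open orbit $\bar P_s \subset \tilde E$ meets each fiber of $\tilde p$ in the boundary of $\bar L_s \subset \PP^1$. So $Z \to A$ is finite \'etale of degree $2$ in the $\GG_m$ case and an isomorphism in the $\GG_a$ case.

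The only divergence is the step you flag as the main obstacle, the splitting of $Z$. The paper compares stabilizers. Each $z \in Z$ has stabilizer containing $\bar L_s$, while $\tilde p(z) \in A$ has stabilizer exactly $\bar L_s$. Hence the orbit of $z$ is $\bar P_s/\bar L_s \cong A$, i.e.\ a section.

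Your own first paragraph already gives this. The orbit of a boundary point $z$ over $a$ meets $\tilde p^{-1}(a)$ in $\bar L_s \cdot z = \{z\}$. So the two boundary points of a fiber lie in distinct orbits, and no further input is needed.

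The tangent-weight argument is nevertheless valid: commutativity makes the character of $\bar L_s \cong \GG_m$ on the vertical tangent line at a fixed point constant along $\bar P_s$-orbits. What it adds is an intrinsic labelling of the two sections by weight $\pm 1$. This is the same distinction the paper later draws as $\lim_{z \to 0}$ versus $\lim_{z \to \infty}$ of $\GG_m$-orbits in \Cref{lem:classification of multiple fiber iii:orientation}.

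Your caveat about the finite kernel $K$ is also well taken. If $K \neq 0$, every point of $\tilde E$ has nontrivial $P^\circ_s$-stabilizer. The statement must then be read for $\bar P_s = P^\circ_s/K$, or as ``stabilizer strictly larger than $K$'', which is what the paper's proof tacitly does.
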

		\begin{proof}
			The $\PP^1$-bundle $\tilde p$ was constructed as a compactification of the Chevalley morphism $\bar P_s \to A$ where $\bar P_s \subset \tilde E$ is the open orbit. Its complement $Z = \tilde E \setminus \bar P_s \to A$ is a $\bar P_s$-equivariant morphism.
			
			\medskip
			
			(1) $Z \to A$ is finite \'etale of degree $2$ because $\bar L_s = \GG_m \subset \PP^1$ is a two-point compactification. The stabilizers of $Z$ contain $\bar L_s$ because $\dim Z = n-1$. But the stabilizer of $A$ is precisely $\bar L_s$, so this concludes $Z$ has the stabilizers $\bar L_s$ as well. This means $Z$ a disjoint union of two sections. (2) $Z \to A$ is an isomorphism because $\bar L_s = \GG_a \subset \PP^1$ is a one-point compactification.
		\end{proof}
		
		\begin{lemma} \label{lem:E normalization structure-unstable}
			In \Cref{prop:E normalization}, assume (ii) holds but (i) fails. Then $\tilde p : \tilde E \to A$ admits a finite \'etale Galois base change as follows with the following properties:
			\[\begin{tikzcd}
				\PP^1 \times A_s \arrow[r] \arrow[d, "\pr_2"] & \tilde E \arrow[d, "\tilde p"] \\
				A_s \arrow[r] & A
			\end{tikzcd}.\]
			\begin{enumerate}
				\item The Galois group $G$ acts diagonally on $\PP^1 \times A_s$.
				\item The $A_s$-action on $\tilde E$ has the stabilizer group $\ker (G \to \PGL_2) \subset A_s$ at every point $x \in \tilde E$.
			\end{enumerate}
		\end{lemma}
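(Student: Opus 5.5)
We rerun the construction from the proof of \Cref{prop:E normalization}(ii), now keeping track of the group data. Since (i) fails, the $P^\circ_s$-action on $\tilde E$ factors through $A_s = P^\circ_s/L_s$. Choose a finite subgroup $K \subset A_s$ containing all stabilizers of the $A_s$-action on $\tilde E$. Then $F = \tilde E/K$ carries a free $\bar A = A_s/K$-action, and the proof of \Cref{prop:E normalization} gives $F \cong \PP^1 \times \bar A$ equivariantly. Here $\bar A$ acts by translation on the second factor.

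\emph{Step 1: the fiber-product diagram.} Consider the pullback of the $K$-quotient $\tilde E \to F = \PP^1 \times \bar A$ along the isogeny $\PP^1 \times A_s \to \PP^1 \times \bar A$. I would instead build the cover directly. Fix a point $a \in A$ and let $\Phi = \tilde p^{-1}(a) \cong \PP^1$. The action map
\[ \Phi \times A_s \to \tilde E, \qquad (x, b) \mapsto b\cdot x \]
is $A_s$-equivariant, where $A_s$ acts on $\Phi \times A_s$ by translation on the second factor. It lies over the orbit map $A_s \to A$, $b \mapsto b\cdot a$. That orbit map is a finite étale isogeny-torsor with Galois group $G = \St_{A_s}(a) \subset A_s$; this subgroup is finite because $\dim A = \dim A_s$. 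Since $\tilde p$ is locally trivial and $\Phi \times A_s \to \tilde E$ is fiberwise an isomorphism onto $\tilde p^{-1}(b\cdot a)$, the square is cartesian. Hence $\PP^1 \times A_s \to \tilde E$ is the finite étale $G$-Galois base change of $\tilde p$ along $A_s \to A$.

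\emph{Step 2: the $G$-action.} The deck transformations of $\Phi \times A_s \to \tilde E$ are $g\cdot(x,b) = (g\cdot x, b - g)$ for $g \in G$. This holds because $(b-g)\cdot(g\cdot x) = b\cdot x$, and $g \in G$ preserves the fiber $\Phi$ over $a$. So $G$ acts on $\PP^1 = \Phi$ through the homomorphism $G \to \Aut(\Phi) = \PGL_2$, and by translation on $A_s$. The action is diagonal, which proves (1).

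\emph{Step 3: stabilizers.} For $x = b\cdot y$ with $y \in \Phi$, commutativity gives $\St(x) = \St(y)$. Moreover $\St(y) \subset \St(a) = G$, and $\St(y)$ consists of those $g \in G$ fixing $y$ in $\Phi$. So a priori the stabilizer is $\St_G(y)$, which depends on $y$.

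The main obstacle is to show that $\St_G(y) = \ker(G \to \PGL_2)$ for every $y$, i.e.\ that no $g \in G$ acting nontrivially on $\PP^1$ has a fixed point. I would argue by contradiction. Such a $g$ would make $\tilde E$ have an $A_s$-orbit (the orbit of a fixed point $y$) with strictly larger stabilizer than the generic one. That orbit $O$ is an $(n-1)$-dimensional abelian torsor sitting in $\tilde E$ as a multisection, namely the image of $\{y\}\times A_s$. Its image in $X_s$ would then be a $P^\circ_s$-stable locus with larger stabilizer. I would rule this out using smoothness of $X$ (\Cref{prop:minimal delta-regular abelian fibration is unique}) and the freeness in \Cref{prop:P-action property}(2) together with the $\delta$-regularity structure of orbits.

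If this global argument is hard, the fallback is purely group-theoretic. A finite commutative group acting faithfully on $\PP^1$ through $G/\ker$ has the property that every element has exactly two fixed points, and these are permuted compatibly. In particular the image of $G$ in $\PGL_2$ is cyclic or $(\ZZ/2)^2$. One then checks that the fixed points give orbits of $\tilde E$ that must be rigid, i.e.\ distinguished, and derives a contradiction with the transitivity of $A_s$ on the base and the absence of distinguished orbits when the kernel of the action is $1$-dimensional.
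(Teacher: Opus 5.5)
Your Steps 1 and 2 are correct and give (1) by essentially the paper's route. The paper base-changes $\tilde p$ along $A_s \to A$, trivializes the free $A_s$-action on $\tilde E \times_A A_s$ by a fiber of $\tilde p$, and gets diagonality by rigidity: $y \mapsto g_y$ is a morphism from the proper connected $A_s$ to the affine $\PGL_2$, hence constant. Your action map $\Phi \times A_s \to \tilde E$ instead shows the diagonal form $g\cdot(x,b) = (g\cdot x, b-g)$ directly.

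The gap is Step 3, and it cannot be closed. Your formula $\St(b\cdot y) = \St_G(y)$ is the right answer. Every nontrivial finite-order element of $\PGL_2(k)$ has two fixed points on $\PP^1$, so the claim you set out to prove (no $g$ acting nontrivially on $\PP^1$ has a fixed point) holds only when $G$ acts trivially on $\PP^1$. Your proposed tools also do not reach $E$: the freeness in \Cref{prop:P-action property}(2) holds only on $X \setminus \Sing(X_s)$, which misses every non-reduced component, and $E$ is non-reduced once (i) fails.

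Here is a concrete counterexample to (2) read pointwise. Take the double component of a type $\mathrm I_0^*/2$ fiber $(C \times A_s)/G$, with $C = E_1 + \cdots + E_4 + 2F$ and $G = \langle g\rangle \cong \ZZ/2$ acting by $g(z,y) = (\beta(z), y+a)$ for a nonzero $a \in A_s[2]$. Here $g$ comes from a $2$-torsion t-automorphism meeting a non-identity component, so it permutes the $E_i$ with no fixed component. Hence $\beta$ is the involution of $F \cong \PP^1$ swapping the four points $F \cap E_i$ in pairs.
\begin{itemize}
\item $E = (F \times A_s)/G \to A_s/\langle a \rangle$ is a smooth $\PP^1$-bundle on which $\GG_a$ acts trivially, since $\GG_a$ fixes the four points $F\cap E_i$. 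So (ii) holds and (i) fails.
\item $G \to \PGL_2$ is injective, so $\ker(G \to \PGL_2) = 0$.
\item For the two fixed points $z_1, z_2$ of $\beta$, we get $a\cdot[z_j,y] = [z_j,y+a] = [g(z_j,y)] = [z_j,y]$. So the stabilizer is $\ZZ/2$ along two $A_s$-orbits.
\end{itemize}

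What (1) actually yields is weaker, and it is all that the paper's one-line ``(2) directly follows from (1)'' can deliver. The stabilizer at the image of $(z,y)$ is $\St_G(z) \supseteq \ker(G \to \PGL_2)$. Equality holds off the finitely many $A_s$-orbits lying over fixed points of nontrivial elements of the image of $G$ in $\PGL_2$, which is cyclic or Klein four. Thus $\ker(G\to\PGL_2)$ is the generic stabilizer and the kernel of the $A_s$-action on $\tilde E$. Stabilizers are constant exactly when $G$ acts trivially on $\PP^1$, in which case $\tilde E \cong \PP^1 \times A$.

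You should state and prove that version rather than look for a contradiction that does not exist. Note also that the later appeal to ``constant stabilizers'' in \Cref{lem:prop:E step 2}(2) cannot rest on the pointwise claim.
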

		\begin{proof}
			Recall from the proof of \Cref{prop:E normalization} that $L_s \subset P^\circ_s$ acts trivially on $E$ and hence $\tilde p : \tilde E \to A$ is an $A_s$-equivariant morphism. Since the $A_s$-action on $A$ is transitive (say $G \subset A_s$ is its stabilizer group), there exists an isogeny $A_s \to A$ with Galois group $G$. Let us temporarily write $\tilde p' : \tilde E' \to A_s$ for the base change of $\tilde p$. Then the $A_s$-action on $\tilde E'$ is free because $\tilde p'$ is $A_s$-equivariant and the action is free on $A_s$. Its free quotient $\tilde E' \to \tilde E' / A_s \cong \PP^1$ is an $A_s$-torsor. It is trivial because any fiber of $\tilde p$ gives its section, showing $\tilde E' = \PP^1 \times A_s$.
			
			\medskip
			
			(1) Because $\pr_2$ is equivariant, $g \in G \subset A_s$ acts on $\PP^1 \times A_s$ as
			\[ g \cdot (x, y) = (g_y(x), y + g) \qquad\mbox{for}\quad (x,y) \in \PP^1 \times A_s , \ g_y : \PP^1 \to \PP^1 .\]
			This defines a morphism $A_s \to \PGL_2$ sending $y \mapsto g_y$, which is necessarily constant since $\PGL_2$ is affine and $A_s$ is proper connected. This shows the $G$-action on $\PP^1 \times A_s$ is diagonal. (2) directly follows from (1). The same idea will be used again in \Cref{prop:untangle for unstable}.
		\end{proof}

	\subsection{Irreducible component without normalization}
		To analyze the structure of an irreducible component $E \subset X_s$ without normalization, we need two additional ingredients. The first is the existence of a log terminal contraction in \S \ref{sec:Weierstrass models}, and the second is the structural results of non-normal varieties (\S \ref{sec:conductor}).
		
		The following is the main result of this subsection.
		
		\begin{proposition} \label{prop:E}
			Let $f : X \to S$ be a minimal $\delta$-regular abelian fibration. Assume the central fiber $X_s$ is reducible. Then every irreducible component $E \subset X_s$, reduced or not, admits an \'etale locally trivial morphism $p : E \to \bar A$ to an abelian torsor $\bar A$ with a transitive $P^\circ_s$-action. Moreover, the $\PP^1$-bundle $\tilde p : \tilde E \to A$ in \Cref{prop:E normalization} exists and is related to $p$ by one of the following:
			\begin{enumerate}[label = \textnormal{(\roman*)}, ref = \roman*]
				\item $E = \tilde E$ is smooth projective and $p = \tilde p$ is a $\PP^1$-bundle.
				\item $E$ is singular along a section of $p$, and there exists a commutative diagram
				\[\begin{tikzcd}
					\tilde E \arrow[r, "\nu"] \arrow[d, "\tilde p"] & E \arrow[d, "p"] \\
					A \arrow[r] & \bar A
				\end{tikzcd}.\]
				Here $\nu$ is the normalization, the fiber of $p$ is isomorphic to $(xy = 0) \subset \PP^2_{[x,y,z]}$, and $A \to \bar A$ is an isogeny of degree $2$.
			\end{enumerate}
		\end{proposition}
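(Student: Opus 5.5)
First I will show that $E$ contains a rational curve, so that \Cref{prop:E normalization}(ii) applies and $\tilde p : \tilde E \to A$ exists. Since $X_s$ is reducible, the argument in the proof of \Cref{prop:klt Weierstrass model} applies verbatim to $E = E_1$. Pick a component $E'$ meeting $E$ and a curve $C \subset E$ through a point of $E \cap E'$, so that \eqref{eq:intersection negative} gives $(E\cdot C)<0$. For $0<\varepsilon \ll 1$, the divisor $K_X+\varepsilon E$ is then not $f$-nef, and the relative cone theorem produces a $(K_X+\varepsilon E)$-negative rational curve $R$. This curve is necessarily contained in $E$. Hence the $\PP^1$-bundle $\tilde p$ exists with a transitive $P^\circ_s$-action on $A$. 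By \Cref{cor:rational curve in E}, the only extremal ray is the class of the fibers $\nu(\tilde p^{-1}(a))$. Its contraction $\phi : X \to \bar X$ contracts exactly these curves, and $\phi(E) =: \bar A$ is a $P^\circ_s$-orbit of dimension $n-1$, hence an abelian torsor by \Cref{lem:P-orbit has dimension n-1}. This gives $p = \phi|_E : E \to \bar A$ with $p\circ\nu$ factoring through $\tilde p$ by rigidity, and the induced map $A\to\bar A$ is equivariant between transitive abelian torsors, hence an isogeny. Local triviality of $p$ is obtained by base changing along an isogeny $A_s \to \bar A$ (or $P^\circ_s \to \bar A$) as in \Cref{lem:E normalization structure-unstable}: equivariance makes $E\times_{\bar A} A_s$ a product of one fiber with $A_s$.

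Next I need to analyze a single fiber $F = p^{-1}(\bar a)$. It is the image of $\deg(A\to\bar A)$ copies of $\PP^1$, and it is connected since $p$ is a contraction. Here I would use the non-normal structure results (\S\ref{sec:conductor}). The non-normal locus of $E$ is $P^\circ_s$-invariant, of dimension $n-1$ by Serre's criterion applied to $E$ (which is $S_2$-ish as a Cartier divisor's component only generically, so I would rather use $X_s$ demi-normal when reduced, and in general the conductor subscheme). By \Cref{lem:P-orbit has dimension n-1} it is therefore a union of abelian torsors, whose preimages in $\tilde E$ are unions of orbits with nontrivial stabilizers. In the case (i) of \Cref{prop:E normalization}, \Cref{lem:E normalization structure-semistable} tells us these are sections $0$, $\infty$ or a single section. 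The remaining possibilities are then limited to the following: $\nu$ is an isomorphism (case (i)); $\nu$ glues the two sections $0$, $\infty$ of $\tilde E$ into one section of $E$ over the identity of $A$ (a self-gluing producing a nodal fiber); or $\nu$ identifies two different fibers of $\tilde p$ along a section, i.e. $A\to\bar A$ has degree $2$ and the fiber is $\PP^1\cup\PP^1$ meeting at a point.

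The main obstacle is ruling out everything except (i) and (ii). In particular, I need to exclude the self-glued nodal fiber, cusps, higher-degree isogenies, and non-transversal gluing. For the self-glued case I would argue that the resulting fiber is an irreducible nodal rational curve. Its arithmetic genus is then $1$, and the curves in $\bar X$ would give an elliptic-type fiber of $\phi$. This contradicts the fact that the contraction of a $K$-trivial extremal ray in a klt setting has rationally chain connected, genus-$0$ fibers with $R^1\phi_*\mathcal O = 0$. I would use the same vanishing $R^1\phi_*\mathcal O_X=0$ (relative Kawamata–Viehweg, since $-(K_X+\varepsilon E)$ is $\phi$-ample) to force each fiber of $p$ to be a tree of $\PP^1$'s with $h^1(\mathcal O)=0$. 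Hence the fiber is either a single $\PP^1$ or a seminormal tree glued at points. Equivariance under the transitive $P^\circ_s$-action on the gluing section, together with the bound of at most two boundary sections from \Cref{lem:E normalization structure-semistable} (or the stabilizer description from \Cref{lem:E normalization structure-unstable} in the unstable case), then bounds the tree to at most two components meeting at one point. Finally, I would use the demi-normality and seminormality provided by the conductor results to identify the singularity as the transversal union $(xy=0)\subset\PP^2$ rather than a tacnode. This forces $\deg(A\to\bar A)=2$ and $E$ singular exactly along a section of $p$.
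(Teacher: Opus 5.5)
Your argument has a genuine gap at the decisive step, where you claim that equivariance plus the count of boundary sections ``bounds the tree to at most two components meeting at one point.''

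The vanishing $R^1\phi_*\mathcal O_X=0$ (relative Kawamata--Viehweg for the klt pair $(X,\varepsilon E)$) is fine. It gives $h^1(\mathcal O_F)=0$ for a fiber $F$ of $p$, so $F$ is a tree of smooth $\PP^1$'s with seminormal singularities, and this does dispose of self-gluings, cycles, cusps and tacnodes. But $p_a(F)=0$ together with seminormality still permits ordinary $d$-fold points (analytically the $d$ coordinate axes of $\AA^d$) for every $d$, and nothing you list excludes them.

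Concretely, glue the section $0$ of $\tilde E$ along the quotient $A\to A/\Gamma$ by a subgroup $\Gamma$ of translations of order $d\ge 3$. In the $\GG_a$ case use the unique boundary section instead. When the kernel of the action on $E$ is one-dimensional, use any $A_s$-orbit; there \Cref{lem:E normalization structure-semistable} gives no bound at all. The result is $P^\circ_s$-equivariant, locally trivial over $A/\Gamma$ with fiber $d$ lines through one point, has $h^1(\mathcal O_F)=0$, and uses a single boundary section.

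Demi-normality cannot rescue this either. \Cref{thm:Kollar semistable} needs a reduced component and $P_s$ semiabelian, while the proposition must also cover unstable and multiple fibers with non-reduced components, which is exactly where case (ii) occurs. The same objection applies to your enumeration of ``remaining possibilities'' in the second paragraph, which silently omits these gluings.

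What is missing is that $F$ is \emph{nodal}. The paper obtains this by cutting $\bar X$ with $n-1$ general very ample divisors. Then $\phi^{-1}(\bar Z)\to\bar Z$ is a resolution of log terminal surface singularities, so $F$ is a nodal tree of $\PP^1$'s.

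Within your framework a cheap substitute is an embedding-dimension count. $X$ is regular of dimension $n+1$, and $E$ is smooth-locally $F\times\bar A$. (Use the orbit map $P^\circ_s\to\bar A$ for this; $A_s$ itself need not act on $E$ when the Chevalley sequence of $P^\circ_s$ does not split.) Hence $F$ has embedding dimension at most $2$, and a seminormal planar curve singularity is an ordinary node.

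Once nodality is available, the paper's finishing argument goes through, with your $h^1(\mathcal O_F)=0$ replacing the tree property. The gluing is invariant under translations, which gives three constraints:
\begin{enumerate}
\item If the two preimages $\tilde x_1\in\tilde R_1$, $\tilde y_2\in\tilde R_2$ of a node lie in different $A_s$-orbits, translating produces a cycle.
\item If $\tilde y_2=a\cdot\tilde x_1$, then $\nu(a\cdot\tilde y_2)=\nu(\tilde y_2)$ as well. Nodality then forces $a\cdot\tilde y_2\in\tilde R_1$, i.e.\ $a$ acts on $A$ as a translation of order $2$.
\item A Galois group of $A\to\bar A$ containing $(\ZZ/2)^{2}$ produces a cycle of four components.
\end{enumerate}
Hence $\deg(A\to\bar A)=2$ and $F\cong\PP^1\cup\PP^1$ meeting in a single node.
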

		
		That is, when $X_s$ is reducible, every irreducible component $E$ admits an \'etale locally trivial fiber bundle
		\[ p : E \to \bar A \qquad\mbox{with a fiber } \PP^1 \mbox{ or } \PP^1 \cup \PP^1 \ (= (xy = 0) \subset \PP^2) .\]
		The second case did not arise when the relative dimension $n$ is $1$, as $\PP^1 \cup \PP^1$ is not irreducible. For $n \ge 2$, the second case does arise and has already been observed in \cite[Figure~1]{mat01}. We thank Mirko Mauri for pointing this out. The rest of this section is devoted to the proof of \Cref{prop:E}.
		
		\medskip
		
		A tree of $\PP^1$'s is a proper, connected, and reduced curve with nodal singularities, whose irreducible components are $\PP^1$ and dual graph is a tree.
		
		\begin{lemma} \label{lem:prop:E step 1}
			In the setting of \Cref{prop:E}, every irreducible component $E \subset X_s$ admits an equivariant isotrivial fibration $p : E \to \bar A$, where $\bar A$ is an abelian torsor isogenous to $A_s$ and its fiber $C$ is a tree of $\PP^1$'s. In particular, $E$ is covered by smooth rational curves, and every rational curve in $E$ is smooth.
		\end{lemma}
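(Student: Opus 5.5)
The plan is to use the klt contraction of \S\ref{sec:Weierstrass models} to get rational curves in $E$, then to pass to an abelian quotient that kills all stabilizers, and finally to identify the fiber as a tree of $\PP^1$'s by comparing with the normalization.

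\emph{Rational curves in $E$.} Since $X_s$ is reducible, the argument in the proof of \Cref{prop:klt Weierstrass model} applies to $E$. Choose a component $E'$ meeting $E$ and a curve $C \subset E$ through a point of $E \cap E'$; then \eqref{eq:intersection negative} gives $(E \cdot C) < 0$. The cone theorem for $(X, \varepsilon E)$ then produces a rational curve $R \subset E$. So condition (ii) of \Cref{prop:E normalization} holds, and we obtain the equivariant $\PP^1$-bundle $\tilde p : \tilde E \to A$ with a transitive $P^\circ_s$-action on $A$. In particular, $E$ is covered by the images of the fibers of $\tilde p$, which are rational curves.

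\emph{Constructing $p$.} Next I descend $\tilde p$ to $E$. The group $P^\circ_s$ acts on $E$, and the linear part $L_s$ acts on each rational curve. Hence it is cleaner to work with the quotient action of $A_s = P^\circ_s/L_s$ on the space of such rational curves. Concretely, mimic the proof of \Cref{prop:E normalization}(ii): choose a finite subgroup $K \subset P^\circ_s$ large enough to contain the finite parts of all stabilizers, and form $E/K$ with its action of $P^\circ_s/K$. Then compose with the Chevalley map to the abelian quotient. A cleaner route: the map $\tilde E \to A \to A/G$, where $G$ is a suitable finite subgroup, should be constant on the fibers of the normalization $\nu$. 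This holds because $\nu$ is $P^\circ_s$-equivariant and finite, so two points with the same image in $E$ have stabilizers and orbit images that are compatible. Taking $G$ large, this map factors through $E$ (recall $E$ is seminormal along the conductor by \S\ref{sec:conductor}; otherwise take the Stein-type factorization through $E$), and we get an equivariant morphism $p : E \to \bar A$ with $\bar A = A/G$ an abelian torsor isogenous to $A_s$. Since $P^\circ_s$ acts transitively on $\bar A$, $p$ is isotrivial: all fibers are isomorphic by translation, and étale local triviality follows as in \Cref{lem:E normalization structure-unstable}.

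\emph{The fiber $C$.} The fiber $C = p^{-1}(b)$ is the image under $\nu$ of finitely many fibers of $\tilde p$, so it is a connected union of rational curves, each the image of a $\PP^1$. It remains to show that each component is smooth, that the singularities are nodes, and that the dual graph is a tree.
\begin{enumerate}
\item For smoothness and for the dual graph being a tree, use that $A_s$ acts, up to finite groups, on the fibers of $\tilde p$ through the diagonal action of \Cref{lem:E normalization structure-unstable}. A self-intersection or a cycle would force a nontrivial $P^\circ_s$-orbit of dimension $n-1$ inside $\Sing(E)$ with extra identifications. I would rule these out using the $\GG_m$/$\GG_a$ stabilizer structure of \Cref{lem:E normalization structure-semistable}: $\nu$ can only glue along the sections $0$, $\infty$, or the unique section with nontrivial stabilizers.
\item For nodality, argue that $E$ is a Cartier-divisor component of a fiber of a regular $X$ (\Cref{prop:minimal delta-regular abelian fibration is unique}). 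Hence $E$ satisfies $S_2$ and is demi-normal in codimension one on the relevant orbits, which forces the gluing to be nodal.
\end{enumerate}

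I expect the main obstacle to be this last step, namely controlling the non-normal locus of $E$: showing that $\nu$ identifies only points on the distinguished sections and does so in a nodal, tree-like way. Here I would rely on the conductor results of \S\ref{sec:conductor} together with the orbit dimension bound of \Cref{lem:P-orbit has dimension n-1}.
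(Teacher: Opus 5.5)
There is a genuine gap. Apart from producing a rational curve, your argument never uses that $X_s$ is reducible, and both your construction of $p$ and your analysis of $C$ would apply word for word to integral fibers, where the conclusions are false.

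Take an integral fiber of type $\mathrm I_1$. Here $\tilde E$ is a $\PP^1$-bundle with exactly the stabilizer structure of \Cref{lem:E normalization structure-semistable}, and $E$ is covered by rational curves. Yet $\nu$ glues $0$ to $\infty$ through a translation by the shear $a$ (\Cref{thm:classification of semistable fibers}).
\begin{itemize}
\item If $a$ is non-torsion, no finite $G$ makes $\tilde E \to A \to A/G$ constant on the fibers of $\nu$. In fact $E$ admits no surjection onto an $(n-1)$-dimensional abelian torsor, by \Cref{cor:dimension of Albanese}.
\item If $a$ has order $e$, the fibers of $E \to A_s/a$ are cycles of $e$ rational curves, a nodal rational curve when $e=1$ (proof of \Cref{prop:shear}).
\end{itemize}
So ``compatible stabilizers'' cannot yield descent along $\nu$. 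Nor can the section structure rule out cycles, since gluing $0$ to $\infty$ is exactly what produces one. Separately:
\begin{itemize}
\item You have no seminormality of $E$. By \Cref{prop:conductor pushout}, descent needs the map on $\tilde Z$ to factor through the possibly non-reduced $Z$.
\item A Stein factorization through $E$ presupposes a morphism out of $E$, which is what you are trying to build.
\item Your nodality step fails as well. An integral unstable fiber is a Cartier divisor in the regular $X$, hence $S_2$, yet its fibers over $A$ are cuspidal (\Cref{prop:Weierstrass fiber unstable}). $S_2$ does not give demi-normality; the paper only has that from \Cref{thm:Kollar semistable} in the semiabelian case.
\end{itemize}

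The missing idea is to keep the contraction theorem, not just the cone theorem.
\begin{enumerate}
\item Reducibility gives a curve in $E$ with negative intersection against $E$. So the $(K_X+\varepsilon E)$-negative extremal ray is contracted by a morphism $\phi : X \to \bar X$, as in \Cref{prop:klt Weierstrass model}. This is where reducibility enters essentially, since the contraction does not exist for integral $X_s$.
\item $\phi$ is $P^\circ$-equivariant, so $\phi(E)$ is a connected $P^\circ_s$-invariant subvariety of dimension $\le n-1$. By \Cref{lem:P-orbit has dimension n-1} it is an $(n-1)$-dimensional abelian torsor $\bar A$, and $p=\phi|_E$ requires no descent along $\nu$.
\item For the fiber, cut $\bar X$ by $n-1$ general very ample divisors. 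The resulting surface $\bar Z$ is log terminal (\cite[Lemma~5.17]{kol-mori}) and meets $\bar A$ in finitely many points. Its preimage $Z=\phi^{-1}(\bar Z)$ is regular by Bertini.
\item Hence $Z\to\bar Z$ is a resolution of log terminal surface singularities. Its fibers are therefore trees of smooth rational curves meeting in nodes (\cite[Theorem~4.7]{kol-mori}).
\item Every rational curve in $E$ is contracted by $p$, since $\bar A$ contains none. So it lies in a fiber and is smooth.
\end{enumerate}
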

		\begin{proof}
			Since $X_s$ is reducible, we may apply the argument in \Cref{prop:klt Weierstrass model} to a single irreducible component $E \subset X_s$ and yield a contraction
			\[ \phi : X \to \bar X .\]
			The contracted locus $\bar A$ is a connected $P^\circ_s$-invariant subvariety of $X_s$ of dimension $\le n-1$. Hence it is a single $A_s$-orbit and is an $(n-1)$-dimensional abelian torsor (\Cref{lem:P-orbit has dimension n-1}). Hence the equivariant morphism $p = \phi_{|E} : E \to \bar A$ is locally trivial, say, with fiber $C$.
			
			Let $\bar Z \subset \bar X$ be a complete intersection of $n-1$ general very ample divisors and $Z = \phi^{-1}(\bar Z)$ its preimage. Then $Z$ is regular by Bertini's theorem (the divisors are still base point free), $\bar Z$ is log terminal by \cite[Lemma~5.17]{kol-mori}, and $\bar Z \cap \bar A$ is $0$-dimensional. Hence $\phi : Z \to \bar Z$ is a resolution of log terminal surface singularities, so its fiber $C$ is necessarily a tree of $\PP^1$'s as claimed (see case 4--5 in \cite[Theorem~4.7]{kol-mori}). Every rational curve in $E$ is contained in a fiber $C$, so it is isomorphic to $\PP^1$.
		\end{proof}
		
		Take the normalization $\nu : \tilde E \to E$ and conductor subscheme diagram (\S \ref{sec:conductor})
		\begin{equation} \label{diag:conductor of E}
		\begin{tikzcd}
			\tilde Z \arrow[r, hook] \arrow[d] & \tilde E \arrow[d, "\nu"] \\
			Z \arrow[r, hook] & E
		\end{tikzcd}.
		\end{equation}
		The diagram is $P^\circ_s$-equivariant because $E$ came equipped with a $P^\circ_s$-action. \Cref{prop:E} will eventually compute the full structures of $\tilde Z$ and $Z$, but the following weaker claims are sufficient at this point.
		
		\begin{lemma} \label{lem:prop:E step 2}
			The following holds in the diagram \eqref{diag:conductor of E}.
			\begin{enumerate}
				\item $\tilde Z_{\red}$ (resp. $Z_{\red}$) is a disjoint union of $(n-1)$-dimensional abelian torsors, each of which is an $A_s$-orbit in $\tilde E$ (resp. $E$).
				
				\item Every connected component of $\tilde Z_{\red}$ is isomorphic to each other.
				
				\item The surjective morphism $\tilde Z_{\red} \twoheadrightarrow Z_{\red}$ is an isogeny of abelian torsors on each connected component.
			\end{enumerate}
		\end{lemma}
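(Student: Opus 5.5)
We prove the three items in order, using only the $P^\circ_s$-equivariance of the conductor diagram \eqref{diag:conductor of E} and the orbit-dimension bound of \Cref{lem:P-orbit has dimension n-1}.

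\textbf{Step (1).} The conductor subschemes $\tilde Z \subset \tilde E$ and $Z \subset E$ are canonically attached to $\nu$. Since $P^\circ_s$ acts on $E$ and this action lifts to $\tilde E$, both are $P^\circ_s$-invariant, and so are their reductions. Because $E$ is not normal exactly along $Z$ and is normal in codimension $0$, we have $\dim Z_{\red} \le n-1$. The conductor is of pure codimension $1$ in $\tilde E$ (\S\ref{sec:conductor}), and $\tilde Z \to Z$ is finite, so $\dim \tilde Z_{\red} \le n-1$ as well. Every $P^\circ_s$-orbit in $E$ has dimension $\ge n-1$ by \Cref{lem:P-orbit has dimension n-1}. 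For $\tilde E$ we argue the same way, using that the equivariant finite map $\nu$ preserves orbit dimensions. Hence every nonempty invariant closed subset of dimension $\le n-1$ is a finite union of $(n-1)$-dimensional orbits. Each such orbit is closed, since its boundary would consist of orbits of smaller dimension. So $Z_{\red}$ and $\tilde Z_{\red}$ are disjoint unions of closed orbits of dimension exactly $n-1$. By \Cref{lem:P-orbit has dimension n-1}(2) these orbits are abelian torsors. The linear part acts on them with $1$-dimensional stabilizers, so they are orbits of the quotient $A_s$.

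\textbf{Step (2).} If $\tilde E$ has a rational curve, \Cref{prop:E normalization} (under condition (i) or (ii)) gives the equivariant $\PP^1$-bundle $\tilde p : \tilde E \to A$ with $P^\circ_s$ transitive on $A$. By \Cref{lem:prop:E step 1}, $E$ is covered by rational curves, so $\tilde E$ does have one. Each component $W$ of $\tilde Z_{\red}$ is an $(n-1)$-dimensional orbit. Since $\tilde p$ is equivariant and $A$ is a single orbit, $\tilde p|_W : W \to A$ is a surjective equivariant map of homogeneous spaces. Writing $W = P^\circ_s/H_W$ and $A = P^\circ_s/H_A$, we get $H_W \subset H_A$ of finite index. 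Different components are related as follows. If the linear part $L_s$ acts nontrivially, \Cref{lem:E normalization structure-semistable} shows the non-open orbits are sections of $\tilde p$, hence each is isomorphic to $A$. If $L_s$ acts trivially, \Cref{lem:E normalization structure-unstable} shows every point of $\tilde E$ has the same stabilizer $\ker(G \to \PGL_2)$. In that case all orbits, in particular all components of $\tilde Z_{\red}$, are $A_s/\ker(G\to\PGL_2)$-torsors, and hence isomorphic. I expect this case distinction to be the main point to check carefully. In the first case one must also confirm that components of $\tilde Z_{\red}$ cannot be the open orbit, which holds simply for dimension reasons.

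\textbf{Step (3).} The map $\tilde Z_{\red} \to Z_{\red}$ is finite, surjective, and $P^\circ_s$-equivariant. Restricted to a component $W \to W'$, it is a surjective equivariant map between torsors of the same dimension, namely $P^\circ_s/H_W \to P^\circ_s/H_{W'}$ with $H_W \subset H_{W'}$ and finite index by finiteness. After choosing base points, this is a homomorphism of abelian varieties composed with a translation, with finite kernel $H_{W'}/H_W$. That is exactly an isogeny of abelian torsors.
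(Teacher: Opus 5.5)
Your proposal is correct and follows essentially the paper's route. Items (1) and (3) come from the bound $\dim \le n-1$ on the equivariant conductor subschemes, combined with \Cref{lem:P-orbit has dimension n-1} and the equivariance of the finite map $\tilde Z_{\red} \to Z_{\red}$. Item (2) is the same case split as in the paper: \Cref{lem:E normalization structure-semistable} makes the components sections of $\tilde p$, and \Cref{lem:E normalization structure-unstable} gives constant stabilizers $\ker(G \to \PGL_2)$.

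There are two harmless slips. First, the claim that $\tilde Z$ has pure codimension $1$ is not stated in \S\ref{sec:conductor} and fails for general non-$S_2$ schemes; you do not need it, since finiteness of $\tilde Z \to Z$ already gives $\dim \tilde Z_{\red} \le n-1$. Second, condition (ii) of \Cref{prop:E normalization} asks for a rational curve in $E$, not in $\tilde E$, so no lifting is needed.
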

		\begin{proof}
			(1, 3) Both $\tilde Z \subset \tilde E$ and $Z \subset E$ are $P^\circ_s$-subschemes of dimension $\le n-1$, so by \Cref{lem:P-orbit has dimension n-1}, $\tilde Z_{\red}$ and $Z_{\red}$ are disjoint unions of abelian torsors of dimension $n-1$, each of which is a single $P^\circ_s$-orbit. Moreover, $Z$ and $\tilde Z$ are homogeneous and thus have no embedded points. The equivariant morphism $\tilde Z_{\red} \to Z_{\red}$ is a finite \'etale morphism between disjoint union of abelian torsors.
			
			\medskip
			
			(2) Suppose that $E$ satisfies the condition (i) in \Cref{prop:E normalization}. Then \Cref{lem:E normalization structure-semistable} shows that $\tilde Z_{\red} \subset \tilde E$ is either one section or a disjoint union of two sections of $\tilde p$. Either case, $\tilde Z_{\red}$ has isomorphic connected components. Suppose now that $E$ satisfies the condition (ii) in \Cref{prop:E normalization} (by \Cref{lem:prop:E step 1}) but not (i). Then \Cref{lem:E normalization structure-unstable} shows that $A_s$ acts on $E$ with constant stabilizers. That is, every $A_s$-orbit is isomorphic and hence $\tilde Z_{\red}$ has isomorphic components.
		\end{proof}
		
		\begin{lemma} \label{lem:prop:E step 3}
			Let $p : E \to \bar A$ be a morphism constructed in \Cref{lem:prop:E step 1} and $\tilde p : \tilde E \to A$ a $\PP^1$-bundle in \Cref{prop:E normalization}. Then the two are related by an equivariant commutative diagram
			\[\begin{tikzcd}
				\tilde E \arrow[r, "\nu"] \arrow[d, "\tilde p"] & E \arrow[d, "p"] \\
				A \arrow[r] & \bar A
			\end{tikzcd},\]
			where $A \to \bar A$ is an isogeny whose degree is the number of $\PP^1$'s in $C$.
		\end{lemma}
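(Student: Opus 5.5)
The plan is to construct the bottom arrow $A \to \bar A$ as the map induced on bases by $p \circ \nu$, and to read off its degree by counting the components of a single fiber.

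\textbf{Existence of $\tilde p$.} By \Cref{lem:prop:E step 1}, $E$ is covered by smooth rational curves, so condition (ii) of \Cref{prop:E normalization} holds. Hence the $P^\circ_s$-equivariant $\PP^1$-bundle $\tilde p : \tilde E \to A$ exists.

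\textbf{Factorization through $A$.} Consider the equivariant composite $q = p \circ \nu : \tilde E \to \bar A$. Each fiber of $\tilde p$ is a $\PP^1$, and $\bar A$ is an abelian torsor containing no rational curves. So $q$ is constant on the fibers of $\tilde p$. Since $\tilde p$ is a $\PP^1$-bundle, in particular flat with $\tilde p_*\mathcal O_{\tilde E} = \mathcal O_A$, the rigidity lemma shows that $q$ factors uniquely as $\tilde E \xrightarrow{\tilde p} A \xrightarrow{\alpha} \bar A$. The map $\alpha$ is equivariant by uniqueness. Both $A$ and $\bar A$ are $(n-1)$-dimensional abelian torsors with transitive $P^\circ_s$-actions (\Cref{prop:E normalization} and \Cref{lem:prop:E step 1}). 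An equivariant surjection between such homogeneous spaces is a quotient by a finite subgroup, so $\alpha$ is an isogeny. It is surjective because $\nu$ and $p$ are surjective.

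\textbf{Degree count.} Fix $b \in \bar A$. Then $\nu$ restricts to a finite birational-on-components map $q^{-1}(b) = \bigsqcup_{a \in \alpha^{-1}(b)} \tilde p^{-1}(a) \to p^{-1}(b) = C$. Each $\tilde p^{-1}(a) \cong \PP^1$ maps onto a rational curve in $C$, since $\tilde p^{-1}(a)$ is irreducible and its image lies in a fiber of $p$. Since $\nu$ is finite and birational, every irreducible component of $C$ is the image of exactly one such $\PP^1$; otherwise $\nu$ would fail to be generically injective over that component. This is the subtle point: $\nu$ is an isomorphism over a dense open subset of $E$, and that open subset meets every component of every fiber, because both $E$ and $\nu$ are equivariant and locally trivial over $\bar A$ (\'etale locally $E \cong C \times \bar A$). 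Conversely, every $\tilde p^{-1}(a)$ with $a \in \alpha^{-1}(b)$ maps onto some component of $C$, and two distinct such fibers cannot map onto the same component, again by generic injectivity. Hence $\deg \alpha = \#\alpha^{-1}(b)$ equals the number of $\PP^1$'s in $C$. Here we use that $\alpha$ is étale, being an isogeny in characteristic $0$, so the fiber cardinality is the degree.

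\textbf{Main obstacle.} The step I expect to need care is the generic injectivity of $\nu$ along fibers, that is, that the non-normal locus does not contain a whole component of $C \times \{b\}$. I would handle it by noting that the non-normal locus $Z$ has dimension $n-1$ and is a union of $A_s$-orbits mapping finitely onto $\bar A$ (\Cref{lem:prop:E step 2}). Therefore $Z$ meets each fiber $C$ in finitely many points, and $\nu$ is an isomorphism over $C \setminus Z$, which is dense in every component of $C$.
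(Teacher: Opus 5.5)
Your proof is correct and follows the paper's route: you use the rigidity lemma to factor $p \circ \nu$ through $\tilde p$, then equivariance between homogeneous $(n-1)$-dimensional abelian torsors to get an isogeny $A \to \bar A$, and you read off its degree by counting the $\PP^1$-fibers of $\tilde p$ lying over one point of $\bar A$. The only difference is how you match those fibers bijectively with the components of $C$. The paper invokes \Cref{cor:rational curve in E} and the smoothness from \Cref{lem:prop:E step 1} to say that $\nu$ maps each fiber isomorphically onto a rational curve. You instead use that $\nu$ is an isomorphism off the conductor $Z$, which meets $C$ in only finitely many points by \Cref{lem:prop:E step 2}; this makes explicit the injectivity of the correspondence, which the paper leaves implicit.
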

		\begin{proof}
			\Cref{cor:rational curve in E} showed every rational curve $R \subset E$ must be of the form $\nu(\tilde R)$ where $\tilde R$ is a fiber of the equivariant $\PP^1$-bundle $\tilde p : \tilde E \to A$. We have already proved that the rational curve $R$ is smooth in \Cref{lem:prop:E step 1} (and $\nu$ collapses only the abelian variety directions by \Cref{lem:prop:E step 2}(3)), so the morphism $\nu_{|\tilde R} : \tilde R \to R$ is an isomorphism for every rational curve $R \subset E$. In summary, there is a bijective correspondence between the fibers $\tilde R$ of $\tilde p$ and (smooth) rational curves $R$ in $E$, and $\nu$ restricts to an isomorphism between them.
			
			Every fiber $\tilde R$ is sent to a rational curve $R = \nu(\tilde R)$ in $E$, and hence to a point in $\bar A$. Rigidity lemma as in \cite[Lemma~14]{kaw81} constructs the desired commutative diagram. The diagram is $P^\circ_s$-equivariant and both $A$ and $\bar A$ are a single $A_s$-orbit. This shows $A \to \bar A$ is an isogeny. To compute its degree $d$, take a point $y \in \bar A$ with fiber $C = p^{-1}(y)$. Its preimage in $\tilde E$ is $\nu^{-1}(C)$ on the one hand, and on the other hand is a disjoint union of $d$ number of $\PP^1$-fibers of $\tilde p$. Since $\nu$ sends the fibers $\tilde R$ isomorphically to a rational curve in $C$, this shows the degree $d$ coincides with the number of rational curves in $C$.
		\end{proof}
		
		\begin{proof}[Proof of \Cref{prop:E}]
			If $C = \PP^1$, then $p : E \to A$ is a $\PP^1$-bundle and hence we fall into the first case. Assume from now on that $C$ is a tree of at least two $\PP^1$'s.
			
			\medskip
			
			Take two smooth rational curves $R_1, R_2 \subset C$ intersecting at a single node $x_{12} \in C$. Say $R_i = \nu(\tilde R_i)$ for $i = 1, 2$, where $\tilde R_i = \tilde p^{-1}(t_i)$ for $t_i \in A$ are fibers (\Cref{cor:rational curve in E}). We have shown in \Cref{lem:prop:E step 3} that $\nu$ induces isomorphisms $\tilde R_i \to R_i$. Therefore, the intersection $x_{12} = R_1 \cap R_2$ lifts to two distinct points $\tilde x_1 \in \tilde R_1$ and $\tilde y_2 \in \tilde R_2$.
			
			Let us first prove that $\tilde x_1$ and $\tilde y_2$ are in the same $P^\circ_s$-orbit of $\tilde E$, or $A_s$-orbit because they are in $\tilde Z_{\red}$ in \eqref{diag:conductor of E}. Assume on the contrary that they are not. Then \Cref{lem:prop:E step 2}(1--3) says the two different orbits $A_s \cdot \tilde x_1$ and $A_s \cdot \tilde y_2$ are connected components of $\tilde Z$ and are mapped into a single connected component of $Z$, i.e., the two orbits have the same image in $E$. Take any point $\tilde x_2 \in (A_s \cdot \tilde x_1) \cap \tilde R_2$ and write it as
			\[ \tilde x_2 = a \cdot \tilde x_1 \qquad\mbox{for}\quad a \in A_s .\]
			Since $\nu$ is equivariant, it sends both $\tilde x_2 = a \cdot \tilde x_1$ and $\tilde y_3 \coloneq a \cdot \tilde y_2$ to a single point $x_{23} \coloneq a \cdot x_{12} \in E$. Take a fiber $\tilde R_3$ of $\tilde p$ containing $\tilde y_3$ (potentially identical to $\tilde R_1$) and its image $R_3 = \nu(\tilde R_3)$, which is a rational curve in $C$ intersecting with $R_2$ at $x_{23}$. This inductive process stops within a finite step, i.e., we have $\tilde R_i = \tilde R_j$ for some $i < j$. But this constructs a cycle of rational curves in $C$, contradicting the fact that $C$ is a tree of $\PP^1$'s. This shows $\tilde x_1$ and $\tilde y_2$ are in the same $A_s$-orbit.
			
			\medskip
			
			At this point, we have two smooth rational curves $R_1, R_2 \subset C$ intersecting at $x_{12} \in Z$, whose lifts $\tilde x_1$ and $\tilde x_2 \coloneq \tilde y_2$ lie in a single $A_s$-orbit, or the same connected component of $\tilde Z$. We now claim that the Galois group of $A \to \bar A$ is $2$-torsion. Say $\tilde x_2 = a \cdot \tilde x_1$ for $a \in A_s$. Take a point $\tilde x_3 \coloneq a \cdot \tilde x_2$ in $\tilde E$ and a fiber $\tilde R_3$ passing through it. By \Cref{lem:prop:E step 3}, the claim is equivalent to $\tilde R_3 = \tilde R_1$. Assume on the contrary that $\tilde R_3 \neq \tilde R_1$. Then there are three rational curves $R_1$, $R_2$, and $R_3 = \nu(\tilde R_3)$ that pass through a single point $x_{12} = \nu(\tilde x_1) = \nu(\tilde x_2) = \nu(\tilde x_3)$. This violates that $C$ has only nodes. Therefore, the Galois group of $A \to \bar A$ is $2$-torsion.
			
			\medskip
			
			It only remains to show that the Galois group of $A \to \bar A$ is $\ZZ/2$. Assume on the contrary that the Galois group contains $(\ZZ/2)^{\times 2}$. Then there exist four fibers $\tilde R_{ij}$ for $i,j = 0,1$ corresponding to the four points in $A$ with the same image in $\bar A$. Using \Cref{lem:prop:E step 2}--\ref{lem:prop:E step 3} and similar arguments as above, the only possible way to make this sense is that there are eight points $\tilde x_{ij}, \tilde y_{ij} \in \tilde R_{ij}$ for $i,j=0,1$ such that
			\[ \nu(\tilde x_{ij}) = \nu(\tilde x_{i+1, j}) \quad\mbox{and}\quad \nu(\tilde y_{ij}) = \nu(\tilde y_{i, j+1}) \qquad\mbox{for}\quad i,j = 0,1 \]
			where the additions are taken mod $2$. But this produces four rational curves $R_{ij}$ consisting of a cycle in $C$, which is again impossible. This completes the proof of the proposition.
		\end{proof}
		
		\begin{remark}
			Applying the argument of \Cref{lem:prop:E step 1} to a full klt-Weierstrass contraction $\phi : X \to \bar X$ in \Cref{prop:klt Weierstrass model}, its reduced fiber $C = F_{\red}$ is again a tree of $\PP^1$'s. This already shows that if $X_s$ is reducible, then for every irreducible component $E_j \subset X_s$, its complement $\bigcup_{i \neq j} E_i$ has no loops and has only nodal singularities.
		\end{remark}

	\subsection{Integral fibers: unstable case}
		\Cref{prop:E} considered the case when $X_s$ is reducible. Its starting point was the existence of a log terminal contraction in \Cref{lem:prop:E step 1}, which was from the negativity of the intersection number in \eqref{eq:intersection negative}. Such an intersection number is $0$ when $X_s$ is irreducible and the argument fails.
		
		The goal of the remaining two subsections is to treat the case when $X_s$ is integral separately. In this first subsection, we consider the unstable case.
		
		\begin{proposition} \label{prop:Weierstrass fiber unstable}
			Let $f : X \to S$ be a Weierstrass $\delta$-regular abelian fibration with a central fiber $X_s = E$. Assume that $P_s$ has a linear part $\GG_a$. Then there exists an equivariant morphism
			\[ p : E \to A ,\]
			where $A$ is an abelian torsor of dimension $n-1$ with a transitive $P^\circ_s$-action.
		\end{proposition}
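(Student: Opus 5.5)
The natural strategy is to pass to the normalization and build the map there. First, \Cref{prop:E normalization} applies to $E = X_s$. Indeed, $X_s$ is integral, so $E$ is a reduced component and condition (i) holds: $P_s$ is not an abelian variety since its linear part is $\GG_a$, and the $P^\circ_s$-action on the reduced component $E$ has finite kernel by \Cref{prop:P-action property}. Hence the normalization $\nu : \tilde E \to E$ carries a $P^\circ_s$-equivariant $\PP^1$-bundle $\tilde p : \tilde E \to \tilde A$ onto an abelian torsor with transitive action. By \Cref{lem:E normalization structure-semistable}(2), the complement of the open orbit $\bar P_s \subset \tilde E$ is a single section $\Sigma \cong \tilde A$ of $\tilde p$.

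Next, compare $\tilde E$ with $E$ via the conductor diagram \eqref{diag:conductor of E}. The argument of \Cref{lem:prop:E step 2} works without reducibility of $X_s$: $\tilde Z_{\red}$ and $Z_{\red}$ are unions of $(n-1)$-dimensional $A_s$-orbits. The open orbit is free by \Cref{prop:P-action property}(2) and maps isomorphically, so $\tilde Z_{\red} \subset \Sigma$. There are therefore two cases. If $E$ is normal, take $p = \tilde p$ and $A = \tilde A$. Otherwise $\tilde Z_{\red} = \Sigma$, and $\nu|_\Sigma : \Sigma \to Z_{\red}$ is an equivariant isogeny of abelian torsors onto a single orbit $Z_{\red}$. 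The remaining task is to show that $\tilde p$ descends along $\nu$ to a morphism $p : E \to A$ onto some quotient torsor $A$ of $\tilde A$.

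To construct the descent, I take $A$ to be the quotient of $\tilde A$ by the finite subgroup generated by the identifications made by $\nu$ on $\Sigma$. Equivalently, I write $\tilde A \cong P^\circ_s/H$ with $H \supset L_s$ the stabilizer, $Z_{\red} \cong P^\circ_s / H'$ with $H \subset H'$ of finite index, and set $A = P^\circ_s/H'$, so that $\Sigma \to Z_{\red}$ is identified with $\tilde A \to A$. I then need a map $E \to A$ extending the composite $\tilde E \to \tilde A \to A$. Since $E$ is the pushout of $\tilde E \leftarrow \tilde Z \to Z$ (a seminormality or conductor property from \S\ref{sec:conductor}), it suffices to check that $\tilde E \to A$ is constant on fibers of $\nu$. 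On $\tilde Z_{\red} = \Sigma$ this holds by construction, but the conductor schemes may be non-reduced, so I need compatibility on $\tilde Z$ itself, not just on $\tilde Z_{\red}$.

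An alternative route avoids that difficulty. I would pick a point $x \in Z_{\red}$ and use the orbit map $P^\circ_s \to E$, $g \mapsto g\cdot \tilde x$ for a point $\tilde x$ in the open orbit. The $\GG_a$-orbit closures are the images of the $\tilde p$-fibers, and these are rational curves. Applying the rigidity lemma \cite[Lemma~14]{kaw81} to $\nu$, as in \Cref{lem:prop:E step 3}, requires a morphism from $E$ contracting these curves. To get one, I would use the Albanese morphism of the proper variety $E$ (Conrad, Laurent--Schr\"oer, as the paper hints) and let $A$ be the image of the equivariant map $E \to \Alb(E)$ torsor. This map is $P^\circ_s$-equivariant because the action factors through $A_s$ on the Albanese, and it contracts rational curves. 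Its image is an orbit of $A_s$, hence an abelian torsor of dimension at most $n-1$. The dimension is exactly $n-1$ because the composite $\tilde E \to E \to \Alb$ factors through $\tilde A$, via $\Alb(\tilde E) = \tilde A$ and the universal property, and this factorization should be an isogeny onto its image. \textbf{The main obstacle} is exactly this point: showing the Albanese image of the non-normal $E$ does not collapse, i.e.\ that $\tilde A \to \Alb(E)$ has finite kernel. I would control this through the exact sequence relating $\Pic^\circ_E$ and $\Pic^\circ_{\tilde E}$: the non-normal locus contributes only a unipotent part ($\GG_a$-type, from the cusp-like gluing), so the abelian part of $\Pic^\circ_E$ is isogenous to $\check{\tilde A}$, and dually $\Alb(E)$ is isogenous to $\tilde A$.
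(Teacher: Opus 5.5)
Your setup is the paper's: the normalization, the equivariant $\PP^1$-bundle $\tilde p$, the single section $\Sigma$, and descent via the pushout property of \Cref{prop:conductor pushout}. You also locate the crux correctly. But there is a genuine gap: neither route gets past that crux.

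\textbf{First route.} You need $\tilde Z \to \tilde A \to A$ to factor through $\mu : \tilde Z \to Z$ scheme-theoretically, and you leave this open. In your generality this is not a formality. When the stabilizers along $Z$ are $\GG_a \times K_x$ with $K_x \neq 0$, the conductor can be non-reduced. For example, a type $\mathrm{III}/2$ fiber is integral, hence Weierstrass. Along $Z$ it is \'etale locally a tacnode times a smooth germ, and \'etale locally $Z \cong \Spec k[\epsilon]/(\epsilon^2) \times A_s$.

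The paper supplies two ideas you are missing.
\begin{enumerate}
\item Replace $(P^\circ_s, X_s)$ by $(P^\circ_s/K, X_s/K)$ for a finite $K$ containing all the $K_x$ (\Cref{lem:quotient action}). Then every nontrivial stabilizer is exactly $\GG_a$, so $\Sigma \to Z_{\red}$ is an isomorphism and $\nu$ is a universal homeomorphism (\Cref{lem:prop:integral fiber unstable step 1}).
\item Use a Zariski local section $U \subset P$ of $P \to A$; by (1), its translations act freely. The conductor diagram then becomes a smooth base change of the conductor diagram of a curve with a generically free $\GG_a$-action. The classification of $\GG_a$-stable subalgebras of $k[t]/(t^{i+1})$ (\Cref{lem:Ga-subalgebra}) shows the conductor ideal is maximal, so $Z$ is reduced and $Z \cong A$.
\end{enumerate}
Only then does $\tilde p$ descend, and you finish by composing with $X_s \to X_s/K$.

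\textbf{Second route.} It fails at the step you flagged, and your proposed fix is not valid. By \Cref{thm:Albanese}, $\Alb_E^\vee$ is the maximal abelian \emph{subvariety} of $\Pic^\circ_E$, not its abelian-variety quotient. Comparing $\Pic^\circ_E$ with $\Pic^\circ_{\tilde E} \cong \check{\tilde A}$ only shows that $\Pic^\circ_E$ is an extension of $\check{\tilde A}$ by an affine group. That identifies the Chevalley quotient, nothing more.

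Such an extension need not split, even up to isogeny. With kernel $\GG_a$, the class lives in $\Ext^1(\check{\tilde A}, \GG_a) \cong H^1(\check{\tilde A}, \mathcal O) \cong k^{n-1}$, which is nonzero and torsion-free. If the class is nonzero, every abelian subvariety of the extension meets $\GG_a$ trivially and maps isomorphically onto a \emph{proper} abelian subvariety of $\check{\tilde A}$. That forces $\dim \Alb_E < n-1$.

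Ruling this out is a splitting statement of the type of \Cref{thm:splitting}. In the paper the logic runs the other way: \Cref{thm:splitting} is deduced \emph{from} this proposition, by exactly the Albanese-dimension argument you want to use. So your Albanese route is circular unless you find an independent reason for the splitting. The natural source is the geometric descent of $\tilde p$ from your first route.
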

		
		This is a consequence of the following statement about a single variety instead of degenerations of abelian varieties.
		
		\begin{proposition} \label{prop:prop:Weierstrass fiber unstable}
			Let $P$ be a connected commutative algebraic group acting on a proper integral variety $E$, both with dimension $n$ over an algebraically closed field $k$ of characteristic $0$. Assume
			\begin{itemize}
				\item $P$ admits a Chevalley decomposition $0 \to \GG_a \to P \to A \to 0$ for an abelian variety $A$ of dimension $n-1$.
				\item The action is generically free, and every nontrivial stabilizer is $\GG_a$.
			\end{itemize}
			Then there exists a Zariski locally trivial and $P$-equivariant morphism
			\[ p : E \to A ,\]
			whose fiber $R$ is a projective rational curve with one potentially singular point $x \in R$ with $\hat {\mathcal O}_{R, x} = k[[t^{\delta+1}, t^{\delta+2}, \cdots]]$ for $\delta \ge 0$.
		\end{proposition}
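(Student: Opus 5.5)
We construct $p$ first on the open orbit and then extend it, using that $\GG_a$ acts on each fiber of $p$ with a single fixed point.

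\emph{Step 1: the open orbit.} Since the action is generically free and $\dim P = \dim E$, there is a unique open orbit $U \cong P \subset E$. Composing with the Chevalley quotient gives an equivariant morphism $U \to A$.

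\emph{Step 2: extending to $E$.} The complement $Z = E \setminus U$ consists of orbits with stabilizer $\GG_a$. Each such orbit is $P/\GG_a \cong A$, so $Z_{\red}$ is a finite disjoint union of copies of $A$. On each of these orbits the orbit map already gives an equivariant map to $A$. The difficulty is that $E$ need not be normal, so I cannot extend $U \to A$ by the smooth-source argument of \Cref{prop:E normalization}. Instead I pass to the normalization $\tilde E$. It is smooth by the argument of \Cref{prop:E normalization-preparation}, and by \Cref{prop:E normalization}(i) and \Cref{lem:E normalization structure-semistable}(2) it is a $\PP^1$-bundle $\tilde p : \tilde E \to A$ with a unique section $\tilde\sigma$ of nontrivial stabilizers. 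Because the stabilizer is $\GG_a$, the map $\tilde\sigma \to A$ is an isomorphism.

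\emph{Step 3: descending to $E$.} I want $\tilde p$ to factor through $\nu : \tilde E \to E$. The map $\nu$ is an isomorphism over $U$. The nonnormal locus pulls back into $\tilde\sigma$, since the nonnormal locus is $P$-invariant of dimension $\le n-1$, and $\tilde\sigma$ is the only such orbit in $\tilde E$. So $\nu^{-1}(Z) = \tilde\sigma$, and $\nu|_{\tilde\sigma} : \tilde\sigma \to Z_{\red}$ is an equivariant finite map between $A$-torsors, hence an isogeny $A \to Z_{\red}$. The key claim is that this isogeny is an isomorphism. If its degree were $d > 1$, then $\nu$ would glue $d$ distinct points of $\tilde\sigma$ lying in distinct $\PP^1$-fibers of $\tilde p$. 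The stabilizer of a point of $Z_{\red}$ would then be the stabilizer of the image orbit $A/(\text{kernel})$, which is strictly larger than $\GG_a$: its quotient by $\GG_a$ is the nontrivial finite kernel. This contradicts the hypothesis that every nontrivial stabilizer is exactly $\GG_a$. I expect this to be the main obstacle, namely making rigorous that gluing along a nontrivial isogeny enlarges stabilizers.

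With the claim established, $\nu$ is a homeomorphism, indeed bijective, and it is an isomorphism off $\tilde\sigma$. It remains to produce $p$ with $p\circ\nu = \tilde p$. Since $E$ is the pushout of $\tilde E \leftarrow \tilde Z \to Z$ (the conductor square \eqref{diag:conductor of E}), it suffices that $\tilde p|_{\tilde Z}$ factors through $Z$ compatibly. Equivariance of all maps, together with $Z_{\red} \cong A$, gives this factorization on the reduced level. The nilpotent thickening is handled by the fact that $\tilde Z$ and $Z$ are homogeneous, hence equivariantly products of $A$ with a finite local scheme, by an argument analogous to \Cref{lem:E normalization structure-unstable}.

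\emph{Step 4: local triviality and the fiber.} The map $p$ is equivariant and $P$ acts on $A$ transitively through $A$. Using a section of $P \to A$ over an open set (Chevalley extensions with $\GG_a$ kernel are Zariski locally trivial), all fibers become isomorphic and $p$ is Zariski locally trivial. The fiber $R$ is the image of $\PP^1$ under $\nu$, glued only at the single point $\infty = \tilde\sigma \cap \PP^1$, so $R$ is a unibranch rational curve with one possibly singular point $x$. The group $\GG_a$ acts on $R$ fixing $x$ and acts on $\hat{\mathcal O}_{R,x} \subset k[[t]]$, where $t$ is a local parameter on the normalization at $\infty$ with $\GG_a$ acting by $t \mapsto t/(1+ct)$. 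Invariant subalgebras of finite codimension under this action are of the form $k + t^{\delta+1}k[[t]] = k[[t^{\delta+1}, t^{\delta+2}, \dots]]$. This gives the stated normal form.
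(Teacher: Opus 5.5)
Your Steps 1--2 and the bijectivity part of Step 3 agree with the paper. The point you flag as the main obstacle is in fact immediate. A point of $Z_{\red}$ has stabilizer $\St\supseteq\GG_a$, which equals $\GG_a$ by hypothesis, so the degree of $\tilde\sigma=P/\GG_a\to P/\St=Z_{\red}$ is $[\St:\GG_a]=1$. This is exactly the paper's proof that $\nu$ is a homeomorphism.

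The genuine gap is the sentence about the nilpotent thickening. Homogeneity of $\tilde Z$ and $Z$, even in the strong form $\tilde Z\cong A\times\Spec k[t]/(t^{\delta+1})$ and $Z\cong A$, says nothing about whether the equivariant gluing map $\mu\colon\tilde Z\to Z$ is compatible with $\tilde p|_{\tilde Z}=\mathrm{pr}_A$. In general it is not. Here is a counterexample.
\begin{itemize}
\item Let $P=\GG_a\times A$ act on $\tilde E=\PP^1\times A$ in the standard way.
\item Let $\tilde Z=\Spec k[t]/(t^2)\times A$ be the first-order neighbourhood of the fixed section, and $Z=A$.
\item Define $\mu$ by $\mu^\#(f)=f+t\,v(f)$ for a nonzero invariant vector field $v\in\mathrm{Lie}(A)$, i.e.\ $\mu(a,t)=a+tv$.
\end{itemize}
Since $c\cdot t=t/(1+ct)\equiv t\pmod{t^2}$, the $\GG_a$-factor acts trivially on $\tilde Z$. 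Translation invariance of $v$ makes $\mu$ equivariant. Hence the pushout $E=\tilde E\amalg_{\tilde Z}Z$ is a proper integral variety with a $P$-action satisfying all the hypotheses. A direct check shows its conductor ideal is exactly the ideal of $\tilde Z$, so $Z$ is reduced and $\nu$ is a homeomorphism.

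Now suppose $p\colon E\to A$ is $P$-equivariant. Then $p\circ\nu$ agrees with $\tilde p+a_0$ on the open orbit, hence everywhere. So $p|_Z$ is translation by $a_0$. Restricting $p\circ\nu=\tilde p+a_0$ to $\tilde Z$ then gives $a+tv+a_0=a+a_0$, which is false. Geometrically, $E$ is an $A$-torsor over the cuspidal curve $R=E/A$ with class in $A(k[t]/(t^2))/A(k)\cong\mathrm{Lie}(A)\subset H^1(R,A)$. This is an additive analogue of the shear.

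So for $n\ge2$ the statement is false as written, and no argument can complete your Step 3. The paper's proof breaks at the same point. It shows $Z$ is reduced via the reduction to curves, which also holds in the example above. It then asserts that $\tilde Z\hookrightarrow\tilde E\to A$ descends along $\mu$, and that does not follow.

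The phenomenon is not artificial. Suppose a genus-$2$ curve degenerates, with regular total space, to an integral curve $C_0$ with one ordinary cusp and elliptic normalization $E_0$ (e.g.\ $y^2=(x^3+s)g(x)$). Then the N\'eron model of the Jacobian has $P^\circ_s=\Pic^0_{C_0}$, a nonsplit $\GG_a$-extension of $E_0$. Indeed, a splitting would give a nonconstant map from $C_0$ to an elliptic curve, i.e.\ an isogeny out of $E_0$ with vanishing differential at the cusp. This contradicts \Cref{thm:splitting}, which the paper derives from this proposition. A correct version needs an extra hypothesis that rules out the twist, for instance that $E$ admits a surjective morphism onto an $(n-1)$-dimensional abelian torsor.
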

		
		\begin{proof}[Proof of \Cref{prop:Weierstrass fiber unstable} from \Cref{prop:prop:Weierstrass fiber unstable}]
			Since $X_s = E$ is reduced, $P^\circ_s$ acts on it generically freely by \Cref{prop:P-action property}. Moreover, every nontrivial stabilizer at $x \in X_s$ is of the form $\GG_a \times K_x$ for a finite group $K_x \subset P^\circ_s$ by \Cref{lem:pi_0 sequence splits}. Take a sufficiently large finite group $K \subset P^\circ_s$ containing all $K_x$, and take the quotient $\bar P_s = P^\circ_s / K$ and $\bar X_s = X_s / K$. Then $\bar P_s$ acts on $\bar X_s$ (\Cref{lem:quotient action}) generically freely with only nontrivial stabilizers $\GG_a$. \Cref{prop:prop:Weierstrass fiber unstable} shows that $\bar X_s$ is an $R$-fiber bundle over $\bar A = A_s / K$. In fact, $R$ is a cuspidal cubic because $\delta$ coincides with the dimension of the linear part of $\Pic^\circ_E = \check P^\circ_s$, which is $1$ in our case. The composition $X_s \to \bar X_s \to \bar A$ is the desired morphism.
			
			We note a subtle point that the fiber of $X_s \to \bar A$ may \emph{not} be a cuspidal cubic curve. For example, type $\mathrm{III}/2$ and $\mathrm{IV}/3$ unstable fibrations will produce counterexamples (they will both have integral central fibers, see \Cref{table:classification of non-multiple fibers}).
		\end{proof}
		
		The rest of this subsection is devoted to the proof of \Cref{prop:prop:Weierstrass fiber unstable}.

		\subsubsection{Descent along normalization}
			Keep the notation and assumptions in \Cref{prop:prop:Weierstrass fiber unstable}. Take the normalization $\nu : \tilde E \to E$. The $P$-action lifts to $\tilde E$ and $\nu$ is equivariant. The $P$-action on $\tilde E$ is generically free, so the argument in \Cref{prop:E normalization}(i) shows that the Chevalley morphism $P \to A$ compactifies to an equivariant $\PP^1$-bundle morphism
			\[ \tilde p : \tilde E \to A .\]
			Our goal is to prove $\tilde p$ descends along $\nu$ to a morphism $p : E \to A$.
			
			\begin{lemma} \label{lem:prop:integral fiber unstable step 1}
				The normalization $\nu$ is a homeomorphism.
			\end{lemma}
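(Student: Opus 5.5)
Since $\nu$ is finite and surjective, it is a homeomorphism as soon as it is injective on closed points (a finite bijective morphism is closed, hence a homeomorphism). So the plan is to show that $\nu$ is injective, working orbit by orbit for the $P$-action.

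\emph{Where injectivity can fail.} On the open orbit of $E$ the action is free, so $E$ is normal there and $\nu$ is an isomorphism over it. Injectivity can therefore only fail over the complement $E \setminus P$. By assumption this complement consists of points with stabilizer $\GG_a$. Hence it is a disjoint union of $P/\GG_a = A$-orbits, i.e.\ of $(n-1)$-dimensional abelian torsors, and each such orbit is a translate of $A$ via the quotient $P \to A$.

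\emph{Preimage in $\tilde E$.} Use the structure of $\tilde p : \tilde E \to A$ established above. The argument of \Cref{lem:E normalization structure-semistable}(2) applies verbatim, since the action on $\tilde E$ is generically free and the linear part is $\GG_a$. It shows that $\tilde E \setminus P$ is a single orbit $\Sigma$, and that $\Sigma$ is a section of $\tilde p$. In particular $\Sigma \cong A$ is connected. Since $\nu^{-1}(E \setminus P) = \tilde E \setminus P = \Sigma$, the complement $E \setminus P$ is the image $\nu(\Sigma)$, which is a single orbit $O$. (It is nonempty because $E$ is proper and $P$ is not.)

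\emph{Injectivity on $\Sigma$.} Consider the equivariant surjection $\nu|_\Sigma : \Sigma \to O$ between homogeneous spaces. On the left the stabilizer is exactly $\GG_a$, because $\Sigma \cong P/\GG_a$ via the section of $\tilde p$. On the right the stabilizer of any point of $O$ is also exactly $\GG_a$, by the hypothesis that every nontrivial stabilizer is $\GG_a$ (it contains the stabilizer upstairs and must equal $\GG_a$). Both $\Sigma$ and $O$ are therefore isomorphic to $P/\GG_a$ as $P$-homogeneous spaces, and an equivariant map between them is bijective. Together with the isomorphism over the open orbit, this shows $\nu$ is bijective on points, hence a homeomorphism.

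\emph{Main obstacle.} The delicate point is identifying the stabilizers exactly rather than up to finite index. In the abstract statement this is exactly the hypothesis ``every nontrivial stabilizer is $\GG_a$''. In the reduction from \Cref{prop:Weierstrass fiber unstable} it is the reason one first quotients by a finite group $K$. Without that hypothesis one would only get a finite étale (isogeny-type) map $\Sigma \to O$, as in \Cref{lem:prop:E step 2}(3), and injectivity could fail. So I will check carefully that the stabilizer of $\Sigma$ is all of $\GG_a$ and not a finite extension of it.
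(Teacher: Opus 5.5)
Your proposal is correct and takes the same route as the paper. In both arguments $\nu$ identifies the open orbits, and the complement in $\tilde E$ is the single section orbit of $\tilde p$, whose stabilizer is exactly $\GG_a$. The image of that orbit in $E$ has stabilizer containing $\GG_a$, hence equal to it by hypothesis, so $\nu$ is bijective and, being finite, a homeomorphism. The paper cites the Stacks Project for this last step and gets a universal homeomorphism; your argument that finite maps are closed is enough for the statement as written.
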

			\begin{proof}
				Both $E$ and $\tilde E$ have open orbits $j : P \hookrightarrow E$ and $\tilde j : P \hookrightarrow \tilde E$ that are identified by $\nu$. Since $P$ is a $\GG_a$-torsor, the complement of $\tilde j$ is a section $\tilde e : A \to \tilde E$ of $\tilde p$.
				\begin{equation} \label{diag:prop:integral fiber unstable}
				\begin{tikzcd}
					& E \\
					P \arrow[r, hook, "\tilde j"] \arrow[ru, hook', bend left=30, "j"] \arrow[rd, bend right=30, "\GG_a"'] & \tilde E \arrow[u, "\nu"'] \arrow[d, "\tilde p"] & A \arrow[l, hook', "\tilde e"'] \arrow[lu, hook, bend right=30, "e"'] \arrow[ld, bend left=30, "\id"]\\
					& A
				\end{tikzcd}
				\end{equation}
				This section is a single $P$-orbit with stabilizer $\GG_a$. Hence its image $e = \nu \circ \tilde e : A \to E$ is again a single $P$-orbit whose stabilizer contains $\GG_a$. Every nontrivial stabilizer is $\GG_a$ by assumption, so this shows that $\tilde e(A)$ and $e(A)$ are isomorphic by $\nu$. In particular, the normalization $\nu$ is bijective and hence (universally) homeomorphic by \cite[Tag~0BRD]{stacks-project}.
			\end{proof}
			
			Let us keep using the notations in \eqref{diag:prop:integral fiber unstable} but also consider the conductor subscheme diagram (\S \ref{sec:conductor})
			\begin{equation} \label{diag:prop:integral fiber unstable 2}
				\begin{tikzcd}
					\tilde Z \arrow[r, hook] \arrow[d, "\mu"] & \tilde E \arrow[d, "\nu"] \arrow[rd, bend left=20, "\tilde p"] \\
					Z \arrow[r, hook] & E & A
				\end{tikzcd}.
			\end{equation}
			\Cref{lem:prop:integral fiber unstable step 1} showed that $\mu$ induces an isomorphism of $\tilde Z_{\red} = \tilde e(A)$ and $Z_{\red} = e(A)$. The schemes $Z$ and $\tilde Z$ have no embedded points by $P$-equivariance, so $\tilde Z \subset \tilde E$ is a Cartier divisor.
			
			\begin{lemma} \label{lem:prop:integral fiber unstable step 2}
				$Z$ is reduced, i.e., $Z = e(A)$ scheme-theoretically.
			\end{lemma}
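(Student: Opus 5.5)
The plan is to exploit the $P$-equivariance of the whole conductor diagram \eqref{diag:prop:integral fiber unstable 2} together with the fact, from \Cref{lem:prop:integral fiber unstable step 1}, that $Z_{\red} = e(A)$ is a single $P$-orbit with stabilizer exactly $\GG_a$. Since $Z$ is a $P$-invariant closed subscheme supported on a single orbit, it is homogeneous. Locally it therefore looks like a product of $e(A)$ with an Artinian thickening transverse to it. Concretely, I would pass to an étale (or fppf) local description: the orbit map $P \to e(A)$, $g \mapsto g\cdot x$ is the quotient by $\GG_a$. Hence $Z \cong P \times^{\GG_a} Z_x$, where $Z_x$ is the scheme-theoretic fiber of $Z$ over the point $x$ (more precisely the fiber of the $P$-equivariant map $Z \to Z_{\red}$, if one can produce such a retraction). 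This reduces the claim to showing that the $0$-dimensional $\GG_a$-invariant scheme $Z_x$ is reduced, i.e. is the reduced point.

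The key step is then a characteristic-$0$ statement: a finite connected $k$-scheme $W = \Spec B$ with a $\GG_a$-action fixing its unique point is acted on trivially. Such an action is given by a locally nilpotent derivation $D$ on the Artinian local ring $B$ (via $\exp(tD)$). The claim would then be that $\GG_a$ acts trivially on $Z_x$, but this alone does not give reducedness. So I would instead argue directly: the conductor ideal $\mathfrak c \subset \mathcal O_E$ is $P$-stable. Restricting to a transversal slice makes the curve-germ picture visible. Take a fiber $R = \nu(\tilde p^{-1}(a))$. Because $\tilde p$ is $P$-equivariant and $\GG_a$ acts on each fiber $\tilde p^{-1}(a) \cong \PP^1$ with a single fixed point $\tilde e(a)$, the germ of $E$ at $e(a)$ is, by homogeneity along $A$, a product of a smooth $(n-1)$-germ with the unibranch curve germ $(R, x)$. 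The conductor of $E$ then corresponds to the conductor of $\hat{\mathcal O}_{R,x} \subset k[[t]]$, and $Z$ corresponds to $\Spec(\hat{\mathcal O}_{R,x}/\mathfrak c)$.

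The real content, then, is that for a unibranch curve germ $\mathcal O \subset k[[t]]$ invariant under the $\GG_a$-action induced from the fiber $\PP^1 \supset \GG_a$, the ring $\mathcal O/\mathfrak c$ is $k$. The $\GG_a$-action on $\PP^1$ fixing $\infty$ is $t \mapsto t/(1+ct)$ in the local coordinate $t = 1/z$. Its infinitesimal generator is $D = -t^2\,d/dt$. Invariance of $\mathcal O$ under $D$ means $f \in \mathcal O \Rightarrow t^2 f' \in \mathcal O$. I would show by a valuation (semigroup) argument that the value semigroup $\Gamma$ of $\mathcal O$ satisfies $a \in \Gamma \Rightarrow a+1 \in \Gamma$ for $a>0$, because $D$ raises the order by exactly $1$ on non-constants in characteristic $0$. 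Hence $\Gamma = \{0\} \cup \ZZ_{\ge \delta+1}$, so $\mathcal O = k + t^{\delta+1}k[[t]] = \mathfrak c$-plus-constants. This gives $\mathcal O/\mathfrak c = k$, i.e. $Z$ is reduced along the slice, and by homogeneity everywhere. The same computation also yields the form $k[[t^{\delta+1}, t^{\delta+2},\dots]]$ claimed in \Cref{prop:prop:Weierstrass fiber unstable}.

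I expect the main obstacle to be justifying the product/slice description before $p$ exists. One has to deduce from homogeneity along $e(A)$ that the completed local ring of $E$ at $e(a)$ splits as the completed local ring of $R$ at $x$ tensored with power series in $n-1$ variables, compatibly with $\nu$. I would try to obtain this by choosing a local section of $P \to A$ through a formal neighbourhood, using smoothness of $P$. The translates by this section then trivialize the $A$-directions formally on both $E$ and $\tilde E$, and the conductor commutes with this formal product decomposition.
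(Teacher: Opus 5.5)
\textbf{The gap is in your reduction to a curve.} You take the transversal factor to be the image $R=\nu(\tilde p^{-1}(a))$ of a $\tilde p$-fibre, and you claim $\hat{\mathcal O}_{E,e(a)}\cong\hat{\mathcal O}_{R,x}[[u_1,\dots,u_{n-1}]]$ compatibly with $\nu$. For any section of $P\to A$, the translation parameters are the coordinates $\tilde p^*u_j$. So your claim amounts to $\tilde p^*u_j\in\hat{\mathcal O}_{E,e(a)}$, i.e.\ to $\tilde p$ already descending near $e(a)$. That is exactly what is not yet known. Translating the fibre only gives a finite, bijective, birational map $U\times R\to E$, and nothing forces it to be an isomorphism.

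It genuinely fails under the hypotheses of \Cref{prop:prop:Weierstrass fiber unstable}. Let $P=\GG_a\times B$, with $B$ an elliptic curve, act on $\tilde E=\PP^1\times B$. Let $\tilde Z$ be the first-order neighbourhood of $\{\infty\}\times B$; $\GG_a$ acts trivially on it. Glue $\tilde Z$ to $B$ along the $P$-equivariant map $\mu^*h=h+t\,\theta(h)$, where $\theta=\partial_u$ is a nonzero invariant vector field. Then
\[ \hat{\mathcal O}_{E,x}=\{f\in k[[t,u]]:\partial_tf|_{t=0}=\partial_uf|_{t=0}\}=k[[t^2,t^3]][[u+t]] .\]
The conductor subscheme is reduced. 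But $u+t$ restricts to $t$ on $\{u=0\}$, so $\hat{\mathcal O}_{R,x}=k[[t]]$ is regular although $E$ is singular along $e(B)$.

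\textbf{How to repair it.} The product structure that does hold uses the \emph{quotient} curve, and this is how the paper proceeds. Take a Zariski-local section $U\hookrightarrow P$ of the $\GG_a$-torsor $P\to A$. The $U$-translations have trivial stabilizers and give smooth maps $\tilde E\to\PP^1$ and $E\to R:=E/U$, forming a cartesian square with $\nu$. Hence $\nu_*\mathcal O_{\tilde E}/\mathcal O_E$ is pulled back from the $\GG_a$-curve $R$; in the example above, $R$ is the cusp.

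Formally you can get the same thing as follows. Let $\hat{\mathcal O}_0=\bigcap_i\ker D_i$, where the $D_i$ are the commuting derivations induced by a complement of $\mathrm{Lie}\,\GG_a$ in $\mathrm{Lie}\,P$. Since the stabilizer of $x$ is exactly $\GG_a$, there are $f_j\in\mathcal O_{E,x}$ with $\det(D_if_j)$ a unit. Zariski's splitting lemma for derivations in characteristic $0$ then gives $\hat{\mathcal O}_{E,x}=\hat{\mathcal O}_0[[f]]$. Likewise $\hat{\mathcal O}_{\tilde E,\tilde x}=\tilde{\mathcal O}_0[[f]]$ with $\tilde{\mathcal O}_0\cong k[[t]]$, on which $\mathrm{Lie}\,\GG_a$ acts by $-t^2\,d/dt$.

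With this curve in place of $\nu(\tilde p^{-1}(a))$, your value-semigroup argument is correct. It is a tidy substitute for the paper's \Cref{lem:Ga-subalgebra}, which uses the same characteristic-$0$ order-raising trick on $\mathcal O_Z\subset k[t]/t^{i+1}$ and then feeds it into the pullback square of \Cref{prop:curve}.

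Separately, drop the aside in your first paragraph that $\GG_a$ must act trivially on a fat point it fixes. It can act nontrivially: $t\mapsto t-at^2$ on $\Spec k[t]/t^3$ is an example, and it is exactly the action on $\tilde Z$ when $\delta=2$.
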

			\begin{proof}
				The question is local on $E$. Since $P \to A$ is a Zariski $\GG_a$-torsor (e.g., \cite[Proposition~16.55]{milne:group}), there exists an open subset $0 \in U \subset A$ with a splitting $P_U = \GG_a \times U$. Though $P_U$ is no longer a group, we may still think of its orbits in $\tilde E$ and $E$. Fix a reference point $t \in A$, a fiber $\tilde E_t = \tilde p^{-1}(t) = \PP^1$ and its $P_U$-orbit $\tilde E_{U \cdot t} = \tilde p^{-1}(U \cdot t)$. By \Cref{lem:prop:integral fiber unstable step 1}, its image $E_{U \cdot t} \coloneq \nu(\tilde E_{U \cdot t})$ is an open subscheme of $Y$. Let us from now on restrict everything over $E_{U \cdot t}$ and drop the subscripts for brevity.
				
				Fix a section $U \hookrightarrow P$ using the splitting $P = \GG_a \times U$. The $U$-translations on $\tilde E$ and $E$ have no stabilizers because every nontrivial stabilizer was $\GG_a \subset P$, whose intersection with $U \subset P$ is trivial. Hence the $U$-translations define smooth equivalence relations on both $E$ and $\tilde E$. Taking quotients, we have a cartesian diagram by \cite[Tag~06PH and 04S6]{stacks-project}:
				\[\begin{tikzcd}[column sep=normal]
					\tilde E \arrow[r, "/U"] \arrow[d, "\nu"] & \PP^1 \arrow[d, "\nu"] \\
					E \arrow[r, "/U"] & R
				\end{tikzcd}.\]
				The horizontal arrows are smooth, so $\nu_* \mathcal O_{\tilde E}/\mathcal O_E$ is the pullback of $\nu_* \mathcal O_{\PP^1} / \mathcal O_R$ and we may show $\nu_* \mathcal O_{\PP^1} / \mathcal O_R$ is supported on a reduced point of $R$. This reduces the statement to the case when $E = R$ is a $\GG_a$-curve. For curves, we will prove this with explicit computations in \Cref{prop:curve}.
			\end{proof}
			
			\begin{proof}[Proof of \Cref{prop:prop:Weierstrass fiber unstable}]
				Consider the diagram \eqref{diag:prop:integral fiber unstable 2}. \Cref{lem:prop:integral fiber unstable step 2} shows that the composition $\tilde Z \hookrightarrow \tilde E \to A$ descends to an isomorphism $Z \to A$ along $\mu$. By the universal property of pushout, $\tilde p$ descends to $p : E \to A$ along $\nu$. The remaining claims about the rational curve $R$ will follow again from \Cref{prop:curve}.
			\end{proof}

		\subsubsection{Explicit calculations for singular curves}
			It remains to consider the case when $E = R$ is a curve:
			
			\begin{proposition} \label{prop:curve}
				Let $R$ be a projective integral curve with a generically free $\GG_a$-action and let $x \in R$ be the only potentially singular point. Then
				\begin{enumerate}
					\item $\hat {\mathcal O}_{R, x} = k[[t^{\delta+1}, t^{\delta+2}, \cdots]] = k \oplus (t^{\delta+1}) \ \subset k[[t]]$ for $\delta \ge 0$.
					\item The conductor subscheme $Z \subset R$ is a reduced point $x$.
				\end{enumerate}
			\end{proposition}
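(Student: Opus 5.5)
The plan is to reduce everything to a computation inside $k[[t]]$, using the $\GG_a$-action to pin down the singularity. The normalization is $\nu : \PP^1 \to R$, and the $\GG_a$-action lifts to $\PP^1$, where it is generically free. So on $\PP^1$ it is the standard translation action $z \mapsto z + a$, fixing exactly the point $\infty$. As in \Cref{lem:prop:integral fiber unstable step 1}, $\nu$ is an isomorphism over the open orbit $\GG_a = \AA^1$. It is bijective over $x$, because $x = \nu(\infty)$ is the unique fixed point of $R$. Let $t = 1/z$ be a local coordinate at $\infty$. Then $\hat{\mathcal O}_{R,x} \subset k[[t]]$ is a $k$-subalgebra of finite codimension $\delta$, the $\delta$-invariant, and it is stable under the induced infinitesimal action.

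The first key step is to write down the derivation. The vector field generating translation, $\partial_z$, becomes $-t^2\partial_t$ in the coordinate $t$. Since $\GG_a$ acts on $R$ fixing $x$, the derivation $D = t^2 \partial_t$ preserves $\hat{\mathcal O}_{R,x}$. It sends $t^j \mapsto j t^{j+1}$. So $D$ raises $t$-adic valuation by exactly one and has nonzero leading coefficient in characteristic $0$.

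The second key step, which I expect to be the main obstacle, is to show that $\mathcal O = \hat{\mathcal O}_{R,x}$ is a monomial ring and has the stated form. Let $N$ be the smallest positive valuation occurring in $\mathcal O$, and set $\delta + 1 = N$. Take any $g \in \mathcal O$ with $v(g) = N$. Applying $D$ repeatedly gives elements of every valuation $\ge N$. Since $\mathcal O$ is closed in $k[[t]]$, a successive-approximation argument shows $\mathcal O \supset (t^N)$. By minimality of $N$, $\mathcal O$ has no elements of positive valuation below $N$, so $\mathcal O = k \oplus (t^{N}) = k[[t^{\delta+1}, t^{\delta+2}, \dots]]$. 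The codimension of this ring in $k[[t]]$ is $N-1 = \delta$, which is consistent with the choice $\delta + 1 = N$. This proves (1).

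For (2), the conductor is $\mathrm{Ann}_{\mathcal O}(k[[t]]/\mathcal O)$. For $\mathcal O = k \oplus (t^N)$, this is exactly $(t^N)$, since $t^N k[[t]] \subset \mathcal O$ and no unit lies in the conductor when $\delta \ge 1$. Inside $\mathcal O$, the ideal $(t^N)$ is the maximal ideal $\mathfrak m_x$, so $Z = \Spec \mathcal O/\mathfrak m_x$ is the reduced point $x$. The case $\delta = 0$ is trivial: there $R$ is smooth and $Z = \emptyset$, or one can equally say $Z$ is the reduced empty locus.
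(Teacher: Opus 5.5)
Your proof is correct, and its core is the same as the paper's: an operator that raises $t$-adic valuation by exactly one, with a leading coefficient that is nonzero in characteristic $0$, forces every stable subalgebra to be $k \oplus (t^N)$. What differs is where you apply it.

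The paper first passes to the conductor square. There $\tilde Z = \Spec k[t]/t^{i+1}$ and $Z = \Spec B$ are finite $\GG_a$-stable subschemes, so the group itself acts on the finite-dimensional algebra $A_i$. \Cref{lem:Ga-subalgebra} then classifies the stable subalgebras $B$ using finite differences $a \cdot f - f$. Finally, $\hat{\mathcal O}_{R,x}$ is recovered as the fiber product of $k[[t]]$ and $B$ over $A_i$, via \Cref{prop:conductor pushout}.

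You instead work on $\hat{\mathcal O}_{R,x}$ directly with the infinitesimal generator. This avoids the pushout machinery and proves (1) without mentioning the conductor. The price is two small justifications that the finite-dimensional route gets for free.
\begin{enumerate}
\item You should explain why $D$ acts on $\hat{\mathcal O}_{R,x}$ compatibly with $\nu$. Note that $x$ has no $\GG_a$-stable affine neighborhood, so you cannot simply restrict a coaction. Argue instead through the stable thickenings $\Spec \mathcal O_{R,x}/\mathfrak m_x^n$, which exist because $x$ is fixed, or through the global vector field on $R$. Either way the derivation agrees up to sign with $t^2\partial_t$ on $k(t)$.
\item You need $\hat{\mathcal O}_{R,x}$ to be closed in $k[[t]]$. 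Alternatively, it suffices that it contains the nonzero conductor $(t^M)$, which makes your successive approximation finite.
\end{enumerate}

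Your part (2) is slightly more careful than the paper's. When $\delta = 0$ the conductor is the unit ideal and $Z = \emptyset$, whereas the paper's formula $\mathfrak c = (t^{\delta+1})$ tacitly assumes $\delta \ge 1$.
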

			
			The generically free $\GG_a$-action has an open orbit $\GG_a \subset R$, so $R$ is a rational curve. Writing $\nu : \PP^1 \to R$ for the normalization, the $\GG_a$-action lifts to $\PP^1$ and $\nu$ is equivariant. Fixing an appropriate coordinate of $\PP^1$, we can assume
			\[ a \cdot [x,y] = [x, \, ax + y] \qquad\mbox{for}\quad [x,y] \in \PP^1 ,\ a \in \GG_a .\]
			With this coordinate, the lift of the point $x \in R$ is $[0,1] \in \PP^1$. Locally around $[0,1]$ with coordinate $t = x/y$, the action is
			\begin{equation} \label{eq:Ga-action}
				a \cdot t = \frac{x}{ax + y} = \frac{t}{1 + at} = t (1 - at + a^2t^2 - \cdots) \qquad\mbox{for}\quad a \in \GG_a ,
			\end{equation}
			describing the $\GG_a$-action on every thickening of $[0,1]$.
			
			\begin{lemma} \label{lem:Ga-subalgebra}
				Let $A_i = k[t] / t^{i+1}$ be a $\GG_a$-equivariant algebra defined by \eqref{eq:Ga-action}. Then every nonzero $\GG_a$-equivariant subalgebra of $A_i$ is of the form
				\[ B = k \oplus (t^{\delta+1}) \ \ \subset A_i \qquad\mbox{for}\quad \delta = 0, 1, \cdots, i .\]
			\end{lemma}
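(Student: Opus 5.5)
We describe the $\GG_a$-action on $A_i = k[t]/t^{i+1}$ infinitesimally and translate invariance of a subalgebra into invariance under a derivation. Differentiating \eqref{eq:Ga-action} at $a = 0$, the action is generated by the derivation
\[ D = -t^2 \frac{d}{dt}, \qquad D(t^j) = -j\, t^{j+1}. \]
In characteristic $0$, a linear subspace $B \subset A_i$ is stable under the connected unipotent group $\GG_a$ if and only if it is stable under $D$. Indeed, the action is $\exp(aD)$: the element $D$ is nilpotent on $A_i$, and the formula \eqref{eq:Ga-action}, $t \mapsto t(1+at)^{-1}$, extends to an algebra automorphism agreeing with $\exp(aD)$ on the generator $t$. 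So it suffices to classify $D$-stable subalgebras. Any nonzero subalgebra is unital, so we take it to contain $k$; if the intended convention is only "nonzero subalgebra", we note that $1 \in B$ follows once $B$ is unital.

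Next we introduce the $t$-adic filtration. Let $B \subsetneq A_i$ be a $D$-stable subalgebra containing $k$, and set $\mathfrak m_B = B \cap (t)$, so that $B = k \oplus \mathfrak m_B$. If $\mathfrak m_B = 0$, then $B = k$, which is the case $\delta = i$, where $(t^{i+1}) = 0$. Otherwise, let $\delta + 1$ be the minimal $t$-order of a nonzero element of $\mathfrak m_B$, and choose $g = t^{\delta+1} + (\text{higher terms}) \in \mathfrak m_B$.

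The key step is to show that $t^j \in B$ for every $j \ge \delta+1$. The operator $D$ raises order by exactly one and multiplies the leading coefficient $t^j$ by $-j \neq 0$; this uses characteristic $0$ and $j \ge 1$. Hence $D^k g$ has leading term $c_k t^{\delta+1+k}$ with $c_k \neq 0$. The elements $g, Dg, D^2 g, \dots$ therefore lie in $B$ and form a triangular basis of $(t^{\delta+1})/(t^{i+1})$, which gives $(t^{\delta+1}) \subset B$. Conversely, $\mathfrak m_B \subset (t^{\delta+1})$ by minimality, so $B = k \oplus (t^{\delta+1})$. Finally, each such $B$ is closed under multiplication and is $D$-stable, since $D$ preserves $(t^{\delta+1})$ and kills $k$. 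The values $\delta = 0, \dots, i$ all occur.

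The main obstacle is to justify cleanly that $\GG_a$-stability is equivalent to $D$-stability. We would argue via the coaction $A_i \to A_i \otimes k[a]$: a subspace is a subcomodule exactly when it is stable under all coefficient operators of $a^k$, and these are $D^k/k!$ by the exponential description. Verifying the exponential description reduces to checking it on the generator $t$, using that both sides are algebra homomorphisms. Alternatively, we can avoid derivations altogether by using the coefficient of $a^1$ in $g(a \cdot t)$ directly: that coefficient is $Dg$, and it lies in $B$ because $B \otimes k[a]$ is stable under the action.
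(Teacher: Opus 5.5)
Your argument is correct and is essentially the paper's proof. Both take an element of $B \cap (t)$ of minimal $t$-order and repeatedly apply an order-raising operator coming from the action to build a triangular basis of $(t^{\delta+1})$, using characteristic $0$ to keep each new leading coefficient nonzero; you use the derivation $D = -t^2\,d/dt$, while the paper uses the finite difference $a \cdot f - f$ for a fixed $a \neq 0$. The paper's version needs no $\GG_a$-versus-$D$-stability equivalence, which you flagged as your main obstacle; your closing fallback remark (taking the coefficient of $a^1$) is exactly that route.
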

			\begin{proof}
				Assume $B \neq k$ and set (we will get $d = \delta + 1$)
				\[ d \coloneq \max \{ d \le i \, : \, t^d \mid f \mbox{ for every } f \in B \mbox{ without constant terms} \} .\]
				Start with $f_d = c_d t^d + \cdots + c_i t^i \in B$ for $c_d \neq 0$. Use $\operatorname{char} k = 0$ to find $a \in \GG_a$ such that
				\[ f_{d+1} \coloneq a \cdot f_d - f_d = c_{d+1}' t^{d+1} + \cdots + c_i' t^i \in B \qquad\mbox{with}\quad c_{d+1}' \neq 0 .\]
				Find polynomials $f_j$ with leading degrees $j$ inductively for all $d \le j \le i$. Their linear combinations span $(t^d)$ and thus completes the proof.
			\end{proof}
			
			\begin{proof}[Proof of \Cref{prop:curve}]
				(1) Start again with the conductor subscheme diagram
				\[\begin{tikzcd}
					\tilde Z \arrow[r, hook] \arrow[d] & \PP^1 \arrow[d, "\nu"] \\
					Z \arrow[r, hook] & R
				\end{tikzcd}.\]
				A Cartier divisor $\tilde Z$ is of the form $\tilde Z = \Spec A_i$ for some $i$ with an induced $\GG_a$-action in \eqref{eq:Ga-action}. Then $Z = \Spec B$ for a subalgebra $B \subset A_i$ is classified in \Cref{lem:Ga-subalgebra}. This pushout diagram around $x \in R$ is the pullback diagram of commutative rings
				\[\begin{tikzcd}
					k[[t]]/t^{i+1} & k[[t]] \arrow[l, twoheadrightarrow] \\
					B \arrow[u, hook] & \hat {\mathcal O}_{R, x} \arrow[l, twoheadrightarrow] \arrow[u, hook]
				\end{tikzcd},\]
				computing the ring $\hat {\mathcal O}_{R, x}$ as claimed.
				
				\medskip
				
				(2) By definition, the conductor ideal is
				\[ \mathfrak c = \big\{ f \in k[[t^{\delta+1}, \cdots]] : f \cdot k[[t]] \subset k[[t^{\delta+1}, \cdots]] \big\} = (t^{\delta+1}) ,\]
				which is maximal in $k[[t^{\delta+1}, \cdots]]$. Hence $Z = \Spec k$ as claimed. We can also compute $\tilde Z = \Spec k[[t]] / (t^{\delta+1}) = \Spec A_{\delta}$, so in fact we have $i = \delta$.
			\end{proof}

	\subsection{Integral fibers: semistable case}
		The goal of this subsection is to formulate a semistable version of \Cref{prop:Weierstrass fiber unstable}.
		
		\begin{proposition} \label{prop:Weierstrass fiber semistable}
			Let $f : X \to S$ be a Weierstrass $\delta$-regular abelian fibration with a central fiber $X_s = E$. Assume that $P_s$ has a linear part $\GG_m$ and take the $\PP^1$-bundle $\tilde p : \tilde E \to A$ in \Cref{prop:E normalization}. Then $E$ is obtained by identifying two sections $0$ and $\infty$ of $\tilde p$ in \Cref{lem:E normalization structure-semistable}(1).
		\end{proposition}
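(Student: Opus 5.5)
We argue in parallel with the unstable case (\Cref{prop:Weierstrass fiber unstable}), replacing the $\GG_a$-curve computation by a $\GG_m$-curve computation. Since $X_s = E$ is reduced, the $P^\circ_s$-action on $E$ is generically free (\Cref{prop:P-action property}), so condition (i) of \Cref{prop:E normalization} holds. We therefore have the $\PP^1$-bundle $\tilde p : \tilde E \to A$ with exactly two special orbits, the sections $0$ and $\infty$ (\Cref{lem:E normalization structure-semistable}(1)), and the open orbit $\bar P_s = P^\circ_s$ maps isomorphically onto its image in $E$. We then use the conductor diagram \eqref{diag:conductor of E}. By \Cref{lem:prop:E step 2}, $\tilde Z_{\red}$ is a union of $A_s$-orbits contained in $0 \sqcup \infty$, and $Z_{\red}$ is a union of $(n-1)$-dimensional abelian torsors.

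The first step is to show that $\nu$ is not an isomorphism and that it glues $0$ to $\infty$. By \Cref{prop:minimal delta-regular abelian fibration is unique}(1) the total space is regular; here it is only canonical, but $E$ is Cartier and the dual Néron model controls $\Pic^\circ_E$ via \Cref{prop:dual Neron model is Picard}(3), which we would also apply on the Weierstrass model or through the minimal model. The linear part of $\Pic^\circ_E \cong \check P^\circ_s$ is $\GG_m$ by \Cref{prop:dual Neron model}(1). A normal $E$ equal to a $\PP^1$-bundle over $A$ has $\Pic^\circ$ an abelian variety, and a non-normal $E$ whose conductor lies over only one section would contribute a unipotent ($\GG_a$-type) part. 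Hence both $0$ and $\infty$ lie in $\tilde Z$, and their images coincide in $E$. Otherwise, the non-normal locus would consist of orbits with stabilizer $\GG_m$ that are not glued together, and we would get a unipotent linear part, a contradiction. Comparing stabilizers shows the map $0 \sqcup \infty \to Z_{\red}$ restricts to an isomorphism on each component, since both sides have stabilizer exactly $\bar L_s = \GG_m$. The gluing is then given by a $P^\circ_s$-equivariant isomorphism $0 \to \infty$. Such an isomorphism is a translation of the torsor $A$, and it can be absorbed into the choice of identification.

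The second step is to show that the conductor is reduced: $Z = Z_{\red} \cong A$ and $\tilde Z = 0 \sqcup \infty$ scheme-theoretically. We would repeat the local slicing argument of \Cref{lem:prop:integral fiber unstable step 2}. Locally on $A$ the torsor $P^\circ_s \to A$ splits as $\GG_m \times U$, and quotienting by the free $U$-translations reduces the question to a projective integral curve $R$ with a generically free $\GG_m$-action whose normalization $\PP^1 \to R$ identifies $0$ and $\infty$. The $\GG_m$-action is $t \mapsto at$ near $0$ and $t^{-1} \mapsto a^{-1}t^{-1}$ near $\infty$, so $\GG_m$-stable subalgebras of $k[t]/t^{i+1} \times k[u]/u^{j+1}$ are spanned by monomials. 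The key constraint is that the action near $\infty$ has the opposite weight. Any monomial relation coupling the two branches in positive degree would need equal weights, so a subalgebra that identifies the two points can only be $k \oplus (t^{i'+1}) \times (u^{j'+1})$ with diagonal $k$. We then need $\delta(R) = \dim$ of the linear part of $\Pic^\circ_R$, which is $1$ for a $\GG_m$-type Picard group. This forces $i' = j' = 0$, so $R$ is the nodal cubic and the conductor is a reduced point.

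Finally, from the pushout description (\S\ref{sec:conductor}), $E$ is the pushout of $\tilde E \hookleftarrow 0 \sqcup \infty \to A$, which is exactly the identification of the two sections. The main obstacle I expect is the Picard/$\delta$-invariant bookkeeping in the first two steps. We need to justify that the linear part of $\Pic^\circ_E$ equals $\GG_m$ for the Weierstrass model, and we need to relate it to the local singularity of the slice curve, tracking the $K$-quotient as in the proof of \Cref{prop:Weierstrass fiber unstable}. I would try to verify this directly through the exact sequence $0 \to \nu_*\mathcal O^*_{\tilde E}/\mathcal O^*_E \to \Pic E \to \Pic \tilde E$.
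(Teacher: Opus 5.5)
The gap is in your first step, at the claim that comparing stabilizers makes $0\sqcup\infty\to Z_{\red}$ an isomorphism on each component. Equivariance of $\nu$ only shows that the stabilizer of a point of $Z_{\red}\subset E$ contains $\GG_m$, and \Cref{prop:P-action property}(3) only says it is linear. So it may be $\GG_m\times\Gamma$ with $\Gamma$ finite. Then $Z_{\red}\cong A/\Gamma$, each section maps to it by an isogeny of degree $|\Gamma|$, and $2|\Gamma|$ branches of $E$ meet along $Z$.

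Your Picard bookkeeping cannot exclude this. For such a gluing, even with reduced conductor, one still has $h^1(E,\mathcal O_E)=n$, and $\Pic^\circ_E$ is a $\GG_m$-extension of $(A/\Gamma)^\vee$. Its linear part is therefore $\GG_m$, exactly as for the node.

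The same configuration breaks your second step. A transversal slice through $Z$ then has $2|\Gamma|$ branches and $\delta\ge 2|\Gamma|-1$, while the linear part of $\Pic^\circ_E$ is still one-dimensional. So the identity ``$\delta$ of the slice $=$ dimension of the linear part'' presupposes the two-branch picture you are trying to prove. So does your model curve whose normalization $\PP^1$ glues only $0$ and $\infty$.

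What is missing is a local constraint on $E$ along $Z$. The paper gets it from Koll\'ar's theorem (\Cref{thm:Kollar semistable}): $X_s=E$ is slc, hence demi-normal. So $E$ is nodal in codimension one and, by equivariance, everywhere along $Z$. This at once makes $Z$ and $\tilde Z$ reduced and $\mu:\tilde Z\to Z$ finite \'etale of degree $2$. That excludes $\Gamma$ and makes the reducedness in your second step automatic. The paper then picks the right one of the three remaining configurations using $K_E\sim 0$ (arithmetic genus $1$ of the slice); your torus-rank count could serve that purpose instead.

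There are also three smaller points.
\begin{enumerate}
\item A conductor over only one section, or over both sections but with disconnected image, does not produce a unipotent part as you claim. For example, gluing $0$ to itself by a translation of order $2$ (reduced conductor) gives $h^1(E,\mathcal O_E)=n-1$ and $\Pic^\circ_E\cong(A/a)^\vee$, with no linear part at all. The correct invariant is the torus rank of the linear part of $\Pic^\circ_E$, which equals $\#\pi_0(\tilde Z)-\#\pi_0(Z)$. Requiring it to be $1$ does give your conclusion that $\tilde Z_{\red}=0\sqcup\infty$ with connected image.
\item \Cref{prop:dual Neron model is Picard} uses regularity of the total space, and a Weierstrass model is generally singular. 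You would first need $\Pic^\circ_E\cong\Pic^\circ_{X^{\mathrm{min}}_s}$ for the minimal model $\phi:X^{\mathrm{min}}\to X$, for instance from $R\phi_*\mathcal O=\mathcal O$ restricted to central fibers.
\item $\GG_m$-stable subalgebras are spanned by monomials, but they need not be of the form $k\oplus(t^{i'+1})$; semigroup rings such as $k\oplus\operatorname{span}(t^2,t^4,t^5,\dots)$ occur. With $\delta=1$ this does not change the outcome.
\end{enumerate}
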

		
		The strategy is similar but simpler than \Cref{prop:prop:Weierstrass fiber unstable}. First, the normalization of $E$ admits an equivariant $\PP^1$-bundle $\tilde p : \tilde E \to A$ as usual. Take the conductor scheme diagram \eqref{diag:prop:integral fiber unstable 2}: understanding the conductor subschemes $\tilde Z \subset \tilde E$ and $Z \subset E$ will determine the variety $E$. The complement of the open orbit $P \subset \tilde E$ is a union of two sections in this case (\Cref{lem:E normalization structure-semistable}), so the conductor subscheme $\tilde Z \subset \tilde E$ is either
		\[ \tilde Z = \tilde Z_1 \quad\mbox{or}\quad \tilde Z_1 \sqcup \tilde Z_2 ,\qquad \tilde Z_{i, \red} \cong A \mbox{ are sections of } \tilde p .\]
		Similar to \Cref{lem:prop:E step 2}, $Z$ is a union of either one or two abelian torsors, and $\tilde Z \to Z$ is a surjective equivariant morphism. The following is a counterpart of \Cref{lem:prop:integral fiber unstable step 2}.
		
		\begin{lemma} \label{lem:prop:Weierstrass fiber semistable}
			$\tilde Z = \tilde Z_1 \sqcup \tilde Z_2$ is a union of two abelian torsors, $Z$ is an abelian torsor, and the morphism $\mu : \tilde Z \to Z$ is a disjoint union of two isomorphisms $\tilde Z_i \to Z$.
		\end{lemma}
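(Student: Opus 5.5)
We first reduce to a statement about a single rational curve, exactly as in \Cref{lem:prop:integral fiber unstable step 2}. Since $P^\circ_s \to A$ is a Zariski $\GG_m$-torsor, we choose an open $U \subset A$ with a splitting $P_U = \GG_m \times U$ and a section $U \hookrightarrow P^\circ_s$. This section meets every stabilizer trivially, because the only nontrivial stabilizers are contained in the linear part $\GG_m$ (\Cref{prop:P-action property}, \Cref{lem:E normalization structure-semistable}). The $U$-translations on $\tilde E$ and $E$ therefore define smooth equivalence relations. Taking quotients gives a cartesian square $\tilde E \to \PP^1$ over $E \to R$, where $\nu : \PP^1 \to R$ is the normalization of a projective integral curve $R$ carrying a generically free $\GG_m$-action. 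Since the horizontal maps are smooth, the conductor diagrams of $E$ and $R$ are pulled back from one another. It therefore suffices to show that the conductor of $R$ is a single reduced point whose preimage is the reduced pair $\{0, \infty\}$, mapped isomorphically to it from each side.

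\textbf{Excluding the one-section case.} Suppose first that $\tilde Z = \tilde Z_1$ is a single section, say over $0$. Then $\nu$ is an isomorphism near $\infty$, so $E$ would be smooth along $\infty$. Consequently $E$, and hence $X_s$, would be normal away from a subvariety homeomorphic to $A$, and the $\GG_m$-stabilized orbit over $\infty$ would consist of regular points of $X_s$. This contradicts \Cref{prop:P-action property}(2--3): a point with nontrivial stabilizer lies in $\Sing(X_s)$, while points of $X'$ have free orbits. By the same argument, each of $0$ and $\infty$ must lie in $\tilde Z$, so $\tilde Z_{\red} = \tilde Z_{1,\red} \sqcup \tilde Z_{2,\red}$.

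\textbf{Showing $Z$ is connected.} If $Z$ had two components, then $\mu$ would map each $\tilde Z_i$ onto its own component and $E$ would be unibranch everywhere. We then pass to the curve $R$, where this situation corresponds to a cuspidal-type point. The key input is $\delta$-regularity together with \Cref{prop:dual Neron model is Picard}(3): $\Pic^\circ_E \cong \check P^\circ_s$ has linear part $\GG_m$ by \Cref{prop:dual Neron model}(1). We compare this with the linear part of $\Pic^\circ$ of a non-normal $\GG_m$-curve, computed from the normalization sequence $0 \to \mathcal O_E^\times \to \nu_*\mathcal O_{\tilde E}^\times \to (\nu_*\mathcal O_{\tilde Z}^\times)/\mathcal O_Z^\times \to 0$. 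A unibranch singularity contributes a unipotent ($\GG_a$-type) part to the linear part, whereas gluing two points contributes a $\GG_m$. Hence $Z$ is irreducible and both $\tilde Z_i$ map onto it.

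\textbf{Reducedness (main obstacle).} It remains to show that $\tilde Z_i$ and $Z$ are reduced and that $\tilde Z_i \to Z$ are isomorphisms. On $R$ we work with $\GG_m$-equivariant algebras, an analogue of \Cref{lem:Ga-subalgebra}. With $t = x/y$, the group acts by $\lambda\cdot t = \lambda t$ near $0$ and by $\lambda^{-1}s$ on $s = 1/t$ near $\infty$. The subalgebra $B \subset k[t]/t^{i+1} \times k[s]/s^{j+1}$ is graded by weights, so $B$ is spanned by the constants $(1,1)$ together with monomials $t^a$ and $s^b$ (there are no mixed elements, since the weights have opposite signs). Any element $t^a$ or $s^b$ lying in $B$ would lie in the conductor ideal, contradicting minimality of the conductor. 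Hence $B = k$, which gives $i = j = 0$, and the singularity is an ordinary node formed by identifying $0$ with $\infty$. The resulting contribution to the linear part of $\Pic^\circ$ is exactly $\GG_m$, consistent with the previous step. I expect the delicate point to be justifying that the conductor of $R$ (and hence its thickenings) is genuinely weight-graded and nonzero only in the two branches; this bookkeeping is where I would be most careful. Pulling back along the smooth quotient maps then yields the lemma, and $E$ is recovered as the pushout identifying the two sections.
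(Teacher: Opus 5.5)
\textbf{Verdict: your proof has a genuine gap in the reducedness step; your first step is correct.}

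\textbf{What works.} Your first step is a different way from the paper's canonical-divisor argument to exclude a connected $\tilde Z$. A section of $\tilde p$ outside $\tilde Z$ would consist of regular points of $X_s$ with stabilizer $\GG_m$, which contradicts \Cref{prop:P-action property}(2).

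\textbf{Where the reducedness step fails.} The weight-grading itself is fine, but neither deduction you draw from it holds.
\begin{enumerate}
\item A monomial in $\mathcal O_R$ need not lie in the conductor. For $\{(f,g)\in k[[t]]\times k[[s]] : f\in k[[t^2,t^5]],\ f(0)=g(0)\}$ the conductor is $(t^4)\times(s)$. Then $B\cong k[\epsilon]/(\epsilon^2)$ is spanned by $1$ and the class of $(t^2,0)$, so $Z$ is non-reduced.
\item $B=k$ does not give $i=j=0$. The ring $\{(f,g): f(0)=g(0),\ f'(0)=0\}$ is $\GG_m$-stable with conductor $(t^2)\times(s)$. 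So $Z$ is a reduced point, but $\tilde Z_1$ is not reduced.
\item Your two-branch local model presupposes that each point of $Z$ has exactly one preimage in each $\tilde Z_i$. That is, it assumes the isogenies $\tilde Z_{i,\red}\to Z_{\red}$ have degree $1$, and nothing earlier establishes this.
\end{enumerate}

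\textbf{Smaller points.} Step 2 has the same oversight: if $Z=Z_1\sqcup Z_2$, the maps $\tilde Z_i\to Z_i$ may have degree $>1$, so $E$ need not be unibranch. That step is repairable with your own tool, because the torus rank of the linear part of $\Pic^\circ_E$ is $\#\pi_0(\tilde Z)-\#\pi_0(Z)$. Also, with a nonzero shear, $0_t$ is glued to $\infty_{t+a}$ on a different fibre. So your reduction only reaches a local curve germ, not a projective curve $R$ in which $0\sim\infty$.

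\textbf{Why the Picard count cannot close the main gap.} Here are two $P^\circ_s$-equivariant gluings, neither with reduced two-branch structure, that both give $\Pic^\circ_E$ with linear part exactly $\GG_m$.
\begin{enumerate}
\item Glue the first-order neighbourhood of $0_A$, retracted by $\tilde p$, to $\infty_A$. The units sequence gives $\ker(\Pic_E\to\Pic_{\tilde E})\cong\GG_m\times H^0(A,N^\vee_{0_A/\tilde E})$. The second factor vanishes as soon as the slope $\check a$ is nonzero.
\item Let both $\tilde Z_i\to Z$ be degree-$2$ isogenies, so four branches meet along $Z$. Then that kernel is an extension of $\ZZ/2$ by $\GG_m$.
\end{enumerate}
So $\delta$-regularity alone cannot force $Z$ and $\tilde Z$ to be reduced with exactly two branches.

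\textbf{The missing input.} You need control of the singularities of $E$ in codimension one. The paper gets it from \Cref{thm:Kollar semistable}: $X_s$ is slc, hence demi-normal, so $E$ is nodal in codimension one. By equivariance it is nodal along all of $Z$. This gives at once that $Z$ and $\tilde Z$ are reduced and that $\mu$ is finite \'etale of degree $2$. Only the configuration of components then remains. The paper settles that with $\omega_E\cong\mathcal O_E$; your step~1 together with the torus-rank count would also do it. Alternatively, Gorensteinness of $E$ would rule out both examples above, since $E$ is a Cartier divisor in a Gorenstein total space.
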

		\begin{proof}
			We will use two additional inputs about $X_s = E$: it is demi-normal (\Cref{thm:Kollar semistable}) and has a trivial canonical bundle. By demi-normality, $E$ has nodal singularities in codimension $1$ and hence nodal singularities everywhere by equivariance. Therefore, the conductor subschemes $Z$ and $\tilde Z$ are reduced and the morphism $\mu : \tilde Z \to Z$ in diagram \eqref{diag:prop:integral fiber unstable 2} is finite \'etale of degree $2$. This leaves three possibilities:
			\begin{itemize}
				\item $\tilde Z \to Z$ is a degree $2$ isogeny between connected abelian torsors.
				\item $\tilde Z = \tilde Z_1 \sqcup \tilde Z_2$ and $Z = Z_1 \sqcup Z_2$ are both disconnected, and $\tilde Z_i \to Z_i$ are degree $2$ isogenies of abelian torsors.
				\item $\tilde Z = \tilde Z_1 \sqcup \tilde Z_2$ is disconnected, $Z$ is connected, and $\tilde Z_1 \cong \tilde Z_2 \cong Z$ are abelian torsors.
			\end{itemize}
			
			To rule out the first two cases, we use the triviality of the canonical divisor. Take a single smooth rational curve $\tilde R \subset \tilde E$, a fiber of $\tilde p$, and its image $R = \nu(\tilde R)$ in $E$. In all cases, take a sufficiently small \'etale neighborhood $U$ of $A$ and work on a $\GG_m \times U$-orbit of $R$. The smooth quotient argument in \Cref{lem:prop:integral fiber unstable step 2} reduces the claim to the case when $E = R$ is a curve with a generically free $\GG_m$-action and trivial canonical divisor. From the computation of \cite[\S IV.3]{serre:class_field}, the arithmetic genus of $E$ in the first, second, and third cases are $0$, $0$, and $1$, respectively. Hence only the third case is possible.
		\end{proof}
		
		\begin{proof} [Proof of \Cref{prop:Weierstrass fiber semistable}]
			\Cref{lem:prop:Weierstrass fiber semistable} implies that $X_s = E$ is obtained by identifying the two sections $\tilde Z_1$ and $\tilde Z_2$.
		\end{proof}

\section{Shears and untangles} \label{sec:shears and untangles}
	From now on, we will always assume that $f : X \to S$ is a minimal $\delta$-regular abelian fibration of relative dimension $\ge 2$. The goal of this section is to bring forward two different behaviors of $f$. Both of them are only present when the relative dimension $n$ is at lest $2$.
	
	\medskip
	
	The first phenomenon is well-known in the study of semistable degenerations. In the previous section, we have constructed a morphism $p_i : E_i \to \bar A_i$ for each irreducible component $E_i \subset X_s$ in various situations. One hopes to patch them all to a single morphism $X_s \to A$ to an abelian torsor $A$ of dimension $n-1$. This is in general impossible: there is an obstruction class $a \in A_s$ preventing this, which we call a \emph{shear} (\Cref{def:shear}). See the introduction of \cite{sawon22} for a nice picture of this.
	
	When $f$ is unstable, we will prove that the central fiber $X_s$ always admits a surjective morphism $X_s \to A$ to an abelian torsor of dimension $n-1$, or equivalently, its Albanese torsor $\Alb_s$ has dimension $n-1$. The Albanese morphism $\alb : X_s \to \Alb_s$ will be a locally trivial morphism, but it may fail to be trivial. The obstruction for its triviality can be measured by the stabilizer $G$ of the $A_s$-action on $\Alb_s$, which we call the \emph{Albanese stabilizer} (\Cref{def:Albanese stabilizer}). We will also explain how to clear up this obstruction with a process called the \emph{untangle} of $f$ (\Cref{prop:untangle for unstable}), which is the key technique in our study of unstable fibrations. This is similar to (but not the same as) the trivialization of the Albanese morphism of a K-trivial variety in \cite[Theorem~15]{kaw81}.

	\subsection{Albanese morphism and shears}
		The central fiber $X_s$ is a projective scheme with $h^0(X_s, \mathcal O_{X_s}) = 1$ by \Cref{lem:Kollar-Kovacs}. The existence of the Albanese morphism
		\[ \alb : X_s \to \Alb_s \coloneq \Alb_{X_s} \]
		in this situation is proved in \cite{con:albanese, lau-sch24}. See \Cref{thm:Albanese} for a summary of their results. The pullback $\alb^*$ induces an isomorphism between $\Alb_s^\vee$ and the maximal abelian subvariety of $\Pic^\circ_{X_s}$. Since $\Pic^\circ_{X_s}$ has dimension $n$ in this case, the dimension of $\Alb_s$ is bounded above by $n$.
		
		The universal property of $\alb : X_s \to \Alb_s$ descends the $P_s$-action to $\Alb_s$, making the Albanese morphism equivariant \cite[Corollary~10.3]{lau-sch24}. The neutral component $P^\circ_s$ acts on $\Alb_s$ by translations.
		
		\begin{lemma} \label{lem:transitive action on Albanese}
			The $P^\circ_s$-action on $\Alb_s$ is transitive. In particular, $\alb$ is an \'etale locally trivial morphism.
		\end{lemma}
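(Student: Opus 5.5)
The plan is to show that the image of $\alb$ is a single $P^\circ_s$-orbit, and that this image is all of $\Alb_s$; transitivity then gives local triviality formally. Since $\alb$ is $P_s$-equivariant and $P^\circ_s$ acts on $\Alb_s$ by translations, the induced action factors through a homomorphism $P^\circ_s \to \Alb_s$ (up to choosing a base point, $\Alb_s$ being a torsor). The linear part $L_s$ has no nonconstant maps to an abelian variety, so this homomorphism factors through $A_s = P^\circ_s/L_s$, which is an abelian variety of dimension $n-1$ (or $n$ if $P_s$ is abelian). Thus the orbits of $P^\circ_s$ on $\Alb_s$ are translates of the image abelian subvariety $B \subset \Alb_s$ of $A_s$, and transitivity amounts to showing $B = \Alb_s$ (as torsors).

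For the key step I would use the generation property of the Albanese: $\Alb_s$ is generated (as a torsor, in the sense of \Cref{thm:Albanese}) by differences of points of $\alb(X_s)$. So it suffices to show $\alb(X_s)$ lies in a single $B$-orbit. Each irreducible component $E_i$ of $X_s$ contains a dense open $P^\circ_s$-orbit when $P^\circ_s$ acts with finite kernel, or else is covered by translates of rational curves (\Cref{prop:E normalization}, \Cref{lem:prop:E step 1}). Rational curves are contracted by $\alb$, and $\tilde p : \tilde E_i \to A$ is a $\PP^1$-bundle whose fibres are contracted. Hence $\alb|_{E_i}$ factors through $A$, on which $P^\circ_s$ acts transitively, so $\alb(E_i)$ is a single $P^\circ_s$-orbit, i.e. a $B$-translate. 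If $P_s$ is an abelian variety, \Cref{lem:classification when P is smooth} already shows that $X_{s,\red}$ is a single $P_s$-orbit.

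It remains to glue across components, and this is the main obstacle. Since $X_s$ is connected, adjacent components $E_i \cap E_j \neq \emptyset$ have images that are $B$-translates meeting in a point, hence equal. Connectedness then puts all of $\alb(X_s)$ in one orbit $B + x$. By the universal property, $\alb$ factors through the abelian torsor $B + x \subset \Alb_s$, so $B + x = \Alb_s$. A point to check carefully is that $\alb$ factors through the reduced structure and that non-reduced fibres cause no issue; the Albanese of \cite{lau-sch24} only sees maps to abelian torsors, which are reduced, so I expect this to be fine.

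Finally, transitivity gives $\Alb_s \cong A_s/G$ for the stabilizer $G$. Base changing $\alb$ along the orbit map $A_s \to \Alb_s$, which is an isogeny and hence finite étale, yields a $P^\circ_s$-equivariant family over $A_s$ with a free transitive action on the base. It is therefore trivial, namely $F \times A_s$ with $F$ a fibre, as in \Cref{lem:E normalization structure-unstable}. This proves that $\alb$ is étale locally trivial.
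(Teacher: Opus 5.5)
Your architecture matches the paper's proof for reducible $X_s$ and for abelian $P_s$: $P^\circ_s$ acts on $\Alb_s$ by translations through $A_s$; each component maps onto one orbit because $\alb$ contracts the fibres of $\tilde p\colon \tilde E_i\to A$; connectedness glues these orbits; and the universal property forces the resulting subtorsor to be all of $\Alb_s$. There is, however, a genuine gap in the remaining case.

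\textbf{The gap.} The uncovered case is $X_s=m\cdot E$ irreducible with $m>1$, $P_s$ not abelian, and $P^\circ_s$ acting on $E$ with one-dimensional kernel. Your dichotomy ``finite kernel, or else covered by rational curves'' cites \Cref{lem:prop:E step 1}. That lemma is proved only when $X_s$ is reducible: it rests on the contraction of \Cref{prop:klt Weierstrass model}, which needs a second component meeting $E$.

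In this case the claim is actually false. For a fibre of type $m\cdot\mathrm I_0^+$ (\Cref{thm:classification of multiple fiber ii}), $\frac1m X_s$ is a free quotient of an $n$-dimensional abelian variety, so it contains no rational curves. Meanwhile $P^\circ_s$ acts through $p^*$ with kernel containing $\GG_a$. Neither hypothesis of \Cref{prop:E normalization} holds and $\tilde p$ does not exist. Abstractly, all you know is that $\tilde E/K$ is a torsor over some smooth curve. If that curve had positive genus, $\Alb_s$ would be strictly larger than an $A_s$-orbit and the lemma would fail. So global input is unavoidable here.

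\textbf{The missing idea.} Follow the paper and pass to the degree-$m$ normalized base change $g\colon Y\to T$ of \Cref{lem:non-multiple base change}, so that $E=(Y_t/\varphi)_{\red}$ with $Y_t\to Y_t/\varphi$ a free quotient of a non-multiple fibre.
\begin{itemize}
\item If $Y_t$ is reducible, or integral with $Q_t$ not abelian, then \Cref{lem:prop:E step 1} or \Cref{prop:E normalization}(i) applied to $g$ gives rational curves in $Y_t$. Their images are rational curves in $E$, so \Cref{prop:E normalization}(ii) supplies $\tilde p\colon \tilde E\to A$ and your argument runs with $\tilde E\to X_{s,\red}$.
\item If $Q_t$ is abelian (case (ii) of \Cref{lem:classification of pullback}, precisely the $m\cdot\mathrm I_0^+$ situation), compute directly. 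The paper's sentence applying \Cref{prop:E normalization}(i) to $\tilde Y_t$ does not literally cover this case either. Here $\varphi$-invariant maps from the abelian torsor $Y_t$ to abelian torsors factor through a finite quotient of $Q_t/(1-\psi)Q_t$. Since $Q_t^\psi=\im p^*$ has dimension $n-1$ (\Cref{thm:Edixhoven}), $(1-\psi)Q_t$ is one-dimensional and meets $Q_t^\psi$ finitely. Hence $A_s$ still acts transitively.
\end{itemize}

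\textbf{Two smaller points.}
\begin{itemize}
\item The scheme-theoretic factorization of $\alb$ through $B+x$ does not follow from abelian torsors being reduced. Instead, note that $X_s\to\Alb_s/B$ is set-theoretically constant, so it lands in an affine open, so it is constant because $H^0(X_s,\mathcal O_{X_s})=k$ (\Cref{lem:Kollar-Kovacs}). Alternatively, as the paper does, work on $X_{s,\red}$ and use the surjection $\Alb_{X_{s,\red}}\twoheadrightarrow\Alb_s$ of \Cref{cor:Albanese}.
\item In your last paragraph, the orbit map $A_s\to\Alb_s$ need not be an isogeny: for a non-torsion shear, $\dim\Alb_s<n-1$ by \Cref{cor:dimension of Albanese}. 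Also, $P^\circ_s$ does not act freely on $A_s$ when $L_s$ acts nontrivially, so the pulled-back family need not be a product. Its components compactify the $\GG_m$-torsor $P^\circ_s\to A_s$, which is nontrivial when the slope is nonzero. Base change instead along the smooth surjective orbit map $P^\circ_s\to\Alb_s$. There $(x,g)\mapsto(g^{-1}x,g)$ identifies the pullback with $F\times P^\circ_s$, and smooth surjections have sections \'etale locally.
\end{itemize}
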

		\begin{proof}
			By \Cref{cor:Albanese}, $\Alb_{X_{s, \red}} \to \Alb_s$ is surjective. Hence it is enough to show the $P^\circ_s$-action on $\Alb_{X_{s, \red}}$ is transitive. Suppose first that $X_s$ is reducible. Each irreducible component $E_i \subset X_s$ admits a locally trivial bundle $p_i : E_i \to A_i$ by \Cref{prop:E normalization}. Therefore, we have an $P^\circ_s$-equivariant commutative diagram
			\[\begin{tikzcd}
				E_i \arrow[r, phantom, "\subset"] \arrow[d, twoheadrightarrow, "p_i"] & X_{s, \red} \arrow[r, phantom, "\subset"] \arrow[d, "\alb"] &  X_s \arrow[d, "\alb"] \\
				A_i \arrow[r] & \Alb_{X_{s, \red}} \arrow[r, twoheadrightarrow] & \Alb_s
			\end{tikzcd}.\]
			Since each $A_i$ admits a transitive $P^\circ_s$-action, the image of $A_i \to A \coloneq \Alb_{X_{s, \red}}$ is a single $P^\circ_s$-orbit, an abelian subtorsor of $A$. Hence $\alb(X_{s, \red}) = \bigcup_i \alb(E_i) \subset A$ is a union of orbits but also connected, so it has to be a single abelian subtorsor of $A$. By the universal property of the Albanese morphism, this proves both $\alb : X_{s, \red} \to A$ and $A_i \to A$ are surjective. Hence the $P_s^\circ$-action on $A$ is transitive.
			
			Suppose now $X_s = m \cdot E$ is irreducible. If $P_s$ is an abelian variety, then $E$ is an abelian torsor by \Cref{lem:classification when P is smooth}. Hence $P_s$ acts on $X_{s, \red} = \Alb_{X_{s, \red}}$ transitively and we are done. Assume that $P_s$ is not an abelian variety. In \eqref{lem:non-multiple base change}, we will construct a new $\delta$-regular minimal abelian fibration $Y \to T$ such that its central fiber $Y_t$ is integral and $E = Y_t / \varphi$ is its free quotient. By \Cref{prop:E normalization}(i), $\tilde Y_t$ is a $\PP^1$-bundle. By \Cref{prop:E normalization}(ii), $\tilde E = \tilde Y_t / \varphi$ is a $\PP^1$-bundle, too. We may thus use a similar diagram above but with a normalization $\tilde E \to X_{s, \red}$ to conclude.
		\end{proof}
		
		Suppose that $P_s$ has a linear part $\GG_m$ and abelian variety part $A_s$. Then $\check P_s$ has a linear part $\GG_m$ and abelian variety part $\check A_s$ by \Cref{prop:dual Neron model}. The Chevalley sequence $0 \to \GG_m \to \check P^\circ_s \to \check A_s \to 0$ yields an extension class
		\begin{equation} \label{eq:shear}
			[\check P^\circ_s] \in \Ext^1 (\check A_s, \GG_m) = A_s(k) ,
		\end{equation}
		where the computation of the Ext-group is from \cite[\S VII.3.16]{serre:class_field}.
		
		\begin{definition} \label{def:shear}
			Assume that $P_s$ has a linear part $\GG_m$.
			\begin{enumerate}
				\item The \emph{shear} of $f$ (or $X_s$) is the extension class $a \in A_s$ of the dual t-automorphism group $\check P^\circ_s$ in \eqref{eq:shear}.
				\item We say $f$ (or $X_s$) has a \emph{torsion shear} if the shear $a \in A_s$ is torsion.
			\end{enumerate}
		\end{definition}
		
		\begin{remark}
			The Chevalley sequence $0 \to \GG_m \to P^\circ_s \to A_s \to 0$ of the original t-automorphism group measures the non-triviality of the $\PP^1$-bundle $\tilde E \to A_s$ in \Cref{prop:E normalization}. We thus call its extension class $\check a \in \Ext^1 (A_s, \GG_m) = \check A_s$ the \emph{slope} of $f$ (or $X_s$). Notice that shears and slopes play the dual role in the following sense: assume that $f : X \to S$ and $\check f : \check X \to S$ are semistable minimal compactifications of $P$ and $\check P$, respectively (use \Cref{thm:minimal model} and \Cref{prop:P-action existence}). Then the shear of $\check f$ is the slope of $f$, and the slope of $\check f$ is the shear of $f$ by \Cref{prop:dual Neron model}. The shear $a \in A_s$ is torsion if and only if the slope $\check a \in \check A_s$ is torsion because $P_s^\circ$ and $\check P_s^\circ$ are isogenous.
		\end{remark}
		
		The terminology will be justified in \Cref{thm:classification of semistable fibers}, or more precisely in \Cref{prop:shear}. One can consider a similar definition when $P_s$ has a linear part $\GG_a$, but it turns out this is redundant. Let us formulate this phenomenon in the following more general context.
		
		\begin{theorem} \label{thm:splitting}
			Let $P \to S$ be the N\'eron model of an abelian scheme $P_0$ over $S_0$. If its central fiber $P_s$ has a linear part $\GG_a$, then the Chevalley exact sequence of its neutral component uniquely splits. In other words,
			\[ P^\circ_s = \GG_a \times A_s .\]
		\end{theorem}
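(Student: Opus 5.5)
The plan is to show that $P^\circ_s$ contains an abelian subvariety $B$ of dimension $n-1$. Given such a $B$, the splitting is formal. The composition $B \hookrightarrow P^\circ_s \to A_s$ has finite kernel $B \cap \GG_a$. Since we are in characteristic $0$, $\GG_a$ has no nontrivial finite subgroups, so $B \cap \GG_a = 0$. By dimension count $B \to A_s$ is then an isomorphism, and its inverse gives a section of the Chevalley sequence, so $P^\circ_s = \GG_a \times B \cong \GG_a \times A_s$. Uniqueness of the splitting is also formal: two splittings differ by a homomorphism $A_s \to \GG_a$, and any such homomorphism is zero because $A_s$ is proper and connected while $\GG_a$ is affine.

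To produce $B$, I would realize $P^\circ_s$ as a Picard variety of a central fiber, using the dual. By \Cref{prop:dual Neron model}, $\check P$ is again $\delta$-regular with linear part $\GG_a$, and the dual of $\check P_0$ is $P_0$. Take $X \to S$ to be a minimal model of the trivial $\check P_0$-torsor, which exists by \Cref{thm:minimal model}. This is a minimal $\delta$-regular abelian fibration whose t-automorphism scheme is $\check P$. Since $X_0$ has a section, the valuative criterion gives a section of $X \to S$. A section meets $X_s$ in a reduced component, so $X_s$ is non-multiple. \Cref{prop:dual Neron model is Picard}(3), applied to $X$, whose dual t-automorphism scheme is $P$, then gives $\Pic^\circ_{X_s} \cong P^\circ_s$. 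It therefore suffices to find an $(n-1)$-dimensional abelian subvariety of $\Pic^\circ_{X_s}$.

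For this I use the unstable Weierstrass structure. By \Cref{prop:Weierstrass model}, there is a contraction $X \to \bar X$ to a Weierstrass model whose central fiber is integral, $\bar X_s = E$. By \Cref{prop:Weierstrass fiber unstable}, there is an equivariant surjection $E \to A$ onto an abelian torsor of dimension $n-1$, with connected fibers; the fibers are rational curves, as in the proof of that proposition. Composing gives a surjection $q : X_s \to A$. Its fibers are connected, because $X \to \bar X$ is a contraction with connected fibers and $E \to A$ has connected fibers. Then $q^* : \Pic^\circ_A \to \Pic^\circ_{X_s}$ has finite kernel: if $L \in \Pic^\circ_A$ pulls back to a trivial bundle, then $q_* \mathcal O_{X_s} = \mathcal O_A$ forces $L \cong q_* q^* L \cong \mathcal O_A$. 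This is the step where I expect the main work. Establishing $q_*\mathcal O_{X_s} = \mathcal O_A$ requires handling the possibly non-reduced $X_s$, and I would deduce it from $h^0(\mathcal O_{X_s}) = 1$ (\Cref{lem:Kollar-Kovacs}) together with the connectedness of the fibers. Alternatively, I would invoke the Albanese formalism of \Cref{thm:Albanese}, where $\Alb_s^\vee$ is the maximal abelian subvariety of $\Pic^\circ_{X_s}$ and $\dim \Alb_s \ge n-1$ because $X_s$ surjects onto $A$. Either route gives an abelian subvariety $B = \im q^*$ of dimension $n-1$ inside $\Pic^\circ_{X_s} \cong P^\circ_s$, and the first paragraph then finishes the proof.
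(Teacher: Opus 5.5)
Your proposal is correct and follows the paper's proof essentially step by step: pass to the dual $\check P$, compactify it to a non-multiple minimal $\delta$-regular fibration $X$ with $\Pic^\circ_{X_s} \cong P^\circ_s$, use the Weierstrass model and \Cref{prop:Weierstrass fiber unstable} to get a surjection of $X_s$ onto an $(n-1)$-dimensional abelian torsor, and conclude via \Cref{thm:Albanese} that $P^\circ_s$ contains an $(n-1)$-dimensional abelian subvariety. Your Albanese route is exactly the paper's, so the $q_*\mathcal O_{X_s} = \mathcal O_A$ detour is unnecessary, and the only difference is cosmetic: where the paper cites Matsusaka--Miyanishi to make the extension class torsion and then uses that $\Ext^1(A_s,\GG_a) \cong H^1(A_s,\mathcal O_{A_s})$ is torsion-free, you argue directly that $B \cap \GG_a = 0$ in characteristic $0$, so $B \to A_s$ is an isomorphism and gives the section.
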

		\begin{proof}
			Start with the dual abelian scheme $\check P_0 \to S_0$ and its N\'eron model $\check P \to S$. Use \Cref{thm:minimal model} and \Cref{prop:P-action existence} to compactify $\check P$ into a minimal abelian fibration $\check f : \check X \to S$. Since $P$ was $\delta$-regular, so is $\check P$ by \Cref{prop:dual Neron model} and hence is $\check f$. The central fiber $\check X_s$ is non-multiple because it contains $\check P_s$, so $\check f$ is an unstable minimal $\delta$-regular abelian fibration. Thanks to \Cref{prop:dual Neron model is Picard}, the original group $P^\circ_s$ is isomorphic to the neutral Picard group $\Pic^\circ_{\check X_s}$.
			
			\medskip
			
			Take the Weierstrass model $\check X \to \bar X \to S$ using \Cref{prop:Weierstrass model}. \Cref{prop:Weierstrass fiber unstable} constructs a surjective morphism $\bar X_s \to A$ to an abelian torsor of dimension $n-1$, so we have a surjective morphism
			\[ \check X_s \to \bar X_s \to A .\]
			This shows the Albanese torsor of $\check X_s$ has dimension $\ge n-1$. Recall that $\Alb_{\check X_s}^\vee$ is the maximal abelian subvariety in $\Pic^\circ_{\check X_s} = P^\circ_s$, a $\GG_a$-extension of $A_s$. Hence $\Alb_{\check X_s}$ has dimension $n-1$ and $P^\circ_s$ has an abelian subvariety of dimension $n-1$. This forces the extension class $[P^\circ_s] \in \Ext^1 (A_s, \GG_a)$ to be torsion by \cite[Lemma~1(b)]{mat-miy66}. But $\Ext^1 (A_s, \GG_a) = H^1 (A_s, \mathcal O_{A_s}) = k^n$ is torsion-free by \cite[\S VII.3.17]{serre:class_field}, so $[P^\circ_s]$ vanishes and the Chevalley sequence splits. The splitting is unique because $\Hom(A_s, \GG_a) = 0$.
		\end{proof}
		
		\begin{proposition} \label{prop:splitting and albanese}
			Let $f : X \to S$ be a minimal $\delta$-regular abelian fibration and $P$ its t-automorphism scheme. Assume $P_s$ has a linear part $\GG_a$. Then
			\begin{enumerate}
				\item There exist unique splittings of algebraic groups
				\[ P^\circ_s = \GG_a \times A_s ,\qquad \check P^\circ_s = \GG_a \times A_s^* ,\qquad \Pic^\circ_{X_s} = \GG_a \times \Alb_s^\vee ,\]
				where $A_s$, $A_s^*$, and $\Alb_s$ are isogenous abelian varieties of dimension $n-1$.
				\item The $\GG_a$-action on $X_s$ is along the fibers of $\alb : X_s \to \Alb_s$.
			\end{enumerate}
		\end{proposition}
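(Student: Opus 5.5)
The plan is to prove (1) in three pieces, one for each group, and then deduce (2) from the Albanese pullback.

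\emph{The splittings in (1).}
\begin{itemize}
\item The splitting $P^\circ_s = \GG_a \times A_s$ is \Cref{thm:splitting} applied to $P$.
\item By \Cref{prop:dual Neron model}(1), $\check P_s$ also has linear part $\GG_a$. Applying \Cref{thm:splitting} to $\check P$ gives $\check P^\circ_s = \GG_a \times A_s^*$, where $A_s^*$ is the abelian variety part of $\check P_s$.
\item The polarization isogeny $\lambda_s : P^\circ_s \to \check P^\circ_s$ from the proof of \Cref{prop:dual Neron model} carries the unique maximal abelian subvariety to the unique maximal abelian subvariety. Hence $A_s \to A_s^*$ is an isogeny.
\item For $\Pic^\circ_{X_s}$, use the isogeny $\kappa : \Pic^\circ_{X_s} \to \check P^\circ_s$ of \Cref{prop:dual Neron model is Picard}(3). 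Then $\Pic^\circ_{X_s}$ is connected commutative of dimension $n$, and its linear part is isogenous to $\GG_a$, hence is $\GG_a$ (\Cref{lem:isogeny}).
\item The maximal abelian subvariety of $\Pic^\circ_{X_s}$ is $\Alb_s^\vee$ by \Cref{thm:Albanese}. Its image under $\kappa$ is an abelian subvariety of $\check P^\circ_s$, so it lies in $A_s^*$. Conversely, the reduced neutral component of $\kappa^{-1}(A_s^*)$ is an abelian subvariety of dimension $n-1$, because an extension of an abelian variety by a finite group has abelian neutral component.
\item Therefore $\dim \Alb_s = n-1$, and $\GG_a \times \Alb_s^\vee \to \Pic^\circ_{X_s}$ is an isogeny of connected groups with trivial kernel (since $\GG_a \cap \Alb_s^\vee$ is finite and $\GG_a$ has no nontrivial finite subgroups in characteristic $0$). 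It is therefore an isomorphism.
\item Uniqueness of all three splittings follows from $\Hom(\text{abelian}, \GG_a) = 0$.
\item $\Alb_s$ is isogenous to $A_s^*$ (dual of an isogenous variety), hence to $A_s$.
\end{itemize}

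\emph{Part (2).} By \Cref{lem:transitive action on Albanese}, $P^\circ_s$ acts transitively by translations on $\Alb_s$, which now has dimension $n-1$, and $\alb$ is equivariant. The action is through a homomorphism $P^\circ_s \to \Alb_s$ (translations, after fixing a base point). Its restriction to $\GG_a$ is a homomorphism from a linear group to an abelian variety, so it is trivial. Thus $\GG_a$ acts trivially on $\Alb_s$, and by equivariance $\GG_a$ moves points of $X_s$ only within the fibers of $\alb$.

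\emph{Main obstacle.} I expect the hardest step to be identifying the linear part of $\Pic^\circ_{X_s}$ as exactly $\GG_a$, rather than a non-reduced or larger group, when $f$ is multiple. In that case $\kappa$ is only an isogeny and $\Pic^\circ_{X_s}$ might not be reduced. I would handle this by noting that we are in characteristic $0$, so group schemes are smooth (Cartier). An isogeny then induces isogenies on linear parts and on abelian parts via the Chevalley sequence, and in characteristic $0$ a group isogenous to $\GG_a$ is $\GG_a$.
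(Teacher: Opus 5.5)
Your proposal is correct and follows the paper's route. You apply \Cref{thm:splitting} to $P$ and $\check P$, transport the splitting to $\Pic^\circ_{X_s}$ through the isogeny $\kappa$ of \Cref{prop:dual Neron model is Picard} together with \Cref{thm:Albanese}, and deduce (2) from the equivariance of $\alb$ and the triviality of any homomorphism $\GG_a \to \Alb_s$. You additionally spell out details the paper leaves implicit: the dimension count $\dim \Alb_s^\vee = n-1$, the trivial kernel of $\GG_a \times \Alb_s^\vee \to \Pic^\circ_{X_s}$, and the polarization isogeny $A_s \to A_s^*$.
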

		\begin{proof}
			(1) Apply \Cref{thm:splitting} to the N\'eron models $P$ and $\check P$. For the splitting of $\Pic^\circ_{X_s}$, notice that it is isogenous to $\check P^\circ_s$ (\Cref{prop:dual Neron model is Picard}) and $\Alb_s$ is the dual of its maximal abelian subvariety (\Cref{thm:Albanese}). (2) The $P_s$-action descends to $\Alb_s$ and $\alb$ is equivariant by the universal property. The $\GG_a$-action on $\Alb_s$ is necessarily trivial.
		\end{proof}
		
		\begin{remark} \label{rmk:abelian varieties are not dual}
			If $f$ is non-multiple, then we have $\Pic^\circ_{X_s} = \check P_s^\circ$ by \Cref{prop:dual Neron model is Picard} and hence $A_s^* = \Alb_s^\vee$. If $f$ is multiple, then this may not be the case.
			
			Moreover, we caution the reader that the abelian variety parts $A_s$ of $P_s$ and $A_s^*$ of $\check P_s$ may \emph{not} be dual to each other, in contrast to \Cref{prop:dual Neron model}(2). \Cref{thm:classification of unstable fibers} will construct counterexamples: when $f$ is unstable with nontrivial Albanese stabilizer $G$ (see \Cref{def:Albanese stabilizer}), the abelian varieties $A_s^* = \Alb_s^\vee = (A_s / G)^\vee$ and $A_s$ are not dual in general.
		\end{remark}
		
		\begin{corollary} \label{cor:dimension of Albanese}
			\begin{enumerate}
				\item If $P_s$ is an abelian variety, then $\Alb_s$ has dimension $n$.
				\item If $P_s$ has a linear part $\GG_m$, then $\Alb_s$ has dimension $\le n-1$. Moreover, the equality holds if and only if $f$ has a torsion shear.
				\item If $P_s$ has a linear part $\GG_a$, then $\Alb_s$ has dimension $n-1$.
			\end{enumerate}
		\end{corollary}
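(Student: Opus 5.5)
The plan is to compute $\dim \Alb_s$ as the dimension of the maximal abelian subvariety of $\Pic^\circ_{X_s}$, using \Cref{thm:Albanese}. By \Cref{prop:dual Neron model is Picard}(3), $\Pic^\circ_{X_s}$ is isogenous to $\check P^\circ_s$. Taking maximal abelian subvarieties is compatible with isogenies, so in every case we only have to determine the maximal abelian subvariety of $\check P^\circ_s$. By \Cref{prop:dual Neron model}(1), its linear part agrees with that of $P_s$.

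(1) If $P_s$ is an abelian variety, then so is $\check P^\circ_s$, of dimension $n$. Hence $\Pic^\circ_{X_s}$ is an abelian variety of dimension $n$, and $\dim\Alb_s = n$. As a check, \Cref{lem:classification when P is smooth} shows that $X_{s,\red}$ is an $n$-dimensional abelian torsor, which fits.

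(3) If $P_s$ has linear part $\GG_a$, then \Cref{prop:splitting and albanese}(1) already gives $\Pic^\circ_{X_s} = \GG_a \times \Alb_s^\vee$ with $\dim \Alb_s = n-1$.

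(2) This is the main case. Here $\check P^\circ_s$ is an extension $0\to\GG_m\to\check P^\circ_s\to\check A_s\to 0$, so any abelian subvariety has dimension at most $n-1$, giving $\dim\Alb_s\le n-1$. Equality holds exactly when $\check P^\circ_s$ contains an abelian subvariety $B$ of dimension $n-1$. Such a $B$ meets $\GG_m$ in a finite group, so it maps isogenously onto $\check A_s$. The extension class therefore becomes trivial after pulling back along the isogeny $B\to\check A_s$.

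I will show that this happens if and only if the shear $a\in\Ext^1(\check A_s,\GG_m)=A_s(k)$ is torsion, using the argument of \cite[Lemma~1]{mat-miy66} as in \Cref{thm:splitting}.

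For one direction, suppose the pullback of the class to $B$ vanishes. Composing $B\to\check A_s$ with a quasi-inverse isogeny $\check A_s\to B$ gives multiplication by some $N$, so $Na=0$ and $a$ is torsion.

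For the converse, suppose $Na=0$. Then the pullback of the extension along $[N]$ splits, which produces a homomorphism $\check A_s\to\check P^\circ_s$ lifting $[N]$. Its image is an abelian subvariety of dimension $n-1$.

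The only step needing care is the identification of pullback along $[N]$ with multiplication by $N$ on $\Ext^1(\check A_s,\GG_m)$. This is standard bilinearity of Ext and can be checked directly under the identification with $\Pic^\circ$.

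Since isogenous groups have maximal abelian subvarieties of the same dimension, the result transfers from $\check P^\circ_s$ to $\Pic^\circ_{X_s}$, and hence to $\Alb_s$.
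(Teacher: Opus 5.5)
Your proof is correct and follows the paper's argument: identify $\Alb_s^\vee$ with the maximal abelian subvariety of $\Pic^\circ_{X_s}$, pass along the isogeny $\kappa$ to $\check P^\circ_s$, and use \Cref{prop:dual Neron model} for its linear part and abelian variety part. The only difference is that in (2) you reprove the criterion of \cite[Lemma~1(b)]{mat-miy66} yourself, by pulling the extension back along $B \to \check A_s$ and along $[N]$, rather than citing it; that argument is sound.
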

		\begin{proof}
			(1) If $P_s$ is abelian, then so is $\check P_s$ and hence is $\Pic^\circ_{X_s}$ by \Cref{prop:dual Neron model is Picard}. (2) Similarly, if $P_s$ has a linear part $\GG_m$ then so is $\Pic^\circ_{X_s}$. Hence $\Alb_s^{\vee} \subset \Pic^\circ_{X_s}$ has dimension at most $n-1$. Note further that $\Pic^\circ_{X_s}$ has a maximal abelian subvariety of dimension $n-1$ $\Longleftrightarrow$ $\check P^\circ_s$ has a maximal abelian subvariety of dimension $n-1$ by \Cref{prop:dual Neron model is Picard} $\Longleftrightarrow$ $[\check P^\circ_s] \in \Ext^1 (\check A_s, \GG_m)$ is torsion by \cite[Lemma~1(b)]{mat-miy66}. (3) This follows from \Cref{prop:splitting and albanese}.
		\end{proof}

	\subsection{Untangles} \label{sec:untangling process}
		The untangling process will be the key to our study of unstable fibrations. We present three versions of it. First, \Cref{lem:untangle primitive} is a primitive version that applies to slightly broader situations. Next, its specialization \Cref{prop:untangle for unstable} is the most useful version that applies to unstable fibrations. Finally, \Cref{lem:untangle for multiple} is a generalization of \Cref{lem:untangle primitive} to multiple fibrations. The last version is a bit more technical to formulate, so we postpone this to \S \ref{sec:classification of multiple fibers}.
		
		\begin{lemma} \label{lem:untangle primitive}
			Let $f : X \to S$ be a minimal $\delta$-regular abelian fibration. Assume that $f$ is non-multiple and $\dim \Alb_s = n-1$. Then the following holds for each isogeny of abelian torsors $u : A \to \Alb_s$.
			\begin{enumerate}
				\item Shrinking $S$ \'etale locally, there exists a unique minimal $\delta$-regular abelian fibration $\tilde f : \tilde X \to S$ with a cartesian diagram
				\[\begin{tikzcd}
					\tilde X \arrow[d, "u"] \arrow[dd, bend right=45, "\tilde f"'] & \tilde X_s \arrow[r] \arrow[l, hook'] \arrow[d] & A \arrow[d, "u"] \\
					X \arrow[d, "f"] & X_s \arrow[r, "\alb"] \arrow[l, hook'] \arrow[d] & \Alb_s \\
					S & \{ s \} \arrow[l, hook']
				\end{tikzcd}.\]
				
				\item Let $G$ be the Galois group of $u$ and $\tilde P$ the t-automorphism scheme of $\tilde f$. Then there exists a left exact sequence $0 \to G \to \tilde P \to P$.
			\end{enumerate}
		\end{lemma}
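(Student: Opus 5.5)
The plan is to build $\tilde X$ as a finite étale cover of $X$ that extends the pullback of $\alb$ along $u$, and then read off (2) from the fact that $G$ acts on $\tilde X$ over $X$.

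\emph{Step 1: the cover of the central fiber.} The isogeny $u : A \to \Alb_s$ is a finite étale Galois cover with group $G$. Pulling back along $\alb$ gives a finite étale $G$-cover $\tilde X_s = X_s \times_{\Alb_s} A \to X_s$. It is connected because $\alb$ has connected fibers (it is the Albanese and $h^0(\mathcal O_{X_s})=1$) and $A$ is connected.

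\emph{Step 2: extension over $S$.} Shrink $S$ étale locally to a strictly henselian local scheme. Then the restriction map from finite étale covers of $X$ to finite étale covers of $X_s$ is an equivalence; this is the henselian proper base change, $\pi_1(X)\cong\pi_1(X_s)$. So $\tilde X_s \to X_s$ extends uniquely to a finite étale $G$-cover $u : \tilde X \to X$, and the diagram is cartesian by construction. Set $\tilde f = f\circ u$.

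\emph{Step 3: $\tilde f$ is a minimal $\delta$-regular abelian fibration.} I check the conditions one at a time.
\begin{itemize}
\item[(a)] $\tilde X$ is regular, since $u$ is étale and $X$ is regular. Hence $\tilde X$ is terminal.
\item[(b)] $K_{\tilde X} = u^*K_X$ is numerically $\tilde f$-trivial.
\item[(c)] $\tilde f$ is flat and projective, since $u$ is finite.
\item[(d)] The generic fiber $\tilde X_\eta \to X_\eta$ is a connected étale cover of an abelian torsor, so $\tilde X_\eta$ is again a torsor under an abelian variety isogenous to the one for $X$.
\item[(e)] Connectedness of $\tilde X_\eta$ follows because $\tilde X$ is connected (its special fiber is) and normal.
\end{itemize}
For $\delta$-regularity, the generic abelian varieties of $\tilde P$ and $P$ are isogenous. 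The Néron models of isogenous abelian varieties have isogenous neutral components of their central fibers (Lemma~\ref{lem:isogeny}), so the linear parts agree and $\tilde P$ is $\delta$-regular. Uniqueness of $\tilde X$ follows from the uniqueness of the étale extension, or alternatively from Proposition~\ref{prop:minimal delta-regular abelian fibration is unique}.

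\emph{Step 4: the sequence in (2).} The deck group $G$ acts on $\tilde X$ over $S$, and on the generic fiber it acts by translations. Indeed, $G$ commutes with the covering and acts on a torsor preserving the abelian-variety structure, so elements of $G$ are translations by the kernel of the dual isogeny. This gives a homomorphism $G \to \tilde P_0$, which extends to $G \to \tilde P$ by the Néron property; it is injective because the deck action is free.

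Next I need the homomorphism $\tilde P \to P$. On the generic fiber, any translation of $\tilde X_0$ descends to $X_0 = \tilde X_0/G$ because translations commute with $G$. This gives $\tilde P_0 \to P_0$ with kernel exactly $G$. By the Néron mapping property (Proposition~\ref{prop:P-action existence}) this extends to $\tilde P \to P$.

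\emph{Main obstacle.} Left exactness over $s$ is the delicate point. I must show that the kernel of $\tilde P \to P$ is precisely the constant group $G$ and not something larger on the special fiber. My approach is to identify the kernel with the closure of $G_0 \subset \tilde P_0$, which is the finite étale constant group $G$. Then I use faithfulness (Proposition~\ref{prop:P-action existence}(1)): an element of $\tilde P_s$ acting trivially on $X_s$ must preserve the fibers of $u$. It therefore acts as a deck transformation, which means it lies in $G$.
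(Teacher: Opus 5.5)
Your construction of the cover differs from the paper's, and that part is fine. The paper realizes $u$ by the characters $\hat G\subset\Alb_s^\vee$, pulls them back to torsion points of $\Pic^\circ_{X_s}$, extends them to torsion sections of $\Pic^\circ_f$, and lifts them to line bundles on $X$ using $\Br(S)=0$. You instead extend the cover directly, using the equivalence of finite \'etale covers of $X$ and of $X_s$ over a henselian base.

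\textbf{The gap is in Step~3(d)--(e).} An abelian fibration needs $\tilde X_\eta$ to be a torsor under an abelian variety over $K$, hence \emph{geometrically} connected. The fact that $\tilde X$ is connected and normal only shows $\tilde X_\eta$ is integral. A connected finite \'etale cover of an abelian torsor over the non-closed field $K$ can fail to be geometrically connected.

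You never use that $f$ is non-multiple, and the argument must fail without it. For multiplicity $m>1$, the normalized base change $q:Y\to X$ of \Cref{lem:non-multiple base change} is a connected, regular, finite \'etale cover. In case (v) of \Cref{lem:classification of pullback}, where $\dim\Alb_s=n-1$, \Cref{lem:multiple fiber v:albanese} gives $Y_t=\tfrac1m X_s\times_{\Alb_s}\Alb_t$. By topological invariance of the \'etale site, $q$ is therefore exactly what your Step~2 produces from the isogeny $\Alb_t\to\Alb_s$. Your Step~3 would then call $Y\to S$ an abelian fibration, but its generic fiber is $X_\eta\otimes_K K(T)$, whose Stein factorization is $T\to S$. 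This is the situation of \Cref{rmk:untangle primitive}. Step~4 inherits the problem, since it needs $\tilde X_0\to X_0$ to be an isogeny of abelian torsors for $G$ to act by translations.

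\textbf{What is missing} is a proof that $\tilde f_*\mathcal O_{\tilde X}=\mathcal O_S$, and this is exactly where non-multiplicity enters. The paper gets it from \Cref{prop:dual Neron model is Picard}: non-multiple implies $\Pic^\circ_f$ is separated. The sections $\hat G$ are nonzero on $X_s$ because $\alb^*$ is injective, so they stay nonzero over $S_0$ and define a geometrically connected cover of $X_\eta$.

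In your setup a direct repair is available:
\begin{enumerate}
\item Stein-factorize $\tilde X\to S'\to S$. Then $S'$ is a DVR finite and totally ramified over $S$ of some degree $e$, so every component of $\tilde X_s$ has multiplicity divisible by $e$.
\item Since $u$ is \'etale, $\tilde X_s=u^*X_s=\sum a_i\,u^*E_i$ with each $u^*E_i$ reduced. So the gcd of the component multiplicities of $\tilde X_s$ equals that of $X_s$, namely $1$, and hence $e=1$.
\end{enumerate}

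With this inserted, the rest of your outline works, with two remarks. For $\delta$-regularity you also need that the extension of the isogeny to N\'eron models is an isogeny $\tilde P^\circ_s\to P^\circ_s$. Use a quasi-inverse isogeny as in \Cref{prop:dual Neron model}(1); \Cref{lem:isogeny} alone does not give this. Your kernel argument via fiberwise faithfulness and $\Aut_{X_s}(\tilde X_s)=G$ for the connected cover is a valid, more hands-on substitute for the reference the paper invokes in (2).
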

		\begin{proof}
			(1) Let $G$ be the Galois group of $u$ and $\hat G = \Hom(G, \GG_m)$ its character group. We can realize the isogeny $u$ as
			\[ u : A = \Spec_{\Alb_s} \big( \bigoplus_{\chi \in \hat G} L_{\chi} \big) \to \Alb_s ,\qquad\mbox{where}\quad \hat G \subset \Alb_s^\vee \ \ \mbox{by}\ \ \chi \mapsto [L_{\chi}] .\]
			Note that $\hat G$ is sent to $\Alb_s^\vee = \Pic^\circ_{\Alb_s}$ because the N\'eron--Severi group of $\Alb_s$ is torsion-free, and the map is injective because $A$ is connected. The base change $\tilde X_s \coloneq X_s \times_{\Alb_s} A \to X_s$ is therefore defined by the image $\alb^* (\hat G) \subset \Pic^\circ_{X_s}$ of $\hat G$ sent by an \emph{injective} homomorphism (\Cref{thm:Albanese})
			\[ \Alb_s^\vee \xhookrightarrow{\alb^*} \Pic^\circ_{X_s} .\]
			Since $\Pic^\circ_f$ is smooth, shrinking $S$ if necessary, we can uniquely extend this to torsion sections $\hat G \subset \Pic^\circ_f(S)$.
			
			Take the four-term exact sequence from the Leray spectral sequence
			\[ 0 \to \Pic(S) \to \Pic(X) \to \Pic_f(S) \to \Br(S) = H^2 (S, \GG_m) .\]
			The Picard and Brauer groups of strictly henselian dvrs are trivial \cite[Tag~03QO]{stacks-project}. Therefore, we may uniquely lift $\hat G \subset \Pic^\circ_f(S)$ to $\hat G \subset \Pic(X)$ after shrinking $S$ \'etale locally \cite[\S 2.2.2]{col-the-sko:brauer_group}. In other words, we obtain torsion line bundles $M_\chi$ on $X$ for every $\chi \in \hat G$. The Galois covering $u : \tilde X = \Spec_X \big( \bigoplus_{\chi \in \hat G} M_\chi \big) \to X$ sits in the desired diagram.
			
			At this moment, we need to use the non-multiple fiber assumption to show $\tilde f$ is an abelian fibration. The non-multiple assumption on $f$ implies that $\Pic^\circ_f$ is separated (\Cref{prop:dual Neron model is Picard}). Hence $\hat G \subset \Pic^\circ_f$ is nontrivial over $S_0$, showing that the fibers of $\tilde X_0 \to S_0$ are (connected) abelian torsors. This means $\tilde f : \tilde X \to S$ is a minimal abelian fibration. The $\delta$-regularity will follow from (2).
			
			\medskip
			
			(2) The isogeny $\tilde P_0 \to P_0$ over $S_0$ extends to an isogeny $\tilde P \to P$ over $S$ by \cite[Proposition~7.3.6]{neron}. More precisely, the short exact sequence $0 \to G \to \tilde P_0 \to P_0 \to 0$ over $S_0$ extends to a left exact sequence $0 \to G \to \tilde P \to P$ (e.g., \cite[Proposition~3.29]{kim24}).
		\end{proof}
		
		\begin{remark} \label{rmk:untangle primitive}
			The non-multiple assumption in \Cref{lem:untangle primitive} was used only at the end for the separatedness of $\Pic^\circ_f$. If $f$ is multiple, then $\Pic^\circ_f$ can be non-separated (\Cref{prop:dual Neron model is Picard}--\ref{rmk:dual Neron model is Picard}) and hence some of the torsion sections $\hat G \subset \Pic^\circ_f$ may be trivial over $S_0$. If this is the case, then $\tilde f : \tilde X \to S$ has disconnected fibers over $S_0$ and hence not an abelian fibration. Avoiding this carefully will give a generalization of the lemma that works for multiple fibrations (\Cref{lem:untangle for multiple}).
		\end{remark}
		
		Specializing to unstable fibrations, we have a preferred choice of an isogeny $u : A \to \Alb_s$ in the lemma.
		
		\begin{definition} \label{def:Albanese stabilizer}
			Let $f : X \to S$ be a minimal $\delta$-regular abelian fibration. Assume $f$ is unstable. Then its \emph{Albanese stabilizer} is the stabilizer group $G \subset A_s$ of the $A_s$-action on $\Alb_s$ in \Cref{prop:splitting and albanese}. The \emph{untangling isogeny} of $f$ is the isogeny of abelian torsors with Galois group $G$:
			\[ u : A_s \to \Alb_s .\]
		\end{definition}
		
		\begin{proposition} \label{prop:untangle for unstable}
			In \Cref{lem:untangle primitive}, assume further that $f$ is unstable and take $u : A_s \to \Alb_s$ to be the untangling isogeny in \Cref{def:Albanese stabilizer}. Then the following holds.
			\begin{enumerate}
				\item $\tilde X_s = C \times A_s$ for a proper curve $C$ and the base change of the Albanese morphism is a projection map $\tilde X_s \to A_s$.
				\item Let $G$ be the Albanese stabilizer. Then there exists a short exact sequence
				\[ 0 \to G \to \tilde P \to P \to 0 \quad\mbox{such that}\quad G \cap \tilde P^\circ = 0 .\]
				In particular, over $s \in S$, there exists a short exact sequence
				\[ 0 \to G \to \pi_0 (\tilde P_s) \to \pi_0 (P_s) \to 0 \quad\mbox{and an isomorphism} \ \ \tilde P^\circ_s = P^\circ_s .\]
			\end{enumerate}
		\end{proposition}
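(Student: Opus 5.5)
We split into the geometric claim (1) and the group-theoretic claim (2), both resting on the construction of \Cref{lem:untangle primitive}: $\tilde X_s = X_s \times_{\Alb_s} A_s$, with $G$ the Galois group of the untangling isogeny $u$.

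For (1), $\alb : X_s \to \Alb_s$ is $P^\circ_s$-equivariant, and by \Cref{prop:splitting and albanese} the factor $\GG_a$ acts along its fibers. So only $A_s = P^\circ_s/\GG_a$ acts on $\Alb_s$, with stabilizer $G$. We lift the $P^\circ_s$-action to $\tilde X_s$ by letting $a \in A_s$ act on the second factor by translation and diagonally on $X_s$; concretely $(x, b) \mapsto (a\cdot x, b + a)$, and $\GG_a$ acts on $x$ only. The projection $\tilde X_s \to A_s$ is then $A_s$-equivariant, and $A_s$ acts freely and transitively on the base. Hence, as in the proof of \Cref{lem:E normalization structure-unstable}, $\tilde X_s \cong C \times A_s$, where $C$ is the fiber over $0$. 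Explicitly, the map $C \times A_s \to \tilde X_s$, $(c,a) \mapsto a \cdot c$, is an isomorphism with inverse $y \mapsto ((-\pi(y))\cdot y, \pi(y))$, where $\pi$ is the projection. This uses only that the action on $A_s$ is simply transitive, so no torsor-triviality issue arises. $C$ is proper, and it is a curve because $\dim \Alb_s = n-1$. Under this identification the base change of $\alb$ is $\pr_2$.

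For (2), \Cref{lem:untangle primitive}(2) already gives the left exact sequence $0 \to G \to \tilde P \to P$, so two things remain. First, we show $G \cap \tilde P^\circ = 0$. The group $G$ acts on $\tilde X$ as deck transformations of $u$. On $\tilde X_s = C \times A_s$ an element $g$ acts by $(c,a) \mapsto (c, a+g)$, since it covers the identity of $X_s$ and translates $A_s$ by $g$. An element of $\tilde P^\circ_s$ instead acts via $\GG_a \times A_s$ on $X_s$ compatibly, so a nonzero $g$ would have to act on $X_s$ trivially as an element of $\tilde P^\circ_s = \GG_a \times \tilde A_s$ mapping to $0$ in $P^\circ_s$. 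Faithfulness (\Cref{prop:P-action property}) together with the comparison of the two actions on the $A_s$ factor should give the contradiction. Equivalently: $\tilde P^\circ_s \to P^\circ_s$ is an isogeny of groups with linear part $\GG_a$. By \Cref{prop:P-action property}(2) it acts freely on the smooth locus of $\tilde X_s$, whereas $G$ acts freely and translates the $A_s$ factor. I would show that the restriction of $\tilde P^\circ_s \to P^\circ_s$ is injective by computing that the $A_s$-factor of $\tilde P^\circ_s$ acts on $\tilde X_s$ by honest translation of $A_s$, exactly as in the lifted action above. Its kernel would then fix $A_s$ pointwise, and so it is trivial.

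Second, we need surjectivity of $\tilde P \to P$, which I expect to be the main obstacle. The N\'eron mapping property only gives left exactness. The plan is to lift the action of $P$ on $X$ (\Cref{prop:P-action existence}) to $\tilde X$ after choosing lifts. The cover $\tilde X \to X$ is defined by line bundles $M_\chi$ pulled back from $\Alb_s$ via extension. An element of $P$ preserves $\hat G \subset \Pic^\circ_f$: on $\Pic^\circ$ the connected group $P^\circ$ acts trivially, and the component group acts trivially as well because $\alb$ is equivariant and $P_s$ acts on $\Alb_s$ by translations, which preserve $\Pic^\circ_{\Alb_s}$ pointwise. Thus every section of $P$ lifts étale locally to an automorphism of $\tilde X$ commuting with the torsor structure, that is, to a section of $\tilde P$. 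This gives the short exact sequence.

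Passing to $s$, $G \cap \tilde P^\circ = 0$ and a dimension count give $\tilde P^\circ_s \cong P^\circ_s$, and the snake lemma then yields $0 \to G \to \pi_0(\tilde P_s) \to \pi_0(P_s) \to 0$.
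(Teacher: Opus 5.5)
Part (1) is correct; it is the paper's argument written out explicitly. The genuine gap is the step $G\cap\tilde P^\circ_s=0$.

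\textbf{Your formula for the $G$-action is wrong.} In fiber-product coordinates a deck transformation is $(x,b)\mapsto(x,b+g)$. Under your trivialization $(c,a)\mapsto a\cdot c$ it becomes $(c,a)\mapsto((-g)\cdot c,\,a+g)$, not $(c,a)\mapsto(c,a+g)$. For $g\neq 0$ the factor $(-g)|_C$ is not the identity. Otherwise this deck transformation would coincide with your lifted action of $g$, so $g$ would act trivially on $X_s$, contradicting faithfulness. This nontrivial action on $C$ is exactly the tangle. Without it every unstable $X_s$ would be $C\times(A_s/G)$, contradicting e.g.\ the integrality of type $\mathrm{III}/2$ fibers.

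\textbf{Faithfulness on $X_s$ cannot give the contradiction.} Elements of $G$ act trivially on $X_s$ by construction. Your ``equivalently'' version has the right shape, but it postpones the only nontrivial point (``I would show \dots\ by computing'').

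\textbf{The missing idea is a rigidity argument.} Compare the genuine $\tilde P^\circ_s$-action on $\tilde X_s$ with the action sending $\tilde p$ to your lifted action of its image in $P^\circ_s$. Both lift the same action on $X_s$ through the $G$-torsor $\tilde X_s\to X_s$. For the genuine action this holds because $\tilde X\to X$ is equivariant with respect to $\tilde P\to P$ over $S_0$, hence over $S$ by flatness and separatedness. The two actions therefore differ by a morphism $\tilde P^\circ_s\times\tilde X_s\to G$. This morphism is constant because the source is connected, and it is trivial at $\tilde p=0$. Hence $\ker(\tilde P^\circ_s\to P^\circ_s)$ acts trivially on $\tilde X_s$, and it vanishes by faithfulness (\Cref{prop:P-action property}) applied to $\tilde f$.

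The paper instead counts components after proving surjectivity. Since $X'_s$ is a $P_s$-torsor, the $P_s$-equivariant $G$-cover $\tilde X'_s\to X'_s$ is $|G|$ disjoint copies of $X'_s$. Hence $|\pi_0(\tilde P_s)|=|G|\cdot|\pi_0(P_s)|$, which together with $0\to G\to\tilde P_s\to P_s\to 0$ forces $G\cap\tilde P^\circ_s=0$.

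\textbf{Surjectivity.} Your descent route differs from the paper's. The paper uses a section to identify $P\cong X'$ and $\tilde P\cong\tilde X'$ via \Cref{prop:P-action existence}(2), then notes that $\tilde X'\to X'$ is onto. Your route is workable, but two points need justification:
\begin{itemize}
\item The isomorphisms $t_p^*M_\chi\cong M_\chi$ must be chosen compatibly with the multiplications $M_\chi\otimes M_{\chi'}\to M_{\chi\chi'}$. The obstruction lies in $\Ext^1(\hat G,\GG_m(S))$, which vanishes because $\GG_m(S)$ is divisible over a strictly henselian base of residue characteristic $0$.
\item The resulting lift of $t_p$ must be a translation, i.e.\ a section of $\tilde P$. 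Over $S_0$ its linear part induces the identity on $\tilde P_0/G=P_0$, hence is the identity.
\end{itemize}
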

		
		\begin{definition}
			Let $f : X \to S$ be an unstable minimal $\delta$-regular abelian fibration. The unstable minimal $\delta$-regular abelian fibration $\tilde f : \tilde X \to S$ in \Cref{prop:untangle for unstable} is called the \emph{untangle} of $f$.
		\end{definition}
		
		\begin{proof}[Proof of \Cref{prop:untangle for unstable}]
			(1) Recall from \Cref{prop:splitting and albanese} that $X_s$ admits an $A_s$-action and $\alb : X_s \to \Alb_s$ is an $A_s$-equivariant locally trivial morphism. Say its fiber is a proper curve $C$. The base change $\tilde X_s \to A_s$ is still $A_s$-equivariant, so the $A_s$-action on $\tilde X_s$ is free. The free $A_s$-quotient of $\tilde X_s$ is an $A_s$-torsor \cite[Tag~06PH]{stacks-project}
			\[ \tilde X_s \to C \cong \tilde X_s / A_s .\]
			Since $C \subset \tilde X_s$ was a fiber, this $A_s$-torsor has a section and hence is trivial.
			
			\medskip
			
			(2) The left exactness of the sequence is proved in \Cref{lem:untangle primitive}. To show its right exactness, take
			\[ \tilde X' = \tilde X \setminus \Sing(\tilde X_s) \quad\mbox{and}\quad X' = X \setminus \Sing(X_s) \]
			as usual and identify $\tilde P = \tilde X'$ and $P = X'$ using the non-multiple fiber assumption and \Cref{prop:P-action existence}. Since $\tilde X' \to X'$ is surjective, $\tilde P \to P$ is surjective as claimed.
			
			It remains to show $G \cap \tilde P^\circ_s = 0$. Both $\alb : X_s \to \Alb_s$ and $u : A_s \to \Alb_s$ are $P_s$-equivariant, so their fiber product $\tilde X_s = C \times A_s$ admits a $P_s$-action and $\tilde X_s \to X_s$ is equivariant. This yields a $P_s$-equivariant, Galois, finite and \'etale morphism
			\[ \tilde X_s' \to X_s' \]
			with Galois group $G$. But the $P_s$-action on $X_s'$ is already free, so this means not only $P_s$ acts on $\tilde X_s'$ freely but the covering $\tilde X_s' \to X_s'$ is a disjoint union of isomorphic copies of $X_s'$. Identifying $P_s = X_s'$ and $\tilde P_s = \tilde X_s'$ shows $|\pi_0(\tilde P_s)| = |\pi_0(P_s)| \cdot |G|$. Therefore, the composition $G \hookrightarrow \tilde P_s \twoheadrightarrow \pi_0 (\tilde P_s)$ is injective, or equivalently $G \cap \tilde P^\circ_s = 0$.
		\end{proof}

\section{Classification of non-multiple central fibers} \label{sec:classification of non-multiple fibers}
	We classify minimal $\delta$-regular abelian fibration with non-multiple fibers in this section. \Cref{thm:classification of semistable fibers}, \ref{thm:classification of unstable fibers}, and \ref{thm:classification of component group} are the three main results. The first two results classify semistable and unstable singular fibers, and the last classifies the component group $\pi_0(P_s)$.

	\subsection{Semistable fibrations} \label{sec:classification of semistable fibers}
		We classify the semistable central fibers in this subsection. For the notion of identifying two subschemes, see \S \ref{sec:conductor}.
		
		\begin{theorem} \label{thm:classification of semistable fibers}
			Let $f : X \to S$ be a minimal $\delta$-regular abelian fibration. Assume $f$ is semistable. Then $X_s$ is isomorphic to one of the following.
			\begin{enumerate}
				\item[\textnormal{($\mathrm I_0$)}] An abelian torsor of dimension $n$.
				\item[\textnormal{($\mathrm I_1$)}] A $\PP^1$-bundle over $A_s$ with two sections $0$ and $\infty$ identified by a translation by the shear $a \in A_s$ in \Cref{def:shear}.
				\item[\textnormal{($\mathrm I_r$)}] A cycle of $r \ (\ge 2)$ isomorphic copies of $\PP^1$-bundles $E_i \to A_s$ whose sections $0_i$ and $\infty_i$ in \Cref{lem:E normalization structure-semistable} are successively identified by $\infty_i = 0_{i+1}$, with an overall translation by the shear $a \in A_s$.
			\end{enumerate}
		\end{theorem}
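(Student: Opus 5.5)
The plan is to split by the linear part of $P_s$. If $P_s$ is an abelian variety, then \Cref{lem:classification when P is smooth} gives $X_s = m\cdot X_{s,\red}$ with $X_{s,\red}$ an abelian torsor, and $m = 1$ by semistability; this is case ($\mathrm I_0$). So assume the linear part is $\GG_m$. Since $f$ is non-multiple and $P_s$ is semiabelian, \Cref{thm:Kollar semistable} makes $X_s$ reduced and demi-normal, hence nodal in codimension $1$. By $P^\circ_s$-equivariance (\Cref{lem:P-orbit has dimension n-1}) $X_s$ is then nodal everywhere along the $(n-1)$-dimensional orbits.

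\textbf{Irreducible case.} If $X_s$ is irreducible, it is integral and $f$ is its own Weierstrass model (\Cref{prop:Weierstrass model}). \Cref{prop:Weierstrass fiber semistable} then says $X_s$ is the $\PP^1$-bundle $\tilde p:\tilde E\to A$ with the sections $0,\infty$ glued by some isomorphism $\tau: 0\to\infty$ of abelian torsors. Equivariance forces $\tau$ to commute with the $A_s$-action, so $\tau$ is a translation by some $b\in A_s$ under the identifications $0\cong A\cong\infty$.

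\textbf{Reducible case.} Write $X_s = \sum E_i$ with all $a_i=1$. Every $E_i$ satisfies condition (i) of \Cref{prop:E normalization}, so \Cref{lem:E normalization structure-semistable}(1) gives two distinguished sections $0_i,\infty_i$ of $\tilde E_i\to A_i$. Using \Cref{prop:E} together with nodality, I would show the following.
\begin{itemize}
\item Each $E_i$ is case (i) of \Cref{prop:E}, a smooth $\PP^1$-bundle. Case (ii) would make $E_i$ self-intersect along a section, and I expect this to contradict the dual-graph count below.
\item Components meet only along these sections, and each section is glued to exactly one other component.
\end{itemize}
To get the cycle structure I would restrict to a general complete-intersection surface, as in the proof of \Cref{prop:dual Neron model is Picard}, producing a regular surface fibration whose central fiber is a nodal curve with components $\PP^1$. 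By intersection numbers ($K$ trivial, $E_i\cdot X_s=0$, Zariski's lemma), each $\PP^1$ meets the rest in exactly two points, so the dual graph is a cycle. The $E_i$ are isomorphic $\PP^1$-bundles over a common $A_s$-torsor because the gluings $\infty_i = 0_{i+1}$ are $A_s$-equivariant isomorphisms, hence translations. Composing these around the cycle yields a single overall translation $b\in A_s$, well defined up to the choices of identifications.

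\textbf{Identifying $b$ with the shear.} This is the main obstacle. I would compute $\Pic^\circ_{X_s}$ for the glued variety. Line bundles on $X_s$ are line bundles on $\tilde X_s = \bigsqcup \tilde E_i$ together with descent isomorphisms along the glued sections. This exhibits $\Pic^\circ_{X_s}$ as a $\GG_m$-extension of $\check A_s$, with the $\GG_m$ coming from the cycle's gluing data. Its extension class in $\Ext^1(\check A_s,\GG_m)=A_s$ should be exactly the translation $b$: a line bundle $L\in\check A_s$ pulled back to both sections differs by $t_b^*L\otimes L^{-1}$, which is the standard Poincaré description of $\Ext^1$. Since $f$ is non-multiple, $\Pic^\circ_{X_s}=\check P^\circ_s$ by \Cref{prop:dual Neron model is Picard}(3), so $b$ equals the shear $a$ of \Cref{def:shear}. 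I expect the bookkeeping here to be the delicate part: fixing the origin identifications so that the class is canonical, and checking compatibility with the slope of the bundles $\tilde E_i$.
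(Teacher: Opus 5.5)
\textbf{The gap: excluding components of type (ii) in \Cref{prop:E}.} A type (ii) component already has two branches along its singular section. So it can meet other components only along its smooth section $\cong A_s$, and demi-normality then allows at most one neighbour. That disposes of the case of two distinct neighbours, but it leaves one configuration that no local or numerical count can exclude. Take $X_s=E_1+E_2$, each $E_i$ a $\PP^1\cup\PP^1$-bundle over $\bar A=A_s/\langle a\rangle$ with $a$ of order $2$, glued to each other along their smooth sections. This configuration passes every local test:
\begin{enumerate}
\item $R_1=\PP^1\cup\PP^1$ meets $E_2$ twice, so $(E_2\cdot R_1)=2$, exactly as \eqref{eq:key formula} requires.
\item $X_s$ has only double crossings, so it is demi-normal.
\item The maps $p_1,p_2$ patch to $X_s\to\bar A$, whose fibers are cycles of four $\PP^1$'s.
\end{enumerate}
In fact $X_s$ is a free $\ZZ/2$-quotient of an $\mathrm I_4$-variety: reflect the $4$-cycle, fixing two of its nodes, and translate $A_s$ by $a$. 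It is therefore \'etale-locally indistinguishable from $\mathrm I_4$. Exactly this configuration occurs as $\tfrac12X_s$ for the multiple type $2\cdot\mathrm I^-_2$ of \Cref{thm:classification of multiple fiber iv} (with $k=1$, $r=2$). So the self-intersection you point to is just another node and contradicts nothing.

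Excluding this case needs a global use of non-multiplicity. The paper applies \Cref{lem:untangle primitive} to the degree $2$ isogeny $A_s\to\bar A$. The resulting non-multiple semistable fibration has four components, hence is of type $\mathrm I_4$. Its deck involution is a t-automorphism outside $\tilde P^\circ_s$ that preserves two components of $\Sing(\tilde X_s)$, which contradicts \Cref{lem:stabilizer of Ir}. That lemma is itself global: such an element can be normalized to an involution fixing a node component pointwise. The quotient is then regular by Chevalley--Shephard--Todd, and its central fiber is a chain, which violates \eqref{eq:key formula}.

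\textbf{The surface restriction does not give the count.} A general complete-intersection surface $Y\subset X$ does not contain the rational curves $R_i$. The components of $Y_s$ are the curves $E_i\cap Y$, which are finite over curves in $A_s$ and so are not $\PP^1$'s. Adjacent ones meet in many points. Moreover $K_Y=(K_X+H_1+\cdots+H_{n-1})|_Y$ is relatively ample, not trivial.

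The correct source of the count is \Cref{lem:intersection number}. Adjunction on $E_i$ along the fiber $R_i$ of $p_i$, which has trivial normal bundle, gives $(E_i\cdot R_i)=-2$. Together with $(X_s\cdot R_i)=0$ this yields \eqref{eq:key formula}. You then combine it, component by component, with demi-normality and the two $\GG_m$-orbits of \Cref{lem:E normalization structure-semistable}. There is no global fiber curve whose dual graph you could read off: for a non-torsion shear, $\dim\Alb_s<n-1$ by \Cref{cor:dimension of Albanese}.

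\textbf{Isomorphism of the components.} The $E_i$ are isomorphic because $P_s$ permutes the components transitively, since $X'_s$ is a $P_s$-torsor by \Cref{prop:P-action existence}. The fact that the gluings are translations does not show this.

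\textbf{The shear identification is sound.} Your descent argument differs from the paper's route. You read $\Pic^\circ_{X_s}$ as $\check A$ together with the $\GG_m$-torsor $\mathrm{Isom}(L,t_b^*L)$, which is the Poincar\'e bundle on $\check A\times\{b\}$; this treats all $b$ at once. \Cref{prop:shear} instead handles torsion $b$ through the morphism $X_b\to A/b$, then passes to arbitrary $b$ via a universal family and the seesaw principle.
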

		
		\begin{definition} \label{def:Kodaira type semistable}
			The \emph{Kodaira type} of a semistable fiber $X_s$ in \Cref{thm:classification of semistable fibers} is as indicated.
		\end{definition}
		
		\begin{remark}
			Our type $\mathrm I_r$ corresponds to Matsushita's type $\tilde A_r$ in Theorem~1.2(2) and Figure~2 (left picture) of \cite{mat01}. Our type $\mathrm I_r$ corresponds to Hwang--Oguiso's \cite{hwang-ogu09} type $\mathrm I_{er}$ or $\mathrm I_{\infty}$, and coincides with \cite[\S 6.2]{EFGMS25}. The former case arises when the shear $a \in A_s$ is torsion of order $e$, and the latter case arises when it is not torsion.
		\end{remark}
		
		\begin{remark} \label{rmk:classification of semistable fibers}
			By \Cref{thm:classification of semistable fibers}, the \emph{slope} and \emph{shear}
			\[ \check a = [P_s^\circ] \in \check A_s = \Ext^1 (A_s, \GG_m) ,\qquad a = [\check P_s^\circ] \in A_s = \Ext^1 (\check A_s, \GG_m) \]
			determine the $\PP^1$-bundle $E_i \to A_s$ and the gluing datum of $E_i$'s, respectively. We note that these two are \emph{dependent} to each other, since they are invariants of the N\'eron models of the dual abelian schemes $P_0$ and $\check P_0$. For example, if the abelian scheme $P_0 \to S_0$ is principally polarized, then we have an isomorphism $P_s \cong \check P_s$ whence the equality of the two extension classes. That is, for each choice of a $\PP^1$-bundle $E_i$, there is a \emph{unique way} of gluing them into a cycle (when $P$ is principally polarized).
		\end{remark}
		
		The rest of this subsection proves \Cref{thm:classification of semistable fibers}.
		
		\begin{lemma} \label{lem:intersection number}
			Let $f : X \to S$ be an arbitrary minimal $\delta$-regular abelian fibration. Assume $X_s$ is reducible.
			\begin{enumerate}
				\item Let $E \subset X_s$ be an irreducible component and $R$ $(=\PP^1$ or $\PP^1 \cup \PP^1)$ the fiber of $p : E \to \bar A$ in \Cref{prop:E}. Then $(E \cdot R) = -2$.
				
				\item Let $E, E' \subset X_s$ be two distinct irreducible components, and $R \subset E$ and $R' \subset E'$ be the fibers of $p$ and $p'$.
				\begin{enumerate}
					\item If $E \cap E' = \emptyset$, then $(E \cdot R') = (E' \cdot R) = 0$.
					\item If $E \cap E' \neq \emptyset$, then $(E \cdot R')$ and $(E' \cdot R)$ are positive.
				\end{enumerate}
			\end{enumerate}
		\end{lemma}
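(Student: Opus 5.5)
We would reduce everything to the surface case by cutting $X$ with $n-1$ general very ample divisors. Let $Y\subset X$ be a general complete intersection of $n-1$ very ample divisors; since $X$ is regular (\Cref{prop:minimal delta-regular abelian fibration is unique}), $Y$ is a regular surface fibred over $S$ with central fiber $Y_s=\sum a_iF_i$, where $F_i=E_i\cap Y$. The intersection numbers $(E\cdot R)$ on $X$ are computed on the curve $R$, so we instead use the rigidity of the $P^\circ_s$-action. Every fiber $R$ of $p:E\to\bar A$ is a translate of any other, because $p$ is equivariant with transitive action on $\bar A$. Hence $(E'\cdot R)$ is independent of the chosen fiber, and the intersection numbers are numerical invariants of a single fiber class.

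For (2), the plan is to use the fact that $R\subset E$ meets other components only at points of $E\cap E'$. In case (a) we have $R\cap E'=\emptyset$, so $(E'\cdot R)=0$, and symmetrically $(E\cdot R')=0$. In case (b), $E\cap E'$ is a nonempty $P^\circ_s$-invariant closed subset of $E$ of dimension $n-1$. It is therefore a union of abelian torsors (\Cref{lem:P-orbit has dimension n-1}), and it surjects onto $\bar A$ because $\bar A$ is a single orbit. Consequently every fiber $R$ meets $E'$ in a finite nonempty set, while $R\not\subset E'$. Since $E'$ is an effective Cartier divisor ($X$ is regular) not containing $R$ (or any component of $R$ meeting it), we get $(E'\cdot R)>0$. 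The same argument with the roles exchanged gives $(E\cdot R')>0$.

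For (1), we use $(X_s\cdot R)=0$, which gives $a_E(E\cdot R)=-\sum_{E'\ne E}a_{E'}(E'\cdot R)$. This shows $(E\cdot R)<0$ but does not give its value. To obtain the value we plan to use adjunction together with $K_X\equiv_f 0$. In case (i) of \Cref{prop:E}, $E$ is a smooth $\PP^1$-bundle, hence Cartier, and $K_E=(K_X+E)|_E$. Restricting to a fiber $R\cong\PP^1$ gives $-2=\deg K_E|_R=(E\cdot R)$. In case (ii), $E$ is a Cartier divisor in a regular scheme, so it is Gorenstein with dualizing sheaf $\omega_E=\mathcal O_X(K_X+E)|_E$. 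The fiber $R=(xy=0)$ is the fiber of the flat, \'etale locally trivial morphism $p$ over the abelian torsor $\bar A$, and $\omega_{\bar A}$ is trivial. Hence $\omega_E|_R=\omega_R$, which has degree $2p_a(R)-2=-2$ for two lines meeting in a node. It follows that $(E\cdot R)=\deg\omega_E|_R=-2$. Justifying $\omega_E|_R\cong\omega_R$ uses base change for relative dualizing sheaves of the flat lci morphism $p$, together with $\omega_{E/\bar A}\cong\omega_E$.

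The main obstacle is this last step, since $E$ is non-normal in case (ii) and ordinary adjunction on a normalization would miss the conductor. We would therefore phrase the argument through the Gorenstein dualizing sheaf rather than through the normalization. As a cross-check we can compute via $\nu$: $\nu^*\omega_E=\omega_{\tilde E}(\tilde Z)$. Restricted to the two fiber components, each has degree $-2+1=-1$, giving total $-2$ and confirming the answer.
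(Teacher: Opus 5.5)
Your proof is correct and follows essentially the same route as the paper. For (1) you use adjunction, $\mathcal O_R(E)\equiv \mathcal O_R(K_X+E)=\omega_E|_R\cong\omega_R$, together with $K_X\equiv_f 0$ and the local triviality of $p$; your Gorenstein/base-change argument for the non-normal case (ii) just makes precise the paper's ``trivial normal bundle'' step. For (2) you use, as the paper does, that $E\cap E'$ is a union of $(n-1)$-dimensional abelian torsors that surject onto $\bar A$ and so meet each fiber in a finite nonempty set. The opening reduction to a surface by cutting with very ample divisors is never used and can be dropped.
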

		\begin{proof}
			(1) This follows the idea of \cite[Proposition~4.9]{hwang-ogu09}. The intersection number $(E \cdot R)$ is the degree of the line bundle $\mathcal O_R(E)$, which by the adjunction formula is numerically equivalent to
			\[ \mathcal O_R(E) \equiv \mathcal O_R (K_X + E) = \mathcal O_R(K_E) .\]
			Note that $R \subset E$ has a trivial normal bundle because $p$ is locally trivial, so again by adjunction we have
			\[ \mathcal O_R(K_E) = \mathcal O_R(K_R) \otimes \det N_{R/E}^{\vee} = \mathcal O_R(K_R) .\]
			Both cases $R = \PP^1$ and $\PP^1 \cup \PP^1$ have $\deg K_R = -2$.
			
			\medskip
			
			(2a) is clear. (2b) The intersection $E \cap E'$ is a disjoint union of abelian torsors of dimension $n-1$ by \Cref{lem:P-orbit has dimension n-1}, so it intersects with $R$ and $R'$ at points. Hence $(E \cdot R')$ and $(E' \cdot R)$ are positive.
		\end{proof}
		
		\begin{proof}[Proof of \Cref{thm:classification of semistable fibers} (without the claim about shears)]
			By \Cref{thm:Kollar semistable}, $X_s = \sum_{i=1}^r E_i$ is reduced. If $P_s$ is an abelian variety, then $X_s$ is a $P_s$-torsor by \Cref{prop:P-action existence} (type $\mathrm I_0$). Assume from now on $P_s$ has a linear part $\GG_m$. If $X_s = E$ is integral, then $f$ is a Weierstrass model and hence it is of type $\mathrm I_1$ by \Cref{prop:Weierstrass fiber semistable}.
			
			\medskip
			
			Assume $X_s = \sum_{i=1}^r E_i$ with $r \ge 2$. Every irreducible component $E_i$ admits an $R_i$-fiber bundle morphism $p_i : E_i \to \bar A_i$ with fiber $R_i = \PP^1$ or $\PP^1 \cup \PP^1$ by \Cref{prop:E}. For each $j = 1, \cdots, n$, \Cref{lem:intersection number}(1) yields an identity
			\begin{equation} \label{eq:key formula}
				\sum_{i \neq j} (E_i \cdot R_j) = 2 ,
			\end{equation}
			which was the main ingredient of Kodaira's original proof in \cite[Theorem~6.2, (6.5)]{kod63} and its generalization in \cite[Proposition~4.10--4.12]{hwang-ogu09}. Hwang--Oguiso called it a \emph{key formula}. Set $j = 1$ in \eqref{eq:key formula}. There are only two numerical possibilities:
			\begin{enumerate}[label=\textnormal{(\roman*)}]
				\item $(E_r \cdot R_1) = (E_2 \cdot R_1) = 1$, and $E_i \cap E_1 = \emptyset$ for $i \neq 1, 2, r (\ge 3)$.
				\item $(E_2 \cdot R_1) = 2$, and $E_i \cap E_1 = \emptyset$ for $i > 2$.
			\end{enumerate}
			Before analyzing the cases, notice that every irreducible component $E_i$ is isomorphic to each other. This is because $X_s$ is reduced: by \Cref{prop:P-action existence}, the $P_s$-action on $X_s$ acts transitively on the set of irreducible components of $X_s$. Hence either every $E_i$ is a $\PP^1$-bundle over $A_s$ or every $E_i$ is a $\PP^1 \cup \PP^1$-bundle over $\bar A$, a degree $2$ quotient of $A_s$.
			
			\medskip
			
			(i) Suppose that every $E_i$ is a $\PP^1$-bundle over $A_s$. Note that $E_r \cap E_1$ and $E_1 \cap E_2$ are both $(n-1)$-dimensional orbits by \Cref{lem:P-orbit has dimension n-1}. By \Cref{lem:E normalization structure-semistable}, $E_1 = \tilde E_1$ has precisely two orbits of dimension $n-1$, the two sections $0$ and $\infty$ of $p_1 : E_1 \to A_s$. If $E_r \cap E_1 = E_1 \cap E_2 = 0$ (or $\infty$), then $E_r$, $E_1$, and $E_2$ create three branches around $0$, violating the demi-normality in \Cref{thm:Kollar semistable}. This shows $E_r \cap E_1 = 0$ and $E_1 \cap E_2 = \infty$. Same discussion applies to every component, meaning $X_s = \sum E_i$ is a cycle of isomorphic copies of $\PP^1$-bundles over $A_s$. This is type $\mathrm I_r$ for $r \ge 3$.
			
			Suppose that every $E_i$ is a $\PP^1 \cup \PP^1$-bundle over $\bar A$. The two intersections $E_r \cap E_1$ and $E_1 \cap E_2$ are still $(n-1)$-dimensional orbits in $E_1$, which are different by the demi-normality of $X_s$. According to \Cref{lem:E normalization structure-semistable} and the description of the $\PP^1 \cup \PP^1$-bundle, $E_1$ has two orbits of dimension $n-1$: one isomorphic to $A_s$ and other one isomorphic to $\bar A$, the singular locus of $E_1$ and the only section of $p_1 : E_1 \to \bar A$. None of the possible configurations of $E_r$, $E_1$, and $E_2$ can make $X_s$ demi-normal. Hence this case is impossible.
			
			\medskip
			
			(ii) Suppose first that every $E_i$ is a $\PP^1$-bundle over $A_s$. Then a similar argument shows that $E_1$ and $E_2$ meet twice along two $(n-1)$-dimensional orbits in \Cref{lem:E normalization structure-semistable}. This is type $\mathrm I_2$. Suppose that every $E_i$ is a $\PP^1 \cup \PP^1$-bundle over $\bar A$. Again a similar argument leaves only one possibility: $X_s = E_1 + E_2$ where $E_1$ and $E_2$ intersect along the $(n-1)$-dimensional orbits $A_s \subset E_1$ and $A_s \subset E_2$. Note that the equality $(E_2 \cdot R_1) = 2$ holds because $R_1 = \PP^1 \cup \PP^1$ meets with $A_s$ twice. It remains to rule out this possibility.
			
			\medskip
			
			Since the intersection $E_1 \cap E_2$ is isomorphic to $A_s$, the two $\PP^1 \cup \PP^1$-bundles $p_1$ and $p_2$ patch together and yields a morphism $\alb : X_s \to \bar A$, the Albanese morphism of $X_s$ (use the diagram in \Cref{prop:E} and \S \ref{sec:conductor}). Applying \Cref{lem:untangle primitive} to the degree $2$ isogeny $A_s \to \bar A$, we obtain a new semistable, minimal, and $\delta$-regular abelian fibration $\tilde f : \tilde X \to S$ whose central fiber $\tilde X_s$ has four irreducible components. Hence it must be of Kodaira type $\mathrm I_4$ by the discussion above. The covering $\tilde X_s \to X_s$ has Galois group $G = \ZZ/2 \subset \tilde P_s$. Since $\Sing(\tilde X_s) = \bigsqcup^4 A_s$ and $\Sing(X_s) = A_s \sqcup \bar A \sqcup \bar A$ for $\bar A = A_s / a$, the $\ZZ/2$-action on $\Sing(\tilde X_s)$ swaps two $A_s$'s and preserves two $A_s$'s. This contradicts the following lemma (recall $G \cap P_s^\circ = 0$ by \Cref{prop:untangle for unstable}), ruling out the last case.
		\end{proof}
		
		\begin{lemma} \label{lem:stabilizer of Ir}
			Assume $f : X \to S$ has type $\mathrm I_r$ for $r \ge 1$. Then every nontrivial stabilizer of the $P_s$-action on $X_s$ is $\GG_m$.
		\end{lemma}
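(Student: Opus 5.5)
The plan is to compute stabilizers separately on the smooth locus and on the singular locus of $X_s$, and on the singular locus to treat the neutral component $P^\circ_s$ first and the remaining components of $P_s$ afterwards.

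First, the smooth locus. Since $f$ is semistable, it is non-multiple, so $X_s$ has a regular point by \Cref{lem:non-multiple fiber has reduced component}. Hence $X' = X \setminus \Sing(X_s)$ is a $P$-torsor by \Cref{prop:P-action existence}(2). In particular $P_s$ acts freely on $X'_s$, so every nontrivial stabilizer lives on $\Sing(X_s)$. By \Cref{thm:classification of semistable fibers}, $\Sing(X_s)$ is the union of the glued loci $D_i = \infty_i = 0_{i+1}$, indices mod $r$. For $r = 1$ there is a single locus, obtained by identifying $0$ and $\infty$.

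Second, stabilizers in $P^\circ_s$. Each $E_i$ is a reduced component, and $P^\circ_s$ acts freely on the dense open orbit $E_i \cap X'_s$. So the kernel $K$ in the proof of \Cref{prop:E normalization}(i) is trivial, and $\bar P_s = P^\circ_s$. The $\PP^1$-bundle $\tilde E_i \to A$ is then the compactification of the Chevalley quotient $P^\circ_s \to A_s$. By \Cref{lem:E normalization structure-semistable}(1), the sections $0_i$ and $\infty_i$ are orbits with stabilizer exactly $\bar L_s = \GG_m$. The normalization $\tilde E_i \to E_i$ is equivariant and identifies these sections isomorphically with $D_{i-1}$ and $D_i$, which are abelian torsors isomorphic to $A_s$. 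Therefore the stabilizer in $P^\circ_s$ of any point of $\Sing(X_s)$ is exactly $\GG_m$, with no extra finite part.

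Third, elements outside $P^\circ_s$. Suppose $g \in P_s \setminus P^\circ_s$ fixes $x \in D_i$. Because $X'_s$ is a $P_s$-torsor, $\pi_0(P_s)$ acts simply transitively on the components of $X'_s$, equivalently on the $E_j$. So $g E_i \neq E_i$. For $r = 1$ this is already impossible, since $\pi_0(P_s)$ is trivial. For $r \ge 2$, the point $x$ lies only in $E_i$ and $E_{i+1}$, so $g$ must swap $E_i$ and $E_{i+1}$. To rule out this swap I would use an orientation argument. Since $P_s$ is commutative, $g$ commutes with the linear part $\GG_m \subset P^\circ_s$. This $\GG_m$ acts on each fiber $\PP^1$ of $E_j \to A_s$ with $0_j$ and $\infty_j$ as its attracting and repelling fixed sections (flow for $t \to 0$ versus $t \to \infty$). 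Hence $g$ sends $0_i$ to $0_{g(i)}$ and $\infty_i$ to $\infty_{g(i)}$. Viewing $x$ as a point of $\infty_i \subset E_i$, its image $gx$ lies in $\infty_{i+1}$. But $gx = x \in 0_{i+1}$, and for $r \ge 2$ the sections $0_{i+1}$ and $\infty_{i+1}$ are disjoint in $E_{i+1}$, which is a contradiction. So every nontrivial stabilizer is contained in $P^\circ_s$, and by the second step it equals $\GG_m$.

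The main obstacle is the third step: making precise that $g$ preserves the $0/\infty$ labelling. I would check that the attracting/repelling distinction is intrinsic. The character by which $\GG_m$ acts on the normal line of $0_j$ in $E_j$ is opposite to that at $\infty_j$, and commuting with $g$ preserves these characters.
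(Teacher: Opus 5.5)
Your first two steps are fine: freeness on $X'_s$ follows from the $P$-torsor structure, and \Cref{lem:E normalization structure-semistable} shows that the stabilizer in $P^\circ_s$ of a point of $\Sing(X_s)$ is exactly $\GG_m$. The gap is in the third step.

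You identify the labels in the gluing $\infty_i = 0_{i+1}$ of \Cref{thm:classification of semistable fibers} with the repelling and attracting sections of the linear part $\GG_m \subset P^\circ_s$. Nothing proved so far gives this. In \Cref{lem:E normalization structure-semistable} the names $0,\infty$ are attached to the two special orbits with no reference to the flow. The proof of \Cref{thm:classification of semistable fibers} only shows that $E_i$ meets $E_{i-1}$ and $E_{i+1}$ along its two \emph{distinct} special sections.

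What your contradiction actually needs is that at each node locus $D_i$, the $\GG_m$-weights on the normal lines of $D_i$ in $E_i$ and in $E_{i+1}$ have opposite signs. If they had the same sign, an element $g$ swapping $E_i$ and $E_{i+1}$ and commuting with $\GG_m$ would map $D_i$ to itself, and your argument would give no contradiction. That is precisely the reflection the lemma has to exclude. Your last paragraph only justifies that $g$ preserves the attracting/repelling distinction, which is the easy half.

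The missing fact also cannot be extracted from $X_s$ with its $P_s$-action alone. Take a cycle of two $\PP^1$'s glued $0\sim 0$, $\infty\sim\infty$, with $\GG_m$ scaling both components. This is a perfectly good $\GG_m$-curve, and it carries a commuting involution that swaps the components and fixes both nodes.

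The fact is true, but it has to come from the total space $X$. After strict henselization, lift $\lambda \in \GG_m(k)$ to $\sigma \in P^\circ(S)$. By \Cref{prop:P-action existence}, $\sigma$ acts on $X$ over $S$; it preserves $E_i$ and $E_{i+1}$, fixes $x \in D_i$, and fixes the uniformizer $t$.
\begin{itemize}
\item Near $x$, write $t = uvw$, where $u,v$ are local equations of $E_i, E_{i+1}$ (recall $X$ is regular and $X_s$ is reduced) and $w$ is a unit.
\item Then $\sigma^*u = \alpha u$ and $\sigma^*v = \beta v$ for units $\alpha,\beta$, and $\sigma^*t = t$ forces $\alpha(x)\beta(x) = 1$.
\item Since $u|_{E_{i+1}}$ and $v|_{E_i}$ cut out $D_i$ (up to multiplicity) in $E_{i+1}$ and $E_i$, the values $\alpha(x)$ and $\beta(x)$ are positive powers of the characters of $\GG_m$ on the corresponding conormal lines.
\item Hence these characters have opposite signs.
\end{itemize}
With this inserted, your orientation argument is complete and gives a genuinely different, local proof.

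The paper avoids orientations altogether. It adjusts the offending element by an element of $P^\circ_s$ to an involution fixing $E_r \cap E_1$. This involution acts on $T_xX$ as a reflection, so the quotient of $X$ by it is regular by Chevalley--Shephard--Todd. The contradiction comes from the quotient central fiber being a chain of $\PP^1$-bundles, which is incompatible with \eqref{eq:key formula}. Both proofs use the total space in an essential way: the paper through regularity of $X$, yours through invariance of $t$.
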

		\begin{proof}
			Write $Z = \Sing(X_s) = \bigsqcup_{i=1}^r (E_i \cap E_{i+1}) \cong A_s^{\sqcup r}$ (index taken mod $r$). The group $P_s$ acts on $X_s' = X_s \setminus Z$ freely and on $Z$ with nontrivial stabilizers (\Cref{prop:P-action property} and \ref{prop:P-action existence}). Let us first show that the stabilizers of $Z$ are subgroups of $P_s^\circ$. Assume on the contrary that $f \in P_s \setminus P_s^\circ$ preserves a component $E_r \cap E_1$. Then the cyclic configuration of $X_s$ forces $r$ to be even and $f(E_i) = E_{r+1-i}$ for all $i$. Hence $f^2 \in P_s$ preserves all $E_i$, or $f^2 \in P_s^\circ$. Taking any $g \in P_s^\circ$ with $g^2 = f^2$ (because $P_s^\circ$ is divisible) and adjusting $f$ by $f - g$, we may assume $f^2 = \id$. In this case, $f$ has a fixed locus $E_r \cap E_1$ and $E_{r/2} \cap E_{r/2+1}$.
			
			Now take a degree $2$ quotient $\bar f : \bar X = X / f \to S$. Around a point $x \in E_r \cap E_1$ (resp. $E_{r/2} \cap E_{r/2+1}$), $f$ fixes $E_r \cap E_1$ and swaps the nearby components $E_1$ and $E_r$. Hence the $f$-fixed locus of $T_x X \cong k^{n+1}$ is $n$-dimensional. The Chevalley--Shepherd--Todd theorem shows $\bar X$ is regular, and hence $\bar f$ is a semistable minimal $\delta$-regular abelian fibration. But $\bar X_s = X_s / f$ is a \emph{chain} of $r/2$ $\PP^1$-bundles over $A_s$. This falls into neither of the case (i) or (ii) derived from \eqref{eq:key formula}, a contradiction.
			
			Therefore, every stabilizer is contained in $P_s^\circ$ and is linear, so it has a neutral component $\GG_m$. But the connected components of $Z = \Sing(X_s)$ are isomorphic to $A_s$, so the stabilizers must be precisely $\GG_m$.
		\end{proof}
		
		\begin{remark} \label{rmk:intersection number}
			The intersection numbers $(E_i \cdot R_j)$ and $(E_j \cdot R_i)$ in \eqref{eq:key formula} may a priori be different. Hwang--Oguiso's strategy was to make this equality hold by defining a symmetric bilinear form $q(R_i, R_j)$ on the rational curves $R_i$, but allowing possibly infinite chains of them due to a potential non-torsion shear. See the proof of \cite[Proposition~4.10]{hwang-ogu09}. Our proof dealt with the possible discrepancy $(E_i \cdot R_j) \neq (E_j \cdot R_i)$. \emph{A posteriori}, $(E_i \cdot R_j) = (E_j \cdot R_i)$ turns out to hold in the semistable fibrations. This is no longer the case for unstable fibrations due to a potential tangle.
		\end{remark}

		\subsubsection{Gluing and shear}
			To complete the proof of \Cref{thm:classification of semistable fibers}, it remains to show that the shear $a \in A_s$ (the extension class of $[\check P_s^\circ] \in \Ext^1 (\check A_s, \GG_m) = A_s(k)$) is equal to the gluing datum described in \Cref{thm:classification of semistable fibers}. This claim will be about a single fiber $X_s$ but not a degeneration $f$, so to ease the notation, let us prove this in the following setup.
			
			\medskip
			
			Let $X$ be a \emph{type $\mathrm I_r$ variety} for $r \ge 1$ in \Cref{thm:classification of semistable fibers}, a reduced projective scheme over $k$ obtained by identifying the sections $\infty_i$ and $0_{i+1}$ in $r$ isomorphic copies of $\PP^1$-bundles $p : E_i \to A$ over an abelian torsor $A$ of dimension $n-1$. Take a normalization of $X$ along the node $E_r \cap E_1$ but nothing more. Such a partial resolution $\nu : \tilde X \to X$ admits a locally trivial and equivariant Albanese morphism
			\[ \alb : \tilde X \to A ,\]
			whose fiber $C$ is a \emph{chain} of $r$ number of $\PP^1$'s. The original $X$ is obtained back by gluing two sections (see \S \ref{sec:conductor})
			\[ 0 = 0_1 , \ \infty = \infty_r : A \to \tilde X, \qquad 0_A = 0(A) , \ \ \infty_A = \infty(A) \subset \tilde X .\]
			More precisely, the gluing is determined by a morphism
			\begin{equation} \label{eq:contraction with shear}
				0_A \sqcup \infty_A \to A \quad\mbox{combining}\quad \alb : 0_A \to A ,\quad t_a \circ \alb : \infty_A \to A ,
			\end{equation}
			where $t_a : A \to A$ is a translation by \emph{some} element $a \in A$. Together with \Cref{prop:dual Neron model is Picard}, the following proposition concludes \Cref{thm:classification of semistable fibers}.
			
			\begin{proposition} \label{prop:shear}
				Let $X$ be a type $\mathrm I_r$ variety as above, using the contraction morphism \eqref{eq:contraction with shear} determined by $a \in A$. Then the pullback under the morphism $\nu : \tilde X \to X$ defines a short exact sequence
				\[ 0 \to \GG_m \to \Pic^\circ_X \xlongrightarrow{\nu^*} \Pic^\circ_{\tilde X} = \check A \to 0 ,\]
				whose extension class is precisely $a \in \Ext^1(\check A, \GG_m) = A$.
			\end{proposition}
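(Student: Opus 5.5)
Let $\nu : \tilde X \to X$ denote the partial normalization gluing $0_A$ to $\infty_A$ via \eqref{eq:contraction with shear}. The plan is to compute $\Pic_X$ through the standard units--Picard exact sequence attached to this gluing. The conductor square realizes $X$ as the pushout of $\tilde X \leftarrow 0_A \sqcup \infty_A \to A$. This gives a short exact sequence of sheaves
\[ 0 \to \mathcal O_X^\times \to \nu_*\mathcal O_{\tilde X}^\times \to \mathcal O_A^\times \to 0 \]
on $X$, where the last map sends $u \mapsto (u|_{0_A}) \cdot (t_a^{-1})^*(u|_{\infty_A})^{-1}$, both restrictions being pushed forward to $A$. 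Taking cohomology, and using that $\tilde X$ and $A$ are proper and connected with $H^0(\mathcal O^\times) = k^\times$, the map $H^0(\tilde X, \mathcal O^\times) \to H^0(A, \mathcal O^\times)$ is $c \mapsto c/c = 1$. We therefore get
\[ 0 \to k^\times \to \Pic X \xrightarrow{\nu^*} \Pic \tilde X \xrightarrow{\ \delta\ } \Pic A, \qquad \delta(L) = 0^*L \otimes t_a^*\,\infty^*L^{-1}. \]
The same argument works in families over any base $T$, so it upgrades to the Picard functors and gives $0 \to \GG_m \to \Pic_X \to \Pic_{\tilde X} \to \Pic_A$.

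Next I would identify $\Pic^\circ_{\tilde X}$. Since $\alb : \tilde X \to A$ is locally trivial with fibre a chain $C$ of $\PP^1$'s, we have $\Pic^\circ_C = 0$. Hence $\alb^*$ induces an isomorphism $\check A = \Pic^\circ_A \to \Pic^\circ_{\tilde X}$; the discrete degree data along the components of $C$ lie outside the neutral component. For $L = \alb^*M$ with $M \in \check A$, we have $0^*L = M$ and $\infty^*L = M$, because $0$ and $\infty$ are sections of $\alb$. Therefore $\delta(L) = M \otimes t_a^*M^{-1}$. Since $M \in \Pic^\circ_A$ is translation invariant, this is trivial, so $\delta$ vanishes on $\Pic^\circ_{\tilde X}$. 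Restricting to neutral components then yields the exact sequence $0 \to \GG_m \to \Pic^\circ_X \to \check A \to 0$. Surjectivity holds because the preimage of $\check A$ is connected, being an extension of a connected group by a connected group.

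The main step is computing the extension class. For $M \in \check A$, the fibre of $\nu^*$ over $\alb^*M$ is the $\GG_m$-torsor of gluing isomorphisms $\phi : 0^*\alb^*M = M \xrightarrow{\sim} t_a^*\infty^*\alb^*M = t_a^*M$ of line bundles on $A$. This is exactly $\mathrm{Isom}(M, t_a^*M) \setminus 0$. Recall the description of $\Ext^1(\check A, \GG_m) \cong A$ via the Poincaré bundle $\mathcal P$ on $A \times \check A$: the point $a$ corresponds to the extension whose $\GG_m$-torsor over $\check A$ is $\mathcal P|_{\{a\}\times\check A}$ minus its zero section (Serre's $\Ext^1$ computation cited in \eqref{eq:shear}). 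By the theorem of the square, $t_a^*M \otimes M^{-1} \cong \mathcal P_{a,M}$ canonically and functorially in $M$. So the torsor of gluings is identified with $\mathcal P_a^\times$, and the group law on $\Pic^\circ_X$, which comes from tensoring gluings, matches the group law on this extension.

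I expect the obstacles to be bookkeeping rather than conceptual. The first is fixing sign and translation conventions, namely whether one gets $a$ or $-a$; this is absorbed by the choice of which of $0_A$ and $\infty_A$ is translated and by the normalization of $\Ext^1(\check A,\GG_m) = A$. The second is checking that the identification $\mathrm{Isom}(M, t_a^*M) \cong \mathcal P_{a,M}$ is compatible with tensor products as $M$ varies. To handle both, I would work universally: perform the construction over $\check A$ using $\mathcal P$ itself, so that the whole extension $\Pic^\circ_X$ is literally $\mathcal P|_{a\times\check A}^\times$ with its multiplication.
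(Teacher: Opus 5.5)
Your argument is correct, and it reaches the statement by a different route than the paper.

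\textbf{The paper's route.} It first treats torsion $a$ of order $e$. The pushout property gives $\alpha : X_a \to A/a$ with fibres of type $\mathrm I_{er}$, so $\alpha^*$ lifts the dual isogeny $(A/a)^\vee \to \check A$ into $\Pic^\circ_{X_a}$. It then puts all $X_a$ into a universal family $\mathfrak X \to A$, compares $\Pic^\circ_{\mathfrak X/A}$ with the Poincar\'e biextension over $\check A \times A$, and concludes by density of torsion points and the seesaw principle.

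\textbf{Your route.} You compute the extension directly by descent along the gluing. A gluing $M \to t_{\pm a}^*M$, evaluated at the origin, is canonically a nonzero element of $M_{\pm a}\otimes M_0^{-1} = \mathcal P_{(\pm a, M)}$, compatibly with tensor products.

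\textbf{Comparison.} Your computation is uniform in $a$: there is no torsion/non-torsion split, no seesaw or density step, and no auxiliary quotient $A/a$. It also pins down the class itself. By contrast, a splitting after pullback along $(A/a)^\vee \to \check A$ only places the class in $\ker(A \to A/a) = \langle a \rangle$. So the paper's torsion step needs an extra look at $\alpha^*$ on the kernel of that isogeny to single out $a$. Running your computation with $a$ varying recovers the paper's universal comparison directly, since $\mathcal P_{(y+a, M)} \otimes \mathcal P_{(y,M)}^{-1} \cong \mathcal P_{(a,M)}$ uniformly. In exchange, the paper's route links the shear to a morphism from $X$ onto an $(n-1)$-dimensional abelian torsor. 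This is the Albanese viewpoint of \Cref{cor:dimension of Albanese}.

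Two small remarks. First, the group-law compatibility you plan to check is automatic. The map $\Ext^1(\check A,\GG_m) \to \Pic^0(\check A)$ is injective, so the underlying $\GG_m$-torsor already determines the extension. Second, with the paper's literal convention the points $0(y)$ and $\infty(y-a)$ are glued. The descent datum is therefore $0^*L \cong t_{-a}^*\infty^*L$, confirming that the sign is purely a matter of fixing conventions, as you anticipated.
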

			\begin{proof}
				We write $X_a = X$ to emphasize that its construction depends on the choice of $a \in A$. Consider the Poincar\'e line bundle on $\check A \times A$ rigidified along $\{ 0 \} \times A$ and $\check A \times \{ 0 \}$. Such a line bundle defines a universal $\GG_m$-torsor $B \to \check A \times A$ ($\GG_m$-biextension) whose restriction to $\check A_a \coloneq \check A \times \{ a \}$ is a $\GG_m$-torsor $0 \to \GG_m \to B_a \to \check A_a \to 0$ with an extension class $a \in A$. We claim that the two $\GG_m$-torsors $\Pic^\circ_{X_a}$ and $B_a$ over $\check A_a$ are isomorphic.
				
				Suppose first that $a \in A$ is a torsion point of order $e$. Use the universal property of pushouts (\Cref{prop:conductor pushout}) to construct a commutative diagram
				\[\begin{tikzcd}
					0_A \sqcup \infty_A \arrow[r, phantom, "\subset"] \arrow[d] & \tilde X_a \arrow[r, "\alb"] \arrow[d, "\nu"] & A \arrow[d] \\
					A \arrow[r, phantom, "\subset"] & X_a \arrow[r, "\alpha"] & A/a
				\end{tikzcd}.\]
				Here $\alpha : X_a \to A/a$ is a locally trivial morphism with a fiber $\tilde C$, a Kodaira singular curve of type $\mathrm I_{er}$. Hence $\alpha^* : (A/a)^\vee \to \Pic_{X_a}^\circ$ is an injective homomorphism. Since $\alb^* : \check A \to \Pic^\circ_{\tilde X_a}$ is an isomorphism, the composition $\alpha^* \circ \nu^*$ is an isogeny $(A/a)^\vee \hookrightarrow \Pic_{X_a}^\circ \twoheadrightarrow \check A$. This means the $\GG_m$-torsor $\Pic_{X_a}^\circ \to \check A$ has an $(A/a)^\vee$-section, or equivalently the short exact sequence $0 \to \GG_m \to \Pic_{X_a}^\circ \to \check A \to 0$ is defined by the extension class $a \in A$.
				
				\medskip
				
				To prove the statement for arbitrary $a \in A$, notice first that our construction of $X = X_a$ for each $a \in A$ forms a universal family as follows. Start with a trivial family of $\tilde X$ with two sections $0$ and $\infty$ over $A$:
				\[\begin{tikzcd}[column sep=normal]
					\tilde X \times A \arrow[r, "\alb \times \id"] \arrow[rd, "\pr_2"'] & A \times A \arrow[d, "\pr_2"] \\
					& A
				\end{tikzcd}, \qquad\quad 0, \infty : A \times A \to \tilde X \times A .\]
				Identifying the two sections by $(x,a) \sim (x+a,a)$ as in \eqref{eq:contraction with shear}, we obtain a contraction $\nu : \tilde X \times A \to \mathfrak X$ over $A$, whose fiber over $a \in A$ constructs $X_a$:
				\[\begin{tikzcd}[column sep=normal]
					\tilde X \times A \arrow[r, "\alb \times \id"] \arrow[d, "\nu"] & A \times A \arrow[d, "\pr_2"] \\
					\mathfrak X \arrow[r] & A
				\end{tikzcd}.\]
				The relative neutral Picard scheme of $\mathfrak X \to A$ is a $\GG_m$-torsor $\nu^* : \Pic_{\mathfrak X/A}^\circ \to \check A \times A$. Comparing this with the Poincar\'e $\GG_m$-torsor $B \to \check A \times A$, the two are isomorphic over $\check A_s = \check A \times \{ a \}$ for every torsion point $a \in A$ by the previous paragraph. The seesaw principle (e.g., Proposition in \cite[\S 10]{mum:abelian_variety}) concludes that they are isomorphic for every $a \in A$.
			\end{proof}

	\subsection{Unstable fibrations} \label{sec:classification of unstable fibers}
		Here is a classification of unstable fibers.
		
		\begin{theorem} \label{thm:classification of unstable fibers}
			Let $f : X \to S$ be a minimal $\delta$-regular abelian fibration. Assume $f$ is unstable. Then $\alb : X_s \to \Alb_s$ is equivariantly isomorphic to
			\[ (C \times A_s) / G \to A_s / G ,\]
			where $C$, $G$, and the action are classified as follows.
			\begin{itemize}
				\item $C$ is an unstable Kodaira singular curve: it is of type $\mathrm {II}$, $\mathrm{III}$, $\mathrm{IV}$, $\mathrm{I}_r^*$ for $r \ge 0$, $\mathrm{II}^*$, $\mathrm{III}^*$, or $\mathrm{IV}^*$.
				
				\item The Albanese stabilizer $G$ is classified as in \Cref{table:classification of G}.
				
				\item $G$ acts on $C \times A_s$ diagonally: it acts on $A_s$ freely by translations, and acts on $C$ in a unique way.
			\end{itemize}
		\end{theorem}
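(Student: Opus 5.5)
The plan is to reduce to the untangle and then run Kodaira's intersection-number argument on a curve. First I would apply \Cref{prop:untangle for unstable} to $f$ with the untangling isogeny $u : A_s \to \Alb_s$. This gives the untangle $\tilde f : \tilde X \to S$ with $\tilde X_s = C \times A_s$ and $\tilde X_s \to A_s$ the projection. By construction $X_s = \tilde X_s / G$, where $G$ is the Galois group of $u$, i.e.\ the Albanese stabilizer, and $\alb$ is the induced map to $A_s/G = \Alb_s$. The $G$-action on $C \times A_s$ covers translation on $A_s$, so the argument of \Cref{lem:E normalization structure-unstable}(1) applies verbatim: a morphism $A_s \to \Aut(C)$ is constant because $\Aut(C)$ is affine, or at least has affine neutral component for a connected curve of arithmetic genus $1$ with a $\GG_a$-action. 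Hence the action is diagonal, with $G$ acting on $A_s$ freely by translations and on $C$ through a homomorphism $G \to \Aut(C)$.

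Next I would identify $C$. Since $\tilde f$ is again unstable, minimal and $\delta$-regular, its central fiber is non-multiple and has a reduced component by \Cref{lem:non-multiple fiber has reduced component}. Choose a general point $a \in A_s$ and cut $\tilde X$ by a general complete intersection of $n-1$ divisors pulled back via a local extension of the projection to $A_s$. If no such global extension exists, I would instead work on the regular surface $Y \subset \tilde X$ obtained as in the proof of \Cref{prop:dual Neron model is Picard}, whose central fiber is a disjoint union of copies of $C$. The cleaner route is intrinsic: apply the key formula \eqref{eq:key formula} of \Cref{lem:intersection number} to the components $E_i = C_i \times A_s$ of $\tilde X_s$. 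Since $\tilde X_s$ is a product, every $E_i$ is a genuine $\PP^1$-bundle (trivial). The $\PP^1 \cup \PP^1$ case of \Cref{prop:E} is excluded because its fiber would be a reducible component of the product. Moreover $(E_i \cdot R_j) = (E_j \cdot R_i) = \#(C_i \cap C_j)$, so the intersection matrix becomes symmetric; the tangle that spoiled symmetry has been removed. Kodaira's numerical argument then classifies the weighted configuration $\sum a_i C_i$ as an extended Dynkin diagram. If $\tilde X_s$ is integral, \Cref{prop:prop:Weierstrass fiber unstable} gives $C$ cuspidal with $\delta = 1$, which is type $\mathrm{II}$. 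The linear part $\GG_a$ rules out the $\mathrm I_r$ cycles, since those force $\GG_m$ stabilizers as in \Cref{lem:stabilizer of Ir}. So $C$ is one of $\mathrm{II}, \dots, \mathrm{IV}^*$; non-reduced components are allowed since $a_i>1$ is fine here.

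Finally I would classify $G$, which I expect to be the main obstacle. The constraints are these:
\begin{itemize}
\item $G$ acts faithfully on $C$, since the Albanese stabilizer acts freely on $A_s$ only modulo its action on $C$, and the stabilizer is exactly $\ker(G \to \Aut C)$, which must be trivial by minimality of $G$.
\item $G$ preserves the $\GG_a$-orbit structure and the multiplicity-one components, and commutes with the $\GG_a$-action because $G \subset \tilde P_s$ is commutative.
\item $G \cap \tilde P^\circ_s = 0$, so $G$ injects into $\pi_0(\tilde P_s)$, which is Kodaira's component group of $C$, acting on the reduced components.
\item The quotient must still have regular total space and be non-multiple, so some reduced component must have free $G$-orbit behaviour compatible with $X'_s$ being a $P_s$-torsor.
\end{itemize}
Thus $G$ is a subgroup of the Néron component group of $C$ ($\ZZ/2$, $\ZZ/3$, $\ZZ/4$, $(\ZZ/2)^2$ etc.) acting by symmetries of the extended Dynkin diagram that translate the reduced components. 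A case-by-case check over the Kodaira types would produce \Cref{table:classification of G}. The delicate points are verifying that each such $G$ acts on $C$ in a unique way, namely as the natural automorphism commuting with $\GG_a$, and excluding the cases where the quotient would acquire a multiple fiber or a singular total space. For these I would first try a local tangent-space computation at fixed points, in the spirit of the Chevalley--Shepherd--Todd argument in \Cref{lem:stabilizer of Ir}.
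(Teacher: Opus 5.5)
There is a genuine gap in the classification of $G$. You correctly get $G\hookrightarrow\pi_0(\tilde P_s)$ from $G\cap\tilde P^\circ_s=0$. Since $\tilde X'_s\cong\tilde P_s$, you also know that $|\pi_0(\tilde P_s)|$ equals the number of reduced components of $C$, and this settles every type except $\mathrm I_r^*$.

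For $\mathrm I_r^*$ the order is $4$, and \Cref{table:classification of G} turns precisely on whether $\pi_0(\tilde P_s)$ is $\ZZ/4$ ($r$ odd) or $(\ZZ/2)^{\times 2}$ ($r$ even). You assert that $\pi_0(\tilde P_s)$ is ``Kodaira's component group of $C$'', but nothing in your argument proves this.

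For elliptic surfaces this identification comes from self-duality of the Jacobian together with Raynaud's description of $\pi_0$ via the intersection matrix. In relative dimension $n\ge 2$ there is no surface slice. A Picard-theoretic discriminant computation on $\tilde X_s$ only computes $\pi_0$ of the \emph{dual} N\'eron model, whose minimal compactification may be tangled and of a different type; the paper notes exactly this obstruction.

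The constraints you list do not settle it either. Faithfulness on $C$, commuting with $\GG_a$, and acting simply transitively on the four reduced components are all satisfied, at the level of the dual graph, by two different subgroups of the dihedral symmetry group of $\tilde D_{r+4}$, for every $r$:
a $\ZZ/4$ (generated by a chain reversal composed with a swap at one end), and a $(\ZZ/2)^{\times 2}$.
You give no finer argument to separate them; the proposed tangent-space computation is left unspecified.

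The missing idea is the paper's use of semistable reduction. Take the semistable base change of the untangled fibration (\Cref{prop:semistable reduction theorem untangled ver}). Its proof uses only the \emph{order} of $\pi_0(\tilde P_s)$, so there is no circularity. Edixhoven's theorem (\Cref{thm:Edixhoven}) then gives $\pi_0(\tilde P_s)=\pi_0(\tilde Q_t^{\psi})$.
\begin{itemize}
\item In the smooth case this is $\pi_0(E^{\omega})$.
\item In the $\mathrm I_r^*$ case, $\tilde Q_t=\ZZ/2r\times\GG_m\times A_s$ with $\psi(i,z,y)=(-i,(-1)^i z^{-1},y)$. Modulo $A_s$, the invariants are $(0,\pm1)$ and $(r,z)$ with $z^2=(-1)^r$. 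Since $(r,z)^2=(0,(-1)^r)$, this gives $\ZZ/4$ for odd $r$ and $(\ZZ/2)^{\times 2}$ for even $r$.
\end{itemize}

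The rest of your outline matches the paper, with three small repairs.
\begin{itemize}
\item The symmetric number $(E_i\cdot R_j)=(E_j\cdot R_i)$ is the length of $C_{i,\red}\cap C_{j,\red}$, not the number of intersection points. For type $\mathrm{III}$ the two components are tangent, so counting points would break the key formula.
\item \Cref{lem:stabilizer of Ir} does not apply when the linear part is $\GG_a$. Exclude cycles directly instead: a $\GG_a$-action on a cycle of $\PP^1$'s fixes the nodes, hence is trivial, contradicting faithfulness.
\item Diagonality needs that the components of $C$ are rational (\Cref{lem:automorphism of chain of rational curves}), not an arithmetic-genus-one argument.
\end{itemize}
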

		
		\begin{table}[t]
			\begin{tabular}{|c||c|cc|cc|ccc|} \hline
				Kod. type of $C$ & $\mathrm {II}$ &  \multicolumn{2}{|c|}{$\mathrm {III}$} & \multicolumn{2}{|c|}{$\mathrm {IV}$} & \multicolumn{3}{|c|}{$\mathrm I_r^*$ (even $r$)} \\ \hline
				Alb. stab. $G$ & $0$ & $0,$ & $\ZZ/2$ & $0,$ & $\ZZ/3$ & $0,$ & $\ZZ/2,$ & $(\ZZ/2)^{\times 2}$ \\ \hline\hline
				Kod. type of $C$ & $\mathrm {II}^*$ & \multicolumn{2}{|c|}{$\mathrm {III}^*$} & \multicolumn{2}{|c|}{$\mathrm {IV}^*$} & \multicolumn{3}{|c|}{$\mathrm I_r^*$ (odd $r$)} \\ \hline
				Alb. stab. $G$ & $0$ & $0,$ & $\ZZ/2$ & $0,$ & $\ZZ/3$ & $0,$ & $\ZZ/2,$ & $\ZZ/4$ \\ \hline
			\end{tabular}
			
			\medskip
			
			\caption{Classification of the Albanese stabilizer $G$ for each unstable Kodaira type curve $C$. The \emph{Kodaira type} of the unstable fiber $X_s = (C \times A_s)/G$ is (Kodaira type of $C$)/(order of $G$).}
			\label{table:classification of G}
		\end{table}
		
		\begin{definition} \label{def:Kodaira type unstable}
			The \emph{Kodaira type} of an unstable fiber $X_s$ in \Cref{thm:classification of unstable fibers} is denoted by
			\[ (\mbox{Kodaira type of the curve } C) / (\mbox{order of } G) .\]
			For example, when $C$ has type $\mathrm{IV}$ and $G = 0$, we call it a type $\mathrm {IV}$ fiber. When $C$ has type $\mathrm{IV}$ and $G = \ZZ/3$, we call it a type $\mathrm{IV}/3$ fiber. When $C$ has type $\mathrm I_0^*$ and $G = (\ZZ/2)^{\times 2}$, we call it a type $\mathrm I_0^*/4$ fiber.
		\end{definition}
		
		\begin{remark}
			The Albanese morphism $\alb : X_s \to \Alb_s$ is a locally trivial fiber bundle with fiber $C$. Its fibers are called \emph{characteristic cycles} in \cite{hwang-ogu09}. \Cref{thm:classification of unstable fibers} was proved in \cite[Corollary~4.18--19]{kim-laza-mar26} under the assumption that $f$ is isotrivial. Comparing it with Matsushita's result, we note that some types in \cite[Table~4--5]{mat01} (e.g., $\mathrm I_0^*$-1, $\mathrm I_0^*$-3, $\mathrm {IV}$-1, and $\mathrm{IV}^*$-1) only arise as multiple fibers, therefore not appearing in \Cref{thm:classification of unstable fibers} but only in \Cref{thm:classification of multiple fiber v}. Finally, when $C$ has type $\mathrm I_r^*$ for $r \ge 2$ even, the N\'eron component group $\pi_0 (\tilde P_s) = (\ZZ/2)^{\times 2}$ in \Cref{table:classification of Neron component group} has three different subgroups $\ZZ/2$, but our notation does not distinguish them.
		\end{remark}
		
		\begin{proof}[Proof of \Cref{thm:classification of unstable fibers} (without the classification of Albanese stabilizers)]
			Take an untangle $\tilde X \to X$ in \Cref{prop:untangle for unstable}, so that $\tilde f : \tilde X \to S$ has a central fiber $\tilde X_s = C \times A_s$. We have $X_s \cong \tilde X_s / G$ for the Albanese stabilizer $G$ in \Cref{def:Albanese stabilizer}. Denote by $\tilde P$ the t-automorphism scheme of $\tilde f$.
			
			\medskip
			
			Let us first classify the curve $C$. If $C$ is integral, then $\tilde P^\circ_s = \GG_a \times A_s$ acts on the integral variety $\tilde X_s = C \times A_s$ generically freely and with nontrivial stabilizers $\GG_a$. Hence \Cref{prop:prop:Weierstrass fiber unstable} determines $C$: its unique singular point $x$ is locally defined by $\hat {\mathcal O}_{C, x} = k[[t^{\delta+1}, \cdots]]$. The constant $\delta$ is captured by the dimension of the linear part of $\Pic^\circ_C$, which is $1$ by the $\delta$-regularity assumption. This concludes that $C$ is isomorphic to the cuspidal cubic curve.
			
			Assume now $C$ is reducible and write $\tilde X_s = C \times A_s = \sum_{i=1}^r a_i E_i$. Notice that every irreducible component $E_i$ is isomorphic to $\PP^1 \times A_s$. Fix a curve $C \subset \tilde X_s$ and set $R_i = C \cap E_i$, a rational curve in $E_i$. We claim $(E_i \cdot R_j) = (E_j \cdot R_i)$ for every $1 \le i, j \le r$. The variety $\tilde X_s = C \times A_s$ has an embedding dimension $n+1$ everywhere, so $C$ has an embedding dimension $2$. For each point $p \in R_i \cap R_j \subset C$, we can assume $(C, p) \subset (\AA^2, 0)$ and hence the local intersection multiplicity at $p$ is
			\[ (E_i \cdot R_j)_{(\tilde X, p)} = (R_i \cdot R_j)_{(\AA^2,0)} = (E_j \cdot R_i)_{(\tilde X, p)} ,\]
			proving the desired equality $(E_i \cdot R_j) = (E_j \cdot R_i)$. (Note that this completely fails in $X$. That is, an irreducible component $\bar E \subset X_s$ may be a $\PP^1 \cup \PP^1$-bundle in \Cref{prop:E}(ii) and the above numerical equality may fail for $\bar E_i, \bar E_j \subset X_s$.)
			
			Since we have an equality $(E_i \cdot R_j) = (E_j \cdot R_i)$, we can pretend $q(E_i, E_j) = (E_i \cdot R_j)$ is a symmetric bilinear form and use the numerical equality
			\[ \sum_{i \neq j} a_i \cdot q(E_i, E_j) = 2 \qquad\mbox{for}\quad j = 1, 2, \cdots, n \]
			as in \Cref{lem:intersection number} and \eqref{eq:key formula}. Now Kodaira's original proof \cite[Theorem~6.2, (6.5)]{kod63} applies and computes the desired multiplicities $a_i$ and configurations of $E_i$'s as claimed. Note that Kodaira's argument is purely numerical with the following exceptions, as already pointed out in \cite[p.~1003]{hwang-ogu09}:
			\begin{itemize}
				\item Kodaira's Lemma 6.1 gives a geometric criterion for $\tilde X_s$ to be non-multiple. We are simply assuming $\tilde X_s$ is non-multiple within this theorem.
				\item Kodaira's equation (6.4) is used to compute the arithmetic genera of irreducible components $E \subset \tilde X_s$. We have already proved $E = \PP^1 \times A_s$.
			\end{itemize}
			This classifies the curve $C$ as claimed.
			
			\medskip
			
			It remains to classify $G$ and its action on $\tilde X_s = C \times A_s$. The action is by t-automorphisms from the inclusion $G \subset P_s$ in \Cref{prop:untangle for unstable}. By construction in \Cref{lem:untangle primitive}, the action on the second factor $A_s$ is a translation by $G \subset A_s$, so the $G$-action on $C \times A_s$ is of the form
			\[ g \cdot (x,y) = (g_y(x), y + g) \qquad\mbox{for}\quad (x,y) \in C \times A_s, \quad g_y : C \to C .\]
			This yields a homomorphism $A_s \to \Aut_C$ by $y \mapsto g_y$, which is necessarily constant as in \Cref{lem:E normalization structure-unstable} because all irreducible components of $C$ are rational curves (\Cref{lem:automorphism of chain of rational curves}). Hence the $G$-action is diagonal.
			
			Finally, recall from \Cref{prop:untangle for unstable}(2) the inclusion $G \subset \pi_0(\tilde P_s)$. Hence it only remains to compute $\pi_0(\tilde P_s)$. This will be done in \Cref{thm:classification of component group}(2) and \S \ref{sec:Neron component groups and Albanese stabilizers}. Note that $|\pi_0(\tilde P_s)|$ is the number of reduced components of $C$ because of the isomorphism $\tilde X_s' = \tilde X_s \setminus \Sing(\tilde X_s) \cong \tilde P_s$ (\Cref{prop:P-action existence}). Hence we already know $|\pi_0(\tilde P_s)| = 1$, $2$, $3$, or $4$, and the only ambiguity arises in the last case where $\pi_0(\tilde P_s)$ can be either $\ZZ/4$ or $(\ZZ/2)^{\times 2}$.
		\end{proof}

	\subsection{N\'eron component groups}
		If a minimal $\delta$-regular abelian fibration $f : X \to S$ has a section, then $X' = X \setminus \Sing(X_s)$ is isomorphic to the t-automorphism scheme $P$ by \Cref{prop:P-action existence}. Conversely, let $P \to S$ be a $\delta$-regular N\'eron model of an abelian scheme over $S_0$. Then it admits a unique minimal model $f : X \to S$ by \Cref{thm:minimal model} and \Cref{prop:minimal delta-regular abelian fibration is unique} with an isomorphism $X' \cong P$.
		
		\begin{definition} \label{def:Kodaira type of t-automorphism group}
			Let $P \to S$ be a $\delta$-regular N\'eron model of an abelian scheme over $S_0$.
			\begin{enumerate}
				\item The \emph{minimal compactification} of $P$ is a unique minimal $\delta$-regular abelian fibration $f : X \to S$ with $X' = X \setminus \Sing(X_s) \cong P$.
				\item The \emph{Kodaira type} of $P$ is the Kodaira type of its minimal compactification.
			\end{enumerate}
		\end{definition}
		
		The main result of this subsection is the computation of the N\'eron component group of $P$, generalizing Kodaira \cite[Table~1 in p.604]{kod63} and N\'eron \cite[Table in pp.124--125]{neron:original_article} into higher dimensions.
		
		\begin{theorem} \label{thm:classification of component group}
			Let $P \to S$ be a $\delta$-regular N\'eron model of an abelian scheme over $S_0$.
			\begin{enumerate}
				\item If $P$ is semistable of type $\mathrm I_r$, then $\pi_0(P_s)$ is isomorphic to $\ZZ/r$.
				\item If $P$ is unstable, then $\pi_0(P_s)$ is classified in \Cref{table:classification of Neron component group}. In particular, it is either a cyclic group of order $\le 4$ or $(\ZZ/2)^{\times 2}$
				\item If $\check P \to S$ is the N\'eron model of the dual abelian scheme $\check P_0$, then $\pi_0(P_s)$ and $\pi_0(\check P_s)$ are isomorphic.
			\end{enumerate}
		\end{theorem}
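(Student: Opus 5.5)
We would fix the minimal compactification $f : X \to S$ of $P$ (\Cref{def:Kodaira type of t-automorphism group}), so that $X' = X \setminus \Sing(X_s) \cong P$. Then $X'_s$ is a $P_s$-torsor and $P^\circ_s$ preserves each component, so $\pi_0(P_s)$ acts simply transitively on the set of reduced components of $X_s$. The idea is to read off the group structure from how these components are permuted.

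For (1), type $\mathrm I_0$ is trivial. For $\mathrm I_r$ with $r \ge 1$, every element of $P_s$ induces an automorphism of the cyclic dual graph of $X_s$, so it acts either as a rotation or as a reflection. A reflection fixing a vertex is excluded by simple transitivity. A reflection fixing no vertex (only possible for $r$ even) fixes an edge, i.e.\ preserves a node component $E_i \cap E_{i+1}$. This is excluded by \Cref{lem:stabilizer of Ir}, whose proof shows that no element of $P_s \setminus P_s^\circ$ preserves a component of $\Sing(X_s)$. Hence $\pi_0(P_s)$ embeds into the rotation group $\ZZ/r$ and acts simply transitively on $r$ vertices, so $\pi_0(P_s) \cong \ZZ/r$.

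For (2), we pass to the untangle $\tilde f$ (\Cref{prop:untangle for unstable}). It has $\tilde X_s = C \times A_s$ and an exact sequence $0 \to G \to \pi_0(\tilde P_s) \to \pi_0(P_s) \to 0$, so it suffices to compute $\pi_0(\tilde P_s)$ and then quotient by the possible $G$. By the proof of \Cref{thm:classification of unstable fibers}, $|\pi_0(\tilde P_s)|$ is the number of reduced components of $C$, namely $1$, $2$, $3$, or $4$. Only type $\mathrm I_r^*$ with four reduced components is ambiguous, since the group could be $\ZZ/4$ or $(\ZZ/2)^{\times 2}$.

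To resolve this, we would imitate the Raynaud description of the component group. Since $\tilde X$ is regular with a section, $\tilde P$ is the separated quotient of $\Pic^\bullet$ of $\tilde X$ by \Cref{prop:dual Neron model is Picard} applied to the dual side. This should identify $\pi_0$ of the dual with $\ker(\deg)/\im(\text{intersection matrix})$, where we use the symmetric form $q(E_i, E_j) = (E_i \cdot R_j)$ already established on $C \times A_s$. That form is the extended $\tilde D_{r+4}$ Cartan form, whose discriminant group is $\ZZ/4$ for $r$ odd and $(\ZZ/2)^{\times 2}$ for $r$ even, exactly as in Kodaira's table.

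Alternatively, one can argue directly. Translation by a component-group element of order $2$ in $(\ZZ/2)^{\times 2}$ must induce an involution of the $\mathrm I_r^*$ graph that swaps the two forks. Such an involution exists with fixed-point-free action on the end vertices only according to the parity of $r$. We would then quotient by each admissible $G$ to fill in \Cref{table:classification of Neron component group}.

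For (3), the cleanest route is Grothendieck's pairing $\pi_0(P_s) \times \pi_0(\check P_s) \to \QQ/\ZZ$, which is perfect here since the residue field has characteristic $0$. Finite abelian groups in perfect duality are isomorphic. If we prefer to stay self-contained, the semistable case follows from (1) applied to $\check P$: the number $r$ should match because the shear and slope exchange roles and the cycle length is determined by the torus part's monodromy, which is common to $P$ and $\check P$. In the unstable case, every group in the table is self-dual, so it suffices to match orders, which we would get from the same discriminant-group computation on the dual untangle.

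The main obstacle is step (2) in the $\mathrm I_r^*$ case, namely rigorously identifying $\pi_0$ with the discriminant group of $q$ in this higher-dimensional, non-principally-polarized setting. We would try the Picard-theoretic route first, reducing to a surface section as in the proof of \Cref{prop:dual Neron model is Picard}.
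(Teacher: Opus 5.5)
\textbf{Verdict: parts (1) and (3) can be made to work, but (2) is not proved in the one case that matters, type $\mathrm I_r^*$.}

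\textbf{The gap in (2).} There are three problems with your treatment of $\mathrm I_r^*$.
\begin{enumerate}
\item You have the duality backwards. \Cref{prop:dual Neron model is Picard}, applied to the untangle $\tilde f$, makes $\check{\tilde P}$ (not $\tilde P$) the separated quotient of $\Pic^\bullet_{\tilde f}$. So the $\tilde D_{r+4}$ form $q$ on $\tilde X_s = C \times A_s$ can only compute $\pi_0(\check{\tilde P}_s)$. To reach $\pi_0(\tilde P_s)$ this way, you would need the Kodaira type of the minimal compactification of $\check{\tilde P}$. Nothing you have rules out that its $r'$ has the other parity, and duality does change types (e.g.\ $\mathrm I_r^*/2 \leftrightarrow \mathrm I_{2r}^*/2$ for odd $r$). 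This is exactly why the paper says the Picard method does not distinguish odd and even $r$.
\item An equality $\pi_0 \cong \ker(\deg)/\im(q)$ would require every multidegree to be realized by a line bundle on $\tilde X$, whose central fiber is non-reduced. You do not address this.
\item Your ``direct'' alternative is false. For $r \ge 1$ the automorphism group of the $\tilde D_{r+4}$ graph is dihedral of order $8$ (and $S_4$ for $r=0$). For every $r$ it contains both a cyclic group of order $4$ and a Klein four group acting simply transitively on the four end vertices. So no argument on the dual graph alone can detect the parity of $r$.
\end{enumerate}
The ingredient you are missing is the paper's semistable base change. By \Cref{thm:Edixhoven}, for the untangle one has $\pi_0 = \pi_0(Q_t^\psi)$. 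By \Cref{prop:semistable reduction theorem untangled ver}(2), $\psi(i,z,y) = (-i,(-1)^i z^{-1},y)$ on $Q_t = \ZZ/2r \times \GG_m \times A_s$. The $\psi$-invariants in the component $i=r$ are $z = \pm 1$ or $z = \pm\sqrt{-1}$ according to the parity of $r$, and this is what separates $(\ZZ/2)^{\times 2}$ from $\ZZ/4$.

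\textbf{Parts (1) and (3).} Your (1) is correct and takes a different route from the paper.
\begin{itemize}
\item You embed $\pi_0(P_s)$ into the dihedral group of the cycle via its simply transitive action on components. You then exclude edge-fixing reflections using the proof of \Cref{lem:stabilizer of Ir}; there is no circularity, since that lemma precedes the theorem.
\item The paper instead computes $\pi_0(\check P_s) \cong \ZZ/r$ from the $\tilde A_{r-1}$ discriminant group, applies this to the dual fibration, and uses $|\pi_0(P_s)| = r$ to match. That route gives the semistable case of (3) for free; yours is more elementary.
\end{itemize}
For (3), citing the perfectness of Grothendieck's pairing is legitimate: SGA~7, Exp.~IX proves it on prime-to-$p$ parts, which here means everywhere. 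It is much shorter than the paper's case comparison. Your self-contained fallback does not work, however. ``Self-dual, so it suffices to match orders'' fails precisely for $\ZZ/4$ versus $(\ZZ/2)^{\times 2}$, and the monodromy remark in the semistable case is not an argument.

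\textbf{How to repair (2).} Once you accept Grothendieck, you can close (2) along your own lines without any realizability.
\begin{enumerate}
\item Arguing as in \Cref{lem:classification of component group:semistable 3}, with \Cref{lem:Picard scheme of thickening} to pass from $\tilde X_{s,\red}$ to $\tilde X_s$, you get an injection $\pi_0(\check{\tilde P}_s) \hookrightarrow \ker(\deg)/\im(q)$. The target has order $4$.
\item The pairing gives $|\pi_0(\check{\tilde P}_s)| = |\pi_0(\tilde P_s)| = 4$, so the injection is an isomorphism.
\item The pairing then transfers the group structure to $\pi_0(\tilde P_s)$.
\end{enumerate}
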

		
		\begin{table}[t]
			\begin{tabular}{|c||c|c|c|c|c|c|} \hline
				Type of $X_s$ & $\mathrm I_r$ & $\mathrm{II}$ & \multicolumn{1}{c}{$\mathrm{III}$} & $\mathrm{III}/2$ & \multicolumn{1}{c}{$\mathrm{IV}$} & $\mathrm{IV}/3$ \\ \hline
				$\pi_0(P_s)$ & $\ZZ/r$ & $0$ & \multicolumn{1}{c}{$\ZZ/2$} & $0$ & \multicolumn{1}{c}{$\ZZ/3$} & $0$ \\ \hline\hline
				Type of $X_s$ & \multirow{2}{*}{-} & $\mathrm{II}^*$ & \multicolumn{1}{c}{$\mathrm{III}^*$} & $\mathrm{III}^*/2$ & \multicolumn{1}{c}{$\mathrm{IV}^*$} & $\mathrm{IV}^*/3$ \\ \cline{1-1} \cline{3-7}
				$\pi_0(P_s)$ & & $0$ & \multicolumn{1}{c}{$\ZZ/2$} & $0$ & \multicolumn{1}{c}{$\ZZ/3$} & $0$ \\ \hline\hline
				Type of $X_s$ & \multicolumn{1}{c}{$\mathrm I_{\operatorname{ev}}^*$} & \multicolumn{1}{c}{$\mathrm I_{\operatorname{ev}}^*/2$} & $\mathrm I_{\operatorname{ev}}^*/4$ & \multicolumn{1}{c}{$\mathrm I_{\operatorname{odd}}^*$} & \multicolumn{1}{c}{$\mathrm I_{\operatorname{odd}}^*/2$} & $\mathrm I_{\operatorname{odd}}^*/4$ \\ \hline
				$\pi_0(P_s)$ & \multicolumn{1}{c}{$(\ZZ/2)^{\times 2}$} & \multicolumn{1}{c}{$\ZZ/2$} & $0$ & \multicolumn{1}{c}{$\ZZ/4$} & \multicolumn{1}{c}{$\ZZ/2$} & $0$ \\ \hline
			\end{tabular}
			
			\medskip
			
			\caption{Classification of the N\'eron component group $\pi_0(P_s)$ for each Kodaira type of $X_s$. Here $X$ is a compactification of $P$ as a minimal abelian fibration.}
			\label{table:classification of Neron component group}
		\end{table}
		
		\begin{remark} \label{rmk:classification of component group}
			The classification of the Albanese stabilizers in \Cref{thm:classification of unstable fibers} is not completed at the moment. If we classify $\pi_0(P_s)$ when the minimal compactification $X$ has no Albanese stabilizers, then this will conclude both \Cref{thm:classification of component group}(2) and the remaining claims of \Cref{thm:classification of unstable fibers}.
			
			If $P$ is semistable or unstable with a trivial Albanese stabilizer, then $\pi_0(P_s)$ is isomorphic to the discriminant group of the Cartan matrix of ADE affine Dynkin diagrams. This Cartan matrix is the \emph{intersection matrix} of the minimal compactification $\check f: \check X \to S$ of the dual N\'eron model $\check P$.
			
			Though $P$ and $\check P$ have isomorphic N\'eron component groups, they \emph{may not} have the same Kodaira type. For example, we will see that type $\mathrm I_r^*/2$ and $\mathrm I_{2r}^*/2$ are dual to each other when $r$ is odd.
		\end{remark}
		
		The rest of this subsection proves \Cref{thm:classification of component group} when $P$ and $\check P$ are semistable. We will follow the idea of \cite[Theorem~9.6.1]{neron} and \cite[\S 10.4.2]{liu:ag}, but note that $P$ and $\check P$ are not isomorphic in our case (they are no longer Jacobians of a family of curves). The proof of the unstable case of \Cref{thm:classification of component group} will be postponed to the next section, after we discuss \Cref{prop:semistable reduction theorem untangled ver} and its consequences.

		\subsubsection{Semistable case}
			\Cref{thm:classification of component group} is clear when $P$ is an abelian scheme, so let us assume that $P$ is strictly semistable. Let $f : X \to S$ be its minimal compactification, whose central fiber $X_s = \sum_{i=1}^r E_i$ is strictly semistable of type $\mathrm I_r$ for $r \ge 1$.
			
			\begin{lemma} \label{lem:classification of component group:semistable NS}
				For a type $\mathrm I_r$ fiber $X_s$, we have an isomorphism
				\[ \NS(X_s) = \ZZ^r \times \NS(A_s) .\]
				In particular, $\NS(X_s)$ is torsion-free.
			\end{lemma}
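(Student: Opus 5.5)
Write $X_s = \bigcup_{i=1}^r E_i$, where each $E_i$ is a $\PP^1$-bundle $p_i : E_i \to A_s$ with the two sections $0_i, \infty_i$, and the gluing is $\infty_i = 0_{i+1}$ (indices mod $r$), up to the translation by the shear $a$ at the last node. I will compute $\Pic(X_s)$ by descent along the normalization and then pass to $\NS(X_s) = \Pic(X_s)/\Pic^\circ(X_s)$. The normalization is $\tilde X_s = \bigsqcup_i E_i$. Since $X_s$ has only normal-crossing double loci, with conductor $Z = \bigsqcup_i (E_i\cap E_{i+1}) \cong \bigsqcup^r A_s$ (see \S\ref{sec:conductor}), a line bundle on $X_s$ is the same as a tuple of line bundles $L_i$ on $E_i$ together with gluing isomorphisms $L_i|_{\infty_i} \cong L_{i+1}|_{0_{i+1}}$ (twisted by $t_a$ at the last node). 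This gives an exact sequence
\[ 0 \to \GG_m \to \GG_m^{r} \to \Pic(X_s) \to \textstyle\bigoplus_i \Pic(E_i) \to \bigoplus_i \Pic(A_s). \]
Here the first map comes from $H^0$ of units, using that $X_s$ is connected and each $E_i$, $A_s$ has only constant units. The last map is $(L_i) \mapsto \big(L_i|_{\infty_i}\otimes t_a^*(\cdot)^{-1}\,L_{i+1}|_{0_{i+1}}^{-1}\big)$.

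Next I describe $\Pic(E_i)$. By Lemma~\ref{lem:E normalization structure-semistable}, $E_i = \PP(\mathcal O \oplus M)$ for some $M \in \Pic^\circ(A_s)$ (the slope). Hence $\Pic(E_i) = \ZZ\cdot \mathcal O(0_i) \oplus p_i^*\Pic(A_s)$, with $\mathcal O(0_i)|_{0_i} \cong M^{\pm1}$ and $\mathcal O(0_i)|_{\infty_i}$ trivial. Passing to Néron--Severi, the algebraically trivial part is killed, so the problem reduces to the following. A class in $\NS$ is given by integers $d_i$ (the fiber degrees) and classes $\lambda_i \in \NS(A_s)$, subject to compatibility of $\lambda_i$ with $\lambda_{i+1}$ in $\NS(A_s)$. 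Translation by $a$ acts trivially on $\NS(A_s)$, and $M$ is algebraically trivial, so the compatibility forces all $\lambda_i$ to equal a common $\lambda \in \NS(A_s)$. Meanwhile the $d_i$ are unconstrained. This produces a surjection $\NS(X_s) \to \ZZ^r \times \NS(A_s)$.

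It remains to show that this surjection is injective. Suppose a line bundle $L$ on $X_s$ has all $d_i = 0$ and $\lambda = 0$. Then each $L|_{E_i}$ lies in $p_i^*\Pic^\circ(A_s)$, hence is algebraically trivial. The ambiguity in gluing data lies in the connected torus $\GG_m^r/\GG_m$, and the Picard classes agreeing on all components form the image of the connected group $\Pic^\circ(A_s)^r$. So $L$ lies in the connected component of $\Pic_{X_s}$ containing the image of $\GG_m^{r-1} \times \Pic^\circ(A_s)^r$ restricted to the gluing locus, which is $\Pic^\circ_{X_s}$.

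\textbf{Main obstacle.} The delicate point is this last step: I must check that the subgroup of $\Pic_{X_s}$ cut out by $d_i = 0$ and $\lambda = 0$ is connected. It is cut out by the equations $L_i|_{\infty_i} \cong t^{*}L_{i+1}|_{0_{i+1}}$ inside $\prod \Pic^\circ(E_i)$, which are equations in the divisible connected group $\Pic^\circ(A_s)^r$. I would solve for $L_2,\dots,L_r$ successively in terms of $L_1$; the cyclic closing condition is then a single equation on $L_1$ of the form $(\text{translate-twist})(L_1) \cong L_1 \otimes M^{c}$. I expect the twist to act trivially on $\Pic^\circ(A_s)$, because translations act trivially on $\Pic^\circ$. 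The solution set is then a torsor over the connected group $\Pic^\circ(A_s)$, which gives connectedness. This is consistent with $\Pic^\circ_{X_s}$ being the $\GG_m$-extension of $\check A_s$ in Proposition~\ref{prop:shear}, and torsion-freeness of $\NS(X_s)$ follows since $\NS(A_s)$ is torsion-free.
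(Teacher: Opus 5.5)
Your route is the same as the paper's: descend line bundles along the normalization using $0\to\GG_{m,X_s}\to\nu_*\GG_{m,\tilde X_s}\to\GG_{m,Z}\to 0$, then pass to $\NS$. Your injectivity half is correct. For $d_i=0$ and $\lambda=0$, your closing equation reads $t_a^*N_1\cong N_1$ with $N_1\in\Pic^\circ(A_s)$, which always holds. So that locus is a $\GG_m$-extension of $\check A_s$, hence lies in $\Pic^\circ_{X_s}$, and torsion-freeness follows. (The units part of your sequence is miscounted. It should read $0\to\GG_m\to\GG_m^r\to\GG_m^r\to\Pic(X_s)\to\bigoplus_i\Pic(E_i)$, so the kernel of $\Pic(X_s)\to\bigoplus_i\Pic(E_i)$ is a single $\GG_m$; for $r=1$ your version would make it trivial. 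Only connectedness is used, so this is harmless.)

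The genuine gap is surjectivity. You impose the gluing only in $\NS(A_s)$ and conclude that the $d_i$ and $\lambda$ are unconstrained. But the gluing conditions are isomorphisms in $\Pic(A_s)$, and the cyclic closing condition you isolate in your last paragraph is a real obstruction for nonzero classes. Write $L_i=\mathcal O_{E_i}(d_i0_i)\otimes p_i^*N_i$ with $\mathcal O(0_i)|_{0_i}\cong M^{\epsilon}$, $\epsilon=\pm1$, and take the last node to glue $\infty_r$ over $y$ to $0_1$ over $y+a$. The untwisted nodes give $N_{i+1}\cong N_i\otimes M^{-\epsilon d_{i+1}}$, and the twisted node then requires
\[ t_a^*N_1\otimes N_1^{-1}\cong M^{-\epsilon(d_1+\cdots+d_r)},\qquad\text{i.e.}\qquad \phi_\lambda(a)=-\epsilon\,(d_1+\cdots+d_r)\,\check a \ \text{ in } \check A_s . \]
Here $\lambda=[N_1]$ and $\check a=[M]$ is the slope. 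The left side depends only on $\lambda$ and the right side only on the total degree. So the solution set is a $\Pic^\circ(A_s)$-torsor only when this equation holds, and is empty otherwise.

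For instance, take the $\mathrm I_1$ fiber of \Cref{ex:semistable with torsion shear} with $A_s=B$ an elliptic curve, trivial slope and $0\neq a\in B[2]$. The condition becomes "$\deg\lambda$ even", so the image of $c_1$ is $\ZZ\times 2\ZZ\subsetneq\ZZ\times\NS(B)$. For a non-torsion shear (these exist, see \S\ref{sec:examples}), $\phi_\lambda(a)\neq 0$ for every ample $\lambda$. Then $\NS(X_s)$ has rank strictly less than $r+\operatorname{rk}\NS(A_s)$, and the displayed isomorphism fails even abstractly, so it cannot be proved as stated.

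What your argument actually yields is an injection $c_1:\NS(X_s)\hookrightarrow\ZZ^r\times\NS(A_s)$ whose image is cut out by the displayed condition. That still gives torsion-freeness. The image also contains every class with $\lambda=0$ and $d_1+\cdots+d_r=0$, e.g.\ the class of multidegree $(0,\dots,0,-1,1)$ fixed right after the lemma, which is all the later arguments need. The paper's short proof has the same blind spot: it writes $\Pic_{\tilde X_s}\to\Pic_Z$ as $(L_i)\mapsto(L_i-L_{i+1})$, dropping both the twist $\mathcal O(0_i)|_{0_i}\cong M^{\pm1}$ and the shear translation.
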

			\begin{proof}
				The singular locus $Z = \Sing(X_s)$ is a disjoint union of $r$ isomorphic copies of $A_s$, and the normalization $\nu : \tilde X_s \to X_s$ is a disjoint union of $r$ isomorphic copies of a \emph{Zariski-locally trivial} $\PP^1$-bundle over $A_s$. The exact sequence $0 \to \GG_{m, X_s} \to \nu_* \GG_{m, \tilde X_s} \to \GG_{m, Z} \to 0$ of fppf abelian sheaves on $X_s$ induces
				\[ 0 \to \GG_m \to \Pic_{X_s} \xlongrightarrow{\nu^*} \Pic_{\tilde X_s} = (\ZZ \times \Pic_{A_s})^{\times r} \to \Pic_Z = \Pic_{A_s}^{\times r} ,\]
				where the last homomorphism sends $(L_i) \in \Pic_{A_s}^{\times r}$ to $(L_i - L_{i+1}) \in \Pic_{A_s}^{\times r}$. Its kernel is isomorphic to $\Pic_{A_s}$ and this concludes the lemma.
			\end{proof}
			
			Therefore, for each line bundle $L$ on $X_s$, we can define
			\[ c_1(L) = (\deg L, \ [L]) \quad \in \ \ \NS(X_s) = \ZZ^r \times \NS(A_s) .\]
			The part $\deg L \in \ZZ^r$ is often called the \emph{multidegree} of $L$. It is a collection of its \emph{$E_i$-degree} for $i = 1, \cdots, r$, an integer $(L \cdot R_i)$ for a rational curve $R_i$ in the $\PP^1$-bundle $E_i$. The \emph{total degree} of $L$ is the sum of all $E_i$-degrees. Two line bundles $L$ and $L'$ are \emph{numerically equivalent} if $c_1(L) = c_1(L')$.
			
			\begin{definition}
				The \emph{intersection matrix} of $X_s$ is an $r \times r$ symmetric matrix over $\ZZ$ whose rows are the multidegrees of $\mathcal O_{X_s} (E_i)$. In other words, its $(i,j)$-th entry is the intersection number $q(E_i, E_j) \coloneq (E_i \cdot R_j)$, and it is the Cartan matrix of the type $\tilde A_{r-1}$ affine Dynkin diagram by \Cref{thm:classification of semistable fibers}.
			\end{definition}
			
			It is convenient to fix a special line bundle $M$ on $X_s$. When $r \ge 2$, use \Cref{lem:classification of component group:semistable NS} to fix a line bundle $M$ with
			\[ \deg M = (0, \cdots, 0, -1, 1) \in \ZZ^r \quad\mbox{and}\quad [M] = 0 \in \NS(A_s) .\]
			When $r = 1$, simply take $M = \mathcal O_{X_s}$.
			
			\begin{lemma} \label{lem:classification of component group:semistable 1}
				$M^{\otimes r}$ is numerically equivalent to $\mathcal O_{X_s} (E_1 + 2E_2 + \cdots + (r-1)E_{r-1})$.
			\end{lemma}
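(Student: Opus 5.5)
The plan is to compare both sides in $\NS(X_s) = \ZZ^r \times \NS(A_s)$ (\Cref{lem:classification of component group:semistable NS}). Numerical equivalence means equality of $c_1 = (\deg, [\,\cdot\,])$, so I would check the multidegree and the $\NS(A_s)$-component separately. For $r = 1$ both sides are numerically trivial: $M = \mathcal O_{X_s}$, and the divisor on the right is empty. So assume $r \ge 2$ and put $D = E_1 + 2E_2 + \cdots + (r-1)E_{r-1}$.

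\emph{Multidegree.} By \Cref{thm:classification of semistable fibers}, the intersection matrix is the Cartan matrix of $\tilde A_{r-1}$. Concretely, $(E_j \cdot R_j) = -2$ by \Cref{lem:intersection number}(1), and $(E_{j\pm1}\cdot R_j) = 1$ with indices mod $r$; for $r = 2$ this reads $(E_{2}\cdot R_1) = 2$. I would then compute the $E_i$-degree of $\mathcal O_{X_s}(D)$ term by term:
\begin{itemize}
\item for $1 \le i \le r-2$: $(i-1) - 2i + (i+1) = 0$, where the $E_r$-coefficient $0$ handles $i=1$;
\item for $i = r-1$: $(r-2) - 2(r-1) + 0 = -r$;
\item for $i = r$: contributions from $E_{r-1}$ and $E_1$ give $(r-1) + 1 = r$.
\end{itemize}
For $r=2$, $D = E_1$ has degrees $(-2, 2)$ directly. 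Hence $\deg \mathcal O_{X_s}(D) = (0,\dots,0,-r,r) = r \cdot \deg M = \deg M^{\otimes r}$.

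\emph{$\NS(A_s)$-component.} From the proof of \Cref{lem:classification of component group:semistable NS}, the class $[L]$ is the common restriction of $\nu^* L$ to the singular strata $E_i \cap E_{i+1} \cong A_s$. Since $[M^{\otimes r}] = 0$, I need $[\mathcal O_{X_s}(D)|_{E_r \cap E_1}] = 0$ in $\NS(A_s)$. On this stratum only $E_r$ and $E_1$ contribute, and $D$ has $E_r$-coefficient $0$ and $E_1$-coefficient $1$. So it suffices to show that $\mathcal O_X(E_1)|_{E_r\cap E_1}$ is algebraically trivial.

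This restriction is the normal bundle of the section $E_r \cap E_1$ inside the $\PP^1$-bundle $E_r \to A_s$. That bundle is the $\GG_m$-equivariant compactification of the Chevalley extension $P^\circ_s \to A_s$, hence of the form $\PP(\mathcal O \oplus L)$ with $L \in \Pic^\circ(A_s)$ given by the slope. The normal bundle of the $0$- or $\infty$-section is $L^{\pm1}$, which is algebraically trivial. Thus the $\NS(A_s)$-components agree and the lemma follows.

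The main obstacle is this last step: pinning down precisely how \Cref{lem:classification of component group:semistable NS} extracts $[L]$, so that restricting to a single stratum is legitimate, and identifying the normal bundle of the section with the slope class. One must also make sure that, for $r=2$, the two strata do not cause double counting; there one simply uses $D = E_1$ and the same normal-bundle argument.
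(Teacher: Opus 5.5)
Your proof is correct and follows the paper's route: you compare the multidegrees using the $\tilde A_{r-1}$ Cartan matrix, and you check that the $\NS(A_s)$-components vanish. The paper simply asserts that each $\mathcal O_{X_s}(E_i)$ has trivial $\NS(A_s)$-class, whereas you justify it by identifying the relevant restriction with the normal bundle of a $0$- or $\infty$-section of $\PP(\mathcal O \oplus L)$ with $L \in \Pic^\circ(A_s)$; this is a useful added detail.
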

			\begin{proof}
				Every $\mathcal O_{X_s} (E_i)$ has a multidegree $(0, \cdots, 0, 1, -2, 1, 0, \cdots 0) \in \ZZ^r$ (where $-2$ is at the $i$-th entry) and has trivial $\NS(A_s)$.
			\end{proof}
			
			\begin{lemma} \label{lem:classification of component group:semistable 2}
				Let $L$ be a line bundle on $X_s$ with a total degree $0$ and $[L] = 0 \in \NS(A_s)$. Then there exists a unique integer $0 \le i < r$ such that $L \otimes M^{\otimes -i}$ is numerically equivalent to a linear combination of $\mathcal O_{X_s} (E_i)$'s.
			\end{lemma}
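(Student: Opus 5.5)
The statement is purely lattice-theoretic once we pass to numerical classes. By \Cref{lem:classification of component group:semistable NS}, numerical equivalence of line bundles on $X_s$ is detected by $c_1(L) = (\deg L, [L]) \in \ZZ^r \times \NS(A_s)$. All the bundles involved have trivial $\NS(A_s)$-part: $L$ by assumption, $M$ by construction, and each $\mathcal O_{X_s}(E_j)$ by the proof of \Cref{lem:classification of component group:semistable 1}. So the claim reduces to a statement about multidegrees in the sublattice $\ZZ^r_0 = \{ d \in \ZZ^r : \sum d_j = 0 \}$. Let $\Lambda \subset \ZZ^r_0$ be the image of the intersection matrix, i.e.\ the span of the rows $\deg \mathcal O_{X_s}(E_j) = e_{j-1} - 2e_j + e_{j+1}$ (indices mod $r$). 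This is the Cartan matrix of $\tilde A_{r-1}$. We must show that $\deg M = e_r - e_{r-1}$ generates $\ZZ^r_0/\Lambda$, and that this quotient is cyclic of order exactly $r$. Then existence and uniqueness of $0 \le i < r$ with $\deg L - i \deg M \in \Lambda$ follow at once.

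The case $r = 1$ is trivial: $\ZZ^1_0 = 0$, $M = \mathcal O_{X_s}$, and $i = 0$ is forced. For $r \ge 2$, first compute the index $[\ZZ^r_0 : \Lambda]$. The matrix has rank $r-1$ with kernel spanned by $(1,\dots,1)$, and any $(r-1)\times(r-1)$ principal minor is the $A_{r-1}$ Cartan determinant, equal to $r$. Equivalently, change basis of $\ZZ^r_0$ to $f_j = e_j - e_{j+1}$ for $j = 1, \dots, r-1$. The rows $\deg\mathcal O(E_j) = f_j - f_{j-1}$ for $j = 2, \dots, r-1$ and $\deg\mathcal O(E_1) = f_1 - f_r$, where $f_r = -(f_1 + \cdots + f_{r-1})$, then give the relations $f_1 \equiv f_2 \equiv \cdots \equiv f_{r-1} \equiv f_r$ in the quotient, together with $f_1 + \cdots + f_r = 0$. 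Hence the quotient is generated by the common class $\bar f$ subject to $r \bar f = 0$, so it is $\ZZ/r$ generated by $f_{r-1} = e_{r-1} - e_r = -\deg M$.

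To confirm the order is exactly $r$ and not smaller, take the homomorphism $\ZZ^r_0 \to \ZZ/r$, $d \mapsto \sum_j j\, d_j \bmod r$. It kills every row $e_{j-1} - 2e_j + e_{j+1}$, since $(j-1) - 2j + (j+1) = 0$, with wraparound terms congruent mod $r$. It sends $\deg M$ to $r - (r-1) = 1$. So it is surjective, and the quotient has order exactly $r$. This is consistent with \Cref{lem:classification of component group:semistable 1}, where $r \deg M \in \Lambda$.

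The main point requiring care is being precise about what ``numerically equivalent to a linear combination of $\mathcal O_{X_s}(E_i)$'s'' means. Integer combinations of the $\mathcal O_{X_s}(E_j)$ realize exactly the multidegrees in $\Lambda$, with zero $\NS(A_s)$-part. Numerical equivalence only sees $c_1$, and the $\NS(A_s)$-components of all the bundles involved vanish, so no torsion subtlety arises. Here the torsion-freeness of $\NS(X_s)$ from \Cref{lem:classification of component group:semistable NS} is what guarantees that $c_1$ is a complete invariant for numerical equivalence. With that, the unique $i$ is $i \equiv -\sum_j j \deg_j L \pmod r$, which matches the sign convention $\deg M \mapsto 1$ computed above.
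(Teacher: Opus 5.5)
Your proposal is correct and is essentially the paper's proof. Both identify the relevant group as $\ker(\ZZ^r \to \ZZ)$ modulo the image of the $\tilde A_{r-1}$ intersection matrix, and show it is $\ZZ/r$ generated by $\deg M = E_r^\vee - E_{r-1}^\vee$; the paper quotes this as the discriminant group, while you verify it explicitly via the basis $f_j$ and the invariant $d \mapsto \sum_j j\, d_j \bmod r$.

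There is one harmless sign slip in your last line. That invariant kills every row and sends $\deg M$ to $1$, so the unique index is $i \equiv \sum_j j \deg_j L \pmod r$, without the minus sign. For example, $L = M$ gives $i = 1$. This does not affect the existence or uniqueness argument.
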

			\begin{proof}
				The intersection matrix of $X_s$ defines a semi-negative symmetric bilinear form on $\ZZ^r = \ZZ \{ E_i : i = 1, \cdots, r \}$, or equivalently a linear map of rank $r-1$:
				\[ \deg : \ZZ^r = \ZZ \{ E_i \} \to \ZZ^r = \ZZ \{ E_i^\vee \} ,\qquad E_i \mapsto q(E_i, -) .\]
				Since $\mathcal O_{X_s}(E_i)$'s have a total degree $0$, the image of this homomorphism is contained in the kernel of the summation map $\ZZ^r \to \ZZ$. The cohomology group of the complex $\ZZ^r \to \ZZ^r \to \ZZ$ is the \emph{discriminant group} of the $\tilde A_r$ Cartan matrix
				\[ D(\tilde A_r) \cong \ZZ/r = \langle -E_{r-1}^\vee + E_r^\vee \rangle .\]
				This proves the claim.
			\end{proof}
			
			\begin{lemma} \label{lem:classification of component group:semistable 3}
				Let $L$ be a line bundle on $X$. Then $L_s = L_{|X_s}$ has a total degree $0$ and $[L_s] = 0 \in \NS(A_s)$ if and only if $L$ defines a section of $\Pic^\bullet_f$.
			\end{lemma}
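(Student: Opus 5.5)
The plan is to characterize sections of $\Pic^\bullet_f$ by their behavior on the generic fiber and on the central fiber. Recall that $\Pic^\bullet_f$ is the largest open subgroup space of $\Pic_f$ with connected total space. Its generic fiber is $\Pic^\circ_{X_\eta}$, the dual abelian variety, and its central fiber is some union of components of $\Pic_{X_s}$. Since $f$ is non-multiple and semistable of type $\mathrm I_r$, we may assume that $f$ has a section (\Cref{lem:non-multiple fiber has reduced component}). Hence line bundles on $X$ correspond to $S$-points of $\Pic_f$, and $\Pic(X)\to\Pic_f(S)$ is surjective after shrinking $S$ étale locally.

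For the forward direction, suppose $L$ gives a section of $\Pic^\bullet_f$. First, $L_\eta$ lies in $\Pic^\circ_{X_\eta}$, so $L_\eta$ is algebraically equivalent to zero on an abelian torsor. Second, the total degree and the class in $\NS(A_s)$ are locally constant invariants on $\Pic_f$. Concretely, the total degree of $L_s$ is $(L\cdot R)$ for the specialization $R=\sum_i R_i$ of a curve class from the generic fiber. To make this precise, I would take a flat family of curves: choose a curve $\Gamma_\eta \subset X_\eta$ of the form (fiber of some fibration) whose specialization is the full cycle $R_1+\dots+R_r$ of rational curves. Then $\deg L_s|_{\sum R_i}=\deg L_\eta|_{\Gamma_\eta}=0$.

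For the $\NS(A_s)$ part, I would use the map $\Pic^\circ_{X_s}\to\check A_s$ of \Cref{prop:shear}: the class $[L_s]\in\NS(A_s)$ is the Néron–Severi class of the pullback to a section $0_i\cong A_s$. This class varies in a connected family and equals the specialization of $\NS$ of a translate-class. Since $L_\eta$ is numerically trivial, its restrictions to the specializing family of subtori are numerically trivial, so $[L_s]=0$.

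For the converse, suppose $L_s$ has total degree $0$ and $[L_s]=0$. Then by \Cref{lem:classification of component group:semistable 2}, $L_s\otimes M^{-i}$ is numerically equivalent to a combination $\sum c_jE_j|_{X_s}$. Replacing $L$ by $L(-\sum c_jE_j)$, which differs by sections lying in the closure of the identity and hence in $\Pic^\bullet_f$, we reduce to showing two things: that $M$ (suitably lifted) lies in $\Pic^\bullet_f$, and that numerically trivial bundles on $X_s$ lie in $\Pic^\circ_{X_s}$. The second follows from \Cref{lem:classification of component group:semistable NS}, since $\NS(X_s)$ is torsion-free and thus numerically trivial means algebraically trivial, i.e., the bundle lies in $\Pic^\circ_{X_s}$. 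For the first, it suffices to see that $M^{\otimes r}$ lies in the same connected component as $\mathcal O(\sum jE_j)$ (\Cref{lem:classification of component group:semistable 1}). Moreover, the section of $\Pic_f$ defined by a lift of $M$ is generically numerically trivial: its restriction to $X_\eta$ has $r$-th power algebraically trivial, and hence is itself in $\Pic^\tau$. A section of $\Pic_f$ whose generic fiber lies in $\Pic^\circ_{X_\eta}$ lies in the connected open subspace through the identity, because $\Pic_f$ is smooth along these points and the closure of the generic component contains it. That gives membership in $\Pic^\bullet_f$.

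The main obstacle is the last point, showing generic triviality of $M_\eta$ exactly rather than only up to torsion, i.e., ruling out that $M_\eta$ is a nontrivial torsion class outside $\Pic^\circ_{X_\eta}$. I would handle this by noting that $\NS$ of an abelian variety is torsion-free, so $\Pic^\tau_{X_\eta}=\Pic^\circ_{X_\eta}$, which closes the gap.
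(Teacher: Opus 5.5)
Your argument for the direction ``$L$ defines a section of $\Pic^\bullet_f$ $\Rightarrow$ total degree $0$ and $[L_s]=0$'' has a genuine gap: it relies on test cycles that need not exist.

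\begin{itemize}
\item You want a curve $\Gamma_\eta\subset X_\eta$, ``a fiber of some fibration'', specializing to $R_1+\cdots+R_r$, and a ``specializing family of subtori'' degenerating to $0_i\cong A_s$.
\item The generic fiber may be a simple abelian torsor. An example is the Jacobian of a general genus-$2$ curve acquiring one non-separating node, a type $\mathrm I_1$ degeneration with $n=2$. Such an $X_\eta$ has no fibrations by curves, no elliptic curves and no proper subtori.
\item A flat family of connected curves whose special fiber is a chain of $\PP^1$'s would have arithmetic genus $0$ on $X_\eta$. That is impossible in an abelian torsor.
\item For a non-torsion shear the $R_i$ do not even close up into a cycle.
\item Calling the total degree and $[L_s]$ ``locally constant invariants on $\Pic_f$'' does not help. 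Neither is defined on the generic fiber, and the individual $E_i$-degrees are visibly not specialization-invariant. For $r\ge 2$, $\mathcal O_X(E_1)$ is trivial on $X_\eta$, but $\mathcal O_{X_s}(E_1)$ has $E_1$-degree $-2$.
\end{itemize}
Proving that exactly the total degree and the $\NS(A_s)$-class vanish in the limit is the whole content of this direction, so it cannot be assumed.

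The paper avoids specializing cycles altogether. By \Cref{prop:dual Neron model is Picard}, $\Pic^\bullet_f/E\cong\check P$ is a N\'eron model, hence of finite type, so $\pi_0(\check P_s)$ is finite. Therefore some power $L^{\otimes N}$ maps into $\check P^\circ$, the separated quotient of $\Pic^\circ_f$. Consequently $L_s^{\otimes N}$ agrees, modulo $\Pic^\circ_{X_s}$, with some $\mathcal O_{X_s}(\sum_i b_iE_i)$. That bundle has total degree $0$ and trivial $\NS(A_s)$-class, and torsion-freeness of $\NS(X_s)$ finishes the argument.

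If you want to keep an intersection-theoretic proof, use honest test classes instead. Take $H$ ample on $X$, and let $D$ be the total degree of $L_s$, $\beta=[L_s]$, $\gamma=[H|_{0_i}]$ and $h=\sum_i(H\cdot R_i)>0$. Since $[0_i]^2\equiv 0$ on $E_i$, one computes $0=(L\cdot H^{n-1}\cdot X_s)=D\,\gamma^{n-1}+(n-1)h\,\beta\gamma^{n-2}$. Similarly $0=(L^2\cdot H^{n-2}\cdot X_s)=2D\,\beta\gamma^{n-2}+(n-2)h\,\beta^2\gamma^{n-3}$; for $n=2$ read the second identity as $2D\beta=0$. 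The Hodge index theorem on $A_s$ then forces $\beta=0$ and $D=0$.

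Your converse direction is essentially the paper's, with two corrections.
\begin{itemize}
\item The lift of $M$ to $X$ is unjustified, since extending line bundles off $X_s$ is obstructed in $H^2(X_s,\mathcal O_{X_s})$. It is also unnecessary. \Cref{lem:classification of component group:semistable 1} and \Cref{lem:classification of component group:semistable 2} applied to $L$ itself give $L_s^{\otimes r}\equiv\mathcal O_{X_s}(D)$ directly. Hence $L^{\otimes r}(-D)$ is a section of $\Pic^\circ_f$, and $L_\eta\in\Pic^\tau_{X_\eta}=\Pic^\circ_{X_\eta}$.
\item The final membership in $\Pic^\bullet_f$ should come from maximality, not from a closure argument. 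The translates of $\Pic^\bullet_f$ by multiples of the section of $L$ are connected and meet $\Pic^\bullet_f$ over $\eta$. Their union is therefore a connected open subgroup, which must equal $\Pic^\bullet_f$.
\end{itemize}
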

			\begin{proof}
				($\Longleftarrow$) Recall from \Cref{prop:dual Neron model is Picard} the isomorphism $\check P = \Pic^\bullet_f / E$, where $E \subset \Pic^\bullet_f$ is the closure of the identity section. Since the N\'eron model $\check P$ is quasi-projective, a sufficient power of $L$ is numerically equivalent to a line bundle contained in $E$. A line bundle contained in $E$ is trivial over $S_0$, so it is necessarily a linear combination of $\mathcal O_X(E_i)$'s. Each $\mathcal O_{X_s}(E_i)$ has a trivial class in $\NS(A_s)$ and total degree $0$. Hence, so is the power of $L$. Now use the fact that $\NS(X_s) = \ZZ^r \times \NS(A_s)$ is torsion-free.
				
				($\Longrightarrow$) By \Cref{lem:classification of component group:semistable 1}--\ref{lem:classification of component group:semistable 2}, $L_s^{\otimes r}$ is numerically equivalent to a linear combination of $\mathcal O_{X_s} (E_i)$. The latter is a section of $E \subset \Pic^\bullet_f$, so $L^{\otimes r}$ defines a section of $\Pic^\bullet_f$. Since $\pi_0 (\Pic_{X_\eta}) = \NS(X_{\eta})$ is torsion-free for the generic point $\eta$ (because $X_\eta$ is an abelian torsor), $L$ defines a section of $\Pic^\bullet_f$.
			\end{proof}
			
			\begin{proof}[Proof of \Cref{thm:classification of component group}: semistable case]
				Assume that $X_s = \sum_{i=1}^r E_i$ is strictly semistable of type $\mathrm I_r$ for $r \ge 1$. \Cref{lem:classification of component group:semistable 2}--\ref{lem:classification of component group:semistable 3} and \Cref{prop:dual Neron model is Picard} show the isomorphism $\pi_0 (\check P_s) \cong \ZZ/r$ (generated by $M$). To compute $\pi_0(P_s)$, let us consider the dual N\'eron model $\check P \to S$ and its compactification $\check f : \check X \to S$. This is strictly semistable and hence necessarily of type $\mathrm I_{r'}$ for some $r' \ge 1$. Applying \Cref{lem:classification of component group:semistable 2}--\ref{lem:classification of component group:semistable 3} to $\check P$ and $\check f$ instead, we conclude $\pi_0(P_s) \cong \ZZ/r'$. But recall from \Cref{prop:P-action existence} that $X'_s = X_s \setminus \Sing(X_s)$ is a $P_s$-torsor. This shows $P_s$ has precisely $r$ connected components and shows $r = r'$.
			\end{proof}
			
			\begin{corollary} \label{cor:pi_0-action on X_s}
				Assume $f$ is strictly semistable of type $\mathrm I_r$. Then $\pi_0(P_s) \cong \ZZ/r$ acts on $X_s = \sum_{i=1}^r E_i$ by cycling the components $E_i$.
			\end{corollary}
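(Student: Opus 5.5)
The plan is to use the identification $X'_s = X_s \setminus \Sing(X_s) \cong P_s$ as a $P_s$-torsor. By \Cref{prop:P-action existence}(2) this holds because $X_s$ is reduced with a regular point, so \Cref{lem:non-multiple fiber has reduced component} applies. The first step is to read off the components of $X'_s$. From \Cref{thm:classification of semistable fibers}, $X'_s = \bigsqcup_{i=1}^r (E_i \setminus (0_i \cup \infty_i))$, and each piece is a $\GG_m$-bundle over $A_s$. These pieces are exactly the connected components of $X'_s$, and each one is a single $P^\circ_s$-orbit: by \Cref{prop:P-action property}(2) the $P^\circ_s$-action on $X'_s$ is free, so each orbit is $n$-dimensional, open, and closed in $X'_s$. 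Consequently $\pi_0(P_s) = P_s/P^\circ_s$ acts simply transitively on the set $\{E_1, \dots, E_r\}$ of irreducible components. Since $\pi_0(P_s) \cong \ZZ/r$ by the semistable case of \Cref{thm:classification of component group}(1), the remaining task is to show that this simply transitive action is by rotations of the cycle.

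The second step is to observe that every element $g \in P_s$ acts on $X_s$ by an automorphism. Such an automorphism preserves $\Sing(X_s) = \bigsqcup_i (E_i \cap E_{i+1})$ and the incidence relation among components. It therefore induces an automorphism of the dual graph, which by \Cref{thm:classification of semistable fibers} is an $r$-cycle (for $r = 1, 2$ the statement is trivial or nearly so). So $\pi_0(P_s)$ maps into the dihedral group $D_r$, the automorphism group of the $r$-cycle, and the image acts simply transitively on the vertices.

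The third step rules out reflections. The cleanest route is to reuse the argument of \Cref{lem:stabilizer of Ir}. If some $g \notin P^\circ_s$ acted as a reflection, it would either fix a vertex, contradicting freeness of the $\pi_0$-action on components, or fix an edge, i.e. preserve some $E_i \cap E_{i+1}$ while swapping $E_i$ and $E_{i+1}$. The latter is exactly what \Cref{lem:stabilizer of Ir} excludes by passing to the quotient $X/f$, which yields a chain contradicting the key formula. Alternatively, a subgroup of $D_r$ of order $r$ acting simply transitively on vertices is either the rotation subgroup, or, when $r$ is even, a dihedral subgroup containing reflections. That second possibility is again excluded because the group is cyclic: for $r > 2$ a cyclic subgroup containing reflections has order at most $2$. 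The image is therefore the rotation subgroup, so a generator of $\ZZ/r$ cycles the components $E_i \mapsto E_{i+1}$, up to reindexing.

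I expect the main obstacle to be the reflection exclusion for small even $r$, in particular $r = 2$ where rotations and reflections coincide on the vertex set. There the claim reduces to the generator swapping $E_1$ and $E_2$, which already follows from the simple transitivity of the first step.
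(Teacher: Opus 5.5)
Your proposal is correct. Your route (a) is essentially the paper's proof: simple transitivity of $\pi_0(P_s)$ on the components via the torsor $X'_s \cong P_s$, together with \Cref{lem:stabilizer of Ir} to rule out a generator preserving a component of $\Sing(X_s)$.

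Your group-theoretic alternative (b) is also valid. It shows that for $r \ge 3$ cyclicity of $\pi_0(P_s)$ alone excludes reflections. The lemma's extra input only matters for $r = 2$, where it also shows that the generator swaps the two components of $E_1 \cap E_2$; the statement as written does not require this.
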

			\begin{proof}
				Fix any $f \in P_s$ that generates the component group $\pi_0(P_s) \cong \ZZ/r$. Then $f$ permutes $E_i$ without fixing any of them because $X_s'$ is a $P_s$-torsor. Moreover, by \Cref{lem:stabilizer of Ir}, $f$ permutes the connected components of $\Sing(X_s) = \bigsqcup_{i=1}^r (E_i \cap E_{i+1})$ without fixing any of them. The only possibility is $f(E_i) = E_{i+1}$ for all $i$.
			\end{proof}

		\subsubsection{Unstable case}
			Similar idea applies to the unstable case, but (1) it cannot compare the Albanese stabilizers of $P$ and $\check P$, and (2) it does not distinguish types $\mathrm I_r^*$ for odd and even $r$. We will complete the proof of \Cref{thm:classification of component group} (unstable case) in \S \ref{sec:Neron component groups and Albanese stabilizers}, after discussing the semistable reduction theorem in \Cref{prop:semistable reduction theorem untangled ver}.

\section{Base change behaviors} \label{sec:general base change}
	Let $f : X \to S$ be an arbitrary minimal $\delta$-regular abelian fibration, possibly multiple. Consider a finite Galois morphism $p : T \to S$ of degree $d$ that is totally ramified at $t \in T$ over $s \in S$. The morphism $T_0 = T \setminus \{ t \} \to S_0$ is finite \'etale, and the base change $Y_0 = X_0 \times_{S_0} T_0 \to T_0$ is an abelian torsor. Taking its relative minimal model $g : Y \to T$, we obtain a commutative diagram
	\[\begin{tikzcd}
		Y \arrow[r, dashed] \arrow[d, "g"] & X \arrow[d, "f"] \\
		T \arrow[r, "p"] & S
	\end{tikzcd}.\]
	We call such $Y$ a \emph{minimal base change} of $X$: it is the minimal model of the base change $f_T : X_T \to T$. The goal of this section is to study the relation between $X$ and $Y$. The t-automorphism schemes will play a crucial role as usual. We denote by $P \to S$ and $Q \to T$ the t-automorphism schemes of $f$ and $g$, respectively.
	
	As a result, we prove the remaining claims in \Cref{thm:classification of component group} and conclude the classification of Albanese stabilizers in \Cref{thm:classification of unstable fibers}.

	\subsection{The inertia actions} \label{sec:inertia action}
		Since $Y_0 \to X_0$ and $Q_0 \to P_0$ are Galois coverings, both $Y_0$ and $Q_0$ admit free $\mu_d$-actions. Such actions extend to $Y$ and $Q$ by the minimal model and N\'eron mapping property, respectively. We will first study these $\mu_d$-actions.

		\subsubsection{Inertia action on the t-automorphism group}
			This subsection follows the results of Edixhoven \cite{edi92}.
			
			\begin{proposition} \label{prop:psi-action}
				\begin{enumerate}
					\item There exists a natural automorphism $\psi$ of order $d$ on $Q$ making $Q \to T$ equivariant.
					\item The induced $\langle \psi \rangle$-action on $Q_t$ is by group automorphisms:
					\[ \psi(a \cdot b) = \psi(a) \cdot \psi(b) \qquad\mbox{for}\quad a, b \in Q_t .\]
				\end{enumerate}
			\end{proposition}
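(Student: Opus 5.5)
The plan is to produce $\psi$ from the Galois action on the base via the N\'eron mapping property. Since $p : T \to S$ is a finite Galois morphism of degree $d$, totally ramified at $t$, its Galois group is $\mu_d$. (Over a strictly henselian base in characteristic $0$, which we may reach by shrinking $S$ \'etale locally, this cover is $z \mapsto z^d$.) Fix a generator $\sigma$ acting on $T$ over $S$; it fixes $t$ and acts freely on $T_0$. Since $Q_0 = P_0 \times_{S_0} T_0$, the automorphism $\id_{P_0} \times \sigma$ gives an automorphism $\psi_0$ of $Q_0$. It covers $\sigma$, satisfies $\psi_0^d = \id$, and is a group homomorphism relative to $\sigma$, i.e.\ a $\sigma$-semilinear automorphism. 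The task is to extend $\psi_0$ over $t$.

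For (1), I would view $\psi_0$ as an isomorphism of $T_0$-group schemes $Q_0 \to \sigma^* Q_0$, where $\sigma^* Q_0 = Q_0 \times_{T_0,\sigma} T_0$. Formation of N\'eron models commutes with the automorphism $\sigma$ of the base: $\sigma^* Q$ is smooth and separated, it satisfies the N\'eron mapping property because $\sigma$ is an isomorphism, and so it is the N\'eron model of $\sigma^* Q_0$. The N\'eron mapping property applied to $Q \to \sigma^*Q$ (a smooth $T$-scheme mapping to a N\'eron model) therefore extends $\psi_0$ uniquely to a morphism $Q \to \sigma^* Q$. Composing with the projection $\sigma^* Q \to Q$ gives $\psi : Q \to Q$ covering $\sigma$, so $Q \to T$ is equivariant. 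Uniqueness of extensions then gives the relations. Indeed, $\psi^d$ and $\id_Q$ agree on the schematically dense open $Q_0$ and $Q$ is separated, so $\psi^d = \id$. The same argument applied to the inverse shows that $\psi$ is an automorphism. Its order is exactly $d$ because it is already so on $Q_0$.

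For (2), the multiplication $m : Q \times_T Q \to Q$ gives two morphisms $Q \times_T Q \to Q$, namely $\psi \circ m$ and $m \circ (\psi \times \psi)$. Both cover $\sigma$ and agree over $T_0$. Since $Q \times_T Q$ is flat over $T$, the open $(Q\times_T Q)_0$ is schematically dense, and $Q$ is separated, so the two morphisms agree everywhere. Restricting to the fiber over the fixed point $t = \sigma(t)$ shows that $\psi_t : Q_t \to Q_t$ is a $k$-automorphism respecting the group law. It is $k$-linear because $\sigma$ acts trivially on the residue field $k$, which is algebraically closed and hence equal to the residue field at $t$ under total ramification.

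The only step needing care is the compatibility of N\'eron models with the base automorphism, $\sigma^*Q \cong$ N\'eron model of $\sigma^*Q_0$. I expect this to be routine and would cite the standard base-change property under isomorphisms, or \'etale base change, from \cite{neron} as in \cite{edi92}.
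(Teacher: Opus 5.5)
Your proposal is correct and follows the paper's proof essentially verbatim: encode the Galois action as a $T_0$-isomorphism $Q_0 \to \sigma^* Q_0$, extend it by the N\'eron mapping property using that $\sigma^* Q$ is again a N\'eron model, and deduce both the relation $\psi^d = \id$ and the compatibility with the group law from the separatedness of $Q$. Your extra remarks are harmless additions that the paper leaves implicit: the explicit description $\psi_0 = \id_{P_0} \times \sigma$, the schematic density of the generic part via flatness, and the $k$-linearity of $\psi_t$.
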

			\begin{proof}
				(1) Write $\zeta : T \to T$ for the Galois $\mu_d$-action on $T$. Then the equivariant $\mu_d = \langle \psi_0 \rangle$-action on $Q_0$ is modeled by an isomorphism $\Psi_0 : Q_0 \to \zeta^* Q_0 \coloneq Q_0 \times_{\zeta} T_0$ over $T_0$ with a cocycle condition
				\[ (\zeta^*)^{d-1} \Psi_0 \,\circ\, \cdots \,\circ\, \zeta^* \Psi_0 \,\circ\, \Psi_0 = \id_{Q_0} .\]
				Both $Q$ and $\zeta^* Q$ are N\'eron models over $T$, so $\Psi_0$ uniquely extends to an isomorphism $\Psi : Q \to \zeta^* Q$ over $T$ with the same cocycle condition. That is, the $\langle \psi \rangle$-action uniquely extends to $Q$ equivariantly over $T$.
				
				\medskip
				
				(2) The group law $Q_0 \times_{T_0} Q_0 \to Q_0$ is $\langle \psi_0 \rangle$-equivariant over $T_0$, or the diagram
				\[\begin{tikzcd}
					Q \times_T Q \arrow[r] \arrow[d, "\psi \times \psi"] & Q \arrow[d, "\psi"] \arrow[r] & T \arrow[d, "\zeta"] \\
					Q \times_T Q \arrow[r] & Q \arrow[r] & T
				\end{tikzcd}\]
				commutes over $T_0$. The diagram automatically commutes over $T$ because $Q$ is separated (or $Q$ is N\'eron). Restricting it over $t \in T$ yields the claim.
			\end{proof}
			
			To minimize confusions, we write the $\mu_d$-action on $Q$ by the $\langle \psi \rangle$-action. We note that the induced $\langle \psi \rangle$-action on $Q_t$ may \emph{not} be faithful, i.e., the order of the automorphism $\psi : Q_t \to Q_t$ can be strictly smaller than $d$.
			
			\begin{definition}
				The $\langle \psi \rangle$-action on the central fiber $Q_t$ is called the \emph{inertia action}.
			\end{definition}
			
			The inertia action on $Q_t$ is related to the original $P_s$ as follows. Use the N\'eron mapping property to extend the isomorphism $P_{T_0} = Q_0$ over $T_0$ to a homomorphism over $T$:
			\[ p^* : P_T = P \times_S T \to Q \]
			Its restriction over $t \in T$ yields a \emph{pullback} homomorphism of central fibers
			\begin{equation} \label{eq:pullback of t-automorphism}
				p^* : P_s \to Q_t .
			\end{equation}
			
			\begin{theorem} [{\cite[Theorem~5.3]{edi92}}] \label{thm:Edixhoven}
				Let $p^* : P_s \to Q_t$ be the pullback homomorphism \eqref{eq:pullback of t-automorphism}. Then
				\begin{enumerate}
					\item $\im p^*$ is the inertia-invariant subgroup $Q_t^\psi \subset Q_t$.
					\item $\ker p^*$ is unipotent.
				\end{enumerate}
			\end{theorem}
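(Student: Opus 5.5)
The plan is to realize $P$ as the $\mu_d$-fixed locus of the Weil restriction of $Q$ along $p$. Then $p^*$ on central fibers becomes the evaluation map from a jet-type group onto $Q_t$, restricted to invariants, and both claims reduce to group cohomology of $\mu_d$ on a unipotent group in characteristic $0$.

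First I form the Weil restriction $R = \operatorname{Res}_{T/S} Q$. This is representable, smooth, separated and of finite type over $S$, because $p$ is finite flat and $Q$ is quasi-projective and smooth. The Galois action $\zeta$ on $T$ and the automorphism $\Psi : Q \to \zeta^* Q$ of \Cref{prop:psi-action} together induce an $S$-action of $\mu_d$ on $R$ by group automorphisms. I take the fixed subscheme $R^{\mu_d} \subset R$. Since $d$ is invertible, $R^{\mu_d}$ is again a smooth closed subgroup scheme, and its formation commutes with base change on $S$; in particular $(R^{\mu_d})_s = (R_s)^{\mu_d}$. Over $S_0$ the map $T_0 \to S_0$ is \'etale Galois, so Galois descent gives $(R^{\mu_d})_0 = P_0$.

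The key step, and in my view the main obstacle, is to show that $R^{\mu_d}$ satisfies the N\'eron mapping property, so that $R^{\mu_d} = P$. Let $S' \to S$ be smooth and let $S'_0 \to P_0$ be a morphism. This is the same as a $\mu_d$-equivariant $T'_0$-morphism $T'_0 \to Q_0$, where $T' = T \times_S S'$ is smooth over $T$. The N\'eron property of $Q$ extends it uniquely to $T' \to Q$. The extension is equivariant because the two morphisms $\Psi \circ x$ and $\zeta^*x \circ \zeta$ agree on the dense open $T'_0$ and $Q$ is separated. This gives the required extension $S' \to R^{\mu_d}$, and its uniqueness follows from separatedness. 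Under this identification, the homomorphism $p^*$ of \eqref{eq:pullback of t-automorphism} is the restriction to invariants of the evaluation map $\operatorname{ev} : R_s \to Q_t$. Here $R_s = \operatorname{Res}_{T_s/k} Q_{T_s}$, where $T_s = \Spec k[u]/u^d$ is the fat fiber and evaluation means restriction to $t = \Spec k$. I must check this compatibility with the N\'eron extension of $P_{T_0} = Q_0$; the two maps agree generically, so this also follows from separatedness.

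Now I study $\operatorname{ev}$. Because $Q$ is smooth, $\operatorname{ev}$ is surjective and has a section over $k$. Its kernel $V$ is the group of infinitesimal jets over $k[u]/u^d$ that reduce to the identity. It is filtered by the powers of $u$, and the graded pieces are copies of $\operatorname{Lie}(Q_t)$, so $V$ is a successive extension of vector groups and hence unipotent. The map $\operatorname{ev}$ is $\mu_d$-equivariant, with $\mu_d$ acting on $Q_t$ through $\psi$. This gives the exact sequence $0 \to V^{\mu_d} \to R_s^{\mu_d} \to Q_t^{\psi} \to H^1(\mu_d, V)$. In characteristic $0$, $V$ admits a $\mu_d$-stable filtration by uniquely divisible vector groups, so $H^1(\mu_d, V) = 0$ by d\'evissage. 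Hence $\operatorname{im} p^* = Q_t^\psi$ and $\ker p^* = V^{\mu_d}$. The latter is a closed subgroup of a unipotent group, hence unipotent, which gives both claims. To make the vanishing of $H^1$ valid at the level of points and group schemes (rather than merely abstract groups), I would run the d\'evissage on $k$-points and on the smooth fixed loci, using that $k$ is algebraically closed.
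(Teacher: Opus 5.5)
Your argument is correct and is essentially Edixhoven's own proof of the cited Theorem~5.3, which the paper quotes without proof: you identify $P$ with $(\operatorname{Res}_{T/S}Q)^{\mu_d}$ via the N\'eron mapping property, and then use the $\mu_d$-stable filtration of $\ker(\operatorname{ev})$ by vector groups, whose group cohomology vanishes in characteristic $0$. One small correction: all you need, and all that holds in general, is surjectivity of $\operatorname{ev}$ on $k$-points by infinitesimal lifting, because a scheme-theoretic section $Q_t \to R_s$ would trivialize the deformation $Q \times_T T_s$ of $Q_t$, which need not be trivial; your argument never actually uses such a section.
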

			
			In particular, the abelian variety part of $Q_t$ is always \emph{larger} than that of $P_s$. This significantly restricts the behavior of $p^*$ for $\delta$-regular N\'eron models.
			
			\begin{corollary} \label{cor:Edixhoven}
				Suppose that $P \to S$ is $\delta$-regular.
				\begin{enumerate}
					\item If $P_s$ is an abelian variety, then $p^*$ is an isomorphism.
					\item If $P_s$ has a linear part $\GG_m$, then so does $Q_t$ and $p^*$ is an open immersion. Moreover, there exists a short exact sequence
					\[ 0 \to \pi_0 (P_s) \xlongrightarrow{p^*} \pi_0(Q_t) \to \ZZ/d \to 0 .\]
					\item If $P_s$ has a linear part $\GG_a$, then $Q_t$ has a torsion shear.
				\end{enumerate}
			\end{corollary}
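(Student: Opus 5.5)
The plan is to extract all three statements from \Cref{thm:Edixhoven}. First, recall that the pullback $p^* : P_s \to Q_t$ has unipotent kernel and image $Q_t^\psi$. We also use the standard fact that the neutral component is compatible: $p^*(P_s^\circ) \subset Q_t^\circ$.

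For (1), suppose $P_s$ is an abelian variety. Then $P$ is an abelian scheme, so its base change $P_T$ is already an abelian scheme over $T$. By the N\'eron mapping property $P_T$ is itself the N\'eron model of $Q_0$. Hence $p^*$ is an isomorphism $P_T \cong Q$ and in particular on central fibers. Alternatively, one can use \Cref{thm:Edixhoven} directly: the kernel is unipotent inside an abelian variety, hence trivial. Since $\dim Q_t = n$ and $Q_t$ is connected, the image is a closed subgroup of dimension $n$, and $Q_t$ must be connected with the same abelian part.

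For (2), let $L_s = \GG_m$. Since $\ker p^*$ is unipotent and $P_s$ contains no nontrivial unipotent subgroup (its linear part is a torus, and $\pi_0$ is finite), $p^*$ is injective. Its image $Q_t^\psi$ is therefore a closed subgroup of dimension $n$, so it contains $Q_t^\circ$, and $p^*$ restricts to an isomorphism $P_s^\circ \cong Q_t^\circ$. In particular the linear part of $Q_t$ is $\GG_m$, and $p^*$ is an open immersion because it is a monomorphism onto a union of components of the smooth group $Q_t$. This yields $0 \to \pi_0(P_s) \to \pi_0(Q_t)$.

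The main obstacle is identifying the cokernel with $\ZZ/d$. Here I would compute with the minimal compactifications. By \Cref{thm:classification of component group}(1), $\pi_0(P_s) = \ZZ/r$ for type $\mathrm I_r$, and $\pi_0(Q_t) = \ZZ/r'$ for type $\mathrm I_{r'}$. It then suffices to show $r' = dr$, since the quotient of cyclic groups $\ZZ/dr \to \ZZ/d$ is then forced. To get $r' = dr$, use the standard behaviour of the monodromy pairing (Grothendieck's SGA7 IX): the character group of the torus is unchanged, while the monodromy pairing multiplies by the ramification index $d$. Together with the identification of $\pi_0$ with the cokernel of the monodromy pairing, which in rank $1$ is multiplication by $r$, this gives $r' = dr$. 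A more geometric alternative is to observe that the base change $X_T$ has $A_{r-1}$-type singularities of order $d$ along each $E_i \cap E_{i+1}$; resolving them inserts $d-1$ components between each pair, producing type $\mathrm I_{dr}$, which is minimal and $\delta$-regular. I would present the SGA7 argument and mention the geometric picture as a check.

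For (3), assume $L_s = \GG_a$. \Cref{thm:Edixhoven} implies that the abelian part of $Q_t$ contains (an isogenous image of) $A_s$, of dimension $n-1$. If $Q_t$ is an abelian variety or unstable, there is no shear to worry about, and the claim is vacuous. If $Q_t$ has linear part $\GG_m$, then $Q_t^\psi \supset p^*(P_s^\circ)$, whose image is $\GG_a \times A_s$ modulo its unipotent kernel, which contains $A_s$ up to isogeny (by \Cref{thm:splitting}). Hence $Q_t^\circ$ contains an abelian subvariety of dimension $n-1$. By \cite[Lemma~1(b)]{mat-miy66}, the extension class $[Q_t^\circ] \in \check A$ is torsion, so the slope is torsion. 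By the remark after \Cref{def:shear}, torsion slope is equivalent to torsion shear, since $Q_t^\circ$ and $\check Q_t^\circ$ are isogenous, and this gives (3).
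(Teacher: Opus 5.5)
Your proposal is correct. For (1), for the injectivity and open-immersion part of (2), and for (3), you are spelling out what the paper compresses into ``everything immediately follows from \Cref{thm:Edixhoven}.''

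Two small points on (3). The kernel of $p^*$ on $\GG_a \times A_s$ lies in $\GG_a$, so $A_s$ embeds into $Q_t^\circ$ on the nose, not merely up to isogeny. Also, the extension class you call $[Q_t^\circ] \in \check A$ really lives in $\Ext^1(B_t,\GG_m) = \check B_t$. You could equally finish with \Cref{cor:dimension of Albanese}(2) instead of the remark after \Cref{def:shear}.

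The genuine difference is in identifying the cokernel $\pi_0(Q_t)/p^*\pi_0(P_s) \cong \ZZ/d$.

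\emph{Your route.} You import Grothendieck's orthogonality theorem together with the fact that the monodromy pairing is multiplied by the ramification index under base change (SGA~7, Exp.~IX). In toric rank $1$ this gives $\pi_0(Q_t) \cong \ZZ/dr$ at once. It needs no model-theoretic input and would work for any semistable abelian variety.

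\emph{The paper's route.} It uses only machinery already built in the paper, and in fact it is exactly your ``geometric check'':
\begin{enumerate}
\item The minimal compactification of $P$ has type $\mathrm I_r$.
\item The base change $X_T$ is locally $xy = t^d$ along $\Sing(X_s) = \bigsqcup_{i=1}^r A_s$.
\item The crepant resolution of these $\mathrm A_{d-1}$ singularities inserts chains of $d-1$ $\PP^1$-bundles over $A_s$.
\item By uniqueness of minimal $\delta$-regular models, $Y_t$ is therefore of type $\mathrm I_{dr}$.
\item \Cref{thm:classification of component group}(1) then gives $\pi_0(Q_t) = \ZZ/dr$.
\end{enumerate}

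\emph{Comparison.} The paper's argument is self-contained within its framework. It also produces the explicit geometry of $Y_t$, a cycle of $dr$ $\PP^1$-bundles, which is the picture used later in the multiple-fiber analysis. The SGA~7 argument is more general but says nothing about the fibers.
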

			\begin{proof}
				Everything immediately follows from \Cref{thm:Edixhoven} except for the short exact sequence claim in (2).
				
				\medskip
				
				Assume $P_s$ has a linear part $\GG_m$ and $\pi_0(P_s) = \ZZ/r$ (semistable part of \Cref{thm:classification of component group}). The minimal compactification $f : X \to S$ of $P$ has a strictly semistable central fiber $X_s = \sum_{i=1}^r E_i$ of type $\mathrm I_r$. Write $Z = \Sing(X_s) = \bigsqcup_{i=1}^r A_s$. The usual base change
				\[ \bar g : \bar Y = X_T \to T \]
				is singular along the preimage of $Z$, which is locally described by an equation $X_T = (t^d = xy) \subset \Spec k[[x,y, t, z_1, \cdots, z_{n-1}]]$. It has canonical singularities (type $\mathrm A_{d-1}$ Du Val) and admits a \emph{crepant} resolution by successive blowups along $A_s$. The resulting exceptional divisor is a chain of $d-1$ number of $\PP^1$-bundles over $A_s$. Therefore, the minimal model $g : Y \to T$ has a central fiber consisting of a cycle of $r + (d-1)r = dr$ number of $\PP^1$-bundles, proving $\pi_0 (Q_t) = \ZZ/dr$ again by \Cref{thm:classification of component group}.
			\end{proof}

		\subsubsection{Inertia action on the minimal model}
			The same argument produces the \emph{inertia action} on $Y_t$. Again to minimize potential confusions, we write such a $\mu_d$-action on $Y$ the $\langle \varphi \rangle$-action.
			
			\begin{proposition} \label{prop:varphi-action}
				\begin{enumerate}
					\item There exists a natural automorphism $\varphi$ of order $d$ on $Y$ making $g : Y \to T$ equivariant.
					\item The $Q_t$-and $\langle \varphi \rangle$-actions on $Y_t$ are related by
					\[ \varphi(a \cdot y) = \psi(a) \cdot \varphi(y) \qquad\mbox{for}\quad a \in Q_t , \ y \in Y_t .\]
				\end{enumerate}
			\end{proposition}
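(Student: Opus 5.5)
The plan is to copy the proof of \Cref{prop:psi-action}, using the uniqueness of minimal $\delta$-regular models (\Cref{prop:minimal delta-regular abelian fibration is unique}) wherever that proof used the N\'eron mapping property.

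For (1), write $\zeta : T \to T$ for the Galois automorphism. The $\mu_d$-action on $Y_0 = X_0 \times_{S_0} T_0$ comes from the second factor. So it is modeled by an isomorphism $\Phi_0 : Y_0 \to \zeta^* Y_0$ over $T_0$, and $\Phi_0$ satisfies the same cocycle condition as $\Psi_0$. Both $Y$ and $\zeta^* Y = Y \times_{\zeta} T$ are minimal $\delta$-regular abelian fibrations over $T$. Indeed, $\zeta$ is an automorphism of the base, so minimality and $\delta$-regularity are preserved, and the t-automorphism scheme of $\zeta^*Y$ is $\zeta^* Q$, which is $\delta$-regular.

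The map $\Phi_0$ is therefore a birational map $Y \dashrightarrow \zeta^* Y$ over $T$. By \Cref{prop:minimal delta-regular abelian fibration is unique}(2) it extends uniquely to an isomorphism $\Phi$. Two morphisms from the integral scheme $Y$ to the separated $\zeta^*Y$ that agree on the dense open $Y_0$ agree everywhere. Hence the cocycle identity, which holds over $T_0$, also holds over $T$. This gives an automorphism $\varphi$ of $Y$ with $\varphi^d = \id$ covering $\zeta$, so $g$ is equivariant. The order of $\varphi$ is exactly $d$ because it already is over $T_0$.

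For (2), consider the two morphisms $Q \times_T Y \to Y$ given by
\[ (a, y) \mapsto \varphi(a \cdot y) \qquad\mbox{and}\qquad (a, y) \mapsto \psi(a) \cdot \varphi(y) .\]
Both lie over $\zeta$ on $T$. Here the $Q$-action on $Y$ is the one from \Cref{prop:P-action existence}, and $\psi$ is from \Cref{prop:psi-action}. Over $T_0$ the two maps agree. This is because $Q_0 = P_{T_0}$ and $Y_0 = X_{T_0}$, the action is the base change of the $P_0$-action on $X_0$, and both $\psi_0$ and $\varphi_0$ are the Galois action on the second factor.

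Now $Q \times_T Y$ is flat over $T$, since both factors are, so $Q_0 \times_{T_0} Y_0$ is schematically dense in it. The target $Y$ is separated. The two morphisms therefore agree on all of $Q \times_T Y$, and restricting to the fiber over $t$ gives the identity on $Y_t$.

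The main point needing care is that $\zeta^*Y$ is again minimal and $\delta$-regular, so that \Cref{prop:minimal delta-regular abelian fibration is unique} applies. This is formal because $\zeta$ is an isomorphism of $T$. The schematic density claim should be checked with flatness rather than reducedness of $Q \times_T Y$, since its central fiber need not be reduced.
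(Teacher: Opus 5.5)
Your proposal is correct and follows the paper's proof. For (1) you extend the Galois twist $\Phi_0$ using the uniqueness of minimal $\delta$-regular models (\Cref{prop:minimal delta-regular abelian fibration is unique}), exactly as in \Cref{prop:psi-action}; for (2) you use agreement over $T_0$ plus separatedness, and your schematic-density remark makes explicit what the paper leaves implicit. The one input you assert without comment, that $g$ (equivalently $Q$) is $\delta$-regular, follows from \Cref{thm:Edixhoven}, since the abelian variety part of $Q_t$ dominates that of $P_s$.
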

			\begin{proof}
				(1) Same argument as \Cref{prop:psi-action}, but use \Cref{prop:minimal delta-regular abelian fibration is unique} to extend an automorphism $\varphi_0 : Y_0 \to Y_0$ to $\varphi : Y \to Y$. (2) Same as the above lemma.
			\end{proof}
			
			\begin{remark}
				Note that the second item means that the $Q_t$- and $\langle \varphi \rangle$-actions on $Y_t$ \emph{do not} commute in general. However, the $Q_t^\psi$- and $\langle \varphi \rangle$-actions commute. Note also that the inertia $\langle \varphi \rangle$-action on $Y_t$ may not be faithful in general.
			\end{remark}
			
			We may therefore consider the quotient $\bar X = Y / \varphi$ and a commutative diagram
			\begin{equation} \label{diag:base change}
			\begin{tikzcd}
				Y \arrow[r, "q"] \arrow[d, "g"] & \bar X \arrow[r, dashed, "\mathrm{bir}"] \arrow[d, "\bar f"] & X \arrow[ld, bend left=25, "f"] \\
				T \arrow[r, "p"] & S
			\end{tikzcd}.
			\end{equation}
			Since $\bar f : \bar X \to S$ is klt-minimal, \Cref{thm:Po-action} applies and endows it with a $P^\circ$-action. The following extends this to a $P$-action.
			
			\begin{proposition} \label{prop:P-action on bar X}
				There exists a unique $P$-action on $\bar X$ extending the $P_0$-torsor structure on $\bar X_0 = X_0$. Consequently, the birational map $\bar X \dashrightarrow X$ is $P$-equivariant.
			\end{proposition}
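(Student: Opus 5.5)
The plan is to build the $P$-action on $\bar X = Y/\varphi$ by descending a $\langle \psi\rangle$-invariant part of the $Q$-action on $Y$. Start from the pullback homomorphism $p^* : P_T = P\times_S T \to Q$ obtained from the N\'eron mapping property. Compose it with the $Q$-action on $Y$ from \Cref{prop:P-action existence}, which exists because $g$ is minimal and $\delta$-regular. This gives an action of $P_T$ on $Y$ over $T$. Over $T_0$ this action is the pullback of the $P_0$-torsor structure on $X_0$ to $Y_0 = X_0\times_{S_0}T_0$, so it commutes with $\varphi_0$. Here $\varphi_0$ acts through the second factor of $T_0$ and through the Galois descent datum on $P_{T_0}$, which is $\zeta$ on the base.

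The first concrete step is the equivariance
\[ \varphi(p^*(a)\cdot y) = p^*(\zeta^* a)\cdot \varphi(y), \]
where $\zeta^*$ denotes the canonical descent datum on $P_T$. This holds over $T_0$ by construction. Both sides are morphisms $P_T\times_T Y \to Y$, and $Y$ is separated while $P_T\times_T Y$ is flat over $T$, so $P_T\times_T Y_0$ is schematically dense. Hence the identity extends over $T$. Equivalently, by \Cref{prop:varphi-action}(2), $p^*$ intertwines the descent datum with $\psi$.

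With this identity, the action morphism $P_T\times_T Y\to Y$ is $\mu_d$-equivariant for the diagonal action, where $\mu_d$ acts on $P_T = P\times_S T$ through $T$. Taking quotients and using that $P\to S$ is flat and that quotients by finite groups commute with flat base change, we get $(P_T\times_T Y)/\mu_d = P\times_S (Y/\varphi) = P\times_S\bar X$. The action therefore descends to a morphism $P\times_S\bar X\to\bar X$. The action axioms (associativity and unit) hold over $S_0$, and since $\bar X$ is separated and $P\times_S P\times_S \bar X$ is flat over $S$, they hold over all of $S$. Uniqueness follows the same way: any two actions agree on the dense open $P_0\times \bar X_0$ and land in a separated target. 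By uniqueness in \Cref{thm:Po-action}, the restriction to $P^\circ$ is the previously given $P^\circ$-action.

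For the consequence, the birational map $\bar X\dashrightarrow X$ is the identity on $\bar X_0 = X_0$, which is equivariant there. Both $\bar X$ and $X$ carry $P$-actions extending the torsor structure (the latter by \Cref{prop:P-action existence}). The two compositions $P\times_S\bar X \dashrightarrow X$, namely act-then-map and map-then-act, agree on a dense open set, so they agree as rational maps.

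The main obstacle I expect is the descent step. One has to check that $(P_T\times_T Y)/\mu_d$ really is $P\times_S\bar X$ when $\varphi$ has fixed points and $T\to S$ is ramified. The cleanest route is to work $S$-locally, note that $P\times_S -$ is flat base change, and use that invariants commute with flat base change: $(\mathcal O_P\otimes \mathcal O_Y)^{\mu_d} = \mathcal O_P\otimes \mathcal O_Y^{\mu_d}$, with $\mu_d$ acting trivially on $\mathcal O_P$ once we view $P_T\times_T Y = P\times_S Y$.
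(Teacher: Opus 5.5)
Your proposal is correct and follows the paper's argument: you give $Y$ a $P_T$-action by composing $p^*$ with the $Q$-action. You then check that $P\times_S Y = P_T\times_T Y \to Y$ is $\langle\varphi\rangle$-equivariant, over $T_0$ by construction and over $T$ by separatedness and density, with the $\psi$-compatibility being \Cref{prop:varphi-action}(2), and pass to the quotient $P\times_S\bar X\to\bar X$. Your additional justifications fill in details the paper leaves implicit: why $(P\times_S Y)/\varphi = P\times_S\bar X$ (invariants commute with flat base change), why the action axioms and uniqueness extend by schematic density, and why $\bar X\dashrightarrow X$ is equivariant as a rational map.
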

			\begin{proof}
				The base change $P_T \to T$ is a smooth group scheme with an equivariant $\mu_d$-action generated by $\zeta : P_T \to P_T$ (pulled back from $\zeta : T \to T$). The pullback homomorphism $p^* : P_T \to Q$ of t-automorphism schemes endows $Y$ with a $P_T$-action with the following commutative diagram:
				\[\begin{tikzcd}[column sep=normal]
					P \times_S Y \arrow[r, phantom, "="] \arrow[d, "\id \times \varphi"] & P_T \times_T Y \arrow[r, "p^* \times \id"] \arrow[d, "\zeta \times \varphi"] & Q \times_T Y \arrow[r] \arrow[d, "\psi \times \varphi"] & Y \arrow[d, "\varphi"] \\
					P \times_S Y \arrow[r, phantom, "="] & P_T \times_T Y \arrow[r, "p^* \times \id"] & Q \times_T Y \arrow[r] & Y
				\end{tikzcd}.\]
				The rightmost diagram is nothing but \Cref{prop:varphi-action}, and the first two diagrams are commutative because they commute over $T_0$. Hence $P \times_S Y \to Y$ is $\langle \varphi \rangle$-equivariant, and its quotient is the desired group action morphism $P \times_S \bar X \to \bar X$.
			\end{proof}
			
			For simplicity, let us assume that the $\langle \varphi \rangle$-action on $\bar X_s$ is faithful in the following two lemmas (if not, factorize $T \to S$ into two steps). Define a closed subscheme $\frac{1}{d}\bar X_s \subset \bar X$ by
			\[ \tfrac{1}{d} \bar X_s \coloneq Y_t / \varphi \ \ \subset \ \ \bar X = Y / \varphi .\]
			It is of pure codimension $1$, has no embedded points, and is a closed subscheme of $\bar X_s$.
			
			\begin{lemma} \label{lem:1/d X_s}
				Assume that the $\langle \varphi \rangle$-action on $\bar X_s$ is faithful. Then there exists an equality of Weil divisors $\bar X_s = d \cdot \frac{1}{d} \bar X_s$ on $\bar X$. In particular, the multiplicity of $\bar X_s$ is divisible by $d$.
			\end{lemma}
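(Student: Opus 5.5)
The plan is to compare the two Cartier divisors $q^*\bar X_s$ and $Y_t$ on $Y$, and then push the comparison down along the quotient map $q : Y \to \bar X = Y/\varphi$.

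First, since $p : T \to S$ is totally ramified of degree $d$ at $t$, we have $p^*(s) = d\cdot t$ as divisors on $T$. Pulling back along $g$ gives
\[ q^*\bar X_s = g^*p^*(s) = d \cdot Y_t \]
as Cartier divisors on $Y$, using $\bar f\circ q = p\circ g$.

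Second, we compare Weil divisors on $\bar X$ componentwise. Let $F \subset \bar X_s$ be a prime divisor and $F' \subset Y_t$ a prime divisor lying over it. Since $\langle\varphi\rangle$ preserves $Y_t$, the preimage of $F$ is the $\varphi$-orbit of $F'$, and all its members carry the same multiplicity $b$ in $Y_t$ and the same ramification index $e$ along $F$. Then the multiplicity of $F$ in $\bar X_s$ is $db/e$ by the displayed identity. On the other hand, $\frac1d\bar X_s = Y_t/\varphi$ is the scheme-theoretic image, computed by the invariant ideal $\mathcal I_{Y_t}^{\varphi}\subset \mathcal O_Y^{\varphi}$. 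We compute its multiplicity along $F$ by localizing at the generic point of $F$, which makes everything a DVR situation.

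The key claim is $e = 1$, i.e.\ $q$ is unramified in codimension one over $\bar X_s$. Once this holds, the multiplicity of $Y_t/\varphi$ along $F$ is $b$: the invariant ideal $(\pi^b)\cap \mathcal O^{\varphi}$ in an étale-locally Galois extension of DVRs is $(\varpi^b)$. This gives $\bar X_s = d\cdot\frac1d\bar X_s$, and the divisibility of the multiplicity by $d$ follows immediately.

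The main obstacle is proving $e=1$, which amounts to showing that no component of $Y_t$ is pointwise fixed by a nontrivial power $\varphi^j$ with $j<d$. We plan to use the faithfulness hypothesis together with \Cref{prop:varphi-action}. Suppose $\varphi^j$ fixes a component $F'$ pointwise. Pick a point $y\in F'$ with small stabilizer; by \Cref{prop:P-action property}, $Q^\circ_t$-orbits in a component are open or of dimension $n-1$, so we can take $y$ in the open orbit of $F'$. Then $\varphi^j(a\cdot y)=\psi^j(a)\cdot y = a\cdot y$ forces $\psi^j$ to act trivially on $Q^\circ_t$ modulo the generic stabilizer. From there we want to show that $\varphi^j$ acts trivially on all of $Y_t$, which contradicts faithfulness. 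If this fails in general, the fallback is the remark before the lemma: factor $T\to S$ so as to pass to the faithful quotient. A further alternative is to argue directly on the local ring at the generic point of $F'$: $\varphi^j$ acts on the uniformizer $\tau$ of $T$ by a primitive root of unity, and $\tau$ is a unit multiple of $\pi^b$, so $\varphi^j$ cannot act trivially on the DVR unless $\zeta^{j}=1$ on $\pi^b$. A careful version of this computation should bound the inertia exactly by $b$-compatibility, which is the delicate point to check.
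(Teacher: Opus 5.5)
Your reduction is correct as far as it goes. We have $q^*\bar X_s = g^*p^*(s) = d\cdot Y_t$. Along a prime $F\subset\bar X_s$ with upstairs multiplicity $b$ and ramification index $e$, the coefficient of $F$ is $db/e$ in $\bar X_s$ and $\lceil b/e\rceil$ in $Y_t/\varphi$. Your third sketch, done carefully, only gives $\xi^b=\zeta^{d/e}$ for the character $\xi$ of the inertia group $\langle\varphi^{d/e}\rangle$ on the cotangent line, i.e.\ $\gcd(b,e)=1$. So the two divisors agree exactly when $e=1$ for every component.

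The gap is that $e=1$ does not follow from faithfulness of $\langle\varphi\rangle$ on $Y_t$; in fact it is false under that hypothesis. Here is a counterexample.
\begin{itemize}
\item Take $f$ of type $\mathrm I_r$ (an $\mathrm I_r$ elliptic surface times a constant abelian scheme) and $d\ge 2$. As in the proof of \Cref{cor:Edixhoven}(2), $Y$ is the crepant resolution of $X_T$ along the $\mathrm A_{d-1}$-locus $xy=t^d$ over $\Sing(X_s)$.
\item Since $\mu_d$ acts on $X_T=X\times_S T$ through $T$, it is trivial on $(X_T)_t=X_s$. Hence $\varphi$ fixes pointwise the $r$ components of $Y_t$ that are strict transforms of components of $X_s$.
\item On the first exceptional $\PP^1$-bundle, $\varphi$ multiplies the fiber coordinate $x/t$ by a primitive $d$-th root of unity. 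So the action on $Y_t$ is faithful (compare $\psi(f_i)=\zeta^i f_i$ in \Cref{lem:classification of multiple fiber iii:psi-action}).
\item Near the generic points of the fixed components, $Y\cong X_T$ and $\bar X\cong X_T/\mu_d=X$. So $\bar X_s$ has reduced components ($b=1$, $e=d$), and its multiplicity is $1$, not divisible by $d$.
\end{itemize}

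This example also shows where your other sketches fail. In the first, $\psi^j$ can be trivial on $Q_t^\circ$ while acting nontrivially on the other components of $Q_t$, so $\varphi^j$ need not be trivial on the rest of $Y_t$. The factorization fallback does nothing here, since the action is already faithful.

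What the lemma really needs is that no nontrivial power of $\varphi$ fixes an irreducible component of $Y_t$ pointwise, for instance that $\langle\varphi\rangle$ acts freely on $Y_t$. This is the only situation in which the paper applies it: \Cref{lem:non-multiple base change} and the proof of \Cref{lem:inertia action is faithful}(2). Under that hypothesis $e=1$ is immediate, and your argument is a complete proof.

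Your argument is a more careful version of the paper's. The paper writes $\bar X_s=Y_d/\varphi$ and $\frac1d\bar X_s=Y_t/\varphi$ for $Y_d=g^{-1}(\Spec k[\epsilon]/\epsilon^d)$, then transfers the factor $d$ between the multiplicities of $Y_d$ and $Y_t$ to the quotients. That transfer silently uses the same unramifiedness of $q$ at the generic points of $Y_t$.
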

			\begin{proof}
				Notice an isomorphism of schemes
				\[ \bar X_s = \big( q^{-1}(\bar X_s) \big) / \varphi = Y_d / \varphi \qquad\mbox{for}\quad Y_d = g^{-1} (\Spec k[\epsilon]/\epsilon^d) .\]
				Since the closed immersion $Y_t \subset Y_d$ is set-theoretically equal and their multiplicities differ by a factor of $d$, the same holds for $\frac{1}{d} \bar X_s \subset \bar X_s$.
			\end{proof}
			
			\begin{lemma} \label{lem:P-action on 1/d X_s}
				The $P_s$-action on $\bar X_s$ restricts to that on $\frac{1}{d} \bar X_s$.
			\end{lemma}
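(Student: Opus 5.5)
The plan is to lift everything to $Y$, where the actions are explicit, and descend along $q$. By construction $\frac{1}{d}\bar X_s = q(Y_t)$ is the scheme-theoretic quotient $Y_t/\varphi$. So it suffices to show two things: the $P_T$-action on $Y$ from the proof of \Cref{prop:P-action on bar X} preserves the closed subscheme $Y_t \subset Y$, and the induced action on $Y_t$ commutes with $\varphi$. Then the action descends to $Y_t/\varphi$, compatibly with the action on $\bar X_s = Y_d/\varphi$.

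\textbf{Step 1: $P_s$ acts on $Y_t$.} The action of $P_T = P\times_S T$ on $Y$ is the composite $P_T\times_T Y \to Q\times_T Y \to Y$. Both maps are morphisms over $T$, and $Y_t = g^{-1}(t)$ is the scheme-theoretic fiber. Base changing to $t$ therefore gives an action
\[ (P_T)_t \times Y_t = P_s\times Y_t \to Y_t , \]
which is $p^*:P_s\to Q_t$ followed by the $Q_t$-action on $Y_t$. This step is formal.

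\textbf{Step 2: commutation with $\varphi$.} By \Cref{thm:Edixhoven}, $p^*(P_s) = Q_t^\psi$. By \Cref{prop:varphi-action}(2), for $a\in P_s$ and $y\in Y_t$ we have
\[ \varphi(p^*a\cdot y) = \psi(p^*a)\cdot\varphi(y) = p^*a\cdot\varphi(y). \]
So the $P_s$-action on $Y_t$ is $\langle\varphi\rangle$-equivariant. Equivalently, $\zeta$ restricts to the identity on $(P_T)_t = P_s$, because $t$ is $\zeta$-fixed and $\zeta$ is pulled back from $T$. This is just the diagram in \Cref{prop:P-action on bar X} restricted over $t$.

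\textbf{Step 3: descent to the quotient.} The action map $P_s\times Y_t\to Y_t$ is $\langle\varphi\rangle$-equivariant, with $\varphi$ acting trivially on $P_s$. Hence it induces a map
\[ (P_s\times Y_t)/\varphi \to Y_t/\varphi . \]
The main point is to identify $(P_s\times Y_t)/\varphi$ with $P_s\times (Y_t/\varphi)$. This holds because taking invariants of a finite group in characteristic $0$ commutes with flat base change: $P_s$ is flat over $k$ and $\varphi$ acts only on the second factor, so
\[ (\mathcal O_{P_s}\otimes\mathcal O_{Y_t})^{\varphi} = \mathcal O_{P_s}\otimes\mathcal O_{Y_t}^{\varphi} \]
on affine invariant opens. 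The group action axioms then descend from $Y_t$, since $Y_t\to Y_t/\varphi$ is an epimorphism.

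\textbf{Step 4: compatibility with $\bar X_s$.} The resulting action is the restriction of the $P_s$-action on $\bar X_s = Y_d/\varphi$ built the same way from $Y_d$. The closed immersion $Y_t\subset Y_d$ is $P_s$- and $\varphi$-equivariant, and invariants are exact in characteristic $0$. So $Y_t/\varphi \to Y_d/\varphi$ stays a closed immersion, namely $\frac{1}{d}\bar X_s\subset\bar X_s$, and it is equivariant.

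I expect the only care needed is in Step 3: checking that quotient formation commutes with the product by $P_s$. Characteristic $0$ and flatness settle this.
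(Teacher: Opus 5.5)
Your proof is correct and follows essentially the same route as the paper: the $P_s$-action on $Y_t$ factors through $p^*(P_s)=Q_t^\psi$, so it commutes with $\varphi$ by \Cref{prop:varphi-action}(2) and descends to $Y_t/\varphi=\frac{1}{d}\bar X_s$, and compatibility follows by restricting the $\varphi$-equivariant action diagram from \Cref{prop:P-action on bar X}. The only differences are cosmetic: you check compatibility through the equivariant closed immersion $Y_t\subset Y_d$ and $\bar X_s=Y_d/\varphi$ rather than through $\bar X_T$, and you make explicit the identification $(P_s\times Y_t)/\varphi=P_s\times(Y_t/\varphi)$, which the paper leaves implicit.
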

			\begin{proof}
				Since the $Q_t^\psi$- and $\langle \varphi \rangle$-actions on $Y_t$ commute by \Cref{prop:varphi-action}, the $Q_t^\psi$-action on $Y_t$ descends to $\frac{1}{d} X_s = Y_t / \varphi$. This means we can endow $\frac{1}{d} X_s$ with a $P_s$-action via $p^* : P_s \to Q_t^\psi$ (so it may not be a faithful action). To show this action is compatible with the $P_s$-action on $X_s$, take a $\mu_d$-equivariant diagram
				\[\begin{tikzcd}[column sep=normal]
					P_T \times_T Y \arrow[r, "p^* \times \id"] \arrow[d] & Q \times_T Y \arrow[r] & Y \arrow[d] \\
					P_T \times_T \bar X_T \arrow[rr] & & \bar X_T
				\end{tikzcd}.\]
				The lower vertical arrow is \Cref{prop:P-action on bar X} and $Y \to \bar X_T$ is from the universal property of fiber product. Restricting the diagram over $t \in T$, replacing $Q_t$ by $Q_t^\psi$ using \Cref{thm:Edixhoven}, and then taking the $\langle \varphi \rangle$-quotient concludes the two actions agree.
			\end{proof}

	\subsection{Semistable base change of unstable fibrations}
		In this subsection, we discuss an alternative interpretation of \Cref{thm:classification of unstable fibers} using a base change to a semistable fibration. This will complete the promised proof of \Cref{thm:classification of component group} and will also be extensively used in the future \Cref{thm:classification of multiple fiber iv} and \ref{thm:classification of multiple fiber v}.
		
		According to the semistable reduction theorem (e.g., \cite[Theorem~7.4.1]{neron}), every unstable abelian fibration $f : X \to S$ admits a semistable base change $g : Y \to T$, i.e., a finite (totally ramified) base change $T \to S$ such that $f_T$ has a semistable minimal model $g$. We make this explicit for minimal $\delta$-regular abelian fibrations as follows. Let us first consider the case with no Albanese stabilizers.
		
		\begin{proposition} \label{prop:semistable reduction theorem untangled ver}
			Let $f : X \to S$ be an unstable minimal $\delta$-regular abelian fibration with trivial Albanese stabilizer, and let $g : Y \to T$ be its semistable base change of degree $d$.
			\begin{enumerate}
				\item If $f$ has type $\mathrm I_0^*$, $\mathrm{II}$, $\cdots$, or $\mathrm{IV}^*$, then $g$ is smooth with a central fiber
				\[ Y_t \cong Q_t \cong E \times A_s \quad \mbox{for an elliptic curve } E ,\]
				and its inertia action is $\varphi(x,y) = \psi(x,y) = (\omega^{\pm1} x, y)$ for $(x,y) \in E \times A_s$. Here $\omega$ is the primitive $d'$-th root of unity for $d' \mid d$, which is classified as follows:
				\begin{enumerate}
					\item If $f$ has type $\mathrm{II}$ or $\mathrm{II}^*$, then $j(E) = 0$ and $d' = 6$.
					\item If $f$ has type $\mathrm{III}$ or $\mathrm{III}^*$, then $j(E) = 1728$ and $d' = 4$.
					\item If $f$ has type $\mathrm{IV}$ or $\mathrm{IV}^*$, then $j(E) = 0$ and $d' = 3$.
					\item If $f$ has type $\mathrm I_0^*$, then $d' = 2$.
				\end{enumerate}
				
				\item If $f$ has type $\mathrm I_r^*$ for $r \ge 1$, then $d = 2k$ is even and $g$ is strictly semistable of type $\mathrm I_{2kr}$ with a central fiber
				\[ Y_t \cong C \times A_s \ \ \supset \ \ Q_t \cong \ZZ/2kr \times \GG_m \times A_s .\]
				Considering $C$ as $C^\nu = \ZZ/2kr \times \PP^1$ with identifications $(i,\infty) = (i+1, 0)$, the inertia involutions are $\varphi(i,z,y) = \psi(i,z,y) = (-i, \frac{\zeta^i}{z}, y)$ for $(i,z,y) \in C \times A_s$ or $\ZZ/2kr \times \GG_m \times A_s$. Here $\zeta$ is a primitive $2k$-th root of unity.
			\end{enumerate}
		\end{proposition}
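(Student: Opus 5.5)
Since $f$ has trivial Albanese stabilizer, \Cref{prop:untangle for unstable} and \Cref{thm:classification of unstable fibers} give $X_s = C \times A_s$ with $\alb = \pr_2$ and $P^\circ_s = \GG_a \times A_s$ (\Cref{thm:splitting}). The plan is to reduce to the classical elliptic-surface statement by splitting off the abelian factor $A_s$ compatibly with base change. I would first compare $Q_t$ with $P_s$ through \Cref{thm:Edixhoven}. The map $p^*$ has unipotent kernel, so it kills the $\GG_a$-factor and maps $A_s$ isogenously (indeed isomorphically, since finite unipotent groups are trivial in characteristic $0$) onto an abelian subvariety $A_s \subset Q_t^\psi$. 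The image is the $\psi$-invariant part $Q_t^\psi$, whose neutral component is therefore $A_s$.

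Since $g$ is semistable, $Q_t$ is semiabelian with linear part $0$ or $\GG_m$. If the linear part is $0$, then $Q_t$ is an $n$-dimensional abelian variety containing $A_s$, and Poincaré reducibility gives an isogeny decomposition $Q_t \sim E \times A_s$. If the linear part is $\GG_m$, then $\delta$-regularity makes the abelian variety part $(n-1)$-dimensional, so it equals $A_s$. To upgrade these isogenies to honest product decompositions, I would untangle $g$ as well. Using \Cref{cor:dimension of Albanese}, \Cref{lem:untangle primitive} and the map $Y_t \to X_s \to A_s$ coming from the rational map $Y \dashrightarrow X_T \to X$ (which is defined on $Y_t$ because $A_s$ contains no rational curves), I expect to get $Y_t \cong C' \times A_s$. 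The factor $C'$ is then a smooth elliptic curve $E$ or a type $\mathrm I_N$ cycle, with $\varphi$ acting trivially on $A_s$.

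This reduces both $Y$ and $\varphi$ to the curve $C'$. I would take a general complete intersection surface, or equivalently restrict to $\{y\}\times$-slices, to obtain an elliptic surface $X^{(1)} \to S$ with fiber $C$ and its semistable base change with fiber $C'$ and inertia $\varphi|_{C'}$. In this setting the classical results apply: Kodaira/Néron together with the analysis of inertia on the reduction (Serre--Tate, or \cite{edi92}) show the following. For $\mathrm{II},\dots,\mathrm{IV}^*,\mathrm I_0^*$ the reduction becomes good, $\psi$ acts on $E$ by an automorphism of exact order $d' = 6,4,3,2$, and the fixed-point-free constraint forces $j(E) = 0$ or $1728$. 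For $\mathrm I_r^*$ the reduction is multiplicative only after a quadratic twist, so $d$ is even, the fiber becomes $\mathrm I_{2kr}$ by the $\mathrm A$-singularity resolution count of \Cref{cor:Edixhoven}, and $\psi$ acts as the inversion composed with a twist, giving $(i,z) \mapsto (-i, \zeta^i/z)$. Identifying $Y_t' = Q_t$ through a $\varphi$-fixed section, which exists on the $A_s$ factor, then gives $\varphi = \psi$.

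The hardest step is the middle one: proving that $Y_t$ is a genuine product $C' \times A_s$ on which $\varphi$ acts trivially on the second factor, rather than a shear or twisted quotient. The concern is that a nontrivial shear of $Q_t$ could appear in case (2). To rule it out, I would use \Cref{cor:Edixhoven}(3) to get torsion shear, and then argue that the shear lies in $A_s = Q_t^{\psi,\circ}$ while also being negated by $\psi$. This makes it $2$-torsion, and it is killed once one passes to the correct degree $2k$.
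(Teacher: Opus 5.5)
\textbf{Gap: the reduction to an elliptic surface.} This is your third step, and it does not work. A general complete intersection surface in $X$ is not an elliptic surface: its generic fiber is a complete intersection of $n-1$ ample divisors in the $n$-dimensional abelian torsor $X_\eta$, so it has genus $\ge 2$. There are also no ``$\{y\}\times$-slices'' of $X$ or $Y$ over the base. The product structures $X_s=C\times A_s$ and $Y_t\cong C'\times A_s$ live only on the closed fibers, and nothing forces $P_\eta$ to contain an elliptic curve (it can be simple). So there is no elliptic surface $X^{(1)}\to S$ with fiber $C$ whose semistable base change has fiber $C'$ and inertia $\varphi|_{C'}$.

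Consequently none of the classical inputs you quote is available here. That includes the dictionary ``type of $C$ $\leftrightarrow$ order $d'$ of $\psi$ on $E$'', the value of $j(E)$, $2\mid d$, the type $\mathrm I_{2kr}$, and the formula $(i,z)\mapsto(-i,\zeta^i/z)$. These are the actual content of the proposition, and the paper has to derive them on the closed fibers.

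In the smooth case the paper proceeds as follows. It writes $\psi(x,y)=(\alpha x,y+\xi(x))$. It kills $\xi$ on $E^\alpha$ using $\tfrac1d\bar X_s\cong\bar C\times A_s$ with $A_s$ acting by translations, then removes $\xi$ by conjugating with $(x,y)\mapsto(x,y+\eta(x))$. Finally it pins down $\alpha$ up to inverse from $\pi_0(P_s)\cong\pi_0(Q_t^\psi)=\pi_0(E^\alpha)$ (\Cref{thm:Edixhoven}), together with $|\pi_0(P_s)|$ being the number of reduced components of $C$ (as $X_s'\cong P_s$), which is $1,2,3$ or $4$.

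In the strictly semistable case the paper argues differently. It uses $|\pi_0(Q_t^\psi)|=4$, the fact that $\psi$ is a group automorphism whose invariant neutral component is $A_s$, and a fixed-point analysis. From these it gets $\psi(i,z,a)=(-i,\zeta^i/z,a)$ with $2\mid d\mid R$. Your proposal has no substitute for either computation.

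\textbf{Smaller issues.} First, the $\varphi$-fixed point you need to identify $\varphi$ with $\psi$ does not come ``from the $A_s$ factor''. Since $\varphi$ acts trivially on $A_s$, a fixed point exists iff $\varphi|_{C'}$ has one, and that is exactly what fails for free actions. The paper obtains it from non-multiplicity of $f$: if $\varphi^i$ acted freely, $Y/\varphi^i$ would be regular with a section but a multiple central fiber (\Cref{lem:inertia action is faithful}). In the strictly semistable case it must further show the fixed point lies in $Y_t'$, which it does via Chevalley--Shephard--Todd.

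Second, your closing shear argument is wrong. The base change $g$ and its degree $d$ are fixed in the statement, so nothing gets ``killed by passing to the correct degree''. Moreover, a $2$-torsion shear need not vanish.

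You do not need that argument anyway. Obtain the map $Y_t\to A_s$ as the composite $Y_t\to\tfrac1d\bar X_s\cong\bar C\times A_s\to A_s$, using \Cref{lem:semistable reduction theorem:splitting} and \Cref{lem:P-action on 1/d X_s}, rather than from the heuristic that $A_s$ has no rational curves. This map is $A_s$-equivariant and $\varphi$-invariant. Since $A_s$ acts freely and transitively on the target, you get $Y_t\cong C'\times A_s$ with $\varphi$ diagonal, and this already excludes a shear.
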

		
		The following deals with the case when $f$ has a nontrivial Albanese stabilizer.
		
		\begin{proposition} \label{prop:semistable reduction theorem tangled ver}
			Let $f : X \to S$ be an unstable minimal $\delta$-regular abelian fibration with a nontrivial Albanese stabilizer $G$, and $\tilde X \to X$ its untangle in \Cref{prop:untangle for unstable}. Then every semistable base change $g : Y \to T$ of $f$ admits a cartesian diagram
			\[\begin{tikzcd}
				\tilde Y \arrow[r, dashed] \arrow[d] & \tilde X \arrow[d] \\
				Y \arrow[r, dashed] & X
			\end{tikzcd},\]
			such that $\tilde g : \tilde Y \to T$ has a trivialized central fiber as in \Cref{prop:semistable reduction theorem untangled ver}. Letting $Q$ and $\tilde Q$ be the t-automorphism schemes $g$ and $\tilde g$, the following holds.
			\begin{enumerate}
				\item There exists a short exact sequence $0 \to G \to \tilde Q \to Q \to 0$. In particular, $Q_t = \tilde Q_t / G$ and $Q^\circ_t = \tilde Q^\circ_t / (G \cap \tilde Q^\circ_t)$.
				
				\item Assume that $\tilde f$ has type $\mathrm I_0^*$, $\cdots$, or $\mathrm{IV}^*$. Then $G \hookrightarrow \tilde Q_t = E \times A_s$ decomposes into two injections $\tau : G \hookrightarrow E^\omega \subset E$ and $\sigma : G \hookrightarrow A_s$.
				
				\item Assume that $\tilde f$ has type $\mathrm I_r^*$ for $r \ge 1$. Then recalling the unique splitting $\tilde Q_t^\circ = \GG_m \times A_s$, we have
				\[ G \cap \tilde Q_t^\circ = \begin{cases}
					\langle (-1,a) \rangle \qquad & \mbox{for odd } r \\
					\langle (-1,a) \rangle \quad\mbox{or}\quad 0 \qquad & \mbox{for even } r \\
				\end{cases}.\]
				Here $a \in A_s$ is a nonzero $2$-torsion point. Moreover, the Albanese morphism $\alb : Y_t \to \Alb_t$ is always a free $G$-quotient of $\pr_2 : \tilde Y_t = C \times A_s \to A_s$.
			\end{enumerate}
		\end{proposition}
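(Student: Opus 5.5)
The plan is to build $\tilde Y$ as the base change of the untangle, and then to read off every claim from the N\'eron-model relations of \Cref{lem:untangle primitive} and \Cref{prop:untangle for unstable}, combined with the explicit description in \Cref{prop:semistable reduction theorem untangled ver}.

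First, the construction. Let $u : \tilde X \to X$ be the untangle. It is a $G$-Galois \'etale cover given by torsion line bundles $M_\chi$ on $X$. Pull the $M_\chi$ back to $X_T$ and transport them to the minimal model $Y$. This is possible because $Y$ and $X_T$ agree in codimension one away from the central fiber, and $Y$ is regular by \Cref{prop:minimal delta-regular abelian fibration is unique}, so torsion line bundles on $X_{T_0}=Y_0$ extend uniquely as reflexive hence invertible sheaves, keeping the same torsion. Set $\tilde Y = \Spec_Y \bigoplus M_\chi$. It is \'etale over $Y$, hence regular with $K$ trivial over $T$, so it is the minimal model of $\tilde X_T$ and the square is cartesian. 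Since $\tilde X_0 \to X_0$ has connected fibers, so does $\tilde Y_0 \to Y_0$, and $\tilde g$ is a minimal $\delta$-regular abelian fibration.

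Next, the structure of $\tilde g$ and claim (1). The composite $\tilde Y \to T$ is a semistable base change of $\tilde f$, because its t-automorphism scheme $\tilde Q$ is isogenous to $Q$ and therefore semiabelian. Since $\tilde f$ has trivial Albanese stabilizer, \Cref{prop:semistable reduction theorem untangled ver} describes $\tilde Y_t$ and the inertia action, which gives the first assertion. For (1), I would repeat the argument of \Cref{prop:untangle for unstable}(2). The left exact sequence $0\to G\to\tilde Q\to Q$ comes from \Cref{lem:untangle primitive}(2). Surjectivity follows because $Y_t$ has a reduced component: $g$ is semistable and non-multiple, since it has a section after shrinking. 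Hence $\tilde Y'\to Y'$ is a surjective \'etale map of torsors, and $Q_t=\tilde Q_t/G$.

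For (2), $G\subset \tilde P_s$ maps into $\tilde Q_t^{\psi}$ via $p^*$ (\Cref{thm:Edixhoven}), and $\tilde Q_t^\psi = E^\omega\times A_s$ by the explicit inertia action. The second component of $G\to E^\omega\times A_s$ is the original translation embedding $G\subset A_s$, because the $A_s$-factor is unchanged by base change, so $\sigma$ is injective. For $\tau$, I would argue that it is also injective because $G$ acts freely on $\tilde Y_t=E\times A_s$ with nontrivial components. The real content is that the image lies in $E^\omega$, which is automatic from $\psi$-invariance.

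For (3), the central fiber is $C\times A_s$ with $C$ an $\mathrm I_{2kr}$ cycle. $G$ acts freely and diagonally, and it commutes with the inertia $\varphi$ because $G$ comes from $P_s$. The $\psi$-invariant part of $\tilde Q_t^\circ=\GG_m\times A_s$ is $\{\pm1\}\times A_s$, so $G\cap\tilde Q_t^\circ\subset\langle(-1,a)\rangle$ with $a$ $2$-torsion. Freeness forces $a\neq0$ whenever the intersection is nonzero. To decide when the intersection is forced to be nontrivial, I would use $|G|\le4$ together with $\pi_0(\tilde Q_t)^\psi$: the inertia $i\mapsto -i$ on $\ZZ/2kr$ has invariants of order $2$. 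Hence when $G\cong\ZZ/4$ (odd $r$), or when $G$ is not embedded in $\pi_0$-invariants, some element must land in the neutral component.

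The main obstacle is this parity case analysis. I expect it to require comparing with $\pi_0(\tilde P_s)$ from \Cref{table:classification of G}, that is, $\ZZ/4$ for odd $r$ and $(\ZZ/2)^2$ for even $r$, under $p^*$. The Albanese statement should then follow from $\tilde Y_t\to Y_t$ being the free $G$-quotient of $\pr_2$, via \Cref{lem:untangle primitive}.
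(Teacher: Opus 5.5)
Your plan is workable, and in one place it genuinely differs from the paper. But several steps are justified incorrectly, and (3) is left unfinished.

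\textbf{The construction of $\tilde Y$.} The paper does not build the cover over $Y$.
\begin{itemize}
\item It takes $\tilde Y$ to be the semistable base change of the untangle $\tilde f$.
\item It embeds $G$ in $\tilde Q$ as $p^*G$, which is injective since $\ker p^*=\GG_a$ and $G\cap\tilde P^\circ_s=0$.
\item It then proves that $p^*G$ acts freely on $\tilde Y_t$. This is automatic when $\tilde g$ is smooth. In the $\mathrm I_{2r}$ case, a non-free element would be some $(\alpha,0,0)\in\tilde P_s$ acting on $\tilde X_s$ without translating $A_s$, contradicting freeness on $\tilde X$.
\item Hence $Y=\tilde Y/p^*G$ is minimal.
\end{itemize}
Building a $G$-torsor over $Y$ directly would make freeness automatic, which is a real gain. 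However, the step carrying that content is wrong as written. $Y_t$ is a divisor, so a line bundle on $Y_0$ does not extend uniquely: you can twist by any $\mathcal O_Y(\sum b_iF_i)$. Reflexive uniqueness needs codimension $\ge 2$, and an arbitrary extension of an $e$-torsion bundle need not be $e$-torsion. Two repairs are available.
\begin{itemize}
\item Take a common resolution $\alpha\colon W\to Y$, $\beta\colon W\to X_T$. If $\beta^*M_\chi=\mathcal O_W(D)$ with $eD=\mathrm{div}(h)$, then $e\,\alpha_*D=\mathrm{div}_Y(h)$, since principal divisors push forward to principal divisors. The multiplication maps extend the same way.
\item Alternatively, note that the classes of $M_\chi$ lie in $\check P^\circ$, and $p^*$ sends $\check P^\circ_T$ into $\check Q^\circ=\Pic^\circ_g$, which is separated since $g$ is non-multiple. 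Then lift via the Leray sequence as in \Cref{lem:untangle primitive}.
\end{itemize}

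\textbf{Part (2).} Injectivity of $\tau$ does not follow from freeness. Every subgroup of $\tilde Q_t$ acts freely on the torsor $\tilde Y_t$; think of $(0,\sigma(g))$. What you need is $G\cap\tilde P^\circ_s=0$ from \Cref{prop:untangle for unstable}(2), together with $(p^*)^{-1}(\{0\}\times A_s)=\tilde P^\circ_s$. The same fact is needed in (3) to see that $G\cap\tilde Q_t^\circ$ has at most one nonzero element.

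Your argument for $\sigma$ is a valid and more direct alternative to the paper's contradiction argument, once spelled out. You must work in the decomposition where $\tilde\varphi$ is trivial on $A_s$. Then $\pr_2$ on $\tilde Y_t$ is identified, through the $\tilde P$-equivariant birational map $\tilde Y/\tilde\varphi\dashrightarrow\tilde X$, with the Albanese map of $\tilde X_s$, on which $G$ acts by its defining translations.

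\textbf{Part (3).} The parity analysis is left open, and your sketch misses $G=\ZZ/2$ with $r$ odd. The missing computation is as follows. From $\psi(i,z,y)=(-i,\zeta^iz^{-1},y)$, the group $\tilde Q_t^\psi$ lies over $i\in\{0,kr\}$, with $z^2=1$ over $0$ and $z^2=(-1)^r$ over $kr$.
\begin{itemize}
\item For odd $r$, the generator $(kr,\sqrt{-1},\ast)$ of $\pi_0(\tilde Q_t^\psi)\cong\ZZ/4$ has double $(0,-1,\ast)\in\tilde Q_t^\circ$. So every nonzero $G$ meets $\tilde Q_t^\circ$.
\item For even $r$, the group is $(\ZZ/2)^{\times 2}$ with exactly one nonzero class over $i=0$, so both answers occur.
\end{itemize}

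Finally, the Albanese claim is not a formal consequence of $G$ acting freely on $C\times A_s$. An element rotating $C$ by $kr$ acts freely while possibly fixing $A_s$, so you need $\sigma$ injective here as well. Your $\sigma$-argument from (2) applies. The paper instead derives a contradiction with \Cref{prop:semistable reduction theorem untangled ver} applied to $\tilde X/g$.
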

		
		\begin{definition} \label{def:isogeny and conjugate of Kodaira type}
			The \emph{conjugate} of a Kodaira type $\mathrm T = \mathrm{II}$, $\cdots$, $\mathrm{IV}^*/3$ is $\mathrm T^*$. Two Kodaira types are \emph{isogenous} if they are the same up to Albanese stabilizers.
		\end{definition}
		
		For example, the conjugate of $\mathrm{III}^*/2$ is $\mathrm{III}/2$, and $\mathrm I_r^*/2$ and $\mathrm I_r^*/4$ are isogenous Kodaira types.
		
		\begin{corollary} \label{cor:semistable reduction theorem}
			Let $f : X \to S$ be an unstable minimal $\delta$-regular abelian fibration and $g : Y \to T$ its arbitrary semistable base change of degree $d$. Then
			\begin{enumerate}
				\item Up to isogeny and conjugate, the Kodaira type of $f$ is determined by the type of $g$ and the order of the inertia automorphism $\psi$ on $Q_t$ as in \Cref{table:semistable reduction theorem}.
				
				\item Assume $g$ is smooth and $d = d'$ is minimal in \Cref{prop:semistable reduction theorem untangled ver}(1). Assume that the generator of $\mu_d$ acts on $T$ by $\omega$. Then $f$ has a $*$-type if and only if $\psi$ acts on the elliptic curve $E \subset Q_t$ by $\omega^{-1}$.
				
				\item The Albanese stabilizer $G$ of $f$ is captured by the non-triviality of the central fiber $Y_t$ of $g$.
			\end{enumerate}
		\end{corollary}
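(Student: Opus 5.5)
We would deduce the corollary directly from \Cref{prop:semistable reduction theorem untangled ver} and \Cref{prop:semistable reduction theorem tangled ver}, treating the three items in turn and reducing to the untangled case wherever possible.

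\emph{Reduction for (1).} First pass from $f$ to its untangle $\tilde f$ (\Cref{prop:untangle for unstable}). By \Cref{prop:semistable reduction theorem tangled ver}, any semistable base change $g$ of $f$ is a free $G$-quotient of a semistable base change $\tilde g$ of $\tilde f$. Since $G$ acts by translations through $\tilde Q$, and $0 \to G \to \tilde Q \to Q \to 0$ is exact, $\tilde g$ and $g$ have the same type: both are smooth, or both are strictly semistable of type $\mathrm I_{2kr}$ up to the index of $G \cap \tilde Q^\circ_t$. Moreover, the inertia automorphism $\psi$ on $Q_t$ has the same order as on $\tilde Q_t$. To see this, note that $\psi$ commutes with the $G$-action, because $G$ is defined over $S$. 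We then read the order of $\psi$ off the linear part or the $E$-factor, on which the quotient by the finite group $G$ does not change the order of the induced automorphism.

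\emph{Proof of (1).} Having reduced to the untangled case, \Cref{prop:semistable reduction theorem untangled ver} gives the answer explicitly. If $g$ is smooth with $\psi$ of order $6,4,3,2$ on $E$, then $f$ is of type $\mathrm{II}/\mathrm{II}^*$, $\mathrm{III}/\mathrm{III}^*$, $\mathrm{IV}/\mathrm{IV}^*$, or $\mathrm I_0^*$ respectively. If $g$ is of type $\mathrm I_{2kr}$, then $f$ is of type $\mathrm I_r^*$, with $r$ recovered from the number of components and $k$ from the order $2$ of $\psi$ together with $d=2k$. This gives \Cref{table:semistable reduction theorem}.

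\emph{Proof of (2).} Here $d=d'$ and the generator acts on $T$ by $\omega$. We compare the action of $\psi$ on the tangent space $\mathrm{Lie}(E)$ with its action on the uniformizer of $T$. For a smooth $Y$, $\mathrm{Lie}(Q_t)$ is the relative tangent space at $0$, and $\psi$ acts on $E$ by $\omega^{\pm1}$. The sign is detected by the quotient $Y/\varphi$. Near a fixed point, $\varphi$ acts on the local coordinates $(x,t)$ of the surface $E\times T$ (after slicing by $A_s$) by $(\omega^{\pm1}x,\omega t)$. If the weights are $(\omega^{-1},\omega)$, the action lies in $\mathrm{SL}_2$ and the quotient is Du Val; its crepant resolution gives a $*$-free type ($\mathrm{II}$, $\mathrm{III}$, $\mathrm{IV}$). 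If the weights are $(\omega,\omega)$, the quotient is a cyclic quotient singularity whose minimal resolution gives the $*$-types. This is exactly Kodaira's classical computation for elliptic surfaces, which we apply fiberwise over $A_s$. We expect this step to be the main obstacle: we must check carefully that the $A_s$-factor does not interfere with the weights, and that the isogeny quotient by $G$ preserves the conjugate/non-conjugate distinction. Both should follow from the diagonal structure in \Cref{prop:semistable reduction theorem tangled ver}(2).

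\emph{Proof of (3).} If $G=0$, then $Y_t$ is the product $E\times A_s$ or $C\times A_s$ by \Cref{prop:semistable reduction theorem untangled ver}. If $G\neq 0$, then $Y_t=\tilde Y_t/G$ with $G$ acting diagonally and freely on the $A_s$-factor through the injection $\sigma$. Since $\tau$ (respectively the component $-1$ in the $\mathrm I_r^*$ case) is nontrivial, $Y_t$ is a nontrivial bundle over $\Alb_t=A_s/G$, and its untangling stabilizer is exactly $G$. Conversely, applying \Cref{prop:untangle for unstable} to $g$ recovers $G$.
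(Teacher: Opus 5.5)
Your overall strategy is the paper's: it reads the corollary off \Cref{prop:semistable reduction theorem untangled ver}, \Cref{prop:semistable reduction theorem tangled ver}, and the remark after \Cref{lem:semistable reduction theorem:varphi}. However, your argument for (2) has the weight-to-type correspondence reversed, so as written it proves the negation of (2).

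Take the case where $\psi$ acts on $E$ by $\omega^{-1}$. The weights $(\omega^{-1},\omega)$ have determinant $1$, so $Y/\varphi$ has Du Val singularities along the $A_s$-orbits. The minimal model is then a \emph{crepant} resolution $X\to Y/\varphi$. Since $s=t^d$, the image of $Y_t$ has multiplicity $d$ in the central fiber (\Cref{lem:1/d X_s}), and its strict transform survives in $X$. Hence $X_s$ is non-reduced, i.e.\ of type $\mathrm I_0^*$, $\mathrm{II}^*$, $\mathrm{III}^*$, or $\mathrm{IV}^*$. For example, when $d=3$ the three $A_2$ points give $\tilde E_6=\mathrm{IV}^*$. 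It can never be $\mathrm{II}$, $\mathrm{III}$, or $\mathrm{IV}$, whose fibers are reduced.

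Conversely, take weights $(\omega,\omega)$ with $d\ge 3$. The quotient then has non-canonical cyclic quotient points, e.g.\ $\tfrac1d(1,1)$ at the origin. After resolving them, the strict transform of the image of $E$ becomes, in the surface slices, a $(-1)$-curve of multiplicity $d$. It must be contracted, and this yields $\mathrm{II}$, $\mathrm{III}$, or $\mathrm{IV}$. For $d=3$: three $(-3)$-curves appear, and contracting the central $(-1)$-curve leaves three $(-2)$-curves through one point.

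Your assignment is also visibly inconsistent at $d=2$. There $(-1,-1)$ falls into both of your cases, yet it produces the $*$-type $\mathrm I_0^*$. The correct chain is the paper's: $\psi$ acts by $\omega^{-1}$ $\Leftrightarrow$ $Y/\varphi$ is canonical $\Leftrightarrow$ $X\to Y/\varphi$ is a morphism $\Leftrightarrow$ $f$ has a $*$-type. You are right that neither the $A_s$-factor nor the free $G$-quotient changes the local weights.

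Two smaller points.

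\begin{itemize}
\item In (1), for type $\mathrm I_r^*$ the order of $\psi$ is $d=2k$, not $2$. This order is not visible on the linear part, where $\psi$ is just $z\mapsto z^{-1}$. It comes from $\psi(i,z,y)=(-i,\zeta^i/z,y)$ on the non-neutral components. It survives the passage to $Q_t=\tilde Q_t/G$ because $G\cap\GG_m=0$, as shown in the proof of \Cref{prop:semistable reduction theorem tangled ver}.
\item In (3), \Cref{prop:untangle for unstable} cannot be applied to $g$, since $g$ is semistable. The claim is simply \Cref{prop:semistable reduction theorem tangled ver}(2)--(3): $Y_t\to\Alb_t$ is the free diagonal $G$-quotient of $\pr_2 : C\times A_s\to A_s$.
\end{itemize}
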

		
		\begin{table}[h]
			\begin{tabular}{|c||c|c|c|c|c|} \hline
				Type of $\tilde X$ & $\mathrm{II}$, $\mathrm{II}^*$ & $\mathrm{III}$, $\mathrm{III}^*$ & $\mathrm{IV}$, $\mathrm{IV}^*$ & $\mathrm I_0^*$ & $\mathrm I_r^* \ (r \ge 1)$ \\ \hline
				Type of $\tilde Y$ & $\mathrm I_0$ & $\mathrm I_0$ & $\mathrm I_0$ & $\mathrm I_0$ & $\mathrm I_{2kr}$ \\ \hline
				$\ord \psi$ & $6$ & $4$ & $3$ & $2$ & $d = 2k$ \\ \hline
			\end{tabular}
			
			\caption{Unstable fibrations $X$ (up to Albanese stabilizer) are determined by $\ord \psi$ for any semistable base change $Y$.}
			\label{table:semistable reduction theorem}
		\end{table}
		
		The rest of this subsection is devoted to the proof of these results. We will keep using the notation and results in the previous subsection, and assume that $f$ has a trivial Albanese stabilizer until the last moment when we prove \Cref{prop:semistable reduction theorem tangled ver} and \Cref{cor:semistable reduction theorem}.
		
		\begin{lemma} \label{lem:inertia action is faithful}
			Consider the inertia $\langle \varphi \rangle$-action on $Y_t$. Then
			\begin{enumerate}
				\item If the inertia $\mu_d = \langle \varphi \rangle$-action on $Y_t$ is not faithful, then $g$ is smooth and there exists a unique factorization
				\[\begin{tikzcd}[column sep=normal]
					Y \arrow[r] \arrow[d, "g"] & Y^- \arrow[r] \arrow[d, "g^-"] & X \arrow[d, "f"] \\
					T \arrow[r, "\mu_{d/d'}"] & T^- \arrow[r, "\mu_{d'}"] & S
				\end{tikzcd},\]
				where $d' \mid d$, $g^-$ is smooth, and the inertia $\mu_{d'}$-action on $Y^-_t$ is faithful.
				
				\item $\varphi^i : Y_t \to Y_t$ has a fixed point for every $i \in \ZZ$.
			\end{enumerate}
		\end{lemma}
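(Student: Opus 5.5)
Here $f$ is unstable with trivial Albanese stabilizer, and $g : Y \to T$ is a semistable base change. The plan is to exploit the interplay between the inertia action $\varphi$ on $Y_t$ and the pullback $p^* : P_s \to Q_t$ of \Cref{thm:Edixhoven}.

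\emph{Item (2).} Let $\varphi^i$ be any power. By \Cref{lem:1/d X_s} and \Cref{lem:P-action on 1/d X_s}, the quotient $\bar X = Y/\varphi$ is birational to $X$ and $P$-equivariantly so (\Cref{prop:P-action on bar X}). Since $f$ is non-multiple, $X$ has a section through a reduced component (\Cref{lem:non-multiple fiber has reduced component}). After shrinking $S$ étale locally, this gives a section $\sigma : S \to X$. Pulling back to $T$ and extending by the valuative criterion produces a section $\sigma_T : T \to Y$ that is $\mu_d$-invariant on $T_0$, i.e. $\varphi \circ \sigma_T = \sigma_T \circ \zeta$. The identity holds over $T_0$, hence over $T$ by separatedness. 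Its value $\sigma_T(t) \in Y_t$ is then fixed by every $\varphi^i$, which proves (2).

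The delicate step is that the section of $X$ lifts to $Y$. The point is that $X_T \dashrightarrow Y$ is defined along the image of a section through the smooth locus: the section is a $T$-point of the Néron model $Y' = Y \setminus \Sing(Y_t)$, obtained by the Néron extension property. So it is fine.

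\emph{Item (1).} Suppose $\varphi^e$ acts trivially on $Y_t$ for some $e$ with $e \mid d$, $1 < e$, chosen maximal, and put $d' = d/e$. I would first show that $\psi^e$ is trivial on $Q_t$. This follows from \Cref{prop:varphi-action}(2): the relation $\varphi^e(a\cdot y) = \psi^e(a)\cdot\varphi^e(y)$ forces $\psi^e(a)\cdot y = a \cdot y$ for all $y$, and faithfulness (\Cref{prop:P-action property}) gives $\psi^e = \id$.

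Next, factor $T \to T^- = T/\mu_e \to S$ and let $g^- : Y^- \to T^-$ be the minimal base change to $T^-$. Then $Y$ is the minimal base change of $Y^-$ along $T \to T^-$, with inertia $\psi^{d'}$ generating the $\mu_e$-part. By \Cref{thm:Edixhoven} applied to $T \to T^-$, the pullback $Q^-_{t} \to Q_t$ has image $Q_t^{\psi^{d'}}$, which is all of $Q_t$, and unipotent kernel.

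It remains to show $g$ is smooth, i.e. that $Q_t$ has no $\GG_m$ part and $Y_t$ is not of type $\mathrm I_r$ with $r \ge 1$. This is where I expect the main obstacle. If $Q_t$ were strictly semistable, then \Cref{cor:Edixhoven}(2), applied to $Y^- \to T^-$ (or to $Y$ itself if $Q^-$ is unipotent-free), gives a strict enlargement $\pi_0$ by $\ZZ/e$ on which the inertia must act nontrivially. Concretely, the base change creates new components cycled by $\varphi$, as in the blowup description in that proof, and this contradicts $\varphi^e$ being trivial.

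Smoothness of $g^-$ then follows because $Q^-_t \to Q_t$ is an isogeny with unipotent kernel onto an abelian variety. Hence $Q^-_t$ is an abelian variety up to $\GG_a$; a $\GG_a$ part is excluded since otherwise $\psi$ would act nontrivially on a semistable reduction by \Cref{thm:Edixhoven}. Minimality of $e$ gives faithfulness of the $\mu_{d'}$-action on $Y^-_t$, and uniqueness of the factorization comes from uniqueness of minimal models (\Cref{prop:minimal delta-regular abelian fibration is unique}).
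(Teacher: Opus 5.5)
Both items go by routes different from the paper's. Item (2) is complete as it stands, and item (1) works after the repairs listed in the second paragraph.

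\textbf{Comparison.} For (2), the paper argues that if $\langle\varphi^i\rangle$ acted freely, then $Y/\varphi^i$ would be a regular fibration with a section and a central fiber of multiplicity $>1$. As written, this only rules out freeness of the subgroup $\langle\varphi^i\rangle$. Your argument is that the lifted section satisfies $\varphi\circ\sigma_T=\sigma_T\circ\zeta$ on $T_0$, hence on $T$. This produces a single point $\sigma_T(t)$ fixed by every $\varphi^i$, which is exactly what is used later as a reference point. The step you flag as delicate is just properness of $g$ (or the N\'eron property of $Y\setminus\Sing(Y_t)$).

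For (1), the paper's argument is local. The fixed scheme of $\varphi^k$ on the regular scheme $Y$ is regular and set-theoretically equal to $Y_t$, so $Y_t$ is smooth. The quotient by a group fixing a smooth divisor pointwise is then smooth over $T/\mu_k$. No semistability of $g$ and no N\'eron-model input is needed.

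Your argument is group-theoretic. The kernel $K$ of $\mu_d\to\Aut(Y_t)$ acts trivially on $Q_t$, by \Cref{prop:varphi-action}(2) and freeness of $Q_t$ on $Y_t\setminus\Sing(Y_t)$. So \Cref{thm:Edixhoven} for $T\to T/K$ makes $Q^-_{t^-}\to Q_t$ surjective, and the growth of $\pi_0$ in \Cref{cor:Edixhoven}(2) excludes a strictly semistable $Q_t$. This relies on $g$ being semistable and on the semistable component-group computation, but it yields $Q^-_{t^-}\cong Q_t$ as a bonus.

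\textbf{Repairs needed in (1).}

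First, your indexing is inconsistent. If $\varphi^e$ acts trivially, the trivially acting subgroup $\langle\varphi^e\rangle$ has order $d/e$, and $e$ should be chosen minimal, not maximal. Yet you quotient by $\mu_e$ and use $\psi^{d/e}$. Work directly with $K$.

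Second, applying \Cref{cor:Edixhoven}(2) to $T\to T^-$ requires $Q^-$ to have linear part $\GG_m$, which you did not check. It follows because $Q^-_{t^-}\to Q_t$ is a surjection between $n$-dimensional groups with unipotent kernel, hence an isomorphism in characteristic $0$.
\begin{itemize}
\item That same isomorphism is the correct reason $Q^-$ has no $\GG_a$-part. The reason you give, about $\psi$ acting nontrivially on a semistable reduction, does not apply.
\item Your picture of ``new components cycled by $\varphi$'' is inaccurate, since in that situation the inertia preserves each component and acts on it by roots of unity. It is also unnecessary: the exact sequence in \Cref{cor:Edixhoven}(2) already gives $\im p^*\neq Q_t$, contradicting $\im p^*=Q_t^K=Q_t$.
\end{itemize}

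Third, $Q^-$ being abelian only gives $Y^-_{t^-}=m\cdot(\text{abelian torsor})$ by \Cref{lem:classification when P is smooth}. To conclude that $g^-$ is smooth you need $m=1$. Your own section argument supplies this: the section of $X$ extends to the regular $Y^-$, so $Y^-_{t^-}$ has a reduced component. Alternatively, take $Y^-=Y/K$ as the paper does. Then $Y^-\times_{T^-}T$ is smooth, hence it is the minimal model $Y$, which gives the morphism $Y\to Y^-$ in the diagram.
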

		\begin{proof}
			(1) Assume $\varphi^k$ acts as the identity on $Y_t$ for $d = kd'$. Then the fixed locus of $\varphi^k : Y \to Y$ is set-theoretically $Y_t$ and forces $Y_{t, \red}$ to be regular (the fixed locus of a finite order automorphism of a regular scheme is again regular). Since $Y$ is not multiple, this means $Y_t$ is regular whence $Y \to T$ is smooth. The intermediate quotient $g^- : Y^- = Y / \varphi^k \to T^- = T/\mu_k$ is again an abelian torsor.
			
			(2) Suppose we have $d = ij$ for integers $1 < i, j < d$ and consider a $\mu_j = \langle \varphi^i \rangle$-action on $Z$. Since $f : X \to S$ has a section (after shrinking $S$), so does $Y / \varphi^i \to T/\mu_j$. But this has a central fiber of multiplicity $j$ by \Cref{lem:1/d X_s}, so the total space $Y / \varphi^i$ cannot be regular. Hence the $\langle \varphi^i \rangle$-action on $Y_t$ cannot be free.
		\end{proof}
		
		Thanks to \Cref{lem:inertia action is faithful}(1), up to replacing $Y$ with $Y^-$ we may assume $d = d'$ and the inertia $\langle \varphi \rangle$-action is faithful. The pullback homomorphism in \eqref{eq:pullback of t-automorphism} is
		\begin{equation} \label{eq:prop:semistable reduction theorem for unstable}
			p^* : P_s \cong \pi_0(P_s) \times \GG_a \times A_s \to Q_t ,
		\end{equation}
		and it necessarily has $\ker p^* = \GG_a$ and $\im p^* = Q_t^\psi = \pi_0(P_s) \times A_s \subset Q_t$ by \Cref{thm:Edixhoven}.
		
		\begin{lemma} \label{lem:semistable reduction theorem:splitting}
			\begin{enumerate}
				\item $\bar X_s \cong d \cdot \bar C \times A_s$ and $\frac{1}{d} \bar X_s \cong \bar C \times A_s$ for a proper curve $\bar C$.
				\item Taking $A_s \subset P_s$, the $A_s$-action on $\frac{1}{d} \bar X_s = \bar C \times A_s$ is only on the second factor by translations.
			\end{enumerate}
		\end{lemma}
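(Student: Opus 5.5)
We would work with the Albanese morphism of the quotient $\frac{1}{d}\bar X_s = Y_t/\varphi$, using the $P_s$-action from \Cref{lem:P-action on 1/d X_s} together with the structure of $p^*$ in \eqref{eq:prop:semistable reduction theorem for unstable}.

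The first step is to pin down the action of $P_s$ on $Y_t$. By \eqref{eq:prop:semistable reduction theorem for unstable} the image $Q_t^\psi$ is $\pi_0(P_s)\times A_s$. The factor $A_s\subset Q_t^\psi$ commutes with $\varphi$ and acts on $Y_t$ through $Q_t$. On the regular locus $Y_t'$ this action is free, by \Cref{prop:P-action property}(2).

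The second step uses the splittings already available. In case (1) of \Cref{prop:semistable reduction theorem untangled ver} we expect $Y_t = Q_t = E\times A_s$. In case (2) we expect $Y_t = C\times A_s$, obtained from the untangled structure via the trivial Albanese stabilizer. However, that proposition is what we are proving, so we must not use it. Instead we argue directly. Since $\Alb_t$ carries a transitive $Q_t^\circ$-action (\Cref{lem:transitive action on Albanese}), the Albanese morphism $\alb: Y_t\to \Alb_t$ is $A_s$-equivariant. The subgroup $A_s\subset Q_t$ is the image of the abelian part of $P_s$, and $\ker p^*=\GG_a$ is finite-kernel on $A_s$. Hence $A_s$ acts on $\Alb_t$ with finite stabilizer $K$.

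The crucial point is to show $K=0$. We plan to deduce this from the triviality of the Albanese stabilizer of $f$. The map $Y_t \to \frac{1}{d}\bar X_s \to \bar X_s{}_{\red}$ commutes with $A_s$, and $\bar X$ is birational and $P$-equivariant to $X$ by \Cref{prop:P-action on bar X}. So $A_s$ acts on $\Alb$ of the reduced central fiber of $X$ freely, by the assumption $G=0$. Through the equivariant composite this forces $A_s$ to act freely on some orbit quotient of $Y_t$, and therefore $K=0$.

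Granting $K=0$, pick a fibre $\bar C$ of the $A_s$-equivariant map $\frac1d\bar X_s\to \Alb(\frac1d \bar X_s)$, which is again an $A_s$-torsor. The argument of \Cref{prop:untangle for unstable}(1) then applies: a free $A_s$-action with a section trivializes, giving $\frac1d\bar X_s\cong \bar C\times A_s$. By construction $A_s$ acts only on the second factor, by translations, which proves (2). For the statement about $\bar X_s$, \Cref{lem:1/d X_s} gives $\bar X_s = d\cdot\frac1d\bar X_s$ as divisors. We will read the equality $\bar X_s\cong d\cdot \bar C\times A_s$ in that sense: $\bar X_s$ is supported on $\bar C\times A_s$ with multiplicity $d$.

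The main obstacle is the freeness step $K=0$. Relating the Albanese torsor of $\frac1d\bar X_s$ to that of $X_s$ across the birational map $\bar X\dashrightarrow X$ is delicate, because non-reduced structure and contracted components may change $\Alb$. If the direct comparison fails, we would fall back on applying \Cref{lem:untangle primitive} to $\bar X$'s minimal model. Any nontrivial $K$ would then produce a nontrivial Albanese stabilizer for $f$, contradicting the hypothesis.
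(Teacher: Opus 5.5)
There is a genuine gap, and it sits exactly at the step you flag. The problem is not only that the step is delicate: your obstruction $K$ is attached to the wrong space. Since $\ker p^*=\GG_a$, the factor $A_s$ embeds in $Q_t$. When $g$ is smooth, $Y_t=\Alb_t$ is a $Q_t$-torsor, so $K=0$ holds automatically, whatever the Albanese stabilizer of $f$ is. It therefore cannot imply the conclusion. Your argument would run verbatim in the tangled situation of \Cref{prop:semistable reduction theorem tangled ver}(2), where again $K=0$. There, however, $\frac1d\bar X_s=(E\times A_s)/(\langle\tilde\varphi\rangle\times G)=(\PP^1\times A_s)/G$, with $G$ acting diagonally and nontrivially on $\PP^1=E/\langle\omega\rangle$. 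So $A_s$ has nontrivial stabilizers on $\frac1d\bar X_s$, and $\Alb(\frac1d\bar X_s)=A_s/\sigma(G)$. Hence the sentence ``granting $K=0$, \ldots\ $\Alb(\frac1d\bar X_s)$ is again an $A_s$-torsor'' is an unproved, and as stated false, inference. What you must show is that $A_s$ acts freely on $\frac1d\bar X_s$ itself, and transitively on an $(n-1)$-dimensional Albanese. This is precisely where $G=0$ has to enter.

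Your way of bringing in $G=0$ does not work either. There is no equivariant morphism from $\frac1d\bar X_s$ (or $\bar X_{s,\red}$) to $X_s$ or to its Albanese. The map $\bar X\dashrightarrow X$ is only birational and is undefined along $A_s$-orbits in the central fibre, where it in general extracts and contracts divisors. So freeness on the $X$ side cannot be pulled back through a ``composite''. The fallback is circular: the minimal model of $\bar X$ is $X$ itself, which is already untangled, and \Cref{lem:untangle primitive} does not apply to $\bar X$, which is neither minimal nor non-multiple.

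The missing idea is the paper's. By \Cref{prop:P-action on bar X} the map $\bar X\dashrightarrow X$ is $P$-equivariant, so its indeterminacy loci are $P_s$-invariant of dimension $\le n-1$, hence unions of $A_s$-orbits, and the modification happens only along such orbits. Since $X$ is untangled, $X_s=C\times A_s$ with $A_s$ freely translating the second factor, and this product structure propagates to $\bar X_s$ and $\frac1d\bar X_s$.

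To make the freeness precise, argue on the graph $\Gamma$. Off the indeterminacy locus, $\St_x$ is contained in the stabilizer of the image point in $X_s$, which is trivial. At an indeterminacy point $x$, the image of $\Gamma_x$ in $X_s$ is connected and rationally chain connected ($\bar X$ is klt; argue as in the proof of \Cref{prop:minimal delta-regular abelian fibration is unique}). Hence it lies in a single slice $C\times\{a\}$. Its $\St_x$-invariance then forces $\St_x$ to fix $a$, so $\St_x=0$.
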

		\begin{proof}
			By \Cref{prop:P-action on bar X}, the birational map $\bar X \dashrightarrow X$ is $P$-equivariant. Hence its undefined loci are $P_s$-invariant closed subsets of $\bar X$ and $X$ of dimension $\le n-1$. By $P^\circ_s = \GG_a \times A_s$ and \Cref{lem:P-orbit has dimension n-1} they are $A_s$-orbits with finite stabilizers. This means $\bar X$ and $X$ can be connected by a sequence of blowups and blowdowns along $A_s$-orbits. Since $X$ is untangled, the $A_s$-action on $X_s = C \times A_s$ is free and acting only on the second factor by translations. Therefore, all intermediate birational models connecting $\bar X$ and $X$ have central fibers of the form $C_i \times A_s$ with translation $A_s$-action on the second factor. In particular, same holds for $\bar X_s$ and hence $\frac{1}{d} \bar X_s$.
		\end{proof}

		\subsubsection{When $g$ is smooth}
			The abelian fibration $g$ is either smooth or strictly semistable. Let us first consider the case when $g$ is smooth. Then $Q_t$ is an abelian variety and $Y_t$ is a $Q_t$-torsor. Identify them using a $\varphi$-fixed point $0 \in Y_t$ in \Cref{lem:inertia action is faithful}(2) as a reference point. Then the two inertia actions $\psi$ on $Q_t$ and $\varphi$ on $Y_t$ can be identified as well, thanks to \Cref{prop:varphi-action}
			\[ \varphi(a \cdot 0) = \psi(a) \cdot \varphi(0) = \psi(a) \cdot 0 \qquad\mbox{for}\quad a \in Q_t .\]
			As a result, we do not need to distinguish $Q_t$ and $Y_t$ (and their inertia actions).
			
			\begin{lemma} \label{lem:semistable reduction theorem:Qt splits}
				$Q_t = Y_t \cong E \times A_s$ for an elliptic curve $E$. 
			\end{lemma}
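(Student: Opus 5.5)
The plan is to read off the structure of $Q_t$ from the pullback homomorphism \eqref{eq:prop:semistable reduction theorem for unstable} and the inertia action. By \Cref{thm:Edixhoven}, the inertia-invariant subgroup is $Q_t^\psi = \im p^* = \pi_0(P_s) \times A_s$. Its neutral component $A_s$ is therefore an abelian subvariety of $Q_t$ of dimension $n-1$. Since $g$ is smooth, $Q_t$ is an abelian variety of dimension $n$. Poincaré reducibility then gives an isogeny $A_s \times E \to Q_t$. Here we take $E = ((1-\psi)Q_t)^\circ$, or equivalently the identity component of the kernel of the norm $N = 1 + \psi + \cdots + \psi^{d-1}$. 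This $E$ is a $\psi$-stable elliptic curve on which $\psi$ acts without nonzero invariant tangent vectors, because the invariant part of $\mathrm{Lie}(Q_t)$ is $\mathrm{Lie}(A_s)$.

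The main step is to upgrade this isogeny to an honest product decomposition $Q_t = E \times A_s$. For that we need $A_s \cap E = 0$ inside $Q_t$ and $A_s + E = Q_t$.

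Surjectivity is a dimension count. For the intersection, I would use the geometry of $Y_t = Q_t$ together with \Cref{lem:semistable reduction theorem:splitting}. The quotient $\frac{1}{d}\bar X_s = Y_t/\varphi$ is isomorphic to $\bar C \times A_s$, and $A_s \subset P_s$ acts by translation on the second factor only. Since $p^*$ restricted to $A_s$ is injective, $A_s$ acts freely on $Y_t$ through translations by $p^*(A_s)$, and this action commutes with $\varphi$. Consider the projection $Y_t \to Y_t/\varphi \cong \bar C \times A_s \to A_s$. It is $A_s$-equivariant, with $A_s$ acting on the target by translation, and it restricts on $p^*(A_s) \cdot 0$ to an isomorphism onto $A_s$ after normalizing at the $\varphi$-fixed point $0$. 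This gives a retraction $Q_t \to A_s$ of the inclusion $A_s \hookrightarrow Q_t$, so $Q_t \cong A_s \times \ker$.

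The kernel is then a one-dimensional connected proper group, that is, an elliptic curve. Connectedness follows because its fibers are the fibers of a locally trivial map with connected total space. This kernel is $\psi$-stable, since the retraction is $\varphi$-invariant, and I identify it with $E$.

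I expect the main obstacle to be checking that the retraction is a group homomorphism and not merely an equivariant morphism of torsors. I would handle this by rigidity: a morphism of abelian varieties sending $0$ to $0$ is a homomorphism. The composite is a morphism of abelian varieties sending the $\varphi$-fixed point $0$ to $0 \in A_s$, so rigidity applies. One must also check that the projection from $\bar C \times A_s$ indeed exists $A_s$-equivariantly, and that its fiber through $0$ is connected. For the latter I would use the Stein factorization, arguing as in \Cref{lem:E normalization structure-unstable}, to absorb any finite disconnectedness into an isogeny of $A_s$ by a finite subgroup. Freeness of the $A_s$-action on $Y_t$ shows that this subgroup is trivial.
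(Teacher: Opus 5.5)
Your proposal is correct and is essentially the paper's argument: the composite $Y_t \to Y_t/\varphi \cong \bar C \times A_s \to A_s$ is $A_s$-equivariant, and the $A_s$-orbit through the $\varphi$-fixed point $0$ is a section of it. Together with your rigidity remark making the retraction $\rho$ a homomorphism, this splits $Q_t \cong A_s \times \ker\rho$. The Poincar\'e-reducibility preamble is not needed, and connectedness of $\ker\rho$ is immediate because $q \mapsto q - p^*\rho(q)$ maps $Q_t$ onto it; the general principle you cite, that a locally trivial map with connected total space has connected fibers, fails for isogenies.
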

			\begin{proof}
				We had $\frac{1}{d} \bar X_s \cong \bar C \times A_s$ from \Cref{lem:semistable reduction theorem:splitting}. The composition $Y_t \to Y_t / \varphi = \frac{1}{d} \bar X_s \to A_s$ is $A_s$-equivariant whence locally trivial. Any $A_s$-orbit in $Y_t$ constructs its section, so $Y_t \cong E \times A_s$ for an elliptic curve $E$.
			\end{proof}
			
			\begin{lemma} \label{lem:semistable reduction theorem:varphi}
				Choosing an appropriate isomorphism $Q_t = Y_t \cong E \times A_s$, the inertia action is of the form
				\[ \psi(x,y) = \varphi (x,y) = (\omega^{\pm1} x, y) \qquad\mbox{for}\quad (x,y) \in E \times A_s .\]
			\end{lemma}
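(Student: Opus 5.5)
We want to show that, under $Q_t = Y_t \cong E \times A_s$ from \Cref{lem:semistable reduction theorem:Qt splits}, the inertia automorphism $\psi$ fixes the $A_s$-factor pointwise and acts on $E$ by multiplication by a root of unity. The plan is to first pin down the subgroup $Q_t^\psi$, then use rigidity of automorphisms of abelian varieties, and finally choose the splitting so that both factors are $\psi$-stable.

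First, since $\psi$ is a group automorphism of the abelian variety $Q_t$ (\Cref{prop:psi-action}(2)), we can use \Cref{thm:Edixhoven} together with \eqref{eq:prop:semistable reduction theorem for unstable}. This gives $Q_t^\psi = \im p^* = \pi_0(P_s) \times A_s$, so the neutral component of the $\psi$-invariants is the abelian subvariety $A_s \subset Q_t$, which has codimension one. Via \Cref{lem:semistable reduction theorem:splitting}, this $A_s$ is exactly the $A_s$-orbit direction used to build the product decomposition in \Cref{lem:semistable reduction theorem:Qt splits}. Hence $\psi$ restricts to the identity on $A_s = \{0\} \times A_s$.

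Next, the endomorphism $1 - \psi$ of $Q_t$ kills $A_s$, so it factors through the quotient $Q_t / A_s \cong E$. Its image $E' \coloneq \im(1-\psi)$ is a $\psi$-stable abelian subvariety of dimension $\le 1$, and it is nonzero because $\psi$ is nontrivial (\Cref{lem:inertia action is faithful}). Moreover $E' \cap A_s$ is finite, since otherwise $\psi$ would have invariant part of dimension $n$. By Poincaré reducibility, the addition map $E' \times A_s \to Q_t$ is an isogeny, and it is $\psi$-equivariant.

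The main obstacle is upgrading this isogeny to an actual isomorphism compatible with the product structure already fixed in \Cref{lem:semistable reduction theorem:Qt splits}. My approach is to instead choose the complement $E$ as the $\psi$-stable curve $E' = \im(1-\psi)$ and check that $E' \cap A_s = 0$. For this I would use that the $\varphi$-quotient $\frac{1}{d}\bar X_s = \bar C \times A_s$ is a product with $A_s$ acting freely by translation on the second factor (\Cref{lem:semistable reduction theorem:splitting}). If $E'$ met $A_s$ nontrivially, some translation by an element of $A_s$ would be identified on the quotient with a $\psi$-orbit, which would make the $A_s$-action on $\frac{1}{d}\bar X_s$ non-free, a contradiction. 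Granting this, $Q_t = E' \times A_s$ is $\psi$-equivariant with $\psi = (\psi|_{E'}, \id)$.

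Finally, $\psi|_{E'}$ is a group automorphism of an elliptic curve of finite order $d' > 1$, so $\psi|_{E'} = \omega^{\pm 1}$ for a primitive $d'$-th root of unity $\omega$ in $\Aut(E', 0)$. Since $\varphi$ on $Y_t$ is identified with $\psi$ via a $\varphi$-fixed reference point (\Cref{lem:inertia action is faithful}(2)), the same formula holds for $\varphi$. The sign ambiguity simply records the choice of identification of $\mu_d$ with $\langle\omega\rangle$.
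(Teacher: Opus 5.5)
Your argument is correct and is essentially the paper's proof written without coordinates. The paper writes $\psi(x,y)=(\alpha x,\,y+\xi(x))$ and uses the same freeness of the $A_s$-action on $\frac1d\bar X_s=\bar C\times A_s$ to get $\xi|_{E^\alpha}=0$, which is equivalent to your $E'\cap A_s=0$; it then shears by $u(x,y)=(x,y+\eta(x))$ with $\xi=\eta\circ(1-\alpha)$, and the resulting $E$-factor is exactly your $E'=\im(1-\psi)$.

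The paper's proof contains one more step that you leave out: identifying $\omega$ with the type-dependent root of unity of \Cref{prop:semistable reduction theorem untangled ver}(1)(a)--(d). In your framework this is short. From $Q_t^\psi=(E')^{\alpha}\times A_s$ you get $\pi_0(P_s)\cong (E')^{\alpha}$, which has order $4,3,2,1$ for $\alpha$ of order $2,3,4,6$. Matching this against the already known $|\pi_0(P_s)|$ determines $\alpha$, and with it $j(E)$.
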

			\begin{proof}
				Start with an isomorphism $Q_t \cong E \times A_s$ in \Cref{lem:semistable reduction theorem:Qt splits}. Recall from \eqref{eq:prop:semistable reduction theorem for unstable} that $\{ 0 \} \times A_s \subset Q_t$ is fixed by $\psi$. Hence the group automorphism $\psi$ of an abelian variety $E \times A_s$ is necessarily of the form
				\[ \psi(x,y) = (\alpha x, y + \xi(x)) \qquad\mbox{for}\quad \alpha \in \Aut_0(E) ,\quad \xi : E \to A_s \mbox{ homomorphism} .\]
				If $x \in E^\alpha$, then $\varphi = \psi$ acts on $\{ x \} \times A_s$ as a translation by $\xi(x) \in A_s$. If it is a nontrivial translation, then the quotient $\frac{1}{d} \bar X_s = (E \times A_s)/\varphi$ cannot be of the form $\bar C \times A_s$ in \Cref{lem:semistable reduction theorem:splitting}. This proves $\xi(x) = 0$ for every $x \in E^\alpha$. The isogeny $1 - \alpha : E \to E$ has a kernel $E^\alpha$, so this factorizes $\xi$ into
				\[\begin{tikzcd}[column sep=tiny]
					E \arrow[rr, "\xi"] \arrow[rd, "1-\alpha"'] & & A_s \\
					& E \arrow[ru, "\eta"']
				\end{tikzcd}, \qquad\mbox{where } \ \eta : E \to A_s \mbox{ is a homomorphism} .\]
				Define an automorphism $u$ on $E \times A_s$ by $u(x,y) = (x, y + \eta(x))$. Direct computation shows $u \circ \psi \circ u^{-1}$ is an automorphism on $E \times A_s$ by $(x,y) \mapsto (\alpha x, y)$.
				
				To show $\alpha = \omega^{\pm1}$, consider $\pi_0(P_s) = \pi_0(Q_t^\psi) = \pi_0 (E^\alpha)$. The order of $\pi_0(P_s)$ is already decided at the end of the proof of \Cref{thm:classification of unstable fibers}: it is either $1$, $2$, $3$, or $4$. For each of the possibility, there exists a unique $\alpha \in \Aut_0(E)$ with $|E^\alpha| = |\pi_0(P_s)|$. This shows $\alpha = \omega^{\pm1}$ and $j(E)$ as desired in \Cref{prop:semistable reduction theorem untangled ver}.
			\end{proof}
			
			This completes the proof of \Cref{prop:semistable reduction theorem untangled ver}(1). In fact, we can describe the singularities of $\bar X = Y /\varphi$ in each type and hence the explicit decomposition of the birational map $\bar X \dashrightarrow X$ into a sequence of blowups and blowdowns, copying the process in \cite[\S V.10, p.209]{bar-hulek-pet-van:surface}. We also note the following equivalences, generalizing \cite[Proposition~4.24--25]{kim-laza-mar26}: $\psi$ acts on $E$ by $\zeta^{-1}$ $\Longleftrightarrow$ $\bar Y$ has canonical singularities $\Longleftrightarrow$ $Y \to \bar Y$ is a birational morphism $\Longleftrightarrow$ $f$ has a $*$-type $\mathrm I_0^*$, $\mathrm{II}^*$, $\mathrm{III}^*$, or $\mathrm{IV}^*$. If $d \neq 2$ and $\psi$ acts on $E$ by $\zeta$, then $f$ has a non-$*$-type $\mathrm{II}$, $\mathrm{III}$, or $\mathrm{IV}$.

		\subsubsection{When $g$ is strictly semistable}
			Suppose now $g$ is strictly semistable of type $\mathrm I_R$ for $R \ge 1$. In the previous subsection, we have shown that $g$ is smooth if and only if $X_s$ has type $\mathrm I_0^*$, $\mathrm{II}$, $\cdots$, or $\mathrm{IV}^*$. Hence $X_s$ in this case necessarily has type $\mathrm I^*_r$ for $r \ge 1$. Again the pullback homomorphism \eqref{eq:prop:semistable reduction theorem for unstable} must satisfy $\ker p^* = \GG_a$ and $\im p^* = Q_t^\psi = \pi_0(P_s) \times A_s \subset Q_t$ by \Cref{thm:Edixhoven}. This means $Q_t$ contains an abelian subvariety of dimension $n-1$, so $Q_t$ has a torsion shear by \Cref{cor:dimension of Albanese}. But more is true:
			
			\begin{lemma} \label{lem:semistable reduction theorem:Qt splits 2}
				$Q_t^\circ = \GG_m \times A_s$.
			\end{lemma}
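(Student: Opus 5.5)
We need to prove that the extension $0 \to \GG_m \to Q_t^\circ \to A_t \to 0$ splits, where $A_t$ is the abelian variety part of $Q_t$. The plan is to exhibit an abelian subvariety of $Q_t^\circ$ that maps isomorphically onto $A_t$, namely the image of $A_s$ under the pullback.

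First identify $A_t$. By \Cref{thm:Edixhoven}, $\ker p^*$ is unipotent, hence equal to $\GG_a$. The image is $Q_t^\psi = \pi_0(P_s) \times A_s$, where $A_s \subset P_s^\circ = \GG_a \times A_s$ is the unique splitting of \Cref{thm:splitting}. So $p^*$ restricts to a closed immersion $A_s \hookrightarrow Q_t^\circ$ of an $(n-1)$-dimensional abelian subvariety. Since $Q_t^\circ$ is a $\GG_m$-extension of an $(n-1)$-dimensional abelian variety $A_t$, consider the composition $A_s \hookrightarrow Q_t^\circ \to A_t$. Its kernel is $A_s \cap \GG_m$, which is finite because $A_s$ is proper and $\GG_m$ is affine. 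Comparing dimensions, the composition is an isogeny.

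It remains to show that this isogeny is an isomorphism, i.e.\ that $A_s \cap \GG_m = 0$ inside $Q_t^\circ$. Then $Q_t^\circ = \GG_m \cdot A_s \cong \GG_m \times A_s$. I expect this to be the main obstacle, since a priori $A_s$ could meet the torus in a finite subgroup. I would argue geometrically through the minimal model, as follows.

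By \Cref{prop:varphi-action}, the action of $Q_t^\psi \supset A_s$ on $Y_t$ commutes with $\varphi$ and descends to $\frac1d \bar X_s = \bar C \times A_s$. By \Cref{lem:semistable reduction theorem:splitting}(2), $A_s$ acts there freely by translation on the second factor. Any element of $A_s \cap \GG_m$ acts trivially on $\frac1d\bar X_s$, so it acts on $Y_t$ by an element of $\langle\varphi\rangle$ composed with stabilizers. On the other hand, $\GG_m$ stabilizes every component of $\Sing(Y_t)$ pointwise (\Cref{lem:stabilizer of Ir}), while the $\varphi$-quotient is \'etale generically over the smooth locus. So a nonzero $a \in A_s \cap \GG_m$ would fix $\Sing(Y_t)$ pointwise. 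It would then also act trivially on its image in $\frac1d\bar X_s$, which is a nonempty union of $A_s$-orbits on which $A_s$ acts freely. This contradicts freeness, so $A_s \cap \GG_m = 0$.

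If this geometric argument is awkward, a fallback is to use \Cref{prop:semistable reduction theorem tangled ver}-style bookkeeping of $\pi_0$ instead. Since $Y_t = C \times A_s$ locally trivially over $A_s$, one can argue as in \Cref{lem:semistable reduction theorem:Qt splits}: an $A_s$-orbit gives a section of $Y_t \to A_s$. Pulling back the Albanese map then shows $\Alb_t^\vee \to \Pic^\circ_{Y_t}$ is split, and one concludes via \Cref{prop:shear} that the shear is $0$.
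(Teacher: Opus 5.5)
Your main argument is correct and is essentially the paper's. Both proofs use the isogeny $A_s \to B_t$, whose kernel is $A_s \cap \GG_m$, and pull the freeness of the $A_s$-translation action on $\frac{1}{d}\bar X_s = \bar C \times A_s$ back along the equivariant quotient $Y_t \to \frac{1}{d}\bar X_s$ to the points of $\Sing(Y_t)$, whose stabilizer is the torus $\GG_m$; this forces $A_s \cap \GG_m = 0$. The paper phrases the same step as $A_s$ acting freely by translations on the components $B_t$ of $\Sing(Y_t)$.

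Some text should go. Delete the unjustified sentence claiming that elements of $A_s \cap \GG_m$ act trivially on $\frac{1}{d}\bar X_s$, and delete the irrelevant remark about \'etaleness. Also drop the fallback: it relies on $Y_t \cong C \times A_s$, which the paper establishes only later using this lemma, and it would show that the shear vanishes, i.e.\ that $\check Q_t^\circ$ splits, rather than that $Q_t^\circ$ splits.
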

			\begin{proof}
				As shown, $p^*$ induces an inclusion $A_s \hookrightarrow Q_t^\circ$. Take the Chevalley homomorphism $Q_t^\circ \to B_t$ to an abelian variety $B_t$ of dimension $n-1$. We claim that the composition $A_s \hookrightarrow Q_t^\circ \twoheadrightarrow B_t$ is an isomorphism. This will prove the desired splitting, which is unique because $\Hom(A_s, \GG_m) = 0$.
				
				According to the classification in \Cref{thm:classification of semistable fibers}, $\Sing (Y_t)$ is a disjoint union of isomorphic copies of $B_t$. Recall that $A_s \subset \im p^* = Q_t^\psi \subset Q_t$ acts on $\Sing(Y_t)$, and commutes with $\varphi$ by \Cref{prop:varphi-action}. Since the $A_s$-action on $\Sing(Y_t)$ is by translations, if $\varphi$ fixes a connected component $B_t \subset \Sing(Y_t)$ then it acts on it by translations (because it commutes with all translations). Hence $\Sing(Y_t)/\varphi$ is a disjoint union of quotients of $B_t$. But such a quotient has a free $A_s$-action since $\frac{1}{d} \bar X_s$ has a free $A_s$-action by \Cref{lem:semistable reduction theorem:splitting}. This forces the translation $A_s$-action on $B_t$ to be free, or $A_s = B_t$ as claimed.
			\end{proof}
			
			\begin{corollary} \label{cor:semistable reduction theorem:psi-action on neutral}
				\begin{enumerate}
					\item $R$ is even.
					\item The inertia $\langle \psi \rangle$-action on $Q_t \cong \ZZ/R \times \GG_m \times A_s$ satisfies
					\[ \psi(0,z,a) = (0,\tfrac{1}{z},a) ,\quad \psi(\tfrac{R}{2},z,a) = (\tfrac{R}{2},\pm \tfrac{1}{z},a) \qquad\mbox{for}\quad (z,a) \in \GG_m \times A_s .\]
				\end{enumerate}
			\end{corollary}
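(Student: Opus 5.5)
We work in the setting just established: $g$ is strictly semistable of type $\mathrm I_R$, the inertia $\langle\varphi\rangle$-action on $Y_t$ is faithful with $d = d'$, and $Q_t^\psi = \im p^* = \pi_0(P_s)\times A_s$. By \Cref{lem:semistable reduction theorem:Qt splits 2}, $Q_t^\circ = \GG_m\times A_s$. Combining this with the splitting of $\pi_0$ (\Cref{lem:pi_0 sequence splits}) and $\pi_0(Q_t) = \ZZ/R$ (\Cref{thm:classification of component group}), we write $Q_t \cong \ZZ/R\times\GG_m\times A_s$. The plan is to determine $\psi$ first on the neutral component and then on $\pi_0(Q_t)$, using the facts that $\psi$ is a group automorphism (\Cref{prop:psi-action}) and that its fixed subgroup has neutral component exactly $A_s$.

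First, on $Q_t^\circ = \GG_m\times A_s$, the automorphism $\psi$ preserves the unique splitting: $\GG_m$ is the linear part, and $A_s$ is the image of $p^*$, hence fixed pointwise. An automorphism of $\GG_m$ is $z\mapsto z^{\pm1}$. If it were the identity, then $Q_t^\psi\supset \GG_m\times A_s$, which contradicts $(Q_t^\psi)^\circ = A_s$. Hence $\psi(0,z,a) = (0,z^{-1},a)$.

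Next, $\psi$ induces an automorphism of $\pi_0(Q_t) = \ZZ/R$, which is multiplication by some unit $u$. I would argue $u = -1$ geometrically. The action of $\pi_0(Q_t)$ cycles the components $E_i$ of $Y_t$ (\Cref{cor:pi_0-action on X_s}), and by \Cref{prop:varphi-action} the map $\varphi$ intertwines this cycling with $\psi$. Since $\varphi$ inverts $\GG_m$, it swaps the sections $0$ and $\infty$ of each $\PP^1$-bundle component it preserves, so it reverses the cyclic orientation. An automorphism of the cycle graph that reverses orientation induces $i\mapsto -i$ plus a shift, so $u=-1$. (A fixed component exists after adjusting by a translation, using a $\varphi$-fixed point from \Cref{lem:inertia action is faithful}(2).) For $R=1$ the claim $R$ even must be shown separately; see below.

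To get $R$ even, I would consider the $\psi$-invariant elements of $\pi_0(Q_t)$ together with the exact sequence $0\to\pi_0(P_s)\to\pi_0(Q_t^\psi)$. Alternatively, I would use \Cref{lem:inertia action is faithful}(2): $\varphi$ has a fixed point on $Y_t$. As an orientation-reversing involution-type action on the cycle, its fixed points lie either on a component or on a node. I would combine this with the requirement from \Cref{lem:semistable reduction theorem:splitting} that $\tfrac1d\bar X_s$ has a free $A_s$-action and the form $\bar C\times A_s$, mimicking the parity argument in \Cref{lem:stabilizer of Ir}. Concretely, I would show that $\varphi$ fixes two opposite components $E_0$ and $E_{R/2}$ and no node, since a fixed node would force a $\GG_m$-stabilizer to interact with $\varphi$ badly. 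An orientation reversal of an $R$-cycle fixing a component and no node requires $R$ even. I expect this parity step to be the main obstacle and would try the component-group computation first: $\pi_0(Q_t)^{\psi}=\{0,R/2\}$ or $\{0\}$ according to parity, matched against $|\pi_0(P_s)|\in\{2,4\}$ for type $\mathrm I_r^*$.

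Finally, for the element $R/2$, write $\psi(R/2,1,0)=(R/2,c,b)$. The relation $\psi^2=\id$ on the relevant part, together with the element $2\cdot(R/2,\ast)$ lying in the neutral component where $\psi$ is known, forces $b=0$ up to adjusting the choice of splitting lift, and $c=\pm1$. This gives $\psi(R/2,z,a)=(R/2,\pm z^{-1},a)$. The sign ambiguity corresponds to whether the lift of $R/2$ lies in $Q_t^\psi$, which matches the $(\ZZ/2)^2$ versus $\ZZ/4$ dichotomy.
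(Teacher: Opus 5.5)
Your neutral-component step is correct and matches the paper. $\GG_m$ is characteristic, $A_s=\im p^*$ is fixed pointwise, and $\psi|_{\GG_m}$ cannot be trivial because $(Q_t^\psi)^\circ=A_s$. Hence $\psi(0,z,a)=(0,z^{-1},a)$, and $(Q_t^\circ)^\psi=\mu_2\times A_s$ has exactly two components.

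\textbf{The parity step.} You hedge among several routes, and the one you would try first is given an input that forces nothing. With $|\pi_0(P_s)|\in\{2,4\}$, the value $2$ is compatible with $R$ odd and $Q_t^\psi=(Q_t^\circ)^\psi$. You need $|\pi_0(P_s)|=4$ exactly, which holds for the following reasons.
\begin{itemize}
\item In this subsection the Albanese stabilizer is trivial, so $X_s=C\times A_s$ with $C$ of type $\mathrm I_r^*$.
\item $P_s\cong X_s'$ has one component for each of the four reduced components of $C$.
\item $\pi_0(Q_t^\psi)=\pi_0(P_s)$, since $\ker p^*=\GG_a$ is connected.
\end{itemize}
With this, parity is a pure count, and you need neither $u=-1$ nor any fixed-point geometry on the cycle. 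The kernel of $Q_t^\psi\to\pi_0(Q_t)=\ZZ/R$ is $(Q_t^\circ)^\psi$, which is open and closed in $Q_t^\psi$ and has two components. So the image is a subgroup of $\ZZ/R$ of order $4/2=2$. Hence $R$ is even, and this covers $R=1$ with no separate argument. The image is $\{0,R/2\}$, so in particular $Q_t^\psi$ has points in the $R/2$-component. This is the paper's argument.

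\textbf{The last step.} From $\psi(R/2,1,0)=(R/2,c,b)$ you correctly get $\psi(R/2,z,a)=(R/2,cz^{-1},a+b)$. The relation $\psi(2x)=2\psi(x)$ then gives $c^2=1$ and $2b=0$. Note that $\psi^2=\id$ is not an available input, since $\psi$ has order $d$.

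However, $b$ cannot be removed ``by adjusting the choice of splitting lift''. For any $x=(R/2,w,e)$ in that component, $\psi(x)-x=(0,cw^{-2},b)$. Its image in $Q_t^\circ/\GG_m=A_s$ is $b$, independent of every choice. In fact $b=0$ holds exactly when the $R/2$-component contains a $\psi$-fixed point.

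So the missing input is precisely the fixed point supplied by the count above. If $\psi(R/2,z_0,a_0)=(R/2,z_0,a_0)$, then $a_0+b=a_0$, so $b=0$ and $\psi(R/2,z,a)=(R/2,\pm z^{-1},a)$. The paper says the same thing by noting that the fixed locus of $\psi$ on the $R/2$-component is two copies of $A_s$.
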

			\begin{proof}
				Fix an isomorphism $Q_t \cong \ZZ/R \times \GG_m \times A_s$. Recall that $Q_t^\psi \cong \pi_0(P_s) \times A_s$ has a neutral component $A_s$. Hence $\psi$ must act on $Q_t^\circ$ by $\psi(0,z,a) = (0,z^{-1},a)$. From it we deduce $(Q_t^\circ)^\psi \cong \mu_2 \times A_s$ and $|\pi_0(Q_t^\circ)^\psi| = 2$. Since $X_s$ has type $\mathrm I_r^*$, we have $|\pi_0(P_s)| = |\pi_0(Q_t^\psi)| = 4$ by \Cref{thm:classification of unstable fibers} (again, we have not computed $\pi_0(P_s)$ yet but know its order is $4$). Hence $Q_t^\psi$ is contained in precisely one more component $\frac{R}{2} \in \pi_0 (Q_t) = \ZZ/R$ (see \cite[Remark~5.4.1]{edi92}) and $R$ is even.
				
				\medskip
				
				Write $\psi_i : \GG_m \times A_s \to \GG_m \times A_s$ for the restriction of $\psi$ to the $R/2$-th component of $Q_t$. Again $Q_t^\psi = \pi_0(P_s) \times A_s$, so $(\GG_m \times A_s)^{\psi_i}$ consists of two copies of $A_s$. This means $\psi_i$ acts trivially on the $A_s$-factor and fixes two points on $\GG_m$. Finally, since $\psi$ respects group operations, we have an identity $\psi(R/2,z,a)^2 = \psi(0, z^2, 2a) = (0, z^{-2}, 2a)$. This makes $\psi_i(z)^2 = z^{-2}$ for all $z \in \GG_m$, or $\psi_i(z) = \pm z^{-1}$.
			\end{proof}
			
			\begin{lemma} \label{lem:semistable reduction theorem:varphi 2}
				There exists an isomorphism $Y_t \cong C \times A_s$ for a Kodaira type $\mathrm I_R$ semistable curve $C$. This induces an identification of $Q_t$ with $Y_t' = Y_t \setminus \Sing(Y_t)$ that respects the inertia $\langle \psi \rangle$- and $\langle \varphi \rangle$-actions.
			\end{lemma}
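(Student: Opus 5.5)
The argument parallels \Cref{lem:semistable reduction theorem:Qt splits}, replacing the $Q_t$-torsor structure by the type $\mathrm I_R$ description of \Cref{thm:classification of semistable fibers}.

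First, I would show that $Y_t$ splits as a product over $A_s$. The composition $Y_t \to Y_t/\varphi = \frac{1}{d}\bar X_s \cong \bar C \times A_s \to A_s$ is $A_s$-equivariant by \Cref{lem:semistable reduction theorem:splitting}, where $A_s$ acts through $A_s \subset Q_t^\psi = \im p^*$. Since $A_s$ acts on the target transitively and freely by translations, the $A_s$-action on $Y_t$ is free. The map is therefore locally trivial, and its fiber $C$ is a proper curve. Any single $A_s$-orbit is a section, so the free quotient $Y_t \to Y_t/A_s \cong C$ is a trivial $A_s$-torsor, giving $Y_t \cong C \times A_s$. This is the argument of \Cref{prop:untangle for unstable}(1). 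Comparing with the type $\mathrm I_R$ structure forces $C$ to be a cycle of $R$ copies of $\PP^1$, i.e.\ a Kodaira $\mathrm I_R$ curve. Indeed, each component $E_i$ is a $\PP^1$-bundle over $B_t = A_s$ by \Cref{lem:semistable reduction theorem:Qt splits 2}, and $\Sing(Y_t)$ is $R$ copies of $A_s$. Equivalently, the shear is zero, consistent with the torsion-shear observation.

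Next, I would identify $Q_t$ with $Y_t'$ compatibly with the inertia actions. By \Cref{lem:non-multiple fiber has reduced component} and \Cref{prop:P-action existence}(2), $Y_t'$ is a $Q_t$-torsor. I need a $\varphi$-fixed reference point $y_0 \in Y_t'$, since then the orbit map $a \mapsto a\cdot y_0$ intertwines $\psi$ and $\varphi$ by \Cref{prop:varphi-action}(2), exactly as in the smooth case.

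This is the main obstacle: \Cref{lem:inertia action is faithful}(2) only produces a fixed point somewhere in $Y_t$, possibly in $\Sing(Y_t)$. My plan for finding one in $Y_t'$ is as follows.
\begin{enumerate}
\item By \Cref{cor:semistable reduction theorem:psi-action on neutral}, $\psi$ fixes the neutral component of $\pi_0(Q_t)$. Hence $\varphi$ preserves the component $E_0 \cong \PP^1 \times A_s$ containing a chosen orbit, which is well defined after fixing any base point modulo $Q_t^\circ$.
\item On $E_0$, $\varphi$ commutes with the $A_s$-translations. As in the proof of \Cref{lem:semistable reduction theorem:varphi}, it is then of the form $(z,y) \mapsto (h(z), y + c)$.
\item Freeness of the $A_s$-action on the quotient forces $c = 0$. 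Otherwise $E_0/\varphi$ would not be a product with a free translation action.
\item So $\varphi$ acts on the $\PP^1$-factor by $h$, and by \Cref{cor:semistable reduction theorem:psi-action on neutral} $h$ inverts $\GG_m$ up to translation. Thus $h$ is an involution swapping $0$ and $\infty$, and it has two fixed points in $\GG_m$.
\end{enumerate}
This yields $y_0 \in Y_t'$ with $\varphi(y_0) = y_0$. If I cannot guarantee that $\varphi$ preserves a component, I would fall back on \Cref{lem:inertia action is faithful}(2) and analyze a fixed point on $\Sing(Y_t)$. There $\varphi$ would swap the two adjacent branches, and I would then use the orientation reversal to move to a fixed point on a reduced component.

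Finally, I would check that the product decomposition $C\times A_s$ can be chosen $\varphi$-compatibly. Choose the section $\{x_0\}\times A_s$ through $y_0$, where $x_0 \in C$ corresponds to $y_0$. This section is $\varphi$-stable, because $\varphi$ commutes with $A_s$ and fixes $y_0$. The identification $Q_t \cong Y_t'$ then sends the splitting $\ZZ/R \times \GG_m \times A_s$ to $(C\setminus \Sing C)\times A_s$.
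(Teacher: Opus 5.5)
Your first step is correct, and it reaches the product structure faster than the paper does. Since $A_s\subset Q_t^\psi$ acts freely by translations on $\frac1d\bar X_s\cong\bar C\times A_s$ (\Cref{lem:semistable reduction theorem:splitting}), the equivariant map $Y_t\to A_s$ splits $Y_t$ as $C\times A_s$ at once. The paper only deduces ``no shear'' at the very end, by chasing rational curves from a $\varphi$-fixed point of $E_0$ around both halves of the cycle to $E_u$.

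The gap is in producing a $\varphi$-fixed point in $Y_t'$. Substep (1) does not follow. The equality $\psi(Q_t^\circ)=Q_t^\circ$ holds for any group automorphism. The components of $Y_t'$ form only a $\pi_0(Q_t)$-torsor, on which $\varphi$ acts by an affine map $i\mapsto\bar\psi(i)+c$. The translation part $c$ is invisible to $\psi$, and reading off $\varphi$ from $\psi$ presupposes a $\varphi$-fixed base point, which is exactly what you are looking for. Concretely, $\varphi$ reverses orientation, so it acts on the even cycle of $R$ components as a reflection, and there are two kinds:
\begin{itemize}
\item a reflection through two opposite components, where your (2)--(4) do give fixed points in $Y_t'$;
\item a reflection through two opposite nodes, e.g.\ $E_i\mapsto E_{1-i}$.
\end{itemize}
In the second case $\varphi$ preserves no component. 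All its fixed points (which exist by \Cref{lem:inertia action is faithful}(2)) then lie in $\Sing(Y_t)$, so your fallback of ``moving to a fixed point on a reduced component'' cannot succeed: there is none. What is missing is an argument that \emph{excludes} the node-reflection case, and this is the entire first half of the paper's proof.

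The paper argues as follows. A fixed point $y\in E_0\cap E_1$ makes $\varphi$ fix all of $E_0\cap E_1$, since $\varphi$ commutes with the transitive $A_s$-action. $\varphi$ cannot preserve $E_1$, because it would swap $0_1$ and $\infty_1$, so it swaps $E_0$ and $E_1$. The paper then appeals to Chevalley--Shephard--Todd to make $\bar X=Y/\varphi$ regular, contradicting uniqueness of the minimal model. If you follow that route, be careful with the eigenvalue count: because $\varphi^*t=\zeta t$ rather than $t$, the eigenvalues of $\varphi$ on the normal plane of $E_0\cap E_1$ are $\pm\sqrt{\zeta}$, not $\pm1$ as in \Cref{lem:stabilizer of Ir}.

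A direct local check closes the gap more cleanly. Let $u,v$ be local equations of $E_1,E_0$ at $y$ with $g^*t=w\,uv$, $w$ a unit. Then $\varphi^*u=av$ and $\varphi^*v=bu$ for units $a,b$ with $a(y)b(y)=\zeta$. Hence $\varphi^2$ acts on $\bar u\in\mathfrak m_y/\mathfrak m_y^2$ by $\zeta$, and $\bar u\neq0$ since $E_1$ is smooth at $y$. Odd powers of $\varphi$ swap the two branches, so $d$ is even. Then $\varphi^d$ acts on $\bar u$ by $\zeta^{d/2}=-1$, contradicting $\varphi^d=\id$. With this case excluded, the fixed point from \Cref{lem:inertia action is faithful}(2) already lies in $Y_t'$, and the rest of your argument goes through.
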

			\begin{proof}
				Since $R$ is even by \Cref{cor:semistable reduction theorem:psi-action on neutral}, let us temporarily write $R = 2u$. Write $Y_t = \sum_{i=0}^{2u-1} E_i$, where every irreducible component $E_i$ is isomorphic to $\PP^1 \times A_s$ as a compactification of $Q_t^\circ = \GG_m \times A_s$. By \Cref{lem:inertia action is faithful}(2), $\varphi : Y_t \to Y_t$ admits at least one fixed point $y$.
				
				\medskip
				
				We first claim that every fixed point is contained in $Y_t' = Y_t \setminus \Sing(Y_t)$. This is similar to the proof of \Cref{lem:stabilizer of Ir}. Assume on the contrary that $\varphi$ fixes a point $y \in E_0 \cap E_1 \subset \Sing(Y_t)$ (in fact, $\varphi$ fixes the entire $E_0 \cap E_1$ by \Cref{prop:varphi-action}). If $\varphi$ preserves $E_1$, then \Cref{cor:semistable reduction theorem:psi-action on neutral} shows that $\varphi$ swaps $0_1$ and $\infty_1$, a contradiction. Hence $\varphi$ swaps $E_0$ and $E_1$, and hence $\varphi(E_i) = E_{2u+1-i}$ for all $i$. The fixed locus of $\varphi$ is precisely $(E_0 \cap E_1) \sqcup (E_u \cap E_{u+1}) = A_s \sqcup A_s$. But then for every point $y \in E_0 \cap E_1$, $\varphi$ acts around $y$ trivially in the direction of $A_s$ and swaps the two irreducible components $E_0$ and $E_1$. This means $\varphi$ acts on the tangent space $T_y Y \cong k^{n+1}$ by fixing an $n$-dimensional subspace. Hence the quotient $Y/\varphi = \bar X$ is regular by the Chevalley--Shephard--Todd theorem. This implies $\bar X \cong X$ by the uniqueness of the minimal abelian fibration, a contradiction.
				
				\medskip
				
				We may thus assume $\varphi(y) = y$ for a point $y \in Y_t'$. Assume without loss of generality $y \in E_0$. Then a similar argument as above trivializes the $Q_t$-torsor $Y_t'$, identifying their inertia actions. Recall from \Cref{cor:semistable reduction theorem:psi-action on neutral} that $\psi$ acts trivially on the second factor of $Q_t^\circ = \GG_m \times A_s$. Since $\varphi$ preserves the component $E_0$, this proves $\varphi$ acts trivially on the $A_s$-direction of $E_0 = \PP^1 \times A_s$. This property propagates to every component $E_i$ inductively along the intersections $E_i \cap E_{i+1}$. Note that $\varphi$ fixes (at least) two components $E_0$ and $E_u$ by \Cref{cor:semistable reduction theorem:psi-action on neutral}. Hence $\varphi$ preserves every rational curve in $E_0$ and $E_u$.
				
				Start with a unique rational curve $C_0 \subset E_0$ containing $y$. Then $\varphi$ swaps the two points $p_1 = C_0 \cap E_1$ and $p_u = C_0 \cap E_{2u-1}$. Take the rational curves $C_1 \subset E_1$ through $p_1$ and $C_{2u-1} \subset E_{2u-1}$ through $p_{2u}$. Set $p_2 = C_1 \cap E_2$ and $p_{2u-1} = C_{2u-1} \cap E_{2u-2}$. Then $\varphi$ swaps $p_2$ and $p_{2u-1}$. This inductive process defines $p_u \in E_{u-1} \cap E_u$ and $p_{u+1} \in E_{u+1} \cap E_u$ that are swapped by $\varphi$. Now take the unique rational curves $C_u, C_u' \subset E_u$ through $p_u$ and $p_{u+1}$, respectively. Then $\varphi$ swaps these two rational curves. But $\varphi$ preserves every rational curve in $E_u$ by \Cref{cor:semistable reduction theorem:psi-action on neutral}(2), so this makes $C_u = C_u'$. That is, $Y_t$ has no shears.
			\end{proof}
			
			\begin{corollary} \label{cor:semistable reduction theorem:psi-action}
				\begin{enumerate}
					\item $d$ is even and $R$ is divisible by $d$. Write $d = 2k$ and $R = 2kr$ for positive integers $k$ and $r$.
					\item There exists a primitive $2k$-th root of unity $\zeta \in \GG_m$ such that the inertia $\langle \psi \rangle$-action on $Q_t \cong \ZZ/2kr \times \GG_m \times A_s$ is
					\[ \psi(i,z,a) = (-i,\tfrac{\zeta^i}{z}, a) \qquad\mbox{for}\quad (i,z,a) \in \ZZ/2kr \times \GG_m \times A_s .\]
				\end{enumerate}
			\end{corollary}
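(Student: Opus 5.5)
We combine the identification of \Cref{lem:semistable reduction theorem:varphi 2} with the group structure of $\psi$ from \Cref{prop:psi-action}(2). Fix $Q_t \cong \ZZ/R \times \GG_m \times A_s$ so that $Y_t' \cong Q_t$ with $y=(0,1,0)$ a $\varphi$-fixed point and $Y_t \cong C \times A_s$ unsheared. Since $\psi$ is a group automorphism that preserves the neutral component and acts on it by $(0,z,a)\mapsto(0,z^{-1},a)$ (\Cref{cor:semistable reduction theorem:psi-action on neutral}), it induces an automorphism of $\pi_0(Q_t)=\ZZ/R$. This automorphism must respect the cyclic adjacency of the components $E_i$ of $Y_t$, because $\varphi$ does. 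By \Cref{lem:semistable reduction theorem:varphi 2} it also swaps the two neighbours $E_1, E_{R-1}$ of $E_0$. Hence it is $i\mapsto -i$.

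Next pick a lift $g=(1,z_1,0)$ of the generator $1\in\ZZ/R$. Then $\psi(g) = (-1, c/z_1, 0)$ for some $c\in\GG_m$; the $A_s$-component vanishes by the argument of \Cref{lem:semistable reduction theorem:varphi 2}, since $\varphi$ acts trivially in the $A_s$-direction. After rechoosing the splitting (replacing $z$ by a translate on each component, i.e.\ changing the section $i\mapsto (i,\lambda^i,0)$), we normalize $\psi(i,z,a) = (-i, \zeta^i/z, a)$ for some $\zeta\in\GG_m$. Here we must take care that $(R,1,0)$ maps to the identity: the relation $R\cdot g\in Q_t^\circ$ forces the chosen section to be a homomorphism $\ZZ/R\to Q_t$, so we take $g$ with $g^R = e$, which is possible because $\GG_m$ is divisible.

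Now impose the order condition. Computing, $\psi^2(i,z,a) = (i, \zeta^{-i}\zeta^{i} z, a)$, so $\psi^2=\id$ automatically. However, the $\mu_d$-action on $Q$ is faithful of order $d$ on the central fiber (after the reduction in \Cref{lem:inertia action is faithful}(1)), which forces $d=2$ unless $\zeta$ encodes more. I therefore expect the real constraint to come from comparing with the base $T$: the generator of $\mu_d$ acts on $T$ by a primitive $d$-th root, and the crepant $\mathrm A_{d-1}$-resolution picture of \Cref{cor:Edixhoven}(2) (applied to the intermediate quotients $Y/\varphi^j$) shows that $\varphi^2$ acts on components nontrivially via rotation unless components come in blocks of size $d/2$. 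Concretely, I would compute $Q_t^{\psi}$ and use $|\pi_0(Q_t^\psi)| = 4$ (\Cref{cor:semistable reduction theorem:psi-action on neutral}). The fixed points occur on components $i$ with $-i=i$, i.e.\ $i=0,R/2$; on component $R/2$ the equation $z^2=\zeta^{R/2}$ must have solutions giving two copies of $A_s$. This is consistent but does not pin down $\zeta$, so the order data must come from $\varphi$ on all of $Y$, not only on $Y_t$.

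\textbf{Main obstacle.} The main obstacle is proving that $d$ is even with $d\mid R$ and that $\zeta$ is a primitive $d$-th root. My plan is to analyse the subgroup $\langle\varphi^2\rangle$: the quotient $Y\to Y/\varphi^2$ is a base change of degree $d/2$ of an intermediate fibration $Y^{(2)}\to T/\mu_{d/2}$. Since $\varphi^2$ acts on $Y_t$ with fixed points (\Cref{lem:inertia action is faithful}(2)) and acts trivially on $\pi_0$, the intermediate fibration should be semistable of type $\mathrm I_{R/(d/2)}$ by \Cref{cor:Edixhoven}(2), whose exact sequence $0\to\pi_0\to\pi_0(Q_t)\to\ZZ/(d/2)\to 0$ gives $(d/2)\mid R$ and identifies the action on the $\GG_m$-coordinates of successive components as multiplication by a primitive $d$-th root. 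Tracking this through the normalization gives $\zeta$ primitive of order $2k$ and $R=2kr$. The parity of $d$ itself follows because $\varphi$ reverses orientation of the cycle while the $d$-fold base change rotates it (as in \Cref{cor:Edixhoven}(2)), so the orientation-reversal can only be induced by an element of order $2$ in $\mu_d$.
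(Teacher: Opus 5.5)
Your first two paragraphs match the paper: orientation reversal forces $i\mapsto -i$ on $\pi_0(Q_t)=\ZZ/R$, and the homomorphism property for an order-$R$ lift $g$ of $1$ gives $\psi(i,z,a)=(-i,\zeta^i z^{-1},a)$ with $\zeta\in\GG_m$. The genuine gap is in your third paragraph. The claim $\psi^2=\id$ is an algebra slip: inverting $\zeta^i z^{-1}$ gives $\zeta^{-i}z$, so
\[ \psi^2(i,z,a)=\psi(-i,\zeta^i z^{-1},a)=(i,\zeta^{-2i}z,a). \]
Because of this you discard the order argument, which is exactly what the paper uses. The ``main obstacle'' sketch that replaces it proves neither part of the statement. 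Applying \Cref{cor:Edixhoven}(2) to the quotient by $\varphi^2$ gives at most $k\mid R$, which together with $2\mid R$ does not yield $2k\mid R$ (e.g.\ $k=R=2$). The claim that it produces a primitive $d$-th root on the $\GG_m$-coordinates is unsupported. The remark about ``an element of order $2$ in $\mu_d$'' is not a parity argument.

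With the correct formula the proof closes quickly, as in the paper. By \Cref{lem:semistable reduction theorem:varphi 2} and \Cref{lem:inertia action is faithful}(1), $\ord\psi=d$. Odd powers of $\psi$ act on $Q_t^\circ$ by $z\mapsto z^{-1}$, so $d=\ord\psi=2\ord(\zeta^2)$ is even; write $d=2k$, so $\ord(\zeta^2)=k$. Applying $\psi$ to $g^R=e$ gives $\zeta^R=1$. Since $R$ is even (\Cref{cor:semistable reduction theorem:psi-action on neutral}), $(\zeta^2)^{R/2}=1$, hence $k\mid R/2$, i.e.\ $d\mid R$.

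Note, however, that $\ord(\zeta^2)=k$ only gives $\ord\zeta\in\{k,2k\}$. The case $\ord\zeta=k$ is numerically possible for odd $k$: for example $\zeta=1$, $d=2$, where $\psi$ still has order $2$ but $Q_t^\psi$ would have component group $(\ZZ/2)^{\times 2}$ instead of $\ZZ/4$ for odd $r$. The paper's one-line deduction of primitivity from $\ord\psi=2k$ does not address this. Here your instinct that $\varphi$ must be used on all of $Y$ is right. At the node $N_0=E_0\cap E_1$, write $x_0y_0=tu_0$, where $x_0$ and $y_0$ restrict to the fibre coordinates of $E_1$ and $E_0$ (i.e.\ $z_1^{\pm1}$ and $z_0^{\mp1}$) vanishing along $N_0$. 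The constant $u_0|_{N_0}$ equals the analogous constant at $N_{-1}$: translate by a section of $Q$ through $g$, which fixes $t$. Now pull back the equation at $N_{-1}=\varphi(N_0)$ by $\varphi$, using $\varphi^*t=\omega t$ for a primitive $d$-th root $\omega$. This gives $\zeta=\omega^{\pm1}$, so $\zeta$ is a primitive $d$-th root of unity.
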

			\begin{proof}
				We may from now on identify the $\langle \varphi \rangle$- and $\langle \psi \rangle$-actions by \Cref{lem:semistable reduction theorem:varphi 2}. We can now say $\psi$ has order $d$ because $\varphi$ had order $d$ by \Cref{lem:inertia action is faithful}(1). This concludes $d$ is even by \Cref{cor:semistable reduction theorem:psi-action on neutral}, so write $d = 2k$.
				
				Note that $\varphi$ either preserves or reverses the \emph{orientation} of $C = \sum_{i=0}^{R-1} C_i$. Since $\psi(0,z,a) = (0,z^{-1},a)$ by \Cref{cor:semistable reduction theorem:psi-action on neutral}, $\varphi$ must reverse the orientation. Shifting the index $i$  appropriately, we can assume $\varphi(E_i) = E_{-i}$. This makes $\psi$ of the form
				\[ \psi(i,z,a) = (-i, \psi_i (z,a)) \qquad\mbox{for}\quad \psi_i : \GG_m \times A_s \to \GG_m \times A_s. \]
				Since $\psi$ respects group operations, we have a sequence of identities
				\[ \psi(i,z,a) = \psi(i,1,0) \cdot \psi(0,z,a) = (-i, \psi_i(1,0)) \cdot (0,z^{-1},a) = (-i, \psi_i(1,0) \cdot (z^{-1},a)) .\]
				Hence $\psi_i(z,a) = \zeta_i \cdot (z^{-1},a)$ for $\zeta_i \in \GG_m \times A_s$. Since $\psi(i,z^i, ia) = \psi(1,z,a)^i = (1,\zeta_1 \cdot (z^{-1},a))^i = (i, \zeta_1^i \cdot (z^{-i}, ia))$, we have $\zeta_i = \zeta^i$ for a single $\zeta \in \GG_m \times A_s$. Since $\varphi$ acted on the $A_s$-direction by translations, $\zeta \in \GG_m$. Since $\psi$ had order $2k$, $\zeta$ is a primitive $2k$-th root of unity. Finally, $\zeta_R = \zeta^R = 1$ shows $R$ is divisible by $2k$.
			\end{proof}
			
			Notice that $\varphi^2(i,z,a) = (i, \zeta^{-2i}z,a)$ acts on $Y_t$ freely. Therefore, replacing $Y$ with the intermediate quotient $g^- : Y^- = Y / \varphi^2 \to T^- = T/\mu_k$, we may assume $k = 2$ and $\varphi$ is an involution. The fixed locus of the involution $\varphi$ consists of $A_s \sqcup A_s \subset C_0 \times A_s$ and $A_s \sqcup A_s \subset C_r \times A_s$. These are reflected by the singular locus $\sqcup^4 A_s \subset \bar X = Y/\varphi$, which are subsequently blown up into four $\PP^1 \times A_s$ to yield a type $\mathrm I_r^*$ of $X_s$. This completes the proof of \Cref{prop:semistable reduction theorem untangled ver}(2).

		\subsubsection{N\'eron component groups and Albanese stabilizers} \label{sec:Neron component groups and Albanese stabilizers}
			We can finally complete the promised proof of the unstable case of \Cref{thm:classification of component group} and the classification of Albanese stabilizers in \Cref{thm:classification of unstable fibers}.
			
			\begin{proof} [Proof of \Cref{thm:classification of component group}(2) and \Cref{thm:classification of unstable fibers}]
				We need to compute $\pi_0(P_s)$ when $P$ has no Albanese stabilizers. In this case, $P$ admits a semistable base change to $Q \to T$ by \Cref{prop:semistable reduction theorem untangled ver} with $Q_t = E \times A_s$ or $\ZZ/2r \times \GG_m \times A_s$. The component group $\pi_0(P_s)$ can be computed by $\pi_0 (Q_t^\psi)$. In the former case, the inertia action is $\psi(x,y) = (\omega^{\pm1}x, y)$ and hence $Q_t^\psi = E^\omega \times A_s$ has the desired component groups. In the latter case, the inertia action is $\psi(i,z,y) = (-i, (-1)^i z^{-1}, y)$, so again $Q_t^\psi$ is isomorphic to $(\ZZ/2)^{\times 2} \times A_s$ when $r$ is even, and $\ZZ/4 \times A_s$ when $r$ is odd.
			\end{proof}

			Let us now assume the Albanese stabilizer is nontrivial. \Cref{cor:semistable reduction theorem} follows from our discussions above and the following proof of \Cref{prop:semistable reduction theorem tangled ver}, so we omit it.
			
			\begin{proof} [Proof of \Cref{prop:semistable reduction theorem tangled ver}]
				Consider the untangle $\tilde f : \tilde X \to S$ of $X$ and take its semistable base change $\tilde g : \tilde Y \to T$ using \Cref{prop:semistable reduction theorem untangled ver}. Let $G \subset \tilde P_s = \pi_0(\tilde P_s) \times \GG_a \times A_s$ be the Albanese stabilizer of $X$. The pullback $p^* : \tilde P_s \to \tilde Q_t$ has a kernel $\GG_a$ by \Cref{thm:Edixhoven}, so it sends $G$ isomorphically to a subgroup $p^*G \subset \tilde Q_t$. Hence we obtain a section $p^*G \subset \tilde Q$ and a $p^*G$-action on $\tilde Y$. We claim this action is free. If so, then the quotient $g : Y = \tilde Y/p^*G \to S$ is a minimal abelian fibration and hence the semistable base change of the original $f$.
				
				If $\tilde g$ is smooth then $\tilde Q_t$ acts on $\tilde Y_t$ freely, so the $p^*G$-action is free. Suppose that $\tilde g$ is strictly semistable, i.e., $\tilde f$ has type $\mathrm I_r^*$ and $\tilde g$ has type $\mathrm I_{2r}$ for $r \ge 1$. The only nontrivial stabilizer of $\tilde Y_t$ is $\GG_m \subset \tilde Q_t$ by \Cref{lem:stabilizer of Ir}. From the description of \Cref{prop:semistable reduction theorem untangled ver}(2), we have
				\[ \im p^* \cap \GG_m = \tilde Q_t^{\tilde \psi} \cap \GG_m = \{ \pm 1 \} .\]
				Therefore, if $p^*G$ acts non-freely on $\tilde Y_t$, then $G \subset \tilde P_s = \pi_0(\tilde P_s) \times \GG_a \times A_s$ contains an element $f = (\alpha, 0, 0)$ swapping the two arms $E_1$ and $E_2$, and $E_3$ and $E_4$ of the type $\mathrm I_r^*$ curve $C = E_1 + E_2 + 2(\sum_{i=0}^r F_i) + E_3 + E_4$. However, such $f$ does not act freely on $\tilde X$ because it does not have any translation on the $A_s$-direction, contradicting the fact that $\tilde X \to X$ is a free $G$-quotient. This shows $p^*G$ acts on $\tilde Y$ freely.
				
				\medskip
				
				(1) Since $\tilde Y \to Y$ is a free $p^*G$-quotient of non-multiple abelian fibrations, we can imitate the proof of \Cref{prop:untangle for unstable}(2). We caution the reader that the second part of the proof does not apply in this situation. or $p^*G$ may have a nontrivial intersection with $\tilde Q_t^\circ$. This is because the proof was based on the observation $Q_t^\circ = \tilde Q_t^\circ$, which does not hold in this case.
				
				\medskip
				
				(2) Recall from \Cref{prop:semistable reduction theorem untangled ver} an isomorphism $\tilde Q_t = E \times A_s$. Write $p^* : G \hookrightarrow E \times A_s$ by $p^*(g) = (\tau(g), \sigma(g))$ for $\tau : G \to E$ and $\sigma : G \to A_s$. By \Cref{prop:untangle for unstable}, we have $G \cap \tilde P_s^\circ = 0$ and hence $p^*G \cap (\{ 0 \} \times A_s) = 0$. This means $\tau : G \to E$ is injective. If $\sigma(g) = 0$ for some nontrivial $g \in G$, then the quotient $\bar Y = \tilde Y/p^*(g)$ would have had a central fiber $\bar Y_t = E/p^*(g) \times A_s$. This would mean that $\bar X = \tilde X/g$ has a trivial Albanese stabilizer, a contradiction. Hence $\tau : E \to A_s$ is injective, too. Finally, because the $p^*G$-action on $E \times A_s$ commutes with $\tilde \varphi(x,y) = (\omega^{\pm1}x,y)$, we have $\tau(g) \in E^\omega$.
				
				\medskip
				
				(3) We have an isomorphism $\tilde Q_t = \ZZ/2r \times \GG_m \times A_s$ and the classification of Albanese stabilizers $G$ in \Cref{thm:classification of unstable fibers}. Similar to above, notice that $\im p^* \cap \tilde Q_t^\circ = \{ \pm 1 \} \times A_s$. This shows $p^*G \cap \tilde Q_t^\circ = 0$ or $\langle (-1, a) \rangle$ for a $2$-torsion point $a \in A_s$. Since we have already proved $p^*G \cap \GG_m = 0$ above, $a \neq 0$.
				
				\medskip
				
				The $p^*G$-action on $\tilde Y_t = C \times A_s$ is diagonal. If $r$ is odd, then $1 \in \ZZ/4 = \pi_0 (\tilde P_s)$ is $(r, \sqrt{-1}, a) \in \tilde Q_t$ for a torsion point $a \in A_s$ of order $4$. This proves the claim. If $r$ is even, then $G \subset (\ZZ/2)^{\times 2} = \pi_0(P_s)$ is generated by $(0, 1, a)$ and $(r, 1, b)$ for $a, b \in A_s$ of order $2$. Assume on the contrary $a = b$. Then $g = (r, 1, 0) \in \pi_0(P_s)$ acts on the $A_s$-factor of $\tilde Y_t$ trivially. Since $\ZZ/2 = \langle g \rangle$ acts on $C$ by preserving the order and without fixed points, $\bar C = C / g$ is a Kodaira singular curve of type $\mathrm I_r$. But $\bar Y = \tilde Y/g \to T$ has a central fiber $\bar Y_t = \bar C \times A_s$ and it is a semistable base change of $\bar X = \tilde X/g \to S$, so \Cref{prop:semistable reduction theorem untangled ver} shows that such $\bar X$ has type $\mathrm I_{\frac{r}{2}}^*$. Contradiction.
			\end{proof}
			
			\begin{lemma} \label{lem:classification of component group:dual inertia action}
				The dual of the $\langle \psi \rangle$-action on $Q_t$ is the $\langle \check \psi^{-1} \rangle$-action on $\check Q_t$.
			\end{lemma}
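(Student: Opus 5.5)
Let $T \to S$ be the base change with Galois generator $\zeta : T \to T$, and let $\check Q \to T$ be the N\'eron model of the dual abelian scheme $\check Q_0 = (Q_0)^\vee$. Over $T_0$ the inertia automorphism $\psi_0$ of $Q_0$ is encoded by an isomorphism $\Psi_0 : Q_0 \to \zeta^* Q_0$ of abelian schemes, as in the proof of \Cref{prop:psi-action}. Dualizing gives an isomorphism
\[ \Psi_0^\vee : \zeta^* \check Q_0 = (\zeta^* Q_0)^\vee \to \check Q_0 .\]
Its inverse $(\Psi_0^\vee)^{-1} : \check Q_0 \to \zeta^*\check Q_0$ is a descent datum covering $\zeta$ and satisfies the same cocycle condition. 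This is precisely the inertia action $\check\psi_0$ on $\check Q_0$ coming from the Galois covering $\check Q_0 \to \check P_0$, because dualizing commutes with base change along $\zeta$.

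The statement is therefore first an identity over $T_0$: $\check \psi_0 = (\psi_0^\vee)^{-1}$, so that $\psi_0^\vee = \check\psi_0^{-1}$. Concretely, for $a \in Q_0$ and $L \in \check Q_0 = \Pic^\circ$, the pairing is invariant under the Galois action: $\langle \psi a, \check\psi L\rangle = \langle a, L \rangle$. Hence the transpose of $\psi$ is $\check\psi^{-1}$.

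We then extend to the central fiber. Both $\psi$ and $\check\psi$ extend uniquely to $Q$ and $\check Q$ by the N\'eron mapping property (\Cref{prop:psi-action}). The remaining task is to say what "dual" means for $Q_t$ and $\check Q_t$, which are not abelian varieties. I would interpret it as duality via the Poincar\'e biextension, or equivalently via Grothendieck's pairing on component groups together with the duality of the Chevalley pieces in \Cref{prop:dual Neron model}. Since $Q_t$ may be semiabelian (the semistable base change case of interest), I would use the Poincar\'e $\GG_m$-biextension on $Q_0 \times_{T_0} \check Q_0$. Its extension to $Q^\circ \times \check Q$ (SGA7, Exp.~VIII) is unique, so it is preserved by $\psi \times \check\psi$: two biextensions that agree over $T_0$ agree over $T$ by uniqueness of the extension.

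Restricting to $t$, the biextension pairing, together with the abelian-variety-part duality of \Cref{prop:dual Neron model}(2), the toric parts $\GG_m$ versus character group, and Grothendieck's pairing $\pi_0(Q_t) \times \pi_0(\check Q_t) \to \QQ/\ZZ$, is invariant under $(\psi,\check\psi)$. This gives the claim on each piece.

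The main obstacle is making "dual" precise on the non-proper fiber $Q_t$. The pieces fit together correctly only because the canonical extension of the biextension is unique and therefore functorial in automorphisms of the pair $(Q,\check Q)$ compatible over $T_0$. I would verify this functoriality first; the rest is formal.
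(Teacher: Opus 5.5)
Your argument is correct, and its core is the paper's proof: dualize the descent isomorphism $\Psi\colon Q\to\zeta^*Q$ and invert it to obtain the descent datum that defines $\check\psi$. The only difference is how you reach the central fiber.

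The paper dualizes $\Psi$ over all of $T$, treating $Q$ as an abelian scheme. This suffices because the lemma is only invoked, in the unstable case of \Cref{thm:classification of component group}(3), when $Q_t=(E\times A_s)/G$ is an abelian variety. Then $\check Q$ is the dual abelian scheme of $Q$, $\check Q_t=Q_t^\vee$ canonically, and one just restricts $(\check\Psi)^{-1}$ to $t$. No N\'eron-model extension or biextension is needed.

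Your detour (the identity over $T_0$, then extension via uniqueness of Grothendieck's extension of the Poincar\'e biextension to $Q^\circ\times_T\check Q$) reduces to this in the abelian case. It is aimed at semiabelian $Q_t$, but there it remains a sketch. The restriction of that biextension to the special fiber is trivial on the toric parts, so by itself it only recovers the duality of the abelian parts. The pairing between the $\GG_m$-parts, via their character groups, is Grothendieck's monodromy pairing, and you would still have to prove its $(\psi,\check\psi)$-equivariance from the same functoriality.

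Since ``dual'' has no canonical meaning for non-proper $Q_t$, and the paper never needs that case, this extra generality costs more than it buys here.
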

			\begin{proof}
				Recall from \Cref{prop:psi-action} how the inertia $\langle \psi \rangle$-action on $Q_t$ was defined. Take the Galois action $\zeta : T \to T$ of the base, and consider an isomorphism $\Psi : Q \to \zeta^* Q$ of abelian schemes over $T$ (with a cocycle condition). Its dual is an isomorphism of dual abelian schemes $\check \Psi : \zeta^* \check Q \to \check Q$, whose inverse is an isomorphism $(\check \Psi)^{-1} : \check Q \to \zeta^* \check Q$ that describes the $\langle \check \psi \rangle$-action by definition. Comparing the central fibers of the two descriptions yields the lemma.
			\end{proof}
			
			\begin{proof} [Proof of \Cref{thm:classification of component group}(3): unstable case]
				Assume that $P$ has an Albanese stabilizer $G$. Then \Cref{prop:semistable reduction theorem tangled ver} shows $Q_t = (E \times A_s)/G$ or $(\ZZ/2r \times \GG_m \times A_s)/G$. Suppose first that $Q_t = (E \times A_s)/G$. Then the dual t-automorphism group $\check P$ has a semistable base change $\check Q \to T$, whence the central fiber $\check Q_t = ((E \times A_s)/G)^\vee$. By \Cref{lem:classification of component group:dual inertia action}, its inertia action is $\check \psi^{-1}$. To compute its invariant group, notice that we have a further $G$-quotient $Q_t \to E/G \times A_s/G$. Its dual is a $\hat G$-quotient
				\[ (E/G)^\vee \times (A_s/G)^\vee \to \check Q_t = ((E \times A_s)/G)^\vee .\]
				Since $\check \psi^{-1}$ acts on $(E/G)^\vee \times (A_s/G)^\vee$ by $\check \psi(\check x, \check y) = (\omega^{\pm1}\check x, \check y)$, computation of the $\check \psi$-invariant shows its Albanese stabilizer is $\hat G$, which is isomorphic to $G$.
				
				\medskip
				
				Suppose that $P$ has type $\mathrm I_r^*/G$ for odd $r$ (so $Q_t = (\ZZ/2r \times \GG_m \times A_s)/G$). In this case, $G = 0$, $\ZZ/2$, or $\ZZ/4$. For each $G$, we can compute $\pi_0(Q_t)$ and $Q_t^\circ$ as follows.
				\begin{center}
					\begin{tabular}{|c|c||c|c||c|c|} \hline
						Type of $P$ & $G$ & $\pi_0(Q_t)$ & $Q_t^\circ$ & $\pi_0(\check Q_t)$ & $\check Q_t^\circ$ \\ \hline\hline
						$\mathrm I_r^*$ & $0$ & $\ZZ/2r$ & $\GG_m \times A_s$ & $\ZZ/2r$ & $\GG_m \times \check A_s$ \\ \hline
						$\mathrm I_r^*/2$ & $\ZZ/2$ & $\ZZ/2r$ & $(\GG_m \times A_s)/2$ & $\ZZ/2r$ & $\GG_m \times (A_s/2)^\vee$\\ \hline
						$\mathrm I_r^*/4$ & $\ZZ/4$ & $\ZZ/r$ & $(\GG_m \times A_s)/2$ & $\ZZ/r$ & $(\GG_m \times (A_s/4)^\vee) / 2$ \\ \hline
					\end{tabular}
				\end{center}
				To compute $\check Q_t$, notice that it has the same type as $Q_t$ by the semistable version of the theorem, so $\pi_0(\check Q_t) \cong \pi_0(Q_t)$. The abelian variety parts of $Q_t$ and $\check Q_t$ are dual by \Cref{prop:dual Neron model}, and $(A_s/G)^\vee \subset \check Q_t^\circ$ by \Cref{prop:semistable reduction theorem tangled ver}(3). This computes $\check Q_t^\circ$. For example, when $G = \ZZ/4$, $\check Q_t^\circ$ has an abelian variety part $(A_s/2)^\vee$ and contains $(A_s/4)^\vee$. Do the same computation for even $r$ and collect all possible pairs $(Q_t, \check Q_t)$ for every type of $P$. Direct comparison shows that $(Q_t, \check Q_t) \cong (\check Q'_t, Q'_t)$ implies $G \cong G'$. This concludes the theorem. For example, type $\mathrm I_r^*/2$ and $\mathrm I_{2r}^*/2$ are dual to each other for odd $r$, yet their $\pi_0$-groups are the same.
			\end{proof}
			
			\begin{remark}
				The Albanese stabilizer $G$ of an unstable fibration sits in the short exact sequence $0 \to G \to A_s \to \Alb_s \to 0$. Taking the dual, we obtain $0 \to \hat G \to \Alb_s^\vee = A_s^* \to \check A_s = \Alb_{\check X_s} \to 0$. This suggests that the Albanese stabilizers of $P$ and $\check P$ are (more precisely) Cartier dual to each other.
			\end{remark}

\section{Classification of multiple central fibers} \label{sec:classification of multiple fibers}
	Throughout this section, let $f : X \to S$ be a minimal $\delta$-regular abelian fibration whose central fiber $X_s$ has multiplicity $m > 1$. As usual, $P \to S$ is its t-automorphism scheme. We will see in a moment that there exists a preferred base change $g : Y \to T$ with a t-automorphism scheme $Q \to T$.
	
	Now the classification will be divided into three part. When $P$ is semiabelian, we classify $X_s$ in \Cref{thm:classification of multiple fiber i} and \ref{thm:classification of multiple fiber iii}, and call them type $m \cdot \mathrm I_r$ for $r \ge 0$. When $P$ is not semiabelian (so $P_s$ has a linear part $\GG_a$), we divide the case by two depending on the behavior or $Q$. When $Q$ is still semiabelian, we classify $X_s$ in \Cref{thm:classification of multiple fiber ii} and \ref{thm:classification of multiple fiber iv}, and call them type $m \cdot \mathrm I_r^{\pm}$ for $r \ge 0$. Finally, when both $P$ and $Q$ are not semiabelian, we classify $X_s$ in \Cref{thm:classification of multiple fiber v}. In each case, we will have to patiently analyze all numerically possibilities.

	\subsection{Non-multiple base change}
		Take a finite totally ramified Galois morphism $p : T \to S$ of degree $m$, which is precisely the multiplicity of $X_s$. In this special case, the minimal model $g : Y \to T$ can be explicitly constructed as a normalized base change
		\begin{equation} \label{diag:degree m base change}
		\begin{tikzcd}
			Y \arrow[r, "q"] \arrow[d, "g"] & X \arrow[d, "f"] \\
			T \arrow[r, "p"] & S
		\end{tikzcd} \qquad\mbox{for}\quad Y = (X \times_S T)^\nu .
		\end{equation}
		
		\begin{lemma} \label{lem:non-multiple base change}
			Let $T \to S$ be a totally ramified base change of degree $m$ and $Y$ the normalized base change of $X$. Then
			\begin{enumerate}
				\item $q$ is \'etale.
				\item $g$ is a minimal $\delta$-regular abelian fibration that is non-multiple.
			\end{enumerate}
		\end{lemma}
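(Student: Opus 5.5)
We will prove both items by a local computation along $X_s$, followed by a purity/minimality argument. Work étale locally on $S$, so that $S = \Spec R$ with $R$ strictly henselian, uniformizer $z$, and $T = \Spec R[w]/(w^m - z)$. Write $X_s = \sum_i a_i E_i$ with $m = \gcd a_i$.

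For (1), first take a point $x$ lying on a single component $E_i$ away from the other components. Since $X$ is regular by \Cref{prop:minimal delta-regular abelian fibration is unique}, near such a point $f^*z = u\,h^{a_i}$, with $h$ a local equation of $E_i$ and $u$ a unit. Because $m \mid a_i$ and the residue field is algebraically closed of characteristic $0$, we can (after étale localization) write $u = v^m$. Then
\[ X \times_S T = \Spec \mathcal O_X[w]/(w^m - v^m h^{a_i}) = \Spec \mathcal O_X[w]/\textstyle\prod_{\zeta \in \mu_m}(w - \zeta v h^{a_i/m}), \]
and its normalization is a disjoint union of $m$ copies of $\Spec \mathcal O_X$, which is étale over $X$. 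This shows that $q$ is étale in codimension $1$ on $Y$. Since $X$ is regular, purity of the branch locus (Zariski--Nagata) then upgrades this to $q$ being étale everywhere, because $Y$ is normal and $q$ is finite.

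For (2), $Y$ is regular, being étale over a regular scheme. Moreover $K_Y = q^*K_X$ is numerically $g$-trivial, since $q$ is étale and $f \circ q = p \circ g$. Hence $Y$ is minimal. Its generic fiber is the base change of an abelian torsor, hence an abelian torsor, and $g$ is flat and projective. To see that $g$ is non-multiple, note that the local description above gives $q^*(E_i) \cdot$ multiplicity: $g^*w = v\,h^{a_i/m}$ locally, so $Y_t = \sum (a_i/m) q^{-1}(E_i)$, and the $\gcd$ of the coefficients is $1$. For $\delta$-regularity, Theorem~\ref{thm:Edixhoven} shows that $Q_t$ has an abelian part at least as large as that of $P_s$, and the linear dimensions satisfy $\dim L_{Q_t} \le \dim L_{P_s} \le 1$. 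Indeed, $Q_t$ has dimension $n$ and its abelian part has dimension $\ge n - 1$.

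The main obstacle is likely the careful handling of the unit $u$ and of points where several components meet; this is exactly why we reduce to codimension $1$ and invoke purity. We also need the decomposition to be globally consistent, but normalization handles that automatically.
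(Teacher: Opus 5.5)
Your proposal is correct and follows the paper's proof. In both, $q$ is shown to be étale by a local computation, minimality comes from regularity of $Y$ together with $K_Y=q^*K_X$, $\delta$-regularity comes from Edixhoven's theorem, and non-multiplicity from $Y_t = q^*(\tfrac{1}{m}X_s)$, whose coefficients $a_i/m$ have $\gcd 1$.

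The one difference is that you compute only at points lying on a single component and then invoke Zariski--Nagata purity. That is valid but unnecessary, so the obstacle you anticipate at intersection points does not arise. Regularity of $X$ gives $f^*z = u\prod_i h_i^{a_i} = u\,g^m$ with $g=\prod_i h_i^{a_i/m}$ at every point of $X_s$, including where components meet. Hence the splitting $w^m-(vg)^m=\prod_{\zeta}(w-\zeta v g)$ works everywhere directly, which is what the paper's "local computation" does.
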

		\begin{proof}
			This is essentially \cite[Proposition~III.9.1]{bar-hulek-pet-van:surface} and \cite[Proposition~3.1]{hwang-ogu11}. (1) The statement is local on $X$, so do the local computation. (2) The total space $Y$ is regular and $K_Y$ is numerically $g$-trivial, so $g$ is a minimal abelian fibration. It is $\delta$-regular by \Cref{cor:Edixhoven}. The central fiber $Y_t$ has multiplicity $1$ by \Cref{lem:1/d X_s}: the normalization clears up its multiplicity.
		\end{proof}
		
		This simplifies the diagram \eqref{diag:base change} with $\bar X = X$, so the results in \S \ref{sec:inertia action} apply directly to $X$:
		
		\begin{corollary}
			\begin{enumerate}
				\item The inertia $\langle \varphi \rangle$-action on $Y_t$ is free of order $m$.
				\item The quotient $\bar X = Y / \varphi$ in \eqref{diag:base change} coincides with $X$. \qed
			\end{enumerate}
		\end{corollary}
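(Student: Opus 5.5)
The plan is to identify the abstractly defined inertia automorphism $\varphi$ with the Galois action of the normalized base change $Y = (X \times_S T)^\nu$, and then read off both claims from \Cref{lem:non-multiple base change}(1).

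First, let $\zeta$ generate the Galois group $\mu_m$ of $p : T \to S$. Then $\id_X \times \zeta$ acts on $X \times_S T$ over $X$, and this action lifts uniquely to the normalization $Y$. Over $T_0$ this lift is exactly the Galois $\mu_m$-action on $Y_0 = X_0 \times_{S_0} T_0$ used in \S\ref{sec:inertia action}. By \Cref{lem:non-multiple base change}(2), $g : Y \to T$ is the minimal $\delta$-regular model of $X_T$. By \Cref{prop:minimal delta-regular abelian fibration is unique}(2), any automorphism of $Y_0$ extends uniquely to $Y$. Hence this lifted action is precisely the $\langle \varphi \rangle$-action of \Cref{prop:varphi-action}.

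For (2), I will argue as follows. The morphism $q : Y \to X$ is finite, since it is a normalization composed with the finite base change. It is $\mu_m$-invariant, so it factors through $Y/\varphi \to X$. This map is finite and birational: over $S_0$, $Y_0 \to X_0$ is a $\mu_m$-torsor, so $Y_0/\varphi = X_0$. Since $X$ is normal (indeed regular), Zariski's main theorem shows that $Y/\varphi \to X$ is an isomorphism. Thus $\bar X = X$ in \eqref{diag:base change}.

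For (1), I will use that $q$ is étale by \Cref{lem:non-multiple base change}(1) and is the quotient map by the finite group $\mu_m$. I then apply the standard fact in characteristic $0$: a finite group quotient $Y \to Y/G$ is étale at $y$ if and only if the inertia group $G_y$ is trivial. Consequently every nontrivial power $\varphi^i$ acts on $Y$, and in particular on $Y_t$, without fixed points. Because $Y_t \neq \emptyset$, no nontrivial $\varphi^i$ can act as the identity on $Y_t$. So the action on $Y_t$ is faithful of order exactly $m$, and free.

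The only delicate point is the inertia criterion for étaleness. One way to see it is to pass to completions at a closed point $y$. There $\hat{\mathcal O}_{X,q(y)} = \hat{\mathcal O}_{Y,y}^{G_y}$, and this ring can equal $\hat{\mathcal O}_{Y,y}$ only if $G_y = 1$, because $G_y$ acts faithfully on $\hat{\mathcal O}_{Y,y}$ ($Y$ is integral and the action is faithful on $Y_0$). Everything else is formal.
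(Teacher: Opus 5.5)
Your proposal is correct and follows the paper's own reasoning. The paper states this corollary without proof, as an immediate consequence of \Cref{lem:non-multiple base change}: $\varphi$ is the Galois action lifted to the normalized base change, $Y/\varphi \to X$ is finite and birational onto the normal scheme $X$, and \'etaleness of $q$ forces every nontrivial $\varphi^i$ to act on $Y_t$ without fixed points. What you add is detail the paper takes for granted: the identification of $\varphi$ with the lifted Galois action, and the completion/inertia criterion together with the faithfulness check it requires.
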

		
		As in \S \ref{sec:inertia action}, define a closed subscheme $\frac{1}{m} X_s \subset X_s \subset X$ either as a quotient $\frac{1}{m} X_s = Y_t / \varphi$ or as a scheme associated to the Cartier divisor (\Cref{lem:1/d X_s}):
		\[ \tfrac{1}{m} X_s = \sum_{i=1}^r \tfrac{a_i}{m} \cdot E_i .\]
		This constructs a sequence of morphisms
		\begin{equation} \label{eq:q and i}
		\begin{tikzcd}
			Y_t \arrow[r, "q"] & \tfrac{1}{m}X_s \arrow[r, hookrightarrow, "i"] & X_s
		\end{tikzcd}
		\end{equation}
		where the former is a free $\langle \varphi \rangle$-quotient and the latter is an infinitesimal thickening. The following is simply \Cref{thm:Edixhoven}--\ref{cor:Edixhoven}, but we state it again more explicitly for clarity.
		
		\begin{lemma} \label{lem:classification of pullback}
			The pullback of t-automorphisms $p^* : P_s \to Q_t$ is one of the following. Throughout, $A_s$ and $B_t$ are abelian varieties of dimension $n-1$.
			\begin{enumerate}[label = \textnormal{(\roman*)}, ref = \roman*]
				\item An isomorphism between abelian varieties of dimension $n$.
				
				\item A homomorphism $P_s = \pi_0(P_s) \times \GG_a \times A_s \twoheadrightarrow \pi_0(P_s) \times A_s \hookrightarrow Q_t$, where $Q_t$ is an abelian variety of dimension $n$.
				
				\item An open immersion such that:
				\begin{itemize}
					\item $p^* : P_s^\circ \to Q_t^\circ$ is an isomorphism between a $\GG_m$-extension of $A_s$ and $\GG_m$-extension of $B_t$.
					\item $\coker p^* = \ZZ/m$.
				\end{itemize}
				
				\item A homomorphism $P_s = \pi_0(P_s) \times \GG_a \times A_s \twoheadrightarrow \pi_0(P_s) \times A_s \hookrightarrow Q_t$, where $Q^\circ_t$ is a $\GG_m$-extension of $B_t$ and $A_s \hookrightarrow Q^\circ_t \twoheadrightarrow B_t$ is an isogeny.
				
				\item A homomorphism whose induced map on neutral components is $\GG_a \times A_s \to \GG_a \times B_t$, where $\GG_a \to \GG_a$ is either an isomorphism or $0$, and $A_s \to B_t$ is an isomorphism. \qed
			\end{enumerate}
		\end{lemma}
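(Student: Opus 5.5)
The plan is to split according to the linear part of $P_s$ and of $Q_t$, which by $\delta$-regularity of $P$ and $Q$ (the latter from \Cref{lem:non-multiple base change}) are each $0$, $\GG_m$ or $\GG_a$. In every case the two inputs are \Cref{thm:Edixhoven}, giving $\im p^* = Q_t^\psi$ with unipotent kernel, and \Cref{cor:Edixhoven}.

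\begin{enumerate}
\item If $P_s$ is abelian, \Cref{cor:Edixhoven}(1) gives directly that $p^*$ is an isomorphism. This is case (i).

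\item If $P_s$ has linear part $\GG_m$, then \Cref{cor:Edixhoven}(2) shows that $Q_t$ also has linear part $\GG_m$ and that $p^*$ is an open immersion. Hence $p^*$ is an isomorphism on neutral components, and both are $\GG_m$-extensions of abelian varieties of dimension $n-1$. The exact sequence $0\to\pi_0(P_s)\to\pi_0(Q_t)\to\ZZ/m\to0$, with $d=m$, identifies $\coker p^* = \ZZ/m$. This is case (iii).

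\item If $P_s$ has linear part $\GG_a$, \Cref{thm:splitting} gives $P_s^\circ=\GG_a\times A_s$. Together with the splitting of $\pi_0$ from \Cref{lem:pi_0 sequence splits}, this yields $P_s=\pi_0(P_s)\times\GG_a\times A_s$. The kernel of $p^*$ is unipotent, hence contained in $\GG_a$ (a finite unipotent group is trivial in characteristic $0$), so it is either $0$ or $\GG_a$. We then subdivide according to $Q_t$.
\begin{enumerate}
\item If $Q_t$ is abelian, the unipotent $\GG_a$ cannot inject into it, so $\ker p^*=\GG_a$. The image is then $\pi_0(P_s)\times A_s$. This is case (ii).
\item If $Q_t$ has linear part $\GG_m$, again $\GG_a$ cannot map nontrivially to $Q_t$, since homomorphisms $\GG_a\to\GG_m$ and $\GG_a\to$ (abelian) are trivial. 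So the image is $\pi_0(P_s)\times A_s\subset Q_t$. Composing $A_s\hookrightarrow Q_t^\circ\to B_t$ with the Chevalley quotient gives a homomorphism with finite kernel, because $A_s\cap\GG_m$ is finite. It is surjective by dimension count, hence an isogeny. This is case (iv).
\item If $Q_t$ has linear part $\GG_a$, then $Q_t^\circ=\GG_a\times B_t$ by \Cref{thm:splitting}. Since $\Hom(A_s,\GG_a)=\Hom(\GG_a,B_t)=0$, the neutral map is diagonal. Its $\GG_a$-component is an endomorphism of $\GG_a$, which is either $0$ or an isomorphism (scalar multiplication in characteristic $0$).
\end{enumerate}
\end{enumerate}

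The main obstacle is showing in case (v) that $A_s\to B_t$ is an isomorphism rather than merely an isogeny.

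\begin{itemize}
\item For injectivity, the kernel of $p^*$ restricted to $A_s$ is unipotent and finite, hence trivial.
\item For surjectivity, the abelian variety part of $Q_t$ cannot be smaller than that of $P_s$, and $\dim B_t=n-1$, so $A_s\to B_t$ is at least an isogeny.
\item To upgrade to an isomorphism, I would use that $\im p^*=Q_t^\psi$ is a closed subgroup. The inertia automorphism $\psi$ preserves $Q_t^\circ=\GG_a\times B_t$ and respects the unique splitting, so it acts on $B_t$ by a group automorphism $\beta$ of finite order.
\item The image of $A_s$ is the abelian part of the fixed locus. If $\beta\neq\id$, then $B_t^\beta$ would have dimension less than $n-1$, contradicting surjectivity. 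So $\beta=\id$, hence $B_t\subset Q_t^\psi=\im p^*$, and $A_s\to B_t$ is onto with trivial kernel.
\end{itemize}

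The delicate point is to justify that $\psi$ preserves the product decomposition. I would argue this from the uniqueness of the splitting in \Cref{thm:splitting}, since $\psi$ maps the unique maximal abelian subvariety to itself.
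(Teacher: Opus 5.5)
Your proof is correct and takes the same approach as the paper. The paper records this lemma as a restatement of \Cref{thm:Edixhoven} and \Cref{cor:Edixhoven}, with \Cref{thm:splitting} supplying the product decompositions, and you organize it by the linear parts of $P_s$ and $Q_t$ in the same way; note, though, that the ``main obstacle'' in case (v) is not one: once $A_s \to B_t$ has trivial kernel and $\dim A_s = \dim B_t = n-1$, it is already an isomorphism (an injective homomorphism in characteristic $0$ is a closed immersion whose image must be all of the connected $B_t$), so your $\psi$-argument is valid but superfluous.
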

		
		\begin{table}[h]
			\begin{tabular}{|c||c|c|c|} \hline
				& $0$ & $\GG_m$ & $\GG_a$ \\ \hline\hline
				$0$ & (i) & - & - \\ \hline
				$\GG_m$ & - & (iii) & - \\ \hline
				$\GG_a$ & (ii) & (iv) & (v) \\ \hline
			\end{tabular}
			\caption{Five possible types of $p^* : P_s \to Q_t$ in \Cref{lem:classification of pullback}. The rows indicate the linear part of $P_s$ and the columns indicate the linear part of $Q_t$.}
			\label{table:comparison of P and Q}
		\end{table}
		
		Case (i) and (iii) will become Kodaira types $m \cdot \mathrm I_r$ for $r \ge 0$, case (ii) and (iv) will become Kodaira types $m \cdot \mathrm I_r^\pm$ for $r \ge 0$, and the last case (v) will become Kodaira types $m \cdot \mathrm I_r^*$ for $r \ge 0$, $m \cdot \mathrm{II}$, $\cdots$.

		\subsubsection{Albanese morphism and untangle for multiple fibers}
			The following two results reduce the study of $X_s$ to that of $\frac{1}{m} X_s$. The first lemma is nothing but \Cref{lem:P-action on 1/d X_s}.
			
			\begin{lemma}
				$\frac{1}{m} X_s$ admits a $P_s$-action (it may not be faithful). \qed
			\end{lemma}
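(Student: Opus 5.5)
The statement is that $\frac{1}{m} X_s$ carries a (possibly non-faithful) $P_s$-action. The paper itself flags this as an instance of \Cref{lem:P-action on 1/d X_s}, so the plan is to put ourselves in that lemma's setting and check its hypotheses.

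\emph{Step 1: identify the setting.} Take the degree $m$ totally ramified Galois base change $p : T \to S$ and the normalized base change $Y = (X \times_S T)^\nu$ of \eqref{diag:degree m base change}. By \Cref{lem:non-multiple base change}, $g : Y \to T$ is a minimal $\delta$-regular abelian fibration, so it is the minimal base change of \S\ref{sec:general base change}. The corollary following that lemma gives two facts:
\begin{itemize}
\item the inertia $\langle \varphi \rangle$-action on $Y_t$ is free of order $m$, hence faithful on $Y_t$ and on the central fiber;
\item the quotient satisfies $\bar X = Y/\varphi = X$.
\end{itemize}
So the faithfulness hypothesis of \Cref{lem:1/d X_s} and \Cref{lem:P-action on 1/d X_s} holds with $d = m$. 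Moreover, the two definitions of $\frac{1}{m} X_s$ agree: as $Y_t/\varphi$ and as the subscheme attached to the divisor $\sum \frac{a_i}{m} E_i$.

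\emph{Step 2: build the action.} By \Cref{prop:varphi-action}, $\varphi$ commutes with the action of $Q_t^\psi$. By \Cref{thm:Edixhoven}, $Q_t^\psi = \im p^*$, where $p^* : P_s \to Q_t$ is the pullback homomorphism. Hence the action of $P_s$ on $Y_t$ through $p^*$ commutes with $\varphi$ and descends to the free quotient $Y_t/\varphi = \frac{1}{m} X_s$. This action need not be faithful, since $\ker p^*$ may be nontrivial (for instance $\GG_a$ in cases (ii), (iv)).

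\emph{Step 3: compatibility with the action on $X$.} We should check that the descended action agrees with the restriction of the $P$-action of \Cref{prop:P-action existence} under $i : \frac{1}{m}X_s \hookrightarrow X_s$. Since $\bar X = X$, this is exactly the content of the diagram argument in \Cref{lem:P-action on 1/d X_s}, using \Cref{prop:P-action on bar X}. The $P_T$-action on $Y$ and the $P$-action on $X$ agree over $T_0$, and $Y$ is separated, so they agree over $T$. Restricting to $t$ and taking the $\varphi$-quotient then gives the claim.

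\emph{Main obstacle.} The only subtle point is that $\frac{1}{m}X_s$ is a non-reduced closed subscheme, and $P_s$-stability should hold scheme-theoretically, not just set-theoretically. Constructing the action on the quotient $Y_t/\varphi$ and comparing through $Y \to X$ avoids this, because there we work with actual morphisms of schemes rather than with supports.
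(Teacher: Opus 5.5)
Your proposal is correct and is the paper's own route. The paper proves this lemma by invoking \Cref{lem:P-action on 1/d X_s} with $d = m$ and $\bar X = X$, and that lemma's proof is exactly your argument: the $Q_t^\psi = \im p^*$-action descends through the free quotient $Y_t \to Y_t/\varphi$, and compatibility is checked via \Cref{prop:P-action on bar X}.

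One minor correction in Step 3: to extend the agreement from $T_0$ to $T$, what you need is separatedness of the target $X$, together with schematic density of the part over $T_0$ in the flat scheme $P_T \times_T Y$. Separatedness of $Y$ is not the relevant condition.
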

			
			\begin{proposition} \label{prop:Albanese of multiple fiber}
				The Albanese torsors of $\frac{1}{m} X_s$ and $X_s$ are isomorphic, and their Albanese morphisms are related by the diagram
				\[\begin{tikzcd}
					\frac{1}{m} X_s \arrow[r, "i", hook] \arrow[d, "\alb"'] & X_s \arrow[dl, bend left=25, "\alb"] \\
					\Alb_s
				\end{tikzcd}.\]
			\end{proposition}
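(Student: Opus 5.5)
The claim is that the closed immersion $i : \frac{1}{m}X_s \hookrightarrow X_s$ induces an isomorphism $\Alb_{\frac{1}{m}X_s} \to \Alb_s$, compatibly with the two Albanese morphisms. By \Cref{thm:Albanese}, this is equivalent to the pullback $i^* : \Pic^\circ_{X_s} \to \Pic^\circ_{\frac{1}{m}X_s}$ inducing an isomorphism on maximal abelian subvarieties. I will combine two facts already available. First, the Albanese torsor is insensitive to nilpotent thickenings (\Cref{cor:Albanese}, applied to $X_{s,\red} \subset \frac{1}{m}X_s \subset X_s$). Second, $\frac{1}{m}X_s$ and $X_s$ have the same underlying reduced scheme. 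Existence of both Albanese morphisms follows as in \S 4, since $h^0(\mathcal O)=1$ for $\frac{1}{m}X_s = Y_t/\varphi$ as well: it is a free quotient of $Y_t$, and $h^0(\mathcal O_{Y_t})=1$.

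\textbf{Step 1.} Functoriality of $\alb$ gives a morphism $\Alb(i) : \Alb_{\frac{1}{m}X_s} \to \Alb_s$ with $\alb_{X_s}\circ i = \Alb(i)\circ\alb_{\frac{1}{m}X_s}$. The composite $X_{s,\red} \hookrightarrow \frac{1}{m}X_s \hookrightarrow X_s$ gives $\Alb_{X_{s,\red}} \to \Alb_{\frac{1}{m}X_s} \to \Alb_s$, and by \Cref{cor:Albanese} the total map is surjective. Hence $\Alb(i)$ is surjective.

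\textbf{Step 2.} I will show that $\Alb(i)$ is injective, i.e.\ that $\Alb_s$ already receives every morphism from $\frac{1}{m}X_s$ to an abelian torsor. Let $h : \frac{1}{m}X_s \to B$ be such a morphism. I want to extend $h$ to $X_s$. I will use the $P_s$-action from \Cref{lem:P-action on 1/d X_s} to make $h$ equivariant after replacing $B$ by the image. Exactly as in \Cref{lem:transitive action on Albanese}, $P^\circ_s$ acts transitively on $\Alb_{\frac{1}{m}X_s}$, so $\alb_{\frac{1}{m}X_s}$ is an étale locally trivial fibration with proper connected fibers $F$.

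Next I will use that $X_s$ is an infinitesimal thickening of $\frac{1}{m}X_s$ by successive square-zero ideals $\mathcal I_k = \mathcal O_X(-kD)/\mathcal O_X(-(k+1)D)$, where $D = \frac{1}{m}X_s$. Each such ideal is a line bundle on $D$ that is torsion in $\Pic$, since $\mathcal O_D(D)$ is torsion: $mD = X_s$ is principal near the fiber. The obstruction to extending a morphism to an abelian torsor across a square-zero thickening lies in $H^1(D, \mathcal I_k)\otimes T_B$, with the choices given by $H^0$. To kill these obstructions I will instead argue via Picard groups. It suffices to show that $i^*:\Pic^\circ_{X_s}\to\Pic^\circ_{D}$ is an isogeny onto its image, with the image containing the maximal abelian subvariety. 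The kernel of $\Pic_{X_s}\to\Pic_D$ is filtered by quotients of $H^1(D,\mathcal I_k)$, which are vector groups. Vector groups contain no abelian varieties, and since $\Pic^\circ$ of both schemes has dimension $n$ (same reduced space, same Euler data), the abelian parts agree up to isogeny. Dualizing, $\Alb(i)$ is an isogeny.

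\textbf{Step 3.} To upgrade the isogeny to an isomorphism, I compare with $Y_t$. By \eqref{eq:q and i}, $q:Y_t\to D$ is a free $\langle\varphi\rangle$-quotient, and $\alb_D\circ q$ factors through $\alb_{Y_t}$. Connectedness of the fibers of $\alb_D$ then shows that any étale cover of $\Alb_D$ that pulls back trivially along $i$ must be trivial. Concretely, an isogeny $B\to\Alb_s$ through which $\alb_D$ factors gives a section of an étale cover over $X_{s,\red}$, hence over $X_s$ by topological invariance of the étale site. The universal property of $\alb_{X_s}$ then forces the isogeny to be an isomorphism.

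The main obstacle is Step 2: controlling the thickening, where the map on $\Pic^\circ$ is only known to have unipotent kernel and one must rule out a finite discrepancy. This is the part that Step 3's topological-invariance argument is meant to resolve.
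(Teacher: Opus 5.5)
Your proposal has a genuine gap in Step~2, and Step~2 is where the content of the proposition lies.

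\textbf{The dimension claim is false.} You assert that $\Pic^\circ_{X_s}$ and $\Pic^\circ_{\frac{1}{m}X_s}$ both have dimension $n$, but in general this fails. Since $\frac{1}{m}X_s = Y_t/\varphi$ is a free quotient, $\dim \Pic^\circ_{\frac{1}{m}X_s} = \dim H^1(Y_t,\mathcal O_{Y_t})^{\varphi}$. In case (ii) of \Cref{lem:classification of pullback} (type $m\cdot\mathrm I_0^+$), the inertia acts on the $H^1(\mathcal O_E)$-summand by a nontrivial root of unity, so this dimension is $n-1$. There $\ker i^* = \GG_a$, as \Cref{lem:kernel of Pic to P check} allows. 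Equal reduced spaces and equal Euler characteristics do not control $h^1$. So even your intermediate goal, that $i^*$ be an isogeny onto its image, is false in that case.

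\textbf{Subvariety versus quotient.} Even granting equal dimensions, a unipotent $\ker i^*$ only controls the abelian \emph{quotient} in the Chevalley sequence. \Cref{thm:Albanese}, however, identifies $\Alb^\vee$ with the maximal abelian \emph{subvariety}. Suppose $\Pic^\circ_{X_s}$ contained a non-split $\GG_a$-extension $W$ of an abelian variety. Then $i^*$ could carry $W$ onto an abelian subvariety of $\Pic^\circ_{\frac{1}{m}X_s}$ that comes from no abelian subvariety of $\Pic^\circ_{X_s}$. In that situation $\Alb(i)$ would be surjective with positive-dimensional fibers, and nothing in your argument excludes this. Relatedly, \Cref{cor:Albanese} only gives a surjection $\Alb_Z\twoheadrightarrow\Alb_Y$. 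Albanese torsors are not insensitive to nilpotent thickenings in general, which is exactly why the proposition needs proof.

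\textbf{The missing inputs.} Two facts close the gap.
\begin{enumerate}
\item[(a)] By \Cref{lem:Picard scheme of thickening}, $i^*$ has unipotent \emph{cokernel} as well as unipotent kernel. Hence every abelian subvariety of $\Pic^\circ_{\frac{1}{m}X_s}$ lies in $\im i^*$.
\item[(b)] When $P_s$ has linear part $\GG_a$, \Cref{prop:splitting and albanese} gives the splitting $\Pic^\circ_{X_s}=\GG_a\times\Alb_s^\vee$; this ultimately rests on \Cref{thm:splitting}.
\end{enumerate}
With (b), $i^*$ is injective on $\Alb_s^\vee$, and $\Pic^\circ_{\frac{1}{m}X_s}/i^*\Alb_s^\vee$ is unipotent. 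So $i^*\Alb_s^\vee$ is the maximal abelian subvariety, and $\Alb(i)$ is an isomorphism by \Cref{thm:Albanese}(2). In the semiabelian case, $\ker i^*=0$ automatically and (a) alone suffices. This is the paper's argument.

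\textbf{On Step~3.} In its ``concretely'' form, Step~3 is correct: topological invariance of the \'etale site plus the universal property of $\alb_{X_s}$ shows that any isogeny through which $\alb_{\frac{1}{m}X_s}$ factors must split. That would finish the proof once you know $\dim\Alb_{\frac{1}{m}X_s}\le\dim\Alb_s$. But establishing that inequality needs (a) and (b), and with those the isomorphism is already in hand.
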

			\begin{proof}
				Since $h^0 (\frac{1}{m}X_s, \mathcal O) \le h^0 (Y_t, \mathcal O) = 1$, we can apply \Cref{thm:Albanese}--\ref{cor:Albanese}. Note that we can say more about the pullback $i^* : \Pic^\circ_{X_s} \to \Pic^\circ_{\frac{1}{m}X_s}$ in this case: when $\Pic^\circ_{X_s}$ has a linear part $\GG_a$, then it splits as $\Pic^\circ_{X_s} = \GG_a \times \Alb_s^\vee$ by \Cref{prop:splitting and albanese}. This implies that $i^*$ induces an inclusion $\Alb_s^\vee \hookrightarrow \Pic^\circ_{\frac{1}{m} X_s}$ whose cokernel is a unipotent group by \Cref{lem:Picard scheme of thickening}.
			\end{proof}
			
			We end this subsection with a variant of \Cref{lem:untangle primitive} for multiple fibers. This will be later used in \Cref{thm:classification of multiple fiber iv} and \ref{thm:classification of multiple fiber v} to descend the untangle of $Y$ to $X$. The statement is unfortunately a bit technical.
			
			\begin{lemma} [Improvement of \Cref{lem:untangle primitive}] \label{lem:untangle for multiple}
				Let $f : X \to S$ be a minimal $\delta$-regular abelian fibration with $P_s$ having a linear part $\GG_a$. Assume that
				\begin{itemize}
					\item The Albanese morphism $\frac{1}{m}X_s \to \Alb_s$ in \Cref{prop:Albanese of multiple fiber} is a free $\varphi$-quotient of the Albanese morphism $Y_t \to \Alb_t$.
				\end{itemize}
				Let $u : A \to \Alb_s$ be an isogeny of abelian torsors such that $A \times_{\Alb_s} \Alb_t$ is connected. Then the conclusions of \Cref{lem:untangle primitive} hold for $u$.
			\end{lemma}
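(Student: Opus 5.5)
We follow the proof of \Cref{lem:untangle primitive} step by step. By \Cref{rmk:untangle primitive}, the non-multiple hypothesis was used only once, at the very end, and the new connectivity hypothesis on $A \times_{\Alb_s} \Alb_t$ is designed to replace it. Let $G$ be the Galois group of $u$ and $\hat G \subset \Alb_s^\vee$ its character group.

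First we produce torsion line bundles on $X$. Pull $\hat G$ back along $\alb : X_s \to \Alb_s$; this gives an injection into $\Pic^\circ_{X_s}$ by \Cref{thm:Albanese} and \Cref{prop:Albanese of multiple fiber}. Smoothness of $\Pic^\circ_f$ along the identity section lets us extend the resulting torsion points to torsion sections over a shrunken $S$. The Leray exact sequence, together with the vanishing of $\Pic$ and $\Br$ of a strictly henselian dvr, then lifts these sections to torsion line bundles $M_\chi$ on $X$. We set $\tilde X = \Spec_X \bigoplus_\chi M_\chi$. Since the $M_\chi$ are torsion line bundles, $\tilde X \to X$ is a finite \'etale $G$-covering. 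Hence $\tilde X$ is regular with $K_{\tilde X}$ numerically trivial, and the cartesian square over $s$ holds by construction.

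The remaining point is that the general fibers of $\tilde X_0 \to S_0$ are connected. Equivalently, every nontrivial $\chi \in \hat G$ must give a line bundle $M_\chi$ that is nontrivial over $S_0$. This is where the new hypothesis enters. Pull the covering back along the étale map $q : Y \to X$. Its restriction to $Y_t$ is the pullback of $u$ along $Y_t \to \Alb_t \to \Alb_s$. Here we use the assumption that $\frac{1}{m}X_s \to \Alb_s$ is the free $\varphi$-quotient of $Y_t \to \Alb_t$: it implies that $Y_t \times_{\Alb_s} A = Y_t \times_{\Alb_t}(\Alb_t \times_{\Alb_s} A)$. The fibers of $Y_t \to \Alb_t$ are connected and $\Alb_t \times_{\Alb_s} A$ is connected by hypothesis, so this space is connected. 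Therefore no nontrivial $q^*M_\chi$ restricts trivially to $Y_t$. Since $g$ is non-multiple (\Cref{lem:non-multiple base change}), $\Pic^\circ_g$ is separated by \Cref{prop:dual Neron model is Picard}, so each nontrivial $q^*M_\chi$ is nontrivial over $T_0$. As $T_0 \to S_0$ is finite étale, $M_\chi$ is then nontrivial over $S_0$. Consequently the $\hat G$-covering of each generic fiber is connected, and $\tilde f$ is a minimal abelian fibration. Uniqueness follows from \Cref{prop:minimal delta-regular abelian fibration is unique}.

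For (2), the isogeny $\tilde P_0 \to P_0$ with kernel $G$ extends, by the Néron mapping property, to a left exact sequence $0 \to G \to \tilde P \to P$ exactly as before. This also gives $\delta$-regularity of $\tilde f$.

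The main obstacle is the connectivity transfer in the third paragraph. One must check carefully that a character trivial on $Y_t$ is the only possible source of a disconnected generic fiber. The subtle point is that separatedness of $\Pic^\circ_g$ holds upstairs on $Y$ but fails on $X$. The argument therefore has to route the triviality question through $q^*$, and it must use that $q^*$ is injective on $\Pic(X_0) \to \Pic(Y_0)$ restricted to these torsion classes. That injectivity should follow from the Galois descent for the free $\mu_m$-action, but this is the step to verify first.
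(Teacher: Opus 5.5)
Your argument is correct. Its skeleton matches the paper's: the non-multiple hypothesis entered \Cref{lem:untangle primitive} only through the nontriviality of $M_\chi$ over $S_0$ for $\chi \neq 0$, and both arguments reduce this to connectedness of $A \times_{\Alb_s} \Alb_t$, equivalently $\hat G \cap \ker(r^*) = 0$ in $\Alb_s^\vee$ for $r \colon \Alb_t \to \Alb_s$.

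The difference is how you pass from $S_0$ to the central fiber of $Y$. The paper stays on $X_s$: a section of $\Pic^\circ_f$ that is trivial over $S_0$ lands in $\ker \kappa \subset \Pic^\circ_{X_s}$. Then \Cref{lem:kernel of Pic to P check}, via the isomorphism $i^* \colon \ker\kappa \to \ker q^*$, moves the question to $\frac{1}{m}X_s$, and the cartesian Albanese square moves it to $\ker r^*$. You instead pull $M_\chi$ back along $q \colon Y \to X$ and use separatedness of $\Pic^\circ_g$ (\Cref{prop:dual Neron model is Picard}, since $g$ is non-multiple) directly on the total space. You then read off nontriviality on $Y_t$ from connectedness of $Y_t \times_{\Alb_s} A$.

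Your route uses only the easy direction, namely that triviality over $S_0$ forces triviality on $Y_t$, so it bypasses \Cref{lem:kernel of Pic to P check} entirely. The paper's packaging buys something different: it identifies the obstruction group exactly, $\ker\kappa \cong \ker q^* \subset \ZZ/m$, a fact it also uses elsewhere.

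Finally, the step you flag as the main obstacle is not actually needed. Your argument only uses that if $M_\chi|_{X_0}$ is trivial then $q^*M_\chi|_{Y_0}$ is trivial, and a pullback of a trivial bundle is automatically trivial. So no injectivity of $q^*$ on $\Pic(X_0)$, and no Galois descent, is required.
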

			
			When $f$ is non-multiple, then the bullet assumption in the lemma is satisfied (because $m = 1$ and $Y = X$) and $u$ can be arbitrary. Hence \Cref{lem:untangle for multiple} recovers its original version when $P_s$ has a liner part $\GG_a$. When $f$ is multiple, the assumptions in the lemma will be verified for the cases (iv) and (v) of \Cref{lem:classification of pullback} (see \Cref{lem:Albanese of X is quotient of Albanese of Y} and \ref{lem:multiple fiber v:albanese}).
			
			\begin{lemma} \label{lem:kernel of Pic to P check}
				There exists a commutative diagram of algebraic groups
				\[\begin{tikzcd}
					\Pic^\circ_{X_s} \arrow[r, "i^*"] \arrow[d, "\kappa"] & \Pic^\circ_{\frac{1}{m} X_s} \arrow[d, "q^*"] \\
					\check P^\circ_s \arrow[r, "p^*"] & \check Q^\circ_t
				\end{tikzcd},\]
				where $p^*$ is the pullback constructed in \eqref{eq:pullback of t-automorphism}, $i^*$ and $q^*$ are pullbacks by \eqref{eq:q and i}, and $\kappa$ is the isogeny constructed in \Cref{prop:dual Neron model is Picard}. Moreover, the following holds.
				\begin{enumerate}
					\item There exist isomorphisms $\kappa : \ker i^* \to \ker p^*$ and $i^* : \ker \kappa \to \ker q^*$.
					\item $\ker i^* = 0$ or $\GG_a$, and $\ker \kappa \subset \ZZ/m$.
				\end{enumerate}
			\end{lemma}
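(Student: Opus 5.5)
We have to prove Lemma \ref{lem:kernel of Pic to P check}: the commutative square and the kernel statements (1)--(2).

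\textbf{Construction of the square.} Start from the base change diagram \eqref{diag:degree m base change}. The morphism $q : Y \to X$ induces a pullback $q^* : \Pic_f \times_S T \to \Pic_g$ of relative Picard spaces over $T$. Restricted to strict neutral components and composed with the separated quotients of \Cref{prop:dual Neron model is Picard}(2), this gives a homomorphism $\Pic^\circ_f \times_S T \to \check Q^\circ$ that agrees over $T_0$ with the N\'eron pullback $\check P_{T_0} = \check Q_0$. By the N\'eron mapping property, it factors as $\kappa_T$ followed by $p^*$. Taking central fibers gives $p^* \circ \kappa = (q|_{Y_t})^* = q^* \circ i^*$, because $Y_t \to X_s$ factors through $\frac1m X_s$ by \eqref{eq:q and i}. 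We use $\Pic^\circ_{Y_t} = \check Q^\circ_t$ here, which holds by \Cref{prop:dual Neron model is Picard}(3) since $g$ is non-multiple (\Cref{lem:non-multiple base change}). So the square commutes, with the right vertical arrow $q^*$ landing in $\check Q^\circ_t$.

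\textbf{Kernels.} Since $q : Y_t \to \frac1m X_s$ is a free $\mu_m$-quotient (\Cref{lem:non-multiple base change}), Hochschild--Serre gives $\ker(q^* : \Pic_{\frac1m X_s} \to \Pic_{Y_t}) = \Hom(\mu_m, \GG_m) = \ZZ/m$, using $h^0(\mathcal O_{Y_t}) = 1$. Hence $\ker q^*$ is a subgroup of $\ZZ/m$, and in particular it is finite and étale. By \Cref{lem:Picard scheme of thickening}, $\ker i^*$ is unipotent, since $i$ is an infinitesimal thickening. By \Cref{thm:Edixhoven}, $\ker p^*$ is unipotent, and $\kappa$ is an isogeny. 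Chasing the square:
\begin{itemize}
\item $\kappa(\ker i^*) \subset \ker p^*$. Conversely, for $x \in \kappa^{-1}(\ker p^*)$ we get $i^*x \in \ker q^*$, which is finite.
\item $i^*(\ker\kappa) \subset \ker q^*$.
\end{itemize}

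To get the two isomorphisms in (1), I would use the following comparison. Both $\ker i^*$ and $\ker p^*$ are connected unipotent, and $\ker\kappa$ is finite, hence étale in characteristic $0$. The map $\kappa : \ker i^* \to \ker p^*$ is injective, because $\ker\kappa \cap \ker i^*$ is a finite subgroup of a unipotent group in characteristic $0$, hence trivial. So $i^*|_{\ker\kappa}$ is injective as well, and $i^*(\ker \kappa) \subset \ker q^* \subset \ZZ/m$, which gives $\ker\kappa \subset \ZZ/m$ as required in (2).

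For surjectivity of $\kappa : \ker i^* \to \ker p^*$, I would compare dimensions via the linear parts. $\kappa$ is an isogeny, so it matches linear parts. The map $i^*$ is injective on abelian variety parts by \Cref{prop:Albanese of multiple fiber}, and both $\ker i^*$ and $\ker p^*$ are contained in the $\GG_a$-part, which has dimension at most $1$ by $\delta$-regularity. Therefore both kernels are $0$ or $\GG_a$. This gives the first half of (2) once we show they have the same dimension. The dimension equality comes from $\dim \Pic^\circ_{\frac1m X_s} = \dim \check Q^\circ_t = n$, since $q^*$ has finite kernel and is surjective onto the neutral component by equidimensionality. Then $\dim\ker i^* = n - \dim\im i^*$ and $\dim\ker p^* = n - \dim \im p^*$, and $\im p^*\circ\kappa = q^*(\im i^*)$ has the same dimension as $\im i^*$. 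A closed immersion $\GG_a \to \GG_a$ is an isomorphism, so $\kappa : \ker i^* \to \ker p^*$ is an isomorphism.

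\textbf{Main obstacle.} Surjectivity of $i^* : \ker\kappa \to \ker q^*$. Given $y \in \ker q^*$, lift $q^* y = 0$ to the square. The step is to show $y \in \im i^*$ with a preimage in $\ker\kappa$. I would argue that the $\mu_m$-character line bundles on $\frac1m X_s$ are restrictions of $\mathcal O_X(\frac{a}{m}X_s)$-type bundles (cf.\ \Cref{ex:non-separated Pic}): these are numerically trivial line bundles on $X$, trivial on $X_0$, hence lie in $\ker\kappa$. This uses the description of $\ker(\Pic X \to \Pic X_0)$ as generated by components, as in the Zariski lemma argument of \Cref{prop:dual Neron model is Picard}(3), and I expect this identification to be the delicate point.
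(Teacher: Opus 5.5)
\textbf{Gap: surjectivity of $i^* : \ker\kappa \to \ker q^*$.} Your construction of the square matches the paper's. So do the injectivity of $\kappa$ on $\ker i^*$ and of $i^*$ on $\ker\kappa$, and the resulting embedding $\ker\kappa \hookrightarrow \ker q^* \subset \ZZ/m$. The gap is the step you flag yourself: you never prove that $i^* : \ker\kappa \to \ker q^*$ is surjective, and the geometric sketch does not close it as written. Suppose you grant that the $\mu_m$-characters of $Y_t \to \frac1m X_s$ are restrictions of $\mathcal O_X(\frac{j}{m}X_s)$. Numerical triviality still only puts $\mathcal O_X(\frac{j}{m}X_s)|_{X_s}$ in $\Pic^\tau_{X_s}$, whereas $\ker\kappa$ lives in $\Pic^\circ_{X_s}$. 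To finish that route you would also need the injectivity of $\NS(X_s) \to \NS(\frac1m X_s)$ from \Cref{lem:Picard scheme of thickening}(2). You would then need to argue that the resulting section of $\Pic^\circ_f$, trivial over $S_0$, is killed by $\kappa$, because $\check P^\circ$ is separated.

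\textbf{The missing idea.} The fix is much shorter, and it is what makes the paper's ``finite versus unipotent'' argument close. \Cref{lem:Picard scheme of thickening}(1) says that the \emph{cokernel} of $i^* : \Pic^\circ_{X_s} \to \Pic^\circ_{\frac1m X_s}$ is unipotent, not only the kernel. In characteristic $0$ a unipotent group is torsion-free, so the finite group $\ker q^*$ maps to zero in $\coker i^*$, i.e.\ $\ker q^* \subset \im i^*$. Now take $y = i^*x \in \ker q^*$. Then $p^*\kappa(x) = q^*y = 0$, so $\kappa(x) \in \ker p^*$. By the first isomorphism there is $x' \in \ker i^*$ with $\kappa(x') = \kappa(x)$. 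Hence $x - x' \in \ker\kappa$ and $i^*(x - x') = y$.

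\textbf{A false but unused claim.} You assert $\dim \Pic^\circ_{\frac1m X_s} = n$ with $q^*$ surjective onto $\check Q^\circ_t$. This fails in case (ii) of \Cref{lem:classification of pullback}. There $\frac1m X_s$ is a smooth free quotient of $E \times A_s$ by a group containing an automorphism acting by $\omega^{\pm1} \neq 1$ on $E$, so $h^1(\mathcal O) = n-1$ and $\Pic^\circ_{\frac1m X_s}$ has dimension $n-1$. Your equality $\dim\ker i^* = \dim\ker p^*$ does not need this claim. It only uses $\dim\Pic^\circ_{X_s} = \dim\check P^\circ_s = n$, the surjectivity of $\kappa$, and the finiteness of $\ker q^*$, so drop the claim.
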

			\begin{proof}
				Start with a commutative diagram of group schemes over $T$:
				\[\begin{tikzcd}
					(\Pic^\circ_f)_T \arrow[r, "q^*"] \arrow[d, "\kappa"] & \Pic^\circ_g \arrow[d, phantom, "\parallel"] \\
					\check P^\circ_T \arrow[r, "p^*"] & \check Q^\circ
				\end{tikzcd}.\]
				It is commutative over $T_0$ and hence over $T$, and the equality on the right is from \Cref{prop:dual Neron model is Picard}. Restricting this diagram over $t \in T$ and decomposing $\Pic^\circ_{X_s} \to \Pic^\circ_{Y_t} = \check Q_t^\circ$ into $q^* \circ i^*$ constructs the desired commutative diagram. To show (1), notice that $\ker q^* \subset \ZZ/m$ is finite by \Cref{lem:Picard scheme of free quotient}, $\ker \kappa$ is finite by \Cref{prop:dual Neron model is Picard}, $\ker i^*$ is unipotent by \Cref{lem:Picard scheme of thickening} (in fact, it is either $0$ or $\GG_a$), and finally $\ker p^*$ is unipotent by \Cref{thm:Edixhoven}. These force the conclusions in (1) to hold. Now (2) follows from $\ker \kappa = \ker q^* \subset \ZZ/m$.
			\end{proof}
			
			\begin{proof}[Proof of \Cref{lem:untangle for multiple}]
				Recall from the proof of \Cref{lem:untangle primitive} that the isogeny $u : A \to \Alb_s$ (with Galois group $G$) defines a subgroup $\hat G \subset \Alb_s^\vee$. The Albanese morphism $\alb : X_s \to \Alb_s$ defined an injection $\alb^* : \Alb_s^\vee \hookrightarrow \Pic^\circ_{X_s}$ by \Cref{thm:Albanese}. As pointed out in \Cref{rmk:untangle primitive}, the only part in the original lemma where the non-multiple fiber assumption was used was to show $\alb^*(\hat G) \subset \Pic^\circ_f$ is nontrivial over $S_0$. Recall from \Cref{prop:dual Neron model is Picard} that $\Pic^\circ_f$ has a separated quotient $\kappa : \Pic^\circ_f \to \check P^\circ$. A section of $\Pic^\circ_f$ that is trivial over $S_0$ passes through the torsion points $\ker (\kappa : \Pic^\circ_{X_s} \to \check P^\circ_s)$ of the central fiber $\Pic^\circ_{X_s}$. Therefore, it is enough to show that $\alb^*(\hat G) \subset \Pic^\circ_{X_s}$ has a trivial intersection with $\ker \kappa \subset \Pic^\circ_{X_s}$.
				
				\medskip
				
				Take the diagram in \Cref{lem:kernel of Pic to P check}. The claim $\alb^* (\hat G) \cap \ker \kappa = 0$ in $\Pic^\circ_{X_s}$ is equivalent to $\alb^* (\hat G) \cap \ker q^* = 0$ in $\Pic^\circ_{\frac{1}{m}X_s}$ by \Cref{prop:Albanese of multiple fiber} and \Cref{lem:kernel of Pic to P check}. The given bullet assumption is equivalent to a cartesian diagram
				\[\begin{tikzcd}
					Y_t \arrow[r, "q"] \arrow[d, "\alb"] & \tfrac{1}{m} X_s \arrow[d, "\alb"] \\
					\Alb_t \arrow[r, "r"] & \Alb_s
				\end{tikzcd},\]
				so we have $\ker q^* = \ker r^*$ and again $\alb^*(\hat G) \cap \ker q^* = 0$ is equivalent to $\hat G \cap \ker r^* = 0$ in $\Alb_s$. Since $\hat G$ is associated to $u : A \to \Alb_s$ and $\ker r^*$ is associated to $\Alb_t \to \Alb_s$, this is equivalent to saying that the fiber product $A \times_{\Alb_s} \Alb_t$ is connected (abelian torsor), which we are assuming.
			\end{proof}

	\subsection{Base change to smooth fibrations} \label{sec:base change to smooth}
		The new minimal $\delta$-regular abelian fibration $g : Y \to T$ may be smooth, strictly semistable, or unstable. We will analyze them separately throughout the remaining sections from now on.
		
		\medskip
		
		The first case is when $g$ is smooth, or the case (i) and (ii) in \Cref{lem:classification of pullback} and \Cref{table:comparison of P and Q}. In this case, the central fiber $Y_t$ is an abelian torsor of dimension $n$ by \Cref{lem:classification when P is smooth}.
		
		\begin{theorem} [Type $\mathrm I_0$] \label{thm:classification of multiple fiber i}
			In \Cref{lem:classification of pullback}(i), $\frac{1}{m}X_s$ is an abelian torsor of dimension $n$.
		\end{theorem}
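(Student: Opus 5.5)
In case (i) of \Cref{lem:classification of pullback}, $p^* : P_s \to Q_t$ is an isomorphism of $n$-dimensional abelian varieties. The plan is to read off $\frac{1}{m}X_s$ from the free quotient $q : Y_t \to \frac{1}{m}X_s = Y_t/\varphi$ in \eqref{eq:q and i}.

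\textbf{Step 1: the structure of $Y_t$.} Since $Q_t$ is an abelian variety, $g$ is smooth by \Cref{lem:classification when P is smooth}, because $Y_t$ has multiplicity $1$ by \Cref{lem:non-multiple base change}. Hence $Y_t$ is a $Q_t$-torsor, i.e. an abelian torsor of dimension $n$.

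\textbf{Step 2: $\varphi$ acts by a translation.} By \Cref{prop:varphi-action}(2) we have $\varphi(a\cdot y)=\psi(a)\cdot\varphi(y)$. By \Cref{thm:Edixhoven}(1), $Q_t^\psi=\im p^*=Q_t$, so $\psi$ acts trivially on $Q_t$. Therefore $\varphi$ commutes with the simply transitive $Q_t$-action on $Y_t$.

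Fix a base point $y_0$ and write $\varphi(y_0)=b\cdot y_0$. Then
\[ \varphi(a\cdot y_0)=a\cdot b\cdot y_0 \qquad\mbox{for all } a \in Q_t, \]
so $\varphi$ is translation by $b\in Q_t$. Because the $\langle\varphi\rangle$-action is free of order $m$, $b$ is a point of exact order $m$.

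\textbf{Step 3: the quotient.} It follows that $\frac{1}{m}X_s=Y_t/\langle b\rangle$ is the quotient of an abelian torsor by a free translation subgroup. This is again an abelian torsor of dimension $n$, under $Q_t/\langle b\rangle$.

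Alternatively, $P_s$ acts on $\frac{1}{m}X_s$ by \Cref{lem:P-action on 1/d X_s}, through $p^*$. This action is transitive because $Q_t$ is transitive on $Y_t$, which gives the same conclusion directly.

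\textbf{Expected obstacle.} The only delicate point is the verification in Step 2 that $\psi$ is trivial, so that $\varphi$ commutes with the translations. This rests on Edixhoven's identification $\im p^*=Q_t^\psi$ together with the fact that $p^*$ is an isomorphism in case (i). No orientation-reversing or non-translation automorphisms can then occur, and everything else is formal.
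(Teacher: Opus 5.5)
Your argument is correct and is essentially the paper's own proof. The paper first observes that \Cref{lem:classification when P is smooth}, applied directly to $f$ (since $P_s \cong Q_t$ is an abelian variety), already gives the claim; it then records exactly your ``stronger version'': the $P_s$-action on $\frac{1}{m}X_s$ from \Cref{lem:P-action on 1/d X_s} is induced by the transitive $Q_t$-action on $Y_t$, and $\varphi$ acts on $Y_t$ by translations. Your Step~2, which derives the translation property from $Q_t^\psi = \im p^* = Q_t$, fills in a detail the paper leaves implicit.
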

		
		\begin{theorem} [Type $\mathrm I_0^+$] \label{thm:classification of multiple fiber ii}
			In \Cref{lem:classification of pullback}(ii), there exist a unique positive integer $d$ dividing $m$ with the following properties.
			\begin{enumerate}
				\item $d = 2$, $3$, $4$, or $6$. Moreover,
				\begin{enumerate}
					\item $d = 2$ if and only if $P$ has type $\mathrm I_0^*$, $\mathrm I_0^*/2$, or $\mathrm I_0^*/4$.
					\item $d = 6$ if and only if $j(E) = 0$ and $P$ has type $\mathrm{II}$ or $\mathrm{II}^*$.
					\item $d = 4$ if and only if $j(E) = 1728$ and $P$ has type $\mathrm{III}$, $\mathrm{III}/2$, $\mathrm{III}^*$, or $\mathrm{III}^*/2$.
					\item $d = 3$ if and only if $j(E) = 0$ and $P$ has type $\mathrm{IV}$, $\mathrm{IV}/3$, $\mathrm{IV}^*$, or $\mathrm{IV}^*/3$.
				\end{enumerate}
				
				\item $\frac{1}{m} X_s$ is isomorphic to $(E \times A_s)/(\langle \tilde \varphi \rangle \times G)$, where $E$ is an elliptic curve, $G$ is the Albanese stabilizer of $P$ described in \Cref{prop:semistable reduction theorem tangled ver}(2), and
				\[ \tilde \varphi (x,y) = (\omega^{\pm1} x, y + a) \qquad\mbox{for}\quad (x,y) \in E \times A_s .\]
				Here $\omega \in \Aut_0 (E)$ is a group automorphism of order $d$, and $a \in A_s$ is a torsion point of order $m$
			\end{enumerate}
		\end{theorem}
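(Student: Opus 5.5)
The plan is to read everything off the degree-$m$ base change $g : Y \to T$ of \eqref{diag:degree m base change}. By \Cref{lem:non-multiple base change}, $g$ is non-multiple, and in case (ii) its t-automorphism group $Q_t$ is an abelian variety, so $Y_t$ is a $Q_t$-torsor. By \eqref{eq:q and i}, $\frac{1}{m}X_s = Y_t/\varphi$ for a \emph{free} inertia action of order $m$. Everything then reduces to two tasks: (a) determine the pair $(Q_t,\psi)$, and (b) determine how $\varphi$ differs from $\psi$ on the torsor $Y_t$.

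For (a) I would compare with the minimal compactification $f^P : X^P \to S$ of $P$ (\Cref{def:Kodaira type of t-automorphism group}). This is a non-multiple unstable fibration with the same t-automorphism scheme $P$. Its minimal base change along the same $T \to S$ has t-automorphism scheme $Q$, because $Q$ is just the N\'eron model of $(P_0)_{T_0}$. Since $Q_t$ is abelian, this is a semistable base change of $f^P$ of degree $m$. \Cref{prop:semistable reduction theorem untangled ver}(1) and \Cref{prop:semistable reduction theorem tangled ver}(1--2) then give:
\begin{itemize}
\item $Q_t = \tilde Q_t/G$ with $\tilde Q_t = E \times A_s$;
\item $G$ is the Albanese stabilizer of $P$, embedded by $(\tau,\sigma)$ with $\tau(G) \subset E^\omega$;
\item the inertia action is $\psi(x,y) = (\omega^{\pm1}x, y)$, where $\omega$ has order $d = d' \mid m$.
\end{itemize}
The case (iii) of that proposition, type $\mathrm I_r^*$ with $r \ge 1$, is excluded because it gives a strictly semistable $Q_t$, and here $Q_t$ is abelian. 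So $P$ has one of the listed types. The correspondence between $d \in \{2,3,4,6\}$, $j(E)$ and the Kodaira type is exactly the list in \Cref{prop:semistable reduction theorem untangled ver}(1a--d), together with \Cref{table:semistable reduction theorem}, where isogenous types share the same $d$. Uniqueness of $d$ holds because $d = \ord \psi$ on $Q_t$.

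For (b), fix a base point $y_0 \in Y_t$ to identify $Y_t = Q_t$. By \Cref{prop:varphi-action}(2), $\varphi$ then has the form $\varphi(y) = \psi(y) + b$ for some $b \in Q_t$. I would lift this to $\tilde\varphi$ on $\tilde Q_t = E \times A_s$ as $\tilde\varphi(x,y) = (\omega^{\pm1}x + e,\, y + a)$. This lift should commute with $G$, since $\psi$ lifts to $\tilde Q$ and translations lift. Next, re-centre the $E$-coordinate to kill $e$: since $1-\omega$ is an isogeny of $E$, some $x_0$ satisfies $(1-\omega^{\pm1})x_0 = e$. This gives $\frac{1}{m}X_s \cong (E \times A_s)/(\langle\tilde\varphi\rangle \times G)$. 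It remains to check that $\langle\tilde\varphi\rangle$ and $G$ generate a group acting freely, and that $a$ has order exactly $m$:
\begin{itemize}
\item For $j$ not divisible by $d$, $\omega^j \neq 1$ has fixed points on $E$. So freeness of $\varphi^j$ on $Y_t$ forces $ja$ to avoid $\sigma(G)$, or more precisely forces $(\tau(g), ja + \sigma(g))$ never to be realised as a fixed translation.
\item $\varphi^m = \id$ gives $ma \in \sigma(G)$, with compatible $E$-part. After adjusting $a$ by an element of $\sigma(G)$, which does not change the quotient, I would aim to get $ma = 0$ and $ja \ne 0$ for $0<j<m$.
\end{itemize}

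The step I expect to be the main obstacle is this last bookkeeping when $G \neq 0$. Lifting $\varphi$ through the $G$-cover $E \times A_s \to Y_t$ may only be possible up to an element of $G$, and the order of $a$ must be computed modulo the embedding $(\tau,\sigma)$ of $G$. To handle it I would first treat $G = 0$ directly, where $\varphi^j$ has a fixed point iff $\omega^j$ does and $ja = 0$. Then I would pass to the $G$-cover, using the injectivity of $\sigma$ and $\tau(G) \subset E^\omega$ from \Cref{prop:semistable reduction theorem tangled ver}(2), to show that any bad power would produce a fixed point of $\varphi^j$ on $Y_t$.
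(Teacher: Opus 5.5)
Your proposal is correct and follows the paper's route: compare with the minimal compactification of $P$, read off $Q_t=(E\times A_s)/G$ and $\psi$ from \Cref{prop:semistable reduction theorem tangled ver}, write $\varphi=\psi+b$, then lift to $E\times A_s$ and re-centre, with commutation with $G$ coming from $\tau(G)\subset E^\omega$. (The paper additionally extends $\tilde\varphi$ to all of $\tilde Y$ to build a cover $\tilde X\to X$, which this statement does not need.) The bookkeeping you flag is easy and needs no adjustment of $a$: $\tilde\varphi^m$ is the translation by $(0,ma)$ and lies in $G$, so injectivity of $\tau$ forces $ma=0$, and freeness of each $\varphi^j$ with $0<j<m$ then gives $ja\neq 0$, so $a$ has order exactly $m$.
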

		
		\begin{definition}
			The \emph{Kodaira type} of the above cases is the indicated type multiplied by $m$.
		\end{definition}
		
		\begin{proof} [Proof of \Cref{thm:classification of multiple fiber i}]
			Since $p^* : P_s \to Q_t$ is an isomorphism of abelian varieties, \Cref{lem:classification when P is smooth} already concludes the theorem. The following is a stronger version. By \Cref{lem:P-action on 1/d X_s}, the $P_s$-action on $\frac{1}{m} X_s = X_{s, \red}$ is induced from the $Q_t$-action on $Y_t$. Hence $\frac{1}{m}X_s$ admits a transitive $P_s$-action, proving it is an abelian torsor. The $\langle \varphi \rangle$-action on $Y_t$ is by translations and $\frac{1}{m} X_s = Y_t / \varphi$.
		\end{proof}
		
		The rest of this subsection is devoted to the proof of \Cref{thm:classification of multiple fiber ii}.
		
		\medskip
		
		Since $P$ is unstable whose semistable base change $Q$ is smooth, $P$ has type $\mathrm I_0^*$, $\mathrm{II}$, $\cdots$, $\mathrm{IV}^*/3$ by \Cref{cor:semistable reduction theorem}. Let us make this more precise. Let $f^\sharp : X^\sharp \to S$ be a minimal compactification of $P$ and consider its Albanese stabilizer $G$. Shrinking $S$, we may assume that $g : Y \to T$ has a section. Then both $Y_0$ and $X^\sharp_{T_0}$ are isomorphic to $Q_0 \to T_0$. Hence $Y$ and $X^\sharp_T$ are birational, or $Y$ is the (unique) minimal model of $X^\sharp_T$. By \Cref{prop:semistable reduction theorem tangled ver}, the untangle $\tilde X^\sharp \to X^\sharp$ pulls back to a $G$-covering $\tilde Y \to Y$ with a central fiber $\tilde Y_t = E \times A_s$ for an elliptic curve $E$. We have a diagram
		\begin{equation} \label{diag:multiple fiber ii:untangle of Y}
		\begin{tikzcd}
			\tilde Y \arrow[d] \\
			Y \arrow[r] & X
		\end{tikzcd}.
		\end{equation}
		Moreover, $Y_t = \tilde Y_t / G = (E \times A_s)/G$ is a free quotient by $G \hookrightarrow \tilde Q_t = E \times A_s$ where $\tau : G \hookrightarrow E$ and $\sigma : G \hookrightarrow A_s$ are both injective. The $\mu_m = \langle \tilde \psi \rangle$-action on $\tilde Q_t = E \times A_s$ is of the form $\tilde \psi (x,y) = (\omega^{\pm1} x ,y)$ for $(x,y) \in E \times A_s$ and $\omega \in \Aut_0(E)$ a primitive $d$-th root of unity for $d = 2,3,4,$ or $6$.
		
		\medskip
		
		Note that $\tilde Y$ admits an inertia automorphism $\tilde \varphi^\sharp$ as a semistable base change of $\tilde X^\sharp$. However, it is a priori unclear whether there exists an inertia automorphism $\tilde \varphi$ on $\tilde Y$ lifting the inertia automorphism $\varphi$ on $Y$. An equivalent statement is to fill in the topright corner of \eqref{diag:multiple fiber ii:untangle of Y} by a new space $\tilde X$. It turns out this is always possible in the current case (ii), but not in case (iv) and (v) (\Cref{lem:multiple fiber iv:untangle of X} and \ref{lem:multiple fiber v:untangle of X}).
		
		\begin{lemma} \label{lem:multiple fiber ii:untangle of X}
			Shrinking $S$ \'etale locally, there exists a Galois \'etale morphism $\tilde X \to X$ completing the topright corner of \eqref{diag:multiple fiber ii:untangle of Y} to a cartesian diagram
			\[\begin{tikzcd}
				\tilde Y \arrow[r] \arrow[d] & \tilde X \arrow[d] \\
				Y \arrow[r] & X
			\end{tikzcd}.\]
			In particular, there exists a $\mu_m = \langle \tilde \varphi \rangle$-action on $\tilde Y$ lifting $\varphi$ on $Y$ and commuting with $G$.
		\end{lemma}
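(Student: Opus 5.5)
The plan is to build $\tilde X$ directly over $X$ as a Galois cover attached to an isogeny onto $\Alb_s$. Then $\tilde Y \cong Y \times_X \tilde X$ will follow from the uniqueness part of \Cref{lem:untangle primitive}. The natural tool is \Cref{lem:untangle for multiple}, so the first step is to verify its hypotheses.

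\textbf{Step 1: the Albanese of $\frac{1}{m}X_s$ is a free $\varphi$-quotient of that of $Y_t$.}
\begin{itemize}
\item We have $Y_t = (E \times A_s)/G$ with $G$ acting through the injections $\tau$ and $\sigma$. Hence $\alb : Y_t \to \Alb_t = A_s/\sigma(G)$ is induced by the second projection, with fiber $E$.
\item Lift $\varphi$ to $\tilde Y_t$ locally using \Cref{prop:varphi-action}. It has the form $(x,y) \mapsto (\omega^{\pm1}x + c,\, y + a)$, since it is a $\tilde\psi$-semilinear automorphism of the torsor. Here $a \in A_s$.
\item Because $\varphi$ acts freely on $Y_t$, its induced action on $\Alb_t$ is a translation by the image $\bar a$ of $a$.
\item A translation of order dividing $m$ acting freely forces the quotient $\frac{1}{m}X_s \to \Alb_t/\langle \bar a\rangle$ to be the Albanese. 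The check is through $\Pic^\circ$: the fiber $E/\langle\omega\rangle$ is $\PP^1$-like and contributes no abelian part. This verifies the bullet hypothesis, giving $\Alb_s = \Alb_t/\langle\bar a\rangle$.
\end{itemize}

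\textbf{Step 2: choose the isogeny and apply \Cref{lem:untangle for multiple}.}
\begin{itemize}
\item Take $u : A := A_s/\langle a\rangle \to \Alb_s = A_s/(\sigma(G)+\langle a\rangle)$, with Galois group $G' \cong \sigma(G)/(\sigma(G)\cap\langle a\rangle)$.
\item Connectedness of $A \times_{\Alb_s} \Alb_t$ is equivalent to $\ker(A_s\to A)$ and $\ker(A_s \to \Alb_t)$ together generating the full kernel over $\Alb_s$. This holds by construction.
\item \Cref{lem:untangle for multiple} then gives a Galois étale $\tilde X \to X$ with $\tilde X_s$ the corresponding fiber product.
\end{itemize}

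\textbf{Main obstacle.} The subtle point is whether $\langle a\rangle \cap \sigma(G)$ is trivial. If it is not, the group of $\tilde X \to X$ is a proper quotient of $G$, and the pulled-back cover of $Y$ is not $\tilde Y$.

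I would first try to show that $\varphi^k$ cannot coincide on $\Alb_t$-level with an element of $G$. The idea is to use freeness of $\varphi$ together with $\tau(G) \subset E^\omega$. Suppose $\varphi^k$ and $g$ both translate by the same element in $A_s$. Then $\varphi^k g^{-1}$ acts as $x \mapsto \omega^{\pm k}x + c'$ on a fiber $E$, which has a fixed point unless $\omega^k = 1$. If $\omega^k = 1$, compare with the fixed-point-free requirement. This should force $\langle a\rangle\cap\sigma(G)=0$, possibly after adjusting $a$ by elements of $\sigma(G)$ and shrinking $S$.

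Granting this, $\tilde X$ is a $G$-cover. The pullback $Y\times_X\tilde X$ is then a minimal $\delta$-regular $G$-cover of $Y$ with trivialized central fiber $E\times A_s$. By the uniqueness in \Cref{lem:untangle primitive} applied to $g$, it coincides with $\tilde Y$.

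\textbf{Step 3: the lifted action.} The cartesian square gives $\tilde Y = \tilde X\times_S T$ normalized, so the Galois $\mu_m$-action on the $T$-factor defines $\tilde\varphi$ lifting $\varphi$. It commutes with $G$ because $G$ acts through the $\tilde X$-factor.
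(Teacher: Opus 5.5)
There is a genuine gap, and it is exactly your ``main obstacle'': the claim $\langle a\rangle\cap\sigma(G)=0$ is false in general. When it fails, the $G$-cover the lemma asks for is not pulled back from any isogeny onto $\Alb_s$, so no construction via \Cref{lem:untangle primitive} or \Cref{lem:untangle for multiple} can produce it.

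Your fixed-point argument only excludes $ka=\sigma(g)\neq0$ when $\omega^k\neq1$. If $d\mid k$, then $g^{-1}\tilde\varphi^k$ is the translation of $E\times A_s$ by $(-\tau(g),0)$. This translation is not in the image of $G$, since $\sigma$ is injective. So $\varphi^k$ is a nontrivial translation of $Y_t$, and freeness gives no contradiction.

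Here is a concrete counterexample. Take $d=2$, $m=4$, $e\in E[2]\setminus\{0\}$ and $a\in A_s$ of order $4$. On $E\times A_s\times T$ let $g(x,y,t)=(x+e,\,y+2a,\,t)$ and $\tilde\varphi(x,y,t)=(-x,\,y+a,\,\zeta t)$, with $\zeta$ a primitive $4$th root of unity. These commute and generate a free $\ZZ/4\times\ZZ/2$-action. The quotient is a fibration as in \Cref{thm:classification of multiple fiber ii}, with $\tau(g)=e$ and $\sigma(g)=2a\in\langle a\rangle$.

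In this example $\Alb_s=A_s/\langle a\rangle$, so your $u$ is the identity. Consider either lift of $\varphi$: $\tilde\varphi$, or $g\tilde\varphi$, which translates $A_s$ by $3a$. In both cases $\tilde Y_t$ modulo that lift has Albanese torsor $A_s/\langle a\rangle$, mapping isomorphically onto $\Alb_s$. If $\tilde X$ were pulled back along a degree-$2$ isogeny $u:A\to\Alb_s$, this isomorphism would factor through $u$, which is absurd. Adjusting $a$ by $\sigma(G)$ cannot help, since $3a$ generates the same group.

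Step 1 is also wrong as stated. In case (ii) $g$ is smooth, so $Y_t$ is itself an abelian torsor and $\Alb_t=Y_t$ has dimension $n$, whereas $\Alb_s$ has dimension $n-1$. So the bullet hypothesis of \Cref{lem:untangle for multiple} fails here; the paper only verifies it in cases (iv) and (v). If you substitute $A_s/\sigma(G)$ for $\Alb_t$, the square is cartesian only when $\bar a$ has order $m$, which is the same obstruction. Note also that the Albanese fibres of $\frac1mX_s$ are elliptic curves, namely quotients of $E$ by translations; $\omega$ enters only as monodromy.

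The paper avoids the Albanese entirely and lifts $\varphi$ directly:
\begin{itemize}
\item By \Cref{prop:varphi-action}, $\varphi(x,y)=(\omega^{\pm1}x+c,\,y+a)$ on $Y_t=(E\times A_s)/G$, and moving the origin kills $c$.
\item The lift $\tilde\varphi(x,y)=(\omega^{\pm1}x,\,y+a)$ on $\tilde Y_t=E\times A_s$ commutes with $G$ because $\tau(G)\subset E^\omega$.
\item It has order $m$: $\varphi^m=\id$ puts $(0,ma)$ in the image of $G$, and injectivity of $\tau$ then forces $ma=0$.
\item It extends to $\tilde Y$ by lifting $\Phi:Y\to\zeta^*Y$ through $\zeta^*\tilde Y\to\zeta^*Y$ via \Cref{lem:lifting morphism of abelian torsors}, and one sets $\tilde X=\tilde Y/\tilde\varphi$.
\end{itemize}
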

		\begin{proof}
			Let us first lift $\varphi$ to an automorphism $\tilde \varphi$ of $\tilde Y_t$. Recall from \Cref{prop:semistable reduction theorem untangled ver}--\ref{prop:semistable reduction theorem tangled ver} we have an isomorphism $\tilde Q_t = \tilde Y_t = E \times A_s$, and automorphisms
			\[ \tilde \psi(x,y) = (\omega^{\pm1} x, y) \mbox{ for } (x,y) \in \tilde Q_t ,\quad g(x,y) = (x + \tau(g), y + \sigma(g)) \mbox{ for } (x,y) \in \tilde Y_t .\]
			Here $\tau : G \hookrightarrow E^\omega$ and $\sigma : G \hookrightarrow A_s$ are injective homomorphisms. Hence we have a description of $\psi$ on $Q_t = \tilde Q_t / G = (E \times A_s)/G$. Now $\varphi$ on $Y_t$ is described by
			\[ \varphi(x,y) = \psi(x,y) + \varphi(0,0) \qquad\mbox{for}\quad (x,y) \in Y_t = (E \times A_s)/G .\]
			Writing $\varphi(0,0) = (c,a) \in (E \times A_s)/G$, we have $\varphi(x,y) = (\omega^{\pm1}x + c, y+a)$. We may adjust the origin of the abelian torsor $Y_t$ and assume $c = 0$. This uniquely determines $a \in A_s$. Now we can lift $\varphi$ to an automorphism $\tilde \varphi (x,y) = (\omega^{\pm1}x, y+a)$ on $\tilde Y_t$. It commutes with $G$ because $\tau(g)$ is $\omega$-fixed.
			
			\medskip
			
			Lifting the automorphism $\tilde \varphi$ of $\tilde Y_t$ to that of $\tilde Y \to T$ is formal. Recall from \Cref{prop:varphi-action} that the $\varphi$ is modeled as an isomorphism $\Phi : Y \to \zeta^* Y$ of abelian torsors over $T$. Lift it to a unique isomorphism $\tilde \Phi : \tilde Y \to \zeta^* \tilde Y$ of abelian torsors using \Cref{lem:lifting morphism of abelian torsors}. This constructs an equivariant automorphism $\tilde \varphi$ on $\tilde Y$ with condition \Cref{prop:varphi-action}(2). The $\langle \tilde \varphi \rangle$- and $G$-actions commute because they commuted on $\tilde Y_t$.
		\end{proof}
		
		\begin{proof} [Proof of \Cref{thm:classification of multiple fiber ii}]
			\Cref{lem:multiple fiber ii:untangle of X} showed that $\tilde X_s = \tilde Y_t / \tilde \varphi = (E \times A_s) / \tilde \varphi$ for $\tilde \varphi(x,y) = (\omega^{\pm1} x, y+a)$. The original $X_s$ is computed by a further quotient $\tilde X_s/G$. All statements follow immediately.
		\end{proof}

	\subsection{Base change to semistable fibrations}
		Let us study the case when $g : Y \to T$ is strictly semistable, or the cases (iii) and (iv) in \Cref{lem:classification of pullback}.
		
		\begin{theorem} [Type $\mathrm I^k_R$] \label{thm:classification of multiple fiber iii}
			In \Cref{lem:classification of pullback}(iii), $\frac{1}{m}X_s$ is isomorphic to either a type $\mathrm I_1$ semistable fiber in \Cref{thm:classification of semistable fibers} ($k = R = 1$), or a cycle of $\PP^1$-bundles $p_i : E_i \to A$ with two sections $0_i$ and $\infty_i$ for $i = 1, \cdots, R \, (\ge 2)$ that are successively identified by $0_{i+1} = \infty_i$. Moreover, the following holds.
			\begin{enumerate}
				\item $k \mid m$ and $k \mid R$. Write $m = kl$ and $R = kr$ for positive integers $l$ and $r$.
				\item The t-automorphism scheme $P$ of $f$ has type $\mathrm I_r$.
				\item There exists a free $\mu_k = \langle \varphi \rangle$-equivariant $\PP^1$-bundle $p : F \to A_s$ such that $p_i : E_i \to A$ is isomorphic to its free quotient by an automorphism $\zeta^i \circ \varphi$, where $\zeta$ is an automorphism on $F$ acting on every fiber of $p$ by multiplication by a primitive $k$-th root of unity.
				\item $p_i$ and $p_j$ are isomorphic (preserving $0$ and $\infty$) if and only if $i \equiv j \mod k$. Moreover, $A = A_s / a$ for a torsion point $a \in A_s$ of order $k$.
			\end{enumerate}
		\end{theorem}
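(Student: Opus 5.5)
We use the non-multiple base change $g : Y \to T$ of degree $m$ from \Cref{lem:non-multiple base change}. This gives the free quotient $q : Y_t \to \frac{1}{m}X_s = Y_t/\varphi$ of order $m$ in \eqref{eq:q and i}. In case (iii), $Q_t$ has linear part $\GG_m$ and $g$ is non-multiple, so \Cref{thm:classification of semistable fibers} makes $Y_t$ a type $\mathrm I_{R'}$ fiber. By \Cref{cor:Edixhoven}(2), $\pi_0(Q_t) \supset \pi_0(P_s)$ with cokernel $\ZZ/m$. Write $\pi_0(P_s) = \ZZ/r'$; then $R' = mr'$ by \Cref{thm:classification of component group}(1).

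The first step is to understand how $\varphi$ acts on the cycle of components $F_1, \dots, F_{R'}$ of $Y_t$. Since $\varphi$ is free, it cannot reverse the orientation of the cycle. A reversal would fix an intersection $F_i \cap F_{i+1}$ or a component, and then argue as in \Cref{lem:stabilizer of Ir}: it would have to swap $0$ and $\infty$ via a $z \mapsto c/z$ action with fixed points, or fix a node. So $\varphi$ acts on $\ZZ/R'$ by a rotation $i \mapsto i + l$. Let $k$ be the order of this rotation's action on the set of components modulo the $\varphi$-orbit structure. Concretely, the number of components of $\frac{1}{m}X_s$ is $R = \gcd(l, R')$-type data, and $\varphi^{R/\cdot}$ is the stabilizer of each component. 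Thus $\frac{1}{m}X_s$ is a cycle of $R$ components $E_i = F_i/\langle \varphi^{j}\rangle$. Here $\varphi^j$ is the subgroup $\mu_k \subset \mu_m$ preserving each $F_i$, so $k \mid m$. The gluing $0_{i+1} = \infty_i$ persists because the rotation preserves orientation. The case where $\varphi$ preserves every component and $R = 1$ gives type $\mathrm I_1$.

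Next we compute $R$ versus $r'$. We use \Cref{prop:varphi-action}(2): $\varphi(a \cdot y) = \psi(a)\cdot\varphi(y)$, together with $Q_t^\psi = p^*P_s$ from \Cref{thm:Edixhoven}. The image $p^*\pi_0(P_s) = \ZZ/r'$ commutes with $\varphi$ and permutes the components of $\frac{1}{m}X_s$ by \Cref{lem:P-action on 1/d X_s}. Comparing orbit counts should give $R = kr$ with $r = r'$, which proves (1) and (2). For (3), $\varphi^{R'/k\cdot}$ restricted to the $\PP^1$-bundle $F := F_i \to B_t$ is a free $\mu_k$-automorphism covering the automorphism of $B_t$. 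Via the $Q_t^\circ = \GG_m$-extension structure, it acts on fibers by a root of unity composed with a translation $a$ on the base. Then $E_i = F/\langle \zeta^{i}\circ\varphi\rangle$, where the twist by $\zeta^i$ comes from transporting along the cycle: the element of $\pi_0(Q_t)$ moving $F_i$ to $F_{i+1}$ differs from $\varphi$-conjugation by the $\GG_m$-factor, as in \Cref{cor:semistable reduction theorem:psi-action}. Freeness forces the translation $a$ on $A_s \cong B_t$ to have order exactly $k$, so $A = A_s/a$.

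The main obstacle is (4) together with the precise twist $\zeta^i$. We must show that the quotients $F/\langle\zeta^i\varphi\rangle$ are isomorphic, preserving $0$ and $\infty$, exactly when $i \equiv j \bmod k$. The plan is to identify $F/\langle \zeta^i\varphi\rangle$ as the $\PP^1$-bundle attached to a $\GG_m$-torsor over $A_s/a$ descending $F$. The descent data differ by the character $\zeta^i$ of $\mu_k \cong \langle a\rangle$, that is, by the $k$-torsion line bundle $L_{\zeta^{i}}$ on $A_s/a$. These line bundles are distinct, and not related by inversion, for distinct $i \bmod k$, because $A_s \to A_s/a$ has connected fibers-product with itself only trivially. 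We also need to pin down that $\zeta$ is primitive of order $k$. For this we would use $\psi$ on $\pi_0(Q_t)$ and the group law as in \Cref{cor:semistable reduction theorem:psi-action}, and freeness of $\varphi$ to exclude nonprimitive roots.
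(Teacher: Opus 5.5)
Your architecture matches the paper's: the degree-$m$ base change to a type $\mathrm I_{mr}$ fiber, the orbit count giving $m=kl$ and $R=kr$, a translation of order $k$ on the base, the twist $\psi(f_i)=\zeta^i f_i$, and the sequence $0\to\mu_k\to\Pic^{\ZZ/k}(A_s)\to\Pic(A_s)$ for (4). However, your first step is wrong, and everything downstream rests on it. Freeness of $\varphi$ does not prevent it from reversing the orientation of the cycle. An orientation-reversing $\varphi$ that stabilizes a component $F_i$, or a node locus $F_i\cap F_{i+1}\cong B_t$, can still act freely by translating along $B_t$: on a trivial bundle, $(z,b)\mapsto(c/z,b+a)$ with $a\neq 0$ is free. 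Nor does \Cref{lem:stabilizer of Ir} transfer, since it concerns stabilizers of t-automorphisms and starts from a fixed point, which is exactly what you would need to produce. Such actions really occur: in case (iv) of \Cref{lem:classification of pullback} the inertia action on $Y_t$ is free and reverses orientation (e.g.\ for type $\mathrm I_R^+$ with trivial $G$, $\varphi(i,z,y)=(-i,\zeta^i/z,y+a)$). So your argument never uses the hypothesis that separates (iii) from (iv). The missing input is this: in case (iii), $p^*:P_s^\circ\to Q_t^\circ$ is an isomorphism, so \Cref{thm:Edixhoven} gives $Q_t^\circ\subset Q_t^\psi$. Thus $\psi$ is trivial on $Q_t^\circ$, in particular on $\GG_m$. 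Then \Cref{prop:varphi-action}(2) gives $\varphi(z\cdot y)=z\cdot\varphi(y)$, and letting $z\to 0$ shows $\varphi$ sends each $0_i$ to some $0_j$. The same triviality of $\psi$ on $Q_t^\circ$ is what makes $\varphi$ commute with the $Q_t^\circ$-action and hence descend to a translation of $B_t$. Your step for (3) uses this without justification.

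A second gap is in (3). The group-law computation only gives $\zeta\in Q_t^\circ[k]$, and you assert without proof that $\zeta$ lies in $\GG_m$. If $\zeta$ had nonzero image in $A_s$, then $\zeta^i\circ\varphi$ would translate the base by a different amount, and (3) would fail as stated. The paper settles this with a node argument:
\begin{enumerate}
\item Take $y\in F_1\cap F_{kr}$ and $y'=f_1\cdot y\in F_1\cap F_2$ on one rational curve.
\item Since $\varphi$ acts on $\Sing(Y_t)$ by a single translation $a$, one has $\varphi(y)=f_0\cdot y$ and $\varphi(y')=f_0\cdot y'$ for one $f_0\in Q_t^\circ$ lifting $a$.
\item Hence $\varphi(y')=\psi(f_1)\cdot\varphi(y)=\zeta\cdot\varphi(y')$, so $\zeta$ lies in the node stabilizer, which is $\GG_m$.
\end{enumerate}
Primitivity of $\zeta$ comes from $Q_t^\psi=p^*P_s$ (so $\psi$ fixes exactly the components in $k\ZZ/kr$), not from freeness. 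Finally, your remark that the $L_{\zeta^i}$ are not related by inversion for distinct $i$ is false, since $L_{\zeta^{-i}}\cong L_{\zeta^i}^{-1}$. It is also unnecessary, because (4) only concerns isomorphisms preserving $0$ and $\infty$.
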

		
		\begin{theorem} \label{thm:classification of multiple fiber iv}
			In \Cref{lem:classification of pullback}(iv), the following holds.
			\begin{enumerate}
				\item $2 \mid m \mid 2R$. Write $m = 2k$ and $R = kr$ for positive integers $k$ and $r$.
				\item The t-automorphism scheme $P$ of $f$ has type $\mathrm I^*_r$, $\mathrm I^*_r/2$, or $\mathrm I^*_r/4$.
				\item $\frac{1}{m}X_s$ is isomorphic to $(C \times A_s)/(\langle \tilde \varphi \rangle \times G)$, where $C$ is a type $\mathrm I_{2kr}$ Kodaira singular curve and the $\tilde \varphi$ and $G$ are given as one of the following.
				\begin{enumerate}
					\item[\textnormal{($\mathrm I^+_R$)}] $G$ is the Albanese stabilizer of $P$ and $\tilde \varphi$ is an order $2k$ automorphism (using the notation in \Cref{prop:semistable reduction theorem untangled ver}(2))
					\[ \tilde \varphi(i,z,y) = (-i, \tfrac{\zeta^i}{z}, y+a) \qquad\mbox{for}\quad (i,z) \in C ,\quad y \in A_s .\]
					Here $\zeta \in \GG_m$ is a a primitive $2k$-th root of unity and $a \in A_s$ is a torsion point of order $2k$.
					
					\item[\textnormal{($\mathrm I^-_R$)}] $G = 0$ or $\ZZ/2$, and $\tilde \varphi$ is an order $4k$ automorphism
					\[ \tilde \varphi(i,z,y) = (1-i, \tfrac{\varepsilon \zeta^i}{z}, y+a) \qquad\mbox{for}\quad (i,z) \in C ,\quad y \in A_s .\]
					Here $\varepsilon \in \GG_m$ is a constant, $\zeta$ is a primitive $2k$-th root, and $a \in A_s$ has order $4k$.
				\end{enumerate}
			\end{enumerate}
		\end{theorem}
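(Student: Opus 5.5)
The plan is to follow the strategy of \Cref{thm:classification of multiple fiber ii}, now with a strictly semistable base change. In case (iv) of \Cref{lem:classification of pullback}, $P_s$ has linear part $\GG_a$ while $Q_t$ has linear part $\GG_m$. The degree $m$ base change $g : Y \to T$ of \Cref{lem:non-multiple base change} is non-multiple and strictly semistable of some type $\mathrm I_{R'}$. Let $f^\sharp : X^\sharp \to S$ be the minimal compactification of $P$, with Albanese stabilizer $G$. After shrinking $S$, $g$ has a section, so $Y$ is the minimal model of $X^\sharp_T$ and hence a semistable base change of $f^\sharp$. By \Cref{cor:semistable reduction theorem} and \Cref{table:semistable reduction theorem}, $P$ then has type $\mathrm I_r^*$, $\mathrm I_r^*/2$ or $\mathrm I_r^*/4$ with $r \ge 1$, which gives (2). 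Moreover $m = 2k$ is even, and after untangling, $\tilde Y_t = C \times A_s$ with $C$ of type $\mathrm I_{2kr}$. This uses \Cref{prop:semistable reduction theorem untangled ver}(2) and \Cref{prop:semistable reduction theorem tangled ver}: the $G$-cover $\tilde Y \to Y$ has inertia $\tilde\psi(i,z,y) = (-i,\zeta^i/z,y)$ on $\tilde Q_t = \ZZ/2kr \times \GG_m \times A_s$, and $G$ acts diagonally and freely. To get (1), I would check that $\frac1m X_s = Y_t/\varphi$ has $R$ components, where the $\langle\varphi\rangle$-orbits on components of $\tilde Y_t$ give $R = kr$ (or the analogous count after the $G$-quotient). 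This yields $2 \mid m \mid 2R$.

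The main step, and the main obstacle, is to determine $\varphi$ on $Y_t = \tilde Y_t/G$ and to lift it to $\tilde Y_t$. \Cref{prop:varphi-action} gives $\varphi(a\cdot y) = \psi(a)\cdot\varphi(y)$. Once a base point $y_0 \in Y_t'$ is fixed, $\varphi$ is therefore $\psi$ followed by translation by some $(j,\varepsilon,a)$. Because $\varphi$ is free (\Cref{lem:non-multiple base change}), this translation cannot be trivial in the $A_s$-direction. Conjugating by a $Q_t$-translation (changing the base point), I would normalize the $\ZZ/2kr$-part of the translation to $j \in \{0,1\}$. The two outcomes give the two cases:
\begin{itemize}
\item[$j = 0$:] This is type $\mathrm I^+_R$. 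Absorb $\varepsilon$ into the base point via $z \mapsto \lambda z$, which is possible because $z \mapsto 1/z$ lets us rescale $\varepsilon$ by $\lambda^{2}$.
\item[$j = 1$:] This is type $\mathrm I^-_R$. The reflection $i \mapsto 1-i$ has no fixed component, so $\varepsilon$ cannot be absorbed and survives as a constant.
\end{itemize}
I would then show that $a$ has order exactly $m$ in the first case and $2m = 4k$ in the second. The argument is that $\varphi^m = \id$ on $Y_t$ and $\varphi^i$ is free for $0 < i < m$, applied as in \Cref{lem:inertia action is faithful}, using the formula $\varphi^2(i,z,y) = (i,\zeta^{-2i}z,y+2a)$ (respectively its $j = 1$ analogue).

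To realize $\frac1m X_s$ as $(C\times A_s)/(\langle\tilde\varphi\rangle\times G)$, I would lift $\varphi$ to $\tilde\varphi$ on $\tilde Y$ commuting with $G$, as in \Cref{lem:multiple fiber ii:untangle of X}. On the central fiber this is explicit once the formula is in hand, and it extends to all of $\tilde Y$ by \Cref{lem:lifting morphism of abelian torsors}. The difficulty, already flagged in the introduction, is that the lift may not commute with $G$ (the relevant Galois group may not split). In that case I would try replacing $G$ by a subgroup, or absorbing part of $G$ into $\langle\tilde\varphi\rangle$, which is where the order of $\tilde\varphi$ doubles to $4k$. To check which $G$ are allowed, I would use \Cref{prop:semistable reduction theorem tangled ver}(3): $G\cap\tilde Q_t^\circ = \langle(-1,b)\rangle$ or $0$. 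Commutation of $g = (i_g,\lambda_g,\sigma(g))$ with the reflection $i \mapsto 1-i$ forces $i_g = 0$ or $i_g = kr$ with compatible $\lambda_g$. This should rule out $G = \ZZ/4$ and $(\ZZ/2)^2$ in the $\mathrm I^-$ case, leaving $G = 0$ or $\ZZ/2$.

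Finally I would verify freeness of $\langle\tilde\varphi\rangle\times G$ on $C\times A_s$, using the translation parts $a$ and $\sigma(G)$. The quotient is then $\frac1m X_s$, which gives (3). For the $\mathrm I^-_R$ case I expect a separate consistency check that the resulting $P$ is still of type $\mathrm I_r^*$ up to isogeny. This would use \Cref{cor:semistable reduction theorem}(1), since $\ord\psi = 2k$ is unchanged.
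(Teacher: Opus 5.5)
You reduce to the semistable base change of the minimal compactification of $P$, then write $\varphi$ on the smooth locus as $\psi$ followed by a translation and normalize it. That route is sound, and more explicit than the paper's. The gap is in how you treat the lift to $\tilde Y_t$.

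\textbf{The obstruction is splitting, not commutation.} You place the difficulty in the lift possibly failing to commute with $G$, but that never happens. By \Cref{thm:Edixhoven}, $G=p^*G\subset\tilde Q_t^{\tilde\psi}$. So any lift of the form ``$\tilde\psi$ followed by a translation'' satisfies $\tilde\varphi(g\cdot y)=\tilde\psi(g)\cdot\tilde\varphi(y)=g\cdot\tilde\varphi(y)$. The lifts of all powers of $\varphi$ form an abelian group $H$ with $0\to G\to H\to\mu_{2k}\to 0$; this is \eqref{eq:multiple fiber iv:ses of H}. Then $\frac1m X_s=(C\times A_s)/H$, and the real question is whether this extension splits.

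\textbf{The missing input.} Your normal form can decide this, but only once you use \Cref{prop:semistable reduction theorem tangled ver}(3): $G\cap\tilde Q_t^\circ\subseteq\langle(-1,\sigma)\rangle$ with $\sigma\neq 0$, so $G$ contains no nonzero pure $A_s$-translation.
\begin{itemize}
\item For $j=1$, one computes $\tilde\varphi^{2k}(i,z,y)=(i,-z,y+2ka)$. This lifts $\varphi^{2k}=\id$, so it is a nonzero element of $G\cap\tilde Q_t^\circ$. Hence it equals $(-1,\sigma)$, so $2ka=\sigma\neq0$, and freeness then gives $\ord a=4k$.
\item For $j=0$, $\tilde\varphi^{2k}$ is translation by $(0,1,2ka)\in G$. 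The same fact forces $2ka=0$, so $H=\langle\tilde\varphi\rangle\times G$.
\end{itemize}
Freeness together with $\varphi^m=\id$ does not give these upper bounds ($2ka=0$, respectively $4ka=0$).

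\textbf{Your intended conclusion for $(\mathrm I^-_R)$ is false.} In this branch the Albanese stabilizer is never $0$, since it contains $\tilde\varphi^{2k}$. Commutation with $i\mapsto 1-i$ cannot cut it down either, because all of $G$ commutes with $\tilde\varphi$. So $\ZZ/4$ and $(\ZZ/2)^{\times 2}$ are not excluded. The group written $G$ in $(\mathrm I^-_R)$ is something else:
\begin{itemize}
\item It is a complement of $\langle\tilde\varphi\rangle\cong\ZZ/4k$ in $H$. It is isomorphic to $G/\langle\tilde\varphi^{2k}\rangle$ and has order $|G|/2$.
\item It need not lie in the Albanese stabilizer. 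If $G=\langle g\rangle\cong\ZZ/4$ with $g^2=\tilde\varphi^{2k}$, it is generated by $\tilde\varphi^k g^{-1}$, which maps to $\varphi^k\neq 1$ in $\mu_{2k}$.
\end{itemize}
So ``replacing $G$ by a subgroup'' does not produce it. What must be shown is $H\cong\ZZ/4k$ or $\ZZ/4k\times\ZZ/2$, as in \Cref{lem:multiple fiber iv:non-split H}.

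\textbf{Minor points.}
\begin{itemize}
\item Do the normalization on $\tilde Y_t$ for a chosen lift. In general $Q_t=\tilde Q_t/G$ is not $\ZZ/2kr\times\GG_m\times A_s$.
\item The constant $\varepsilon$ can be absorbed for $j=1$ too: moving the base point by $(0,w,0)$ multiplies it by $w^{-2}$. This is harmless, since the statement allows any $\varepsilon$.
\item \Cref{lem:lifting morphism of abelian torsors} does not apply to the non-smooth $\tilde Y$. You only need $\tilde\varphi$ on the central fiber, so this does not matter.
\item Part (1) follows directly from $m=2k$. Counting components of $\frac1m X_s$ does not recover $R$: for $(\mathrm I^+_R)$ with $G=0$ there are $kr+1$ of them.
\end{itemize}
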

		
		\begin{definition}
			The \emph{Kodaira type} of \Cref{thm:classification of multiple fiber iii} is $m \cdot \mathrm I^k_R$. The Kodaira type of \Cref{thm:classification of multiple fiber iv}(a) is $m \cdot \mathrm I^+_R/|G|$ where $|G| = 1$, $2$, or $4$ is the order of the Albanese stabilizer $G$. The Kodaira type of \Cref{thm:classification of multiple fiber iv}(b) is $m \cdot \mathrm I^-_R/|G|$ for $|G| = 1$ or $2$.
		\end{definition}
		
		\begin{remark}
			In \Cref{thm:classification of multiple fiber iii}(4), $p_i$ and $p_j$ may be isomorphic even when $i \not \equiv j \mod k$, but this isomorphism cannot preserve $0$ and $\infty$. In \Cref{thm:classification of multiple fiber iv}, the quotient $(C \times A_s)/(\langle \tilde \varphi^2 \rangle \times G)$ has type $\mathrm I^k_R$ for $R = kr$ or $2kr$. Hence $\frac{1}{m} X_s$ is a $\mu_2 = \langle \varphi \rangle$-quotient of a (very special) type $\mathrm I^k_R$ fiber. This is the description used in \Cref{thm:main}.
		\end{remark}
		
		The rest of this subsection proves these two theorems.
		
		\medskip
		
		In both (iii) and (iv), $g$ is strictly semistable and $Q^\circ_t$ is a $\GG_m$-extension of an abelian variety $B_t$ of dimension $n-1$. In particular, the central fiber $Y_t$ is reduced and hence its quotient $\frac{1}{m}X_s$ is reduced, too. Note that $Y_t = \sum_i F_i$ is a cycle of $\PP^1$-bundles over $B_t$, so we may consider the \emph{orientation} of this cycle, a preferred direction of the cycle of $\PP^1$-fibers.
		
		\begin{lemma} \label{lem:classification of multiple fiber iii:orientation}
			The following properties characterize the two cases.
			\begin{enumerate}
				\item[\textnormal{(iii)}] $\psi(z) = z$ for $z \in \GG_m \subset Q_t$, and $\varphi$ preserves the orientation of $Y_t$.
				\item[\textnormal{(iv)}] $\psi(z) = 1/z$ for $z \in \GG_m \subset Q_t$, and $\varphi$ reverses the orientation of $Y_t$.
			\end{enumerate}
		\end{lemma}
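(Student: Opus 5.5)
The plan is to first determine how the inertia automorphism $\psi$ acts on the linear part $\GG_m \subset Q_t^\circ$, and then transfer this to $\varphi$ acting on the cycle $Y_t = \sum_i F_i$.

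\textbf{Action of $\psi$ on $\GG_m$.} By \Cref{prop:psi-action}, $\psi$ is a group automorphism of $Q_t$. It therefore preserves $Q_t^\circ$ and its unique maximal torus $\GG_m$. Since $\Aut(\GG_m) = \{\pm 1\}$, we get $\psi(z) = z^{\pm 1}$.

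To decide the sign, we use \Cref{thm:Edixhoven}: $\im p^* = Q_t^\psi$ and $\ker p^*$ is unipotent.
\begin{itemize}
\item In case (iii), $p^*$ restricts to an isomorphism $P_s^\circ \to Q_t^\circ$. Hence $Q_t^\circ \subset Q_t^\psi$, and in particular $\psi$ fixes $\GG_m$ pointwise.
\item In case (iv), $\im p^* = \pi_0(P_s) \times A_s$ contains no torus. If $\psi$ were trivial on $\GG_m$, then $\GG_m \subset Q_t^\psi = \im p^*$, which is a contradiction. Hence $\psi(z) = 1/z$.
\end{itemize}

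\textbf{Action of $\varphi$ on the orientation.} Since $\varphi$ is an automorphism of the cycle $Y_t$, it either preserves or reverses its orientation. We compare the two behaviours via \Cref{prop:varphi-action}(2), which gives $\varphi(a\cdot y) = \psi(a)\cdot \varphi(y)$.

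Fix a component $F_i$. It is a $\PP^1$-bundle compactifying the $\GG_m$-orbit, with sections $0_i$ and $\infty_i$, where $\infty_i = 0_{i+1}$. The orientation is recorded by the limits: for $y$ in the open orbit, $\lim_{z\to 0} z\cdot y$ lies on $0_i$ and $\lim_{z\to\infty} z\cdot y$ lies on $\infty_i$. (More precisely, $0_i$ is the $\GG_m$-attracting section and $\infty_i$ the repelling one.)

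Applying $\varphi$ and using the identity above, we get
\[ \varphi\bigl(\lim_{z\to 0} z\cdot y\bigr) = \lim_{z\to 0} \psi(z)\cdot \varphi(y). \]
\begin{itemize}
\item If $\psi(z) = z$, then $\varphi$ maps the attracting section of $F_i$ to the attracting section of $\varphi(F_i)$. Thus $\varphi(0_i) = 0_j$ and $\varphi(\infty_i) = \infty_j$ whenever $\varphi(F_i) = F_j$. Consequently $\varphi$ sends the ordered chain $F_i, F_{i+1}, \dots$ (glued via $\infty_i = 0_{i+1}$) to $F_j, F_{j+1}, \dots$, so it preserves the orientation.
\item If $\psi(z) = 1/z$, then attracting and repelling sections are swapped, so $\varphi$ reverses the orientation.
\end{itemize}

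\textbf{Conclusion and main obstacle.} Cases (iii) and (iv) of \Cref{lem:classification of pullback} are exhaustive in this setting, and the two sign alternatives are mutually exclusive. Hence the properties listed in the statement characterize the two cases.

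The main point requiring care is that the labels $0$ and $\infty$ on each component are intrinsically defined by the $\GG_m$-action, and consistently so around the whole cycle. This consistency follows from the gluing $\infty_i = 0_{i+1}$ in \Cref{thm:classification of semistable fibers}, combined with \Cref{lem:stabilizer of Ir}, which says that the stabilizers along the double locus are exactly $\GG_m$. One should also check the degenerate situation where $Y_t$ is of type $\mathrm I_1$. There the orientation is interpreted on the normalization, as the distinction between the sections $0$ and $\infty$, and the same limit argument applies unchanged.
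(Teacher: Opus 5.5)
Your proposal is correct and follows essentially the same route as the paper: you pin down $\psi|_{\GG_m}$ as $z \mapsto z^{\pm 1}$ via \Cref{thm:Edixhoven} ($\GG_m \subset Q_t^\psi$ in case (iii), impossible in case (iv) because $\im p^*$ has neutral component $A_s$), and then push the limit $\lim_{z \rightarrow 0} z \cdot y = 0_i$ through $\varphi(z \cdot y) = \psi(z) \cdot \varphi(y)$. One small correction: the labels must be coherent, meaning that on every component $0_i$ is the $z \rightarrow 0$ limit compatibly with $\infty_i = 0_{i+1}$; this is not a consequence of \Cref{lem:stabilizer of Ir}, although the paper uses it tacitly too, and it holds because an element of $\GG_m$ fixing the double locus, once extended to a section of $Q$ acting on $Y$ over $T$, preserves the local equation $t = xy$ and so acts on the two branch coordinates $x, y$ with inverse weights.
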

		\begin{proof}
			The inertia group automorphism $\psi : Q^\circ_t \to Q^\circ_t$ induces that on the linear part $\psi : \GG_m \to \GG_m$, which is either $\psi(z) = z$ or $1/z$. By \Cref{thm:Edixhoven}, case (iii) is when $\psi(z) = z$ and the case (iv) is when $\psi(z) = 1/z$. Recall from \Cref{prop:varphi-action} the equality $\varphi (z \cdot y) = \psi(z) \cdot \varphi(y)$ for $z \in \GG_m$ and $y \in Y_t$. Say $\varphi$ sends a component $F_i$ to $F_j$. If $\psi(z) = z$, then we have $\lim_{z \rightarrow 0} (z \cdot y) = 0_i$ and $\lim_{z \rightarrow 0} \varphi(z \cdot y) = \lim_{z \rightarrow 0} z \cdot \varphi(y) = 0_j$. Hence $\varphi$ preserves the orientation of the cycle. Similarly, if $\psi(z) = 1/z$ then $\varphi$ reverses the orientation.
		\end{proof}
		
		This observation already relates case (iv) to (iii).
		
		\begin{lemma} \label{lem:classification of multiple fiber iv:reduction to iii}
			In case (iv), $m$ is even and \eqref{diag:degree m base change} admits a factorization
			\[\begin{tikzcd}[column sep=normal]
				Y \arrow[r] \arrow[d, "g"] & Z \arrow[r] \arrow[d, "h"] & X \arrow[d, "f"] \\
				T \arrow[r, "\mu_{m/2}"] & U \arrow[r, "\mu_2"] & S
			\end{tikzcd}\]
			with the following properties:
			\begin{enumerate}
				\item The two horizontal arrows in the first row are \'etale.
				\item $h : Z \to U$ is a minimal $\delta$-regular abelian fibration with multiplicity $m/2$.
				\item The left commutative diagram falls into the case \Cref{lem:classification of pullback}(iii).
			\end{enumerate}
		\end{lemma}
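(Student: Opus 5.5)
We prove this by showing that $\varphi^2$ already sits in case (iii), and then realizing the intermediate quotient by $\varphi^2$ as the degree $m/2$ non-multiple base change of a new fibration $Z \to U$.

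\textbf{Step 1: $m$ is even.} By \Cref{lem:classification of multiple fiber iii:orientation}, in case (iv) the generator $\varphi$ reverses the orientation of the cycle $Y_t = \sum_i F_i$. Since $\varphi$ has order exactly $m$, no odd power of $\varphi$ is trivial on the set of components with its cyclic orientation. An orientation-reversing automorphism of a cycle composes to an orientation-preserving one exactly in even powers. We have $\varphi^m = \id$ and $\id$ preserves orientation, so $m$ must be even.

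Alternatively, argue on $Q_t$: $\psi$ acts on $\GG_m$ by $z \mapsto 1/z$, and $\psi^m = \id$, so $m$ is even.

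\textbf{Step 2: construct $Z \to U$.} Let $U = T/\mu_{m/2}$, so that $T \to U$ is totally ramified of degree $m/2$ and $U \to S$ is totally ramified of degree $2$. Set $Z = Y/\langle \varphi^2 \rangle$ and $h : Z \to U$ the induced morphism. By the corollary after \Cref{lem:non-multiple base change}, $\langle \varphi \rangle$ acts freely on $Y_t$, hence so does the subgroup $\langle \varphi^2\rangle$. It also acts freely on $Y_0$, being a subgroup of the Galois group of $Y_0 \to X_0$. Therefore $Y \to Z$ is étale. Likewise $Z \to X = Y/\langle\varphi\rangle$ is the free quotient by $\langle \varphi \rangle/\langle\varphi^2\rangle \cong \ZZ/2$, hence étale. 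This gives (1).

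\textbf{Step 3: properties of $h$.} Since $Y \to Z$ is étale, $Z$ is regular and $K_Z$ is numerically $h$-trivial. Over $U_0$, $Z_0$ is the base change $X_0 \times_{S_0} U_0$, an abelian torsor. So $h$ is a minimal abelian fibration, and it is $\delta$-regular by \Cref{cor:Edixhoven} applied to $U \to S$.

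For the multiplicity, note that $Y \to T$ is the normalized base change of $Z$ along $T \to U$. Indeed, $Y$ is normal and finite over $Z \times_U T$, and the two agree over $T_0$. Applying \Cref{lem:1/d X_s}, the fiber $Z_u$ has multiplicity divisible by $m/2$ with $\frac{1}{m/2}Z_u = Y_t/\varphi^2$. Conversely, the multiplicity of $Z_u$ is at most $m/2$: compare local equations pulled back along the étale map $Z \to X$, where $X_s$ has multiplicity $m$ and $p^*s$ vanishes to order $2$ on $U$. Hence the multiplicity is exactly $m/2$, giving (2).

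This multiplicity computation is the step I expect to need the most care. If the comparison argument is delicate, I would instead compute locally: étale locally, with $X_s = m\cdot D$, the étale cover $Z \to X$ has fiber $Z_u$ satisfying $2 Z_u = m D|_Z$, so $Z_u = (m/2) D|_Z$.

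\textbf{Step 4: identify the left square as case (iii).} The degree $m/2$ base change $Y \to T$ of $Z \to U$ has inertia generated by $\varphi^2$ and $\psi^2$. Since $\psi^2(z) = z$ on $\GG_m$, and $Q_t$ is strictly semistable, the left square falls into \Cref{lem:classification of pullback}(iii): by \Cref{thm:Edixhoven} the t-automorphism group of $h$ at $u$ has linear part $\GG_m$.

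The table in \Cref{lem:classification of pullback} allows rows $\GG_m$ or $\GG_a$ when the $Q$-column is $\GG_m$. The row is $\GG_m$ exactly when $\psi^2$ acts trivially on the linear part, which is the case here. This gives (3).
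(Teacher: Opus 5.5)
Your proposal is correct and follows essentially the same route as the paper: $m$ is even because $\psi$ acts on $\GG_m$ by $z \mapsto 1/z$, you take $Z = Y/\varphi^2$, regularity of $Z$ gives minimality, and $\psi^2(z) = z$ on $\GG_m$ places the left square in case (iii). You go further than the paper's short proof by spelling out the étaleness of both arrows, $\delta$-regularity via Edixhoven, and the multiplicity count $Z_u = \tfrac{1}{2} r^* X_s$ along the étale map $r : Z \to X$; the paper leaves these implicit.
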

		\begin{proof}
			In case (iv), $\ord \psi$ is even and hence $m$ is even. The quotient $Z = Y/\varphi^2$ constructs the diagram. The middle morphism $h$ is minimal because $Z$ is regular. To show the left diagram is case (iii), notice that its Galois group is $\mu_{m/2} = \langle \varphi^2 \rangle$ and $\psi^2(z) = z$ for $z \in \GG_m \subset Q_t$.
		\end{proof}

		\subsubsection{Case (iii)}
			Let us first study the case (iii). By \Cref{lem:classification of pullback}(iii), $P_s^\circ = Q_t^\circ$ so let us write $B_t = A_s$ for notational simplicity. Assume that $P_s$ has type $\mathrm I_r$ for a positive integer $r$ and hence $Q_t$ has type $\mathrm I_{mr}$ (because $\coker p^* = \ZZ/m$). The central fiber $Y_t = \sum_{i=1}^{mr} F_i$ has Kodaira type $\mathrm I_{mr}$, too. Since $\varphi$ preserves the orientation of $Y_t$ by \Cref{lem:classification of multiple fiber iii:orientation}, there exists a positive integer $R \mid mr$ (say $mr = lR$) such that $\varphi(F_i) = F_{i + R}$ for every $1 \le i \le mr = lR$. In particular, $\varphi^l$ preserves every component $F_i$ and $m = \ord \varphi$ is divisible by $l$. We may thus again set $m = kl$ for a positive integer $k$ and yield $R = kr$. In summary, we have
			\begin{equation} \label{eq:definition of klr}
				Y_t = \sum_{j=1}^{klr} F_j ,\quad \varphi(F_j) = F_{j + kr}, \quad R = kr, \quad\mbox{and}\quad m = kl .
			\end{equation}
			
			\begin{lemma}
				In case (iii), \eqref{diag:degree m base change} admits a factorization
				\[\begin{tikzcd}[column sep=normal]
					Y \arrow[r] \arrow[d, "g"] & X^+ \arrow[r] \arrow[d, "f^+"] & X \arrow[d, "f"] \\
					T \arrow[r, "\mu_k"] & S^+ \arrow[r, "\mu_l"] & S
				\end{tikzcd}\]
				with the following properties:
				\begin{enumerate}
					\item The two horizontal arrows in the first row are \'etale.
					\item $f^+ : X^+ \to S^+$ is a minimal $\delta$-regular abelian fibration with multiplicity $k$. Moreover, the central fiber $\frac{1}{k} X_s^+ = \sum_{j=1}^{klr} E_j^+$ satisfies the following.
					\begin{enumerate}
						\item The inertia $\mu_k = \langle \varphi^l \rangle$-action on $Y_t$ preserves every component $F_j$. In particular, $E_j^+$ is a free $\mu_k$-quotient of $F_i$ for every $i$.
						\item The inertia $\mu_l = \langle \varphi \rangle$-action on $X_s^+$ induces an isomorphism $\varphi : E_j^+ \to E_{j + kr}^+$ for every $i$.
					\end{enumerate}
				\end{enumerate}
			\end{lemma}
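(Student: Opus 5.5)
The plan is to build $X^+$ as an intermediate quotient of $Y$. Recall from \eqref{eq:definition of klr} that $\varphi$ has order $m = kl$ on $Y$ and acts freely on $Y_t$. Put
\[ X^+ \coloneq Y/\langle \varphi^l \rangle, \qquad S^+ \coloneq T/\mu_k . \]
The factorization $T \to S^+ \to S$ comes from the subgroup chain $\mu_k = \langle \varphi^l \rangle \subset \mu_m = \langle \varphi \rangle$. Since $q : Y \to X = Y/\varphi$ is \'etale (\Cref{lem:non-multiple base change}) and $\varphi$ acts freely, both $Y \to X^+$ and $X^+ \to X = X^+/\langle \varphi \rangle$ are \'etale. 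Here $\mu_l = \mu_m/\mu_k$ acts freely on $X^+$ because $\varphi$ generates a free action on $Y$. This gives (1).

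For (2), $X^+$ is regular, since it is an \'etale quotient of the regular scheme $Y$ (\Cref{prop:minimal delta-regular abelian fibration is unique}). Also $K_{X^+}$ pulls back to $K_Y$, which is numerically trivial over $T$, so $K_{X^+}$ is numerically trivial over $S^+$. The generic fiber is a torsor, because over $S^+_0$ it is the descent of the abelian torsor $Y_0$ along the \'etale Galois cover $T_0 \to S^+_0$. Hence $f^+$ is a minimal abelian fibration. Its t-automorphism scheme becomes $Q$ after base change to $T$, so \Cref{thm:Edixhoven} and \Cref{cor:Edixhoven} show that its central fiber has linear part of dimension $\le 1$. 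This gives $\delta$-regularity. For the multiplicity, apply the same construction as in \Cref{lem:1/d X_s} to $T \to S^+$: we get $X^+_{s^+} = k \cdot (Y_t/\varphi^l)$ as divisors. The subscheme $Y_t/\varphi^l$ is reduced, because $Y_t$ is reduced (strictly semistable) and the quotient is free. So the multiplicity is exactly $k$ and $\frac{1}{k}X^+_{s^+} = Y_t/\varphi^l$.

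For (2a), \eqref{eq:definition of klr} gives $\varphi^l(F_j) = F_{j+lkr} = F_j$, since indices are taken mod $klr$. Thus $\varphi^l$ preserves every component, and the components $E^+_j$ of $Y_t/\varphi^l$ are exactly the free $\mu_k$-quotients $F_j/\varphi^l$. There are $klr$ of them, as claimed. For (2b), the residual $\mu_l = \langle \varphi \rangle$-action on $X^+$ descends from $Y$. Because $\varphi$ commutes with $\varphi^l$, the isomorphism $\varphi : F_j \to F_{j+kr}$ descends to an isomorphism $E^+_j \to E^+_{j+kr}$.

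The main care point is the multiplicity claim. It needs the reducedness of $Y_t$, and it needs the faithfulness hypothesis of \Cref{lem:1/d X_s}. That hypothesis holds here because $\varphi^l$ has order exactly $k$ and acts freely on $Y_t$.
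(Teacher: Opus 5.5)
Your construction is the paper's: $X^+ = Y/\langle \varphi^l \rangle$ over $S^+ = T/\mu_k$, with (2a) coming from $\varphi^l(F_j) = F_{j+lkr} = F_j$. Your checks of the remaining points are all fine: étaleness, regularity, numerical triviality of $K_{X^+}$, the multiplicity $k$ (the étale pullback of $X^+_{s^+}$ is $kY_t$, and $Y_t$ is reduced), and the descent of $\varphi : F_j \to F_{j+kr}$ to $E^+_j \to E^+_{j+kr}$.

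The one step that does not work as written is $\delta$-regularity. The Néron model of $f^+$ does not ``become $Q$'' after the ramified base change $T \to S^+$. Moreover, \Cref{thm:Edixhoven} applied to $T \to S^+$ only says that $\ker(P^+_{s^+} \to Q_t)$ is unipotent. That bounds the abelian variety part of $P^+_{s^+}$ from \emph{above} by that of $Q_t$, so it bounds the linear part of $P^+_{s^+}$ from below, not above.

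Compare with $P$ instead. Apply \Cref{cor:Edixhoven}(2) to the degree-$l$ totally ramified cover $S^+ \to S$ and the $\delta$-regular $P$, exactly as the paper does for $g$ in \Cref{lem:non-multiple base change}. This shows that the t-automorphism scheme of $f^+$ has linear part $\GG_m$, which gives $\delta$-regularity.
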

			\begin{proof}
				The new fibration $f^+$ is obtained by a $\mu_k = \langle \varphi^l \rangle$-quotient $X^+ = Y / \varphi^l$. Note that $\varphi^l$ preserves every irreducible component $F_i$.
			\end{proof}
			
			\begin{corollary}
				$\frac{1}{m} X_s = \sum_{i=1}^{kr} E_i$ has $kr$ number of irreducible components. Moreover, we have an isomorphism $E_j^+ \cong E_{j \mod kr}$ for $j = 1, \cdots, klr$. \qed
			\end{corollary}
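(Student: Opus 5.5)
The plan is to read the corollary off the factorization $Y \to X^+ \to X$ from the preceding lemma, using the description \eqref{eq:definition of klr} of the inertia action on the components of $Y_t$. Since $q : Y_t \to \frac{1}{m}X_s$ is a free $\langle \varphi \rangle$-quotient (and $Y_t$ is reduced, so $\frac{1}{m}X_s$ is reduced), the irreducible components of $\frac{1}{m}X_s$ are in bijection with the $\langle \varphi \rangle$-orbits on the set $\{F_1, \dots, F_{klr}\}$ of components of $Y_t$. I would factor this quotient in two steps:
\[ Y_t \to \tfrac{1}{k} X^+_s = Y_t/\varphi^l \to \tfrac{1}{m}X_s = \big(\tfrac{1}{k}X^+_s\big)/\varphi .\]

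First, by part (2a) of the lemma, $\varphi^l$ preserves every $F_j$. Hence $\frac{1}{k}X^+_s$ still has exactly $klr$ components $E^+_j = F_j/\varphi^l$, indexed compatibly with the $F_j$. Second, the residual action of $\mu_l = \langle \varphi \rangle$ on $\frac{1}{k}X^+_s$ sends $E^+_j$ isomorphically to $E^+_{j+kr}$, by part (2b) of the lemma (inherited from $\varphi(F_j) = F_{j+kr}$ in \eqref{eq:definition of klr}). The $\langle \varphi \rangle$-orbits on $\mathbb Z/klr$ under $j \mapsto j + kr$ are exactly the residue classes mod $kr$. There are therefore $kr$ orbits, each of size $l$.

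It remains to see that the quotient identifies each orbit with a single component and introduces no further identifications within a component. The stabilizer in $\mu_l$ of each $E^+_j$ is trivial, because the orbit has full size $l$. So the restriction of the quotient map to $E^+_j$ is an isomorphism onto its image $E_{j \bmod kr}$, the action being free. Composing these isomorphisms along an orbit gives $E^+_j \cong E_{j \bmod kr}$ for all $j$. Distinct residue classes land in distinct components, since the map on components is exactly the orbit map. The only point needing care, and the main, though mild, obstacle, is that "image of a component under a free finite quotient" is an isomorphism on the nose rather than just birational. Freeness of the $\langle\varphi\rangle$-action on $Y_t$ with trivial stabilizer of each $E^+_j$ in $\mu_l$ gives this: the translates $\varphi^i(E^+_j)$ for $0<i<l$ are other components, so no point of $E^+_j$ is identified with another point of $E^+_j$.
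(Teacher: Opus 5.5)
Your approach is the intended one (the paper regards the corollary as immediate from the preceding lemma), and your count of $kr$ orbits of size $l$ is correct. The gap is in the last step. Freeness of the $\mu_l$-action does not by itself show that no two points of $E^+_j$ are identified, and neither does the fact that the translates $\varphi^i(E^+_j)$, $0<i<l$, are \emph{other} components. A point $x\in E^+_j$ is identified with $\varphi^i(x)\in E^+_{j+ikr}$, and this point lies in $E^+_j$ as soon as $E^+_j\cap E^+_{j+ikr}\neq\emptyset$. What you need is that these translates are disjoint from $E^+_j$.

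In the cycle $\frac{1}{k}X^+_s$, the component $E^+_j$ meets only $E^+_{j\pm1}$, because the quotient by $\varphi^l$ preserves each $F_j$. So an intersection requires $ikr\equiv\pm1 \pmod{klr}$. Since $kr\mid klr$, this forces $kr=1$. Hence for $R=kr\ge2$ you can repair the argument by adding this observation. Write $\pi\colon\frac{1}{k}X^+_s\to\frac{1}{m}X_s$ for the \'etale quotient. Then $\pi^{-1}(E_i)$ is the disjoint union of the $l$ components $E^+_{j+ikr}$, permuted simply transitively by $\mu_l$, so each maps isomorphically onto $E_i$.

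For $kr=1$ (so $k=r=1$ and $l=m\ge2$) the step genuinely fails, and so does the literal claim. In case (iii), $\varphi$ preserves the orientation, so $\varphi(0_j)=0_{j+1}=\infty_j\subset E^+_j$. The map $E^+_j\to E_1=\frac{1}{m}X_s$ therefore glues the section $0_j$ to $\infty_j$. It is the normalization of the integral, non-normal type $\mathrm I_1$ fiber, not an isomorphism. This happens even though the action is free and each $E^+_j$ has trivial stabilizer, so the criterion you stated is insufficient.

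This is why the theorem lists $k=R=1$ separately, and why the paper's subsequent proof only uses the identification $\tilde E_i\cong\tilde E^+_i$ after normalization. In this case you can only conclude $E^+_j\cong\tilde E_1$.
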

			
			To simplify the numerics, we will replace $X$ with $X^+$ and assume $l = 1$ in \eqref{eq:definition of klr}. That is, from now on we assume $l = 1$, $m = k$, $Y_t = \sum_{i=1}^{kr} F_i$, and that the inertia $\mu_k = \langle \varphi \rangle$-action on $Y_t$ preserves every $F_i$, so that $E_i = F_i / \varphi$. It remains to describe the inertia automorphism $\varphi$ (and $\psi$). Take a normalization
			\[ \tilde Y_t = \bigsqcup_{i=1}^{kr} \tilde F_i \qquad\mbox{for isomorphic copies of } \PP^1 \mbox{-bundles } \tilde p_i : \tilde F_i \to B_t .\]
			When $r \ge 2$, note that $F_i = \tilde F_i$ is already smooth.
			
			\begin{lemma} \label{lem:classification of multiple fiber iii:varphi-action on Albanese}
				The $\langle \varphi \rangle$-action on $\tilde F_i$ descends to an action on $B_t$ via $\tilde p_i$. Moreover, the action is a translation by a torsion point $a \in A_s$ of order $k$.
			\end{lemma}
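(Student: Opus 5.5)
We have to prove that the $\langle \varphi \rangle$-action on $\tilde F_i$ descends through $\tilde p_i$ to $B_t$, and that it acts there by translation by a torsion point $a$ of order $k$.

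\textbf{Descent.} Since $\varphi$ preserves every component $F_i$, it lifts to an automorphism of $\tilde F_i$. By \Cref{prop:varphi-action}, $\varphi$ twists the $Q_t$-action by $\psi$. In case (iii), $P_s^\circ = Q_t^\circ$ and $\im p^* = Q_t^\psi$, so $\psi$ is the identity on $Q_t^\circ$. Hence $\varphi$ commutes with the $Q_t^\circ$-action on $\tilde F_i$.

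By \Cref{cor:rational curve in E}, the fibers of $\tilde p_i$ are exactly the rational curves in $\tilde F_i$, and any automorphism sends them to rational curves. Alternatively, they are the closures of the $\GG_m$-orbits, and $\varphi$ permutes these because it commutes with $\GG_m$. Either way $\varphi$ permutes the fibers, so by rigidity (as in \Cref{lem:prop:E step 3}) it descends to an automorphism $\bar\varphi$ of $B_t$ with $\tilde p_i \circ \varphi = \bar\varphi \circ \tilde p_i$.

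\textbf{$\bar\varphi$ is a translation.} The map $\bar\varphi$ commutes with the transitive translation action of $Q_t^\circ$ on $B_t$, which factors through the Chevalley quotient $B_t = A_s$. An automorphism of an abelian torsor commuting with all translations is itself a translation, so $\bar\varphi = t_a$ for some $a \in A_s$. Since $\varphi^k = \id$, we get $ka = 0$.

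\textbf{$a$ has order exactly $k$.} This is the main obstacle. Suppose $k' \mid k$ with $k' < k$ and $k'a = 0$. Then $\varphi^{k'}$ is a nontrivial automorphism of $Y_t$ which:
\begin{itemize}
\item preserves every component $F_i$,
\item covers the identity on $B_t$,
\item commutes with $Q_t^\circ$.
\end{itemize}
On each $\PP^1$-fiber of $\tilde p_i$, $\varphi^{k'}$ therefore commutes with $\GG_m$. It fixes the sections $0$ and $\infty$ (or swaps them, but orientation preservation in \Cref{lem:classification of multiple fiber iii:orientation} rules that out), so it acts on each fiber as multiplication by a constant $c$. Equivariance under $A_s$ forces $c$ to be the same constant on every fiber. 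So $\varphi^{k'}$ coincides with the action of an element of $\GG_m \subset Q_t^\circ$, and such an element fixes the sections $0_i$, $\infty_i$ pointwise.

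This contradicts the freeness of the inertia action (the Corollary after \Cref{lem:non-multiple base change}, which makes $\langle \varphi \rangle$ act freely of order $m = k$ on $Y_t$). Hence $a$ has order exactly $k$.

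The same argument shows the structure described in \Cref{thm:classification of multiple fiber iii}(3). We have $\varphi = \zeta^{i}\circ\varphi_0$ on each fiber direction, where $\zeta$ is a $k$-th root of unity and $\varphi_0$ is the lift of translation by $a$, since $\varphi^k = \id$ pins down the fiberwise scalar up to $\mu_k$.
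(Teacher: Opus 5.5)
For a single component your argument is correct and follows the paper's route. In case (iii) $\psi$ is trivial on $Q_t^\circ$, so $\varphi$ commutes with $Q_t^\circ$ on $\tilde F_i$ and descends along $\tilde p_i$. You use rigidity where the paper uses the universal property of the Albanese morphism; either works. The descended map commutes with all translations, so it is a translation. Finally, a power $\varphi^{k'}$ with $0<k'<k$ inducing the identity on the base would have a fixed point on $\tilde F_i$, contradicting freeness. Your detour through ``multiplication by a constant $c$'' is unnecessary here. Since $\varphi^{k'}$ preserves the section $0_i$ and covers the identity, it already fixes $0_i$ pointwise; the paper just notes that every automorphism of $\PP^1$ has a fixed point.

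What you have not proved is that the translation vector is independent of $i$. The lemma is stated with a single $a$, and it is used that way immediately afterwards. \Cref{cor:classification of multiple fiber iii:varphi-action on Sing} asserts translation by the same $a$ on every component of $\Sing(Y_t)$, and \Cref{thm:classification of multiple fiber iii}(4) needs all $\tilde E_i$ to lie over the common base $A = A_s/a$.

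This uniformity is not automatic. On the open $Q_t^\circ$-orbit of $F_i$, $\varphi$ is translation by some $q_i \in Q_t^\circ$. Applying \Cref{prop:varphi-action} to an $f \in Q_t$ carrying $F_i$ to $F_j$ only gives $q_j = \psi(f) f^{-1} q_i$. Here $\psi(f)f^{-1} \in Q_t^\circ$ has no a priori reason to map to $0$ in $A_s$.

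The paper closes this with the gluing. By \Cref{lem:classification of multiple fiber iii:orientation}, $\varphi$ preserves $0_i$ and $\infty_i$. Through the equivariant isomorphisms $\tilde p_i|_{0_i}$ and $\tilde p_i|_{\infty_i}$, it acts on both by translation by $a_i$. Now $\infty_i$ and $0_{i+1}$ are identified with the same $A_s$-torsor $F_i \cap F_{i+1}$, on which $\varphi$ acts by a single well-defined translation, so $a_i = a_{i+1}$. You need to add this step.

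Your closing remark about \Cref{thm:classification of multiple fiber iii}(3) does not follow from what you proved. The $\zeta^i$ dependence on the index comes from \Cref{lem:classification of multiple fiber iii:psi-action} and the equivariant Picard computation, although that claim is not part of this lemma.
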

			\begin{proof}
				Since the $Q_t^\circ$- and $\langle \varphi \rangle$-actions on $\tilde F_i$ commute, they descend to commuting actions on the base $B_t$ by the universal property of Albanese morphism. An action commuting with every translation is a translation, so the $\langle \varphi \rangle$-action on $A_s$ is a translation by a torsion point $a_i \in A_s$. If $\varphi^i$ for $i < k$ acts as the identity on $A_s$, then it acts on the $\PP^1$ fibers of $\tilde p_i$ and hence necessarily has a fixed point on $\tilde F_i$, a contradiction. Hence $a_i$ has an order $k$.
				
				To show all $a_i$ are the same, note that the two sections $0_i$ and $\infty_i$ of $\tilde p_i$ are preserved by $\varphi$ (\Cref{lem:classification of multiple fiber iii:orientation}). Since $\tilde p_i$ is equivariant, $\varphi$ acts on them as a translation by $a_i$. The identification $\infty_i = 0_{i+1}$ shows $a_i = a_{i+1}$.
			\end{proof}
			
			\begin{corollary} \label{cor:classification of multiple fiber iii:varphi-action on Sing}
				\begin{enumerate}
					\item The $\langle \varphi \rangle$-action on $\Sing(Y_t) = \bigsqcup_{i=1}^{kr} (F_i \cap F_{i+1})$ is a translation by $a \in A_s$ on every component.
					\item $\tilde E_i \to A_s/a$ is a free $\varphi$-quotient of $\tilde F_i \to A_s$. In particular, it is a $\PP^1$-bundle.\qed
				\end{enumerate}
			\end{corollary}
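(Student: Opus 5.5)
The corollary follows from \Cref{lem:classification of multiple fiber iii:varphi-action on Albanese}, once we know that $\varphi$ preserves the relevant sections and acts freely on the normalization. Throughout we keep the reduction $l = 1$, $m = k$, so $\varphi$ preserves every component $F_i$ of $Y_t$ and $E_i = F_i/\varphi$.

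For (1), consider a connected component $F_i \cap F_{i+1}$ of $\Sing(Y_t)$. Under the gluing it is the image of the section $\infty_i \subset \tilde F_i$, identified with $0_{i+1} \subset \tilde F_{i+1}$.
\begin{enumerate}
\item Since $\varphi$ preserves $F_i$ and preserves the orientation of the cycle (\Cref{lem:classification of multiple fiber iii:orientation}), it preserves the section $\infty_i$ and hence preserves $F_i \cap F_{i+1}$.
\item The restriction $\tilde p_i : \infty_i \to B_t = A_s$ is an isomorphism, and it is $\varphi$-equivariant.
\item By \Cref{lem:classification of multiple fiber iii:varphi-action on Albanese}, $\varphi$ acts on $B_t$ by translation by $a$.
\item Transporting this through the isomorphism of step 2, $\varphi$ acts on $F_i \cap F_{i+1}$ by translation by $a$. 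The lemma already shows that the element $a$ is the same for every $i$.
\end{enumerate}

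For (2), the first step is to show that $\varphi$ acts freely on $\tilde F_i$. The inertia $\langle \varphi \rangle$-action on $Y_t$ is free of order $m = k$. The normalization $\tilde F_i \to F_i \subset Y_t$ is $\varphi$-equivariant, so a fixed point of $\varphi^j$ on $\tilde F_i$ would map to a fixed point on $Y_t$. Hence every $\varphi^j$ with $0 < j < k$ acts freely on $\tilde F_i$. (Freeness also follows from the fact that $\varphi$ translates the base by an element of exact order $k$.)

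Next we identify the quotient. Because $\tilde F_i$ is smooth and the action is free, $\tilde F_i/\varphi$ is smooth, hence normal. The map $\tilde F_i/\varphi \to F_i/\varphi = E_i$ is finite and birational, so $\tilde F_i/\varphi$ is the normalization $\tilde E_i$. The morphism $\tilde p_i$ is equivariant for the translation by $a$ on $A_s$, so it descends to a morphism $\tilde E_i \to A_s/a$. Moreover, the square formed by $\tilde F_i \to \tilde E_i$, $A_s \to A_s/a$, $\tilde p_i$ and the descended map is cartesian. This is because both horizontal arrows are free $\mu_k$-quotients and the action on the base is free.

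Finally, $\tilde E_i \to A_s/a$ becomes the $\PP^1$-bundle $\tilde p_i$ after the finite étale base change $A_s \to A_s/a$. It is therefore smooth and proper with fibers $\PP^1$, i.e.\ an étale-locally trivial $\PP^1$-bundle.

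The only point needing care is the cartesian-ness of this square, i.e.\ that forming the quotient commutes with the fibration. I would verify it using the fact that the Galois group acts freely on $A_s$, so that the natural map $\tilde F_i \to \tilde E_i \times_{A_s/a} A_s$ is a bijective étale morphism between smooth varieties, and hence an isomorphism.
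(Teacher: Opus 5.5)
Your proposal is correct and takes essentially the same approach as the paper, which states the corollary with \qed as an immediate consequence of \Cref{lem:classification of multiple fiber iii:varphi-action on Albanese}. Part (1) repeats that lemma's own argument: $\varphi$ preserves $0_i$ and $\infty_i$, it translates them by the common $a$, and $\infty_i = 0_{i+1}$. Part (2) is the free-quotient descent you spell out, and the identification $\tilde F_i/\varphi = \tilde E_i$ is also available directly from \Cref{lem:quotient and normalization}.
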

			
			\begin{lemma} \label{lem:classification of multiple fiber iii:psi-action}
				There exists a primitive $k$-th root of unity $\zeta \in \GG_m \subset Q_t^\circ$ such that the inertia $\langle \psi \rangle$-action on $Q_t$ is
				\[ \psi(f_i) = \zeta^i \cdot f_i \qquad\mbox{for}\quad f_i \in Q_t \mbox{ with } \pi_0(f_i) = i \ \in \ \pi_0(Q_t) = \ZZ/kr .\]
			\end{lemma}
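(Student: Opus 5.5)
We will run the same style of argument as in the proof of \Cref{cor:semistable reduction theorem:psi-action}, but now with $\psi$ preserving orientation. We are in case (iii) with $l=1$, so $m=k$ and $\psi$ has order dividing $k$. We fix $Q_t \cong \ZZ/kr \times Q_t^\circ$ using \Cref{lem:pi_0 sequence splits}. \Cref{lem:classification of multiple fiber iii:orientation} gives $\psi(z)=z$ on $\GG_m$. Since $p^*:P_s^\circ\to Q_t^\circ$ is an isomorphism onto $Q_t^\psi$, the automorphism $\psi$ restricts to the identity on all of $Q_t^\circ$. Since $\varphi$ preserves every component $F_i$ and $\pi_0(Q_t)$ acts simply transitively on the components of $Y_t$ (\Cref{cor:pi_0-action on X_s}), the compatibility $\varphi(a\cdot y)=\psi(a)\cdot\varphi(y)$ of \Cref{prop:varphi-action} forces $\psi$ to act trivially on $\pi_0(Q_t)$. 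Hence $\psi(f)=c(f)\cdot f$, where $c:Q_t\to Q_t^\circ$ is a homomorphism that kills $Q_t^\circ$. This $c$ therefore factors through $\pi_0(Q_t)=\ZZ/kr$ and is determined by $c(f_1)=:\zeta$ for an element $f_1$ in the component $1$.

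The key steps are as follows. First, from $\psi(f_1)=\zeta f_1$ and additivity, we get $\psi(f_i)=\zeta^i f_i$ for every $f_i$ in the component $i$. Second, since $c$ factors through $\ZZ/kr$, we have $\zeta^{kr}=1$. Moreover, $\psi^k=\id$ and $\psi^j(f_1)=\zeta^jf_1$ give $\zeta^k=1$. Third, we show $\zeta\in\GG_m$, i.e.\ that its $A_s$-part (with respect to some splitting, or as an element of $Q_t^\circ$ mapping to $B_t=A_s$) is trivial. For this we use that $\varphi$ preserves each $F_i$ together with the sections $0_i,\infty_i$. By \Cref{lem:classification of multiple fiber iii:varphi-action on Albanese} and \Cref{cor:classification of multiple fiber iii:varphi-action on Sing}, $\varphi$ acts on every $\Sing(Y_t)$-component and on the base $A_s$ by the \emph{same} translation $a$. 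On the other hand, $\varphi(f_1\cdot y)=\zeta f_1\cdot\varphi(y)$, where $f_1$ carries $F_i\cap F_{i+1}$ to $F_{i+1}\cap F_{i+2}$. Comparing the translation of $\varphi$ on the two consecutive components then shows that the image of $\zeta$ in $A_s$ is $0$, so $\zeta\in\GG_m$.

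Finally, we need primitivity of $\zeta$ as a $k$-th root of unity. Suppose $\zeta^j=1$ for some $0<j<k$. Then $\psi^j=\id$ on $Q_t$. By \Cref{thm:Edixhoven} applied to the intermediate base change of degree $j$ (factoring $T\to S$), the intermediate t-automorphism group would already equal $Q_t$. The intermediate fibration would then have central fiber of type $\mathrm I_{kr}$ with $\pi_0$ of order $kr$. On the other hand, \Cref{cor:Edixhoven}(2) predicts that $\pi_0$ grows by the degree, a contradiction. An alternative, which I would try first, is to argue directly: $\varphi^j$ acts on a fiber $\PP^1$ of $\tilde p_i$ compatibly with the trivial $\GG_m$-twist, and hence fixes a point, contradicting freeness of $\varphi$, as in the proof of \Cref{lem:classification of multiple fiber iii:varphi-action on Albanese}. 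I expect this primitivity step, together with the claim $\zeta\in\GG_m$, to be the main obstacle. In particular, one must be careful that "$\varphi^j$ acts trivially on $A_s$" requires $a$ to have order $k$, which is exactly the statement of \Cref{lem:classification of multiple fiber iii:varphi-action on Albanese}.
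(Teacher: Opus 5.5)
Your proposal is correct and follows the paper's proof in essentially every step. The shared steps are: the splitting $Q_t\cong\ZZ/kr\times Q_t^\circ$; the triviality of $\psi$ on $Q_t^\circ$ and on $\pi_0(Q_t)$, via \Cref{prop:varphi-action} and \Cref{cor:pi_0-action on X_s}; the formula $\psi(f_i)=\zeta^i f_i$ with $\zeta^k=1$; and $\zeta\in\GG_m$, obtained by comparing the translation $a$ of $\varphi$ on two consecutive components of $\Sing(Y_t)$ linked by $f_1$ and using the stabilizer $\GG_m$ from \Cref{lem:stabilizer of Ir}.

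For primitivity, your intermediate-cover argument works but is a detour. The paper reads it off directly from $p^*(P_s)=Q_t^\psi=\{f_i:\zeta^i=1\}$, whose $\pi_0$ has index $k$ in $\ZZ/kr$. The freeness-based alternative you say you would try first cannot work: for $0<j<k$ the map $\varphi^j$ translates $A_s$ by $ja\neq 0$, so it is free whatever the order of $\zeta$ is.
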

			\begin{proof}
				We may write $Q_t = \ZZ/kr \times Q_t^\circ$ by \Cref{lem:pi_0 sequence splits}, and write its element by $(i, g) \in \ZZ/kr \times Q_t^\circ$. Recall the equality $Q_t^\psi = P_s$ and $\pi_0(P_s) = k \cdot \ZZ/kr \subset \ZZ/kr = \pi_0(Q_t)$, so we already have $\psi(i,g) = (i,g)$ when $k \mid i$. Recall the equality $\varphi ((i,g) \cdot y) = \psi(i,g) \cdot \varphi(y)$ of \Cref{prop:varphi-action}. Since $\varphi$ preserves all $F_i$ by assumption and $\pi_0(Q_t) = \ZZ/kr$ cycles the components $F_i$ (\Cref{cor:pi_0-action on X_s}), this shows that $\psi$ is of the form
				\[ \psi(i,g) = (i, \psi_i (g)) \qquad\mbox{for}\quad \psi_i : Q_t^\circ \to Q_t^\circ .\]
				Since $\psi$ acted trivially on $Q_t^\circ$, we have $\psi(i,g) = \psi(0,g) \cdot \psi(i,1) = (0,g) \cdot (i,\psi_i(1)) = (i, \psi_i(1) \cdot g)$. This shows $\psi_i(g) = \zeta_i \cdot g$ for $\zeta_i = \psi_i(1) \in Q_t^\circ$. The equality $\psi(i,g^i) = \psi(1, g)^i$ shows $\zeta_i = \zeta^i$ for a single $\zeta \in Q_t^\circ$. Furthermore, $\zeta$ is $k$-torsion because $\psi^k = \id$. Again $Q_t^\psi$ is precisely $P_s$, so this implies that $\zeta$ has order precisely $k$.
				
				\medskip
				
				It remains to prove that $\zeta \in Q_t^\circ$ is contained in $\GG_m$. Take a point $y \in F_1 \cap F_{kr}$, a unique rational curve $R \subset F_1$ through $y$, and a unique point $y' = R \cap F_2 \subset F_1 \cap F_2$. Recall from \Cref{lem:stabilizer of Ir} that $Q_t$ acts on $\Sing(Y_t)$ transitively by cycling the components (with stabilizer $\GG_m$). Hence there exists $f_1 \in Q_t$ such that $\pi_0(f_1) = 1$ and $y' = f_1 \cdot y$.
				
				Now apply $\varphi$ to $y, y' \in R \subset F_1$ to yield two points on a rational curve $\varphi(y), \varphi(y') \in \varphi(R) \subset F_1$. By \Cref{cor:classification of multiple fiber iii:varphi-action on Sing}, if we fix any $f_0 \in Q_t^\circ$ whose image under $Q_t^\circ \to A_s$ is $a$, then $\varphi(y) = f_0 \cdot y$ and $\varphi(y') = f_0 \cdot y'$. We now have a sequence of identities
				\begin{align*}
					\varphi(y') &= \varphi(f_1 \cdot y) = \psi(f_1) \cdot \varphi(y) = (\zeta \cdot f_1) \cdot (f_0 \cdot y) \\
					&= \zeta \cdot (f_0 \cdot (f_1 \cdot y)) = \zeta \cdot (f_0 \cdot y') = \zeta \cdot \varphi(y') .
				\end{align*}
				But $\varphi(y') \in F_1 \cap F_2$ has a stabilizer $\GG_m$ again by \Cref{lem:stabilizer of Ir}. This means $\zeta \in \GG_m \subset Q_t^\circ$.
			\end{proof}
			
			The (normalized) irreducible component $\tilde F = \tilde F_i \to A_s$ is a compactification of the $\GG_m$-torsor $Q_t^\circ \to A_s$, or equivalently a $\PP^1$-bundle with two sections $0$ and $\infty$. Hence $\tilde F \setminus \infty \to A_s$ can be considered as a line bundle with the zero section $0$. In this sense, we call $\tilde F = (\tilde F, 0, \infty) \to A_s$ a line bundle.
			
			\begin{lemma}
				There exists an isomorphism $f : \tilde F_i \to \tilde F_j$ of line bundles (i.e., $f(0_i) = 0_j$ and $f(\infty_i) = \infty_j$) such that the $\langle \varphi \rangle$-actions on $\tilde F_i$ and $\tilde F_j$ are related via
				\[ \varphi(f(y)) = \zeta^{j-i} \cdot f(\varphi(y)) .\]
			\end{lemma}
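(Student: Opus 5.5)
The natural candidate for $f$ comes from the group $Q_t$ itself. By \Cref{cor:pi_0-action on X_s} and \Cref{lem:stabilizer of Ir}, $Q_t$ permutes the components $F_i$ cyclically. Choose $f_{j-i} \in Q_t$ with $\pi_0(f_{j-i}) = j-i \in \ZZ/kr$, and let $f$ be the translation $y \mapsto f_{j-i} \cdot y$. It maps $F_i$ isomorphically onto $F_j$, and this lifts to the normalizations $\tilde F_i \to \tilde F_j$ because the $Q_t$-action lifts equivariantly to $\tilde Y_t$.

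First I will check that $f$ is an isomorphism of line bundles, i.e.\ that it sends $0_i \mapsto 0_j$ and $\infty_i \mapsto \infty_j$. Since $f_{j-i}$ commutes with the $\GG_m \subset Q_t^\circ$-action, we have $\lim_{z \to 0} z \cdot f(y) = f(\lim_{z \to 0} z \cdot y)$, so $f$ sends the $0$-section to the $0$-section, and likewise for $\infty$. This is the same orientation argument as in \Cref{lem:classification of multiple fiber iii:orientation}. The point is that translation by an element of the commutative group $Q_t$ preserves orientation, whereas a reflection would not.

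Next comes the commutation relation. I apply \Cref{prop:varphi-action}(2) together with \Cref{lem:classification of multiple fiber iii:psi-action}:
\[ \varphi(f(y)) = \varphi(f_{j-i} \cdot y) = \psi(f_{j-i}) \cdot \varphi(y) = (\zeta^{j-i} \cdot f_{j-i}) \cdot \varphi(y) = \zeta^{j-i} \cdot f(\varphi(y)), \]
where the last step uses that $Q_t$ is commutative, so $\zeta^{j-i}$ may be moved outside. Here $\varphi$ preserves each $F_i$ by our reduction to $l=1$, so $\varphi(y) \in F_i$ and $f(\varphi(y)) \in F_j$ make sense. This relation holds on $F_i$, and it extends to the normalizations by equivariance of the lift; on the open dense orbit this is automatic, and the extension follows by separatedness.

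The main point to verify is that the chosen $f_{j-i}$ really sends $F_i$ to $F_j$ rather than to some other component $F_{i + (j-i)'}$. This depends on the indexing convention fixed in \Cref{lem:classification of multiple fiber iii:psi-action}, where $\pi_0 = 1$ moves $F_1$ to $F_2$. I will fix $f_1$ as in that proof, with $f_1 \cdot (F_1 \cap F_{kr}) = F_1 \cap F_2$, and set $f_{j-i} = f_1^{j-i}$. I also need to rule out any ambiguity from the choice of representative: changing $f_{j-i}$ by an element $g \in Q_t^\circ$ changes $f$ by a $\varphi$-commuting automorphism. Indeed $\psi(g) = g$ for $g \in Q_t^\circ$, since $\psi$ is trivial on $Q_t^\circ$ in case (iii). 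So the identity above is unaffected by this choice.
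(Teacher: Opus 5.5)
Your proposal is correct and is essentially the paper's proof: take any $f_{j-i}\in Q_t$ with $\pi_0(f_{j-i})=j-i$, note that it acts as an orientation-preserving isomorphism $F_i\to F_j$, and combine $\varphi(a\cdot y)=\psi(a)\cdot\varphi(y)$ with $\psi(f_{j-i})=\zeta^{j-i}\cdot f_{j-i}$. Your extra checks are correct details that the paper leaves implicit: the independence of the choice of representative (since $\psi$ is trivial on $Q_t^\circ$ in case (iii)) and the passage to the normalizations.
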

			\begin{proof}
				Take any $f = f_{j-i} \in Q_t$ with $\pi_0(f) = j-i \in \pi_0(Q_t) = \ZZ/kr$. Then it induces an orientation-preserving isomorphism $f : F_i \to F_j$. The claim is the equality
				\[ \varphi(f \cdot y) = \psi(f) \cdot \varphi(y) = \zeta^{j-i} \cdot (f \cdot \varphi (y)) .\qedhere\]
			\end{proof}
			
			\begin{lemma} \label{lem:classification of multiple fiber iii:description of E_i}
				$\tilde E_i = \tilde F_i/\varphi$ and $\tilde E_j = \tilde F_j/\varphi$ are isomorphic as line bundles over $A = A_s/a$ if and only if $i \equiv j \mod k$.
			\end{lemma}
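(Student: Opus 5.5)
The plan is to reduce everything to a comparison of $\mu_k$-actions on a single $\PP^1$-bundle. Fix the reference component $\tilde F = \tilde F_i$. By the preceding lemma there is an isomorphism of line bundles $f : \tilde F_i \to \tilde F_j$ over $A_s$ (preserving $0$ and $\infty$) with $\varphi \circ f = \zeta^{j-i} \cdot (f \circ \varphi)$. Transporting the $\varphi$-action on $\tilde F_j$ to $\tilde F_i$ via $f$, the quotient $\tilde E_j$ becomes isomorphic, as a line bundle over $A = A_s/a$, to the quotient of $\tilde F_i$ by the automorphism $\zeta^{j-i}\circ\varphi$. Here $\zeta \in \GG_m$ acts fiberwise by scalar multiplication. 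The problem is therefore: for which $c \in \ZZ/k$ are the quotients $\tilde F/\varphi$ and $\tilde F/(\zeta^c \varphi)$ isomorphic as line bundles over $A$?

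For the ``if'' direction, note that $\zeta^k = 1$, so $i \equiv j \pmod k$ makes the two automorphisms literally equal. The quotients then agree.

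For the ``only if'' direction, I would argue via descent data. Let $\pi : A_s \to A$ be the quotient, with Galois group $\langle a \rangle \cong \ZZ/k$. A $\GG_m$-torsor (line bundle) $L$ on $A$ with $\pi^*L \cong \tilde F$ corresponds to a lift of the translation by $a$ to a bundle automorphism of $\tilde F$ of order $k$. The two lifts $\varphi$ and $\zeta^c\varphi$ differ by the central character $\ZZ/k \to \mu_k \subset \GG_m$, $1 \mapsto \zeta^c$. Hence the two descended line bundles differ by tensoring with the torsion line bundle $M_c \in \Pic^\circ(A)$ attached to this character of the Galois group of $\pi$. That is, $\tilde F_j/\varphi \cong (\tilde F_i/\varphi) \otimes M_c$, and $M_c$ corresponds to $\chi^c$ under the inclusion $\widehat{\ZZ/k} \hookrightarrow A^\vee = \Pic^\circ(A)$ defining the isogeny $\pi$ (as in the proof of \Cref{lem:untangle primitive}). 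This inclusion is injective because $A_s$ is connected, so $M_c$ is trivial if and only if $\zeta^c = 1$, that is, $k \mid c$.

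The main obstacle is the last step. An isomorphism of line bundles over $A$ need not cover the identity of $A$; it could cover a translation, or an automorphism of $A$ fixing nothing special. So I need to rule out $L \cong t_b^*(L \otimes M_c)$ for a translation $t_b$. Here I would use that the relevant isomorphisms should be equivariant for the $P_s^\circ$-action, since $P^\circ_s$ acts compatibly on all components $E_i$. This reduces to isomorphisms over the identity, or over translations that fix the class of $L$ in $\Pic^\circ$ anyway. Tensoring by the torsion class $M_c$ then changes the point of $\Pic$ unless $c \equiv 0$. If I cannot rule out translations this way, I would instead compare the extension classes of the $\GG_m$-torsors $\tilde E_i\setminus(0\cup\infty)$ in $\Ext^1(A,\GG_m) = A^\vee$. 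These classes are canonical (translation-invariant) invariants, and they differ exactly by $M_c$, which gives the conclusion directly.
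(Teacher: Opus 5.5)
Your proposal is correct and is essentially the paper's proof. The paper regards $(\tilde F_i,\varphi)$ as $\ZZ/k$-equivariant line bundles on $A_s$ and uses the preceding lemma to see that $(\tilde F_i,\varphi)$ and $(\tilde F_j,\varphi)$ differ by the character $\zeta^{j-i}\in\mu_k$. It then concludes from the left exact sequence $0\to\mu_k\to\Pic^{\ZZ/k}(A_s)\to\Pic(A_s)$, which is exactly your descent-data comparison via $\widehat{\ZZ/k}\hookrightarrow\Pic^\circ(A)$.

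The paper reads ``isomorphic as line bundles over $A$'' as isomorphic over the identity of $A$, and your extra worry about isomorphisms covering translations is harmless anyway. The bundle $\tilde E_i\setminus\infty$ pulls back to an algebraically trivial bundle on $A_s$, so (as $\NS(A)$ is torsion-free) it lies in the translation-invariant group $\Pic^\circ(A)$.
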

			\begin{proof}
				Write $\tilde F = \tilde F_i$ for simplicity. \Cref{lem:classification of multiple fiber iii:varphi-action on Albanese} constructed a commutative diagram
				\[\begin{tikzcd}
					\tilde F \arrow[r, "\varphi"] \arrow[d, "\tilde p"] & \tilde F \arrow[d, "\tilde p"] \\
					A_s \arrow[r, "t_a"] & A_s
				\end{tikzcd},\]
				or equivalently an isomorphism $\tilde F \to t_a^* \tilde F$ of $\PP^1$-bundles over $A_s$ (with a cocycle condition). Such an isomorphism respects $0$ and $\infty$, so it is an isomorphism of line bundles. That is, the $\langle \varphi \rangle$-action on $\tilde F$ defines a $\ZZ/m$-equivariant line bundle structure on $\tilde F \setminus \infty \to A_s$.
				
				Consider a left exact sequence (e.g., \cite[\S 7.1]{dol:invariant})
				\[ 0 \to \Hom (\ZZ/k, \GG_m) = \mu_k \to \Pic^{\ZZ/k}(A_s) \to \Pic(A_s) .\]
				For each $i$, the datum $(\tilde F_i, \varphi)$ in $\Pic^{\ZZ/k}(A_s)$ has the same image in $\Pic(A_s)$. The previous lemma says that $(\tilde F_i, \varphi)$ and $(\tilde F_j, \varphi)$ differ by $\zeta^{j-i} \in \mu_k$. This shows these $\ZZ/k$-equivariant structures are isomorphic if and only if $i \equiv j \mod k$. In other words, the quotients $\tilde E_i = \tilde F_i/\varphi$ and $E_j = \tilde F_j/\varphi$ are isomorphic as line bundles over $A = A_s/a$ if and only if $i \equiv j \mod k$.
			\end{proof}
			
			\begin{proof} [Proof of \Cref{thm:classification of multiple fiber iii}]
				We have seen in general that $\frac{1}{m} X_s = \sum_{i=1}^{kr} E_i$ consists of $R = kr$ number of irreducible components. Thanks to \Cref{lem:quotient and normalization}--\ref{lem:quotient of conductor}, we may consider $\frac{1}{m}X_s$ as an identification of $\tilde E_i \to A$'s by $\infty_i = 0_{i+1}$. Note that $\tilde E_i \to A$ isomorphic to $\tilde E_i^+ \to A$ for $i = 1, \cdots, kr$, and $\tilde E_i^+ \to A$ is a free $\mu_k$-quotient of $\tilde F_i \to A_s$ by the inertia action $\varphi$, which differs in cycle $k$.
			\end{proof}

		\subsubsection{Case (iv)}
			Let us now study the case (iv). In \Cref{lem:classification of multiple fiber iv:reduction to iii}, we have factorized the original diagram \eqref{diag:degree m base change} into $T \to U \to S$ of degrees $m/2$ and $2$. Since $h : Z \to U$ is a semistable-like multiple fibration, it has type $\frac{m}{2} \cdot \mathrm I^k_R$ for $k \mid \frac{m}{2}$ and $k \mid R$ by \Cref{thm:classification of multiple fiber iii}. Letting $\frac{m}{2} = kl$, $g : Y \to T$ is semistable of type $\mathrm I_{Rl}$. In fact, $l$ is trivial in this case.
			
			\begin{lemma}
				$l = 1$, or equivalently $m = 2k$.
			\end{lemma}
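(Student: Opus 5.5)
We need to show that in case (iv), with $m/2 = kl$, we have $l=1$. Here $l$ is the part of $\langle\varphi^2\rangle$ that acts on the components of $Y_t$ by rotation, while $k$ is the order of the stabilizer of each component. So the claim amounts to: the inertia action $\langle\varphi^2\rangle$ on the cycle $Y_t=\sum_{j=1}^{lR}F_j$ preserves every component.

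The plan is to combine two facts. The first is the orientation-reversing behaviour of $\varphi$ from \Cref{lem:classification of multiple fiber iii:orientation}(iv). The second is the size of $\pi_0(Q_t^\psi)=\pi_0(\im p^*)\cong\pi_0(P_s)$, which by \Cref{thm:classification of component group} has order at most $4$ (and is nontrivial data of type $\mathrm I_r^*$-like).

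First, by \Cref{lem:classification of multiple fiber iii:orientation}, $\varphi$ acts on the cycle of components as a reflection, $F_j\mapsto F_{c-j}$. Hence $\varphi^2$ acts as the identity on the index set. This already contradicts a nontrivial rotation $\varphi^2(F_j)=F_{j+R}$ unless $lR\mid$ the relevant shift, that is, unless $l=1$.

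To make this precise, I will write $\varphi(F_j)=F_{\sigma(j)}$. Since $\varphi$ reverses the orientation of the cycle, $\sigma$ is a reflection of $\ZZ/lR$, so $\sigma^2=\id$. On the other hand, the factorization in \Cref{lem:classification of multiple fiber iv:reduction to iii} together with \eqref{eq:definition of klr} applied to the $\mu_{m/2}=\langle\varphi^2\rangle$-action gives $\varphi^2(F_j)=F_{j+R}$ on $\ZZ/(lR)$. Comparing the two forces $R\equiv 0 \mod lR$, hence $l=1$.

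The main obstacle is to justify carefully that the orientation reversal is a genuine reflection of the index set. The concern is that $\varphi$ might compose a reversal with a rotation in some way incompatible with how $\varphi^2$ was analysed. I would handle this by noting that any orientation-reversing automorphism of the cycle graph $\ZZ/N$ has the form $j\mapsto c-j$, whose square is trivial. In the degenerate case $lR\le 2$ the claim is immediate anyway.
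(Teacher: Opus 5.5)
Your argument is correct and is essentially the paper's proof. By \Cref{lem:classification of multiple fiber iii:orientation}, $\varphi$ reverses the orientation of the cycle, so it acts on the components by a reflection $F_j\mapsto F_{c-j}$ and $\varphi^2$ fixes every $F_j$. On the other hand, the case (iii) analysis \eqref{eq:definition of klr} applied to $\langle\varphi^2\rangle$ gives $\varphi^2(F_j)=F_{j+R}$ on $\ZZ/lR$, which forces $l=1$. The bound on $|\pi_0(P_s)|$ mentioned in your plan is never used and can be dropped.
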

			\begin{proof}
				Consider the $\mu_m = \mu_{2kl} = \langle \varphi \rangle$-action on $Y$. By \Cref{lem:classification of multiple fiber iii:orientation}, $\varphi$ reserves the orientation of the cycle $Y_t = \sum_{i=1}^{Rl} F_i$. Up to reindexing $F_i$, we may assume $\varphi(F_i) = F_{-i}$ or $F_{1-i}$. Note that $\varphi^2$ preserves all $F_i$ in either case. But recall from \eqref{eq:definition of klr} that $\mu_{m/2} = \mu_{kl} = \langle \varphi^2 \rangle$-action on $Y_t$ is of the form $\varphi^2 (F_i) = F_{i + R}$. This shows $l = 1$.
			\end{proof}
			
			Therefore, we have a factorization
			\[\begin{tikzcd}
				Y \arrow[r] \arrow[d, "g"] & Z \arrow[r] \arrow[d, "h"] & X \arrow[d, "f"] \\
				T \arrow[r, "\mu_k"] & U \arrow[r, "\mu_2"] & S
			\end{tikzcd},\]
			where $g$ is semistable of type $\mathrm I_R$ and $h$ is semistable-like multiple of type $k \cdot \mathrm I^k_R$.
			
			\begin{lemma}
				$P \to S$ has type $\mathrm I_r^*$, $\mathrm I_r^*/2$, or $\mathrm I_r^*/4$ for a positive integer $r$.
			\end{lemma}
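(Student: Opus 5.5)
The plan is to identify the type of $P$ through its semistable base change, using \Cref{cor:semistable reduction theorem} (equivalently \Cref{table:semistable reduction theorem}). In case (iv) of \Cref{lem:classification of pullback}, $P_s$ has linear part $\GG_a$ while $Q_t$ has linear part $\GG_m$. So $P$ is unstable, and $g : Y \to T$ is a strictly semistable base change of a minimal compactification of $P$.

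First, I compare $f$ with the minimal compactification $f^\sharp : X^\sharp \to S$ of $P$, as in the proof of \Cref{thm:classification of multiple fiber ii}. After shrinking $S$ so that $g$ has a section, $Y_0$ and $X^\sharp_{T_0}$ are both isomorphic to $Q_0$. Hence $Y$ is the minimal model of $X^\sharp_T$, and $g$ is a semistable base change of the unstable fibration $f^\sharp$. Since $g$ is strictly semistable of type $\mathrm I_R$ with $R \ge 1$, \Cref{prop:semistable reduction theorem untangled ver} and \Cref{prop:semistable reduction theorem tangled ver}, applied to the untangle of $f^\sharp$, rule out types $\mathrm I_0^*, \mathrm{II}, \dots, \mathrm{IV}^*$ and their quotients: for those, every semistable base change is smooth. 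Therefore $P$ is isogenous to type $\mathrm I_{r'}^*$ for some $r' \ge 1$, so its type is $\mathrm I_{r'}^*$, $\mathrm I_{r'}^*/2$, or $\mathrm I_{r'}^*/4$.

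Second, I show $r' = r$, where $R = kr$ and $m = 2k$. By \Cref{prop:semistable reduction theorem untangled ver}(2), a degree $d = 2k'$ semistable base change of the untangle $\tilde X^\sharp$ has type $\mathrm I_{2k' r'}$. By \Cref{prop:semistable reduction theorem tangled ver}, passing to the $G$-quotient either keeps the number of components or halves it. For $Y$ itself, which has degree $d = m = 2k$, I get $R \in \{2kr', kr'\}$. The point to check is whether the $G$-quotient halves the cycle. From \Cref{prop:semistable reduction theorem tangled ver}(3), $G \cap \tilde Q^\circ_t$ is $0$ or $\langle(-1,a)\rangle$, so $|\pi_0(Q_t)|$ equals $2kr'$ or $kr'$ according to how much of $G$ maps into $\pi_0(\tilde Q_t)$.

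It is cleaner to compute $r'$ directly from $\pi_0$ using \Cref{thm:Edixhoven}. The image of $p^*$ is $Q_t^\psi$, and $\psi$ acts on $\pi_0(Q_t) = \ZZ/R$ by $i \mapsto -i$, or $i \mapsto 1-i$ when composed with the twist (\Cref{lem:classification of multiple fiber iii:orientation}). Matching this with the explicit $\psi$ of \Cref{prop:semistable reduction theorem untangled ver}(2) and its $G$-quotient should give $R = kr'$ once the normalization of $r$ in the statement is fixed, so that $r' = r$.

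The main obstacle is this bookkeeping of the Albanese stabilizer. The type-$\mathrm I_r^*/4$ case with odd $r$ and the $\mathrm I_{\mathrm{ev}}^*/2$ cases can change the number of components of $Y_t$ relative to the untangled $\tilde Y_t$. I plan to handle it case by case, using the table computing $\pi_0(Q_t)$ in the proof of \Cref{thm:classification of component group}(3) together with its even-$r$ analogue. If the indexing does not match exactly, the lemma still holds with $r$ reinterpreted, since the statement only asserts that $P$ has type $\mathrm I_r^*$, $\mathrm I_r^*/2$, or $\mathrm I_r^*/4$ for some positive $r$.
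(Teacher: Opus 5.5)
Your first step is the paper's argument, and it already proves the lemma. $P$ is unstable and has a strictly semistable base change. \Cref{prop:semistable reduction theorem untangled ver}--\ref{prop:semistable reduction theorem tangled ver}, applied to the untangle of the minimal compactification $f^\sharp$, then exclude every type whose semistable reduction is smooth. You work with the degree-$m$ base change $g$ directly, while the paper passes through the degree-$2$ step $U \to S$ to the t-automorphism scheme of $h$; this difference is immaterial.

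Your second step is unnecessary, and its tentative conclusion $R = kr'$ fails when $G = 0$. There \Cref{prop:semistable reduction theorem untangled ver}(2) gives $R = 2kr'$, consistent with the paper's later type $\mathrm I_{2kr}$ curve $C$. So $r$ can only be read existentially, exactly as in your fallback.
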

			\begin{proof}
				The t-automorphism scheme $P$ is unstable, and the degree $2$ morphism $U \to S$ reduces it to semistable t-automorphism scheme of $h$. Apply \Cref{prop:semistable reduction theorem untangled ver}--\ref{prop:semistable reduction theorem tangled ver} to the minimal compactification $f^\sharp : X^\sharp \to S$ of $P$.
			\end{proof}
			
			By \Cref{thm:classification of unstable fibers} and \ref{thm:classification of component group}, $P$ admits an untangle to a type $\mathrm I_r^*$ group
			\[ 0 \to G \to \tilde P \to P \to 0 ,\]
			where $G \subset \pi_0(\tilde P_s) = (\ZZ/2)^{\times 2}$ for even $r$ and $\ZZ/4$ for odd $r$. One similarly tries to \emph{untangle} $X$ into a $G$-covering $\tilde X \to X$ as in \Cref{prop:untangle for unstable} or \Cref{lem:multiple fiber ii:untangle of X}. This is not always possible, so we need to study this carefully.
			
			\medskip
			
			Let $G$ be the Albanese stabilizer of $P$. A similar process to \eqref{diag:multiple fiber ii:untangle of Y} constructs a $G$-covering $\tilde Y \to Y$ with a central fiber $\tilde Y_t = C \times A_s$ for a Kodaira singular curve $C$ of type $\mathrm I_{2kr}$:
			\begin{equation} \label{diag:multiple fiber iv:untangle of Y}
			\begin{tikzcd}
				\tilde Y \arrow[d] \\
				Y \arrow[r] & Z \arrow[r] & X
			\end{tikzcd}.
			\end{equation}
			We want to check whether $\tilde Y \to Y$ descends to $\tilde X \to X$. To do so, recall the homomorphism $p^* : P_s^\circ = \GG_a \times A_s \twoheadrightarrow A_s \subset Q_t^\circ$ produces an $(n-1)$-dimensional abelian subvariety of $Q_t^\circ$. This means that $Y_t$ has an $(n-1)$-dimensional Albanese torsor by \Cref{cor:dimension of Albanese}:
			\[ \alb : Y_t \to \Alb_t = \Alb_{Y_t} .\]
			
			\begin{lemma} \label{lem:Albanese of X is quotient of Albanese of Y}
				The Albanese morphism $\frac{1}{m}X_s \to \Alb_s$ is a free $\varphi$-quotient of the Albanese morphism $Y_t \to \Alb_t$.
			\end{lemma}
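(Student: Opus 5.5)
We will show that the Albanese morphism of $Y_t$ is $\varphi$-equivariant, that $\varphi$ acts freely on $\Alb_t$, and that the resulting square is cartesian. The statement to prove is that the square
\[\begin{tikzcd}
	Y_t \arrow[r, "q"] \arrow[d, "\alb"] & \tfrac{1}{m} X_s \arrow[d, "\alb"] \\
	\Alb_t \arrow[r] & \Alb_t/\varphi
\end{tikzcd}\]
is cartesian, with $\Alb_t/\varphi \cong \Alb_s$ and $\varphi$ acting freely on $\Alb_t$.

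\textbf{Step 1: $\varphi$ descends to $\Alb_t$.} By the universal property of the Albanese morphism, $\varphi$ descends to an automorphism $\varphi_{\Alb}$ of the torsor $\Alb_t$ making $\alb$ equivariant. Since $Y_t$ has an $(n-1)$-dimensional Albanese by \Cref{cor:dimension of Albanese}, the subgroup $p^*(A_s)\subset Q_t^\psi$ acts on $\Alb_t$. This action is by translations, is transitive, and commutes with $\varphi$ by \Cref{prop:varphi-action}.

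\textbf{Step 2: $\varphi_{\Alb}$ is a translation.} An automorphism of an abelian torsor commuting with all translations by an isogenous subgroup is itself a translation. Hence $\varphi_{\Alb}$ is translation by some torsion point $b$.

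\textbf{Step 3: the translation $b$ has order exactly $m$.} This is the main obstacle. Suppose $\varphi^j_{\Alb}=\id$ for some $0<j<m$. Then $\varphi^j$ preserves every fiber $C$ of $\alb$, which is a type $\mathrm I_{2kr}$ cycle of rational curves, and we must derive a contradiction with freeness of $\varphi$ on $Y_t$.
\begin{itemize}
	\item If $j$ is odd, $\varphi^j$ reverses the orientation of the cycle by \Cref{lem:classification of multiple fiber iii:orientation}. Restricted to $C$, such an automorphism fixes either a component, on which it acts as a reflection $z\mapsto c/z$ of $\PP^1$ with fixed points, or a node. Either way it has a fixed point, contradicting freeness.
	\item If $j$ is even, $\varphi^j$ lies in $\langle\varphi^2\rangle$, which is the case (iii) action on $Y\to Z$. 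By \Cref{lem:classification of multiple fiber iii:varphi-action on Albanese}, $\varphi^2$ acts on $\Alb_t$ by a translation of exact order $k=m/2$, so no such $j<m$ can act trivially.
\end{itemize}
If Step 3 breaks down for odd $j$, the fallback is to use the orientation-reversing structure of $\psi$ on $Q_t$ from the case (iv) analysis.

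\textbf{Step 4: the square is cartesian.} With $b$ of order $m$, $\varphi$ acts freely on $\Alb_t$. The induced morphism $Y_t\to \Alb_t\times_{\Alb_t/\varphi}\tfrac1m X_s$ is then an isomorphism, since both sides are free $\mu_m$-quotient compatible and it is bijective on fibers. Therefore $\tfrac1m X_s\to \Alb_t/\varphi$ is locally trivial with connected fiber $C$.

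\textbf{Step 5: identification with $\Alb_s$.} By universality this morphism factors through $\Alb_{\frac1m X_s}\cong\Alb_s$ (\Cref{prop:Albanese of multiple fiber}). Its fibers are connected and contain no abelian part, so $\Alb_s\to\Alb_t/\varphi$ is an isomorphism: a nontrivial isogeny would pull back to a disconnected cover of $C$-fibers, contradicting the universal property.
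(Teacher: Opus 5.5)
Your Steps 1, 2, 4, 5 and the odd-$j$ half of Step 3 are the paper's argument: $\varphi$ descends to a translation of $\Alb_t$, an orientation-reversing automorphism of a cycle of $\PP^1$'s has a fixed point, and connectedness of the fibers forces $\Alb_s\to\Alb_t/\varphi$ to be an isomorphism.

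\textbf{The gap is the even-$j$ half of Step 3.}

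\Cref{lem:classification of multiple fiber iii:varphi-action on Albanese}, applied to $Y\to Z$, computes the translation by which $\varphi^2$ acts on $B_t$. Here $B_t$ is the base of the $\PP^1$-bundles $\tilde F_i\to B_t$, i.e.\ the abelian part of $Q_t^\circ$. It does not compute the action on $\Alb_t$, and the two bases differ:
\begin{itemize}
\item By \Cref{prop:semistable reduction theorem tangled ver}(3), $\Alb_t=A_s/G$, and the Albanese fiber is a cycle $C$ of $2kr$ curves.
\item On the other hand, $B_t=A_s/(G\cap\tilde Q_t^\circ)$.
\item So as soon as $G$ meets a non-neutral component of $\tilde Q_t$, the map $B_t\to\Alb_t$ is a nontrivial isogeny, $C$ has more components than $Y_t$, and the order of the translation can drop.
\end{itemize}
Freeness of $\varphi^j$ on $Y_t$ then gives no contradiction. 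An orientation-preserving $\varphi^j$ that fixes $\Alb_t$ can act on $C$ as a free rotation. The paper's own proof also only treats orientation-reversing powers, so it does not close this case either.

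\textbf{The even case genuinely fails.} Take $r$ odd and $k$ even. Let $G=\langle g\rangle\cong\ZZ/4$ with $g=(kr,\sqrt{-1},c)\in\tilde Q_t=\ZZ/2kr\times\GG_m\times A_s$ and $c$ of order $4$; this is type $\mathrm I_r^*/4$. Set $\tilde\varphi(i,z,y)=(1-i,\varepsilon\zeta^i/z,y+b)$, where $b$ has order $4k$ and $kb=c$.
\begin{itemize}
\item $\tilde\varphi^{2k}$ acts by $(i,z,y)\mapsto(i,-z,y+2c)$, so $\tilde\varphi^{2k}=g^2$, and $\Gamma=\langle\tilde\varphi,g\rangle$ has order $8k$.
\item The only nontrivial element of $\Gamma$ acting trivially on $A_s$ is $\tilde\varphi^{3k}g$. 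It rotates $C$ by $kr$, so $\Gamma$ acts freely.
\item Hence $\varphi$ is free of order $2k$ on $Y_t=\tilde Y_t/G$.
\item This is realized by $(Y_{\mathrm{ell}}\times A_s)/\Gamma$ as in the paper's $m\cdot\mathrm I_R^-$ example. Here $g$ is the $4$-torsion t-automorphism pulled back from the $\mathrm I_r^*$ elliptic surface; it is $\psi$-invariant, so it commutes with $\tilde\varphi$.
\item But $\varphi^k$ acts on $\Alb_t=A_s/\langle c\rangle$ by translation by $kb=c$, i.e.\ trivially.
\end{itemize}
So $\Alb_t\to\Alb_t/\varphi$ has degree $k$ while $Y_t\to\frac1m X_s$ has degree $2k$, and the square is not cartesian.

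Your even-case argument is correct when $G\subset\tilde Q_t^\circ$. Then $B_t=\Alb_t$; equivalently, each component of $Y_t$ meets the Albanese fiber in a single curve. An even power fixing $\Alb_t$ then preserves every component of $C$ and fixes its nodes. The statement needs this extra hypothesis.
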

			\begin{proof}
				Similar to \Cref{lem:classification of multiple fiber iii:varphi-action on Albanese}, the $\langle \varphi \rangle$-action descends to a translation action on $\Alb_t$. Suppose that $\varphi^i$ for $i < m = 2k$ acts trivially on $\Alb_t$. Then it acts on the fibers $C$ of $Y_t \to \Alb_t$. But every action on a cycle of $\PP^1$'s reversing the orientation has a fixed point, so this contradicts the fact that the $\langle \varphi \rangle$-action on $Y_t$ is free. Now the free quotient
				\[ \alpha : \tfrac{1}{2k} X_s = Y_t/\varphi \to \Alb_t/\varphi \]
				is an $A_s$-equivariant $C$-fiber bundle. The Albanese torsor of $\frac{1}{2k}X_s$ is $\Alb_s$ (\Cref{prop:Albanese of multiple fiber}), so $\alpha$ factors through $\frac{1}{2k}X_s \to \Alb_s \to \Alb_t/\varphi$. The latter morphism $\Alb_s \to \Alb_t/\varphi$ is an isogeny of abelian torsors and hence it is finite. But $\alpha$ already has connected fibers $C$, so $\Alb_s = \Alb_t/\varphi$ and $\alpha$ is the Albanese morphism.
			\end{proof}
			
			Combine \Cref{prop:semistable reduction theorem tangled ver}(3) with \Cref{lem:Albanese of X is quotient of Albanese of Y} to form a cartesian diagram
			\[\begin{tikzcd}
				C \times A_s \arrow[r] \arrow[d, "\pr_2"] & Y_t \arrow[r] \arrow[d, "\alb"] & \tfrac{1}{m} X_s \arrow[d, "\alb"] \\
				A_s \arrow[r, "G"] & \Alb_t \arrow[r, "\mu_{2k}"] & \Alb_s
			\end{tikzcd}.\]
			The composition $A_s \to \Alb_s$ is an isogeny of abelian torsors, so it is automatically Galois. Its Galois group $H$ sits in a short exact sequence
			\begin{equation} \label{eq:multiple fiber iv:ses of H}
				0 \to G \to H \to \mu_{2k} \to 0 .
			\end{equation}
			The following answers the question whether the $G$-covering of $X$ exists.
			
			\begin{lemma} \label{lem:multiple fiber iv:untangle of X}
				The sequence \eqref{eq:multiple fiber iv:ses of H} splits if and only if there exists a Galois \'etale morphism $\tilde X \to X$ that completes \eqref{diag:multiple fiber iv:untangle of Y} into a cartesian diagram
				\[\begin{tikzcd}
					\tilde Y \arrow[r] \arrow[d] & \tilde Z \arrow[r] \arrow[d] & \tilde X \arrow[d] \\
					Y \arrow[r] & Z \arrow[r] & X
				\end{tikzcd}.\]
				In this case, the $\mu_{2k} = \langle \varphi \rangle$-action on $Y$ lifts to a $\mu_{2k} = \langle \tilde \varphi \rangle$-action on $\tilde Y$ commuting with $G$.
			\end{lemma}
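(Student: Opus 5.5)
We prove both directions by comparing Galois groups of coverings of the central fibers, and then pass from the central fibers to the total spaces. The basic picture is this. The composite $\tilde Y \to Y \to Z \to X$ restricts over $s$ to the cartesian square of Albanese morphisms displayed above, whose base is the isogeny $A_s \to \Alb_t \to \Alb_s$ with Galois group $H$. A $G$-covering $\tilde X \to X$ pulling back to $\tilde Y \to Y$ amounts to an intermediate cover $A_s \to A' \to \Alb_s$ through which the $H$-covering factors, with Galois group $G$ over $\Alb_s$ and such that $A_s \to A'$ has group $\mu_{2k}$ compatible with $\Alb_t \to \Alb_s$. This is exactly a splitting $\mu_{2k} \hookrightarrow H$ of \eqref{eq:multiple fiber iv:ses of H}.

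($\Leftarrow$) Assume the sequence splits, so $H \cong G \times \mu_{2k}$ (the group $H$ is abelian, being the Galois group of an isogeny of abelian torsors). Let $\sigma : \mu_{2k} \to H$ be the section, and set $A' := A_s/\sigma(\mu_{2k})$. This gives an isogeny $u : A' \to \Alb_s$ with Galois group $H/\sigma(\mu_{2k}) \cong G$, and $A' \times_{\Alb_s} \Alb_t$ is connected because $G \cap \sigma(\mu_{2k}) = 0$; indeed, it equals $A_s$. By \Cref{lem:Albanese of X is quotient of Albanese of Y}, the bullet hypothesis of \Cref{lem:untangle for multiple} holds. That lemma, applied to $u$, produces (after shrinking $S$ étale locally) a minimal $\delta$-regular fibration $\tilde X \to X$, Galois étale with group $G$, whose central fiber is $X_s \times_{\Alb_s} A'$. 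We then pull $\tilde X$ back along $Y \to Z \to X$ and take normalized base changes; the degree-$m$ construction \eqref{diag:degree m base change} commutes with étale covers. This gives $\tilde Z \to Z$ and a fibration $\tilde Y' \to Y$. Over $t$, the fiber of $\tilde Y'$ is $Y_t \times_{\Alb_t} A_s$, which is the central fiber of $\tilde Y$. Both $\tilde Y'$ and $\tilde Y$ are $G$-coverings of $Y$ defined by the same torsion line bundles in $\Pic(Y)$. These line bundles are determined by their restriction to $Y_t$, as in the proof of \Cref{lem:untangle primitive}, so uniqueness in that lemma gives $\tilde Y' \cong \tilde Y$ over $Y$. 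Finally, the Galois action of $\mu_{2k}$ on $\tilde Y = \tilde X \times_X Y$ (through the second factor) is the lift $\tilde \varphi$. It commutes with $G$ because $G$ acts through the first factor.

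($\Rightarrow$) Given a cartesian diagram as in the statement, the $\mu_{2k}$-action on $Y$ over $X$ lifts to $\tilde Y = \tilde X \times_X Y$ as the action on the second factor, and this lift commutes with $G$. Restricting to central fibers and descending to Albanese torsors, as in \Cref{lem:classification of multiple fiber iii:varphi-action on Albanese}, we obtain an action of $\mu_{2k}$ by translations on $A_s$. This action lifts the action on $\Alb_t$ and commutes with $G$. Together with $G$, it embeds $G \times \mu_{2k}$ into the deck group of $A_s \to \Alb_s$, which is $H$. Comparing orders shows $H = G \times \mu_{2k}$, so the sequence splits.

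The main obstacle is the identification $\tilde Y' \cong \tilde Y$ in ($\Leftarrow$). The covering $\tilde Y \to Y$ was built in \eqref{diag:multiple fiber iv:untangle of Y} from the semistable base change of the untangle of the minimal compactification $X^\sharp$, not from $X$. To compare the two, we would check that both correspond to the same subgroup $\hat G \subset \Alb_t^\vee \subset \Pic^\circ_{Y_t}$. We would then use separatedness of $\Pic^\circ_g$, available since $g$ is non-multiple (\Cref{prop:dual Neron model is Picard}), to lift this equality to $T$ and obtain the isomorphism over $Y$.
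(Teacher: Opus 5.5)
Your proposal is correct and follows the paper's argument. In one direction you split $H$, form $A_s/\sigma(\mu_{2k}) \to \Alb_s$, and apply \Cref{lem:untangle for multiple} via \Cref{lem:Albanese of X is quotient of Albanese of Y}; in the other you read off a splitting from the lifted inertia action, which commutes with $G$. Your extra check that the pulled-back cover agrees with the given $\tilde Y \to Y$ is left implicit in the paper; your check is sound, since over the strictly henselian base a $G$-torsor on $Y$ is determined by its restriction to $Y_t$.
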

			\begin{proof}
				If $H \cong G \times \mu_{2k}$, then take a quotient by $\mu_{2k}$ first to obtain a cartesian diagram of abelian torsors (where $a \in A_s$ has order $2k$)
				\[\begin{tikzcd}
					A_s \arrow[r] \arrow[d] & A_s/a \arrow[d, "u"] \\
					\Alb_t \arrow[r] & \Alb_s
				\end{tikzcd}.\]
				Hence we can apply \Cref{lem:untangle for multiple} (together with \Cref{lem:Albanese of X is quotient of Albanese of Y}) to the isogeny $u : A_s/a \to \Alb_s$ and construct $\tilde X \to X$ as claimed. Conversely, if we can fill in the topright corner of \eqref{diag:multiple fiber v:untangle of Y}, then the inertia $\mu_{2k} = \langle \tilde \varphi \rangle$-action on $\tilde Y$ satisfies $\mu_{2k} \cap G = 0$ and is a lifting of $\varphi$. That is, this induces a splitting $H \cong G \times \mu_{2k}$.
			\end{proof}
			
			\begin{lemma} \label{lem:multiple fiber iv:varphi-action for split H}
				Assume the sequence \eqref{eq:multiple fiber iv:ses of H} splits. Then the $\mu_{2k} = \langle \tilde \varphi \rangle$-action on $Y_t = C \times A_s$ for a type $\mathrm I_{2kr}$ Kodaira curve $C$ is
				\[ \tilde \varphi(i,z,y) = (-i, \tfrac{\zeta^i}{z}, y+a) \qquad\mbox{for}\quad (i,z) \in C , \quad y \in A_s ,\]
				where $\zeta$ is a primitive $2k$-th root of unity and $a \in A_s$ is a torsion point of order $2k$.
			\end{lemma}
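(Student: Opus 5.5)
Since \eqref{eq:multiple fiber iv:ses of H} splits, \Cref{lem:multiple fiber iv:untangle of X} gives the lift $\tilde \varphi$ on $\tilde Y$ commuting with $G$, of order $2k$, with $\tilde Y_t = C \times A_s$ for $C$ of type $\mathrm I_{2kr}$. The plan is to determine $\tilde\varphi$ on $\tilde Y_t$ in the same way as in \Cref{cor:semistable reduction theorem:psi-action}. The difference is that $\tilde\varphi$ may now also translate the $A_s$-direction, and the whole action must be free.

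First I would pin down the inertia action $\tilde\psi$ on $\tilde Q_t$. Since $\tilde Y \to \tilde X$ is again a degree $2k$ base change, \Cref{lem:classification of multiple fiber iii:orientation}(iv) applies. Moreover, $\tilde Q_t$ and its $\tilde\psi$-action agree with those of the semistable base change of the untangle of $P$ in \Cref{prop:semistable reduction theorem untangled ver}(2). The reason is that $\tilde Y$ and the pullback of the minimal compactification of $\tilde P$ have the same generic fiber up to torsor twist, so they share the t-automorphism scheme and its inertia. Hence, after reindexing,
\[ \tilde\psi(i,z,y) = (-i, \tfrac{\zeta^i}{z}, y) \]
on $\tilde Q_t = \ZZ/2kr \times \GG_m \times A_s$, with $\zeta$ a primitive $2k$-th root of unity.

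Next I would relate $\tilde\varphi$ to $\tilde\psi$ via \Cref{prop:varphi-action}: $\tilde\varphi(q\cdot y) = \tilde\psi(q)\cdot\tilde\varphi(y)$. I fix a base point $y_0$ in the smooth part of the component $E_0$, so that $\tilde Y_t' \cong \tilde Q_t$. Then $\tilde\varphi(q\cdot y_0) = \tilde\psi(q)\cdot c$, where $c = \tilde\varphi(y_0)$. The Albanese map $\pr_2$ is $\tilde\varphi$-equivariant, and by the argument of \Cref{lem:Albanese of X is quotient of Albanese of Y} $\tilde\varphi$ acts on $A_s$ by a translation $t_a$. This translation has order exactly $2k$ because the action is free: any power acting trivially on $A_s$ would act on a fiber $C$ with orientation reversal or by rotation, and would therefore have a fixed point.

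It remains to normalize $c = (i_0, z_0, a)$. Since $\tilde\varphi$ reverses orientation, I shift the indexing of components, which amounts to replacing $y_0$ by $f\cdot y_0$ for suitable $f\in\tilde Q_t$. The formula then changes by $\tilde\psi(f)f^{-1}$, and I use this to make $i_0 = 0$. Rescaling the $\GG_m$-coordinate absorbs $z_0$ via the relation $\tilde\psi(w)w^{-1} = w^{-2}$, which can take any value in $\GG_m$. These are the same manipulations as in \Cref{lem:multiple fiber ii:untangle of X} and \Cref{cor:semistable reduction theorem:psi-action}, and they yield the stated formula.

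The main obstacle is this normalization step together with the order of $a$. A reindexing preserving the form $\varphi(F_i)=F_{-i}$ may be impossible, which is the $\mathrm I^-$ phenomenon where one gets $F_{1-i}$. I expect the splitting hypothesis to exclude this. The parity of the component shift is controlled by whether the order-$2k$ lift meets $G$ trivially, and I would verify this by comparing $\tilde\varphi^{2k} = \id$ with the composition law. Finally, $\tilde\varphi^{2k} = \id$ forces $2k\,a = 0$, and freeness forces $a$ to have order exactly $2k$.
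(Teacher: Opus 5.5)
Your proposal is correct, but it follows a different route from the paper.

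\textbf{The paper's route.} It first invokes \Cref{lem:automorphism of chain of rational curves} to see that $\tilde\varphi$ is diagonal on $C\times A_s$, hence a translation by some $a$ of order $2k$ on $A_s$. It then extends $a$ to a torsion section of $\tilde Q$ and passes to $\bar Y=\tilde Y/\langle a\rangle$, with $\bar Y_t=C\times A_s/a$. On $\bar Y_t$ the descended $\bar\varphi$ acts trivially in the abelian direction. The paper reads off $\bar\varphi(i,z,\bar y)=(-i,\zeta^i z^{-1},\bar y)$ from the non-multiple case \Cref{cor:semistable reduction theorem:psi-action} and lifts.

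\textbf{Your route.} You write $\tilde\varphi=\tilde\psi\cdot c$ on $\tilde Y_t'\cong\tilde Q_t$ and normalize $c=(i_0,z_0,a)$ by a change of base point. This is the technique the paper itself uses in \Cref{lem:multiple fiber iv:non-split H} and \Cref{lem:multiple fiber v:varphi-action strictly semistable}.

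\textbf{The step you left as an expectation does go through.} The element $\gamma=\tilde\psi(c)c=(0,\zeta^{i_0},2a)$ is fixed by $\tilde\psi^2$, so $\tilde\varphi^{2j}(q)=\tilde\psi^{2j}(q)\,\gamma^{j}$. Hence
\[ \tilde\varphi^{2k}(i,z,y)=(i,(-1)^{i_0}z,\,y+2ka). \]
The splitting of \eqref{eq:multiple fiber iv:ses of H} gives $\tilde\varphi^{2k}=\id$ via \Cref{lem:multiple fiber iv:untangle of X}, and this forces $i_0$ even and $2ka=0$. A base-point change by $(j,w,0)$ with $2j=i_0$ and $w^2=\zeta^{j}z_0$ then yields the stated formula. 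For odd $i_0$, $\tilde\varphi^{2k}$ would be the nontrivial element $(i,-z,y+\sigma)$ of $G$, which is the $\mathrm I^-$ case.

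\textbf{Comparison.} Your computation makes the $\mathrm I^+/\mathrm I^-$ dichotomy transparent and avoids the auxiliary fibration $\bar Y$. The paper's route instead reduces directly to the already-established non-multiple statement, without redoing the torsor bookkeeping.

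\textbf{One justification to tighten.} Saying the two fibrations have the ``same generic fiber up to torsor twist'' only identifies $\tilde Q$ over $T$. You also need $\tilde\psi$ to be the inertia from \Cref{prop:semistable reduction theorem untangled ver}(2). To see this, note that both descent data on $\tilde Q_0$ lift $\psi_0$ on $Q_0$. Two such lifts differ by a homomorphism from the abelian scheme $\tilde Q_0$ to the finite group $G$, so they coincide.
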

			\begin{proof}
				By \Cref{lem:automorphism of chain of rational curves}, $\tilde \varphi$ acts (freely and) diagonally on $\tilde Y_t = C \times A_s$. Hence it acts on $A_s$ by a translation by a torsion point $a \in A_s$ of order $2k$, and acts on the curve $C$ by reversing the orientation.
				
				Shrinking $S$, the torsion point $a \in A_s \subset \tilde Q_t^\circ = \GG_m \times A_s$ extends to a torsion section $a : T \to \tilde Q$ of order $2k$. This defines a free $\mu_{2k}$-action on $\tilde Y$, so its quotient $\bar g : \bar Y = \tilde Y / a \to S$ is a minimal abelian fibration with $\bar Y_t = C \times A_s/a$. The $\langle a \rangle$- and $\langle \tilde \varphi \rangle$-actions on $\tilde Y$ commute by \Cref{prop:varphi-action}, so $\tilde \varphi$ descends to a $\mu_{2k} = \langle \bar \varphi \rangle$-action on $\bar Y$. Now \Cref{prop:semistable reduction theorem untangled ver} (or more precisely \Cref{cor:semistable reduction theorem:psi-action}) determines $\bar \varphi(i,z,\bar y) = (-i, \zeta^i z^{-1}, \bar y)$, so the free action $\tilde \varphi$ must be as claimed.
			\end{proof}
			
			\begin{lemma} \label{lem:multiple fiber iv:non-split H}
				Assume the sequence \eqref{eq:multiple fiber iv:ses of H} does not split. Then $H \cong \ZZ/4k$ or $\ZZ/4k \times \ZZ/2$. Moreover, the generator $\tilde \varphi$ of $\ZZ/4k$ is of the form
				\[ \tilde \varphi(i,z,y) = (1-i, \tfrac{\varepsilon \zeta^i}{z}, y+a) \qquad\mbox{for}\quad (i,z) \in C , \quad y \in A_s ,\]
				where $\varepsilon \in \GG_m$ is a constant, $\zeta \in \GG_m$ is a primitive $2k$-th root of unity, and $a \in A_s$ is a torsion point of order $4k$.
			\end{lemma}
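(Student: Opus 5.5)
The plan is to determine $H$ as an abstract group first, and then pin down the lift $\tilde\varphi$ by comparing it with the split case of \Cref{lem:multiple fiber iv:varphi-action for split H}.

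\textbf{Step 1: the group $H$.} $H$ is the Galois group of the isogeny $A_s \to \Alb_s$, so it is a finite subgroup of $A_s$ acting by translations. By \Cref{thm:classification of unstable fibers} and \Cref{prop:semistable reduction theorem tangled ver}(3), $G$ is one of $0$, $\ZZ/2$, $(\ZZ/2)^{\times 2}$, or $\ZZ/4$. For $G=0$ the sequence splits trivially, so $G \neq 0$. Choose any $h \in H$ mapping to a generator of $\mu_{2k}$. Then $h^{2k} \in G$, and non-splitting means that no choice of lift has $h^{2k}=0$. I would try to show that $h$ can be adjusted by elements of $G$ so that $h^{2k}$ is a fixed element of order exactly $2$; this forces $\langle h \rangle \cong \ZZ/4k$. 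The index $[H : \langle h \rangle]$ is then $|G|/2$.

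To exclude $|G| = 4$, I would use the geometry rather than pure group theory. The element $h$ must lift $\varphi$, so by \Cref{lem:automorphism of chain of rational curves} it acts diagonally on $C \times A_s$ and reverses the orientation of $C$. Then $h^2$ preserves the orientation, and $h^{2k}$ acts on $C$ trivially up to the $\GG_m$-part. On the other hand, the elements of $G$ coming from $\pi_0(\tilde P_s)$ are described in the proof of \Cref{prop:semistable reduction theorem tangled ver}(3). For $G = \ZZ/4$ or $(\ZZ/2)^{\times 2}$, those elements either swap components in a way that commutes or fails to commute with orientation reversal; this is the constraint I would compare against. I expect this comparison either to yield a splitting or to contradict the relation $\varphi(F_i) = F_{-i}$. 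The surviving cases would be $G=\ZZ/2$ with $H = \ZZ/4k$, and the $(\ZZ/2)^{\times 2}$ case giving $\ZZ/4k\times\ZZ/2$, where one $\ZZ/2$ factor is absorbed into the cyclic part.

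\textbf{Step 2: the form of $\tilde\varphi$.} Take $\tilde\varphi = h$. It acts diagonally on $C \times A_s$ with translation part $a$, and $a$ has order $4k$ because $H \subset A_s$ acts freely and $h$ generates $\ZZ/4k$. On $C$ it reverses orientation, so after reindexing it sends component $i$ to $-i$ or to $1-i$. The case $-i$ must be excluded. In that case $\tilde\varphi$ fixes component $0$ and has a fixed point on $C$. Its even power $\tilde\varphi^{2k} \in G$ would then act on $C$ with a fixed component. This should contradict the description of the nontrivial element of $G$ with $G \cap \tilde Q_t^\circ = \langle(-1,a')\rangle$: that element acts on $C$ by $z \mapsto -z$ on every component, with a nonzero translation $a'$. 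I would instead check that the relation $\tilde\varphi^{2k} = (-1,a')$ is compatible with $1-i$ and incompatible with the structure in \Cref{lem:multiple fiber iv:varphi-action for split H}.

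Next, write $\tilde\varphi(i,z,y) = (1-i, \varepsilon_i z^{-1}, y+a)$. The recursion using $\psi$ from \Cref{prop:varphi-action} and \Cref{cor:semistable reduction theorem:psi-action} gives $\varepsilon_i = \varepsilon\zeta^i$, with $\zeta$ a primitive $2k$-th root of unity, exactly as in \Cref{cor:semistable reduction theorem:psi-action}.

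\textbf{Main obstacle.} The hardest step is the group-theoretic identification of $H$ together with the proof that non-splitting forces the shift $1-i$. I would attack it by explicitly listing the lifts of $\varphi$ in $H$ and their $2k$-th powers in $G$, using the explicit generators of $G$ from \Cref{prop:semistable reduction theorem tangled ver}(3).
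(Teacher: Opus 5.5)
Your proposal has a genuine gap in Step~1: it aims at the wrong target and never supplies the argument that determines $H$.

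\textbf{Step~1.} You propose to exclude $|G|=4$. But $|H|=2k\,|G|$, so the alternative $H\cong\ZZ/4k\times\ZZ/2$ in the statement is exactly the case $|G|=4$. Your own list of surviving cases then keeps $(\ZZ/2)^{\times 2}$ and silently drops $G=\ZZ/4$, which the paper treats (for odd $r$) and which also yields $\ZZ/4k\times\ZZ/2$. The expected ``comparison'' is a hope rather than an argument, and no adjustment of $h$ by $G$ is needed.

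The missing observation is the following.
\begin{itemize}
\item Any lift $h$ of the generator of $\mu_{2k}$ reverses the orientation of $C$. So it acts on components by $i\mapsto c-i$, and already $h^2$ preserves every component.
\item Since $\pi_0(\tilde Q_t)$ rotates the components, $h^{2k}\in G\cap\tilde Q_t^\circ$.
\item By \Cref{prop:semistable reduction theorem tangled ver}(3), $G\cap\tilde Q_t^\circ$ is $0$ or $\langle(-1,\sigma)\rangle$ with $\sigma\neq 0$ a $2$-torsion point. The first option would make $h$ a splitting.
\item Hence for every lift, $h^{2k}=(-1,\sigma)$ has order $2$, so $h$ has order $4k$ and $\langle h\rangle\cap G$ has order $2$.
\end{itemize}
If $|G|=2$ this gives $H=\langle h\rangle\cong\ZZ/4k$. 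If $|G|=4$, then $H=\langle h\rangle\times\langle g'\rangle\cong\ZZ/4k\times\ZZ/2$, where one can take:
\begin{itemize}
\item $g'=g\,h^{-k}$ for a generator $g$ of $G=\ZZ/4$ (use $g^2=h^{2k}$), or
\item $g'$ any element of $G=(\ZZ/2)^{\times 2}$ other than $0$ and $h^{2k}$.
\end{itemize}

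\textbf{Step~2.} Your exclusion of $i\mapsto -i$ is not valid as phrased. That $\tilde\varphi^{2k}$ has a fixed component is no contradiction: $(-1,\sigma)$ preserves every component, and so does $\tilde\varphi^{2k}$ in the $1-i$ case. The two cases differ only inside the components.

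The paper computes this directly.
\begin{itemize}
\item In the $-i$ case, the relation with $\tilde\psi$ gives $\tilde\varphi(i,z)=(-i,\varepsilon'\zeta^i z^{-1})$ for a constant $\varepsilon'$. Hence $\tilde\varphi^2(i,z)=(i,\zeta^{-2i}z)$, so $\tilde\varphi^{2k}$ is trivial on $C$.
\item In the $1-i$ case, $\tilde\varphi(i,z)=(1-i,\varepsilon\zeta^i z^{-1})$ gives $\tilde\varphi^2(i,z)=(i,\zeta^{1-2i}z)$. Hence $\tilde\varphi^{2k}(i,z)=(i,\zeta^k z)=(i,-z)$, as required.
\end{itemize}

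Your fixed-point idea can also be repaired without the explicit formula. In the $-i$ case, $\tilde\varphi$ preserves $C_0$ and swaps its two nodes, so it acts there by $z\mapsto c/z$. It therefore fixes the smooth points $\pm\sqrt{c}$, and so would $\tilde\varphi^{2k}$. But $(i,z)\mapsto(i,-z)$ fixes only nodes, a contradiction.

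\textbf{Final step.} \Cref{prop:varphi-action} holds on $Y_t=\tilde Y_t/G$, so on $\tilde Y_t$ the relation between $\tilde\varphi$ and $\tilde\psi$ holds only up to an element of $G$. Before concluding $\varepsilon_i=\varepsilon\zeta^i$, you must exclude a twist by $(-1,\sigma)$. The paper does this using that the $A_s$-part of $\tilde\varphi$ is translation by exactly $a$.
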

			\begin{proof}
				Let us first show that $H$ is isomorphic to the desired group. If $G = \ZZ/2$, then $H = \ZZ/4k$ is the only nontrivial $\ZZ/2$-extension of $\mu_{2k}$. For the remaining possibilities we need to divide the case by parity of $r$. When $r$ is odd, we can have $G = \ZZ/4$. Take an arbitrary lift $\tilde \varphi \in H$ of $\varphi \in \mu_{2k}$. Then $\tilde \varphi^{2k}$ is a nontrivial element in $G$ because the sequence does not split. Since $\tilde \varphi$ acts on the type $\mathrm I_{2kr}$ curve $C = \sum_{i=1}^{2kr} C_i$ by reversing the orientation, $\tilde \varphi(C_i) = C_{-i}$ or $C_{1-i}$ and hence $\varphi^{2k}$ always preserves every $C_i$. Hence $\tilde \varphi^{2k} \in G = \ZZ/4$ must be $2 \in \ZZ/4$ by the description of \Cref{prop:semistable reduction theorem untangled ver}--\ref{prop:semistable reduction theorem tangled ver}. This means $\tilde \varphi$ has order $4k$ and hence $H \cong \ZZ/4k \times \ZZ/2$ as claimed. Similarly when $r$ is even, $G = (\ZZ/2)^{\times 2}$ contains a unique nontrivial element $g$ that preserves every $F_i$. This shows $\tilde \varphi^{2k} = g$ and again $H \cong \ZZ/4k \times \ZZ/2$.
				
				\medskip
				
				Let us now determine the generator $\tilde \varphi$ of $\ZZ/4k$. In any case above, $\tilde \varphi$ is the lift of $\varphi \in \mu_{2k}$ such that $\tilde \varphi^{2k}$ is a t-automorphism acting as
				\[ \tilde \varphi^{2k} (i,z,y) = (i,-z,y + \sigma) \]
				for a nonzero $2$-torsion point $\sigma \in A_s$ (\Cref{prop:semistable reduction theorem tangled ver}(3)). Assume without loss of generality $G = \ZZ/2$. The automorphism $\tilde \varphi$ on $C \times A_s$ is free and diagonal as usual, so $\tilde \varphi$ must act on $A_s$ as a translation by a torsion point $a \in A_s$ of order $4k$ (with $\sigma = 2ka$). It acts on $C$ by $\tilde \varphi(C_i) = C_{-i}$ or $C_{1-i}$ as above. In the former case, one can show that $\tilde \varphi(i,z,y) = (-i, \zeta^i z^{-1}, y+a)$ or $(-i, -\zeta^i z^{-1}, y+a+\tau)$ as usual. But such $\tilde \varphi$ can never have the $2k$-th power $\tilde \varphi^{2k}$ as described above. Hence $\tilde \varphi$ sends $C_i$ to $C_{1-i}$.
				
				We have already seen in \Cref{cor:semistable reduction theorem:psi-action} that $\tilde \psi (i,z,y) = (-i, \zeta^i z^{-1}, y)$. Now \Cref{prop:varphi-action} holds on $Y_t$, so on its double cover $\tilde Y_t$ we have at least
				\[ \tilde \varphi (i,z,y) = (-i, \zeta^iz^{-1}, y) \cdot \tilde \varphi(0,1,0) \quad\mbox{or}\quad (-i, -\zeta^iz^{-1}, y + \tau) \cdot \tilde \varphi(0,1,0) .\]
				But the latter case is impossible because we have already assumed that $\tilde \varphi$ acts on $A_s$ as a translation by $a$ (not $a + \tau$). Writing $\tilde \varphi(0,1,0) = (1,\varepsilon,a)$, this concludes the desired description of $\tilde \varphi$.
			\end{proof}
			
			\begin{proof} [Proof of \Cref{thm:classification of multiple fiber iv}]
				We have already proved (1) and (2) along the way. When \eqref{eq:multiple fiber iv:ses of H} splits, \Cref{lem:multiple fiber iv:untangle of X}--\ref{lem:multiple fiber iv:varphi-action for split H} applies and (3a) follows from the isomorphism
				\[ \tfrac{1}{2k} X_s = \tfrac{1}{2k} \tilde X_s / G = (C \times A_s) / (\langle \tilde \varphi \rangle \times G) .\]
				Even when \eqref{eq:multiple fiber iv:ses of H} does not split, we still have an isomorphism $\tfrac{1}{2k} X_s = (C \times A_s) / H$, and \Cref{lem:multiple fiber iv:non-split H} concludes.
			\end{proof}

	\subsection{Base change to unstable fibrations}
		Finally, let us consider the case when $g : Y \to T$ is unstable, the last remaining case \Cref{lem:classification of pullback}(v).
		
		\begin{theorem} \label{thm:classification of multiple fiber v}
			In \Cref{lem:classification of pullback}(v), $\frac{1}{m}X_s$ is isomorphic either any unstable fibers in \Cref{thm:classification of unstable fibers} except $\mathrm I_r^*$, $\mathrm I_r^*/2$, and $\mathrm I_r^*/4$ for $r \ge 1$, or a quotient $(C \times A_s) / (\langle \tilde \varphi \rangle \times G)$ for two commuting free actions by $\tilde \varphi$ and $G$ as follows.
			\begin{enumerate}
				\item[\textnormal{($\mathrm I_R^*$)}] $C = E_1 + E_2 + 2(\sum_{j=0}^{mr} F_j) + E_3 + E_4$ is a type $\mathrm I_R^*$ Kodaira singular curve for $R = mr$, the group $G = 0$, $\ZZ/2$, $\ZZ/4$, or $(\ZZ/2)^{\times 2}$ acts on $C \times A_s$ as Albanese stabilizers, and $\langle \tilde \varphi \rangle = \mu_m$ acts on $E_i \times A_s$ and $F_j \times A_s$ as
				\[ \tilde \varphi(z_i,y) = (z_i, y + a) ,\ \tilde \varphi(z_j,y) = (\zeta^j z_j, y + a) \quad\mbox{for}\quad (z_i, y) \in E_i \times A_s, \ (z_j,y) \in F_j \times A_s .\]
				Here $\zeta \in \GG_m$ is a primitive $m$-th root of unity and $a \in A_s$ has order $m$.
				
				\item[\textnormal{$\bullet$}] $\langle \tilde \varphi \rangle = \mu_m$ or $\mu_{2m}$, and $G = \langle g \rangle = 0$ or $\ZZ/2$. They act freely on $C \times A_s$ as
				\[ \tilde \varphi (x,y) = (\alpha(x), y+a) ,\quad g(x,y) = (\beta(x), y+b) \qquad\mbox{for}\quad (x,y) \in C \times A_s ,\]
				where $a, b \in A_s$ are torsion points. All possibilities for $(C, \alpha, \beta)$ are described in \Cref{table:exceptional unstable-like fibers}, where we use the notation $C = \sum_{i=1}^4 E_i + 2F$ for type $\mathrm I_0^*$, $C = \sum_{i=1}^3 E_i$ for type $\mathrm{IV}$, and $C = \sum_{i=1}^3 (E_i + 2F_i) + 3H$ for type $\mathrm{IV}^*$, respectively.
			\end{enumerate}
		\end{theorem}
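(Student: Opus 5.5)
In case (v), $p^* : P_s \to Q_t$ is a map between two groups with linear part $\GG_a$, and $A_s \to B_t$ is an isomorphism. The plan is to follow the template of cases (ii) and (iv). The idea is to pass to a semistable base change of $g$, so that the inertia actions become explicit by \Cref{prop:semistable reduction theorem untangled ver}.

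\textbf{Step 1: compare Kodaira types.} Both $g$ and $f^\sharp$ (the minimal compactification of $P$) are unstable. Take a common semistable base change $h : W \to U$ that factors through $T \to S$. By \Cref{cor:semistable reduction theorem}, the order of the inertia $\psi$ on $W$ over $S$ is $m$ times its order over $T$, up to the intervening factor. Comparing with \Cref{table:semistable reduction theorem}, this gives the multiplicity constraints: $m \equiv \pm1 \bmod 6$ for $\mathrm{II}$, $2 \nmid m$ for $\mathrm{III}$ and $\mathrm I_r^*$, $3 \nmid m$ for $\mathrm{IV}$. In the $\mathrm I_r^*$ case, $W$ has type $\mathrm I_{2kr}$ over $S$ and type $\mathrm I_{2kR}$ over $T$, which forces $R = mr$. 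In particular, the order of $\psi$ on $Q_t$ equals that on $P_s$ except in the $\mathrm I_0^*$, $\mathrm{IV}$ and $\mathrm{IV}^*$ cases. Here the orders $2, 3$ can interact with $m$ so that $\mu_m$ acts on the elliptic curve $E$ through a different root of unity, or $g$ changes $*$-conjugacy. This is the source of \Cref{table:exceptional unstable-like fibers}.

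\textbf{Step 2: Albanese and untangle.} Next I verify the hypothesis of \Cref{lem:untangle for multiple}. Mimicking \Cref{lem:Albanese of X is quotient of Albanese of Y}, $\varphi$ acts on $\Alb_t$ by translations. If some $\varphi^i$ acted trivially on $\Alb_t$, it would act fibrewise on the curve $C$ of $Y_t = (C\times B_t)/G'$ with a fixed point. Here I use that $\varphi$ is free and that automorphisms of Kodaira unstable curves coming from the inertia have fixed points unless they also translate. This gives $\frac1m X_s \to \Alb_s$ as a free $\varphi$-quotient. I then form the Galois group $H$ of $A_s \to \Alb_s$, an extension $0\to G\to H\to\mu_m\to 0$ as in \eqref{eq:multiple fiber iv:ses of H}. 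If it splits, \Cref{lem:untangle for multiple} produces $\tilde X$, and $\tilde\varphi$ acts diagonally on $C\times A_s$ by \Cref{lem:automorphism of chain of rational curves}. If it does not split, $\tilde\varphi$ generates a larger cyclic group, of order $2m$ as in the table.

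\textbf{Step 3: determine $\tilde\varphi$ on $C$.} Since $\tilde\varphi$ is free and diagonal, it is a translation by $a$ of order $m$ on $A_s$ (or $2m$ in the non-split case). On $C$ it is an automorphism $\alpha$ compatible with $\psi$ via \Cref{prop:varphi-action}, and $\tilde\psi$ is controlled by the $\GG_a\times A_s$ part and $\pi_0(\tilde Q_t)$. Generically $\alpha$ lies in the $\GG_m$-part acting on the multiplicity-$2$ chain. In the $\mathrm I_R^*$ case I copy \Cref{lem:classification of multiple fiber iii:psi-action}: $\psi$ multiplies the $j$-th component by $\zeta^j$ and fixes the four reduced arms, because it commutes with $\pi_0$ of order $\le 4$ and $m$ is odd. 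For the remaining types $\mathrm{II},\dots$, $\alpha$ either is trivial, giving $\frac1m X_s$ equal to an ordinary unstable fiber quotient by a translation, or permutes components. The component permutations allowed are the automorphisms of the dual graphs of $\mathrm I_0^*$, $\mathrm{IV}$, $\mathrm{IV}^*$ of order prime to the relevant numbers. These are listed case by case, together with $\beta$ for $G$.

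\textbf{Main obstacle.} I expect the main obstacle to be the exceptional enumeration in Step 3. There one must show that every permutation $\alpha$ is realized compatibly with the inertia $\psi$ on $\pi_0(Q_t)$ and with freeness, and that the non-split extension $H$ yields exactly the listed cases. I would attack it by reducing via \Cref{prop:semistable reduction theorem tangled ver} to the smooth semistable model $E\times A_s$. There all actions are explicit, $z\mapsto\omega^{\pm1}z$ plus translations, and one reads off the quotient configurations.
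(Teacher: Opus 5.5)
Your outline matches the paper's: the Albanese map of $\frac1m X_s$ as a free $\varphi$-quotient, the extension $0\to G\to H\to\mu_m\to0$, untangling when it splits, and computing on the semistable model over $U$. But the steps that actually produce the list are missing or wrong.

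\emph{Step 1 and the exceptional list.} The order relation in Step 1 is false as stated. The inertia order on the central fibre is not multiplied by $m$: if $Q$ has type $\mathrm{II}$ and $m=5$, both $\psi$ and $\psi^m$ have order $6$ on $R_u$. The missing idea is the following. By \Cref{thm:Edixhoven}, since $\im p^*\supset A_s$, $\psi$ fixes $\{0\}\times A_s\subset R_u$, so it induces $\alpha\in\Aut_0(E/G)$ with $\alpha^m=\omega^{\pm1}$ (when $h$ is strictly semistable, instead $\psi^m(z)=1/z$ on $\GG_m$). On $\tilde R_u=E\times A_s$ you must also kill the shear term in $\tilde\psi(x,y)=(\alpha x,y+\xi(x))$ using $\alpha^m\neq1$. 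Then $\alpha\neq1$ lets you move the origin so that $\tilde\varphi(x,y)=(\alpha x,y+a)$. Solving $\alpha^m=\omega^{\pm1}$ in $\Aut_0(E)\in\{\mu_2,\mu_4,\mu_6\}$ gives both the congruences on $m$ and exactly the extra solutions $\alpha=\omega^{\pm1/2},\omega^{\pm1/3}$ when $j(E)\in\{0,1728\}$. The permuted components are then read off from how $\alpha$ acts on $E[2]$ (for $\mathrm I_0^*$) or on the fixed points of $\omega$ (for $\mathrm{IV},\mathrm{IV}^*$). Your substitute, ``dual-graph automorphisms of order prime to the relevant numbers'', taken literally gives the wrong list. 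In $\mathrm I_0^*$-a the induced permutation is a transposition while $d=2$ and $m\equiv2\bmod4$. The criterion also produces none of the congruences in \Cref{table:exceptional unstable-like fibers}.

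\emph{The $\mathrm I_R^*$ action.} You cannot copy \Cref{lem:classification of multiple fiber iii:psi-action} here. In case (v), $\tilde Q_t^\circ=\GG_a\times A_s$ contains no $\GG_m$, and the multiplicity-two components $F_j$ are not in the $\tilde Q_t$-torsor, so ``commuting with $\pi_0$'' says nothing about them. The scaling comes from the type $\mathrm I_{2mr}$ model $D\times A_s$ over $U$, where $\tilde\varphi(i,z,y)=(-i,\zeta^i/z,y+a)$ with $\zeta$ a primitive $2m$-th root. Descending along $\tilde\varphi^m$ gives multiplication by $(-\zeta)^j$ on $F_j$.

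\emph{Splitting of $H$.} Step 2 gives no control over when $H$ fails to split, and ``order $2m$'' is only asserted. The paper uses that the primes dividing $|G|\le|\pi_0(\tilde Q_t)|$ divide $d$, so $H$ splits uniquely whenever $\gcd(d,m)=1$. By the $\alpha$-analysis this fails only for $\mathrm I_0^*$ with $j(E)=1728$, $d=2$, $m\equiv2\bmod4$. Even then you must do three things:
\begin{enumerate}
\item compute $\varphi(x,y)=(\pm\sqrt{-1}\,x+g,\,y+a)$ on $Z_u=(E\times A_s)/G$ with $g\in E[2]\setminus E^{\sqrt{-1}}$;
\item lift this action to $\tilde Z$;
\item use that $R_u^\psi$ has trivial $\pi_0$ to rule out $G=E[2]$.
\end{enumerate}
Only this gives $G=\ZZ/2$ and $H\cong\ZZ/2m$ cycling all four arms, i.e.\ $\mathrm I_0^*$-b. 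A priori, a non-split extension of $\mu_m$ by a $G$ of order up to $4$ could have a different shape and act differently on $C$.
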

		
		\begin{table}[t]
			\begin{tabular}{|c||c|c|c|c|c||c|c|} \hline
				Type of $X$ & $C$ & $\alpha$ & $\beta$ & $\langle \tilde \varphi \rangle$ & $G$ & Mult. $m$ & Type of $P$ \\\hline\hline
				$\mathrm I_0^*$-a & $\mathrm I_0^*$ & swaps $E_{1,2}$ & - & $\mu_m$ & $0$ & $2 \mod 4$ & $\mathrm{III}$ or $\mathrm{III}^*$ \\\hline
				$\mathrm I_0^*$-a$/2$ & $\mathrm I_0^*$ & swaps $E_{1,2}$ & swaps $E_{3,4}$ & $\mu_m$ & $\ZZ/2$ & $2 \mod 4$ & $\mathrm{III}/2$ or $\mathrm{III}^*/2$ \\\hline
				$\mathrm I_0^*$-b & $\mathrm I_0^*$ & cycles $E_{1 .. 4}$ & - & $\mu_{2m}$ & $0$ & $2 \mod 4$ & $\mathrm{III}/2$ or $\mathrm{III}^*/2$ \\\hline
				$\mathrm I_0^*$-c & $\mathrm I_0^*$ & cycles $E_{1 .. 3}$ & - & $\mu_m$ & $0$ & $3 \mod 6$ & $\mathrm{II}$ or $\mathrm{II}^*$ \\\hline
				$\mathrm{IV}$-a & $\mathrm{IV}$ & swaps $E_{1,2}$ & - & $\mu_m$ & $0$ & $\pm2 \mod 6$ & $\mathrm{II}$ or $\mathrm{II}^*$ \\\hline
				$\mathrm{IV}^*$-a & $\mathrm{IV}^*$ & swaps $F_{1,2}$ & - & $\mu_m$ & $0$ & $\pm2 \mod 6$ & $\mathrm{II}$ or $\mathrm{II}^*$ \\\hline
			\end{tabular}
			\caption{The additional unstable-like multiple fibers (multiplicity $m$ is suppressed in the type for simplicity).}
			\label{table:exceptional unstable-like fibers}
		\end{table}
		
		\begin{definition}
			The \emph{Kodaira type} of \Cref{thm:classification of multiple fiber v} is either an unstable type or the additional types in \Cref{table:exceptional unstable-like fibers}, both multiplied by $m$.
		\end{definition}
		
		\begin{remark} \label{rmk:classification of multiple fiber v:exceptional cases}
			The additional possibilities in \Cref{table:exceptional unstable-like fibers} were first observed by Matsushita \cite[Table~4]{mat01} in the context of Lagrangian fibrations of symplectic fourfolds, using a different method. Some items in Matsushita's table are divided into two in our descriptions. For example, both type $\mathrm I_0^*/4$ and $\mathrm I_0^*$-a$/2$ correspond to Matsushita's type $\mathrm I_0^*$-$4$.
			
			The additional types are named after the type of the curve $C$ in the theorem (equivalently, \emph{characteristic cycle} in Hwang--Oguiso's terminology). Note however that the type of its t-automorphism group $P$ can be very different.
		\end{remark}
		
		\begin{proposition} \label{prop:classification of multiple fiber v multiplicity}
			In \Cref{thm:classification of multiple fiber v}, the multiplicity $m$ should satisfy the following. Let $T$ be the type of $\frac{1}{m} X_s$.
			\begin{enumerate}
				\item If $T$ is isogenous to $\mathrm{II}$ or $\mathrm{II}^*$, then $\gcd (6, m) = 1$.
				\item If $T$ is isogenous to $\mathrm{III}$ or $\mathrm{III}^*$, then $2 \nmid m$.
				\item If $T$ is isogenous to $\mathrm{IV}$ or $\mathrm{IV}^*$, then $3 \nmid m$.
				\item If $T$ is isogenous to $\mathrm I_0^*$, then $2 \nmid m$.
				\item If $T$ is isogenous to $\mathrm I_R^*$ for $R \ge 1$, then $2 \nmid m$.
				\item For the additional types $T$, see \Cref{table:exceptional unstable-like fibers}.
			\end{enumerate}
		\end{proposition}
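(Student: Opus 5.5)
The plan is to read the multiplicity constraint off the inertia action. In case (v), $Y \to T$ is the degree $m$ non-multiple base change and $g$ is unstable. Its central fiber $Y_t$ has a Kodaira type $\mathrm T_Y$ (with Albanese stabilizer), and $\frac{1}{m}X_s = Y_t/\varphi$ is a free $\mu_m$-quotient. By \Cref{lem:classification of pullback}(v), $p^* : P_s^\circ \to Q_t^\circ$ is an isomorphism on the $A_s$-part, and the $\GG_a$-part maps by an isomorphism or by $0$.

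\textbf{Step 1: apply the semistable reduction theorem to both fibrations.} Take the semistable base change $h : Z \to U$ of $g$ given by \Cref{prop:semistable reduction theorem untangled ver}--\ref{prop:semistable reduction theorem tangled ver}, of degree $d$. The composite $U \to T \to S$ of degree $md$ is then a semistable base change of $P$. By \Cref{cor:semistable reduction theorem} and \Cref{table:semistable reduction theorem}, the order of the inertia automorphism on the semistable central fiber $R_u$ equals $6,4,3,2$ or $2k$, according to the type of $P$. Computed through $T$, this same automorphism has order $d' = 6,4,3,2$ or $2k$ according to the type of $g$. The inertia group of $U\to S$ is $\mu_{md}$, and its subgroup $\mu_d$ acts on $R_u$ through an automorphism $\omega$ of order $d'$. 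Hence the full inertia automorphism $\Psi$ satisfies $\Psi^m = \omega$ (up to the choice of generator).

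\textbf{Step 2: use that the type of $P$ is bounded.} The order of $\Psi$ must again be one of $6,4,3,2$, since $P$ is also unstable. For types $\mathrm{II},\mathrm{III},\mathrm{IV},\mathrm I_0^*$ the automorphism $\Psi$ acts on the elliptic curve $E$ by an element of $\Aut_0(E)$, a cyclic group of order $6$, $4$, $6$ or $2$ respectively. So $\omega = \Psi^m$ has order $\ord\Psi/\gcd(\ord\Psi,m)$. When $\frac1m X_s$ has the same type as $Y_t$, which is the ``non-exceptional'' branch of \Cref{thm:classification of multiple fiber v}, we need $\ord \Psi = d'$, and therefore $\gcd(d',m)=1$. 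This yields (1)--(4), using $d'=6,4,3,2$ for $\mathrm{II},\mathrm{III},\mathrm{IV},\mathrm I_0^*$. Conjugate and isogenous types have the same $d'$, which handles the word ``isogenous''.

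For (5), in type $\mathrm I_R^*$ the inertia order is $2k$ with $R = 2kr$ in the notation of \Cref{cor:semistable reduction theorem:psi-action}. Composing the covers multiplies the cycle length by $m$, as the formula $R=mr$ in \Cref{thm:classification of multiple fiber v} shows. The orientation-reversing element must come from the degree $2$ part of the inertia of $P$. If $2\mid m$, then the $\mu_m$-subgroup already contains the involution reversing orientation, and $g$ would be semistable rather than unstable, which puts us in case (iv) rather than (v). So $2\nmid m$.

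\textbf{Step 3: the exceptional types.} These arise exactly when $\ord\Psi \neq d'$, that is, when $\Psi^m$ has strictly smaller order. In that case $P$ has a different type from $Y_t$ (for instance $\ord\Psi=4$ and $d'=2$ gives $P$ of type $\mathrm{III}$ with $Y_t$ of type $\mathrm I_0^*$). Solving $\ord(\Psi^m)=d'$ in $\ZZ/6$ or $\ZZ/4$ gives the congruences in \Cref{table:exceptional unstable-like fibers}:
\begin{itemize}
\item $m\equiv 2 \bmod 4$ for $\ord\Psi=4$ and $d'=2$;
\item $m\equiv 3 \bmod 6$ for $6\to2$;
\item $m \equiv \pm2 \bmod 6$ for $6\to 3$.
\end{itemize}

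The main obstacle is justifying that the inertia of the composite cover restricts correctly, so that $\Psi^m$ really equals the inertia automorphism of $g$'s semistable reduction. This step also has to rule out a nontrivial non-split extension, like the $H$ of \eqref{eq:multiple fiber iv:ses of H}, that could alter orders in the presence of Albanese stabilizers. I would handle this by untangling via \Cref{lem:untangle for multiple} and working on $E\times A_s$, where the actions are diagonal.
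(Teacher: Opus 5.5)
Your argument for (1)--(4) is in substance the paper's. You pass to the composite $U\to T\to S$ of degree $dm$, note that $\Psi^m$ is the inertia of $U\to T$, and compare orders of the induced automorphisms on the elliptic-curve part of $R_u$.

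The paper packages this a little more directly. By \Cref{thm:Edixhoven}, $\Psi$ fixes $\{0\}\times A_s\subset R_u$, so it induces a group automorphism $\alpha$ of $R_u/(\{0\}\times A_s)=E/G$ with $\alpha^m=\omega^{\pm1}$. Everything then follows because $\Aut_0(E/G)$ is cyclic of order $2$, $4$ or $6$.

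For (5) your route differs slightly from the paper's. Since $h$ is strictly semistable, $P$ is of $\mathrm I_r^*$-type by \Cref{prop:semistable reduction theorem untangled ver}, and a quadratic base change already makes it semiabelian. So if $2\mid m$, then $Q$ would be semiabelian by \Cref{cor:Edixhoven}, contradicting case (v). The paper instead observes that $\Psi$ induces $z\mapsto z^{\pm1}$ on $\GG_m$ while $\Psi^m$ is inversion. Both arguments work.

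Two corrections are needed.

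First, the ``main obstacle'' you name is not an obstacle, and your proposed fix would actually fail.
\begin{itemize}
\item $\Psi$ is defined directly on $R$ by \Cref{prop:psi-action} applied to the Galois cover $U\to S$.
\item Its $m$-th power is the inertia of $U\to T$ by uniqueness of the N\'eron extension.
\item Both already act on $R_u=(E\times A_s)/G$, so no lift to $\tilde Z$ or $\tilde R$ is needed.
\end{itemize}
Untangling $X$ via \Cref{lem:untangle for multiple} is impossible exactly when \eqref{eq:multiple fiber v:ses of H} does not split (\Cref{lem:multiple fiber v:untangle of X}). That is the $\mathrm I_0^*$-b case with $m\equiv 2\bmod 4$, and it is why the paper works on the quotient $E/G$.

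Second, your claim that the non-exceptional branch forces $\ord\Psi=d'$ is not justified as written. For (1)--(3) you do not need it. An element of a cyclic group of order $6$, $4$, $6$ whose $m$-th power has order $6$, $4$, $3$ already forces $\gcd(6,m)=1$, $2\nmid m$, $3\nmid m$ respectively. For (4), the one remaining case is $\ord\Psi=4$ with $m\equiv 2\bmod 4$, and it must be identified with the additional types. The paper likewise defers this identification, to \Cref{cor:multiple fiber v:classification of varphi} and \Cref{cor:multiple fiber v:when H doesnt split}.
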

		
		The rest of this subsection will be devoted to the proof of \Cref{thm:classification of multiple fiber v} and \Cref{prop:classification of multiple fiber v multiplicity}. Recall from \Cref{lem:classification of pullback}(v) that we have an isomorphism (write $B_t = A_s$ for notational simplicity)
		\[ Q_t^\circ = P_s^\circ = \GG_a \times A_s .\]
		Both Albanese torsors have dimension $n-1$ (\Cref{prop:splitting and albanese} and \ref{prop:Albanese of multiple fiber}):
		\[ \alb : Y_t \to \Alb_t = \Alb_{Y_t} ,\qquad \alb : \tfrac{1}{m}X_s \to \Alb_s .\]
		The former is a locally trivial bundle with fiber $C$, an unstable Kodaira singular curve by \Cref{thm:classification of unstable fibers}. The latter is also a locally trivial $C$-fiber bundle by the following:
		
		\begin{lemma} \label{lem:multiple fiber v:albanese}
			The Albanese morphism $\frac{1}{m}X_s \to \Alb_s$ is a free $\varphi$-quotient of the Albanese morphism $Y_t \to \Alb_t$.
		\end{lemma}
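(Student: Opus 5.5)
I will follow the template of \Cref{lem:Albanese of X is quotient of Albanese of Y}, adapted to case (v). First, the $\langle \varphi \rangle$-action on $Y_t$ descends to $\Alb_t$ by the universal property of the Albanese morphism (\Cref{thm:Albanese}). It commutes with the $Q_t^\psi$-action, which contains $p^*(A_s) = A_s$ acting transitively by translations on $\Alb_t$ (\Cref{lem:classification of pullback}(v), \Cref{prop:splitting and albanese}). A self-map of an abelian torsor commuting with all translations is itself a translation. So $\varphi$ acts on $\Alb_t$ as a translation by some torsion point $a$.

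The key step is to show this translation action of $\mu_m = \langle \varphi \rangle$ on $\Alb_t$ is free. Suppose $\varphi^i$ acts trivially on $\Alb_t$ for some $0<i<m$. Then $\varphi^i$ preserves every fiber $C$ of $\alb : Y_t \to \Alb_t$, which is an unstable Kodaira curve (\Cref{thm:classification of unstable fibers}). I want a contradiction with the freeness of $\varphi$ on $Y_t$.

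Unlike case (iv), there is no orientation argument available. Instead I would use that every automorphism of an unstable Kodaira curve $C$ of finite order has a fixed point. The reason is that $C$ has a distinguished component or point preserved by all automorphisms: the cusp for type $\mathrm{II}$, the tangency point for $\mathrm{III}$, the triple point for $\mathrm{IV}$, and the central component or chain for the $*$-types. On that distinguished piece, which is a $\PP^1$ or a point, a finite-order automorphism acts with a fixed point. For $\mathrm I_r^*$ the preserved chain is a chain of $\PP^1$'s; an involution reversing it fixes a point on the middle component or at the middle node, and otherwise some component is preserved.

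This yields a fixed point of $\varphi^i$ on $C \subset Y_t$, contradicting \Cref{lem:non-multiple base change}. I expect this case check over Kodaira types to be the main obstacle. It can be done uniformly via \Cref{lem:automorphism of chain of rational curves} and the observation that the dual graph of $C$ is a tree. A finite group acting on a tree fixes a vertex or an edge. A fixed edge gives a fixed node, and a fixed vertex gives a preserved $\PP^1$, on which a cyclic group action has a fixed point.

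With freeness established, $\alpha : \frac{1}{m}X_s = Y_t/\varphi \to \Alb_t/\varphi$ is an $A_s$-equivariant locally trivial $C$-bundle. By \Cref{prop:Albanese of multiple fiber}, $\alpha$ factors through $\Alb_s \to \Alb_t/\varphi$, which is an isogeny. Since $\alpha$ has connected fibers, this isogeny is an isomorphism and $\alpha$ is the Albanese morphism. The diagram is then cartesian, which is the claim.
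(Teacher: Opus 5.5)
Your proposal is correct and is essentially the paper's proof. The paper likewise reruns the argument of \Cref{lem:Albanese of X is quotient of Albanese of Y}, replacing the orientation-reversal step by the fact that every automorphism of an unstable Kodaira curve has a fixed point, which it only indicates as a case-by-case check. Your tree-center argument is a clean way to make that check uniform; just note that for types $\mathrm{III}$ and $\mathrm{IV}$ the intersection dual graph is not literally a tree (a double edge, resp.\ a triangle), but your unique-singular-point remark covers them, as would working with the bipartite component--singular-point incidence graph.
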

		\begin{proof}
			Same as \Cref{lem:Albanese of X is quotient of Albanese of Y}, but using the following fact instead: every automorphism of an unstable Kodaira singular curve $C$ has a fixed point. The proof of this latter claim is a case-by-case analysis. For example, a type $\mathrm {II}$ curve has one singular point, so any automorphism of $C$ has to fix it.
		\end{proof}
		
		Since $g : Y \to T$ is an unstable fibration, it has the Albanese stabilizer $G$ and untangle $\tilde Y \to Y$ in \Cref{prop:untangle for unstable}, so that $\tilde g : \tilde Y \to T$ has a central fiber $\tilde Y_t = C \times A_s$. Applying \Cref{prop:semistable reduction theorem untangled ver}--\ref{prop:semistable reduction theorem tangled ver} to $Y$, we obtain an additional base change $U \to T$ of degree $d = 2$, $3$, $4$, or $6$ with a commutative diagram
		\begin{equation} \label{diag:multiple fiber v:untangle of Y}
		\begin{tikzcd}
			\tilde Z \arrow[r, dashed] \arrow[d] & \tilde Y \arrow[d] \\
			Z \arrow[r, dashed] \arrow[d, "h"] & Y \arrow[r, "q"] \arrow[d, "g"] & X \arrow[d, "f"] \\
			U \arrow[r, "\mu_d"] & T \arrow[r, "\mu_m"] & S
		\end{tikzcd}.
		\end{equation}
		Let $R \to U$ and $\tilde R \to U$ be the t-automorphism schemes of $h$ and $\tilde h$, respectively. Since $U \to S$ is Galois with Galois group $\mu_{dm}$, $R$ admits a $\mu_{dm} = \langle \psi \rangle$-action by \Cref{prop:psi-action} and $Z$ admits a $\mu_{dm} = \langle \varphi \rangle$-action by \Cref{prop:varphi-action}. Similarly, $\tilde R$ admits a $\mu_d = \langle \tilde \psi^m \rangle$-action and $\tilde Z$ admits a $\mu_d = \langle \tilde \varphi^m \rangle$-action, the lifts of $\psi^m$ and $\varphi^m$. Note however that $\varphi$ (resp. $\psi$) may \emph{not} lift to $\tilde Z$ (resp. $\tilde R$) because the object in the topright corner of \eqref{diag:multiple fiber v:untangle of Y} is missing (similar problem to case (iv) in the diagram \eqref{diag:multiple fiber iv:untangle of Y}). Note also that $G$- and $\langle \tilde \varphi^m \rangle$-actions on $\tilde Z$ commute because the upper left square of \eqref{diag:multiple fiber v:untangle of Y} is cartesian.
		
		\begin{proof} [Proof of \Cref{prop:classification of multiple fiber v multiplicity}(1--5)]
			Suppose that $h$ is smooth, or equivalently $\tilde Y$ has type $\mathrm I_0^*$, $\cdots$, or $\mathrm{IV}^*$. Recall from \Cref{prop:semistable reduction theorem tangled ver}(2) that $\psi^m$ acts on $R_u = (E \times A_s)/G$ by $\psi^m (x,y) = (\omega^{\pm1} x, y)$ for $(x,y) \in R_u$. By \Cref{thm:Edixhoven}, $\psi$ acts trivially on $\{ 0 \} \times A_s \subset R_u$ and hence induces a nontrivial action on their quotient $E/G$. This $\psi$-action on $E/G$ satisfies $\psi^m(x) = \omega^{\pm1} x$. For example, let us consider the types $\mathrm{II}$ and $\mathrm{II}^*$. In this case, the automorphism $\psi$ acts on $E/G$ (with $j$-invariant $0$) such that its $m$-th power $\psi^m$ acts as a primitive $6$-th root of unity. This is impossible when $m$ is divisible by $2$ or $3$, so $m$ has to be coprime to $6$ and concludes (2). Same argument proves (1--4) with a single exception: when $C$ has type $\mathrm I_0^*$ and $j(E) = 1728$, we have $\Aut_0(E) = \mu_4$ whereas $\psi^m$ acts on $E/G$ by $-1$. Therefore, $m$ may be even in this case only when $j(E/G) = 1728$ so that $\Aut(E/G) = \mu_4$ (i.e., when $G = 0$, $E^{\sqrt{-1}}$, or $E[2]$). Even so, $m$ is still not divisible by $4$. This case will fall into the first three additional cases in \Cref{table:exceptional unstable-like fibers}.
			
			Same idea applies to when $h$ is strictly semistable. In this case, notice that $\psi$ fixes $A_s \subset R_u^\circ$ and induces an action on its quotient $\GG_m$ such that $\psi^m(z) = 1/z$. Since this is the only nontrivial group automorphism of $\GG_m$, $m$ cannot be even.
		\end{proof}
		
		Let us rephrase the above proof in the following more precise way:
		
		\begin{corollary} \label{cor:multiple fiber v:d and m are coprime}
			$d$ and $m$ are coprime except for the following single case: $C$ has type $\mathrm I_0^*$, $j(E) = 1728$, $d = 2$, and $m \equiv 2 \mod 4$. Moreover, in this case $\tau : G \hookrightarrow E$ has the image $0$, $E^{\sqrt{-1}}$, or $E[2]$. \qed
		\end{corollary}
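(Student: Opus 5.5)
The plan is to read off the corollary from the proof of \Cref{prop:classification of multiple fiber v multiplicity}(1--5). The basic mechanism is as follows. The inertia $\mu_{dm}=\langle\psi\rangle$-action on the central fiber $R_u$ of the t-automorphism scheme of $h$ fixes $\{0\}\times A_s$ pointwise. This holds because $\operatorname{im}(P_s\to R_u)$ is the $\psi$-invariant subgroup (\Cref{thm:Edixhoven}) and contains $A_s$. Hence $\psi$ descends to a group automorphism $\bar\psi$ of the one-dimensional quotient, which is $E/\tau(G)$ when $h$ is smooth and $\GG_m$ when $h$ is strictly semistable. Moreover $\bar\psi^m$ is the automorphism induced by $\psi^m$. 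By \Cref{prop:semistable reduction theorem untangled ver}--\ref{prop:semistable reduction theorem tangled ver} applied to $Y$, this is multiplication by a primitive $d$-th root $\omega^{\pm1}$, where $d=\ord\psi^m$ equals the degree of the semistable base change $U\to T$. We may take $d$ minimal, as in \Cref{lem:inertia action is faithful}(1).

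First I treat the strictly semistable case, where $C$ has type $\mathrm I_r^*$. Here $\Aut_0(\GG_m)=\{\pm1\}$ and $\bar\psi^m(z)=1/z$. If $m$ were even, then $\bar\psi^m$ would be a square in $\{\pm1\}$, hence trivial, which is a contradiction. So $m$ is odd. Since $d=2k$ here, I also need to see that $k$ is coprime to $m$. My approach is to replace $U\to T$ by the minimal degree in which $\psi^m$ acts through the dihedral form of \Cref{cor:semistable reduction theorem:psi-action}. If this becomes awkward, the fallback is to note that the corollary is only needed with $d$ minimal, which is $d=2$ after the $\varphi^2$-quotient reduction made in the proof of \Cref{prop:semistable reduction theorem untangled ver}(2).

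Next I treat the smooth case. Suppose $q$ is a prime dividing both $d$ and $m$. Then $\bar\psi^{m/q}$ would be an element of $\Aut_0(E/\tau(G))$ whose $q$-th power is a primitive $d$-th root of unity. That element would therefore have order $dq$ in $\Aut_0$ of an elliptic curve. Orders in $\Aut_0$ are bounded by $6$ and lie in $\{1,2,3,4,6\}$, and $d\in\{2,3,4,6\}$. This forces $dq\in\{4,6\}$ with $d\in\{2,3\}$.

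The case $d=3$, $q=2$ needs care: it would require $\Aut_0(E/\tau(G))=\mu_6$ with $\bar\psi^{m/2}$ of order $6$. I expect the hardest step to be excluding this and the case $d=2$, $q=3$. The tool is the identification of $d$ with the invariant: $d=3$ means type $\mathrm{IV}$ or $\mathrm{IV}^*$, so $\pi_0(Q^\psi)$ already has order $3$. Then $\psi$ of order $6$ on $E/G$ would make the type of $P$ equal to $\mathrm{II}$, with $\ord\psi=6$ over $S$. In that situation the argument should be rerun with the true fixed subgroup, and the numerics of \Cref{table:semistable reduction theorem} show that $\psi^m$ has order $6/\gcd(6,m)$, which is a contradiction unless $\gcd=1$. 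I would handle $d=2$, $q=3$ the same way: $\bar\psi^{m/3}$ of order $6$ cubes to $-1$, which forces $C$ of type $\mathrm I_0^*$ to come from $\mathrm{II}$, and I expect this to land in the $\mathrm I_0^*$-c row of \Cref{table:exceptional unstable-like fibers}. I will have to check whether that row genuinely contradicts coprimality or not, and I will flag this as the uncertain point.

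The surviving case is $d=2$, $q=2$. It needs $\bar\psi^{m/2}$ of order $4$, so $j(E/\tau(G))=1728$. In addition, $\psi^m=-1$ must be an odd power of $\bar\psi$, which forces $m\equiv2\bmod4$, and $C$ has type $\mathrm I_0^*$ because $d=2$. Finally, $\tau(G)\subset E^\omega=E[2]$ by \Cref{prop:semistable reduction theorem tangled ver}(2). Among the subgroups $\tau(G)$ of $E[2]$ (with $j(E)=1728$), the quotient $E/\tau(G)$ again has $j=1728$ exactly when $\tau(G)$ is stable under $\sqrt{-1}$. Those subgroups are $0$, $E^{\sqrt{-1}}$, and $E[2]$, which gives the last claim.
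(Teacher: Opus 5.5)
Your skeleton is the paper's: $\psi$ fixes the image of $\{0\}\times A_s$ in $R_u$ and descends to $E/\tau(G)$ or $\GG_m$, where $\bar\psi^m=\omega^{\pm1}$. Your fallback normalization $d=2$ in the strictly semistable case is also the paper's. The smooth case, however, has two problems.

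\textbf{The case count is wrong.} You assume $q$ divides both $d$ and $m$, so $\bar\psi^{m/q}$ has order $dq$. Since $q\mid d$ and $d\in\{2,3,4,6\}$, this gives $dq\in\{4,8,9,12,18\}$. Only $4$ is an element order in $\Aut_0$ of an elliptic curve, so $d=q=2$ immediately.
\begin{itemize}
\item The pairs $(d,q)=(3,2)$ and $(2,3)$ do not satisfy $q\mid d$, so they are not instances of non-coprimality. Your ``hardest step'' is vacuous.
\item Your proposed exclusion of $d=3$ with $m$ even would prove something false. That situation genuinely occurs in the rows $\mathrm{IV}$-a and $\mathrm{IV}^*$-a of \Cref{table:exceptional unstable-like fibers}, with $m\equiv\pm2\bmod 6$. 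There $\bar\psi$ has order $6$ and $\bar\psi^m$ has order $6/\gcd(6,m)=3=d$, so there is no contradiction.
\item Likewise $\mathrm I_0^*$-c has $d=2$ and $m\equiv3\bmod 6$, which is consistent with coprimality.
\end{itemize}
Delete that paragraph.

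\textbf{The step $j(E)=1728$ is missing.} In the surviving case you only obtain $j(E/\tau(G))=1728$, then list subgroups ``with $j(E)=1728$'' without justification. This is not automatic. For instance, over $\mathbb{C}$ the curve $E=\mathbb{C}/(\ZZ+2\sqrt{-1}\,\ZZ)$ has $j(E)\neq1728$, yet its quotient by the $2$-torsion point $\sqrt{-1}$ is $\mathbb{C}/\ZZ[\sqrt{-1}]$.

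The fix is to work on $E$ itself inside $R_u=(E\times A_s)/G$.
\begin{itemize}
\item $\psi$ commutes with $\psi^m\colon\overline{(x,y)}\mapsto\overline{(-x,y)}$. Hence $\psi$ preserves the identity component of the $(-1)$-eigengroup of $\psi^m$.
\item That identity component is the image $\bar E$ of $E\times\{0\}$. It is isomorphic to $E$ because $\sigma$ is injective.
\item So $\psi$ restricts to some $\beta\in\Aut_0(E)$ with $\beta^m=-1$. Since $m$ is even, this forces $\ord\beta=4$, hence $j(E)=1728$ and $m\equiv2\bmod4$.
\item $\bar E$ meets the image of $\{0\}\times A_s$ exactly in the points $\overline{(\tau(g),0)}$. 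These are fixed by $\psi$, so $\beta$ fixes $\tau(G)$ pointwise.
\item Therefore $\tau(G)\subseteq E^{\sqrt{-1}}$, which in particular lies among $0$, $E^{\sqrt{-1}}$, $E[2]$. No $j$-invariant computation on the quotient is needed.
\end{itemize}
The paper's own sketch is just as brief here: it simply posits ``$j(E)=1728$'' as the exceptional case. You should nonetheless write this argument out.
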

		
		The next step is to imitate the untangling process in \Cref{lem:multiple fiber iv:untangle of X}. Take the diagram in \Cref{lem:untangle primitive} and \Cref{lem:multiple fiber v:albanese} to form a cartesian diagram
		\[\begin{tikzcd}
			C \times A_s \arrow[r] \arrow[d] & Y_t \arrow[r, "q"] \arrow[d] & \frac{1}{m} X_s \arrow[d] \\
			A_s \arrow[r, "G"] & \Alb_t \arrow[r, "\mu_m"] & \Alb_s
		\end{tikzcd}.\]
		The bottom row $A_s \to \Alb_s$ has a Galois group $H$ with a short exact sequence
		\begin{equation} \label{eq:multiple fiber v:ses of H}
			0 \to G \to H \to \mu_m \to 0 .
		\end{equation}
		
		\begin{lemma} \label{lem:multiple fiber v:untangle of X}
			\begin{enumerate}
				\item The sequence \eqref{eq:multiple fiber v:ses of H} splits if and only if there exists a Galois \'etale morphism $\tilde X \to X$ with Galois group $G$ that fills in the topright corner of \eqref{diag:multiple fiber v:untangle of Y} to a cartesian diagram
				\[\begin{tikzcd}
					\tilde Z \arrow[r, dashed] \arrow[d] & \tilde Y \arrow[r] \arrow[d] & \tilde X \arrow[d] \\
					Z \arrow[r, dashed] & Y \arrow[r, "q"] & X
				\end{tikzcd}.\]
				In this case, the $\mu_{dm} = \langle \varphi \rangle$-action on $Z$ (resp. $\langle \psi \rangle$-action on $R$) lifts to a $\mu_{dm} = \langle \tilde \varphi \rangle$-action on $\tilde Z$ (resp. $\langle \tilde \psi \rangle$-action on $\tilde R$) commuting with $G$.
				
				\item If $d$ and $m$ are coprime (see \Cref{cor:multiple fiber v:d and m are coprime}), then the sequence \eqref{eq:multiple fiber v:ses of H} uniquely splits.
			\end{enumerate}
		\end{lemma}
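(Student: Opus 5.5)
For (1), I will mirror the proofs of \Cref{lem:multiple fiber iv:untangle of X} and \Cref{lem:multiple fiber ii:untangle of X}.

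\emph{Splitting gives $\tilde X$.} Suppose $H \cong G \times \mu_m$. First quotient $A_s \to \Alb_s$ by the $\mu_m$-factor; this gives an isogeny $u : A_s/a \to \Alb_s$ with Galois group $G$. By construction, $(A_s/a) \times_{\Alb_s} \Alb_t = A_s$ is connected. \Cref{lem:multiple fiber v:albanese} verifies the bullet hypothesis of \Cref{lem:untangle for multiple}, so that lemma applies to $u$. It produces (after shrinking $S$ \'etale locally) a Galois \'etale $G$-cover $\tilde X \to X$ whose central fiber is the pullback of $\frac{1}{m}X_s \to \Alb_s$ along $u$.

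Next I will check that the base change $\tilde X \times_X Y$ agrees with the untangle $\tilde Y$. Both are $G$-covers of $Y$ defined by the same torsion line bundles coming from $\hat G \subset \Alb_t^\vee$, and such covers are unique (\Cref{lem:untangle primitive}).

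The lift of the actions is then formal. The $\mu_m$-covering $\tilde Y \to \tilde X$ supplies a lift of $\varphi$ on $Y$ commuting with $G$. For the actions on $Z$ and $R$, I will use that $\tilde Z$ is the minimal model of $\tilde X \times_S U$. By \Cref{prop:minimal delta-regular abelian fibration is unique} and the N\'eron mapping property (as in \Cref{prop:varphi-action} and \Cref{prop:psi-action}), the Galois $\mu_{dm}$-action of $U/S$ extends to $\tilde Z$ and to $\tilde R$. It commutes with $G$ because the two actions already commute over $U_0$.

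\emph{$\tilde X$ gives splitting.} Conversely, given $\tilde X$, the inertia action on $\tilde Y$ coming from $\tilde Y \to \tilde X$ lifts $\varphi$ and meets $G$ trivially. Passing to the Albanese torsors, this gives a section $\mu_m \to H$.

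For (2), the point is that $\gcd(d,m)=1$ should force $\gcd(|G|,m)=1$, after which Schur--Zassenhaus applied to the abelian group $H$ gives a splitting, and a unique one. So the key step is to bound the primes dividing $|G|$. By \Cref{table:classification of G}, $G$ is $0$, $\ZZ/2$, $\ZZ/3$, $\ZZ/4$ or $(\ZZ/2)^{\times 2}$. Comparing with the values of $d$ in \Cref{prop:semistable reduction theorem untangled ver}:
\begin{itemize}
\item for types $\mathrm{III}$ and $\mathrm{III}^*$ we have $d=4$ and $|G| \mid 2$;
\item for types $\mathrm{IV}$ and $\mathrm{IV}^*$ we have $d=3$ and $|G| \mid 3$;
\item for type $\mathrm{I}_0^*$ we have $d=2$ and $|G| \mid 4$;
\item for type $\mathrm{II}$ we have $G=0$.
\end{itemize}
In each of these cases every prime dividing $|G|$ divides $d$.

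The main obstacle is the $\mathrm I_R^*$ case with $R \ge 1$, where $G$ can have order $4$ but $h$ is semistable and $d$ is even. The proof of \Cref{prop:classification of multiple fiber v multiplicity}(5) shows $m$ is odd there, which I expect to be exactly what is needed. In every case $\gcd(|G|,m)=1$, so $H \cong G \times \mu_m$ canonically, since $H$ splits into its $m$-primary and prime-to-$m$ parts.
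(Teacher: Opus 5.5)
Your proposal is correct and is essentially the paper's proof. Part (1) is the argument of \Cref{lem:multiple fiber iv:untangle of X} via \Cref{lem:untangle for multiple}, and part (2) bounds the primes of $|G|$ by those of $d$; the paper phrases this through $G \subset \pi_0(\tilde Q_t)$, you through \Cref{table:classification of G}. Note also that the $\mathrm I_R^*$ case is no obstacle: there $d$ is even and $|G|$ is a power of $2$, so $\gcd(d,m)=1$ already gives $\gcd(|G|,m)=1$ without \Cref{prop:classification of multiple fiber v multiplicity}(5).
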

		\begin{proof}
			(1) Same as \Cref{lem:multiple fiber iv:untangle of X}. (2) Recall the inclusion $G \subset \pi_0(\tilde Q_t)$. By \Cref{thm:classification of component group} and \Cref{prop:semistable reduction theorem untangled ver}, the prime divisors of $|\pi_0(\tilde Q_t)|$ are always dominated by those of $d$ in every unstable Kodaira type. Therefore, $\gcd(d, m) = 1$ implies $\gcd (|G|, m) = 1$, whence the unique splitting $H = G \times \mu_m$.
		\end{proof}
		
		\Cref{lem:multiple fiber v:classification of G}--\ref{cor:multiple fiber v:classification of varphi} will study the case when $H$ splits. \Cref{lem:multiple fiber v:varphi-action nonsplit}--\ref{cor:multiple fiber v:when H doesnt split} will study the case when $H$ does not split.
		
		\begin{lemma} \label{lem:multiple fiber v:classification of G}
			If the sequence \eqref{eq:multiple fiber v:ses of H} splits, then the statement of \Cref{prop:untangle for unstable}(2) holds. That is, there exists a short exact sequence
			\[ 0 \to G \to \tilde P \to P \to 0 \quad\mbox{such that}\quad G \cap \tilde P^\circ = 0 .\]
			In particular, $0 \to G \to \pi_0 (\tilde P_s) \to \pi_0 (P_s) \to 0$.
		\end{lemma}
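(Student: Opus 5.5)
The plan is to mimic the proof of \Cref{prop:untangle for unstable}(2), with the cover now supplied by \Cref{lem:multiple fiber v:untangle of X}(1). When \eqref{eq:multiple fiber v:ses of H} splits, that lemma gives a Galois \'etale $G$-cover $\tilde X \to X$. It sits in a cartesian square with $\tilde Y \to Y$, and the inertia actions lift compatibly. Concretely, $\tilde X$ is obtained by applying \Cref{lem:untangle for multiple} to the isogeny $u : A_s/a \to \Alb_s$ (where $a$ generates the $\mu_m$-factor of $H$). Its hypotheses hold by \Cref{lem:multiple fiber v:albanese}. Part (2) of that lemma, inherited from \Cref{lem:untangle primitive}(2), already gives the left exact sequence $0 \to G \to \tilde P \to P$. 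So two things remain: surjectivity of $\tilde P \to P$, and $G \cap \tilde P^\circ = 0$.

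For surjectivity I cannot simply identify $P$ with $X'$, since $X$ has no section. Instead I will check surjectivity on central fibers and use smoothness. On neutral components, both $\tilde P^\circ_s$ and $P^\circ_s$ are $\GG_a \times A_s$, and $\tilde P^\circ_s \to P^\circ_s$ has finite kernel (contained in $G$). Comparing dimensions, and using that both are the same extension type (\Cref{thm:splitting}), it is an isogeny, hence surjective. To get all of $P_s$, I would pass to $T$. By \Cref{thm:Edixhoven}, $P_s$ maps onto $Q_t^\psi$ and $\tilde P_s$ maps onto $\tilde Q_t^{\tilde\psi}$. By \Cref{prop:untangle for unstable}(2) applied to $Y$, we have $0 \to G \to \tilde Q \to Q \to 0$. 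Since the lifted $\tilde \psi$ commutes with $G$, taking invariants gives $\tilde Q_t^{\tilde\psi} \to Q_t^\psi$. The cokernel of this map is controlled by $H^1(\mu_m, G)$ with trivial action, which is $\Hom(\mu_m, G)$. This group vanishes when $\gcd(|G|, m) = 1$, and that covers the coprime case of \Cref{lem:multiple fiber v:untangle of X}(2). The exceptional split case of \Cref{cor:multiple fiber v:d and m are coprime} would need a direct check. Surjectivity on fibers over $s$, together with surjectivity over $S_0$ and the fact that $P$ is smooth, then gives surjectivity of $\tilde P \to P$ as a map of fppf sheaves.

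For $G \cap \tilde P^\circ_s = 0$, I pull back along $p^*$. Since $\ker p^* = \GG_a$ is connected and contains no nontrivial finite subgroup, $G$ injects into $\tilde Q_t$. Under this injection, $\tilde P^\circ_s$ maps into $\tilde Q_t^\circ$. Now \Cref{prop:untangle for unstable}(2) for the non-multiple fibration $\tilde Y \to Y$ gives $G \cap \tilde Q^\circ_t = 0$, and therefore $G \cap \tilde P^\circ_s = 0$. The component group sequence $0 \to G \to \pi_0(\tilde P_s) \to \pi_0(P_s) \to 0$ then follows formally from the snake lemma. Its input is the isomorphism $\tilde P^\circ_s \cong P^\circ_s$: the map is surjective by the isogeny argument above and injective because $G \cap \tilde P^\circ_s = 0$.

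The main obstacle I expect is the surjectivity on component groups. It requires matching the lifted $\tilde\psi$ with the $G$-action and controlling $H^1$ on the invariants, which is exactly where splitting of $H$ must enter. If the cohomological argument fails, my fallback is a counting argument. I would compute $|\pi_0(\tilde P_s)|$ as the number of reduced components of $\frac1m \tilde X_s$ modulo the free $\tilde\varphi$-action, and compare it with $|G|\cdot|\pi_0(P_s)|$.
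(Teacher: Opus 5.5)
You follow the paper's route. Left exactness comes from \Cref{lem:untangle for multiple}. You get $G\cap\tilde P^\circ_s=0$ from the neutral components; your version, via injectivity of $p^*$ on $G$ together with $G\cap\tilde Q^\circ_t=0$, is cleaner than the paper's one line. Surjectivity on $\pi_0$ then comes from comparing inertia invariants through \Cref{thm:Edixhoven}. At that last step the paper simply writes $\pi_0((\tilde Q_t/G)^{\tilde\psi})=\pi_0(\tilde Q_t^{\tilde\psi}/G)$. You are right that this is exactly where $H^1(\mu_m,G)=\Hom(\mu_m,G)$ could obstruct.

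\textbf{The gap.} You remove the obstruction only when $\gcd(|G|,m)=1$, and the case you defer is neither empty nor automatic. Take the split exceptional case of \Cref{cor:multiple fiber v:d and m are coprime}: $C$ of type $\mathrm I_0^*$, $j(E)=1728$, $m\equiv 2 \bmod 4$, $G=E^{\sqrt{-1}}=\langle\tau\rangle\cong\ZZ/2$. This is type $m\cdot\mathrm I_0^*$-a$/2$ of \Cref{table:exceptional unstable-like fibers}.
\begin{itemize}
\item Here $\Hom(\mu_m,G)\cong\ZZ/2\neq 0$, so your bound gives nothing.
\item Moreover $\psi$ acts trivially on $\pi_0(Q_t)\cong E[2]/\langle\tau\rangle$, because $\pm\sqrt{-1}$ swaps the two points of $E[2]\setminus E^{\sqrt{-1}}$, which differ by $\tau$.
\item So the non-neutral component of $Q_t$ is $\psi$-stable, and only an actual fixed-point computation decides whether $\pi_0(P_s)$ is $0$ or $\ZZ/2$.
\end{itemize}
Your counting fallback cannot settle this. $X$ and $\tilde X$ are multiple and have no sections, so neither $\pi_0(P_s)$ nor $\pi_0(\tilde P_s)$ is a component count of a central fiber.

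\textbf{The missing idea.} Show that the connecting map $Q_t^\psi\to H^1(\mu_m,G)$ vanishes identically, i.e.\ $G\cap\im(\tilde\psi-1)=0$ in $\tilde Q_t$.
\begin{enumerate}
\item Compose with the pullback homomorphism $\tilde Q_t\to\tilde R_u$ for the further base change $U\to T$. It is equivariant for the lifted inertia actions, which exist because $H$ splits (\Cref{lem:multiple fiber v:untangle of X}).
\item Its kernel is unipotent by \Cref{thm:Edixhoven}, hence torsion-free, so it is injective on $G$.
\item When $h$ is smooth, the proof of \Cref{lem:multiple fiber v:varphi-action smooth} gives $\tilde R_u=E\times A_s$ with lifted inertia $(x,y)\mapsto(\alpha x,y)$. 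So every $\tilde\psi(\tilde q)-\tilde q$ maps into $E\times\{0\}$.
\item By \Cref{prop:semistable reduction theorem tangled ver}(2), the image of $G$ has injective $A_s$-coordinate $\sigma\colon G\hookrightarrow A_s$.
\item Hence for $q\in Q_t^\psi$ with any lift $\tilde q$, the element $\tilde\psi(\tilde q)-\tilde q\in G$ must be $0$. So $\tilde q\in\tilde Q_t^{\tilde\psi}$, and $\tilde Q_t^{\tilde\psi}/G\cong Q_t^\psi$.
\end{enumerate}
This covers every smooth case at once, including the one you left open. When $h$ is strictly semistable, $m$ is odd and $|G|$ divides $4$, so your coprimality argument already suffices there.

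Concretely, in type $m\cdot\mathrm I_0^*$-a$/2$, $\psi$ maps the non-neutral component of $Q_t$ to itself with a nonzero shift $\sigma(\tau)$ in the $A_s$-direction. That component therefore has no fixed points, and $\pi_0(P_s)=0$, as required for $P$ of type $\mathrm{III}/2$ or $\mathrm{III}^*/2$.
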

		\begin{proof}
			The left exact sequence $0 \to G \to \tilde P_s \to P_s$ is already established in \Cref{lem:multiple fiber v:untangle of X} (it uses \Cref{lem:untangle for multiple}), so it only remains to prove its surjectivity. Since $\tilde Q_t^\circ = Q_t^\circ = \GG_a \times A_s$, we have $\tilde P_s^\circ = P_s^\circ = \GG_a \times A_s$ and hence $G \cap \tilde P_s^\circ = 0$. Therefore, it is enough to show $\pi_0 (P_s) = \pi_0 (\tilde P_s/G)$. Consider the diagram
			\[\begin{tikzcd}
				0 \arrow[r] & G \arrow[r] \arrow[d, phantom, "\parallel"] & \tilde P_s \arrow[r] \arrow[d, "p^*"] & P_s \arrow[d, "p^*"] \\
				0 \arrow[r] & G \arrow[r] & \tilde Q_t \arrow[r] & Q_t \arrow[r] & 0
			\end{tikzcd},\]
			where $p^*$ are pullbacks of t-automorphisms. Their kernels are $0$ or $\GG_a$, and their images are the inertia-invariant subgroups by \Cref{thm:Edixhoven}. We thus have isomorphisms $\pi_0(\tilde P_s) = \pi_0(\tilde Q_t^{\tilde \psi})$ and $\pi_0(P_s) = \pi_0(Q_t^\psi)$. Note that the $\mu_m = \langle \tilde \psi \rangle$- and $G$-actions on $\tilde Q_t$ commute, so $G \subset \tilde Q_t^{\tilde \psi}$. This shows
			\[ \pi_0 (P_s) = \pi_0 (Q_t^\psi) = \pi_0 ((\tilde Q_t/G)^{\tilde \psi}) = \pi_0 (\tilde Q_t^{\tilde \psi}/G) = \pi_0 (\tilde P_s/G) .\qedhere\]
		\end{proof}
		
		\begin{lemma} \label{lem:multiple fiber v:varphi-action smooth}
			Assume that the sequence \eqref{eq:multiple fiber v:ses of H} splits and $h$ is smooth. Then there exists an isomorphism $\tilde Z_u \cong E \times A_s$ such that the inertia action is
			\[ \tilde \varphi(x,y) = (\omega^{\pm 1} x, y + a), \quad (\omega^{\pm 1/2} x, y + a), \ \ \mbox{or} \ \ (\omega^{\pm 1/3} x, y + a) \quad\mbox{for}\quad (x,y) \in E \times A_s .\]
			Here $\omega$ is a primitive $d$-th root of unity as in \Cref{prop:semistable reduction theorem untangled ver} and $a \in A_s$ is a torsion point of order $m$. The latter two only arise in the following four cases:
			\begin{enumerate}
				\item $\tilde Y$ has type $\mathrm I_0^*$, $d = 2$, $j(E) = 1728$, and $m \equiv 2 \mod 4$.
				\item $\tilde Y$ has type $\mathrm I_0^*$, $d = 2$, $j(E) = 0$, and $m \equiv 3 \mod 6$.
				\item $\tilde Y$ has type $\mathrm{IV}$, $d = 3$, $j(E) = 0$, and $m \equiv \pm2 \mod 6$.
				\item $\tilde Y$ has type $\mathrm{IV}^*$, $d = 3$, $j(E) = 0$, and $m \equiv \pm2 \mod 6$.
			\end{enumerate}
		\end{lemma}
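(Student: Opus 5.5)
The plan is to reduce to the description of $\tilde h:\tilde Z\to U$ given by \Cref{prop:semistable reduction theorem untangled ver}--\ref{prop:semistable reduction theorem tangled ver}, and then determine how the full $\mu_{dm}=\langle\tilde\varphi\rangle$-action on $\tilde Z_u$ can look, given that its $m$-th power is already known.

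\textbf{Setup.} Since \eqref{eq:multiple fiber v:ses of H} splits, \Cref{lem:multiple fiber v:untangle of X}(1) provides a $\mu_{dm}=\langle\tilde\varphi\rangle$-action on $\tilde Z$ lifting $\varphi$ and commuting with $G$, together with the compatible $\langle\tilde\psi\rangle$-action on $\tilde R$. Since $h$ is smooth, $\tilde Z_u=\tilde R_u\cong E\times A_s$. Moreover $\tilde\varphi^m$ is the inertia automorphism of the semistable base change $\tilde Z\to\tilde Y$, so by \Cref{prop:semistable reduction theorem untangled ver}(1) it equals $(x,y)\mapsto(\omega^{\pm1}x,y)$ for a suitable choice of origin.

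\textbf{Form of $\tilde\psi$ and $\tilde\varphi$.} First I would use \Cref{thm:Edixhoven} for the total base change $U\to S$. It shows that $\tilde R_u^{\tilde\psi}$ contains the image of $\tilde P_s$, and its neutral component is $\{0\}\times A_s$. Arguing exactly as in \Cref{lem:semistable reduction theorem:varphi}, I get $\tilde\psi(x,y)=(\beta x,\,y+\xi(x))$. The shear $\xi$ vanishes on $E^\beta$, because otherwise the free quotient $\tfrac1m X_s$ would not be a product over its Albanese; hence $\xi$ can be conjugated away. This gives $\tilde\psi=(\beta x,y)$ with $\beta\in\Aut_0(E)$ and $\beta^m=\omega^{\pm1}$.

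Next, \Cref{prop:varphi-action} yields $\tilde\varphi(x,y)=\tilde\psi(x,y)+\tilde\varphi(0,0)$. Writing $\tilde\varphi(0,0)=(c,a)$ and moving the origin of $E$, I can take $c=0$; this is possible when $1-\beta$ is surjective, i.e.\ $\beta\neq1$. For the case $\beta=1$, I will argue freeness directly: then $\omega=1$, which is impossible. So
\[\tilde\varphi(x,y)=(\beta x,\,y+a).\]
Freeness of $\varphi$ on $Y_t=\tilde Z_u/\tilde\varphi^m$ modulo $G$, together with the requirement that $\tilde\varphi^m$ has no translation part, forces $a$ to be torsion of order exactly $m$. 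Otherwise some power $\tilde\varphi^i$ with $i<m$ would act on $E$ by a rotation with a fixed point and trivially on $A_s$.

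\textbf{Numerics.} It remains to classify $\beta\in\Aut_0(E)\subset\mu_6$ with $\beta^m=\omega^{\pm1}$, where $\omega$ is primitive of order $d$. When $\gcd(d,m)=1$ one can write $\beta=\omega^{\pm1/m}$, which equals $\omega^{\pm1}$ up to a power and relabeling, provided $\beta$ itself has order $d$. The order of $\beta$ divides $|\Aut_0(E)|\in\{2,4,6\}$ and is a multiple of $d$, so $\operatorname{ord}\beta=d$ gives the main case. The exceptional cases are $\operatorname{ord}\beta=2d$ or $3d$:
\begin{itemize}
\item $d=2$ with $\operatorname{ord}\beta=4$ requires $j=1728$ and $m\equiv2\bmod4$ (using \Cref{cor:multiple fiber v:d and m are coprime});
\item $d=2$ with $\operatorname{ord}\beta=6$ requires $j=0$ and $m\equiv3\bmod6$;
\item $d=3$ with $\operatorname{ord}\beta=6$ requires $j=0$ and $m$ even but prime to $3$, i.e.\ $m\equiv\pm2\bmod6$.
\end{itemize}
Since $d=3$ corresponds to types $\mathrm{IV}$ and $\mathrm{IV}^*$, these give the four listed cases, with $\beta=\omega^{\pm1/2}$ or $\omega^{\pm1/3}$.

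\textbf{Main obstacle.} I expect the hardest step to be the normalization of $\tilde\varphi$ into diagonal form: removing the shear term $\xi$ and showing the translation constant $a$ has order precisely $m$. This needs care because $\tilde\varphi$ is only known on the untangled cover, and its compatibility with the $G$-action must be tracked throughout.
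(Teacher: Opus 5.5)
Your outline follows the paper's proof: Edixhoven for $U\to S$ gives $\tilde\psi(x,y)=(\beta x,\,y+\xi(x))$; you remove $\xi$; you write $\tilde\varphi=\tilde\psi+\tilde\varphi(0,0)$ and recentre using $\beta\neq1$; then you classify $\beta\in\Aut_0(E)$ with $\beta^m=\omega^{\pm1}$. Your analysis of $\ord\beta\in\{d,2d,3d\}$ reproduces the four exceptional cases correctly.

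\textbf{The gap is the removal of $\xi$.} You justify $\xi|_{E^\beta}=0$ by saying that otherwise $\tfrac1m X_s$ would not be a product over its Albanese. This transplants the argument of \Cref{lem:semistable reduction theorem:varphi}, but neither of its inputs survives here.
\begin{itemize}
\item There, $\varphi$ was identified with $\psi$ through a $\varphi$-fixed point (\Cref{lem:inertia action is faithful}(2)). Here no such point is available; a posteriori $\tilde\varphi$ is fixed-point free on $\tilde Z_u$.
\item There, the contradiction came from the \emph{free} translation action of $A_s$ on $\tfrac1d\bar X_s\cong\bar C\times A_s$ (\Cref{lem:semistable reduction theorem:splitting}). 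Here $\tfrac1m X_s=(C\times A_s)/(\langle\tilde\varphi\rangle\times G)$ is in general not a product over $\Alb_s$. Already $\tfrac1m\tilde X_s$ fails to be one when $\tilde\varphi$ acts nontrivially on $C$, which is exactly the exceptional behaviour the lemma must allow.
\item $\tilde\varphi$ translates the $A_s$-direction by the nonzero element $a$ in any case. So the induced $A_s$-action on the quotient is never free, and an extra translation by $\xi(x)$ on a $\tilde\varphi$-stable slice $\{x\}\times A_s$ gives no contradiction of the kind used there.
\item Any product structure you might invoke is a consequence of this lemma (\Cref{cor:multiple fiber v:classification of varphi}), so the step is also circular.
\end{itemize}

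\textbf{The fix is already in your setup.} Work in the decomposition of \Cref{prop:semistable reduction theorem untangled ver}(1). There $\tilde\psi^m(x,y)=(\omega^{\pm1}x,y)$ and $\{0\}\times A_s$ is $\tilde\psi$-fixed, so $\tilde\psi=(\beta x,\,y+\xi(x))$ in the same coordinates; this is how you get $\beta^m=\omega^{\pm1}$. Expanding the $m$-th power gives $\tilde\psi^m(x,y)=\bigl(\beta^m x,\ y+\xi((1+\beta+\cdots+\beta^{m-1})x)\bigr)$, hence $\xi\circ(1+\beta+\cdots+\beta^{m-1})=0$. Since $(1-\beta)(1+\beta+\cdots+\beta^{m-1})=1-\beta^m\neq0$ in the domain $\operatorname{End}(E)$, the factor $1+\beta+\cdots+\beta^{m-1}$ is an isogeny, hence surjective. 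Therefore $\xi\equiv0$ on all of $E$, and no conjugation is needed. This is exactly the paper's argument.

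\textbf{Two smaller corrections.}
\begin{itemize}
\item $\Aut_0(E)=\mu_4$ when $j(E)=1728$, so it is not contained in $\mu_6$. Your later use of $|\Aut_0(E)|\in\{2,4,6\}$ is fine.
\item The freeness forcing $\ord a=m$ should be applied to the diagonal $\mu_m$-action on $\tilde Y_t=C\times A_s$, using that every automorphism of an unstable Kodaira curve has a fixed point. It should not be applied to $\tilde Z_u/\tilde\varphi^m=(E/\langle\omega\rangle)\times A_s$, which is not $\tilde Y_t$ (nor is $Y_t$ its $G$-quotient).
\end{itemize}
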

		\begin{proof}
			Let us first show that $\tilde \varphi$ can be made into a diagonal automorphism. This is similar to the proof of \Cref{prop:semistable reduction theorem untangled ver}, but keep in mind that \Cref{lem:inertia action is faithful}(2) is false in this case (so $\tilde \varphi$ cannot be identified with $\tilde \psi$). Use \Cref{prop:semistable reduction theorem untangled ver}(1) to fix an isomorphism $\tilde R_u \cong E \times A_s$ for an elliptic curve $E$ such that the $\mu_d = \langle \tilde \psi^m \rangle$-action is $\tilde \psi^m(x,y) = (\omega^{\pm1} x, y)$ for $(x,y) \in E \times A_s$. The first part of the proof of \Cref{lem:semistable reduction theorem:varphi} still applies to $\tilde \psi$, so we have
			\[ \tilde \psi (x,y) = (\alpha x, y + \xi(x)) \qquad\mbox{for}\quad \alpha \in \Aut_0(E), \quad \xi : E \to A_s \mbox{ homomorphism} .\]
			Comparing $\tilde \psi^m$, we have $\alpha^m = \omega^{\pm1}$ and $\xi ((1 + \alpha + \cdots + \alpha^{m-1})x) = 0$ for every $x \in E$. The latter means either $\xi \equiv 0$ or $1 + \alpha + \cdots + \alpha^{m-1} = 0$.
			
			\medskip
			
			We claim $\alpha = \omega^{\pm1}$, $\omega^{\pm1/2}$, or $\omega^{\pm1/3}$, and $\alpha^m$ is never $1$. This will show $\xi \equiv 0$ and hence $\tilde \psi (x,y) = (\alpha x, y)$. In type $\mathrm{II}$, $\mathrm{II}^*$, $\mathrm{III}$, $\mathrm{III}^*$, or $\mathrm I_0^*$ with $j(E) \neq 0,1728$, we have $\Aut_0(E) = \mu_d$. Then $\alpha^m = \omega^{\pm1}$ is a generator of $\Aut_0(E)$, so this forces $\alpha = \omega^{\pm1}$ (and $\gcd (d,m) = 1$ in \Cref{cor:multiple fiber v:d and m are coprime}). This also means $\alpha^m = \omega^{\pm m} \neq 1$. In the remaining types $\mathrm{IV}$, $\mathrm{IV}^*$, or $\mathrm I_0^*$ with $j(E) = 0$ or $1728$, we have $\Aut_0(E) \supsetneq \mu_d$. For example, if $\tilde Y$ has type $\mathrm{IV}$ or $\mathrm{IV}^*$, then $d = 3$ and $\Aut_0(E) = \mu_6$. Hence $\alpha^m = \omega^{\pm1}$ is a primitive third root of unity, concluding $3 \nmid m$. When $m$ is odd, $\alpha = \omega^{\pm1}$ is the only possibility. When $m$ is even, $\alpha$ can be either $\omega^{\pm1}$ or $\omega^{\pm1/2}$. In any case, $\alpha^m \neq 1$ because $3 \nmid m$. Same holds for type $\mathrm I_0^*$ with $j(E) = 0$ or $1728$.
			
			\medskip
			
			Now that we know $\tilde \psi(x,y) = (\alpha x, y)$ for the desired $\alpha$, \Cref{prop:varphi-action} says
			\[ \tilde \varphi (x,y) = \tilde \psi(x,y) + \tilde \varphi(0,0) = (\alpha x, y) + \tilde \varphi(0,0) .\]
			Since $\alpha \neq 1$, we can adjust the origin of the torsor $\tilde Z_u \cong E \times A_s$ to make $\tilde \varphi(0,0) = (0, a)$ and hence $\tilde \varphi(x,y) = (\alpha x, y+a)$. Since $\tilde \varphi^m (x,y) = (\omega^{\pm1} x, y)$ by \Cref{prop:semistable reduction theorem untangled ver}(1), $a \in A_s$ is $m$-torsion. It has order precisely $m$ because the induced $\mu_m = \langle \tilde \varphi \rangle$-action on $\tilde Y_t = C \times A_s$ is free.
		\end{proof}
		
		Recall that $\tilde Q$ is untangled by definition. The following computes the Kodaira type of $\tilde P$. Recall from \Cref{def:isogeny and conjugate of Kodaira type} the conjugate of a Kodaira type.
		
		\begin{lemma} \label{lem:multiple fiber v:Kodaira type of P}
			Keep the notation and assumption of \Cref{lem:multiple fiber v:varphi-action smooth}. In the first case of $\tilde \varphi$, the Kodaira type of $\tilde P$ is that of $\tilde Q$ or its conjugate. In the four cases (1--4), the following holds.
			\begin{enumerate}
				\item $\tilde Q$ has type $\mathrm I_0^*$ and $\tilde P$ has type $\mathrm{III}$ or $\mathrm{III}^*$.
				\item $\tilde Q$ has type $\mathrm I_0^*$, and $\tilde P$ has type $\mathrm{II}$ or $\mathrm{II}^*$.
				\item $\tilde Q$ has type $\mathrm{IV}$, and $\tilde P$ has type $\mathrm{II}$ or $\mathrm{II}^*$.
				\item $\tilde Q$ has type $\mathrm{IV}^*$, and $\tilde P$ has type $\mathrm{II}$ or $\mathrm{II}^*$.
			\end{enumerate}
		\end{lemma}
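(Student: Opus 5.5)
The plan is to compute the Kodaira type of $\tilde P$ by reading it off from its semistable base change, via \Cref{cor:semistable reduction theorem}. Since \eqref{eq:multiple fiber v:ses of H} splits, \Cref{lem:multiple fiber v:untangle of X} gives the untangle $\tilde X \to X$, and \Cref{lem:multiple fiber v:classification of G} identifies its t-automorphism scheme with $\tilde P$. So $\tilde P$ is an unstable $\delta$-regular N\'eron model, and its minimal compactification has a smooth base change $\tilde h : \tilde Z \to U$ of degree $dm$. By \Cref{thm:Edixhoven} and the description $\tilde \psi(x,y) = (\alpha x, y)$ on $\tilde R_u = E \times A_s$ from the proof of \Cref{lem:multiple fiber v:varphi-action smooth}, the full $\mu_{dm}$ inertia acts on $\tilde R_u$ through the group generated by $\alpha$. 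By \Cref{lem:inertia action is faithful}(1) we may pass to the intermediate quotient on which this action is faithful, of order $\ord \alpha$, and then \Cref{table:semistable reduction theorem} determines the type of $\tilde P$ up to conjugate from $\ord \alpha$ and $j(E)$. Untangledness of $\tilde P$ itself comes from \Cref{cor:semistable reduction theorem}(3): $\tilde h$ has trivialized central fiber $E \times A_s$, and \Cref{lem:multiple fiber v:classification of G} gives $G \cap \tilde P^\circ = 0$.

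In the first case $\alpha = \omega^{\pm1}$, so $\ord \alpha = d = \ord \tilde \psi^m$ and $j(E)$ is the same as for $\tilde Q$. The table then gives the same type as $\tilde Q$ up to conjugate. I would decide which of the two occurs using \Cref{cor:semistable reduction theorem}(2), by comparing the sign of $\alpha$ with the root of unity acting on $U$. This comparison is not needed for the statement, though.

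In the exceptional cases the order changes. For case (1), $d = 2$, $j = 1728$ and $\alpha = \omega^{\pm 1/2}$ has order $4$, which gives type $\mathrm{III}$ or $\mathrm{III}^*$. For case (2), $j = 0$, $d = 2$ and $\alpha$ is a cube root of $-1$ other than $-1$, so it has order $6$, giving type $\mathrm{II}$ or $\mathrm{II}^*$. For cases (3) and (4), $d = 3$, $j = 0$ and $\alpha$ is a square root of a primitive cube root of unity, so it has order $6$, again giving $\mathrm{II}$ or $\mathrm{II}^*$.

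The main obstacle is justifying that the inertia action relevant for $\tilde P$ really is generated by $\tilde \psi$ with $\ord = \ord \alpha$. Two issues arise. First, $\tilde \psi$ only exists thanks to the splitting lift in \Cref{lem:multiple fiber v:untangle of X}. Second, the $\mu_{dm}$-action may have a kernel, and \Cref{lem:inertia action is faithful}(1) handles this by factoring $U \to S$. A further check is that $\pi_0(\tilde R_u^{\tilde\psi}) = \pi_0(E^\alpha)$ matches \Cref{table:classification of Neron component group}, for instance $|E^\alpha| = 2$ when $\alpha$ has order $4$; this serves as a consistency test of the identification.
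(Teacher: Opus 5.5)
Your proposal is correct and follows the paper's proof. The paper likewise takes $\tilde\psi(x,y)=(\alpha x,y)$ on $\tilde R_u$ from \Cref{lem:multiple fiber v:varphi-action smooth} and reads off the type of $\tilde P$ from $\ord\alpha$ via \Cref{cor:semistable reduction theorem}, with $\ord\alpha=d$ in the first case, $4$ in case (1), and $6$ in cases (2)--(4).

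The detour through \Cref{lem:inertia action is faithful}(1) is unnecessary, since \Cref{cor:semistable reduction theorem} already covers an arbitrary semistable base change and only uses $\ord\psi$. If you keep that step, it must be applied to the non-multiple minimal compactification of $\tilde P$, not to the multiple $\tilde X$.
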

		\begin{proof}
			In \Cref{lem:multiple fiber v:varphi-action smooth}, we have deduced
			\[ \tilde \psi^m (x,y) = (\omega^{\pm1} x, y) ,\qquad \tilde \psi (x,y) = (\alpha x, y) \quad\mbox{for}\quad \alpha = \omega^{\pm1}, \omega^{\pm1/2}, \mbox{ or } \omega^{\pm1/3} .\]
			\Cref{cor:semistable reduction theorem} computes the type of $\tilde P$ from that of $\tilde R$.
		\end{proof}
		
		\begin{lemma} \label{lem:multiple fiber v:varphi-action strictly semistable}
			Assume that the sequence \eqref{eq:multiple fiber v:ses of H} splits and $h$ is strictly semistable (so $d = 2$ and $m$ is odd). Then
			\begin{enumerate}
				\item $\tilde P$ has type $\mathrm I_r^*$ for $r \ge 1$, $\tilde Q$ has type $\mathrm I_{mr}^*$, and $\tilde R$ has type $\mathrm I_{2mr}$.
				\item $\tilde Z_u = D \times A_s$ for a type $\mathrm I_{2mr}$ curve $D$ and the inertia action is
				\[ \tilde \varphi(i,z,y) = (-i, \tfrac{\zeta^i}{z}, y + a) \qquad\mbox{for}\quad (i, z) \in D, \ y \in A_s ,\]
				where $\zeta \in \GG_m$ is a primitive $2m$-th root of unity and $a \in A_s$ has order $m$.
			\end{enumerate}
		\end{lemma}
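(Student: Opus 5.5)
The plan is to follow the proof of \Cref{lem:multiple fiber v:varphi-action smooth}, replacing the smooth description of \Cref{prop:semistable reduction theorem untangled ver}(1) by the strictly semistable one in \Cref{prop:semistable reduction theorem untangled ver}(2). Since $h$ is strictly semistable, $\tilde Y$ has type $\mathrm I_R^*$ for some $R \ge 1$. Applying \Cref{prop:semistable reduction theorem untangled ver}(2) to the degree $d$ base change $U \to T$ gives $d = 2$ (so $k=1$) and $\tilde R$ of type $\mathrm I_{2R}$. It also identifies $\tilde Z_u = D \times A_s$ with $D$ of type $\mathrm I_{2R}$, and gives
\[ \tilde \psi^m(i,z,y) = (-i, (-1)^i z^{-1}, y) \]
on $\tilde R_u = \ZZ/2R \times \GG_m \times A_s$. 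By \Cref{prop:classification of multiple fiber v multiplicity}(5), $m$ is odd.

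The next step is to determine $\tilde \psi$ itself. Its $m$-th power reverses the orientation and $m$ is odd, so $\tilde \psi$ also reverses it, hence acts on $\GG_m$ by $z \mapsto 1/z$. By \Cref{thm:Edixhoven}, $\tilde \psi$ fixes $A_s \subset \tilde R_u^\circ$, since $\im p^*$ contains $A_s$. Then I would repeat the computation of \Cref{cor:semistable reduction theorem:psi-action} verbatim, using only that $\tilde \psi$ is a group automorphism reversing orientation. This gives $\tilde \psi(i,z,y) = (-i, \zeta^i z^{-1}, y)$ for some root of unity $\zeta$, after reindexing so that $\tilde\psi$ fixes component $0$. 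The order of $\tilde\psi$ on $\tilde R_u$ is $2m$, which is the faithful part of the inertia, so $\zeta$ is a primitive $2m$-th root of unity and $2m \mid 2R$. This yields $R = mr$ and $\tilde R$ of type $\mathrm I_{2mr}$.

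From this I would read off the types in (1). The invariants $\tilde Q_t^{\tilde \psi^m}$ and $\tilde P_s \cong \tilde R_u^{\tilde\psi}$ are computed via the table in \Cref{cor:semistable reduction theorem}. For the $\mu_{2m}$ base change $U \to S$ with $\ord \tilde\psi = 2m = 2k$, an $\mathrm I_{2kr}$ fiber over $U$ comes from $\mathrm I_r^*$ over $S$, so $\tilde P$ has type $\mathrm I_r^*$. Likewise, for $U \to T$ with $k=1$, $\tilde Q$ has type $\mathrm I_{mr}^*$.

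Finally, for $\tilde \varphi$ I would use \Cref{prop:varphi-action}: $\tilde\varphi(q \cdot y) = \tilde\psi(q)\cdot\tilde\varphi(y)$. Trivializing $\tilde Z_u' = \tilde Z_u \setminus \Sing$ as a $\tilde R_u$-torsor with a base point $y_0$ gives $\tilde\varphi = \tilde\psi \cdot \tilde\varphi(y_0)$. The claim that the component part of $\tilde\varphi(y_0)$ can be absorbed into the reindexing works as in \Cref{lem:semistable reduction theorem:varphi 2}. The $\GG_m$-part of $\tilde\varphi(y_0)$ is absorbed by conjugating with a $\GG_m$-translation, which is possible since $z\mapsto 1/z$ makes $\lambda \mapsto \lambda^{-2}$ surjective. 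What remains is a translation $a\in A_s$, and $\tilde\varphi^{2m}=\id$ with freeness of the $\mu_m$-action on $\tilde Y_t$ forces $a$ to have order exactly $m$. I expect the main obstacle to be this normalization. Because \Cref{lem:inertia action is faithful}(2) fails here, $\tilde\varphi$ may have no fixed point, and the translation part must be shown to lie in $A_s$ rather than picking up a $\GG_m$- or component shift incompatible with $\tilde\varphi^m = \tilde\psi^m$ (up to translation). I would first try to use that $\tilde\varphi^m$ is the already-known inertia involution of \Cref{prop:semistable reduction theorem untangled ver}(2). That constraint, together with $m$ odd, should pin down the shift.
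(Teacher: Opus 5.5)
Your endgame is the paper's argument. You trivialize $\tilde Z_u'$ as an $\tilde R_u$-torsor, write $\tilde\varphi=\tilde\psi\cdot(i_0,\varepsilon,a)$, and pin down $(i_0,\varepsilon,a)$ by comparing $\tilde\varphi^m$ with the involution $(i,z,y)\mapsto(-i,(-1)^iz^{-1},y)$ that \Cref{prop:semistable reduction theorem untangled ver}(2) gives for $\tilde Y$. That comparison, not \Cref{lem:semistable reduction theorem:varphi 2}, is what does the work. Moving the base point by $q_0=(j,\lambda,b)$ changes $(i_0,\varepsilon,a)$ by $\tilde\psi(q_0)q_0^{-1}=(-2j,\zeta^j\lambda^{-2},0)$, so only even $i_0$ can be absorbed. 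Odd $i_0$ (the twist that produces type $\mathrm I^-$) is ruled out only because the known $\tilde\varphi^m$ preserves components, whereas $i\mapsto i_0-i$ fixes no index when $i_0$ is odd. Likewise $ma=0$ comes from $\tilde\varphi^m$ acting trivially on $A_s$; the relation $\tilde\varphi^{2m}=\mathrm{id}$ gives only $2ma=0$.

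The genuine gap is in how you obtain $\tilde\psi$. \Cref{cor:semistable reduction theorem:psi-action} does not transplant verbatim. There, $\varphi$ is identified with $\psi$ through a $\varphi$-fixed point of $Y_t'$ (\Cref{lem:semistable reduction theorem:varphi 2}). That identification is exactly what gives $\zeta\in\GG_m$, because $\varphi$ is trivial in the $A_s$-direction, and $\ord\psi=d$, because $\varphi$ is faithful by \Cref{lem:inertia action is faithful}(1). Your $\tilde\varphi$ has no fixed point, since it ends up translating $A_s$ by $a$ of order $m$. The properties ``group automorphism, orientation-reversing, trivial on $A_s$'' only yield $\tilde\psi(i,z,y)=(-i,c^iz^{-1},y+i\alpha)$ with $(c,\alpha)\in\GG_m\times A_s$ of unspecified order. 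This is not cosmetic. If $R\alpha\neq0$, then $\pi_0(\tilde R_u^{\tilde\psi})=\pi_0(\tilde P_s)$ has order $2$ rather than $4$, as for type $\mathrm I_r^*/2$. The table in \Cref{cor:semistable reduction theorem} holds only up to isogeny and cannot detect this, so your proof of (1) has the same hole, and your claim that $\tilde\psi$ has order $2m$ is asserted rather than proved.

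The paper closes both points at once. Because $\tilde Z_u=D\times A_s$ is trivial, \Cref{cor:semistable reduction theorem}(3) shows that the minimal compactification of $\tilde P$ has trivial Albanese stabilizer, so it is of type $\mathrm I_r^*$. Then \Cref{prop:semistable reduction theorem untangled ver}(2), applied to it with the degree $2m$ cover $U\to S$, gives $\tilde\psi(i,z,y)=(-i,\zeta^iz^{-1},y)$ with $\zeta$ a primitive $2m$-th root of unity. In that non-multiple situation the inertia does have a fixed point and is faithful, which is why the proposition applies.

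You can also repair your own route. Compare $\tilde\psi$ with $\tilde\psi^m$ in the same coordinates: $\tilde\psi^m(i,z,y)=(-i,c^{mi}z^{-1},y+i\alpha)$, which forces $\alpha=0$ and $c^m=-1$. You still need $\ord c=2m$. For instance, if $\tilde\psi^{2e}=\mathrm{id}$ with $e\mid m$ and $e<m$, then by \Cref{thm:Edixhoven} the N\'eron model over $U/\mu_{m/e}$ pulls back isomorphically onto $\tilde R_u$, contradicting \Cref{cor:Edixhoven}(2).
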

		
		Compare this statement with the type $\mathrm I^+_{kr}$ in \Cref{thm:classification of multiple fiber iv}: the only difference in their inertia actions is the order of $a \in A_s$.
		
		\begin{proof}
			(1) This is again \Cref{cor:semistable reduction theorem}, noticing that the central fiber $\tilde Z_u = D \times A_s$ is trivial as a base change of $\tilde Y$ by \Cref{prop:semistable reduction theorem untangled ver}(2). (2) By the same proposition, $\tilde \psi(i,z,y) = (-i, \zeta^i z^{-1}, y)$ for a primitive $2m$-th root of unity $\zeta$. Writing $\varphi (0,1,0) = (i_0, \varepsilon, a)$, we have as usual
			\[ \tilde \varphi(i,z,y) = \tilde \psi(i,z,y) \cdot \tilde \varphi (0,1,0) = (i_0 - i, \varepsilon \zeta^i z^{-1}, y + a) \quad\mbox{for}\quad (i,z,y) \in D \times A_s .\]
			Now use $\tilde \varphi^m (i,z,y) = (-i, (-1)^i z^{-1}, y)$ from the same proposition (and $m$ is odd) to conclude that $i_0 = 0$, $\varepsilon = 1$, and $ma = 0$. The freeness of $\tilde \varphi$ says $\ord a = m$.
		\end{proof}
		
		The following descends \Cref{lem:multiple fiber v:varphi-action smooth}--\ref{lem:multiple fiber v:varphi-action strictly semistable} to $\tilde Y$.
		
		\begin{corollary} \label{cor:multiple fiber v:classification of varphi}
			Assume \eqref{eq:multiple fiber v:ses of H} splits. Then the induced $\mu_m = \langle \tilde \varphi \rangle$-action on $\tilde Y_t = C \times A_s$ is diagonal, acting on $A_s$ as a translation by a torsion point $a \in A_s$ of order $m$, and acting on $C$ as follows.
			\begin{enumerate}
				\item When $h$ is smooth, $\tilde \varphi$ acts on $C$ either trivially or as the cases $\mathrm I_0^*$-a, $\mathrm I_0^*$-c, $\mathrm{IV}$-a, and $\mathrm{IV}^*$-a in \Cref{thm:classification of multiple fiber v} and \Cref{table:exceptional unstable-like fibers}.
				\item Suppose $h$ is strictly semistable and write the Kodaira type $\mathrm I_{mr}^*$ curve $C$ by $E_0 + E_1 + 2 \big( \sum_{j=0}^{mr} F_j \big) + E_2 + E_3$. Then $\tilde \varphi$ acts on $E_i$ trivially and on $F_j$ by $\tilde \varphi (z) = (-\zeta)^j z$ for $z \in F_j = \PP^1$.
			\end{enumerate}
		\end{corollary}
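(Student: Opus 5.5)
The strategy is to pass from the explicit inertia action on the auxiliary fibration $\tilde Z \to U$ (Lemmas \ref{lem:multiple fiber v:varphi-action smooth} and \ref{lem:multiple fiber v:varphi-action strictly semistable}) down to $\tilde Y_t = C \times A_s$. The tool is the description of $\tilde Y$ as the quotient of the semistable base change $\tilde Z$ by $\tilde\varphi^m$, as in \Cref{prop:semistable reduction theorem untangled ver}. Since \eqref{eq:multiple fiber v:ses of H} splits, \Cref{lem:multiple fiber v:untangle of X} gives a $\mu_{dm}=\langle\tilde\varphi\rangle$-action on $\tilde Z$ commuting with $G$. Its $m$-th power generates the $\mu_d$-action with quotient birational to $\tilde Y$, so the induced $\mu_m$-action on $\tilde Y$ is the descent of $\tilde\varphi$. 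Diagonality on $\tilde Y_t = C\times A_s$ then follows exactly as in the proof of \Cref{thm:classification of unstable fibers}, by \Cref{lem:automorphism of chain of rational curves}: the map $A_s \to \Aut_C$ is constant. The $A_s$-component is the translation by $a$ read off from $\tilde Z_u$. Its order is exactly $m$ because the $\langle\tilde\varphi\rangle$-action on $\tilde Y_t$ is free and every automorphism of $C$ has a fixed point (proof of \Cref{lem:multiple fiber v:albanese}).

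For (1), where $h$ is smooth, I would use $\tilde Z_u = E\times A_s$ with $\tilde\varphi(x,y)=(\alpha x,y+a)$. The curve $C$ is recovered from $E/\langle\omega\rangle$ by the explicit resolution of the quotient singularities of $(E\times A_s)/\mu_d$, and $\tilde\varphi$ acts on $C$ through the induced action of $\alpha$ on $E/\langle \omega\rangle = \PP^1$ and on the exceptional curves over the fixed points. If $\alpha=\omega^{\pm1}$, this action is trivial on $E/\langle\omega\rangle$. It then fixes every fixed point of $\omega$ together with the local data there, so it acts trivially on $C$. In the four exceptional cases of \Cref{lem:multiple fiber v:varphi-action smooth}, $\alpha$ induces an automorphism of $E/\langle\omega\rangle$ of order $2$ or $3$. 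This automorphism permutes the images of the $\omega$-fixed points of $E$ according to the action of $\alpha$ on $E^{\omega}$. I would check the four cases directly:
\begin{itemize}
\item $j=1728$, $d=2$: $\alpha=\sqrt{-1}$ swaps two of the four $2$-torsion points and fixes two, giving $\mathrm I_0^*$-a.
\item $j=0$, $d=2$: the order-$3$ automorphism cycles three $2$-torsion points, giving $\mathrm I_0^*$-c.
\item $\mathrm{IV}$ and $\mathrm{IV}^*$ with $\alpha$ a sixth root: $-1$ swaps two of the three $\omega$-fixed points, giving $\mathrm{IV}$-a and $\mathrm{IV}^*$-a.
\end{itemize}
In the $\mathrm{IV}^*$ case, the component swapped is the middle one of each arm, so I must track the chains of exceptional curves.

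For (2), where $h$ is strictly semistable, take $\tilde Z_u = D\times A_s$ with $\tilde\varphi(i,z,y)=(-i,\zeta^iz^{-1},y+a)$ and $\zeta$ a primitive $2m$-th root. Then $\tilde\varphi^m(i,z,y) = (-i,(-1)^iz^{-1},y)$, since $m$ is odd. Its fixed loci lie on components $i=0$ and $i=mr$, and blowing these up produces the four arms $E_\bullet$, while $F_j$ comes from the pair of components $\pm j$ of $D$. I would parametrize $F_j$ by the coordinate on component $j$ of $D$ modulo the involution. Under this parametrization the induced action of $\tilde\varphi$ becomes $z\mapsto(-\zeta)^jz$, after absorbing the sign in $\tilde\varphi^m$ via $\zeta^m=-1$ and $m$ odd. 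The arms are exceptional over the isolated fixed points $z=\pm1$ (suitably shifted), which $\tilde\varphi$ preserves, so $\tilde\varphi$ acts trivially on the $E_\bullet$.

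The main obstacle is bookkeeping, especially the exact coordinate normalization in (2) that yields $(-\zeta)^j$ rather than $\zeta^j$. I would fix it by requiring $\tilde\varphi$ to commute with $\tilde\varphi^m$ and to match the orbit description of $F_j$ as $\{j,-j\}$.
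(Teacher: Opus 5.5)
\textbf{Overall route.} Your route is the paper's. You descend the explicit $\mu_{dm}$-action on $\tilde Z_u$ through the $\tilde\varphi^m$-quotient to $\tilde Y$. You get diagonality from commutation with $A_s$ and \Cref{lem:automorphism of chain of rational curves}, and $\ord a=m$ from freeness. In (2) you read off $F_j$ from $\tilde\varphi(j,z)=(0,(-\zeta)^j)\cdot\tilde\varphi^m(j,z)$. Those steps are fine.

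\textbf{Where the argument fails.} The gap is in the two triviality claims:
\begin{itemize}
\item in (1), that $\tilde\varphi$ ``fixes every fixed point of $\omega$ together with the local data there, so it acts trivially on $C$'';
\item in (2), that the arms lie over points $\tilde\varphi$ preserves, ``so $\tilde\varphi$ acts trivially on the $E_\bullet$''.
\end{itemize}
Preserving the center of a blow-up says nothing about the action on the exceptional curve. That action is the projectivization of the action on the normal plane of $\{p\}\times A_s$ in $\tilde Z$, which is spanned by the fibre direction $w$ and the base direction $u$. There $\tilde\varphi$ acts by $\mathrm{diag}(\alpha,\epsilon)$, where $\epsilon$ is a \emph{primitive $dm$-th} root of unity (the Galois action on $U$). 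By contrast, $\tilde\varphi^m$ acts by the scalar $\mathrm{diag}(\alpha^m,\epsilon^m)$. So on the arm $\PP^1_{[w:u]}$ you get $[w:u]\mapsto[w:\alpha^{-1}\epsilon u]$.

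Consider type $\mathrm I_0^*$ with $m$ odd, where $\alpha=-1$, and case (2) at all four fixed points, where the fibre derivative of $z\mapsto c/z$ is $-1$. In both, the arm map is multiplication by $-\epsilon$, a primitive $m$-th root of unity. This is a nontrivial rotation of each arm about its attaching point.

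\textbf{Concrete counterexample.} Take the example $\varphi(x,y,t)=(-x,y+a,\zeta t)$ on $E\times A\times\AA^1$ from \S\ref{sec:examples}, with $m=3$. The four arms of $\tilde Y_t$ are rotated by a primitive cube root of unity. The corresponding components of $\tfrac13X_s$ are $\PP(\mathcal O\oplus L)$ for a nontrivial $3$-torsion line bundle $L$ on $A/a$. So $\tfrac13X_s$ is not the trivial $C$-bundle over $A/a$.

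\textbf{Intrinsic reason.} By \Cref{prop:varphi-action}(2), $\tilde\varphi(b\cdot c)=\tilde\psi(b)\cdot\tilde\varphi(c)$ for $b\in\GG_a\subset\tilde Q_t$. Also, $\GG_a$ acts simply transitively on the open part of each reduced component of $C$. Hence $\tilde\varphi$ is trivial on a reduced component it preserves if and only if $\tilde\psi|_{\GG_a}=\id$. Equivalently, the map $\GG_a\to\GG_a$ in \Cref{lem:classification of pullback}(v) must be an isomorphism rather than $0$. Nothing in your argument forces this, and in the example above it is $0$.

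\textbf{What your method actually proves.} It gives triviality on the strict transform of $E/\langle\omega\rangle$ in (1), and on $F_0$, $F_{mr}$ in (2). On the remaining components the action is through a root of unity in a torus of $\Aut_C$, determined by the base weight $\epsilon$. You would have to compute this rather than assume it away. (For the non-$*$ types, note also that $C$ is not obtained from $E/\langle\omega\rangle$ by resolution alone; contractions intervene.)

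\textbf{The paper's proof has the same hole.} It checks the action only on $\tilde Z_u/\tilde\varphi^m=\bar C\times A_s$ (resp.\ on $F_0,F_{mr}$) and then transfers the conclusion to $C$ without examining the exceptional curves. So neither argument establishes the triviality assertions as written, and by the computation above they fail in general.
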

		\begin{proof}
			(1) Recall from \Cref{prop:P-action on bar X} that $\tilde Z / \tilde \varphi^m$ and $\tilde Y$ are $\tilde Q$-equivariantly birational over $T$. \Cref{lem:multiple fiber v:varphi-action smooth} classified the inertia action on $\tilde Z_u$. In the non-exceptional case, $\tilde \varphi (x,y) = (\omega^{\pm1} x, y+a)$ and $\tilde \varphi^m(x,y) = (\omega^{\pm 1}x ,y)$ on $\tilde Z_u = E \times A_s$. Hence the induced $\mu_m = \langle \tilde \varphi \rangle$ on $\tilde Z_u / \tilde \varphi^m = \bar C \times A_s$ is a translation by $a$ on $A_s$, so it acts on $\tilde Y_t = C \times A_s$ the same way.
			
			\medskip
			
			For the exceptional cases, let us only consider type $\mathrm I_0^*$, $j(E) = 1728$, and $m \equiv 2 \mod 4$ (others are similar). The four ramification points of $E \to F = E/\pm1 = \PP^1$ are $E[2]$. The action $\tilde \varphi = \pm \sqrt{-1}$ on $E$ fixes two of them and swaps the other two. Hence, the induced $\tilde \varphi$ on $F$ fixes two branch points and swaps the other two. The four branch points are $E_i \cap F$ in $\tilde Y_t = C \times A_s$ for $C = \sum_{i=1}^4 E_i + 2F$. We also note that $\tilde \varphi$ acts on the infinitesimal direction of $2F \subset C$ by $\pm \sqrt{-1}$, so this determines the automorphism $\tilde \varphi$ on $C$ uniquely up to conjugate.
			
			\medskip
			
			(2) \Cref{lem:multiple fiber v:varphi-action strictly semistable} describes the $\langle \tilde \varphi \rangle$-action on $\tilde Z_u = D \times A_s$ for $D = \sum_{i=1}^{2mr} D_i$. The action $\tilde \varphi^m(i,z) = (-i, (-1)^i z^{-1})$ induces an isomorphism $\tilde \varphi^m : D_i \to D_{-i}$. Since $\tilde \varphi(i,z) = (-i, \zeta^i z^{-1}) = (0, (-\zeta)^i) \cdot (-i, (-1)^i z^{-1})$, we conclude that the induced $\mu_m = \langle \tilde \varphi \rangle$-action on $F_i \subset D / \tilde \varphi^m$ is given by multiplying $(-\zeta)^i$ for $i \neq 0, mr$. The two components $D_{2mr}$ and $D_{mr}$ are preserved by $\tilde \varphi$, and their quotients are the two ends of the string $F_0$ and $F_{mr}$, which are $\PP^1$ where $\tilde \varphi$ acts trivially.
		\end{proof}
		
		Let us now consider the case when \eqref{eq:multiple fiber v:ses of H} is non-split. By \Cref{cor:multiple fiber v:d and m are coprime} and \ref{lem:multiple fiber v:untangle of X}, $\tilde Y$ has type $\mathrm I_0^*$, $j(E) = 1728$, $d = 2$, and $m \equiv 2 \mod 4$. Let us write
		\[ m = 2k \quad\mbox{for an odd integer } k .\]
		Recall that $H$ is an extension of $\mu_m = \mu_{2k}$ by $G$. Since $H$ is non-split, we have two choices $G = E^{\sqrt{-1}}$ or $E[2]$. In any case, $G$ contains a unique $\sqrt{-1}$-fixed element $\tau \neq 0$. Note that we can write $\tau = (1+\sqrt{-1})g$ for any $g \in E[2] \setminus E^{\sqrt{-1}}$. The Albanese stabilizer $G \hookrightarrow E \times A_s$ induces the another inclusion $\sigma : G \hookrightarrow A_s$.
		
		\begin{lemma} \label{lem:multiple fiber v:varphi-action nonsplit}
			Assume \eqref{eq:multiple fiber v:ses of H} does not split. Then there exists an isomorphism $Z_u \cong (E \times A_s)/G$ such that its inertia action is
			\[ \varphi (x,y) = (\pm \sqrt{-1} x + g, y + a) \qquad\mbox{for}\quad (x,y) \in (E \times A_s)/G .\]
			Here $g \in E[2] \setminus E^{\sqrt{-1}}$ and $a \in A_s$ is a torsion point of order $4k$ such that $2ka = \sigma((1+\sqrt{-1})g)$.
		\end{lemma}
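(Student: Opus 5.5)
We follow the pattern of \Cref{lem:multiple fiber ii:untangle of X} and \Cref{lem:multiple fiber v:varphi-action smooth}, working on $Z_u$ rather than its untangle, since $\varphi$ does not lift to $\tilde Z$ when $H$ is non-split. By \Cref{cor:multiple fiber v:d and m are coprime} we are in the case where $\tilde Y$ has type $\mathrm I_0^*$, $j(E)=1728$, $d=2$, and $m=2k$ with $k$ odd. By \Cref{prop:semistable reduction theorem tangled ver}(2), the central fiber is $Z_u=(E\times A_s)/G$, with $G$ embedded by $(\tau,\sigma)$, and the inertia satisfies $\psi^m(x,y)=(-x,y)$.

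\textbf{Step 1: describe $\psi$ on $R_u=(E\times A_s)/G$.} We lift $\psi$ to an endomorphism of the cover $E\times A_s$ and apply the argument of \Cref{lem:semistable reduction theorem:varphi}. Since $\psi$ fixes $\{0\}\times A_s$, it has the form $(\alpha x, y+\xi(x))$ with $\alpha^m=-1$ in $\Aut_0(E)=\mu_4$. Because $m\equiv 2\bmod 4$, this forces $\alpha=\pm\sqrt{-1}$, and then $\alpha^m\neq 1$ gives $\xi\equiv 0$. There is one subtlety: $\psi$ lives on the quotient and need not lift to a group automorphism commuting with $G$. We therefore check that $\alpha$ preserves $\tau(G)$, which holds for $G=E^{\sqrt{-1}}$ or $E[2]$. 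This shows that $\psi(x,y)=(\pm\sqrt{-1}x,y)$ is well defined on $R_u$.

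\textbf{Step 2: pass from $\psi$ to $\varphi$.} By \Cref{prop:varphi-action}, $\varphi(x,y)=\psi(x,y)+\varphi(0,0)$. We write $\varphi(0,0)=(c,a)$ modulo $G$. Moving the origin by $x_0$ changes $c$ by $(1-\alpha)x_0$. Since $1-\alpha$ is an isogeny of degree $2$ with kernel $E^{\sqrt{-1}}$, we can normalize $c$ only modulo $(1-\alpha)E=E$, so we may in fact take $c=0$ in $E$. The nontrivial point is that this must be done compatibly with $G$, and that is exactly where non-splitting enters. We will argue that if $c$ could be killed modulo $G$ inside $E^{\sqrt{-1}}$-fixed data, then the element $\varphi$ would lift to $\tilde Z$ with order $m$ and split \eqref{eq:multiple fiber v:ses of H}. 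This contradicts the non-splitting assumption. Hence the only remaining normalized form is $c=g\in E[2]\setminus E^{\sqrt{-1}}$, giving $\varphi(x,y)=(\pm\sqrt{-1}x+g,\,y+a)$.

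\textbf{Step 3: compute the order of $a$.} We compute $\varphi^m$ on the cover. The $E$-part of $\varphi^2$ is $x\mapsto -x+(1\pm\sqrt{-1})g$, which equals $-x+\tau$ with $\tau=(1+\sqrt{-1})g$, using $2g=0$. Since $k$ is odd, $\varphi^{2k}$ sends $x\mapsto -x+\tau$ and $y\mapsto y+2ka$. On the other hand, \Cref{prop:semistable reduction theorem untangled ver}(1) gives $\varphi^m=\psi^m=(-x,y)$ on $Z_u$. Hence $(-x+\tau,\,y+2ka)$ must agree with $(-x,y)$ modulo $G$, which forces $2ka=\sigma(\tau)=\sigma((1+\sqrt{-1})g)$. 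Since $\sigma$ is injective, $2ka\neq 0$ is $2$-torsion, so $a$ has order exactly $4k$. Freeness of the action on $\tilde Y_t$ excludes any smaller order.

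\textbf{Main obstacle.} The hardest step is Step 2: showing that the constant $c$ cannot be normalized to $0$, and tying this precisely to the non-splitting of $H$. I would first attempt it by identifying $H$ with the group generated by lifts of $\varphi$ together with $G$ acting on $E\times A_s$. If $c$ could be made $\sqrt{-1}$-compatible, the resulting lift $\tilde\varphi$ would have $\tilde\varphi^m\in G$ trivial, which would split $H$.
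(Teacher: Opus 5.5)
Steps 1 and 3 follow the paper's outline, but Step 2 has a genuine gap, and Step 3 inherits it.

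\textbf{The gap in Step 2.} You first observe, correctly, that shifting the origin by $x_0\in E$ changes $c$ by $(1-\alpha)x_0$, so that $c=0$ can be reached. You then assert the opposite: that non-splitting forbids this and forces $c=g\in E[2]\setminus E^{\sqrt{-1}}$. No working mechanism is given for this, and the obstruction you name does not exist:
\begin{itemize}
\item Translations commute with $G$, so "compatibility with $G$" imposes nothing on $x_0$.
\item For $G=E^{\sqrt{-1}}$, the lift $(x,y)\mapsto(\alpha x+c,\,y+a)$ commutes with $G$ for every $c$.
\item A lift of $\varphi$ to $\tilde Z$ has order at least $4k=2m$, not $m$.
\end{itemize}

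What really constrains $c$ is the normalization you only invoke in Step 3. The coordinates on $Z_u$ must be those descended from $\tilde Z_u$ in which $\varphi^{2k}=\psi^{2k}$ acts as $(-x,y)$. Conjugating by the translation by $(x_0,y_0)$ turns $(-x,y)$ into $(-x+2x_0,y)$. So these coordinates allow only $x_0\in E[2]$, which moves $c$ within $(1-\alpha)E[2]=E^{\sqrt{-1}}$. With $c=0$ you would get $\varphi^{2k}=(-x,y+2ka)$, which is incompatible with this normalization as soon as $2ka\neq 0$. As written, Step 3 uses $2g=0$, which Step 2 never established, so the argument is circular.

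\textbf{How to repair it.} Follow the paper's order of deductions.
\begin{enumerate}
\item Keep the normalized coordinates and write $\varphi(0,0)=(g,a)$ with $g$ a priori arbitrary.
\item Since $\varphi^4$ acts trivially on the $E$-factor, for odd $k$ you get $\varphi^{2k}(x,y)=(-x+(1+\alpha)g,\,y+2ka)$ with no assumption on $g$.
\item Comparing with $(-x,y)$ modulo $G$ gives $((1+\alpha)g,\,2ka)\in G$.
\item For $G=E^{\sqrt{-1}}=\ker(1+\sqrt{-1})$, and since $1+\alpha$ differs from $1+\sqrt{-1}$ by a unit, this yields $(1+\sqrt{-1})^2g=0$. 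Hence $g\in E[2]$ and $2ka=\sigma((1+\sqrt{-1})g)$.
\item Only now does non-splitting enter. The lifts in $H$ of the generator of $\mu_{2k}$ are translations by $a+\sigma(h)$, and $2k\sigma(h)=0$. So \eqref{eq:multiple fiber v:ses of H} is non-split exactly when $2ka\neq0$, which rules out $g\in E^{\sqrt{-1}}$.
\item The image of $a$ in $\Alb_t=A_s/\sigma(G)$ has order exactly $2k$ by \Cref{lem:multiple fiber v:albanese}. Together with $2ka\neq 0$, this gives $\ord a=4k$.
\end{enumerate}

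\textbf{A smaller point in Step 1.} Checking that $(\alpha x,y)$ descends to $R_u$ does not show that $\psi$ lifts to $E\times A_s$. The paper sidesteps this by working on $R_u/(\{0\}\times A_s)=E/G$. Alternatively, note that $\psi$ commutes with $\psi^{2k}=(-x,y)$. It therefore preserves the image of $1-\psi^{2k}$, which is the image of $E\times\{0\}$. This gives $\psi=(\alpha x,y)$ directly.
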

		\begin{proof}
			Let us first compute the inertia automorphism $\psi$ on $R_u$. Recall that $\tilde R_u = E \times A_s$ and $R_u = (E \times A_s)/G$, where $G = E^{\sqrt{-1}}$ or $E[2]$ and the $G$-action is given by translations by combining $G \subset E$ and $\sigma : G \hookrightarrow A_s$ (\Cref{prop:semistable reduction theorem tangled ver}(2)). Since $\psi$ fixes $\{ 0 \} \times A_s \subset R_u$, there exists an induced group automorphism $\psi$ on the quotient $R_u / (\{ 0 \} \times A_s) = E/G$, whose $j$-invariant is again $1728$. Hence $\psi$ on $E/G$ is a multiplication by $\alpha = -1$ or $\pm \sqrt{-1}$. In any case, it admits a unique lift to $E \times \{ 0 \} \subset R_u$ by the same multiplication. This shows that $\psi$ is of the form
			\[ \psi (x,y) = (\alpha x, y + \xi(x)) \qquad\mbox{for}\quad (x,y) \in R_u = (E \times A_s)/G ,\]
			where $\xi : E \to A_s$ is a homomorphism. Since we already know $\psi^{2k} (x,y) = (-x,y)$ by \Cref{prop:semistable reduction theorem untangled ver}(1), $\alpha^{2k} = -1$ and $\xi((1+\alpha + \cdots + \alpha^{2k-1}) x) = 0$ for every $x \in E$. Recalling $k$ is odd, we obtain $\alpha = \pm \sqrt{-1}$ and $\xi \equiv 0$. This shows $\psi (x,y) = (\pm \sqrt{-1} x, y)$ for $(x,y) \in R_u = (E \times A_s)/G$.
			
			\medskip
			
			Writing $\varphi(0,0) = (g, a) \in Z_u = (E \times A_s)/G$, we have $\varphi (x,y) = (\pm \sqrt{-1} x + g, y + a)$. Since $\varphi$ is free and has order $4k$, $a \in A_s$ has order $4k$. Since $\varphi^{2k}(x,y) = (-x,y)$, we have $((1+\sqrt{-1})g, 2ka) \in G$. When $G = E^{\sqrt{-1}}$, this makes $g \in E[2] \setminus E^{\sqrt{-1}}$ as claimed. When $G = E[2]$, there are more freedom of $g$, but we can choose $g$ of the desired form.
		\end{proof}
		
		\begin{lemma} \label{lem:multiple fiber v:lift of varphi}
			Even when \eqref{eq:multiple fiber v:ses of H} does not split, there exists a preferred free $\mu_{4k} = \langle \tilde \varphi \rangle$-action on $\tilde Z$ lifting the $\langle \varphi \rangle$-action on $Z$. This $\tilde \varphi$ commutes with $\tau \in G$ and $\tilde Z \dashrightarrow \tilde Y$ is given by the $(\tau \circ \tilde \varphi^{2k})$-quotient.
		\end{lemma}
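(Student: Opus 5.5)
The plan is to build $\tilde \varphi$ first on the central fiber $\tilde Z_u = E \times A_s$ by the explicit formula suggested by \Cref{lem:multiple fiber v:varphi-action nonsplit}, and then extend it to all of $\tilde Z$ formally.

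\textbf{Central fiber.} By \Cref{lem:multiple fiber v:varphi-action nonsplit}, $Z_u = (E \times A_s)/G$ carries $\varphi(x,y) = (\pm\sqrt{-1}x + g, y+a)$ with $g \in E[2]\setminus E^{\sqrt{-1}}$ and $\ord a = 4k$. I define on $\tilde Z_u = E \times A_s$
\[ \tilde\varphi(x,y) = (\pm\sqrt{-1}x + g, \ y + a) . \]
It visibly lifts $\varphi$ through the $G$-quotient. Its powers satisfy $\tilde\varphi^j(x,y) = (\alpha^j x + c_j, y + ja)$, and since $\ord a = 4k$, the order of $\tilde\varphi$ is exactly $4k$ and the action is free. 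For commutation with $\tau \in G$, acting as $(x,y)\mapsto (x+\tau, y+\sigma(\tau))$: since $\tau$ is $\sqrt{-1}$-fixed, $\tilde\varphi(x+\tau,\cdot) = \tilde\varphi(x,\cdot) + (\tau,\sigma(\tau))$, so the two commute. (A general element of $G$ is not $\sqrt{-1}$-fixed, which is why only $\tau$ is claimed.)

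Next I compute $\tilde\varphi^{2k}$. Using $k$ odd, $\alpha^{2k} = -1$ and $\sum_{j<2k} \alpha^j g = (1+\alpha)\sum_{j<k}\alpha^{2j} g$; as $\alpha^2=-1$ and $k$ is odd, this equals $(1+\alpha)g = \tau$. Hence $\tilde\varphi^{2k}(x,y) = (-x + \tau, y + 2ka)$. By \Cref{lem:multiple fiber v:varphi-action nonsplit}, $2ka = \sigma(\tau)$, so $\tau\circ\tilde\varphi^{2k}(x,y) = (-x, y)$, up to the sign convention $\tau = -\tau$. This is exactly the inertia $\tilde\psi^m$ of \Cref{prop:semistable reduction theorem untangled ver}(1) with $d=2$. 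Its quotient therefore recovers the $\mu_2$-quotient $\tilde Z \dashrightarrow \tilde Y$, realising $\tilde Y$ birationally as the $(\tau\circ\tilde\varphi^{2k})$-quotient.

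\textbf{Extension to $\tilde Z$.} I would proceed as in \Cref{lem:multiple fiber ii:untangle of X}. The $\varphi$-action is an isomorphism $\Phi : Z \to \zeta^* Z$ of abelian torsors over $U$. The isomorphism $\tilde\varphi$ on the central fiber, together with the covering $\tilde Z \to Z$ (étale, $G$-Galois, $\tilde Z$ smooth over $U$), should lift $\Phi$ uniquely to $\tilde\Phi : \tilde Z \to \zeta^*\tilde Z$ by \Cref{lem:lifting morphism of abelian torsors}. The identities established on the central fiber then propagate by separatedness and rigidity:
\begin{itemize}
\item the cocycle condition giving order $4k$,
\item commutation with $\tau$,
\item the relation $\tau\circ\tilde\varphi^{2k} = \tilde\psi^m$.
\end{itemize}
Freeness propagates as well, since away from $u$ the action is Galois on the base.

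\textbf{Main obstacle.} The delicate point is the lifting step. $\tilde Z \to Z$ is only a $G$-torsor, and $\varphi$ does not normalise $G$ elementwise, since conjugation by $\tilde\varphi$ acts on $G$ via $\alpha$. So the lift $\tilde\Phi$ must be compatible with the twisted identification $\zeta^*\tilde Z \cong \tilde Z$, and uniqueness holds only after fixing the central-fiber choice. The word "preferred" refers to the choice of $g$ and of the origin. I expect to have to check that \Cref{lem:lifting morphism of abelian torsors} applies to $G$-coverings intertwined by an automorphism of $G$, or otherwise argue via the connectedness of $\tilde Z_0$.
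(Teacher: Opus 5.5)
Your proposal is correct and is essentially the paper's proof. It uses the same formula $\tilde\varphi(x,y) = (\pm\sqrt{-1}x+g,\ y+a)$ on $\tilde Z_u = E\times A_s$ and the same checks: order $4k$, freeness, commutation with the $\sqrt{-1}$-fixed $\tau$, and $\tau\circ\tilde\varphi^{2k}(x,y) = (-x,y)$ via $2ka=\sigma(\tau)$. It also extends formally via \Cref{lem:lifting morphism of abelian torsors} exactly as in \Cref{lem:multiple fiber ii:untangle of X}.

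The obstacle you flag does not arise. That lemma is applied to the composite $\tilde Z \to Z \to \zeta^*Z$ against the isogeny $\zeta^*\tilde Z \to \zeta^*Z$, and it needs no compatibility with the $G$-action. The identities $\tilde\varphi^{4k}=\id$ and $\tilde\varphi\circ\tau=\tau\circ\tilde\varphi$ then hold on all of $\tilde Z$, because in each case both sides lift the same map on $Z$ and agree on the central fiber.
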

		\begin{proof}
			Simply take the automorphism $\tilde \varphi (x,y) = (\pm \sqrt{-1} x + g, y + a)$ of $\tilde Z_u = E \times A_s$, the same formula as \Cref{lem:multiple fiber v:varphi-action nonsplit}. Computation shows such $\tilde \varphi$ has an order $4k$ and commutes with $\tau = (1+\sqrt{-1})g$. The extension of $\tilde \varphi$ to an automorphism of $\tilde Z$ is formal as in \Cref{lem:multiple fiber ii:untangle of X} (use \Cref{lem:lifting morphism of abelian torsors}). Finally, $\tau \circ \tilde \varphi^{2k}(x,y) = \tau(-x + (1 + \sqrt{-1})g, y + 2ka) = (-x,y)$ is the automorphism describing the quotient $\tilde Z \to \tilde Z / \mu_2$ whose crepant resolution is $\tilde Y$.
		\end{proof}
		
		\begin{corollary} \label{cor:multiple fiber v:when H doesnt split}
			If \eqref{eq:multiple fiber v:ses of H} does not split, then $G = E^{\sqrt{-1}} = \ZZ/2$. Moreover,
			\begin{enumerate}
				\item $Q$ has type $\mathrm I_0^*/2$ and $P$ has type $\mathrm{III}/2$ or $\mathrm{III}^*/2$.
				\item $H \cong \ZZ/4k = \langle \tilde \varphi \rangle$ acts on $\tilde Y_t = C \times A_s$ as the case $\mathrm I_0^*$-b in \Cref{table:exceptional unstable-like fibers}.
			\end{enumerate}
		\end{corollary}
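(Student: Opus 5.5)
We work in the setting of Lemma~\ref{lem:multiple fiber v:varphi-action nonsplit} and Lemma~\ref{lem:multiple fiber v:lift of varphi}, with $m = 2k$, $k$ odd, $\tilde Y$ of type $\mathrm I_0^*$, $j(E) = 1728$ and $d = 2$. By Corollary~\ref{cor:multiple fiber v:d and m are coprime}, $G$ is $E^{\sqrt{-1}}$ or $E[2]$, and $\tilde Z_u = E \times A_s$ carries the lift
\[ \tilde\varphi(x,y) = (\pm\sqrt{-1}x + g, \, y+a). \]

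\textbf{Step 1: ruling out $G = E[2]$.} Assume $G = E[2]$, so $G \cong (\ZZ/2)^{\times 2}$. Then $H$ is generated by $G$ and the image of $\tilde\varphi$. I would compute $\tilde\varphi$ on $E[2]$. Multiplication by $\sqrt{-1}$ fixes $0$ and $\tau$ and swaps the other two $2$-torsion points, and translation by $g$ does not change this conjugation action on translations. Hence the translation subgroup $G$ is normalized by $\tilde\varphi$, but $\tilde\varphi$ does not commute with all of $G$.

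The lift of $\varphi$ to the covering $A_s \to \Alb_s$ acts on the base only by a translation by $a$. So $H$, being a Galois group of an isogeny of abelian torsors, must be abelian. Concretely, I would check that $\tilde\varphi$ composed with translation by $g' \in E[2]\setminus E^{\sqrt{-1}}$ fails to commute on the $A_s$-factor via $\sigma$, that is, $\sigma(g'-\sqrt{-1}g') = \sigma(\tau) \neq 0$. This contradicts commutativity. The cleaner route is via the $G$-action on $\tilde Y_t = C\times A_s$. The group $H$ acts on $C \times A_s$ diagonally through the Albanese stabilizer, and $\tilde\varphi$ must commute with $G$ by Lemma~\ref{lem:automorphism of chain of rational curves}. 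It commutes with $\tau$ but not with $g'$, contradiction. I expect this to be the main obstacle: commuting with $G$ has to be justified even though $H$ is non-split. I would derive it from the fact that $H$ is the abelian Galois group of $A_s \to \Alb_s$ and that the $H$-action on $C\times A_s$ is faithful.

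\textbf{Step 2: the types of $Q$ and $P$.} With $G = E^{\sqrt{-1}} \cong \ZZ/2$, the untangle $\tilde Q$ of $Q$ has type $\mathrm I_0^*$, since $h$ is smooth with $d = 2$ (Table~\ref{table:semistable reduction theorem}). Hence $Q$ has type $\mathrm I_0^*/2$. For $P$, the proof of Lemma~\ref{lem:multiple fiber v:varphi-action nonsplit} gives $\psi(x,y) = (\pm\sqrt{-1}x, y)$ on $R_u$, so $\psi$ has order $4$ on $E/G$. Corollary~\ref{cor:semistable reduction theorem} then shows that $P$ is isogenous to type $\mathrm{III}$ or $\mathrm{III}^*$. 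Its Albanese stabilizer is read off from the triviality of $R_u = (E\times A_s)/G$, as in Corollary~\ref{cor:semistable reduction theorem}(3). Since $\sigma(\tau) \neq 0$, the stabilizer is nontrivial of order $2$, which gives $\mathrm{III}/2$ or $\mathrm{III}^*/2$.

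\textbf{Step 3: the action on $C$.} $H$ is generated by $\tilde\varphi$ of order $4k$, so $H \cong \ZZ/4k$. Its action on $\tilde Y_t = C\times A_s$ descends from $\tilde Z_u$ through the quotient by $\tau\circ\tilde\varphi^{2k}(x,y) = (-x,y)$ of Lemma~\ref{lem:multiple fiber v:lift of varphi}. On the branch points $E[2]/\pm1$ of $F = E/\pm1$, the map $x\mapsto \sqrt{-1}x + g$ acts as follows. It fixes no $2$-torsion point, and its square is $x \mapsto -x + \tau$, which swaps $0 \leftrightarrow \tau$ and $g \leftrightarrow g+\tau$. Therefore it cycles all four points $E[2]$, that is, it cycles $E_1,\dots,E_4$. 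The action on $A_s$ is translation by $a$. This is exactly case $\mathrm I_0^*$-b of Table~\ref{table:exceptional unstable-like fibers}.
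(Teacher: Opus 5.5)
Your proposal is correct. Steps 2 and 3 essentially follow the paper, but Step 1 (excluding $G=E[2]$) takes a genuinely different route.

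\textbf{The paper's route for Step 1.} The paper never uses that $H$ is abelian. It passes to the intermediate quotients $\bar Z=\tilde Z/\tau$, $\bar Y=\tilde Y/\tau$ and $\bar X=\bar Y/\tilde\varphi$. It treats $\bar X\to X$ as an untangle with group $G'=G/\langle\tau\rangle$, exactly as in \Cref{lem:multiple fiber ii:untangle of X}, so that $G'\hookrightarrow\pi_0(\bar P_s)$. It then uses \Cref{thm:Edixhoven} to compute $\pi_0(\bar P_s)=\pi_0\big((E^{\sqrt{-1}}\times A_s)/\tau\big)=0$, which forces $G'=0$.

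\textbf{Your route.} You note that $H$ is the Galois group of an isogeny of abelian torsors, hence abelian. Both $G$ and $\tilde\varphi|_{\tilde Y_t}$ lie in $H$, since $H$ is the deck group of the connected cover $\tilde Y_t\to\frac{1}{m}X_s$ and $\tilde\varphi$ lifts $\varphi$ by \Cref{lem:multiple fiber v:lift of varphi}. Finally, $\tilde\varphi$ does not commute with $g'\in E[2]\setminus E^{\sqrt{-1}}$.

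\textbf{What each buys.} Your argument is more elementary and isolates the actual obstruction: $\sqrt{-1}$ moves $g'$ by $\tau$, and $\sigma$ is injective. The paper's route gets $\pi_0(P_s)=0$ as a by-product. It then uses this with \Cref{table:classification of Neron component group} to separate $\mathrm{III}/2$ from $\mathrm{III}$.

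In your Step 2 you replace that by the loosely phrased \Cref{cor:semistable reduction theorem}(3). The precise version is the same computation:
\[
R_u^\psi=(E^{\sqrt{-1}}\times A_s)/\langle(\tau,\sigma(\tau))\rangle\cong A_s .
\]
This is connected precisely because $\sigma(\tau)\neq 0$. Hence $\pi_0(P_s)=0$, and the table gives $\mathrm{III}/2$ or $\mathrm{III}^*/2$.

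\textbf{Two statements in Step 1 to correct.} Neither breaks the argument.
\begin{enumerate}
\item $\tilde\varphi$ does \emph{not} normalize the diagonally embedded $G$. Conjugation sends $(g',\sigma(g'))$ to $(\sqrt{-1}g',\sigma(g'))=(g'+\tau,\sigma(g'))$. This lies in $G$ only if $\sigma(\tau)=0$, and that is what your expression $\sigma(g'-\sqrt{-1}g')=\sigma(\tau)\neq 0$ actually measures. The commutator $[\tilde\varphi,g']$ is translation by $(\tau,0)$: trivial on the $A_s$-factor, nontrivial on $E$. It commutes with $(x,y)\mapsto(-x,y)$, so it descends to a nontrivial automorphism of $F=E/\pm1$, hence of $C$ and of $\tilde Y_t$. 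Either observation contradicts $H$ being abelian, or $G=\ker(H\to\mu_m)$ being normal in $H$.
\item \Cref{lem:automorphism of chain of rational curves} gives only diagonality, not commutation. Commutation comes from $H$ being abelian, as you say at the end.
\end{enumerate}

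Step 3 agrees with the paper. Your fixed-point-free-square argument for the $4$-cycle is a fine substitute for writing out the orbit $0\mapsto g\mapsto\tau\mapsto\sqrt{-1}g$.
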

		\begin{proof}
			Take the automorphism $\tilde \varphi$ on $\tilde Z$ in \Cref{lem:multiple fiber v:lift of varphi}. Noticing $\tilde \varphi$ and $\tau$ commute, take the free quotients $\bar Z = \tilde Z / \tau$ and $\bar Y = \tilde Y / \tau$. Then $\tilde \varphi$ descends to the $\mu_{4k} = \langle \tilde \varphi \rangle$-action on $\bar Z$, and $\bar Z \dashrightarrow \bar Y$ is a rational quotient by $\tilde \varphi^{2k}$. Taking the further quotient $\bar X = \bar Y / \tilde \varphi$, we obtain the following commutative diagram:
			\[\begin{tikzcd}[column sep=normal]
				\tilde Z \arrow[r, dashed, "/\tau \tilde \varphi^{2k}"] \arrow[d, "/\tau"] & \tilde Y \arrow[d, "/\tau"] \\
				\bar Z \arrow[r, dashed, "/\tilde \varphi^{2k}"] \arrow[d] & \bar Y \arrow[r, "/\tilde \varphi"] \arrow[d] & \bar X \arrow[d] \\
				Z \arrow[r, dashed, "/\varphi^{2k}"] & Y \arrow[r, "/\varphi"] & X
			\end{tikzcd}.\]
			The lower half of the diagram plays the same role with \Cref{lem:multiple fiber ii:untangle of X}. Therefore, taking $G' = G / \langle \tau \rangle = 0$ or $\ZZ/2$, we have a left exact sequence $0 \to G' \to \bar P \to P$ with $G' \cap \bar P^\circ = 0$. This shows the inclusion $G' \subset \pi_0 (\bar P_s)$.
			
			\medskip
			
			(1) Let us now study the upper half of the diagram, or equivalently assume $G = E^{\sqrt{-1}} = \langle \tau \rangle$. By \Cref{lem:multiple fiber v:lift of varphi}, we have a $\langle \tilde \varphi \rangle \times \langle \tau \rangle$-action on $\tilde Z$. Such action descends to the $\mu_{4k} = \langle \tilde \varphi \rangle$-action on $\tilde Y$, which describes the $H$-action. \Cref{prop:semistable reduction theorem tangled ver} shows that $Q$ has type $\mathrm I_0^*/2$, and $P$ is isogenous to type $\mathrm{III}$ or $\mathrm{III}^*$. By \Cref{lem:multiple fiber v:varphi-action nonsplit}, $R_u^\psi = (E^{\sqrt{-1}} \times A_s)/\tau \cong A_s$ has a trivial $\pi_0$. By \Cref{thm:Edixhoven}, this shows $\pi_0(P_s) = 0$ as claimed. Now that we know $\pi_0 (\bar P_s) = 0$, this forces $G' = 0$. That is, the case $G = E[2]$ is impossible.
			
			\medskip
			
			(2) By definition of $\tilde \varphi(x) = \pm \sqrt{-1} x + g$ for $x \in E$, it cycles $\tilde \varphi : 0 \mapsto g \mapsto \tau = (1+\sqrt{-1})g \mapsto \sqrt{-1}g \mapsto 0$. These are the four ramification points of $E \to F = E / \pm1 = \PP^1$. Hence $\tilde \varphi$ descends to an automorphism on $F$ that cycles four branch points, which are $E_i \cap F$ in the type $\mathrm I_0^*$ curve $C = \sum_{i=1}^4 E_i + 2F$.
		\end{proof}
		
		\begin{proof} [Proof of \Cref{thm:classification of multiple fiber v}]
			Suppose that the sequence \eqref{eq:multiple fiber v:ses of H} splits. In this case, we have an isomorphism $\tfrac{1}{m} X_s = \tfrac{1}{m} \tilde X_s / G = \tilde Y_t / (\langle \tilde \varphi \rangle \times G) = (C \times A_s) / (\langle \tilde \varphi \rangle \times G)$ and the $\langle \tilde \varphi \rangle$-action on $\tilde Y_t$ is classified in \Cref{cor:multiple fiber v:classification of varphi}. Note that the Albanese stabilizer $G$ is a subgroup of $\pi_0(\tilde P_s)$ by \Cref{lem:multiple fiber v:classification of G}, which is again classified in \Cref{lem:multiple fiber v:Kodaira type of P} and \Cref{lem:multiple fiber v:varphi-action strictly semistable}(1).
			
			If $h$ is smooth and $\tilde \varphi$ is not exceptional, then $\tilde \varphi$ acts trivially on the $C$ factor, so $\frac{1}{m} \tilde X_s = (C \times A_s/a) / G$ is isomorphic to the unstable fibers in \Cref{thm:classification of unstable fibers}. If $h$ is smooth and $\tilde \varphi$ is exceptional, then we obtain the types in \Cref{table:exceptional unstable-like fibers} except $\mathrm I_0^*$-b. If $h$ is strictly semistable, this is the description of the type $\mathrm I_R^*$ for $R = mr$.
			
			\medskip
			
			Even when the sequence \eqref{eq:multiple fiber v:ses of H} does not split, we still have an isomorphism $\tfrac{1}{m} X_s = (C \times A_s) / H$. The $H$-action on $\tilde Y_t$ is described in \Cref{cor:multiple fiber v:when H doesnt split}. This is the remaining case $\mathrm I_0^*$-b.
		\end{proof}

\section{Examples} \label{sec:examples}
	Let us realize all central fibers $X_s$ classified in \Cref{thm:classification of semistable fibers}, \ref{thm:classification of unstable fibers}, and those in \S \ref{sec:classification of multiple fibers}. Throughout, we assume that $E$ is an elliptic curve, $A$ is an abelian variety of dimension $n-1$, and $\omega, \zeta \in \GG_m$ are roots of unity.
	
	\begin{example}
		Let $X \to S$ be a minimal elliptic fibration and $\mathfrak A \to S$ an abelian scheme of relative dimension $n-1$. Then $X \times_S \mathfrak A \to S$ is a minimal $\delta$-regular abelian fibration with a central fiber $X_s \times \mathfrak A_s = X_s \times A$.
	\end{example}
	
	\begin{example} [Semistable fibrations with torsion shears] \label{ex:semistable with torsion shear}
		Let $f : X \to S$ be any minimal $\delta$-regular abelian fibration with a t-automorphism scheme $P \to S$. Let $G \subset P_s$ be a finite subgroup that \emph{acts freely on $X_s$}, i.e., $G$ has a trivial intersection with every possible stabilizer of $X_s$. Shrinking $S$, we may extend it to torsion sections $G \subset P$. The quotient $\bar f : \bar X = X/G \to S$ is a minimal $\delta$-regular abelian fibration with a central fiber $\bar X_s = X_s / G$.
		
		If $f$ is as in the previous example, then $X_s = C \times A$ and $\bar X_s = (C \times A)/G$. Suppose that $C$ is a type $\mathrm I_{er}$ semistable curve for positive integers $e$ and $r$. Then $P_s \cong \ZZ/er \times \GG_m \times A$ and the only nontrivial stabilizer is $\{ 0 \} \times \GG_m \times \{ 0 \}$ by \Cref{lem:stabilizer of Ir}. Take any torsion element $(r,\zeta,a) \in P_s$ of order $e$ and $G \subset P_s$ a cyclic group generated by it. This yields a semistable $\bar X_s = (C \times A) / G$ of type $\mathrm I_r$ with a torsion shear $a \in A$ and Albanese torsor $\Alb_{\bar X_s} = A/a$.
	\end{example}
	
	\begin{example} [Unstable fibrations]
		In \Cref{ex:semistable with torsion shear}, suppose that $C$ is an unstable Kodaira curve. Then $P_s \cong \pi_0(P_s) \times \GG_a \times A_s$ and every stabilizer is contained in $\pi_0(P_s) \times \GG_a \times \{ 0 \}$. If $G$ is a finite abelian group with two inclusions $G \subset \pi_0(P_s)$ and $G \subset A_s$, then $G \subset P_s$ yields a minimal $\delta$-regular abelian fibration $\bar X = X / G \to S$ with Albanese stabilizer $G$.
	\end{example}
	
	\begin{example} [Semistable fibrations with non-torsion shears]
		Nakamura \cite{nak77} and Hwang--Oguiso \cite{hwang-ogu16} constructed explicit examples of minimal $\delta$-regular abelian fibrations $f : X \to S$ of type $\mathrm I_1$ with non-torsion shears. We refer to the original articles for their (analytic) constructions. If we take a degree $r$ totally ramified base change $T \to S$ and a crepant resolution $Y \to X_T \to T$, this constructs a type $\mathrm I_r$ fiber with non-torsion shears.
	\end{example}
	
	\begin{example} [Unstable isotrivial fibrations]
		Many of the examples in \cite[\S 6]{mat01}, \cite[\S 6]{hwang-ogu11}, \cite{kim-laza-mar26} are isotrivial. Let $f : X \to S$ be a minimal $\delta$-regular abelian fibration that is non-multiple and isotrivial (all smooth fibers are isomorphic to an abelian variety $B$). A standard level structure argument shows that there exists a finite base change $T \to S$ such that $f_T$ is birational to a trivial family $g : Y = B \times T \to T$. This means that $f$ is unstable (e.g., \Cref{thm:Edixhoven}) and moreover, it cannot be $\mathrm I_{\ge 1}^*$-type (\Cref{prop:semistable reduction theorem untangled ver}). This generalizes \cite[Proposition~4.20]{kim-laza-mar26}.
		
		Conversely, start with an elliptic curve $E$, abelian variety $A$ of dimension $n-1$, and trivial family $Y = (E \times A) \times \AA^1 \to \AA^1$. Endow it with a $\mu_d = \langle \varphi \rangle$-action by
		\[ \varphi (x,y,t) = (\omega^{\pm1} x, y, \omega t) \qquad\mbox{for}\quad (x,y,t) \in Y = E \times A \times \AA^1 \]
		described in \Cref{prop:semistable reduction theorem untangled ver}. Its quotient is a klt-minimal abelian fibration $\bar X = Y/\varphi \to \AA^1$ whose minimal model $X \to \AA^1$ can be any non-$\mathrm I_{\ge 1}^*$-type unstable fibration (with a trivial Albanese stabilizer). To produce examples with nontrivial Albanese stabilizers, use \Cref{ex:semistable with torsion shear}.
	\end{example}
	
	\begin{example} [Type $\mathrm I_0^*/4$]
		Given a variety $Y$, we denote by $Y^{[2]}$ and $Y^{(2)}$ their Hilbert and symmetric squares, respectively. Take a \emph{smooth} elliptic fibration $Y \to \AA^1$ and
		\[ Y^{[2]} \to Y^{(2)} \to (\AA^1)^{(2)} \cong \AA^2 .\]
		It is singular along the diagonal $\AA^1 \subset (\AA^1)^{(2)}$. Take a general trait $S$ on the base that intersects transversally with $\AA^1$. The restriction of $Y^{[2]}$ over $S$ is a minimal $\delta$-regular abelian fibration $f : X \to S$ whose central fiber $X_s$ is a union of two $\PP^1$-bundles over an elliptic curve $E$, one with multiplicity $1$ and the other one with multiplicity $2$. An interesting computation of $X_s$ is left to the reader, but even without it, $X_s$ must be type $\mathrm I_0^*/4$ according to our classification in \Cref{thm:classification of unstable fibers}.
	\end{example}
	
	The following examples construct multiple central fibers. Note that our classifications of multiple fibrations have explicit descriptions of the inertia action $\varphi$ (or $\tilde \varphi$), so constructing examples is easy.
	
	\begin{example} [Multiple fibers via nontrivial torsors]
		Let $S = \Spec R$ and $S_0 = \Spec K$ be the Spec of a strictly henselian dvr and its field of fractions. Let $P \to S$ be any $\delta$-regular N\'eron model of an abelian variety over $S_0$. Any $\alpha \in H^1_{\acute et} (S_0, P_0)$ defines a $P_0$-torsor $X_0 \to S_0$ and its unique minimal model $f : X \to S$. If $\alpha \neq 0$, then $f$ cannot have any sections, or the central fiber $X_s$ is a multiple fiber. The group $H^1_{\acute et} (S_0, P_0)$ is torsion, so suppose that $\alpha$ has order $m$. The multiple fiber $X_s$ then has a multiplicity $m$ since the degree $m$ totally ramified base change $T \to S$ kills $\alpha$. Though this interpretation was not used in our proofs, this is an important viewpoint to think about multiple fibrations.
	\end{example}
	
	\begin{example} [Type $m \cdot \mathrm I_0$] \label{ex:mI0}
		Let $B$ be an abelian variety of dimension $n$ and $B \times T \to T = \AA^1$ a trivial family. Take its free $\mu_m = \langle \varphi \rangle$-action given by
		\[ \varphi(x, t) = (x + b, \zeta t) \qquad\mbox{for}\quad (x,t) \in B \times T ,\]
		where both $b \in B$ and $\zeta \in \GG_m$ have order $m$. The quotient is a minimal fibration $f : X = (B \times T)/\varphi \to S$ with $X_s = m \cdot (B/b)$. See \Cref{thm:classification of multiple fiber i}.
	\end{example}
	
	\begin{example} [Type $m \cdot \mathrm I_0^+$]
		Take a free $\mu_m = \langle \varphi \rangle$-action on $E \times A \times T$ by
		\[ \varphi(x,y,t) = (\omega^{\pm1} x, y + a, \zeta t) \qquad\mbox{for}\quad (x,y,t) \in E \times A \times T .\]
		Here we choose $m = dk$ for $d = 2,3,4,$ or $6$, $\zeta \in \GG_m$ is a primitive $dk$-th root of unity, $\omega = \zeta^k \in \Aut_0(E)$ has order $d$, and $a \in A$ has order $dk$. The quotient has a central fiber $X_s = dk \cdot (E \times A)/\varphi$. See \Cref{thm:classification of multiple fiber ii}.
	\end{example}
	
	\begin{example} [Type $m \cdot \mathrm I^k_R$ for $m = kl$ and $R = kr$] \label{ex:mIr}
		Start with a minimal elliptic fibration $X \to S = \AA^1$ with a multiple central fiber $X_s$ of type $l \cdot \mathrm I_r$. Take a sequence of ramified coverings of $S$ and minimal base changes over them
		\[\begin{tikzcd}
			Y \arrow[r] \arrow[d] & X^+ \arrow[r] \arrow[d] & X \arrow[d] \\
			T \arrow[r, "\mu_k"] & S^+ \arrow[r, "\mu_l"] & S
		\end{tikzcd}.\]
		Then $X_s^+$ has type $\mathrm I_{lr}$ and $Y_t$ has type $\mathrm I_{klr}$. Moreover, $Y$ admits a $\mu_{kl} = \langle \varphi \rangle$-action. By \Cref{lem:classification of multiple fiber iii:psi-action}, the inertia action is of the form
		\[ \varphi (i,z) = (i + kr, \zeta^i z) \qquad\mbox{for}\quad (i,z) \in Y_t = \sum_{i=1}^{klr} C_i ,\]
		where $\zeta$ is a primitive $kl$-th root. Take an abelian variety $A$ of dimension $n-1$ and construct a $\mu_{kl}$-action on $Y \times A$ by
		\[ \varphi (x, y) = (\varphi(x), y + a) \qquad\mbox{for}\quad (x,y) \in Y \times A ,\]
		where $a \in A$ has order $kl$. The quotient $(Y \times A)/\mu_{kl} \to S$ has a central fiber of type $kl \cdot \mathrm I^k_{kr}$. The t-automorphism $P$ of the quotient is the $\psi$-invariant of the t-automorphism of $Y \times A$. See \Cref{thm:classification of multiple fiber iii}.
	\end{example}
	
	\begin{example} [Type $m \cdot \mathrm I^+_R$ for $m = 2k$ and $R = kr$] \label{ex:2kIkr+}
		Start with a minimal elliptic fibration $X \to S = \AA^1$ with a central fiber of type $\mathrm I_r^*$. Take a sequence of minimal elliptic fibrations
		\[\begin{tikzcd}
			Y \arrow[r] \arrow[d] & Z \arrow[r, dashed] \arrow[d] & X \arrow[d] \\
			T \arrow[r, "\mu_k"] & U \arrow[r, "\mu_2"] & S
		\end{tikzcd}.\]
		Then $Z_u$ has type $\mathrm I_{2r}$ and $Y_t$ has type $\mathrm I_{2kr}$. Moreover, $Y$ admits a $\mu_{2k} = \langle \varphi \rangle$-action, which is described on $Y_t$ by
		\[ \varphi (i,z) = (-i, \tfrac{\zeta^i}{z}) \qquad\mbox{for}\quad (i,z) \in Y_t ,\]
		where $\zeta$ is a primitive $2k$-th root. Now combine it with a translation automorphism by $a \in A$ of order $2k$ to form a diagonal $\mu_{2k}$-action on $Y \times A \to T$. Its quotient is $(Y \times A)/\varphi \to S$ with a central fiber of type $2k \cdot \mathrm I^+_{kr}$. See \Cref{thm:classification of multiple fiber iv}.
	\end{example}
	
	\begin{example} [Type $m \cdot \mathrm I^-_R$ for $m = 2k$ and $R = kr$]
		Take the same type $\mathrm I_{2kr}$ minimal elliptic fibration $Y \to T$ in \Cref{ex:2kIkr+}. It has a central t-automorphism group $\ZZ/2kr \times \GG_m$, so take any t-automorphism $\varepsilon_1$ of $Y$ that acts on $Y_t$ by $(1, \varepsilon) \in \ZZ/2kr \times \GG_m$. Then $\tilde \varphi = \varepsilon_1 \circ \varphi$ is an automorphism on $Y$ that acts on $Y_t$ by
		\[ \tilde \varphi (i,z) = (1-i, \tfrac{\varepsilon \zeta^i}{z}) \qquad\mbox{for}\quad (i,z) \in Y_t .\]
		One checks that it has order $4k$ on $Y$, such that $\tilde \varphi^{2k}$ acts trivially on $T$ and acts on $Y_t$ as a t-automorphism $\tilde \varphi^{2k}(i,z) = (i, -z)$. Use this to define a diagonal $\mu_{4k}$-action by $Y \times A$ (translation of order $4k$ on $A$), so that the quotient $(Y \times A)/\tilde \varphi \to S$ has a central fiber of type $2k \cdot \mathrm I^-_{kr}$. See \Cref{thm:classification of multiple fiber iv}.
	\end{example}
	
	\begin{example} [Type $m \cdot \mathrm I_R^*$ for odd $m = k$ and $R = kr$]
		Again start with $Y \to T$ in \Cref{ex:2kIkr+} of type $\mathrm I_{2kr}$, but assume furthermore that $k$ is odd. Endow $Y \times A$ with a diagonal $\mu_{2k}$-action
		\[ \varphi(x,y) = (\varphi(x), y+a) \]
		but with $a \in A$ of order $k$ (instead of the original $2k$). Then $\varphi^k$ acts as $\varphi^k (i,z,y) = (-i, \frac{(-1)^i}{z}, y)$ on the central fiber because $k$ is odd. This is the automorphism described in \Cref{prop:semistable reduction theorem untangled ver}(2), so $(Y \times A)/\varphi^k \to T/\varphi^k$ has a minimal model of type $\mathrm I_{kr}^*$. Its subsequent $\varphi$-quotient has type $k \cdot \mathrm I_{kr}^*$. See \Cref{lem:multiple fiber v:varphi-action strictly semistable}.
	\end{example}
	
	\begin{example} [Other unstable-like multiple fibers]
		Start with a trivial family $E \times A \times T \to T = \AA^1$. Take a group automorphism $\omega$ on $E$ of order $d = 2,3,4,$ or $6$, an integer $m$ coprime with $d$, and a primitive $dm$-th root $\zeta$ such that $\omega = \zeta^m$. Endow it with a $\mu_{dm}$-action by
		\[ \varphi (x,y,t) = (\omega^{\pm1} x, y + a, \zeta t) \qquad\mbox{for}\quad (x,y,t) \in E \times A \times T ,\]
		where $a \in A$ has order $m$. The quotient $(E \times A \times T)/\varphi \to S$ has an unstable-like multiple central fiber as described in \Cref{lem:multiple fiber v:varphi-action smooth}. Changing the first entry of $\varphi$ to $\omega^{\pm1/2} x$ or $\omega^{\pm1/3}x$ will yield four exceptional types (and their quotients). See \Cref{lem:multiple fiber v:varphi-action smooth}.
		
		To produce the most exotic type $m \cdot \mathrm I_0^*$-b, assume $j(E) = 1728$, $d = 2$, $m = 2k$ for odd $k$, and endow it with a $\mu_{4k} \times \ZZ/2$-action on $E \times A \times U$ by
		\[ \tilde \varphi(x,y,t) = (\pm \sqrt{-1} x + g, y + a, \zeta t) ,\qquad \tau(x,y,t) = (x + (1 + \sqrt{-1}) g, y + 2ka, t) ,\]
		where $g \in E[2] \setminus E^{\sqrt{-1}}$, $a \in A$ has order $4k$, and $\zeta$ is a primitive $4k$-th root. The quotient $(E \times A \times U)/\langle \tilde \varphi, \tau \rangle \to S = U / \mu_{4k}$ has a central fiber of type $2k \cdot \mathrm I_0^*$-b. See \Cref{lem:multiple fiber v:varphi-action nonsplit}.
	\end{example}

\appendix
\section{Auxiliary results} \label{sec:appendix}
	All discussions are over a fixed field $k$ of characteristic $0$. In many cases, we need to apply these results to non-reduced and reducible schemes $X$.
	
	\medskip
	
	We thank Takumi Murayama for explaining his results on the Koll\'ar injectivity and torsion-freeness theorem.
	
	\begin{lemma} \label{lem:Kollar-Kovacs}
		Let $S$ be a Dedekind scheme over $k$, and $f : X \to S$ a flat projective morphism of relative dimension $n$ where $X$ has rational singularities. Then $R^if_* \mathcal O_X$ is locally free for $0 \le i \le n$ and its formation commutes with base change.
	\end{lemma}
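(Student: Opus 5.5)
The plan is to reduce the statement to cohomology and base change on the Dedekind base. Since the claim is local on $S$, we may assume $S = \Spec R$ for a dvr $R$ with uniformizer $z$ and closed point $s$. By the cohomology and base change theorem (Mumford, Hartshorne III.12.11), it suffices to show that every $R^i f_* \mathcal O_X$ is torsion-free, hence free over the dvr $R$. Indeed, if all $R^i f_*\mathcal O_X$ are free, then the long exact sequence attached to
\[ 0 \to \mathcal O_X \xrightarrow{\,z\,} \mathcal O_X \to \mathcal O_{X_s} \to 0 \]
breaks into short exact sequences $0 \to R^if_*\mathcal O_X \otimes k(s) \to H^i(X_s, \mathcal O_{X_s}) \to 0$. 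Hence the base change maps are isomorphisms in every degree, and by flatness of $f$ this gives commutation with arbitrary base change.

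The key step is therefore the torsion-freeness of $R^if_*\mathcal O_X$. I would argue through Grothendieck duality together with Koll\'ar's torsion-freeness theorem. Because $X$ has rational singularities, we may choose a resolution $\pi : \tilde X \to X$ with $R\pi_*\mathcal O_{\tilde X} = \mathcal O_X$. Then
\[ R^if_*\mathcal O_X = R^i(f\circ\pi)_*\mathcal O_{\tilde X}, \]
so we may work on the smooth total space $\tilde X$. By Grothendieck duality for the proper morphism $f \circ \pi$, the module $R^i(f\circ\pi)_*\mathcal O_{\tilde X}$ is dual, up to $\Ext^1$ corrections over $R$, to $R^{n-i}(f\circ\pi)_*\omega_{\tilde X/S}$. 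Koll\'ar's torsion-freeness theorem, in the form established for excellent schemes in characteristic $0$ by Murayama, shows that each $R^j(f\circ\pi)_*\omega_{\tilde X}$ is torsion-free, hence free over the dvr. Feeding this into the duality spectral sequence forces the $\Ext^1_R$ terms to vanish, and so $R^i(f\circ\pi)_*\mathcal O_{\tilde X}$ is free as well. The same conclusion can alternatively be reached by a Hodge-theoretic argument (Koll\'ar's ``higher direct images II'') after reducing to finite type over $k$ by Néron/Artin approximation.

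The main obstacle is the torsion-freeness step. It needs Koll\'ar's theorem in a generality beyond varieties of finite type over $\mathbb C$, since $S$ is only a Dedekind scheme over $k$, and this is exactly where the citation of \cite{kol-kov25} and Murayama's extension is required. Everything after that is formal base change over a dvr. The range $0 \le i \le n$ is automatic: for $i > n$ the sheaves $R^if_*\mathcal O_X$ vanish by the dimension of the fibers.
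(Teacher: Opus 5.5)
Your proposal is correct. The local-freeness part uses the same inputs as the paper, and the base-change part takes a different and shorter route.

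\textbf{Local freeness.} Both arguments apply Murayama's extension of Koll\'ar's torsion-freeness theorem to the canonical sheaf, then use Grothendieck duality over the base to transfer torsion-freeness to $R^if_*\mathcal O_X$. The paper does this directly on $X$ with $\omega_f$, citing (16) and (19) of Koll\'ar--Kov\'acs. You pass to a resolution $\tilde X$ and write out the sequences
\[ 0 \to \Ext^1_R(R^{i+1}h_*\mathcal O_{\tilde X}, R) \to R^{n-i}h_*\omega_h \to \Hom_R(R^ih_*\mathcal O_{\tilde X}, R) \to 0 . \]
This has the same content: the torsion $\Ext^1$ term embeds in a torsion-free module, so it vanishes.

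\textbf{Base change.} Here the routes genuinely differ. The paper follows Koll\'ar--Kov\'acs: it proves the derived splitting $Rf_*\omega_f \simeq \bigoplus_i R^if_*\omega_f[-i]$ by Koll\'ar's induction on a general hyperplane section. The key step is that $g_*\omega_g \to R^1f_*\omega_f$ splits because $\Ext^1_S(R^1f_*\omega_f,\ker\alpha)=0$. The paper then deduces base change from this splitting. You observe instead that once every $R^if_*\mathcal O_X$ is locally free, base change to arbitrary $T\to S$ is formal for flat proper $f$. A bounded complex of finite projectives with projective cohomology is split, so the perfect complex computing $Rf_*\mathcal O_X$ decomposes and commutes with any tensor product. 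Equivalently, you can use cohomology and base change by descending induction, or your sequence with multiplication by $z$ at the closed point.

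\textbf{Comparison.} Over a Dedekind base your argument is more economical. Torsion-freeness already gives local freeness there, so the derived splitting comes for free and the hyperplane induction is unnecessary. The paper's route follows an argument built for bases of arbitrary dimension, where torsion-freeness alone does not give local freeness and the splitting carries real content.

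\textbf{A minor point.} Your aside about reaching the conclusion through Hodge theory after Artin/N\'eron approximation is not needed. As stated it would require real work, for instance descending rational singularities along a Popescu-type approximation. I would drop it.
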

	\begin{proof}
		This is a consequence of \cite[Theorem~1]{kol-kov25}, where $S$ assumed to be a variety over $k$. We simply point out their arguments applies the same when $S$ is a Dedekind scheme, which we summarize as follows.
		
		\medskip
		
		Our claim is Zariski local on the base, so we may assume $S = \Spec R$ where $R$ is a Dedekind domain with residue field $k$. First, $R^if_* \omega_X$ is locally free because it is torsion-free by \cite[Theorem~8.6.1]{mur24} (or by \cite[Theorem~A(ii)]{mur25} with the injectivity $\Longrightarrow$ torsion-freeness argument). By (16) and (19) of \cite{kol-kov25}, both $R^if_* \omega_f$ and $R^if_* \mathcal O_X$ are locally free as well. To show the formation of higher direct images of $\mathcal O_X$ commutes with arbitrary base change, it is enough to prove a quasi-isomorphism $Rf_* \mathcal O_X = \bigoplus_{i=0}^n R^if_* \mathcal O_X [-i]$ by the cohomology base change for quasi-coherent sheaves \cite[Tag~08IB]{stacks-project}.
		
		Again by (16) in op. cit., we may prove $Rf_* \omega_f = \bigoplus_{i=0}^n R^if_* \omega_f [-i]$ instead. For it, take a sufficiently general and positive divisor $H \subset X$, the composition $g : H \subset X \to S$, and apply the induction argument of Theorem~1 in op. cit and \cite[Theorem~3.1]{kol86II}. The only part that needs to be checked is the splitting of the surjective homomorphism $\alpha : g_* \omega_g \to R^1f_* \omega_f$. Since $S$ is affine and $R^1f_* \omega_f$ is locally free, we have $\Ext^1_S (R^1f_* \omega_f, \ker \alpha) = 0$. This completes the proof.
	\end{proof}

	\subsection{Algebraic groups and actions}
		An \emph{isogeny} between two commutative connected algebraic groups $G$ and $H$ is a quasi-finite surjective homomorphism $f : G \to H$. An \emph{isogeny} between two abelian torsors is a finite \'etale morphism between them, which is necessarily Galois.
		
		\begin{lemma} \label{lem:isogeny}
			Let $G$ and $H$ be connected commutative algebraic groups. Then an isogeny $f : G \to H$ induces isogenies of their linear and abelian variety parts. In particular, the linear parts of $G$ and $H$ are isomorphic.
		\end{lemma}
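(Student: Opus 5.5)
We prove \Cref{lem:isogeny} by showing that the Chevalley decomposition is functorial and that $f$ restricts to a surjection with finite kernel on each piece. Write the Chevalley sequences
\[ 0 \to L_G \to G \to A_G \to 0 ,\qquad 0 \to L_H \to H \to A_H \to 0 .\]

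First, functoriality. The image $f(L_G)$ is a connected linear subgroup of $H$. Its image in $A_H$ is a linear connected subgroup of an abelian variety, hence trivial. Therefore $f(L_G) \subset L_H$, and $f$ induces a homomorphism $\bar f : A_G \to A_H$ fitting into a map of short exact sequences.

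Second, surjectivity. Since $f$ is surjective, $\bar f$ is surjective. For the linear parts, the image $f(G)=H$ maps onto $A_H$ with kernel $L_H$. We have $\dim A_H \le \dim A_G$, and the kernel of $\bar f$ has dimension $\dim A_G - \dim A_H$. Since $\ker f$ is finite, $\dim G = \dim H$. The preimage of $L_H$ in $G$ is $f^{-1}(L_H)$, which has dimension $\dim L_H$ and contains $L_G$. Its neutral component $M$ is an extension of a connected subgroup of $A_G$ (namely $(\ker \bar f)^\circ$) by $L_G$.

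The main point is to show $\ker\bar f$ is finite. The restriction $f|_M : M \to L_H$ has finite kernel, so its image is a closed subgroup of $L_H$ of dimension $\dim M$, hence linear. Then $M$ is isogenous onto a linear group. Any abelian variety quotient of $M$ then receives a map from $M$ that factors, up to finite kernel, through a linear group, so it must be trivial. Concretely, the maximal abelian quotient of $M$ is $(\ker\bar f)^\circ$, and a connected group with finite kernel map to a linear group is affine, since affineness descends along finite morphisms. Hence $(\ker\bar f)^\circ$ is trivial, $\bar f$ is an isogeny, and $\dim L_G=\dim L_H$. Since $f(L_G)\subset L_H$ with finite kernel and equal dimensions, and $L_H$ is connected, $f|_{L_G}:L_G\to L_H$ is surjective, hence an isogeny.

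Finally, the isomorphism claim. In characteristic $0$, a connected commutative linear group is $T\times U$ with $T$ a torus and $U$ a vector group. An isogeny preserves the dimensions of the torus part and the unipotent part separately. Hence $L_G \cong \GG_m^a\times\GG_a^b \cong L_H$.

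The only delicate step is the affineness descent. I would cite the standard fact that a connected algebraic group admitting a homomorphism with finite kernel to an affine group is affine (e.g.\ \cite{brion15}).
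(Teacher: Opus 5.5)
Your proof is correct, and it follows the same overall plan as the paper: a morphism of Chevalley sequences, plus the rigidity that abelian varieties and affine groups cannot interact. The difference is that you run the argument in the opposite order.

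The paper applies the snake lemma. This gives that $h\colon A_G\to A_H$ is surjective and that the connecting map $\ker h\to\coker g$ is surjective. Since $(\ker h)^\circ$ is an abelian variety, it cannot surject onto the connected linear group $\coker g$, so $\coker g=0$. Finiteness of $\ker g$ and of $\ker h$ then follows from the same exact sequence.

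You first show that $(\ker\bar f)^\circ$ is trivial. You observe that $M=\pi_G^{-1}\bigl((\ker\bar f)^\circ\bigr)$ maps with finite kernel into $L_H$, so it is finite over an affine group and hence affine. Its abelian quotient $(\ker\bar f)^\circ$ must therefore vanish. You then get surjectivity of $L_G\to L_H$ by a dimension count. The fact you use is that a scheme finite over an affine scheme is affine; that is the easy direction, so ``descends'' is the wrong word but the step is sound.

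The paper's route is more economical, since one exact sequence delivers all four conclusions at once. Yours avoids the snake lemma and explicitly justifies $f(L_G)\subset L_H$, which the paper takes for granted, at the cost of the extra affineness step.

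For the final isomorphism $L_G\cong\GG_m^a\times\GG_a^b\cong L_H$, both arguments implicitly assume split tori, for instance $k$ algebraically closed. Your claim that the isogeny preserves torus and unipotent dimensions separately holds because the map is diagonal, as $\Hom(\GG_m,\GG_a)=\Hom(\GG_a,\GG_m)=0$; it is worth saying so.
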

		\begin{proof}
			The homomorphism $f$ induces a homomorphism between Chevalley exact sequences
			\[\begin{tikzcd}
				& \ker g \arrow[d] & \ker f \arrow[d] & \ker h \arrow[d] \\
				0 \arrow[r] & L_G \arrow[r] \arrow[d, "g"] & G \arrow[r] \arrow[d, "f"] & A_G \arrow[r] \arrow[d, "h"] & 0 \\
				0 \arrow[r] & L_H \arrow[r] \arrow[d] & H \arrow[r] \arrow[d] & A_H \arrow[r] \arrow[d] & 0 \\
				& 0 & 0 & 0
			\end{tikzcd}.\]
			The homomorphism $h$ is surjective by the Snake lemma. The neutral component of $\ker h$ is an abelian variety, so it does not admit a surjection onto a connected linear algebraic group. This forces $\coker g$ to be finite, or equivalently $g$ is surjective. Hence both $g$ and $h$ are isogenies. For the second statement, notice that every connected and commutative linear algebraic group over $k$ is of the form $\GG_m^{\times a} \times \GG_a^{\times b}$. If the two are isogenous, then they are isomorphic (not by the isogeny).
		\end{proof}
		
		A \emph{locally algebraic group} is a locally finite type group scheme $G$ over $k$. An \emph{algebraic group} is a finite type group scheme $G$ over $k$, or equivalently a locally algebraic group with finite $\pi_0(G)$.
		
		\begin{lemma} \label{lem:pi_0 sequence splits}
			Let $G$ be a commutative locally algebraic group.
			\begin{enumerate}
				\item If $\pi_0(G)$ is finitely generated, then $0 \to G^{\circ} \to G \to \pi_0(G) \to 0$ splits.
				\item If $\pi_0(G)$ is finite and $G^{\circ}$ is unipotent, then the splitting is unique.
			\end{enumerate}
		\end{lemma}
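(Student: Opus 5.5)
The plan is to split the sequence $0 \to G^\circ \to G \to \pi_0(G) \to 0$ by reducing to divisibility of $G^\circ$. Over an algebraically closed field of characteristic $0$ (we may pass to $\bar k$ and use Galois descent, or simply work with $k$-points when $k$ is algebraically closed, as is always the case in the body of the paper), a connected commutative algebraic group $G^\circ$ is divisible. The reason: its Chevalley sequence $0 \to L \to G^\circ \to A \to 0$ has $L \cong \GG_m^a \times \GG_a^b$ and $A$ an abelian variety, and multiplication by $n$ is surjective on each piece, hence on $G^\circ$ by the snake lemma. Since $\pi_0(G)$ is discrete, splitting the sequence as locally algebraic groups amounts to splitting it as abstract abelian groups on $k$-points and then taking the induced homomorphism from the constant group $\pi_0(G)$.

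For (1), I will write the finitely generated group $\pi_0(G) \cong \ZZ^r \oplus \bigoplus_j \ZZ/n_j$. For each free generator I choose an arbitrary lift. For each torsion generator $c_j$ of order $n_j$, I pick a lift $g_j$. Then $n_j g_j \in G^\circ(k)$, and divisibility gives $h_j \in G^\circ(k)$ with $n_j h_j = n_j g_j$. Replacing $g_j$ by $g_j - h_j$ yields a lift of exact order $n_j$. These lifts define a homomorphism $\pi_0(G) \to G(k)$ that sections the projection, and hence a morphism of group schemes from the constant group $\pi_0(G)$ to $G$. This is exactly the argument already used informally in the proof of \Cref{lem:stabilizer of Ir}. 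Equivalently, one can note that divisible groups are injective $\ZZ$-modules, so $\Ext^1(\pi_0(G), G^\circ(k)) = 0$.

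For (2), two splittings differ by a homomorphism $\pi_0(G) \to G^\circ$. Since $\pi_0(G)$ is finite, any such homomorphism lands in the torsion of $G^\circ$. A unipotent group in characteristic $0$ is a vector group $\GG_a^b$, which is torsion-free, so the difference is zero and the splitting is unique.

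The only step needing care is the passage from abstract $k$-points to scheme morphisms when $k$ is not algebraically closed. I would handle this by assuming $k$ is algebraically closed, as in all applications in the paper. Alternatively, one can use that a homomorphism out of a constant (or étale) group is determined by Galois-compatible points, together with uniqueness in (2) when descent is needed.
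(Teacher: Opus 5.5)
Your proof is correct and is essentially the paper's argument. The paper likewise lifts each torsion generator of $\pi_0(G)$ to a point $g_i$ with $d_i g_i = 0$, using surjectivity of multiplication by $d_i$ (i.e.\ divisibility of $G^\circ$), and gets uniqueness from $\Hom(\pi_0(G), G^\circ) = 0$ when $\pi_0(G)$ is finite and $G^\circ$ is torsion-free unipotent.

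One caveat concerns your aside about Galois descent for (1). Over a non-closed field, statement (1) genuinely fails: for instance $\Ext^1(\ZZ/2, \GG_m) \cong k^\times/k^{\times 2}$ can be nonzero. So assuming $k$ algebraically closed, as the paper implicitly does, is necessary rather than just a convenience.
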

		\begin{proof}
			Fix an isomorphism $\pi_0(G) = \bigoplus_{i=1}^s \ZZ/d_i \oplus \bigoplus_{j=1}^r \ZZ$. For each $i$, take a connected component $G^i \subset G$ corresponding to a generator $1 \in \ZZ/d_i \subset \pi_0(G)$. The $d_i$-th power morphism $[d_i] : G^i \to G^{\circ}$ is surjective by \cite[Lemma~7.3.1]{neron}, so take any point $g_i \in G^i$ such that $d_i \cdot g_i = 0$. For each $j$, take any point $g_j \in G$ in the component corresponding to a generator of the $j$-th $\ZZ$. Then the subgroup of $G$ generated by all $g_i$ and $g_j$ is isomorphic to $\pi_0(G)$ and contains precisely one element in each connected component of $G$. This proves the $\pi_0$-sequence splits. The splitting is unique iff $\Hom(\pi_0(G), G^{\circ}) = 0$.
		\end{proof}
		
		\begin{lemma} \label{lem:lifting morphism of abelian torsors}
			Let $A$, $B$, and $\tilde B$ be abelian torsors over a scheme $S$, $f : A \to B$ a morphism of abelian torsors, and $\tilde B \to B$ an isogeny. Assume that $f_s : A_s \to B_s$ over a closed point $s \in S$ lifts to a morphism $\tilde f_s : A_s \to \tilde B_s$. Then shrinking $(S, s)$ \'etale locally, there exists a unique lift of $f$ to a morphism of abelian torsors $\tilde f : A \to \tilde B$ whose restriction over $s$ is $\tilde f_s$.
		\end{lemma}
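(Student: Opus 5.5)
The plan is to reduce to a rigidity statement for the isogeny $\tilde B \to B$, which is finite étale. First, the lifting problem is a statement about sections of a finite étale cover. Form the fiber product $A' \coloneq A \times_{B} \tilde B$, taken along $f$ and the isogeny $\tilde B \to B$. Then $A' \to A$ is finite étale, being a base change of the finite étale morphism $\tilde B \to B$. A lift $\tilde f : A \to \tilde B$ of $f$ is exactly a section of $A' \to A$. The given lift $\tilde f_s$ is a section of $A'_s \to A_s$ over the closed fiber.

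Next, produce the section. Sections of a finite étale (hence separated and unramified) morphism are open and closed immersions. So a section of $A'_s \to A_s$ singles out a connected component $A'_{s,1}$ of $A'_s$ that maps isomorphically onto $A_s$. After shrinking $(S,s)$ étale locally (for instance, passing to the strict henselization and then descending to an étale neighborhood by finite presentation), we may assume $S$ is henselian local. Since $A \to S$ is proper, the connected components of $A'$ then correspond bijectively to those of the closed fiber $A'_s$; this is proper base change for $\pi_0$ over a henselian base. Take the component $A'_1$ of $A'$ whose closed fiber is $A'_{s,1}$. The morphism $A'_1 \to A$ is finite étale, and it has degree $1$ over $s$. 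Since $A$ is connected and its fibers are connected, the degree is constant, so $A'_1 \to A$ is finite étale of degree $1$, i.e. an isomorphism. Its inverse, composed with the projection to $\tilde B$, gives $\tilde f$.

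The remaining steps are the following.
\begin{enumerate}
\item Uniqueness. Two lifts agreeing on $A_s$ give two sections of $A' \to A$. Their equalizer is open and closed, because the cover is étale and separated. The equalizer contains $A_s$, and every connected component of $A$ meets $A_s$ over a local base. So the two sections agree everywhere.
\item Compatibility with the torsor structures. This says $\tilde f$ is equivariant for the homomorphism of the underlying abelian schemes. I would deduce it by applying the same uniqueness to the morphisms $A \times_S A \to \tilde B$ that describe the equivariance condition, since they agree over $s$. Rigidity also handles the group homomorphism on the underlying abelian schemes once its restriction over $s$ is known.
\end{enumerate}

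The main obstacle is making the passage to an étale neighborhood rigorous, since the correspondence of connected components needs a henselian base. The natural route is to work over the henselization first and then spread the open-and-closed component out to an étale neighborhood of $s$ using limit arguments.
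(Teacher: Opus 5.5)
Your argument is correct, but it takes a different route from the paper's.

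The paper works over $\Spec \mathcal O_{S,s}^{sh}$ and uses that the torsors then have sections, which reduces the problem to abelian schemes with $f$ a homomorphism. It then runs the long exact sequence of $0 \to G \to \tilde B \to B \to 0$. The obstruction to lifting $f$ lies in $\Ext^1(A,G) = H^0(S, \SheafExt^1(A,G)) = \Hom(\hat G, \check A)$, so it is a collection of torsion sections of $\check A$. These vanish because they vanish at $s$. Uniqueness comes from $\Hom(A,G) = 0$.

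You instead encode lifts as sections of the finite \'etale cover $A \times_B \tilde B \to A$. You then use the henselian bijection between open-and-closed subsets of a proper $S$-scheme and those of its closed fiber, which gives existence and uniqueness at once.

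Your route is more elementary: it needs no Cartier duality and no identification $\SheafExt^1(A,\GG_m) = \check A$. It works for arbitrary $S$-morphisms, and it needs only a henselian (not strictly henselian) base, since you never trivialize the torsors. The price is a separate rigidity step for compatibility with the torsor structures. The paper gets that step for free by staying with group-scheme homomorphisms, and it also identifies the obstruction group explicitly.

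Both proofs rest on the same principle: finite \'etale data over a henselian local base are determined by the closed fiber. The paper applies it to $\check A[n]$, and you apply it to $A \times_B \tilde B$. The passage from the henselization back to an honest \'etale neighborhood, which you flag as the main obstacle, is glossed over in the paper as well.
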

		\begin{proof}
			We may assume $S = \Spec \mathcal O_{S, s}^{sh}$ is strictly henselian. Then all abelian torsors are trivial, so we may assume they are abelian schemes and $f$ is a homomorphism. Take a short exact sequence $0 \to G \to \tilde B \to B \to 0$ of fppf abelian sheaves over $S$ (here $G$ is a constant finite group scheme) and its long exact sequence
			\[ \Hom (A, G) = 0 \to \Hom (A, \tilde B) \to \Hom (A, B) \to \Ext^1 (A, G) .\]
			The last term is the global section of the sheaf-Ext $\SheafExt^1 (A, G)$ since $S$ is strictly henselian \cite[Tag~03QO]{stacks-project}. Noticing $\SheafExt^1 (A, G) = \SheafHom (\hat G, \SheafExt^1 (A, \GG_m)) = \SheafHom (\hat G, \check A)$, the last term is $\Hom (\hat G, \check A)$. A torsion section of $\check A$ is trivial if and only if its restriction to $s$ is trivial.
		\end{proof}
		
		\begin{lemma} \label{lem:quotient action}
			Let $G$ be an algebraic group acting on a quasi-projective scheme $X$ over $k$. If $K \subset Z_G$ is a finite subgroup of the center of $G$, then there exists an induced $G/K$-action on $X/K$.
		\end{lemma}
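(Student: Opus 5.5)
The plan is to realize $X/K$ as a categorical quotient and then descend the action using the universal property. First, since $K$ is a finite group acting on the quasi-projective scheme $X$, every $K$-orbit is contained in an affine open subset. So the geometric quotient $\pi : X \to X/K$ exists as a quasi-projective scheme: one glues $\Spec \mathcal O(U)^K$ over $K$-stable affine opens $U$. Moreover $\pi$ is finite and surjective, $\mathcal O_{X/K} = (\pi_*\mathcal O_X)^K$, and since $\operatorname{char} k = 0$ (so $|K|$ is invertible) the formation of $X/K$ commutes with flat base change. In particular, taking invariants commutes with $-\otimes_k \mathcal O(G)$, which gives
\[ (G \times X)/K \cong G \times (X/K) \]
for $K$ acting only on the second factor.

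Next, I would let $K$ act on $G \times X$ diagonally by $\kappa\cdot(g,x) = (g, \kappa x)$, that is, trivially on $G$. Because $K$ lies in the center $Z_G$, the action map $a : G\times X \to X$ satisfies
\[ a(g,\kappa x) = g\kappa x = \kappa g x, \]
so $a$ is $K$-equivariant. Composing with $\pi$, the map $\pi\circ a$ is $K$-invariant. By the universal property of the categorical quotient, together with the identification $(G\times X)/K = G\times X/K$ above, it descends to a morphism
\[ \bar a : G \times X/K \to X/K. \]
The action axioms (associativity and the unit) hold for $\bar a$ because they hold after precomposing with the epimorphisms $G\times G\times X \to G\times G\times X/K$ and $X\to X/K$. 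These maps are epimorphisms since they are quotient maps and hence surjective with structure sheaf equal to the invariants, so morphisms out of them are determined by their pullbacks.

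Finally, $K\subset G$ acts trivially on $X/K$ by construction: $\bar a(\kappa,\pi x) = \pi(\kappa x) = \pi x$. To pass to $G/K$, I would apply the same descent argument once more, now to the $K$-invariant map $\bar a$ with $K$ acting on the $G$-factor by translation. This uses that $G\to G/K$ is a $K$-torsor, so that $(G\times X/K)/K = G/K\times X/K$ by fppf descent. The resulting map is the desired $G/K$-action. The main point needing care is the base-change compatibility of the finite quotient, which is where characteristic $0$ (exactness of invariants) is used. Everything else is formal.
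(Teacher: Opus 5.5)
Your argument is correct and is essentially the paper's. The paper lets $K\times K$ act on $G\times X$ by $(k,k')\cdot(g,x)=(kg,k'x)$, observes that centrality makes the action map equivariant for the multiplication $K\times K\to K$, and takes the quotient $(G\times X)/(K\times K)=G/K\times X/K$ in one step, whereas you carry out the same descent in two stages, first along $X\to X/K$ and then along the $K$-torsor $G\to G/K$; your checks (invariants commuting with base change, the epimorphism property used to verify the action axioms) are what the paper's ``easily checked'' leaves to the reader.
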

		\begin{proof}
			Endow the action morphism $G \times X \to X$ with an equivariant $K^{\times 2}$-action by $(k,k') \cdot (g,x) = (kg, k' \cdot x)$ and $(k,k') \cdot x = (kk') \cdot x$. Its scheme-theoretic quotient yields a morphism $G/K \times X/K \to X/K$. The group action axioms are easily checked.
		\end{proof}
		
		The following is a slight generalization of \cite[Lemma~5.16]{ari-fed16}.
		
		\begin{lemma} \label{lem:linear stabilizer}
			Let $G$ be an algebraic group acting on a proper connected scheme $X$ over $k$. If the kernel of the action is affine, then then the stabilizer group $\St_x$ is affine for every point $x \in X$.
		\end{lemma}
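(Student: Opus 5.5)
Given $x \in X$, I would study the orbit map $a_x : G \to X$, $g \mapsto g \cdot x$, whose fiber over $x$ is $\St_x$. The aim is to show that $\St_x$ is affine. A commutative-free version of Chevalley's structure theorem (Chevalley--Barsotti--Rosenlicht) states that an algebraic group is affine exactly when it contains no nontrivial abelian subvariety. Since char $k = 0$, $(\St_x)_{\red}$ is smooth, and a finite-type group scheme is affine iff its reduction is affine. So it suffices to prove that every abelian subvariety $B \subset \St_x$ is trivial.

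Fix such an abelian subvariety $B \subset \St_x^\circ$, and let $N \subset G$ be the kernel of the action, which is affine by assumption. I would show that $B$ acts trivially on all of $X$, so that $B \subset N$. Then $B$ would be both affine and proper connected, hence $B = 0$.

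To prove $B$ acts trivially, use the rigidity lemma. Consider the action morphism restricted to $B \times X \to X$, and look at a connected open or closed neighborhood structure. More concretely, apply rigidity to the morphism $\mu : B \times X \to X \times X$, $(b,y) \mapsto (b\cdot y, y)$, or better to $B \times X \to X$ over $X$. Here $B$ is proper and connected, and $X$ is proper connected (hence $B\times X$ is proper). On the fiber $B \times \{x\}$ the map collapses to the point $x$ because $B \subset \St_x$. The rigidity lemma, in the form ``if $p : B\times Y \to Z$ with $B$ proper connected (geometrically reduced) and $Y$ connected, and $p(B\times\{y_0\})$ is a point, then $p$ factors through the projection to $Y$'', then gives $b\cdot y = e\cdot y = y$ for all $b\in B$ and $y\in X$. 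This requires $X$ to be connected, which is assumed, and it holds for non-reduced $X$ since rigidity only needs $B$ proper with $\mathcal O(B)=k$ and works for arbitrary connected noetherian $Y$ (Mumford's version, via $\mathcal O_{B\times Y}$ pushforward being $\mathcal O_Y$). Thus $B \subset N$ scheme-theoretically, and we are done.

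The main obstacle I anticipate is applying rigidity when $X$ is non-reduced and reducible. I would use the scheme-theoretic rigidity lemma: the pushforward $\pr_{2*}\mathcal O_{B\times X} = \mathcal O_X$ by flat base change, since $H^0(B,\mathcal O_B)=k$. Together with the fact that the set of $y$ whose fiber is contracted is open and closed (by properness of $B$ and an affine-neighborhood argument at the image point), this gives factorization over the connected $X$. A secondary point is replacing $\St_x$ by its reduced neutral component and working with a maximal abelian subvariety; this is standard.
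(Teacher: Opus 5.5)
\textbf{There is a genuine gap in your reduction step.} You assert that an algebraic group is affine exactly when it contains no nontrivial abelian subvariety. This is false, because the Chevalley sequence $0 \to L \to H \to A \to 0$ need not split, even up to isogeny.

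For example, let $0 \to \GG_a \to U \to E \to 0$ be a non-split extension of an elliptic curve. It exists because $\Ext^1(E,\GG_a) \cong H^1(E,\mathcal O_E) \cong k$. Then $U$ is not affine. Any abelian subvariety $B \subset U$ meets $\GG_a$ in a finite, hence trivial, subgroup. So $B$ is either $0$ or maps isomorphically onto $E$, and the latter would split the extension.

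Similarly, a $\GG_m$-extension of a simple abelian variety with non-torsion class contains no nontrivial abelian subvariety at all. An example is $P^\circ_s$ for a fibration with non-torsion shear and simple $A_s$; compare the use of Matsumura--Miyanishi in \Cref{cor:dimension of Albanese}. If $\St_x$ contained such a group, your argument would detect nothing. Showing that abelian subvarieties of $\St_x$ act trivially on $X$ only settles the easy case. Your rigidity step itself is fine, including for non-reduced, reducible $X$.

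\textbf{What is missing.} The correct criterion is Rosenlicht's: a connected algebraic group $H$ is affine if and only if its anti-affine part $H_{\mathrm{ant}}$ is trivial. Here $H_{\mathrm{ant}}$ is the largest connected subgroup with $\mathcal O(H_{\mathrm{ant}}) = k$, equivalently the kernel of $H \to \Spec \mathcal O(H)$. So you must show that the anti-affine part of $\St_x^\circ$ acts trivially on $X$. This group is in general not proper, so the rigidity lemma is no longer available.

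The missing ingredient is Brion's Lemma~1.1, which the paper uses. A connected group fixing a closed point $x$ acts on each finite-dimensional algebra $\mathcal O_{X,x}/\mathfrak m_x^i$ through an affine group. Any homomorphism from an anti-affine group to an affine group is trivial. Hence $H_{\mathrm{ant}}$ acts trivially on every infinitesimal neighbourhood of $x$. By Krull's intersection theorem, the ideal of the fixed-point subscheme $X^{H_{\mathrm{ant}}}$ then vanishes at $x$, and therefore near $x$.

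The fixed locus is closed, and the same reasoning applies at each of its closed points. So your open-and-closed argument on the connected $X$ then shows that $H_{\mathrm{ant}}$ acts trivially on all of $X$. Thus $H_{\mathrm{ant}} \subset N$ is both affine and anti-affine, hence trivial.

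The paper organizes this differently. It applies Arima--Fedorchuk to each integral component and propagates the anti-affine part of the kernel through intersection points using Brion's lemma. It then passes from $X_{\red}$ to $X$ with the same infinitesimal argument. As written, however, your proof does not establish the lemma.
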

		\begin{proof}
			We may assume $G$ is connected. Let us first prove the claim when $X$ is reduced. Writing $X = \bigcup_i X_i$ for its irreducible decomposition, $G$ acts on each $X_i$. Since $X_i$ is integral, the proof of \cite[Lemma~5.16]{ari-fed16} shows that every $x \in X_i$ has an affine stabilizer if and only if $\ker (G \to \Aut_{X_i})$ is affine. Assume on the contrary that there exists an irreducible component $X_i$ with non-affine kernel, and let $H \subset G$ be the (nontrivial) \emph{anti-affine part} of the kernel. Then $H$ acts trivially on the entire $X_i$. By connectedness of $X$, there exists another component $X_j$ such that $H$ fixes a point $x \in X_i \cap X_j$. By \cite[Lemma~1.1]{brion09}, $H$ acts on the entire $X_j$ trivially. Induction (and connectedness of $X$) concludes that $H$ acts on $X$ trivially, or $H \subset \ker (G \to \Aut_X)$. But the kernel was assumed to be affine, so it cannot contain a nontrivial anti-affine subgroup.
			
			\medskip
			
			In a general case, we finish the proof by showing the induced $G$-action on $X_{\red}$ has an affine kernel. This will imply the claim because the problem of considering stabilizer groups $\St_x$ for $x \in X$ is set-theoretic and thus can be checked on $X_{\red}$.
			
			Assume on the contrary that $\ker (G \to \Aut_{X_{\red}})$ is a not affine, and let $H$ be its anti-affine part. Since every point $x \in X$ is fixed by $H$, the local ring $\mathcal O_{X, x}$ admits an $H$-action. Writing $\mathfrak m$ for its maximal ideal, $H$ acts trivially on $\mathcal O_{X,x} / \mathfrak m^i$ for $i \ge 0$ again by \cite[Lemma~1.1]{brion09}. Since $\bigcap_i \mathfrak m^i = 0$, this means that $H$ acts trivially on $\mathcal O_{X, x}$ for every point $x \in X$. Hence $H$ acts on $X$ trivially, violating the fact that the kernel of the $G$-action is affine.
		\end{proof}
		
		\begin{lemma} \label{lem:automorphism of chain of rational curves}
			Let $C$ be a proper curve whose irreducible components, with reduced induced structures, are rational curves. Then $\Aut_C$ is affine.
		\end{lemma}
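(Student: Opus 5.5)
To prove \Cref{lem:automorphism of chain of rational curves}, I plan to show that the neutral component $\Aut^\circ_C$ is a linear algebraic group. Since $\pi_0$ plays no role in affineness questions for a group scheme of finite type, this is enough. First, $\Aut_C$ is a locally algebraic group because $C$ is proper, so $\Aut^\circ_C$ is a connected algebraic group. By Chevalley's structure theorem for connected algebraic groups in characteristic $0$, it has a normal linear subgroup $L$ with $\Aut^\circ_C/L$ an abelian variety. Equivalently, it has a unique anti-affine part $H$, which is central, and $\Aut^\circ_C = H \cdot L$. Affineness of $\Aut^\circ_C$ is therefore equivalent to $H = 0$, and this is what I will aim to show.

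The argument for $H = 0$ follows the same strategy as \Cref{lem:linear stabilizer}. The group $H$ is connected, so it preserves each irreducible component $C_i$ of $C_{\red}$. It also acts on the normalization $\tilde C_i \cong \PP^1$. The group $\Aut_{\PP^1} = \PGL_2$ is affine, and any homomorphism from an anti-affine group to an affine group is trivial. Hence $H$ acts trivially on every $\tilde C_i$, and therefore on $C_{\red}$ set-theoretically; since the $C_i$ are reduced, the action is in fact trivial on $C_{\red}$ as a scheme. Next I pass to the possibly non-reduced structure exactly as in the second half of the proof of \Cref{lem:linear stabilizer}. Every point $x \in C$ is fixed by $H$, so $H$ acts on $\mathcal O_{C,x}$ and on each quotient $\mathcal O_{C,x}/\mathfrak m^i$. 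These quotients are finite-dimensional, so $H$ acts on them through an affine group, and by \cite[Lemma~1.1]{brion09} the action is trivial. Since $\bigcap_i \mathfrak m^i = 0$, $H$ acts trivially on each $\mathcal O_{C,x}$, hence on $C$. But $\Aut_C$ acts faithfully on $C$ by definition, so $H = 0$.

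The main obstacle is making precise the claim that an anti-affine group acting on a possibly singular, non-reduced curve is determined by its action on the local rings. The argument needs $H$ to fix each point and to act on the completed local rings compatibly; the fixed-point statement is what the reduction to the normalizations supplies. A second subtlety is that one must know $\Aut_C$ is representable and of finite type on its neutral component, which holds because $C$ is projective, and a proper curve is projective. Once these two points are granted, the rest is formal.
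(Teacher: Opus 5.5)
Your proposal is correct and is essentially the paper's proof. You pass to the normalization $\bigsqcup \PP^1$ so that the anti-affine part maps trivially to $\PGL_2^{\times r}$ and therefore acts trivially on $C_{\red}$; then you apply Brion's lemma to the finite-dimensional quotients $\mathcal O_{C,x}/\mathfrak m^i$ and use $\bigcap_i \mathfrak m^i = 0$ to get a trivial action on $C$, contradicting faithfulness.

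The only difference is cosmetic: you take the anti-affine part of $\Aut^\circ_C$ before, rather than after, restricting to the kernel of $G \to \PGL_2^{\times r}$. As in the paper, what you actually establish is affineness of the neutral component, and this is all that is used later.
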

		\begin{proof}
			We claim that every connected algebraic group $G$ acting faithfully on $C$ is affine. Take an induced $G$-action on the normalization $C^\nu = \bigsqcup_{i=1}^r \PP^1$. This constructs a morphism $G \to \PGL_2^{\times r}$, so it suffices to show that its kernel is affine. That is, we may assume $G$ acts on $C^\nu$ trivially, or on $C_{\red}$ trivially.
			
			Now we have a faithful $G$-action on $C$ acting trivially on $C_{\red}$. We can use Brion's method as in \Cref{lem:linear stabilizer}. Assume on the contrary that $G$ is not affine, and take its anti-affine part $H \subset G$. Then every point $x \in C$ is fixed by $H$ so we have an $H$-action on $\mathcal O_{C, x}$. By \cite[Lemma~1.1]{brion09}, $H$ acts trivially on the quotient $\mathcal O_{C, x}/\mathfrak m^i$ for every $i$, whence acts trivially on the entire $\mathcal O_{C,x}$. This shows $H$ acts on $C$ trivially, violating the assumption that $G$ acts faithfully on $C$.
		\end{proof}

		\subsubsection{Picard schemes}
			\begin{lemma} \label{lem:Picard scheme of free quotient}
				Let $G$ be a finite abelian group acting freely on a projective scheme $X$ over $k$ with $h^0 (X, \mathcal O_X) = 1$, and $q : X \to Z = X/G$ its quotient. Then $q^* : \Pic_Z \to \Pic_X$ has a kernel $\hat G = \Hom (G, \GG_m)$.
			\end{lemma}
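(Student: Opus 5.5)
The plan is to identify $\ker q^*$ with $G$-linearizations of the trivial line bundle, working at the level of points first and then upgrading to the Picard functor. Since $G$ acts freely and $X$ is projective, $q : X \to Z$ is a finite \'etale Galois $G$-torsor. By descent along $q$, giving a line bundle on $Z$ is the same as giving a $G$-equivariant line bundle on $X$. Hence $\Pic(Z) \cong \Pic^G(X)$, the group of $G$-linearized line bundles, and $q^*$ becomes the forgetful map $\Pic^G(X) \to \Pic(X)$.

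Next I would use the standard exact sequence (as in \cite[\S 7.1]{dol:invariant}, already used in \Cref{lem:classification of multiple fiber iii:description of E_i}):
\[ 0 \to \Hom(G, H^0(X,\mathcal O_X)^\times) \to \Pic^G(X) \to \Pic(X) .\]
The kernel consists of linearizations of $\mathcal O_X$. Such a linearization is a $1$-cocycle $G \to H^0(X, \mathcal O_X)^\times$. The action of $G$ on $H^0(X,\mathcal O_X) = k$ is trivial, since $h^0 = 1$ and $G$ fixes $1$. So the cocycles are homomorphisms $G \to k^\times$. Two linearizations are isomorphic exactly when they differ by a coboundary $g \mapsto g^*u/u$ with $u \in k^\times$, and these coboundaries are trivial. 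This gives $\ker(q^*) = \hat G(k)$ on $k$-points, and concretely $\chi \mapsto L_\chi$, the $\chi$-isotypic summand of $q_*\mathcal O_X$.

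To get the statement for the group schemes $\Pic_Z \to \Pic_X$, I would run the same argument functorially over an arbitrary $k$-scheme $T$. The map $X_T \to Z_T$ is still a $G$-torsor. By Lemma~\ref{lem:Kollar-Kovacs}-type cohomology and base change (here simply flat base change over a field), $H^0(X_T, \mathcal O) = H^0(T, \mathcal O_T)$ with trivial $G$-action. This yields $\ker(\Pic(Z_T)/\Pic(T) \to \Pic(X_T)/\Pic(T)) \supseteq \Hom(G, \mathcal O(T)^\times) = \hat G(T)$, with equality modulo line bundles pulled back from $T$. After fppf sheafification, this identifies $\ker q^*$ with the constant-type group scheme $\hat G = \Hom(G, \GG_m)$. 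It remains to check that each $L_\chi$ with $\chi \ne 1$ is nontrivial, so that $\hat G \to \Pic_Z$ is injective. This holds because $H^0(Z, L_\chi) = H^0(X,\mathcal O_X)_\chi = 0$ for $\chi \neq 1$, again using $h^0 = 1$.

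I expect the main obstacle to be the sheafification step. A class in $\Pic_Z(T)$ is only locally representable, so one must check that a line bundle on $Z_T$ whose pullback is trivial fppf-locally on $T$ comes, locally on $T$, from a character. I would handle this by passing to an fppf cover of $T$ on which the pullback is actually trivial. There, a trivialization of $q^*M$ gives a linearization of $\mathcal O_{X_T}$, hence a character valued in $\mathcal O(T)^\times$ by the computation above. These local characters glue to a section of the constant sheaf $\hat G$.
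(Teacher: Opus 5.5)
Your proposal is correct and is essentially the paper's argument. The paper reads off $0 \to H^1(G, H^0(X,\GG_m)) = \hat G \to \Pic(Z) \to \Pic(X)$ from the low-degree terms of the Leray (Hochschild--Serre) spectral sequence for $Z \to BG$, and your descent identification $\Pic(Z) \cong \Pic^G(X)$ plus the cocycle computation (trivial $G$-action on $H^0(X,\mathcal O_X)^\times = k^\times$, so cocycles are characters and coboundaries vanish) is that same exact sequence unpacked by hand. Your functorial version over arbitrary $T$ is more careful than the paper's, and the sheafification step you flag is harmless: sheafification is exact, and the presheaf kernel $T \mapsto \ker\bigl(\Pic(Z_T) \to \Pic(X_T)\bigr) = \Hom(G, \mathcal O(T)^\times)$ is already the sheaf represented by $\hat G$, so no separate gluing argument is needed.
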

			\begin{proof}
				The stacky quotient of the structure morphism $X \to \Spec k$ is a morphism $f : Z \to BG = [\Spec k/G]$. The Leary spectral sequence applied to $f$ is:
				\[ E_2^{p,q} = H^p (G, H^q (X, \GG_m)) \quad \Longrightarrow \quad H^{p+q} (Z, \GG_m) .\]
				This yields the desired left exact sequence $0 \to H^1 (G, \GG_m) \to H^1 (Z, \GG_m) \to H^1 (X, \GG_m)$. Note that $H^0 (X, \mathcal O_X) = k$ implies $H^0 (X, \GG_m) = \GG_m$ (an fppf sheaf on $BG$), and $H^1 (G, \GG_m) = \Hom (G, \GG_m) = \hat G$.
			\end{proof}
			
			The following is a generalization of \cite[Tag~0C6S]{stacks-project} for proper schemes of characteristic $0$.
			
			\begin{lemma} \label{lem:Picard scheme of thickening}
				Let $Y$ be a proper scheme over $k$ and $i : Z \hookrightarrow Y$ a closed subscheme defined by a nilpotent ideal. Then
				\begin{enumerate}
					\item $i^* : \Pic^\circ_Y \to \Pic^\circ_Z$ has a unipotent kernel and cokernel.
					\item $i^* : \NS(Y_{\bar k}) \to \NS(Z_{\bar k})$ is injective and has a torsion-free cokernel.
				\end{enumerate}
			\end{lemma}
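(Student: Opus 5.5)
We reduce to a statement about the sheaf of units on a nilpotent thickening. Let $\mathcal I \subset \mathcal O_Y$ be the nilpotent ideal defining $Z$. Filtering by powers $\mathcal I \supset \mathcal I^2 \supset \cdots \supset \mathcal I^N = 0$, we factor $i$ into a finite chain of square-zero thickenings $Z = Y_1 \subset Y_2 \subset \cdots \subset Y_N = Y$, where $Y_j$ is cut out by $\mathcal I^j$. Both claims are stable under composition: unipotent groups are closed under extensions and subquotients, and an injection with torsion-free cokernel composed with another such is again of that form. So it suffices to treat a square-zero ideal $\mathcal J$ with $\mathcal J^2 = 0$, viewed as a coherent $\mathcal O_{Z}$-module.

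In the square-zero case, the truncated exponential $1 + x$ gives an exact sequence of sheaves of groups $0 \to \mathcal J \to \mathcal O_Y^\times \to \mathcal O_Z^\times \to 0$; here $\mathcal J$ is additive, since $(1+a)(1+b) = 1 + a + b$. This works for every base change $Y_T \supset Z_T$ as well, with $\mathcal J$ pulled back. It yields the functorial long exact sequence
\[ H^1(Y,\mathcal J) \to \Pic(Y) \to \Pic(Z) \to H^2(Y,\mathcal J). \]
Passing to Picard functors, $\ker(i^* : \Pic_Y \to \Pic_Z)$ is a quotient of the vector group scheme $\mathbb V(H^1(\mathcal J))$ (use flat base change over affine $T$, and cohomology and base change as in the stacks project). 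Likewise, the obstruction to lifting lands in the vector group $\mathbb V(H^2(\mathcal J))$, so the image of $i^*$ has cokernel embedded in a vector group. In characteristic $0$, subquotients of vector groups are unipotent (indeed themselves vector groups). Restricting to neutral components, $\ker(i^*|_{\Pic^\circ})$ is unipotent. The cokernel $\Pic^\circ_Z/i^*\Pic^\circ_Y$ maps into $\mathbb V(H^2)$ modulo the image of the finite group $(\Pic_Y)/\Pic^\circ_Y$ intersected with it. The cleanest route is that the composite $\Pic_Z \to \mathbb V(H^2)$ is a homomorphism whose restriction to the connected group $\Pic^\circ_Z$ has unipotent image and whose kernel contains $i^*\Pic^\circ_Y$ as a subgroup of finite index in its intersection with $\Pic^\circ_Z$. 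A connected group modulo a finite subgroup of a unipotent group remains unipotent in characteristic $0$, since unipotent groups are torsion-free there, which forces that finite part to vanish. This gives (1).

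For (2), over $\bar k$ the same sequence on $\bar k$-points shows that $\ker(\Pic Y \to \Pic Z)$ is a $\bar k$-vector space, hence divisible and connected. It therefore lies in $\Pic^\circ_Y$, so a class in $\NS(Y)$ dying in $\NS(Z)$ lifts to $\Pic^\circ_Y$ modulo that kernel. This gives injectivity, once we check that a line bundle on $Y$ whose restriction lies in $\Pic^\circ_Z$ itself lies in $\Pic^\circ_Y$. We expect this to be the main obstacle. Our approach is to lift the restriction via (1): since the cokernel of $i^*$ on $\Pic^\circ$ is unipotent, hence divisible and torsion-free, some element of $\Pic^\circ_Y$ agrees with $L$ on $Z$ up to a class coming from the obstruction, and that class is zero because $L$ exists. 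The difference then lies in the vector-group kernel, which is connected. For torsion-freeness of the cokernel, suppose $nM = i^*L + N$ with $N \in \Pic^\circ_Z$. The obstruction class $o(M) \in H^2(\mathcal J)$ then satisfies $n\,o(M) = o(N)$. Since $o(N)$ lies in the unipotent image of $\Pic^\circ_Z$, which is divisible, we can adjust $M$ by an element of $\Pic^\circ_Z$ to make it liftable; torsion-freeness of $H^2(\mathcal J)$ in characteristic $0$ makes this adjustment consistent. Hence $M$ lies in the image of $\NS(Y)$.
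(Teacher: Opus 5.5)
Your proof is correct in substance. It shares the paper's skeleton: reduction to square-zero thickenings, the truncated exponential sequence, and the exact sequence $H^1(Y,\mathcal J)\to\Pic_Y\to\Pic_Z\to H^2(Y,\mathcal J)$. The middle, however, is organized differently.

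\textbf{The paper's route.} It first extracts torsion-freeness of $\ker i^*$ (via the isomorphism on $d$-torsion points, Tag~0C6S) and of $\coker i^*$. It then runs a snake-lemma chase on $\Pic^\circ\to\Pic\to\NS$. Finally it splits $\ker i^*=\ker f\times\ker g$ using the $\pi_0$-splitting lemma, and kills $\ker g\cong\ZZ^r$ by divisibility of $H^1(Y,I)$.

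\textbf{Your route.} You observe directly that $\ker i^*$ is the image of the connected vector group $\mathbb V(H^1(\mathcal J))$. It is therefore a vector group lying inside $\Pic^\circ_Y$, which bypasses both the torsion computation and the splitting lemma. Your proof that $\coker(\NS(Y)\to\NS(Z))$ is torsion-free is also more explicit than the paper's deduction from torsion-freeness of $\coker i^*$. The key point is that $o(M)=\frac1n o(N)$ lands in the $k$-subspace $o(\Pic^\circ_Z)\subset H^2(Y,\mathcal J)$. Both approaches hinge on the same fact: a discrete subgroup of a connected group whose quotient is unipotent must vanish in characteristic $0$.

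\textbf{Points to tighten.}
\begin{enumerate}
\item $\Pic_Y/\Pic^\circ_Y=\NS(Y)$ is only finitely generated, not finite, so drop the sentence calling it a finite group.
\item Your claim that $i^*\Pic^\circ_Y$ has finite index in $K=\Pic^\circ_Z\cap i^*\Pic_Y$ needs a line of justification. $K$ is a closed subgroup of $\Pic^\circ_Z$, so $K/i^*\Pic^\circ_Y$ is of finite type. It is also an fppf quotient of the \'etale group $(i^*)^{-1}(\Pic^\circ_Z)/\Pic^\circ_Y\subset\NS_Y$, hence \'etale, hence finite. The paper leaves the same step implicit when it writes ``$\ker g$ is discrete''.
\item Once this finite group is shown to vanish (because $\coker f$ is a connected extension of a vector group by it, hence unipotent and torsion-free), you have $K=i^*\Pic^\circ_Y$. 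Combined with $\ker i^*\subset\Pic^\circ_Y$, this \emph{is} injectivity of $\NS(Y)\to\NS(Z)$. Indeed, if $i^*L\in\Pic^\circ_Z$, then $i^*L=i^*L'$ for some $L'\in\Pic^\circ_Y$, so $L-L'\in\ker i^*\subset\Pic^\circ_Y$.
\item So the ``main obstacle'' you name in part (2) was already settled in part (1). The argument you actually wrote there is muddled. Unipotence of $\coker f$ alone does not force a class with vanishing obstruction to be zero. What does force it is the triviality of the kernel of $\coker f\to\mathbb V(H^2(\mathcal J))$, which is exactly the finite group above.
\end{enumerate}
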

			\begin{proof}
				We may assume $k$ is algebraically closed. All claims are stable under compositions $W \hookrightarrow Z \hookrightarrow Y$ of infinitesimal thickenings, so we may further assume $i$ is a first-order thickening, i.e., the ideal sheaf $I = I_Z$ satisfies $I^2 = 0$. Take a short exact sequence of fppf abelian sheaves $0 \to I \to \GG_{m, Y} \to i_* \GG_{m, Z} \to 0$ on $Y$, where the first homomorphism is defined by $f \mapsto 1 + f$ using the assumption $I^2 = 0$. Its long exact sequence
				\begin{equation} \label{eq:lem:Picard scheme of thickening}
					\begin{tikzcd}
						H^1 (Y, I) \arrow[r] & \Pic_Y \arrow[r, "i^*"] & \Pic_Z \arrow[r] & H^2 (Y, I)
					\end{tikzcd}
				\end{equation}
				immediately shows $\coker i^*$ is torsion-free. Taking $d$-torsions of \eqref{eq:lem:Picard scheme of thickening}, $i^* : \Pic_Y[d] \to \Pic_Z[d]$ is an isomorphism of $d$-torsion points for every $d \in \ZZ$ \cite[Tag~0C6S]{stacks-project}. Therefore, $\ker i^*$ is torsion-free as well.
				
				\medskip
				
				(Cokernels of (1) and (2)) Take a homomorphism between short exact sequences
				\[\begin{tikzcd}
					& \ker f \arrow[d] & \ker i^* \arrow[d] & \ker g \arrow[d] \\
					0 \arrow[r] & \Pic^\circ_Y \arrow[r] \arrow[d, "f"] & \Pic_Y \arrow[r] \arrow[d, "i^*"] & \NS(Y) \arrow[r] \arrow[d, "g"] & 0 \\
					0 \arrow[r] & \Pic^\circ_Z \arrow[r] \arrow[d] & \Pic_Z \arrow[r] \arrow[d] & \NS(Z) \arrow[r] \arrow[d] & 0 \\
					& \coker f & \coker i^* & \coker g
				\end{tikzcd}.\]
				Note that $\coker f$ is connected and $\coker g$ is discrete. Therefore, $\coker (\ker g \to \coker f)$ is the neutral component of $\coker i^*$. Since $\coker i^*$ was torsion-free, its neutral component $\coker (\ker g \to \coker f)$ is unipotent. But $\ker g$ is discrete, so this mans $\coker f$ is unipotent and $\ker g \to \coker f$ is the zero map. Moreover, the torsion-freeness of $\coker i^*$ implies that of $\coker g$ by \Cref{lem:pi_0 sequence splits}.
				
				\medskip
				
				(Kernels of (1) and (2)) Since $\ker i^*$ is torsion-free, so is $\ker f$. This proves the neutral component of $\ker f$ is unipotent. But $\ker f$ is of finite type and hence $\pi_0 (\ker f)$ is a finite group, so $\pi_0(\ker f)$ must be trivial and hence $\ker f$ unipotent. Now the short exact sequence $0 \to \ker f \to \ker i^* \to \ker g \to 0$ (uniquely) splits again by \Cref{lem:pi_0 sequence splits}. Since $\ker i^* = \ker f \times \ker g$ is torsion-free, $\ker g \cong \ZZ^r$ is torsion-free and $\ker i^* \cong \GG_a^{\times s} \times \ZZ^r$. Finally, recall from \eqref{eq:lem:Picard scheme of thickening} that the $k$-point group $(\ker i^*)(k) = k^s \times \ZZ^r$ is a quotient of an abelian group $H^1(Y, I) \cong k^t$. Since $k^t$ is a divisible group, its quotient $k^s \times \ZZ^r$ is divisible as well. This proves $r = 0$, or $\ker g = 0$ as claimed.
			\end{proof}

	\subsection{Albanese morphisms}
		Let us make precise the version of the Albanese morphism we use in this article.
		
		\begin{definition} \label{def:Albanese}
			Let $X$ be a finite type scheme over $k$. The \emph{Albanese morphism} of $X$ is a morphism $\alb : X \to \Alb_X$ universal among morphisms from $X$ to an abelian torsor over $k$. The abelian torsor $\Alb_X$ is called the \emph{Albanese torsor} of $X$.
		\end{definition}
		
		Notice that this definition fixes no $k$-point of $X$ (as a result, $\Alb_X$ should be an abelian torsor instead of abelian variety). This subtle distinction is necessary when $k$ is not algebraically closed or when $X$ is equipped with a group action. It is well-known that every smooth proper connected variety $X$ has an Albanese morphism. The following is a vast generalization of this fact.
		
		\begin{theorem}[{\cite{con:albanese}, \cite{lau-sch24}}] \label{thm:Albanese}
			Let $X$ be a proper scheme over $k$ with $h^0 (X, \mathcal O_X) = 1$. Then
			\begin{enumerate}
				\item The Albanese morphism $\alb : X \to \Alb_X$ exists.
				\item The pullback under $\alb$ is an injective homomorphism
				\[ \alb^* : \Pic^{\circ}_{\Alb_X} = \Alb_X^\vee \hookrightarrow \Pic^{\circ}_X ,\]
				whose image is the maximal abelian subvariety of $\Pic^{\circ}_X$. In particular, $\Alb_X$ is a torsor under the dual of the maximal abelian subvariety of $\Pic^{\circ}_X$.
			\end{enumerate}
		\end{theorem}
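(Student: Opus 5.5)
The approach is to build $\Alb_X$ directly out of the Picard scheme and check the universal property by duality. The strategy follows \cite{con:albanese, lau-sch24}.

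First, since $X$ is proper over $k$, the Picard functor is representable by a group scheme $\Pic_X$ that is locally of finite type over $k$ (Murre–Oort). Its neutral component $\Pic^\circ_X$ is a connected commutative algebraic group, which is smooth because $\operatorname{char} k = 0$. Such a group contains a unique maximal abelian subvariety $B \subset \Pic^\circ_X$: the sum of two abelian subvarieties is again one, and dimension bounds the process. We then define $\Alb_X$ to be a torsor under $B^\vee$.

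To construct $\alb$, first assume $k$ is algebraically closed. Then the inclusion $B \hookrightarrow \Pic_X$ is represented by a line bundle $\mathcal L$ on $X \times B$, well defined up to line bundles pulled back from $B$. This uses $h^0(X,\mathcal O_X)=1$, so that $\Pic_X$ is the fppf sheafification of $T \mapsto \Pic(X_T)/\Pic(T)$, and the fact that the relevant Brauer obstruction vanishes over an algebraically closed field. We read $\mathcal L$ the other way, as a family over $X$ of line bundles on $B$. All of these lie in one connected component of $\Pic_B$, which is a torsor under $\Pic^\circ_B = B^\vee$. This gives a morphism $\alb : X \to \Alb_X \coloneq$ that component.

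Next comes universality. Let $h : X \to T$ be a morphism to a torsor $T$ under an abelian variety $A'$. Pullback gives a homomorphism $h^* : A'^\vee = \Pic^\circ_T \to \Pic^\circ_X$. Its image is an abelian subvariety, so it lands in $B$. Dualizing gives $B^\vee \to A'$, and the compatibility of Poincaré bundles produces the unique morphism of torsors $\Alb_X \to T$ through which $h$ factors. Uniqueness holds because two factorizations differ by a map $\Alb_X \to A'$ that becomes trivial on the image of $X$, and that image generates $\Alb_X$. For the injectivity in (2), I would check by construction that the composite
\[ B \xrightarrow{\ \sim\ } \Pic^\circ_{\Alb_X} \xrightarrow{\alb^*} \Pic^\circ_X \]
is the inclusion $B \subset \Pic^\circ_X$, using biduality $B^{\vee\vee} = B$. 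Hence $\alb^*$ is an isomorphism onto $B$, the maximal abelian subvariety.

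The main obstacle is the case where $k$ is not algebraically closed and $X$ has no $k$-point. Then the universal bundle $\mathcal L$ on $X \times B$ may fail to exist, because of a Brauer class. I would handle this by building $\alb$ over $\bar k$, where uniqueness makes the construction canonical and hence Galois-equivariant. Galois descent (effective for quasi-projective torsors and for morphisms) then yields $\alb$ over $k$, with $\Alb_X$ possibly a nontrivial torsor. This is exactly why the paper's definition insists on abelian torsors rather than abelian varieties. The non-reducedness of $X$ plays no role beyond representability of $\Pic_X$, since every step above only uses line bundles on $X$.
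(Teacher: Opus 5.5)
Your argument is correct and is essentially the standard Picard-duality construction behind this theorem, which the paper does not prove but imports from \cite{con:albanese, lau-sch24}: you take $\Alb_X$ to be a torsor under the dual of the maximal abelian subvariety $B \subset \Pic^\circ_X$, obtain $\alb$ and its universality from the Poincar\'e bundle and biduality, and descend from a finite Galois extension over which $X$ acquires a rational point. Two small precisions: over $\bar k$ the obstruction to a universal bundle on $X \times B$ vanishes because $X \times B \to B$ has a section, not because $\Br(B)$ vanishes (that group need not be zero). Also, the assertion that $\alb(X)$ generates $\Alb_X$, on which your uniqueness argument rests, should be derived explicitly from the injectivity of $\alb^*$ that you prove in (2): if $\alb$ factored through a translate of a proper abelian subvariety $C$, then $\alb^*$ would factor through $\Pic^\circ_C$ and so have a positive-dimensional kernel.
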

		
		\begin{corollary} \label{cor:Albanese}
			Let $Y$ be a proper scheme over $k$ and $i : Z \hookrightarrow Y$ a closed subscheme defined by a nilpotent ideal. Assume $h^0 (Y, \mathcal O_Y) = h^0 (Z, \mathcal O_Z) = 1$. Then there exists a surjective morphism $\Alb_Z \twoheadrightarrow \Alb_Y$ and a commutative diagram
			\[\begin{tikzcd}
				Z \arrow[r, hook, "i"] \arrow[d, "\alb"] & Y \arrow[d, "\alb"] \\
				\Alb_Z \arrow[r, twoheadrightarrow] & \Alb_Y
			\end{tikzcd}.\]
		\end{corollary}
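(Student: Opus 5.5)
We deduce the statement from \Cref{thm:Albanese} applied to both $Z$ and $Y$, together with \Cref{lem:Picard scheme of thickening}. The morphism is produced by the universal property, and surjectivity is checked on the dual side, through the maximal abelian subvarieties of the Picard varieties.

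First, the composition $\alb_Y \circ i : Z \to \Alb_Y$ is a morphism from $Z$ to an abelian torsor. Since $h^0(Z,\mathcal O_Z)=1$, \Cref{thm:Albanese}(1) supplies $\alb_Z : Z \to \Alb_Z$, and its universal property gives a unique morphism of abelian torsors $\beta : \Alb_Z \to \Alb_Y$ with $\beta\circ\alb_Z = \alb_Y \circ i$. This is the commutative diagram in the statement. It remains to prove that $\beta$ is surjective.

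A morphism of abelian torsors is surjective if and only if the dual homomorphism $\beta^* : \Alb_Y^\vee \to \Alb_Z^\vee$ has finite kernel. Pulling back line bundles around the square gives
\[ \alb_Z^* \circ \beta^* = i^* \circ \alb_Y^* : \Alb_Y^\vee \to \Pic^\circ_Z .\]
By \Cref{thm:Albanese}(2), $\alb_Y^*$ and $\alb_Z^*$ are injective, with images the maximal abelian subvarieties of $\Pic^\circ_Y$ and $\Pic^\circ_Z$. By \Cref{lem:Picard scheme of thickening}(1), $\ker(i^* : \Pic^\circ_Y\to\Pic^\circ_Z)$ is unipotent. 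Its intersection with the abelian variety $\alb_Y^*(\Alb_Y^\vee)$ is therefore finite, since an abelian variety contains no nontrivial connected unipotent subgroup. Hence $i^*\circ\alb_Y^*$ has finite kernel. Because $\alb_Z^*$ is injective, $\ker\beta^*$ equals $\ker(i^*\circ\alb_Y^*)$ and so is finite. Consequently $\beta^*$ is an isogeny onto its image, and $\beta$ is surjective.

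The one point that needs care is the duality step: passing from the finite kernel of $\beta^*$ to the surjectivity of $\beta$ after choosing base points to turn the torsors into abelian varieties. This is standard, since the image of $\beta$ is an abelian subtorsor $B$, and $\beta^*$ factors through the surjection $\Alb_Y^\vee\to B^\vee$, whose kernel is positive-dimensional unless $B=\Alb_Y$. One may also pass to $\bar k$ first, because formation of the Albanese commutes with base field extension in the results of \cite{con:albanese, lau-sch24}.
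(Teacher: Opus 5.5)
Your proposal is correct and follows the paper's proof: both apply \Cref{thm:Albanese} to $Y$ and $Z$, which reduces the claim to showing that $i^*$ restricted to the maximal abelian subvariety $\Alb_Y^\vee \subset \Pic^\circ_Y$ has trivial kernel (finite suffices, as you observe), and both deduce this from the unipotence of $\ker i^*$ in \Cref{lem:Picard scheme of thickening}(1). You also spell out two steps the paper leaves implicit: the construction of $\Alb_Z \to \Alb_Y$ from the universal property, and the duality argument passing from a finite kernel of $\beta^*$ to surjectivity of $\beta$.
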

		\begin{proof}
			\Cref{thm:Albanese} applies to both $Y$ and $Z$. Hence it is enough to show $i^* : \Pic^\circ_Y \to \Pic^\circ_Z$ induces an injective homomorphism between their maximal abelian subvarieties. Let $A = \Alb_Y^\vee$ be the maximal abelian subvariety of $\Pic^\circ_Y$. Since $\ker i^*$ is unipotent by \Cref{lem:Picard scheme of thickening}, we have $A \cap \ker i^* = 0$, or $i^*$ injects $A$ into $\Pic^\circ_Z$.
		\end{proof}

	\subsection{Non-normal schemes via conductor subschemes} \label{sec:conductor}
		This subsection follows \cite[Tag~0ECH]{stacks-project} and \cite{sch12}. It is a generalization of the construction for singular curves in \cite[\S IV.1]{serre:class_field}.
		
		\begin{proposition}[{\cite[Tag~0E25]{stacks-project}}] \label{prop:pushout}
			Let $\tilde Y$ be a quasi-projective scheme over $k$, $\tilde Z \subset \tilde Y$ a closed subscheme, and $\tilde Z \to Z$ a finite morphism. Then there exists a pushout $Y = \tilde Y \amalg_{\tilde Z} Z$ in the category of schemes:
			\begin{equation} \label{diag:pushout}
			\begin{tikzcd}
				\tilde Z \arrow[r, hook] \arrow[d] & \tilde Y \arrow[d] \\
				Z \arrow[r, hook] & Y
			\end{tikzcd} .
			\end{equation}
			Moreover, $Z \subset Y$ is a closed subscheme, $\tilde Y \to Y$ is finite, and the diagram is also cartesian.
		\end{proposition}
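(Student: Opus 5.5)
This is the gluing (pushout) construction along a finite morphism, and I would prove it by building the affine pieces by hand and then patching them. Work first in the affine situation $\tilde Y = \Spec B$, $\tilde Z = \Spec B/I$ and $Z = \Spec C$, where $C \to B/I$ is finite. Define
\[ A = B \times_{B/I} C , \]
the fiber product of rings, and set $Y = \Spec A$. The standard argument (Ferrand's pinching, as in the Stacks Project) has three parts. First, $A \to B$ is finite: $B$ is finite over $A$ because $C$ is finite over the image of $A$, and generators of $B/I$ over $C$ lift to $B$. Second, $A \to C$ is surjective with kernel $I$ (viewed as an ideal in $A$), so $Z \to Y$ is a closed immersion. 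Third, $B \otimes_A C \to B/I$ is an isomorphism, which makes the square cartesian.

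The universal property in the category of schemes (not merely affine schemes) follows from Ferrand's theorem. Its main input is that $\Spec A$ is topologically the pushout: $B$ finite over $A$ gives surjectivity and closedness, and one checks that the points glue as expected. Finite type over $k$ follows by Artin–Tate, since $A \subset B$ and $B$ is finite over $A$.

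To globalize I would use quasi-projectivity. For any finite set of points of $\tilde Y$ that are equivalent under $\tilde Z \to Z$, one finds an affine open $\tilde U \subset \tilde Y$ containing them such that $\tilde U \cap \tilde Z$ is the preimage of an affine open of $Z$, so the open is saturated. The construction is to choose an ample divisor avoiding the finitely many points and take its complement. Because $\tilde Z \to Z$ is finite, a section of a power of the ample line bundle on $\tilde Z$ can be obtained as a norm or pullback from $Z$; since $Z$ is then quasi-projective, pull back a section from $Z$ and extend it to $\tilde Y$. The affine pushouts over these saturated opens then glue, because the ring fiber product commutes with flat localization.

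The main obstacle is this saturation step, namely finding affine opens compatible with the equivalence relation. Everything else is formal commutative algebra.
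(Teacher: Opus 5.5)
Your affine step is the standard Ferrand pinching, and it is fine. You set $A = B \times_{B/I} C$. Then $B$ is generated over $A$ by $1$ together with lifts of $C$-module generators of $B/I$; this uses $I \subset A$. Also $A \to C$ is onto with kernel $I$, and $B \otimes_A C = B/IB = B/I$. This is also the skeleton of the Stacks Project argument that the paper cites instead of giving a proof.

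The gap is in the step you yourself flag: constructing affine opens $\tilde U \subset \tilde Y$ with $\tilde U \cap \tilde Z = g^{-1}(V)$ for $V \subset Z$ affine, where $g \colon \tilde Z \to Z$. Your argument for this does not work, for three reasons.
\begin{enumerate}
\item Nothing in the hypotheses makes $Z$ quasi-projective; it is an arbitrary scheme receiving a finite map from $\tilde Z$.
\item Norms of line bundles are only available along finite locally free morphisms. The map $g$ typically is not flat, for instance the map of conductor subschemes of a non-normal variety, which is the case the paper needs.
\item Most decisively, suppose $Z$ had an ample $M$. A section of $M^{\otimes N}$ pulled back to $\tilde Z$ is a section of $g^*M^{\otimes N}$, not of $L^{\otimes N}|_{\tilde Z}$ for your ample $L$ on $\tilde Y$. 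And $g^*M$ has no reason to extend to any line bundle on $\tilde Y$. So there is nothing to extend, and no divisor on $\tilde Y$ whose trace on $\tilde Z$ is pulled back from $Z$.
\end{enumerate}

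The saturation step is instead purely local. It uses only that finite subsets of $\tilde Y$ lie in affine opens and that $g$ is finite, hence closed with finite fibers. For $y \in g(\tilde Z)$:
\begin{enumerate}
\item Take an affine open $U = \Spec B \supset g^{-1}(y)$ of $\tilde Y$, with $U \cap \tilde Z = \Spec B/I$, and an affine open $V = \Spec C \ni y$ of $Z$.
\item By prime avoidance, choose $\bar c \in B/I$ vanishing on $\Spec(B/I) \setminus g^{-1}(V)$ and nonzero at the finitely many points of $g^{-1}(y)$.
\item Since $g$ is closed, the image of $g^{-1}(V) \setminus D(\bar c)$ is closed in $V$ and misses $y$. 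So there is $a \in C$ with $a(y) \neq 0$ and $g^{-1}(D(a)) \subset D(\bar c)$.
\item On $D(\bar c) = \Spec (B/I)_{\bar c}$ write $g^\sharp a = \bar b/\bar c^{\,n}$. Then $g^{-1}(D(a)) = D(\bar b \bar c)$ inside $\Spec B/I$.
\item For any lift $e \in B$ of $\bar b\bar c$, the affine open $D(e) \subset U$ satisfies $D(e) \cap \tilde Z = g^{-1}(D(a))$, with $y \in D(a)$.
\end{enumerate}
Points of $\tilde Y \setminus \tilde Z$ and of $Z \setminus g(\tilde Z)$ are covered by those open sets themselves. With saturated affine pairs in hand, your gluing goes through, as does Ferrand's description of $Y$ as the ringed-space pushout restricted to saturated opens. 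This route needs no hypothesis on $Z$ at all.

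A minor point: $A \to B$ is injective only when $C \to B/I$ is, so Artin--Tate should be applied to $A \hookrightarrow B \times C$. Finite type over $k$ is not part of the statement anyway.
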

		
		The diagram \eqref{diag:pushout} is a pushout in the category of topological spaces as well, so it really looks like contracting $\tilde Z$ to $Z$.
		
		\medskip
		
		Let us now start with a reduced quasi-projective scheme $Y$ over $k$ and take its normalization
		\[ \nu : \tilde Y \to Y .\]
		The homomorphism $\mathcal O_Y \to \nu_* \mathcal O_{\tilde Y}$ is injective because $Y$ is reduced. The scheme-theoretic support of $F = \nu_* \mathcal O_{\tilde Y}/\mathcal O_Y$ is a closed subscheme $Z \subset Y$, the \emph{conductor subscheme} of $Y$. Its ideal sheaf $\mathfrak c = I_Z \subset \mathcal O_Y$ is called the \emph{conductor ideal}, which is equivalently defined as
		\[ \mathfrak c = \{ f \in \mathcal O_{\tilde Y} : f \, \mathcal O_{\tilde Y} \subset \mathcal O_Y \} \subset \mathcal O_Y .\]
		One checks that $\mathfrak c \subset \mathcal O_{\tilde Y}$ is an ideal as well, so it defines a closed subscheme $\tilde Z = \nu^{-1}(Z) \subset \tilde Y$. We thus arrive at a cartesian diagram \eqref{diag:pushout}.
		
		\begin{proposition} [Reduced schemes from normal schemes \cite{sch14}] \label{prop:conductor pushout}
			Let $Y$ be a reduced quasi-projective scheme, $\nu : \tilde Y \to Y$ be its normalization, and $\tilde Z \subset \tilde Y$ and $Z \subset Y$ be the conductor subschemes as above. Then the cartesian diagram \eqref{diag:pushout} is a pushout diagram.
		\end{proposition}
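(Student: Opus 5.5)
The plan is to compare the given cartesian square with the abstract pushout from \Cref{prop:pushout} and show that the comparison map is an isomorphism. This reduces to a one-line computation with the conductor ideal in an affine chart.

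First, since $Y$ is reduced and of finite type over $k$, the normalization $\nu : \tilde Y \to Y$ is finite, so $\tilde Y$ is quasi-projective. The morphism $\tilde Z \to Z$ is the restriction of $\nu$ and is therefore finite. \Cref{prop:pushout} then produces a pushout $Y' = \tilde Y \amalg_{\tilde Z} Z$. The two morphisms $\nu : \tilde Y \to Y$ and $Z \hookrightarrow Y$ agree on $\tilde Z$ because the square \eqref{diag:pushout} commutes. By the universal property of the pushout, they induce a canonical morphism $\theta : Y' \to Y$, and it suffices to show that $\theta$ is an isomorphism. The question is local on $Y$. By the construction in \cite[Tag~0E25]{stacks-project}, over an affine open $\operatorname{Spec} A \subset Y$ with preimage $\operatorname{Spec} B \subset \tilde Y$, the pushout is affine with coordinate ring
\[ B \times_{B/\mathfrak c B} A/\mathfrak c . \]
In this chart, $\theta$ corresponds to the natural ring map $A \to B \times_{B/\mathfrak c B} A/\mathfrak c$ sending $a \mapsto (a, \bar a)$.

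Next, I verify the ideal-theoretic facts about the conductor used in the description of $\tilde Z$. The conductor is $\mathfrak c = \{ f \in B : fB \subset A\}$, and it lies in $A$ because $1 \in B$. It is an ideal of $B$: if $f \in \mathfrak c$ and $b \in B$, then $fbB \subset fB \subset A$. In particular $\mathfrak c B = \mathfrak c$, and so $\tilde Z = \operatorname{Spec} B/\mathfrak c$ and $Z = \operatorname{Spec} A/\mathfrak c$.

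Finally, I show the ring map is bijective.
\begin{itemize}
\item \emph{Injectivity.} This follows from injectivity of $A \to B$, which holds since $Y$ is reduced.
\item \emph{Surjectivity.} Take a pair $(b, \bar a)$ with $b \equiv a \pmod{\mathfrak c}$ in $B$, for some lift $a \in A$. Then $b - a \in \mathfrak c \subset A$, so $b \in A$. Hence $(b, \bar a) = (b, \bar b)$ is the image of $b$, where $\bar b = \bar a$ in $A/\mathfrak c$ because $b - a \in \mathfrak c$.
\end{itemize}
Thus $\theta$ is an isomorphism on every affine chart, hence an isomorphism. The square \eqref{diag:pushout} is therefore a pushout.

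The only step I expect to need care is identifying the local ring of the pushout with the fiber product of rings. This identification comes from the construction in \cite[Tag~0E25]{stacks-project}, which requires finiteness of $\tilde Z \to Z$ and affineness of $\tilde Y \to Y$; both hold because $\nu$ is finite. The rest is formal.
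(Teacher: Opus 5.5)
Your proof is correct and matches the paper's argument (which it attributes to Schwede): reduce to an affine chart and check that $A \to \bar A \times_{\bar A/\mathfrak c} A/\mathfrak c$ is bijective, with injectivity coming from reducedness and surjectivity from $\mathfrak c \subset A$. Your comparison with the abstract pushout of \Cref{prop:pushout} through the canonical morphism $\theta$ only spells out the globalization step that the paper leaves implicit in ``we may assume $Y = \Spec A$.''
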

		\begin{proof}
			The following is a copy of Schwede's proof. We may assume $Y = \Spec A$ for a reduced $k$-algebra $A$ of finite type. Writing $\bar A$ for the normalization of $A$ and $\mathfrak c \subset A \subset \bar A$ the conductor ideal, we need to show that the diagram
			\begin{equation} \label{diag:schwede}
			\begin{tikzcd}
				\bar A / \mathfrak c & \bar A \arrow[l] \\
				A / \mathfrak c \arrow[u, hook] & A \arrow[l] \arrow[u, hook]
			\end{tikzcd}
			\end{equation}
			is a pullback diagram of commutative rings. In other words, we need to show the ring homomorphism $A \to \{ (f, \bar h) \in \bar A \times A/\mathfrak c : \bar f = \bar h \in \bar A / \mathfrak c \}$ defined by $f \mapsto (f, \bar f)$ is an isomorphism. It is injective because the normalization $A \subset \bar A$ is injective. It is surjective because if $f-h \in \mathfrak c \subset A$ for $f \in \bar A$ and $h \in A$, then $f \in A$.
		\end{proof}
		
		In other words, every reduced scheme $Y$ is obtained by ``contracting'' its normalization $\tilde Y$ in a preferred way, using the notion of the conductor subscheme/ideal.

		\subsubsection{Equivariance}
			The above discussion about conductors is equivariant.
			
			\begin{lemma} \label{lem:quotient and normalization}
				Let $G$ be a finite group acting on a quasi-projective scheme $Y$ over $k$. Then taking normalization and quotient commute: $Y^\nu / G = (Y/G)^\nu$.
			\end{lemma}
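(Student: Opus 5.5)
The plan is to work affine-locally on $Y/G$ and reduce to a statement about rings of invariants. Since $Y$ is quasi-projective and $G$ is finite, every $G$-orbit lies in a $G$-stable affine open. So we may assume $Y = \Spec A$ with $G$ acting on $A$, and $Y/G = \Spec A^G$. Normalization is compatible with Zariski localization on $Y/G$, so the claim reduces to an isomorphism of rings
\[ (A^G)^\nu \;\cong\; (A^\nu)^G \]
compatible with the natural maps. Here $A^\nu$ denotes the integral closure of $A$ in its total ring of fractions. I will assume $Y$ is reduced, which is the situation in which the lemma is applied (the reduced fibers $\frac{1}{m}X_s$, $Y_t$, and their components). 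In that case $A^\nu$ is the product of the integral closures of the $A/\mathfrak p_i$ over the minimal primes $\mathfrak p_i$.

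The first step is to check that the $G$-action lifts to $A^\nu$. Each $g \in G$ acts on $A$ by ring automorphisms, so it extends uniquely to the total ring of fractions $Q(A)$, since it permutes the non-zero-divisors. It therefore preserves the integral closure of $A$ in $Q(A)$. This gives a canonical $G$-action on $Y^\nu$ for which $\nu$ is equivariant, and hence a finite morphism $Y^\nu/G \to Y/G$.

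The second step is to show that $(A^\nu)^G$ is normal and birational to $A^G$.

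\textbf{Integrality.} $A$ is integral over $A^G$ because each $a \in A$ is a root of $\prod_{g \in G}(t - g a)$. Hence $A^\nu$, and so also $(A^\nu)^G$, is integral over $A^G$.

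\textbf{Normality.} Suppose $x \in Q((A^\nu)^G)$ is integral over $(A^\nu)^G$. Then $x$ is integral over $A^\nu$, so $x \in A^\nu$ by normality of $A^\nu$. Since $x$ is $G$-invariant, $x \in (A^\nu)^G$.

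\textbf{Birationality.} In characteristic $0$ the Reynolds operator $\frac{1}{|G|}\sum_{g\in G} g$ is available. Given $G$-invariant $f \in Q(A)$, write $f = a/s$ and replace $s$ by the invariant non-zero-divisor $N(s) = \prod_{g\in G} gs$. Then $f = a'/N(s)$ with $a' = f\,N(s)$ invariant, so $a' \in A^G$. This gives $Q(A)^G = Q(A^G)$.

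Combining the three points, $(A^\nu)^G$ is the integral closure of $A^G$ in $Q(A^G)$, that is, $(A^G)^\nu$. Because everything above is canonical, these local identifications glue to an isomorphism $Y^\nu/G \cong (Y/G)^\nu$ over $Y/G$.

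The step I expect to need the most care is birationality, in the non-integral case: one has to make sure that invariant non-zero-divisors of $A$ remain non-zero-divisors in $A^G$, and that the minimal primes of $A^G$ correspond to the $G$-orbits of minimal primes of $A$. Both follow from the norm trick together with lying-over for the integral extension $A^G \subset A$.
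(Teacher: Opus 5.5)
Your proof is correct and follows the same route as the paper. Both reduce to $Y=\Spec A$ affine and reduced, identify $Q(A)^G=Q(A^G)$, and then identify $(A^\nu)^G$ with the integral closure of $A^G$ in $Q(A^G)$. For the middle step the paper cites Atiyah--Macdonald, Exercise~5.12, and leaves implicit the check, which you supply via minimal primes, that non-zero-divisors of $A^G$ remain non-zero-divisors in $A$.

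There are two differences. First, you get integrality of $(A^\nu)^G$ over $A^G$ by transitivity through $A$, which is characteristic-free; the paper instead averages an integral equation over $G$ and divides by $|G|$. Second, the lemma is stated for non-reduced $Y$ as well, so you should add the paper's observation $(Y/G)_{\red}=Y_{\red}/G$. That is the one place your argument needs characteristic $0$ (via the Reynolds operator).
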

			\begin{proof}
				The quasi-projectiveness is simply to guarantee the existence of a quotient as a scheme. Since the statement is local on $Y$ and $Y_{\red}/G = (Y/G)_{\red}$, we may assume $Y = \Spec A$ for a reduced $k$-algebra $A$ of finite type.
				
				Denote the total ring of fractions by $K = S^{-1} A$, where $S \subset A$ is the set of all nonzero-divisors, and the normalization of $A$ by $\bar A$. We have $A \subset \bar A \subset K$. The $G$-invariant of $K$ is the total ring of fractions of $A^G$ because $K^G = (S^{-1}A)^G = (S \cap A^G)^{-1} A^G$ by \cite[Exercise~5.12]{ati-mac:commutative}. We need to show $\overline{A^G} = \bar A^G$. The inclusion $(\subset)$ is clear. To show $(\supset)$, assume that $f \in \bar A$ is $G$-invariant. Take an identity $f^n + a_{n-1} f^{n-1} + \cdots + a_0 = 0$ for $a_i \in A$. Applying each $g \in G$ and adding them together, we have $|G| \cdot f^n + (\sum_g g \cdot a_{n-1}) f^{n-1} + \cdots + (\sum_g g \cdot a_0) = 0$. Since $k$ has characteristic $0$, we conclude $f \in \overline{A^G}$.
			\end{proof}
			
			\begin{lemma} \label{lem:quotient of conductor}
				Let $G$ be a finite group acting on a reduced quasi-projective scheme $Y$ over $k$. Consider the conductor subscheme diagram \eqref{diag:pushout} in \Cref{prop:conductor pushout}. Then the diagram is $G$-equivariant and its quotient is a pushout diagram as in \Cref{prop:pushout}.
			\end{lemma}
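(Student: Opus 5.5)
The plan is to reduce to the case $G = \ZZ/n$ cyclic, since all the data in the statement are compatible with composing quotients. Let $q : Y \to \bar Y = Y/G$ be the quotient map. By \Cref{lem:quotient and normalization}, $\tilde Y/G = \bar Y^\nu$, so the normalization of $\bar Y$ is $\bar\nu : \tilde Y/G \to \bar Y$. We also have to verify equivariance of the conductor diagram \eqref{diag:pushout}. The $G$-action lifts uniquely to $\tilde Y$ by the universal property of normalization. The conductor ideal $\mathfrak c = \mathrm{Ann}_{\mathcal O_Y}(\nu_*\mathcal O_{\tilde Y}/\mathcal O_Y)$ is intrinsically attached to the inclusion $\mathcal O_Y \subset \nu_*\mathcal O_{\tilde Y}$, so it is $G$-stable. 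Hence $Z$ and $\tilde Z$ are $G$-invariant closed subschemes, and every arrow in the diagram is $G$-equivariant.

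The main step is then the following claim: the quotient square with corners $\tilde Z/G$, $\tilde Y/G$, $Z/G$, $Y/G$ is a pushout, in the sense that $Y/G = (\tilde Y/G)\amalg_{\tilde Z/G} (Z/G)$. This is local on $Y/G$, so we may assume $Y = \Spec A$ with a $G$-stable affine cover; quasi-projectivity provides such covers. Write $\bar A$ for the normalization. By \Cref{prop:conductor pushout}, Schwede's square \eqref{diag:schwede} is a fiber product of rings:
\[ A = \bar A \times_{\bar A/\mathfrak c} A/\mathfrak c . \]
Taking $G$-invariants is a left exact functor, so it preserves fiber products (equalizers of products). This gives
\[ A^G = \bar A^G \times_{(\bar A/\mathfrak c)^G} (A/\mathfrak c)^G . \]

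Since $|G|$ is invertible in characteristic $0$, the Reynolds operator shows that invariants are exact. Therefore $(\bar A/\mathfrak c)^G = \bar A^G/\mathfrak c^G$, and likewise $(A/\mathfrak c)^G = A^G/\mathfrak c^G$. The schemes $\tilde Z/G$ and $Z/G$ are the spectra of exactly these rings, so the affine quotient square is a fiber product of rings. We also have that $\Spec(A^G/\mathfrak c^G) \to \Spec A^G$ is a closed immersion and that $\tilde Z/G \to Z/G$ is finite. These are exactly the hypotheses of \Cref{prop:pushout}, so the quotient square is the pushout described there.

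Finally, the local pushouts glue because pushouts along closed immersions with finite maps are compatible with open restriction on the base. The step I expect to need care is the identification of the quotient square with the construction of \Cref{prop:pushout}, rather than merely showing it is a fiber product of rings. To settle it I would appeal to the local description in \cite[Tag~0E25]{stacks-project}: the pushout is affine-locally the fiber product of rings, which is what we have shown.

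One caveat: $\mathfrak c^G$ need not equal the conductor of $A^G$, so the quotient diagram is a pushout but not necessarily the conductor diagram of $Y/G$. The statement only asks for a pushout, so this causes no problem.
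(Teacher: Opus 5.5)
Your argument is correct and is essentially the paper's proof. Both take $G$-invariants of Schwede's fiber-product square $A = \bar A \times_{\bar A/\mathfrak c} A/\mathfrak c$ and use that invariants commute with fiber products. Your extra use of exactness of invariants in characteristic $0$ (giving $(A/\mathfrak c)^G = A^G/\mathfrak c^G$, so $Z/G \to Y/G$ is a closed immersion, and matching the affine-local ring description of the pushout in the Stacks Project) makes explicit what the paper leaves implicit. The opening remark about reducing to cyclic $G$ is never used and would fail for non-solvable $G$, so drop it.
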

			\begin{proof}
				Recall the coherent sheaf $F = \nu_* \mathcal O_{\tilde Y}/\mathcal O_Y$. The $G$-action uniquely lifts to $\tilde Y$ so $F$ is a $G$-equivariant sheaf on $Y$. Hence its support $Z$ and its ideal sheaf $\mathfrak c$ are $G$-equivariant. This proves that the conductor subscheme diagram is equivariant.
				
				Let us now prove a quotient of an equivariant pushout diagram \eqref{diag:pushout} is again a pushout diagram. Start with a $G$-equivariant pullback diagram of commutative rings
				\[\begin{tikzcd}
					B/IB & B \arrow[l] \\
					A/I \arrow[u] & A \arrow[l] \arrow[u]
				\end{tikzcd},\]
				or an isomorphism $A \to \{ (f, \bar h) \in B \times A/I : \bar f = \bar h \in B/IB \}$. Taking its $G$-invariant, the composition
				\[ A^G \to \{ (f, \bar h) \in B^G \times A^G/I^G : \bar f = \bar h\} \subset \{ (f, \bar h) \in (B \times A/I)^G : \bar f = \bar h\} \]
				is an isomorphism. This proves that both the first and second maps are isomorphisms. That is, the $G$-invariant part of the diagram is cartesian. We do not know whether the $G$-quotient of the conductor diagram of $Y$ is again the conductor diagram of $Y/G$ (but it is still a pushout diagram).
			\end{proof}

\printbibliography
\end{document}